\newtheorem{thm}{Theorem}[section]
\newtheorem{lem}[thm]{Lemma}
\newtheorem{prop}[thm]{Proposition}
\newtheorem{cor}[thm]{Corollary}
\theoremstyle{definition}
\theoremstyle{definition}
\newtheorem{df}[thm]{Definition}
\theoremstyle{definition}
\newtheorem{rem}[thm]{Remark}
\newtheorem{nota}[thm]{Notation}
\theoremstyle{definition}
\newtheorem{exm}[thm]{Example}
\newcommand{\red}{\textcolor{red}}
\newcommand{\blue}{}%{\color{blue}}
\newcommand{\green}{}%{\color{green}}
\renewcommand{\phi}{\varphi}
\definecolor{purple}{RGB}{150,10,200} %xuanlong
\newcommand{\N}{\mathbb{N}}
\newcommand{\Z}{\mathbb{Z}}
\newcommand{\R}{\mathbb{R}}
\newcommand{\C}{\mathbb{C}}
\numberwithin{equation}{section}
\newcommand{\id}{\operatorname{id}}
\newcommand{\hm}{homomorphism}
\newcommand{\dt}{\delta}
\newcommand{\ep}{\varepsilon}
\newcommand{\1}{\mbox{\large \bf 1}}
\newcommand{\gm}{\gamma}
\newcommand{\la}{\langle}
\newcommand{\ra}{\rangle}
\newcommand{\andeqn}{\,\,\,{\rm and}\,\,\,}
\newcommand{\rforal}{\,\,\,{\rm for\,\,\,all}\,\,\,}
\newcommand{\CA}{$C^*$-algebra}
\newcommand{\SCA}{$C^*$-subalgebra}
\newcommand{\af}{{\alpha}}
\newcommand{\bt}{{\beta}}
\newcommand{\wilog}{without loss of generality}
\newcommand{\Wlog}{Without loss of generality}
\newcommand{\beq}{\begin{eqnarray}}
\newcommand{\eneq}{\end{eqnarray}}
\newcommand{\pinr}{\in \mathbb{R}_+\backslash \{0\}}% positive in R
\newcommand{\dimnuc}{\dim_{\mathrm{nuc}}}
\newcommand{\trdimnuc}{\mathrm{Trdim_{nuc}}}
\newcommand{\Her}{\mathrm{Her}}
\newcommand{\calF}{\mathcal{F} }
\newcommand{\bbN}{\mathbb{N} }
\newcommand{\calZ}{\mathcal{Z}}
\newcommand{\JS}{$\mathcal{Z}$}
\newcommand{\xrule}{\bigskip \hrule \bigskip}
\newcommand{\CAs}{$C^*$-algebras}
\title{
%{\green{Asymptotical tracial}}
Tracial approximation in simple \CA s}
\author{Xuanlong Fu\footnote{Email: xlf@fudan.edu.cn}\, and
Huaxin Lin\footnote{Email: hlin@uoregon.edu}
%Xuanlong Fu and Huaxin Lin
%Guihua Gong,  Huaxin Lin and Zhuang Niu
 }
\date{
}
\begin{document}

\maketitle

%\begin{document}

\begin{abstract}

%\begin{center}
%{\bf Abstract. }
%
%\end{center}
%
%\small   \linewidth 5cm % \textwidth 5cm
We revisit the notion of tracial approximation for unital simple \CA s.
We show that a unital
simple separable $C^*$-algebra $A$  is
asymptotically
tracially in the class of \CA s with
{{finite}} nuclear dimension
%{\green{at most}} $n$ (for some integer $n$)
if  and only if {\blue{$A$}} is
asymptotically tracially in the class of simple nuclear ${\cal Z}$-stable \CA s.
%$A$ has finite tracial nuclear dimension
%is equivalent to
%$A$ is tracially ``nuclear and $\mathcal{Z}$-stable''.
\end{abstract}

\section{Introduction}

Nuclear dimension for \CA s was first introduced in \cite{WZ2010}.
Over the time, this notion becomes increasingly important
in the study of \CA s in the connection with the Elliott program
\cite{Ell94},
the program of {\blue{classification of}}
separable simple %{\green{nuclear}} %
amenable
\CA s by the Elliott invariant,  a set of $K$-theory related invariant.  The part of  the Toms-Winter conjecture
(see \cite[Conjecture 9.3]{WZ2010})
states
that a unital simple {\blue{nuclear}} separable \CA\ $A$ has
finite nuclear dimension if and only if $A$ is ${\cal Z}$-stable, i.e.,
$A\otimes {\cal Z}\cong A,$ where ${\cal Z}$ is the Jiang-Su algebra, %{\blue{(\cite{JS1999})}},
{{a}} unital  separable and {\blue{infinite}} dimensional simple \CA\,
which has $K_0({\cal Z})=\Z$ (as an ordered group),
%$K_1(A)=\{0\}$
$K_1({\green{{\cal Z}}})=\{0\}$
and a unique tracial state
{\green{(see \cite{JS1999}).}}
This part of
the Toms-Winter conjecture is now a theorem
(see \cite{W-2012-pure algebras}, \cite{CETWW-2019}{\green{, see also}}
\cite{MS2014}).

On the other hand tracial rank was introduced in
\cite{Lin01-2} (and see also \cite{Lin 2001-1}).
\CA s with tracial rank zero are also called
\CA s which are tracially AF.
%{\rred{After simple AH-algebras of real rank zero with slow dimension are classified (see EG).}}
 Amenable tracially AF-algebras and \CA s of tracial rank one were classified in  \cite{LinDuke} and \cite{Lin-TR-One}
with the
presence of UCT
%(see also,
%LN-advance,
%{\green{\cite{Winter2014}, \cite{L2014localizing}, \cite{LN-Adv}, \cite{Lin-Invent-2011}}}
({\blue{These classification}}   results  {\blue{were preceded}} by
\cite{EG96} and
\cite{EGLi}, respectively{{).}}
{\blue{These}} had been generalized to the classification
of  the class of amenable simple \CA s which have  rationally generalized tracial rank at most one  satisfying the UCT
{\blue{(see \cite{GLN2015}, \cite{GLN2019}, and \cite{GLN2019-2}, see also \cite{Lin-Invent-2011}, \cite{Winter2014}, \cite{L2014localizing}{\green{,}}
and  \cite{LN-Adv}).}}
%?----check my other papers
%{\blue{\bf Try to mention Sato's paper}}
In {{\cite{EGLN2015}}},
%? egln,
it is proved
that all unital separable simple \CA s with finite
nuclear dimension
in the UCT class
%{\green{are}}
in fact  have rationally generalized
tracial rank at most one
{\blue{(using \cite{TWW2015})}}.
In other words, all unital separable simple \CA s with finite nuclear dimension satisfying the UCT
are classified (up to isomorphism) by their Elliott invariant.  This can also be restated, by the proof of Toms-Winter conjecture as mentioned above,
that all
{\green{unital}}
amenable simple ${\cal Z}$-stable \CA s satisfying the UCT are classified.

The beginning point of this paper is to search a tracial version of Toms-Winter conjecture (which is now a theorem).
We {\blue{revisit}} a version of tracial approximation  (see {{Definition}} \ref{def-tracial_approximation} and
{{Proposition}} \ref{equivalent-definition-of-tracial-approximation} below).
The main results include the following statement: A unital {\blue{separable}} simple \CA\ $A$ which is
asymptotically tracially in ${\cal N}_{\cal Z}$
 (the class of
all nuclear ${\cal Z}$-stable \CA s) if and only if $A$ is
asymptotically tracially
in ${\cal N}_n$ (the class of {\green{all}} %unital
\CA s with nuclear dimension
at most $n$) for some integer $n\ge {{0}}$ (see Theorem \ref{tdimnuc-finite-equal-t-nuclear-Z-stable} below).
It is also shown that a unital separable simple \CA\ $A$ which is
asymptotically tracially in {{${\cal C}_{{\cal Z},s}$}}
(the class of all
{{separable}}
${\cal Z}$-stable \CA s), %then $A$
{{is}} either purely infinite, or
has stable rank one (see {{Theorem}} \ref{dichotomy-tracial-finite-dimnuc-finite-infinite}). Moreover,  $A$ has strict comparison
{\blue{(for positive elements).}}   Furthermore, it is shown that if $A$ is a unital {\blue{separable}} simple \CA\
which is
asymptotically tracially in ${\cal N}$
(the class of all nuclear \CA s) and $A$ is
asymptotically tracially in ${\cal C}_{{\cal Z},s},$ then $A$ is asymptotically tracially
in ${\cal N}_{\cal Z}$
%{\green{\bf Why not mention ${\cal N}_{{\cal Z},s,s}$? - Xuanlong}}
{\blue{(see {{Theorem}}
\ref{Thm-tracial-simple-nuclear-Z-stable}).}}
{\blue{As one expects, in the case that $A$ is a unital separable nuclear simple \CA, then $A$ is asymptotically tracially in ${\cal N}_n$
if and only if it is
asymptotically tracially in ${\cal C}_{{\cal Z},s},$ and, if and only if $A$ has finite nuclear dimension and ${\cal Z}$-stable.}}
A number of other related results are also obtained.
In {\blue{Example}} \ref{EXLast}, a large number of unital non-exact separable simple \CA s which are
asymptotically {{tracially}}
in ${\cal N}_n$ are presented.
{\blue{It should be mentioned that {\green{if}} a unital simple \CA\ {\green{$A$}} is asymptotically tracially in the class of finite dimensional
\CA s then $A$ has tracial rank zero, and {\green{if}} $A$ is asymptotically tracially in the class of \CA s which are %non-commutative
1-dimensional {\green{NCCW}} complexes {\green{then}} $A$ has generalized tracial rank at most one.}}

%The proof of  the equivalence that, for unital simple \CA s, a

The organization of this paper is as follows.
Section 2 serves as a preliminary. We fix some frequently used notations and concepts {\blue{there.}}
Section 3 {\blue{studies}} some basic properties of
{\green{asymptotical}} tracial approximation.
Section 4 gives {\blue{some}} useful properties that  are preserved by
{\green{asymptotical}} tracial approximation.
{\blue{One of the
%consequences
{\green{results}}
is that, if $A$ is a unital separable simple \CA\  which is
asymptotically tracially in the class of exact \CA s, then
every $2$-{\green{quasitrace}} of $A$ is a trace {{(see
{{Corollary}} \ref{CQT=T}).}}}}
{\blue{Section 5 is a preparation for Section 6 which}}
%${\blue{Section 6}}
gives a sufficient and necessary condition
%on when does
{\blue{for}} a {{c.p.c.~}}generalized inductive limit
%{\blue{having}}
{\blue{to have}} finite nuclear dimension
(Theorem \ref{gene-lim-fin-nucdim}).
{\blue{Section 7 shows that every unital infinite dimensional separable simple
\CA\ which is asymptotically tracially in ${\cal N}_n$
%the class of \CA s of nuclear dimension $n$ ($n<\infty$)
is asymptotically tracially in ${\cal N}_{\cal Z}$}}
%{\green{\bf Why not mention ${\cal N}_{{\cal Z},s,s}$ or ${\cal N}_{n,s,s}$? - Xuanlong}}
%}}
%the class of ${\cal Z}$-stable \CA s}}
%for simple separable unital infinite dimensional
%\CAs,
%being tracially finite nuclear dimension
%implies being tracially $\mathcal{Z}$-stable
(see Theorem
%\ref{simple-T-finite-dimnuc-implies-T-simple-finite-dimnuc}
%and Corollary
\ref{tracial-finite-dimnuc-implies-tracial-Z-stable}).
%
%Section 7 shows that a \CA\ being tracially ``$\mathcal{Z}$-stable''
%implies the Cuntz semi group is almost unperforated.
{\blue{In
Section 8, we show that a {\blue{separable simple}}
 %show that for
% simple separable
 unital infinite dimensional \CA\ which is
%being
asymptotically tracially in ${\cal N}$
and is also
asymptotically tracially in  ${\cal C}_{{\cal Z},s},$
%the class of $\mathcal{Z}$-stable \CA s
then it is
asymptotically tracially in ${\cal N}_{\cal Z}$}}
%{\green{\bf Why not mention ${\cal N}_{{\cal Z},s,s}$? - Xuanlong}}
%the class of
%simple
% implies  that it is
% being
% tracially ``
% simple separable nuclear and $\mathcal{Z}$-stable}}
(Theorem \ref{Thm-tracial-simple-nuclear-Z-stable}).
{\blue{In Section 9, we summarize and combine some of the results.}}
\iffalse
For example,
combining some results on Toms-Winter conjecture, we present the following result:
A
%for
simple separable unital infinite dimensional \CA\, is
tracially in the class of \CAs with finite nuclear dimension
$n$ if and only if it is tracially in the class of
%being tracially finite nuclear dimension
%is equal to being tracially ``
nuclear and $\mathcal{Z}$-stable \CA s
(Theorem \ref{tdimnuc-finite-equal-t-nuclear-Z-stable}).}}
{\red{We also show that (in section 9) if a unital infinite dimensional separable simple \CA\, $A$ is tracially
in the class of ${\cal Z}$-stable \CA s, then either $A$ is purely infinite or it has stable rank one.}}
\fi
%By studying the tensored products
%
%Theorem \ref{quasitrace-on-tracially-exact-CA-are-trace}
%show that all 2-quasitraces on tracially exact \CAs\
%are traces.
Theorem \ref{tensor-of-tracial-approximation}
{\blue{shows}} that
{\green{asymptotical}} tracial approximation behaves well
under  {\blue{the}} spatial
%spatial
{{tensor products.}}  {{As a consequence, a variety of examples can be produced.}}
{\blue{For example, if $A$ is any unital separable simple \CA\ and $B$ is a unital infinite dimensional separable simple \CA\
which is
asymptotically tracially in the class of ${\cal Z}$-stable {{\CAs}},
then the spatial  tensor product $A\otimes B$ is
asymptotically tracially
in the class of ${\cal Z}$-stable \CA s. If both $A$ and $B$ are
asymptotically tracially in
%the  class of finite  nuclear dimensional \CA s,
{\green{${\cal N}_n,$}}
then the spatial  tensor  product $A\otimes B$ is {{also}}
asymptotically tracially in
{\green{${\cal N}_n.$}}}}
%the class of \CA s of finite nuclear dimension.

\vspace{0.2in}

{\bf Acknowledgements:}   The second named author was partially supported by
an NSF grant (DMS-1665183). Both authors acknowledge the support from the Research Center for Operator Algebras, 
East China Normal University.

\section{Preliminary}

%\subsection{Basic Definitions}

\begin{nota}\label{DB1}
Let $X$ be a normed space {{and}}  $0\leq r\leq s$ {{be real numbers.}}
{{Set}} $B_{r,s}(X):=\{x\in X: r\leq \|x\|\leq s\}.$
Denote  {{by $X^1$}} the closed unit ball  {{$B_{0,1}(X)$}} of $X.$

Let $a,b\in X$ {\blue{and let}}  $\epsilon>0$,
we  {{write}} $a\approx_{\epsilon}b$ if
$\|a-b\|< \epsilon$.
%By $\calF\subset\subset X,$  we mean that $\calF$ is a finite subset of $X$.
%
{{Let}} $Y,Z\subset X$ {\blue{and let}} $\epsilon>0$, we say $Y$ is  {\blue{an}}  $\epsilon$-net of $Z$,
and denoted by $Z\subset_\epsilon Y$,
if, {\blue{for all}} $z\in Z$, {\blue{there is}} $y\in Y$ such that $z\approx_{\epsilon}y$.

\end{nota}

\begin{nota}

{{Let  $A$ and $B$ be \CA s,}} let $\phi: A\rightarrow B$ be a map,
let $\calF\subset A$,  and let $\epsilon> 0$.
{{The map}} $\phi$ is called $(\calF,\epsilon)$-%approximately
multiplicative,
or called $\epsilon$-multiplicative on $\calF$,
if for any $x,y\in \calF$,
$\phi(xy)\approx_{\epsilon}\phi(x)\phi(y)$.
If, in addition,
for any $x\in \calF$, $\|\phi(x)\|\approx_{\epsilon}\|x\|$,
then we say $\phi$ is {{an}}
$(\calF,\epsilon)$-{\blue{approximate}} embedding.

\end{nota}

\begin{nota}

Let $A$ {\blue{and}} $B$ be \CAs.  {\blue{The}} spatial tensor product of $A$ and $B$
is denoted by $A\otimes B$.
\end{nota}

%{\blue
\begin{nota}\label{NoGF}
%xuanlong-new
{{Let $\N=\{1,2{{,\cdots}}\}$ be the set of natural numbers.
Let $M_k$ denote the algebra of $k$ by $k$
{{complex}} matrices ($k\in\N$).}}
Let $\{e_{i,j}^{(k)}\}$ denote
%{\blue{a system of}}
{{the canonical}}
matrix units of
%{\green{the matrix algebra}}
$M_k$
{\blue{($1\leq i,j\leq k$ and  $k\in \N$).}}
If $F$ is a finite dimensional  \CA,
then
%let
{\blue{${\cal G}^F$ denotes}} the standard generating set of $F$,
i.e., the union of
%{\blue{systems of}}
{{canonical}} matrix units of each direct summand of $F$.
Note that the standard generating set of $F$ is in the unit ball of $F$ and
is also a linear generating set .

\end{nota}
%}

\begin{nota}
%\label{notation-1}
Let $A$ be a $C^*$-algebra {\blue{and}}
let $S,T\subset A$ be subsets of {\blue{$A.$}}
{{Set}} $S\cdot T:=\{st:s\in S, t\in T\}$ and set
$S^{\bot}:=\{a\in A: as=0=sa, \forall s\in S\}.$
Let $\overline{S}^{\|\cdot\|}$ be the norm closure of $S$.
%Set $N(S):=\sup\{\|x\|:x\in S\}$.
%
Denote by
$\Her_A(S)$ {\blue{(or just $\Her(S)$)}}  the hereditary $C^*$-subalgebra of $A$ generated by $S$.
Let $C^*(S)$ be the $C^*$-subalgebra of $A$ generated by $S$.
{\blue{Denote by}} $A_+$ the set of all positive elements in $A,$
{{by $A_+^1:=A_+\cap A^1,$}}
and by
$A_{sa}$ the set of all self-adjoint elements in $A.$  Denote by
$\mathcal{M}(A)$  the multiplier algebra of $A$.
For $x\in A$, the spectrum of $x$ is denoted by $\mathrm{sp}_A(x),$ or just $\text{sp}(x).$

\end{nota}

\begin{nota}
\label{notation-2}

Let $A_i$ be \CAs\ ($i\in\mathbb{N}$).
{\blue{Set}}
$\prod_{i=1}^{\infty}A_i:=\{\{a_1,{\blue{a_2,}}\cdots\}:
a_i\in A_i$, $\sup_{i\in\mathbb{N}}\|a_i\|<\infty\},$
and set
$\bigoplus_{i=1}^{\infty}A_i:=\{\{a_1,{\blue{a_2,}}\cdots\}:
a_i\in A_i$, $\lim_{i\rightarrow \infty}\|a_i\|=0\}$.
Denote by $\pi_\infty:
\prod_{i=1}^{\infty}A_i\to {{\prod_{i=1}^{\infty}A_i/\bigoplus_{i=1}^{\infty}A_i}}$ the quotient map.
We also use the notation
$l^{\infty}(A):=\prod_{i=1}^{\infty}A$ and
% is all the bounded sequences with elements in $A$,
$c_0(A):=\bigoplus_{i=1}^{\infty}A$.
%Denote by $\pi_\infty: l^{\infty}(A)\to l^{\infty}(A)/c_0(A)$ the quotient map.
Define $\iota: A\to l^\infty (A)$ by $\iota(a)=\{a,a, {\blue{\cdots}}\},$
%{\green{\cdots}}.\},$
the constant sequence, for all $a\in A.$
Define $\iota_A=\pi_\infty\circ \iota.$
%Set $A_\infty=\pi_\infty\circ \iota_A.$  By identifying $A$ with $A_\infty,$ we view $A$ is a \SCA\, of $A.$

{
Let $h: B\rightarrow \prod_{i=1}^{\infty}A_i/ \bigoplus_{i=1}^{\infty}A_i$
be a *-homomorphism. {\blue{The map}} $h$ is called a {\blue{\it strict embedding,}}
if for any $b\in B$, there exists $\{b_1,{\blue{b_2,}}\cdots\}\in\prod_{i=1}^{\infty}A_i$
such that $h(b)={\blue{\pi_\infty(\{b_1,{\blue{b_2,}}\cdots\})}}$ and
$\|b\|=\liminf_{i\rightarrow \infty}\|b_i\|$.
If $C\subset \prod_{i=1}^{\infty}A_i/ \bigoplus_{i=1}^{\infty}A_i$
is a $C^*$-subalgebra  {\blue{and}} the embedding map
${\blue{\iota}}: C\hookrightarrow \prod_{i=1}^{\infty}A_i/ \bigoplus_{i=1}^{\infty}A_i$
is a {\blue{strict}} embedding, then we say $C$ is strictly embedded.

%\begin{rem}

(1) Note {\blue{that,}} if $C\subset l^\infty(A)/c_0(A)$ is full in
$l^\infty(A)/c_0(A),$ then $C$ is strictly embedded {\blue{(see also
%In particular,
%if $C$ is unital and $1_C$ is the unit of $l^{\infty}(A)/c_0(A)$
%and $C$ is simple, then $C$ is strictly embedded.
%{\green{Also}} see %the following
Proposition \ref{strictly-embedding-simple-case}).}}

(2) For a \CA\ $A$, the map $\iota_A$ defined above is a strict embedding,
and the map $\hat\iota: A\rightarrow l^\infty(A)/c_0(A)$,
$a\mapsto \pi_\infty(\{a,0,a,0,a,0,\cdots\})$ is not.
}
\end{nota}
%\end{rem}

\begin{prop}
\label{strictly-embedding-simple-case}
Let $A_1,$ $A_2,\cdots$ be \CAs\, {\blue{and}} let  $A$ be a simple \CA.
Let $h: A\rightarrow \prod_{i=1}^{\infty}A_i/ \bigoplus_{i=1}^{\infty}A_i$
be an embedding.
If for some nonzero element $a\in A\backslash\{0\}$,
there exists $a_i\in A_i$ ($i\in \N$) such that
$h(a)={\blue{\pi_\infty(\{a_1,a_2,\cdots\})}}$
%and $\|a\|=\liminf_{i\rightarrow \infty}\|a_i\|$ hold,
and $\liminf_{i\rightarrow \infty}\|a_i\|>0$ hold,
then $h$ is a strict embedding.

\end{prop}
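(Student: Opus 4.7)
The plan is to prove the stronger statement that \emph{every} lift of $h(b)$ in $\prod_i A_i$ has liminf of norms equal to $\|b\|$, for every $b \in A$. Since $A$ is simple and $h$ is injective, $h$ is isometric; hence, for any $b\in A$ and any lift $\{b_i\}$ of $h(b)$, one has $\limsup_i \|b_i\| = \|h(b)\| = \|b\|$, giving $\liminf_i\|b_i\|\le\|b\|$ for free.

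The reverse inequality is where I would invoke the hypothesis on $a$. Suppose, for contradiction, that $\liminf_i\|b_i\|<\|b\|$, and extract a subsequence $\{i_k\}$ with $\lim_k\|b_{i_k}\|=\liminf_i\|b_i\|$. The coordinate restriction $\prod_i A_i\to\prod_k A_{i_k}$ sends $\bigoplus_i A_i$ into $\bigoplus_k A_{i_k}$ (because $\|c_i\|\to 0$ forces $\|c_{i_k}\|\to 0$), so it descends to a $\ast$-homomorphism
\[
\rho\colon\prod_{i=1}^\infty A_i\Big/\bigoplus_{i=1}^\infty A_i\ \longrightarrow\ \prod_{k=1}^\infty A_{i_k}\Big/\bigoplus_{k=1}^\infty A_{i_k}.
\]
Evaluating on $h(a)$, the element $\rho(h(a))$ is represented by $\{a_{i_k}\}_k$ and hence has norm $\limsup_k\|a_{i_k}\|\ge\liminf_i\|a_i\|>0$. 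Therefore $\rho\circ h$ is a nonzero $\ast$-homomorphism from the simple $C^*$-algebra $A$, so it must be injective and thus isometric. Applying this to $b$ yields $\|b\|=\|\rho(h(b))\|=\limsup_k\|b_{i_k}\|=\liminf_i\|b_i\|<\|b\|$, the desired contradiction.

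The whole strategy hinges on one observation: the condition $\liminf_i\|a_i\|>0$ is preserved under passage to any subsequence, so the same trick of restricting to a subsequence witnessing $\liminf$ converts nontriviality of $\rho(h(a))$ into injectivity, and hence isometry, of $\rho\circ h$ on all of $A$. So I do not anticipate a real obstacle; the only bookkeeping step is to check that coordinate restriction descends to the sequence-algebra quotients, which is immediate. As a bonus, the argument shows that the conclusion actually holds for \emph{every} lift of $h(b)$, not merely some lift.
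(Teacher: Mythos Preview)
Your proof is correct and follows essentially the same route as the paper: both arguments restrict to a subsequence witnessing a too-small $\liminf$, pass to the induced quotient map $\rho$ (the paper calls it $\pi_1$), use the hypothesis on $a$ to see $\rho\circ h$ is nonzero, and then invoke simplicity of $A$ to force $\rho\circ h$ to be isometric, giving the contradiction. Your ``bonus'' observation that every lift works is in fact automatic, since any two lifts of $h(b)$ differ by a sequence in $\bigoplus_i A_i$ and hence have the same $\liminf$ of norms.
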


\begin{proof}

If $h$ is not a {\blue{strict}} embedding, then {{we can choose}}
$c\in A$ and natural numbers $i_1<i_2<\cdots$,
{\blue{and}} $c_i\in A_i$ ($i\in\N$) such that
$h(c)={\blue{\pi_\infty(\{c_1,{\blue{c_2,}}\cdots\})}}$ and
%\beq
%\label{f2-7prop-1}
$\lim_{n\rightarrow\infty}\|c_{i_n}\|<\|c\|.$
%\eneq

{\blue{Let $\pi_1: \prod_{i=1}^{\infty}A_i/ \bigoplus_{i=1}^{\infty}A_i
\rightarrow
\prod_{n=1}^{\infty}A_{i_n}/ \bigoplus_{n=1}^{\infty}A_{i_n}$ be the quotient map}} %defined by
induced by the
%canonical
{\blue{quotient}} map
$\pi_0:\prod_{i=1}^{\infty}A_i\to \prod_{n=1}^{\infty}A_{i_n}.$
%Note that the following is a *-homomorphism:
%We have a natural quotient map
%\beq
%\pi_1: \prod_{i=1}^{\infty}A_i/ \bigoplus_{i=1}^{\infty}A_i
%&\rightarrow&
%\prod_{n=1}^{\infty}A_{i_n}/ \bigoplus_{n=1}^{\infty}A_{i_n}
%\\
%$${\blue{\pi_1}}({\pi_\infty}(\{x_1,x_2,\cdots\}))=
%{\blue{\pi_\infty}}(\{x_{i_1},x_{i_2},\cdots\}).$$
%\eneq
By the assumption of this proposition,
%\beq
$\|\pi_1\circ h(a)\|=\|{\blue{\pi_\infty(\{a_{i_1},a_{i_2},\cdots\})}}\|
%{\green
={\blue{\lim\inf}}_{n\rightarrow\infty}
%\|a_{i_n}\|=\|a\|.
\|a_{i_n}\|>0.
%}
$
%\eneq
{\blue{It follows that}} ${\blue{\pi_1}}\circ h$ is a nonzero *-homomorphism.
Since $A$ is simple,
$\pi_1\circ h$ is an embedding.
However, {{by the choice of $c,$}}
%by \eqref{f2-7prop-1},
we have
%\beq
$\|\pi_1\circ h(c)\|=\|{\blue{\pi_\infty(\{c_{i_1},c_{i_2},\cdots\})}}\|
=\lim_{n\rightarrow\infty}
\|c_{i_n}\|<\|c\|,$
%\eneq
which is {\blue{contradicted to that}} $\pi_1\circ h$ is an embedding.
Thus $h$ is a strict embedding.

\end{proof}

\begin{nota}
Let $\epsilon>0.$ {\blue{Define}} a continuous function
$f_{\epsilon}: {{[0,+\infty)}}%\mathbb{R}_+
\rightarrow [0,1]$ {\blue{by}}
$$
f_{\epsilon}(t)=
\left\{\begin{array}{ll}
0  &t\in [0,\epsilon],\\
1  &t\in [2\epsilon,\infty),\\
\mathrm{linear } &{t\in[\epsilon,2\epsilon].}
\end{array}\right.
$$

\end{nota}

\begin{nota}
Let $\phi: A\rightarrow B$ be a  linear map.
{{The map}} $\phi$ is positive, if $\phi(A_+)\subset B_+$ {\blue{and}}
$\phi$ is completely positive, abbreviated as c.p.,
if $\phi\otimes \mathrm{id}: A\otimes M_n\rightarrow B\otimes M_n$
are positive for all $n\in\mathbb{N}.$
{\blue{{{If}} $\phi$ is positive linear and $\|\phi\|\le 1,$ then it is called positive %contraction,
{{contractive,}}
abbreviated as p.c.,}}
if $\phi$ is c.p.~and $\|\phi\|\leq 1$,
then $\phi$ is completely positive contractive,
abbreviated as c.p.c..
If $\phi$ is c.p.c. and $\phi(1_A)=1_B$,
then $\phi$ is call unital completely positive,
abbreviated as u.c.p..

\end{nota}

The following lemma is a {well known corollary
of Stinespring's theorem (cf. \cite[Lemma 7.11]{KR2002}):

\begin{lem}
\label{c.p.c.-almost-multiplicative-easy}

Let $\phi: A\rightarrow B$ be a {{c.p.c.~map}}  {\blue{from \CA\, $A$ to \CA\, $B.$}}
%between $C^*$-algebras.
Then
$\|\phi(xy)-\phi(x)\phi(y)\|\leq \|\phi(xx^*)-\phi(x)\phi(x^*)\|^{1/2}\|y\|$ {\blue{for all}} $x,y\in A.$

\end{lem}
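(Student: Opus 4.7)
The plan is to realize $\phi$ as a compression of a $*$-representation via Stinespring's theorem and then apply an operator-norm Cauchy--Schwarz argument. First, I would invoke Stinespring's dilation theorem for the c.p.c.\ map $\phi$ to produce a Hilbert space $H$, a $*$-representation $\pi: A \to \mathcal{B}(H)$, and an operator $V: K \to H$ (where $B \subset \mathcal{B}(K)$ is represented faithfully) with $\phi(a) = V^*\pi(a)V$ for all $a \in A$. Since $\phi$ is contractive, $V$ is a contraction; in particular $1 - VV^* \geq 0$, so $P := (1 - VV^*)^{1/2}$ is a well-defined positive contraction on $H$.

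Next, using the multiplicativity of $\pi$, I would compute the two ``defect'' terms in parallel:
\begin{align*}
\phi(xy) - \phi(x)\phi(y) &= V^*\pi(x)\pi(y)V - V^*\pi(x)VV^*\pi(y)V \\
&= V^*\pi(x)(1 - VV^*)\pi(y)V = (V^*\pi(x)P)(P\pi(y)V),\\
\phi(xx^*) - \phi(x)\phi(x^*) &= V^*\pi(x)(1 - VV^*)\pi(x^*)V = (V^*\pi(x)P)(V^*\pi(x)P)^*.
\end{align*}
Reading norms off the second identity gives the key identification $\|V^*\pi(x)P\| = \|\phi(xx^*) - \phi(x)\phi(x^*)\|^{1/2}$.

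Finally, I would apply submultiplicativity of the operator norm to the factorization of $\phi(xy) - \phi(x)\phi(y)$, together with the estimates $\|P\|\leq 1$, $\|V\|\leq 1$, and $\|\pi(y)\|\leq \|y\|$, to obtain $\|P\pi(y)V\| \leq \|y\|$ and hence
$$\|\phi(xy) - \phi(x)\phi(y)\| \leq \|V^*\pi(x)P\|\cdot\|P\pi(y)V\| \leq \|\phi(xx^*) - \phi(x)\phi(x^*)\|^{1/2}\|y\|,$$
which is exactly the claimed inequality. The only conceptual step is invoking the Stinespring dilation; once that is in hand the algebraic rearrangement and norm estimates are routine, so I do not anticipate any real obstacle in this proof.
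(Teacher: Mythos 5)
Your argument is correct and is essentially the same route the paper takes: the lemma is stated there as a well-known corollary of Stinespring's theorem (citing \cite[Lemma 7.11]{KR2002}), and the cited proof is exactly your dilation $\phi(\cdot)=V^*\pi(\cdot)V$ with the factorization through $(1-VV^*)^{1/2}$ and the norm identity $\|V^*\pi(x)P\|^2=\|\phi(xx^*)-\phi(x)\phi(x^*)\|$. The only point worth a passing remark is that for non-unital $A$ one first passes to the unitization (or uses the non-unital Stinespring dilation with $\|V\|^2=\|\phi\|\le 1$), which is routine and does not affect the estimate.
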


The following lemma is {\blue{taken from}}
% essentially follows from
%the argument of
\cite[Lemma 3.5]{KW-Covering-dimension}.
%{\green{with a mild modification.}}

\begin{lem}
\label{c.p.c.-almost-multiplicative}

Let $A,B,C$ be $C^*$-algebras, {{let}} $a\in A_{sa}$, {\blue{and}}
{\blue{let}} $\epsilon>0.$  {\blue{Suppose that}}
%$A\overset{\psi}{\rightarrow}B\overset{\phi}{\rightarrow}C$
{{$\psi: A\to B$ and $\phi: B\to C$}}
are c.p.c.~maps
and $\|\phi\circ \psi(a^2)-\phi\circ \psi(a)^2\|\leq \epsilon.$
{\blue{Then,}} for all $b\in B$,
$$
\|\phi(\psi(a)b)-\phi(\psi(a))\phi(b)\|
\leq \epsilon^{1/2}\|b\| {\blue{\andeqn}}
%\quad
\|\phi(b\psi(a))-\phi(b)\phi(\psi(a))\|
\leq \epsilon^{1/2}\|b\|.
$$

\end{lem}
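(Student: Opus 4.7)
The plan is to reduce the two-layer statement to the single-layer Stinespring bound of Lemma \ref{c.p.c.-almost-multiplicative-easy} applied to $\phi$ alone, with $c:=\psi(a)\in B_{sa}$ playing the role of the ``$x$'' there. That lemma immediately gives
$$\|\phi(cb)-\phi(c)\phi(b)\|\leq \|\phi(c^2)-\phi(c)^2\|^{1/2}\|b\|,$$
so the first inequality will follow once I bound $\|\phi(\psi(a)^2)-\phi(\psi(a))^2\|$ by $\epsilon$, using only the hypothesis $\|\phi(\psi(a^2))-\phi(\psi(a))^2\|\leq \epsilon$.

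The bridge is the Kadison--Schwarz inequality, which holds for any c.p.c.\ map $T$ and any self-adjoint argument $x$: $T(x)^2\leq T(x^2)$ (this is a consequence of $2$-positivity, hence of complete positivity together with contractivity). I apply this twice. Since $\psi$ is c.p.c.\ and $a$ is self-adjoint, $\psi(a)^2\leq \psi(a^2)$; applying the positive map $\phi$ yields $\phi(\psi(a)^2)\leq \phi(\psi(a^2))$. Likewise, since $\phi$ is c.p.c.\ and $\psi(a)$ is self-adjoint, $\phi(\psi(a))^2\leq \phi(\psi(a)^2)$. Putting these together,
$$0\leq \phi(\psi(a)^2)-\phi(\psi(a))^2\leq \phi(\psi(a^2))-\phi(\psi(a))^2,$$
so the norm of the left-hand difference is dominated by the norm of the right-hand difference, i.e.\ by $\epsilon$. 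Substituting into the Stinespring bound gives the first stated inequality.

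For the second inequality I take adjoints. Since $\psi(a)$ is self-adjoint and $\phi$ is $*$-preserving (being positive),
$$\bigl(\phi(\psi(a)b)-\phi(\psi(a))\phi(b)\bigr)^{*}=\phi(b^{*}\psi(a))-\phi(b^{*})\phi(\psi(a)),$$
and replacing $b$ by $b^{*}$ in the already established first inequality yields exactly the second bound. I do not expect any serious obstacle; the only subtlety is recognizing that the hypothesis controls $\phi(\psi(a^2))-\phi(\psi(a))^2$ rather than $\phi(\psi(a)^2)-\phi(\psi(a))^2$, and that Kadison--Schwarz supplies precisely the intermediate inequality needed to pass between the two.
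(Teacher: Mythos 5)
Your proposal is correct and follows essentially the same route as the paper: the paper's proof also establishes $0\leq \phi(\psi(a)^2)-\phi(\psi(a))^2\leq \phi(\psi(a^2))-\phi(\psi(a))^2\leq\epsilon$ (implicitly via the two Schwarz-type inequalities you make explicit) and then invokes Lemma \ref{c.p.c.-almost-multiplicative-easy}. Your adjoint trick for the second inequality is a fine substitute for the paper's ``the proof of the second is similar.''
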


\begin{proof}
We will only show the first  {\blue{inequality.}}
% holds.
The proof of the {\blue{second}} is  similar.
We have
$$0\leq \phi(\psi(a)^2)-\phi(\psi(a))^2 %{\blue{?}}
\leq \phi(\psi(a^2))-\phi(\psi(a))^2 \leq\epsilon.
$$
{\blue{Thus}} $\| \phi(\psi(a)^2)-\phi(\psi(a))^2\|\le\ep.$
{\blue{By}} Lemma \ref{c.p.c.-almost-multiplicative-easy} we have
$\|\phi(\psi(a)b)-{{\phi(\psi(a))}}\phi(b)\|
\leq \epsilon^{1/2}\|b\|$.

\end{proof}
%2019, 07, 07

Some versions of the following statements {\blue{are}}  well known
(which can also be derived
{{by}}
using Lemma \ref{c.p.c.-almost-multiplicative}
{\blue{in the case of c.p.c.~maps}}).

\begin{lem}
\label{perturbation-c.p.c.-to-p.c.p}
%There exists a function $\ep(\dt)\in C_0(0,1/4]$
%which is defined on $(0,1/4)$ satisfying $\ep(\dt)\to 0$ as $\dt\to 0$,
%such that, for any
For any \CAs\ $A$ and $B$, any p.c.~map (resp.~c.p.c.~map) $\phi: A\rightarrow B,$
any projection $p\in A,$
any $\dt\in(0,{{1/8}}),$
if $\|\phi(p)-\phi(p)^2\|\leq \delta$,
then there exists  {{a}}  {{p.c.~map}}
(resp. c.p.c.~map) $\psi: A\rightarrow B$
satisfying

(1) $\psi(p)$ is a projection in $C^*(\phi(p))$, {{and}}

(2) $\|(\phi-\psi)|_{pAp}\|<5\dt^{1/2}.$

\end{lem}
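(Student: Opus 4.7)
The plan is to perform a spectral analysis of $\phi(p)$ and then correct $\phi$ by a functional calculus of $\phi(p)$ to produce $\psi$. Since $\phi$ is positive and $p$ is a projection, Kadison's inequality gives $\phi(p)^2 \leq \phi(p^2) = \phi(p)$, so $\phi(p) - \phi(p)^2$ is actually positive with norm at most $\delta$. Because $\delta < 1/8 < 1/4$, this forces $\mathrm{sp}(\phi(p)) \subset [0, t_-] \cup [t_+, 1]$, where $t_\pm = (1 \pm \sqrt{1-4\delta})/2$ are separated and satisfy $t_- \leq 5\delta/4$ and $1 - t_+ \leq 5\delta/4$.

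Next I would define continuous functions on $\mathrm{sp}(\phi(p))$: let $g$ equal $0$ on $[0, t_-]$ and $1$ on $[t_+, 1]$, and let $h$ equal $0$ on $[0, t_-]$ and $h(t) = t^{-1/2}$ on $[t_+, 1]$. Setting $e := g(\phi(p))$ and $a := h(\phi(p))$, the functional-calculus identity $h(t)^2 t = g(t)$ on $\mathrm{sp}(\phi(p))$ gives $a\phi(p)a = e$, so $e$ is a projection in $C^*(\phi(p))$. I would then define $\psi\colon A \to B$ by $\psi(x) := a\phi(pxp)a$; this is a composition of positive maps, and $\psi(p) = a\phi(p)a = e$, verifying clause (1).

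For clause (2), fix self-adjoint $x \in (pAp)^1$. Since $x^2 \leq p$ we get $\phi(x^2) \leq \phi(p)$, and Kadison's inequality gives $\phi(x)^2 \leq \phi(x^2) \leq \phi(p)$. Hence
\[
\|(1-e)\phi(x)\|^2 = \|(1-e)\phi(x)^2(1-e)\| \leq \|(1-e)\phi(p)(1-e)\| \leq t_- \leq 5\delta/4,
\]
and symmetrically for $\|\phi(x)(1-e)\|$. Writing $\phi(x) - e\phi(x)e = (1-e)\phi(x) + e\phi(x)(1-e)$ yields $\|\phi(x) - e\phi(x)e\| \leq 2\sqrt{5\delta/4}$. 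A spectral estimate on $|h(t) - g(t)|$ gives $\|a - e\| = O(\delta)$, so $\|e\phi(x)e - \psi(x)\| = O(\delta)$, and summing the two contributions gives $\|\phi(x) - \psi(x)\| < 5\sqrt{\delta}$ when $\delta < 1/8$. In the c.p.c.~case, Lemma~\ref{c.p.c.-almost-multiplicative-easy} (the Schwarz inequality) removes the self-adjointness restriction; in the p.c.~case I would decompose a general $x \in (pAp)^1$ as $x = x_1 + ix_2$ into self-adjoint parts and sum the estimates.

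The hard part will be keeping the numerical constants tight enough to reach the explicit bound $5\sqrt{\delta}$, particularly in the p.c.~case where the self-adjoint decomposition costs a factor of two, together with verifying that $\psi$ is genuinely p.c.~(respectively c.p.c.), which may require a small modification of the construction since the naive estimate $\|\psi\| \leq \|a\|^2 = t_+^{-1}$ slightly exceeds $1$; a minor renormalization or a slightly different form of $\psi$ (for instance, combining the compression $x \mapsto e\phi(x)e$ with a small correction) should fix this without affecting the leading $2\sqrt{t_-}$ term.
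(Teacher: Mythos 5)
Your construction is the same as the paper's: the same spectral gap for $\phi(p)$, the same function $h(t)=t^{-1/2}$ on the upper part of the spectrum, the same map $\psi(x)=h(\phi(p))\,\phi(pxp)\,h(\phi(p))$, Kadison's inequality for the key norm estimate, and the real/imaginary decomposition to pass from self-adjoint elements to general ones. Two of the loose ends you flag at the end are resolved in the paper more cleanly than you anticipate. First, no renormalization is needed for contractivity: $\psi$ is a positive map that factors through the unital algebra $pAp$, so by Russo--Dye its norm is attained at the unit, and $\psi(p)=e$ is a projection, whence $\|\psi\|\le 1$ (and the c.p.\ case is the same). Your proposed renormalization would in fact destroy clause (1). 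Second, your detour through $e\phi(x)e$ followed by an $\|a-e\|=O(\delta)$ correction is lossier than needed; with $\delta$ near $1/8$ the additive $O(\delta)$ term pushes the total past $5\delta^{1/2}$. The paper instead estimates $\|(1-c)\phi(x)\|^2\le\|(1-c)\phi(p)(1-c)\|$ directly with $c=h(\phi(p))$, using that $(1-h(t))^2t\le\eta$ on the whole spectrum, which yields $\|(\phi-\psi)|_{pAp}\|<2\eta^{1/2}(1+1/\sqrt{1-\eta})<5\delta^{1/2}$ with room to spare. With those two adjustments your argument is exactly the paper's proof.
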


\begin{proof}
%Let $\ep(\dt):=14\dt^{1/2}.$
If $\|\phi(p)-\phi(p)^2\|\leq \delta<{{1/8}}$,
one has  ${\rm sp}(\phi(p))\subset [0,\eta]\cup [1-\eta, 1],$
where $\eta={\frac{2\dt}{1+\sqrt{1-4\dt}}}<%2\dt
{{\frac{4\dt}{2+\sqrt 2}
%<\frac{1}{4(1+\sqrt 2)}
}}.$
%where {\green{$\eta=2\dt.$}}
Then
%It follows that
%
$$
%g(t)=
%\left\{\begin{array}{ll}
%0,  &{\rm for\ } t\in [0,{\green{1/2}}),\\%\cap \mathrm{sp}(\phi(p)),\\
%1,  &{\rm for\ } t\in ({\green{1/2}}, 1]
%\end{array}\right.
%{\blue{\andeqn}}
%\quad\quad
h(t)=
\left\{\begin{array}{ll}
0,  &{\rm for\ }t\in [0,\eta],\\%\cap \mathrm{sp}(\phi(p)),\\
1/t^{1/2},  &{\rm for\ } t\in [1-\eta,1]
\end{array}\right.
$$
%may be viewed as
is a continuous function on ${\rm sp}(\phi(p)).$
%Note that $e:=g(\phi(p))$ is a projection in $C^*(\phi(p)).$
Let $c:=h(\phi(p)).$
Define a  {\blue{positive linear map (resp. c.p.~map)}}
%{\blue{p.c.~map}} (resp. c.p.c.~map)
{{$\psi: A\rightarrow B$ by}}
$x\mapsto c\phi(pxp)c$ for all ${{x}}\in A.$
%{\green{\bf Why this is contractive?---XL}}.
Then $e:=\psi(p)=h(\phi(p))^2\phi(p)$ is a projection in $C^*(\phi(p)).$
{{It follows from \cite[Corollary 1]{RD66}
%{\red{It follows that $\psi|_{pAp}$ is a p.c.~map,
%and since the map $x\mapsto pxp$ is c.p.c~map from $A$ to $B,$ one concludes
%{\red{Moreover, for any $x\in A_+^1,$ $\psi(x)=c\phi(pxp)c\le  c\phi(p)c=e.$}}
%{\green{It follows from
%
%{\red{\bf I prefer a reference earlier  than that of
%\cite[II.6.9.4]{B-Encyclopaedia2006}, if we need one.}}
that $\psi$ is a p.c.~map (resp. c.p.c~map).}}
%One estimates that
%$\|\phi(p)^{1/2}-e\|\le \|\phi(p)-e\|<\eta,$ and
%$\|h(\phi(p))-e\|
%{\blue{\leq (1/\sqrt{1-\eta})-1}<\eta.}$
{\blue{For $x\in (pAp)_{sa}^1$,
by Kadison's generalized Schwarz inequality (\cite[Theorem 1]{Kad52}),}}
%we have}}
$$
\|(1-c)\phi(x)\|^2=
\|(1-c)\phi(x)^2(1-c)\|
\le\|(1-c)\phi(x^2)(1-c)\|
\le \|(1-c)\phi(p)(1-c)\|<\eta.
$$
Then, for $x\in (pAp)_{sa}^1,$ one estimates
$$
\|\phi(x)-\psi(x)\|
=\|\phi(x)-c\phi(x)c\|\le \|(1-c)\phi(x)\|+\|c\|\|\phi(x)(1-c)\|
<\eta^{1/2}(1+\frac{1}{\sqrt{1-\eta}}).
$$

%\iffalse
\noindent
{{Therefore,}}
$\|(\phi-\psi)|_{pAp}\|
< 2\eta^{1/2}(1+\frac{1}{\sqrt{1-\eta}}) {\blue{={\frac{4}{\sqrt{{2+\sqrt{2}}}}}(1+\frac{1}{\sqrt{1-\eta}})\dt^{1/2}}}
%= 2\sqrt{\frac{2\dt}{1+\sqrt{1-4\dt}}}
%\left(1+\frac{1}{\sqrt{1-\frac{2\dt}{1+\sqrt{1-4\dt}}}}\right)
< 5\dt^{1/2}.$

\iffalse
{\red{\bf We should be careful about the coefficient $\sqrt2.$ Consider
$x={\left(\begin{array}{cc}0&0\\1&0
\end{array}\right)},$
we have $\|x\|=1=2\max\{\|x_{\rm re}\|,\|x_{\rm im}\|\}.
%> \sqrt2\max\{\|x_{\rm re}\|,\|x_{\rm im}\|\}.
$
Then for any state $t$ on $M_2(\C),$
$$|t(x)|\leq \sqrt2 \max\{|t(x_{\rm re})|,|t(x_{\rm im})|\}
\leq \sqrt2\max\{\|x_{\rm re}\|,\|x_{\rm im}\|\}= \frac{\|x\|}{\sqrt2}
<\|x\|.$$
---XL.}}
 {\red{\bf Resolve the issue.}}
 \fi
%\fi

\iffalse
Let $u$ be a unitary in $pAp$, then
\beq
\|(\phi-\psi)(u)\|
&=&
\|(\phi-\psi)(u_{\rm re})+i(\phi-\psi)(u_{\rm im})\|
\\&\leq&
\|(\phi-\psi)(u_{\rm re})\|+\|(\phi-\psi)(u_{\rm im})\|
\\
&<&
\eta^{1/2}(1+\frac{1}{\sqrt{1-\eta}})(\|u_{\rm re}\|+\|u_{\rm im}\|)
\eneq

Therefore,
by \cite[Corollary 1]{RD66}, we have
\beq
\nonumber
\|(\phi-\psi)|_{pAp}\|
=\sup_{u\in U(pAp)}\|(\phi-\psi)(u)\|
< {\red{\sqrt{2}}}\eta^{1/2}(1+\frac{1}{\sqrt{1-\eta}})
< {\red{(2+\sqrt{2})}}\dt^{1/2}.
\eneq
\fi

%{\red{\bf Do we need a $\sqrt{2}?$}}
\end{proof}

%20200205

\iffalse%-

\begin{lem}
\label{c.p.c.-appro-commute-with-func-calcu}

Let $g\in C[0,1]$ {\blue{and  let}} $\epsilon>0.$   {\blue{There}} exists $\delta>0$
such that,  for any \CA\ $A$, $B$, any $a\in A_+^1$,  and
any c.p.c.~map $\phi: A\rightarrow B$,
if $\phi(a^2)\approx_{\delta}\phi(a)^2$,
%is $(a,\delta)${\green{-multiplicative}},
then $\phi(g(a))\approx_{\epsilon} g(\phi(a))$.
\end{lem}

\begin{proof}
%Define $g\in C[0,1]$ by $g(t)=\max\{0,t-\lambda\}$, $\forall t\in[0,1]$.
By the  Stone-Weierstrass theorem,
there exists a polynomial $h(t)=\sum_{k=1}^n\eta_k t^k\in C[0,1]$
such that $\|g-h\|\leq \epsilon/8$, where $n\in\N$, $\eta_k\in\R$, $k=1,\cdots,n$.
Let $\delta:=(\frac{\epsilon}{8(1+(\sum_{k=1}^n (k-1)|\eta_k|){\blue{\ep}}^{1/2})})^2$.
If $\phi(a^2)\approx_{\delta}\phi(a)^2$,
then by Lemma \ref{c.p.c.-almost-multiplicative-easy},
for any $k\geq 2$,
$\phi(a^k)\approx_{\delta^{1/2}}\phi(a)\phi(a^{k-1}).$
Hence $\phi(a^k)\approx_{(k-1)\delta^{1/2}}\phi(a)^{k}$.
Then
$$
\phi(g(a))\approx_{\epsilon/8}
\phi(h(a))
=\phi(\sum_{k=1}^n\eta_k a^k)
\approx_{(\sum_{k=1}^n (k-1)|\eta_k|)\delta^{1/2}}
\sum_{k=1}^n\eta_k \phi(a)^k
=h(\phi(a))
\approx_{\epsilon/8}
g(\phi(a)).
$$
By the choice of $\delta,$ we have
$\phi(g(a))\approx_{\epsilon} g(\phi(a))$.
\end{proof}

\fi%-

\begin{df}\label{Dcuntz}
%[Cuntz semigroup]
Let $A$ be a \CA\
{\blue{and}}  let $M_{\infty}(A)_+:=\bigcup_{n\in\mathbb{N}}M_n(A)_+$.
\iffalse
Let $a\in M_n(A)_+$ and $b\in M_m(A)_+$.
Define $a\oplus b:=\mathrm{diag}(a,b)\in M_{n+m}(A)_+$.
We write $a\lesssim_A b$ if {\blue{there are}}
%${{\{x_i\}\subset\ }}
$x_i\in M_{\blue{m+n}}(A)$
%{\green{\bf should~be~ M_{m,n}(A)-Xuanlong}}(i\in\N)$
such that
$\lim_{i\rightarrow\infty}\|a-x_i^*bx_i\|=0$.
{\red{\bf Two minor issues here: Do we identify $a\oplus 0$ with $a?$ I assume
that we do. Perhaps we want to state it. Note also, if $a, b\in A,$ then $x_i$ can be chosen in $A.$
To avoid questions,
I suggest:}}
\fi
{\green{For $x\in M_n(A),$
we identify $x$ with ${\rm diag}(x,0)\in M_{n+m}(A)$
for all $m\in \N.$}}
{\blue{Let $a\in M_n(A)_+$ and $b\in M_m(A)_+$.
Define $a\oplus b:=\mathrm{diag}(a,b)\in M_{n+m}(A)_+$.
If $a, b\in M_n(A),$
we write $a\lesssim_A b$ if {\blue{there are}}
%${{\{x_i\}\subset\ }}
$x_i\in M_n(A)$
%{\green{\bf should~be~ M_{m,n}(A)-Xuanlong}}(i\in\N)$
such that
$\lim_{i\rightarrow\infty}\|a-x_i^*bx_i\|=0$.
}}
If such $\{x_i\}$ {\blue{does}}  not  exist,  then we write
$a\lnsim_A b$.
We write $a\sim b$ if $a\lesssim_A b$ and $b\lesssim_A a$ hold.
{\blue{The {\blue{Cuntz}} relation}} $\sim$ is an equivalence relation.
We also write $a\lesssim b$ and $a\sim b,$ when $A$ is given and there is no confusion.
Set $W(A):=M_{\infty}(A)_+/\sim_A$.
Let $\la a\ra$ denote the equivalence class of $a$.
We write $\la a\ra\leq \la b\ra $ if $a\lesssim_A b$.
$(W(A),\leq)$ is a partially ordered abelian semigroup.
$W(A)$ is called almost unperforated,
if for any $\la a \ra, \la b\ra\in W(A)$,
{\blue{and for any}} $k\in\N$,
if $(k+1)\la a\ra \leq k\la b\ra$,
then $\la a \ra \leq \la b\ra$
(see \cite{Rordam-1992-UHF2}).

{\blue{Let $k\in\N$
%$k\ge 1$
be an integer.}}
We write ${\blue{k\la a\ra\, {\overset{_\approx}{_<}}\, \la b\ra}}$
if ${\rm Her}(b)$ contains $k$ mutually orthogonal elements
$b_1, {\green{b_2,}}
{{\cdots}},b_k$
such that $a\lesssim b_i,$ $i=1,{\green{2,}}
{{\cdots}},k.$
%{\red{\bf{-----I requested to make $\approx$ over $<$ some weeks  ago. I hope that you found
%out how to do it.}}}

\end{df}

%\begin{rem}
%\label{Cuntz-remark}
%(1) Note that $a\lesssim_A b$ is equivalent to the following:
%{\blue{for any}} $\epsilon>0$, {\blue{there exist}} $x\in {\red{M_m(A)}}$ such that $a\approx_{\epsilon}x^*bx.$
%{\red{\bf Is it true that the point to have $x\in M_m(A)?$}}

%(1)
If $B\subset A$ is a hereditary $C^*$-subalgebra,
$a,b\in B_+$, then $a\lesssim_A b \Leftrightarrow a\lesssim_B b$.

%\end{rem}

%\begin{framed}

%{\blue{Recall}} the definition of 2-quasitrace (see \cite[1.1.3]{Rordam 2002}):
\begin{df}
\label{2-quasitrace}
\iffalse
Let $A$ be a unital \CA. {\blue{The map}} $\tau: A\rightarrow \C$ is called a
%normalized
2-quasitrace,
if the following hold:

(1) $\tau(x^*x)=\tau(xx^*)\geq 0$ {\blue{for all $ x\in A$,}}

(2) $\tau(a+ib)=\tau(a)+i\tau(b)$ for all $a,b\in A_{sa}$,

(3) $\tau$ is linear on every abelian $C^*$-subalgebra of $A$,
%{\red{$\tau(a+b)=\tau(a)+\tau(b)$ for all $a,b\in A_+$ with $ab=ba,$}}

%(4) $\tau(1_A)\le 1$,

(4) $\tau$ can be extended to be a function on $M_2(A)$ with
(1) - (3) still satisfied.
\fi
Denote by $QT(A)$ the set of  2-quasitraces of $A$ with $\|\tau\|=\tau(1_A)=1$ %(see {\blue{II 1.1 and}} II 2.3 of \cite{BH-trace1982}).
(see {{\cite[II 1.1, II 2.3]{BH-trace1982}}}) {\blue{and by}}
%maps from $A$ to $\C$ which has the form $\af\tau,$
%where $0\le \af\le 1$ and $\tau$ is a
%normalized quasitraces of $A.$}}
%If (4) and (5) is not satisfied, then $\tau$ is called a quasitrace.
%If (5) is not satisfied, then $\tau$ is called 1-quasitrace.
%If, in addition, $\tau$ is linear, then $\tau$ is called a
%{tracial state}.
%Let $QT(A)$ be the set of all normalized 2-quasitraces on $A$.
%{\green{Denote by}}
$T(A)$ the set of all tracial {\blue{states}} on $A.$
{\blue{We will also use $T(A)$ as well as $QT(A)$ for the extensions on $M_k(A)$ for each $k.$}}
For {\blue{$\tau\in QT(A)$,}} define a lower semi-continuous function
$d_\tau: {\blue{M_k(A)_+}}\rightarrow \C$,
$a\mapsto \lim_{n\rightarrow \infty}\tau(f_{1/n}(a))$.
{\blue{The function}} $d_{\tau}$ is called
{the
% lower semi-continuous
dimension function
induced by $\tau$}.
\end{df}

\iffalse%-
\begin{df}
Let $A$ be a \CA. A bounded linear function $\tau: A\rightarrow \C$
is called tracial state if $\|\tau\|=1$ and $\tau(a^*a)=\tau(aa^*)\geq 0$,
$\forall a\in A$.
Let $T(A)$ be the set of all tracial state on $A$.
For $\tau\in T(A)$, define a lower semi-continuous function
$d_\tau: A_+\rightarrow \C$,
$a\mapsto \lim_{n\rightarrow \infty}\tau(f_{1/n}(a))$.
$d_{\tau}$ is called the
% lower semi-continuous
dimension function
induced by $\tau$.

\end{df}

\fi%-

%Recall that a \CA\ $A$ is called exact,
%if there exists a \CA\ $B$
%and exists an injective *-homomorphism $\iota: A\rightarrow B$
%such that $\iota$ is a nuclear map.
%Haagerup showed that
%all quasi-traces on a unital exact \CA\ are traces \cite{Haa}.

%\red{exact \CAs\ and Haagerup's result on quasi-trace}

\begin{df}
\label{Dstrict}
Let $A$ be a unital \CA. {\blue{We}} say {\blue{that}} $A$ has
strict comparison (for positive elements),
if, for all $a,b\in M_k(A)_+$, {\blue{$a\lesssim b,$ whenever
$d_{\tau}(a)<d_{\tau}(b)$ holds for all $\tau\in QT(A).$}}

\end{df}

%\end{framed}

%{\red{\bf We mentioned quasi-trace without defining it.
%If we do, then the last definition should use
%quasi-trace instead.}}

%\section{A notion of tracial approximation}
\section{{{Asymptotical tracial approximation}}}

%{\green{asymptotically}}

\begin{df}[{{Asymptotical tracial}} approximation]
\label{def-tracial_approximation}
Let $A$ be a unital simple $C^*$-algebra,
let $\mathcal{P}$ be a class of \CAs.
We say $A$ is {\blue{asymptotically}} tracially in $\mathcal{P}$,
if for any finite subset $\calF\subset A$,
{\blue{any}}
$\epsilon>0$, and any $a\in A_+\backslash\{0\}$,
there exist a $C^*$-algebra $B$ in $\mathcal{P}$,
{\blue{c.p.c.~maps}}
$\alpha: A\rightarrow B,$
$\beta_n: B\rightarrow A$, {{and}}
%u.c.p.~maps
$\gamma_n: A\rightarrow A$
{{($n\in\mathbb{N}$),}} such that

(1) $x\approx_{\epsilon}\gamma_n(x)+\beta_n\circ\alpha(x)$
{\blue{for all}} $x\in\calF$ {\blue{and for all}} $n\in\mathbb{N},$

(2) $\alpha$ is  {\blue{an}} $(\calF, \epsilon)$-{{approximate embedding}},
%$\forall n\in\mathbb{N}$,

(3) $\lim_{n\rightarrow\infty}\|\beta_n(xy)-\beta_n(x)\beta_n(y)\|=0$ {\blue{and}}
{$\lim_{n\rightarrow \infty}\|\beta_n(x)\|= \|x\|$ }
{\blue{for all}} ${\blue{x,y\in B,}}$ {{and}}

(4) $\gamma_n(1_A)\lesssim_A a$ {\blue{for all}} $ n\in\mathbb{N}$.

\end{df}

\begin{rem}
\label{Rm32}
{{Let us}}  point out that
in {{the}} definition {{above}},
we {\blue{may}} assume that {{$\calF$ is a finite subset of $A^1_+$,}}
$\epsilon\in (0,1)$,
and $\|a\|=1,$
without loss of generality.

{{Asymptotical tracial}} approximation may also be defined for non-unital \CA s as well as for non-simple \CA s. These will be discussed
in a subsequent paper.

Suppose that ${\cal P}$ has the property that, if $A\in {\cal P},$ then $M_n(A)\in {\cal P}$ for all integer $n\ge 1.$
Then,  it is easy to see that, if $A$ is
asymptotically tracially in ${\cal P},$
then $M_n(A)$ is also
asymptotically tracially in ${\cal P}$
{{(cf. \cite[Theorem 3.7.3]{Lnbook}).}}
Also see {{Theorem}} \ref{tensor-of-tracial-approximation}.
\end{rem}

\begin{rem}
Let ${\cal P}_0$  be the class of finite dimensional \CA s
and let ${\cal P}_1$ be
the class of \CA s of 1-dimensional NCCW complexes
(see \cite{ELT1998} for definition of 1-dimensional NCCW complexes)
%(see \cite{ELT1998})
%\blue{(see ?)(Eilers, Loring, Pedersen's paper introduced the definition of NCCW %complexes,
%and showed the semiprojective of 1-NCCW \cite{ELT1998})},
respectively.
Since \CA s in ${\cal P}_0$ as well as in ${\cal P}_1$
are semiprojective (see \cite{ELT1998}),
%Using semiprojectivity of finite dimensional $C^*$-algebras,
one easily verifies  that
$A$ is
asymptotically tracially in  ${\cal P}_0$
%AF-algebras
is equivalent to {\blue{that}}
$A$  has tracial rank zero (or $A$ is tracially AF), and
$A$ is
asymptotically tracially in ${\cal P}_1$ is equivalent
to {\blue{that}} $A$ has generalized tracial rank one
{\blue{(see \cite[Definition 9.2, Remark 9.3, Lemma 3.20]{GLN2019}).}}
%\blue{(see ?)}.
%is a ``tracially AF-algebras'' in the sense of the second author
%\cite{Lin 2001-1}.

\end{rem}

\begin{df}\label{DNn}
Denote by ${\cal E}$ the class of exact \CA s and by ${\cal N}$ the class of  nuclear \CA s.
{\blue{For}} {\blue{each}} $n\in\N\cup\{0\}$,
let
%$\mathcal{C}_{\dimnuc\leq n}$
${\cal N}_n$ be the class of \CAs\
with nuclear dimension at most $n$
(see \ref{definition-of-nuclear-dimension} {\blue{below}}).
Let $\mathcal{C}_{\mathcal{Z}}$ be the class of $\mathcal{Z}$-stable \CAs,
{\blue{let $\mathcal{C}_{\mathcal{Z},s}$
(and $\mathcal{C}_{\mathcal{Z}, s,s}$) be the class
of separable (and simple) ${\cal Z}$-stable \CA s,}}  {\blue{let}}
${\cal N}_{\cal Z}$ {\blue{be}}
the class of nuclear ${\cal Z}$-stable \CA s,
let ${\cal N}_{{\cal Z}, s}$ (and ${\cal N}_{{\cal Z}, s,s}$) {\blue{be}} the class of separable (and simple)
nuclear ${\cal Z}$-stable \CA s, {\blue{respectively.}}
%,  and
%${\cal E}_{\cal Z}$ the class of exact ${\cal Z}$-stable \CA s, {\blue{respectively.}}

\end{df}

\iffalse
\begin{df}\label{DNn}
Denote by ${\cal E}$ the class of exact \CA s and by ${\cal N}$ the class of  nuclear \CA s.
{\blue{For}} {\red{each}} $n\in\N\cup\{0\}$,
let
%$\mathcal{C}_{\dimnuc\leq n}$
${\cal N}_n$ be the class of \CAs\
with nuclear dimension at most $n$
(see \ref{definition-of-nuclear-dimension} {\blue{below}}).
Let $\mathcal{C}_{\mathcal{Z}}$ be the class of $\mathcal{Z}$-stable \CAs.
%{\blue{let $\mathcal{C}_{\mathcal{Z},s}$
%(and $\mathcal{C}_{\mathcal{Z}, s,s}$) be the class
%of separable (and simple) ${\cal Z}$-stable \CA s,}}
{{Let $\mathcal{C}_{\mathcal{Z},s}$
%(and $\mathcal{C}_{\mathcal{Z}, s,s}$)
be the class
of separable ${\cal Z}$-stable \CA s,
and $\mathcal{C}_{\mathcal{Z}, s,s}$
be the class
of separable simple ${\cal Z}$-stable \CA s.}}
Let
${\cal N}_{\cal Z}$
be the class of nuclear ${\cal Z}$-stable \CA s.
%let ${\cal N}_{{\cal Z}, s}$ (and ${\cal N}_{{\cal Z}, s,s}$)
%{\blue{be}} the class of separable (and simple)
%nuclear ${\cal Z}$-stable \CA s,
{{Let ${\cal N}_{{\cal Z}, s}$
%(and ${\cal N}_{{\cal Z}, s,s}$)
be the class of separable %(and simple)
nuclear ${\cal Z}$-stable \CA s,
and ${\cal N}_{{\cal Z}, s,s}$
be the class of separable simple
nuclear ${\cal Z}$-stable \CA s.}}
\fi
\iffalse
{\green{\bf I added a few lines in above definitions, since these are very important notations, I don't want any confusion on them.}}

{\green{\bf I deleted a notation ${\cal E}_{\cal Z}$, because it is not used.
---X.F.}}
%Let ${\cal E}_{\cal Z}$ be the class of exact ${\cal Z}$-stable \CA s. %{\blue{respectively.}}
\fi

%\iffalse
%For a \CA\ $A$,
%if $A$ is tracially in ${\cal N}_n,$
%$\mathcal{C}_{\dimnuc\leq n}$,
%then we say $A$ has tracial nuclear dimension at most $n$.
%If $A$ is tracially in  $\mathcal{C}_{\mathcal{Z}}$,
%then we say $A$ is tracially $\mathcal{Z}$-stable.
%\fi
%\end{df}

%Recall the following construction from \cite{D2000}:

\begin{exm}

Let $A$ be a unital separable residually finite dimensional \CA,
i.e. there {\blue{exists}} a sequence of finite dimensional representations
{\blue{$\{\pi_i\}$}} of $A$
{\blue{such}} that {\blue{$\{\pi_i\}$}}
{\blue{separates}} the points in {\blue{$A.$}}
% ($i\in\N$).
{\blue{Let us recall the construction in \cite{D2000} below.}}
For instance, $A$ can be
%{\green{$\widetilde{CC^*(\mathbb{F}_2)}$,
%where $C^*(\mathbb{F}_2)$ is
%the unitization of the cone of}}
the full group \CA\ of the free group {\blue{of}}  two generators.
Let $n_i$ be the {{dimension}} of $\pi_i$ ($i\in\N$),
%and
{{let}}
$m_1=1$ and let $m_i=\prod_{j=1}^{i-1}(n_j+1)$ {{for $i\geq 2$}}.
For each $i\in\N$, define {{an}} injective *-homomorphism
$$
h_i: A\otimes M_{m_i}\rightarrow A\otimes M_{m_{i+1}},
\quad
x\mapsto x\oplus (\pi_i\otimes \mathrm{id}_{M_{m_i}})(x).
$$
%k

\noindent
{{Let}} $B:=\lim_{i\rightarrow\infty}(A\otimes M_{m_i},h_i)$,
then $B$ is simple separable unital with tracial rank zero
(%{\bf {\blue{Add Dadarlat's paper with $K_0$}}}
\cite{D2000},
see also \cite[Example {\blue{3.7.7}}]{Lnbook}).
%---{\blue{\bf Check if my books is really needed here}}
%---{\green{\bf D\u{a}d\u{a}rl\u{a}t
%did not introduce the notion of tracial rank zero. -- Xuanlong}}
%).
%{\blue{\bf Add Dadarlat's paper and put in front of mine}}
In particular,
$B$ {{is asymptotically tracially in ${\cal N}_0$.}}
%has tracial nuclear dimension zero.
{\blue{In fact, D\u{a}d\u{a}rl\u{a}t {{showed}} %shows
that,}}
%each non type I unital separable AF algebra $C$ is contained in
%one of such $B$ with $B$ is asymptotically homotopy to $C$.
%In particular,
{\blue{for any unital infinite dimensional simple AF-algebra $C,$ one can produce a
unital separable simple non-exact \CA\ $B$ with tracial rank zero such that
$K_0(B)=K_0(C)$ as  ordered groups}}
(see \cite[Proposition 9]{D2000}).
%---$K_0$
In \cite{NW2019},
Niu and Wang showed that,
for {\blue{some
choices}}
%particular kind
of
$A,$
$B$ %(constructed as above)
 can be  {\blue{constructed}} to be a  simple separable unital exact {\blue{\CA}}\,
with tracial rank zero  but not $\mathcal{Z}$-stable
{\blue{(so it is asymptotically tracially in ${\cal N}_0$
but not $\mathcal{Z}$-stable).}}
%Moreover, if $A$ is exact, then $B$ is exact.
%If $A$ is stably finite, then $B$ is stably finite.
However, we will see later that $B$ is
asymptotically tracially  in
{${\cal C}_{{\cal Z},s}.$}
%$\mathcal{Z}$-stable
%(which is stronger than Hirshberg-Orovitz's sense).
Actually, every simple separable unital infinite dimensional
\CA\ {\blue{which is asymptotically tracially in ${\cal N}_n$
% finite tracial nuclear dimension
%are
is}} asymptotically  tracially   {\blue{in ${\cal N}_{{\green{\cal Z}},s,s}$}}
%$\mathcal{Z}$-stable
(see Theorem \ref{tracial-finite-dimnuc-implies-tracial-Z-stable}).

\end{exm}

\begin{df}\label{DH}
A class of \CA s ${\cal P}$ is said to have property (H), if,
for any $B\in {\cal P}$ and any projection $e\in B,$ $eBe\in {\cal P}.$
\end{df}

The following lemma is well known.

\begin{lem}
\label{orthogonal_p.c._map}
For any $\ep>0,$
there {\blue{exists}} $\dt>0$ such that,
for any  unital \CAs\ $A,$ {\blue{and}} $B$,
%and
{{any}} \CA\ $C,$ {\blue{and,}}
any p.c.~maps (resp. c.p.c.~maps) $\phi: A\to C$ and $\psi: B\to C,$
if $\|\phi(1_A)-\phi(1_A)^2\|<\dt$ and
%$\phi(1_A)$ is a projection and
$\|(\phi(1_A)+\psi(1_B))-(\phi(1_A)+\psi(1_B))^2\|<\dt,$
then there exist p.c.~maps (resp. c.p.c.~maps) $\bar\phi: A\to C$ and
$\bar\psi: B\to C,$
satisfying

(1) $\bar{\phi}(1_A)$,
$\bar\psi(1_B)$ are projections and  $\bar{\phi}(1_A)\bot\bar\psi(1_B)$, and
%$\bar{\phi}(1_A)+\bar\psi(1_B)$ are projections,

(2) $\|\phi-\bar\phi\|<\ep$ and
$\|\psi-\bar\psi\|<\ep.$

\noindent
{{Moreover,}}
if $\phi(1_A)$ is a projection, one can take ${\blue{\bar\phi=\phi.}}$

\end{lem}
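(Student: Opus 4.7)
The plan is to first replace $\phi$ by a nearby map $\bar\phi$ whose value on $1_A$ is a projection (via Lemma \ref{perturbation-c.p.c.-to-p.c.p}), and then compress $\psi$ by the orthogonal complement of this projection in order to build $\bar\psi$ with the required orthogonality. Fix $\ep>0$ and let $\dt_0$ be the parameter produced by Lemma \ref{perturbation-c.p.c.-to-p.c.p} for error tolerance $\ep/2$. I will take $\dt \in (0, \min\{\dt_0, 1/8\})$ sufficiently small (to be refined below) so that all estimates are uniformly controlled. Applying Lemma \ref{perturbation-c.p.c.-to-p.c.p} to $\phi$ produces a p.c.~(resp.~c.p.c.) map $\bar\phi : A \to C$ with $p := \bar\phi(1_A)$ a projection in $C^*(\phi(1_A))$ satisfying $\|\bar\phi - \phi\| < \ep/2$; moreover, from the functional-calculus construction in the proof of that lemma, $\|\phi(1_A) - p\| \le \eta_1$ for some $\eta_1 = O(\dt)$.

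Next I build $\bar\psi$. Let $E := \phi(1_A) + \psi(1_B)$; by hypothesis $\|E - E^2\| < \dt$, so ${\rm sp}(E) \subset [0,\eta_2] \cup [1-\eta_2, 1]$ with $\eta_2 = O(\dt)$, and the corresponding spectral cutoff $P := g(E) \in C^*(E)$ is a projection with $\|E - P\| \le \eta_2$. The chain of inequalities $p \le \phi(1_A) + \eta_1 \le E + \eta_1 \le P + (\eta_1 + \eta_2)$ yields $\|(1-P)p(1-P)\| \le \eta_1 + \eta_2$, and hence $\|p - Pp\|, \|p - pPp\| = O(\sqrt{\eta_1+\eta_2})$. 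Consequently $p\psi(1_B)p = pEp - p\phi(1_A)p \approx p - p = 0$ to order $O(\sqrt{\eta_1+\eta_2})$. By Kadison's generalized Schwarz inequality (\cite[Theorem 1]{Kad52}) in the form $\psi(b)\psi(b)^* \le \psi(bb^*)$ for c.p.c.~$\psi$ (or, in the merely positive case, its self-adjoint version applied to real and imaginary parts, at the cost of a harmless constant factor), this gives $\|p\psi(b)\|^2 \le \|p\psi(1_B)p\|$ for $b \in B^1$, and therefore $\|p\psi(b)\| = O((\eta_1+\eta_2)^{1/4})$.

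Define the compression $\psi' : B \to C$ by $\psi'(b) := (1-p)\psi(b)(1-p)$; it is p.c.~(resp.~c.p.c.) and by the previous estimate $\|\psi' - \psi\| \le 2 \sup_{b \in B^1}\|p\psi(b)\| = O((\eta_1+\eta_2)^{1/4})$. Moreover, using $p\phi(1_A) \approx p$ and $Pp \approx p$, one computes
\[
\psi'(1_B) \approx (1-p)E(1-p) \approx (1-p)P(1-p) \approx P - p,
\]
and since $(P-p)^2 = P - pP - Pp + p \approx P - p$, this shows $\|\psi'(1_B) - \psi'(1_B)^2\|$ is bounded by a quantity tending to $0$ with $\dt$. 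Choosing $\dt$ small enough so that this bound is below $\dt_0$, a second application of Lemma \ref{perturbation-c.p.c.-to-p.c.p} to $\psi'$ produces $\bar\psi$ with $\bar\psi(1_B)$ a projection in $C^*(\psi'(1_B)) \subset (1-p)C(1-p)$, automatically orthogonal to $p = \bar\phi(1_A)$, and with $\|\bar\psi - \psi'\| < \ep/2$. The triangle inequality gives $\|\bar\psi - \psi\| < \ep$. The ``moreover'' clause is immediate: if $\phi(1_A)$ is already a projection, take $\bar\phi = \phi$ and $p = \phi(1_A)$, then run only the second step.

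The main technical obstacle is keeping track of the cumulative error through the successive approximations $\phi(1_A) \approx p$, $E \approx P$, $Pp \approx p$, and $\psi'(1_B) \approx P - p$, in such a way that for any prescribed $\ep$ a single universal $\dt$ can be chosen depending only on $\ep$ (and not on the individual maps or algebras). A secondary technical point is that the Kadison--Schwarz inequality $\psi(bb^*) \ge \psi(b)\psi(b)^*$ holds automatically for c.p.c.~maps but requires the self-adjoint splitting in the merely positive case; this costs a constant factor but does not otherwise affect the structure of the argument.
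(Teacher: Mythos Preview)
Your proof is correct and closely parallels the paper's, but the final step differs in an interesting way. The paper first applies Lemma~\ref{perturbation-c.p.c.-to-p.c.p} to \emph{both} $\phi$ and $\psi$ separately (using that $\psi(1_B)$ is also nearly idempotent), obtaining $\bar\phi$ and an intermediate $\hat\psi$ with projection units; it then shows $\bar\phi(1_A)\hat\psi(1_B)$ is small, finds a projection $q\le 1-\bar\phi(1_A)$ close to $\hat\psi(1_B)$, and defines $\bar\psi:=u^*\hat\psi(\cdot)u$ for a unitary $u$ near the identity with $u^*\hat\psi(1_B)u=q$. You instead compress $\psi$ by $1-p$ \emph{before} perturbing, and then invoke Lemma~\ref{perturbation-c.p.c.-to-p.c.p} a second time on the compressed map $\psi'$. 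Your route avoids the unitary conjugation entirely, at the price of introducing the auxiliary spectral projection $P$ of $E=\phi(1_A)+\psi(1_B)$ to verify that $\psi'(1_B)\approx P-p$ is nearly idempotent; the paper's route gets near-idempotence of $\hat\psi(1_B)$ for free from the first perturbation, but then must manufacture orthogonality afterward. Both arguments yield a universal $\dt=\dt(\ep)$ and handle the ``moreover'' clause the same way.
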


\begin{proof}
{\blue{Let $\ep>0.$
Put $\ep_1=\min\{{{\ep/(144\sqrt2)}}, 1/4\}.$
%For any $\dt_1>0,$ t
There}} exists
a universal constant {\blue{$\dt\in(0, \ep_1/16)$}} such that if
$\|\phi(1_A)-\phi(1_A)^2\|<\dt$ and
$\|(\phi(1_A)+\psi(1_B))-(\phi(1_A)+\psi(1_B))^2\|<\dt,$
then
\beq
\label{f3-7-lem-1}
\|\psi(1_B)-\psi(1_B)^2\|<{{(\ep_1/5)^2}}
\mbox{ and }
\|\phi(1_A)\psi(1_B)\|<{{(\ep_1/5)^2}}.
\eneq

%Let $\ep_1\in(0, \ep/4).$
\noindent
{{By}} {\blue{Lemma}} \ref{perturbation-c.p.c.-to-p.c.p}
and {\blue{\eqref{f3-7-lem-1},}}
%with $\dt_1$ sufficiently small,
there exist p.c.~maps (resp. c.p.c.~maps) $\bar\phi: A\to C$ and
$\hat\psi: B\to C,$
such that

(i) $\bar{\phi}(1_A)$ {\blue{and}}
$\hat\psi(1_B)$ are projections, and

(ii) $\|\phi-\bar\phi\|<{{\ep_1,}}$ and
$\|\psi-\hat\psi\|<{{\ep_1}}.$

\noindent
{{Note}} that if $\phi(1_A)$ is a projection, then one can simply take $\phi=\bar\phi.$
By \eqref{f3-7-lem-1} and (ii),
{\blue{one has}}
$\|\bar\phi(1_A)\hat\psi(1_B)\|<{{(\ep_1/5)^2+2\ep_1<3\ep_1}}.$
%{{Put $\ep_2:={\green{(\ep_1/7)^2+2\ep_1.}}$}}
Then
\beq
\label{f3-7-lem-2}
\|\hat{\psi}(1_B)-(1-\bar\phi(1_A))\hat{\psi}(1_B)(1-\bar\phi(1_A))\|
<{{6\ep_1}}.
\eneq
%{\blue{Moreover, together with \eqref{f3-7-lem-1}, one has}}
%{\blue{By \eqref{f3-7-lem-2}, one has}}
{\blue{
Thus $((1-\bar\phi(1_A))\hat{\psi}(1_B)(1-\bar\phi(1_A)))^2
\approx_{{\green{18\ep_1}}} (1-\bar\phi(1_A))\hat{\psi}(1_B)(1-\bar\phi(1_A)).$
Then {\blue{(see  \cite[Lemma 2.5.5]{Lnbook}, for example)}} there is a projection
$q\in C^*({{(1-\bar\phi(1_A))\hat{\psi}(1_B)(1-\bar\phi(1_A))}})$
such that}}
\beq
\|q-\hat{\psi}(1_B)\|<{{36\ep_1}}.
\eneq
Therefore {\blue{(see \cite[Lemma 2.5.1]{Lnbook}, for example)}}
there exists a unitary $u\in \tilde C$
{\blue{(or in $C,$ when $C$ is unital)}}
such that $\|1_{\tilde C}-u\|<{\green{36\sqrt2\ep_1}}\leq\ep/4$
and
$u^*\hat{\psi}(1_B)u=q.$
%$\bar\phi(1_A)\bot u^*\hat\psi(1_B)u.$
Define $\bar\psi: {{B}}\to C$
by $\bar\psi(x):=u^*\hat\psi(x)u$ for all $x\in A.$
{\blue{One then verifies that ${\bar \phi}$ and ${\bar \psi}$ meet the requirements.}}

%and
%{\green{one can verify that (1) and (2) hold}}.

\end{proof}

\begin{prop}
%\label{equiv-defn-finer}
\label{hereditary-subalgebra-preserves-tracially-approximation}

Let $\mathcal{P}$ be a class of $C^*$-algebras.
Let $A$ be a unital simple $C^*$-algebra
which is asymptotically tracially in $\mathcal{P}$.
%if and only if
%We have the
{{Then the following  conditions}} hold:
For any unital hereditary $C^*$-subalgebra $B\subset A$,
{{any}} finite subset $\calF\subset B$,
{{any}} $ \epsilon>0$ {{and any}}  $b\in B_+\backslash\{0\}$,
there exist a $C^*$-algebra $\bar C$ in $\mathcal{P}$,
%and
{{a}} unital hereditary $C^*$-subalgebra {{$C$ of $\bar C$}},
%
%$\bar C$ in $\mathcal{P}$,
%and a unital hereditary $C^*$-subalgebra of $C$,
%
%{\green{and}}
{{c.p.c.~maps}}
$\alpha: B\rightarrow C$,
$\beta_n: C\rightarrow B$, {{and}}
$\gamma_n: B\rightarrow B\cap\beta_n(C)^{\bot}$
($n\in\mathbb{N}$), such that

(1)
%$C$ is unital,
{{the map}} $\alpha$ is u.c.p.,
$\beta_n(1_C)$, $\gamma_n(1_B)$ are projections, {\blue{and}}
$1_B=\beta_n(1_C)+\gamma_n(1_B)$
{{for all}} $ n\in \mathbb{N}$,

(2) $x\approx_{\epsilon}\gamma_n(x)+\beta_n\circ\alpha(x)$
{{for all}} $ x\in\calF$ {{and}}
{{for all}}
$ n\in\mathbb{N}$,

(3) $\alpha$ is {{an}} $(\calF, \epsilon)$-{{approximate embedding}},

(4) $\lim_{n\rightarrow\infty}\|\beta_n(xy)-\beta_n(x)\beta_n(y)\|=0$ {{and}}
$\lim_{n\rightarrow \infty}\|\beta_n(x)\|= \|x\|$
{{for all}} $ x,y\in C$, {{and}}

(5) $\gamma_n(1_B)\lesssim_B b$ {{for all}} $ n\in\mathbb{N}$.

%\noindent
%{{(In particular, the statement holds when $B=A$).}}

\noindent
If, in addition, $\mathcal{P}$ has property (H),
then $C$ is in $\mathcal{P}$, whence
%if $A$ is tracially in $\mathcal{P}$, then
every unital hereditary $C^*$-subalgebra of $A$
is also asymptotically tracially in $\mathcal{P}$.

\end{prop}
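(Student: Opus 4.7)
The plan is to invoke the hypothesis on $A$ for the finite set $\calF\cup\{p\}$ with $p:=1_B$ and a sufficiently small tolerance, and then to use Lemmas \ref{perturbation-c.p.c.-to-p.c.p} and \ref{orthogonal_p.c._map} to convert the resulting almost-projections into exact, coherently orthogonal projections inside $\bar C$ and inside $B$. Without loss of generality I would assume $p\in\calF\subset B_+^1$, $\ep<1$, and $\|b\|=1$; fix $\ep_1\in(0,\ep)$ small enough to absorb all subsequent perturbation errors, together with $b_0\in B_+\setminus\{0\}$ satisfying $b_0\lesssim_B b$ and chosen with enough slack that any positive contraction of $A$ within norm $\ep_1$ of an element Cuntz-dominated by $b_0$ remains Cuntz-dominated by $b$ (a standard perturbation lemma for Cuntz subequivalence). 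Applying Definition \ref{def-tracial_approximation} to $A$ with data $(\calF,\ep_1,b_0)$ produces $\bar C\in\mathcal P$ and c.p.c.~maps $\alpha':A\to\bar C$, $\beta_n':\bar C\to A$, $\gamma_n':A\to A$ satisfying (1)--(4) of that definition.

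Next, since $p\in\calF$ is a projection and $\alpha'$ is $(\calF,\ep_1)$-approximate embedding, $\alpha'(p)^2\approx\alpha'(p)$. Lemma \ref{perturbation-c.p.c.-to-p.c.p} applied to $\alpha'|_B$ would then yield a c.p.c.~map $\alpha_0:B\to\bar C$ with $\alpha_0(p)$ a projection $q\in\bar C$ and $\|\alpha_0-\alpha'|_B\|$ small. Set $C:=q\bar Cq$, a unital hereditary $C^*$-subalgebra of $\bar C$ with unit $q$, and let $\alpha:B\to C$ be $\alpha_0$, which is now u.c.p. Similarly, the asymptotic multiplicativity of $\beta_n'$ (Definition \ref{def-tracial_approximation}(3)) gives $\beta_n'(q)^2\to\beta_n'(q)$, so Lemma \ref{perturbation-c.p.c.-to-p.c.p} furnishes c.p.c.~maps $\tilde\beta_n:\bar C\to A$ with $\tilde\beta_n(q)$ a projection and $\|\tilde\beta_n|_C-\beta_n'|_C\|\to 0$.

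Third, condition (1) of Definition \ref{def-tracial_approximation} at $x=p$, combined with the preceding, gives $p\approx\gamma_n'(p)+\tilde\beta_n(q)$ for large $n$, so this sum is asymptotically a projection. I would then apply Lemma \ref{orthogonal_p.c._map} to the pair $(\tilde\beta_n|_C,\gamma_n'|_B)$, using the ``moreover'' clause for the first map since $\tilde\beta_n(q)$ is already a projection; this produces a c.p.c.~map $\bar\gamma_n:B\to A$ with $\bar\gamma_n(p)$ a projection orthogonal to $\tilde\beta_n(q)$ and $\bar\gamma_n$ arbitrarily close to $\gamma_n'|_B$ for large $n$. The projection $f_n:=\tilde\beta_n(q)+\bar\gamma_n(p)$ is then within arbitrarily small norm distance of $p$, so a standard small-unitary perturbation yields $u_n\in\widetilde A$ close to $1$ with $u_n^*f_nu_n=p$. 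Setting $\beta_n(c):=u_n^*\tilde\beta_n(c)u_n$ and $\gamma_n(x):=u_n^*\bar\gamma_n(x)u_n$ gives c.p.c.~maps with ranges in $pAp=B$, with $\beta_n(1_C)$ and $\gamma_n(1_B)$ orthogonal projections summing to $p=1_B$, and with $\gamma_n(B)\perp\beta_n(C)$, establishing condition~(1) of the proposition.

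Conditions (2)--(4) of the proposition are then inherited from the corresponding conditions for $\alpha',\beta_n',\gamma_n'$, as every perturbation and conjugation contributes only $O(\ep_1)$ error, which is chosen less than $\ep$. Condition~(5) follows because $\gamma_n(1_B)$ lies within norm $O(\ep_1)$ of $\gamma_n'(p)\le\gamma_n'(1_A)\lesssim_A b_0$, the choice of $b_0$ then forces $\gamma_n(1_B)\lesssim_A b$, and $\lesssim_A$ coincides with $\lesssim_B$ on $B_+$ since $B$ is hereditary in $A$. The main technical obstacle is the third step, where the three independent perturbations must be coordinated so that $\alpha'(p)$, $\beta_n'(q)$, and $\gamma_n'(p)$ simultaneously become genuine, correctly oriented projections while preserving the approximations needed for (2)--(4); keeping track of the quantifiers ``$n$ large enough'' against the universal $\ep_1$ is what determines how small $\ep_1$ must be taken in the reduction step. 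The concluding assertion is then immediate: $C=q\bar Cq$ is a unital corner of $\bar C\in\mathcal P$, so property~(H) places $C\in\mathcal P$, whence every unital hereditary $C^*$-subalgebra of $A$ is asymptotically tracially in $\mathcal P$.
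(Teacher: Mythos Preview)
Your proposal is correct and follows essentially the same route as the paper: apply the definition with a small tolerance, use Lemma~\ref{perturbation-c.p.c.-to-p.c.p} to turn $\alpha'(1_B)$ and $\beta_n'(1_C)$ into genuine projections, then Lemma~\ref{orthogonal_p.c._map} plus a small-unitary conjugation to make $\beta_n(1_C)+\gamma_n(1_B)=1_B$. The only minor remark is that your auxiliary element $b_0$ with ``slack'' is unnecessary---since $\gamma_n(1_B)$ is a projection, the paper simply uses $\gamma_n(1_B)=f_{1/4}(\gamma_n(1_B))\lesssim \bar\gamma_n(1_B)\lesssim b$ via \cite[Proposition~2.2]{Rordam-1992-UHF2}, and your general perturbation claim for arbitrary positive contractions would not hold as stated.
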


%\end{framed}

\begin{proof}
%We can see the ``if'' part is trivial by letting $B=A$.
%Now we show the ``only if'' part.
Without loss of generality, we may assume that $1_B\in \calF\subset B^1$
and $\epsilon<1$.
Let $\dt_1<\ep/64$ be the {\blue{universal constant {\blue{(in place of $\dt$)}}}} in {{Lemma}} \ref{orthogonal_p.c._map}
{\blue{associated with  $\ep/64$ (in the place of $\ep$).}}
%Let $\dt_2$ be the $\dt$ in \ref{perturbation-c.p.c.-to-p.c.p}
%with $\dt_1/8$ in the place of $\ep.$
%Let $\dt_2$ be such that $\ep(\dt_2)<\dt_1/8,$
%where $\ep(\cdot)$ is defined in \ref{perturbation-c.p.c.-to-p.c.p}.
{\blue{Let $\dt:=\frac{1}{128}\min\{\ep,(\dt_1/{\blue{5}})^2, {\blue{1}}\}.$}}

Since $A$ is asymptotically tracially in $\mathcal{P}$,
there {{exist}} a $C^*$-algebra $\bar{C}$ in $\mathcal{P}$
and
c.p.c maps
$\bar{\alpha}: A\rightarrow \bar{C}$,
$\bar{\beta}_n: \bar{C}\rightarrow A$, {{and}}
$\bar{\gamma}_n: A\rightarrow A$
($n\in\mathbb{N}$) {\blue{such}} that

({{$1'$}}) $x\approx_{\delta}\bar{\gamma}_n(x)+\bar{\beta}_n\circ\bar{\alpha}(x)$
{{for all}} $ x\in\calF$ {{and}} {{for all}} $ n\in\mathbb{N}$,

({{$2'$}}) $\bar{\alpha}$ is {{an}} $(\calF, \delta)$-{{approximate embedding}},

({{$3'$}}) $\lim_{n\rightarrow\infty}
\|\bar{\beta}_n(xy)-\bar{\beta}_n(x)\bar{\beta}_n(y)\|=0$,
$\lim_{n\rightarrow \infty}\|\bar{\beta}_n(x)\|= \|x\|$
{{for all}} $ x,y\in \bar{C}$, {\blue{and}}

({{$4'$}}) $\bar{\gamma}_n(1_A)\lesssim_A b$, {{for all}} $ n\in\mathbb{N}$.

%%%%%%%%%%%%%%%

\noindent
{{Since}}
$\|{\bar\af}(1_B)-{\bar\af}(1_B)^2\|<{\blue{\dt}}$
%<\dt_2$
{{(see ({{$2'$}})),}}
by Lemma \ref{perturbation-c.p.c.-to-p.c.p},
%and
%by the choice of $\dt_2$,
there exists a c.p.c.~map ${\alpha}: A\rightarrow \bar{C}$
{\blue{such}} that ${\alpha}(1_B)$ is a projection and
\beq\label{34-n3}
\|\bar{\alpha}(x)-{\alpha}(x)\|< \frac{\dt_1}{8}\|x\|
\rforal x\in 1_BA1_B=B.
\eneq
%Define
Let $C:={\alpha}(1_B)\bar{C}{\alpha}(1_B)$
be a unital hereditary $C^*$-subalgebra of $\bar C.$
We may view $\af$ as a map from  $B$ to $C.$
%, $x\mapsto \hat{\alpha}(x)$.
{\blue{Then,}} by ({{$2'$}}), {\blue{{{\eqref{34-n3},}} and by the choice of $\dt,$}}
% and $\dt_1$,}}
%the choice of $\hat{\alpha}$,
 $\alpha$ is {{an}} $(\calF, \ep)$-{{approximate embedding}}.
Thus (3) in the proposition holds.

By {\blue{($3'$),}} we have
$\lim_{n\rightarrow\infty}
\|\bar{\beta}_n(1_C)-\bar{\beta}_n(1_C)^2\|=0.$
{\blue{Then,}} by Lemma \ref{perturbation-c.p.c.-to-p.c.p},
there {{exist}} c.p.c.~maps $\hat{\beta}_n: C\rightarrow A$
such that $\hat{\beta}_n(1_C)$ are projections and
\beq\label{34-n4}
\|{{\bar{\beta}_{n}|_{C}}}-\hat{\beta}_n\|\rightarrow 0\,\,\,{{{\rm{(as}}\,\,n\to
\infty {\rm{).}}}}
\eneq

%For sufficiently large $n$,
\noindent
{{By}} \eqref{34-n3} and \eqref{34-n4}, without loss of generality,
we may assume {\blue{that,}} for all $n\in\N,$
%and all $x\in\calF,$
\beq
\label{f3-7prop-3}
\bar{\beta}_n\circ\bar{\alpha}(x)\approx_{\dt_1/8}
\hat{\beta}_n\circ\af(x)
%\quad
\mbox{ for all }
x\in\calF.
\eneq

\noindent
{{Then,}} from \eqref{f3-7prop-3} and ($1'$),
$$
\|(\bar\gamma_n(1_B)+\hat{\beta}_n\circ\af(1_B))-
(\bar\gamma_n(1_B)+\hat{\beta}_n\circ\af(1_B))^2\|<\dt_1.
$$

\noindent
By {{Lemma}} \ref{orthogonal_p.c._map} and the choice of $\dt_1$,
for each $n\in\N$,
there exists a c.p.c.~map $\hat\gamma_n: A\to A$ such that

(i) $\hat\gamma_n(1_B)$ is a projection and
$\hat\gamma_n(1_B)\bot\hat{\beta}_n\circ\af(1_A)$, and

(ii) $\|\hat\gamma_n-\bar\gamma_n\|<\ep/64.$

\noindent
{{By}} (ii), \eqref{f3-7prop-3} and $(1')$, we have
$1_B\approx_{\ep/32} \hat\gamma_n(1_B)+ \hat{\beta}_n\circ\af(1_B).$
Then there exist unitaries $u_n\in A$ ($n\in\N$)
such that
$\|1_A-u_n\|<\sqrt{2}\ep/32$ and
$u_n^*(\hat\gamma_n(1_B)+ \hat{\beta}_n\circ\af(1_B))u_n=1_B$
(see \cite[Lemma 2.5.1]{Lnbook}).
Define c.p.c.~maps $\gamma_n: B\to B$ by $\gamma_n(x):=u_n^*\hat\gamma_n(x)u_n$,
and define c.p.c.~maps $\beta_n: C\to B$ by $\beta_n(x):=u_n^*\hat\beta_n(x)u_n$.
Then (1) in the proposition holds.
By $(3'),$ \eqref{34-n4} and the fact that $u_n$ are unitaries,
{\blue{condition}} (4) in the proposition holds.

By (ii) and the fact that $\|1_A-u_n\|<\sqrt{2}\ep/32$, we have
\beq
\label{f3-7prop-6}
\|\gamma_n-\bar\gamma_n\|<\ep/4
\mbox{ and }
\|\beta_n-\hat\beta_n\|<\ep/4.
\eneq
Then, by \eqref{f3-7prop-6}, \eqref{f3-7prop-3} and ($1'$),
{\blue{condition}} {\blue{(2)}} in the proposition holds.

By
{{the fact that $\gamma_n(1_B)$ is a projection,}}
\eqref{f3-7prop-6}, and \cite[Proposition 2.2]{Rordam-1992-UHF2},
we have $\gamma_n(1_B)\sim f_{1/4}(\gamma_n(1_B))
\lesssim \bar\gamma_n(1_B)\overset{(4')}{\lesssim} b.$
Thus (5) in the proposition holds.  {\blue{The proposition follows.}}

\end{proof}

%20200218

%20200215

The following lemma is well known.
%{\green{\bf I change the following statement into a more consistent form.
%I used this form in two places.
%---XL.}}
%{\green{\bf I deleted one redundant sentence in the following statement.---XL}}
\begin{lem}
\label{projection-Cuntz-subequivalent-lem}
Let $A$ be a \CA, {\blue{$a\in A_+$}} and let
%{\green{Let}} $p,a\in A_+$ {\green{with}}
$p\in A$ {\blue{be}} a projection with
%and
$p\lesssim_A a$.
Then there exists $s\in A$ such that $p=s^*as$.
Moreover, if $\{a_n\}\in l^\infty(A)_+$
and $\{p_n\}\in l^\infty(A)$
is a projection such that
$\pi_\infty(\{p_n\})\lesssim_{l^\infty(A)/c_0(A)} \pi_\infty(\{a_n\}),$
then there exists  $\{s_n\}\in l^\infty(A)$ such that
{\blue{$s_n^*s_n=p_n$ and $s_ns_n^*\in \Her(a_n)$ for all}}
%$p_n=s_n^*a_ns_n$ for all
large $n.$
%there exists a nonzero projection $p\in A$ such that $p\lesssim_A a$ and
%$\Her(a)$ contains a nonzero projection.

\end{lem}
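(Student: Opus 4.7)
For the first assertion I would use the classical R{\o}rdam functional-calculus argument. Since $p\lesssim_A a$, choose $x\in A$ with $\|p-x^*ax\|<1/4$ and set $b:=x^*ax$. Then $pbp\in pAp$ satisfies $\|p-pbp\|<1/4$, so its spectrum inside the unital $C^*$-algebra $pAp$ (with unit $p$) lies in $[3/4,5/4]$. Hence $c:=(pbp)^{-1/2}$ is well defined in $pAp$ via continuous functional calculus, with $\|c\|\le(3/4)^{-1/2}$ and $c(pbp)c=p$. Taking $s:=xpc\in A$ yields $s^*as=c\,p\,x^*ax\,p\,c=c(pbp)c=p$.

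For the moreover part, by the definition of Cuntz subequivalence in $l^\infty(A)/c_0(A)$ together with a bounded lift, there exists $\{y_n\}\in l^\infty(A)$ with $\limsup_n\|p_n-y_n^*a_ny_n\|<1/4$. I would then apply the first-part construction pointwise for each sufficiently large $n$: setting $b_n:=y_n^*a_ny_n$ and $c_n:=(p_nb_np_n)^{-1/2}\in p_nAp_n$ (uniformly norm-bounded by $(3/4)^{-1/2}$), the element $t_n:=y_np_nc_n$ satisfies $t_n^*a_nt_n=p_n$, and $\{t_n\}\in l^\infty(A)$.

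To promote this identity into the two requirements $s_n^*s_n=p_n$ and $s_ns_n^*\in\Her(a_n)$, I would factor through $a_n^{1/2}$: define $s_n:=a_n^{1/2}t_n$ for all large $n$ (and $s_n:=0$ otherwise). Then $s_n^*s_n=t_n^*a_nt_n=p_n$ and $s_ns_n^*=a_n^{1/2}(t_nt_n^*)a_n^{1/2}\in\overline{a_nAa_n}=\Her(a_n)$, while $\|s_n\|^2=\|s_n^*s_n\|\le 1$ ensures $\{s_n\}\in l^\infty(A)$.

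The main (mild) obstacle is ensuring uniformity across $n$: one must verify that the continuous functional calculus producing $c_n$ is uniformly norm-bounded, and that the spectral estimate holds for all sufficiently large $n$ simultaneously. This is handled once the Cuntz relation in the quotient is translated into the uniform estimate $\|p_n-p_nb_np_n\|<1/4$ via the quotient-norm identity $\|\pi_\infty(\{z_n\})\|=\limsup_n\|z_n\|$ applied to the chosen Cuntz-approximant lift $\{y_n\}$.
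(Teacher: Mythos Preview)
Your proof is correct and follows essentially the same approach as the paper: the functional-calculus inversion of $p(x^*ax)p$ inside $pAp$ for the first part, and then for the moreover part lifting a Cuntz approximant from the quotient, applying the first-part construction pointwise with uniform bounds, and finally factoring through $a_n^{1/2}$ to land in $\Her(a_n)$. The paper takes one extra intermediate step---it first applies the first part \emph{inside the quotient} $l^\infty(A)/c_0(A)$ to obtain an exact equality $\pi_\infty(\{p_n\})=t^*\pi_\infty(\{a_n\})t$ before lifting---whereas you go directly from a single Cuntz approximant with $\limsup<1/4$; your route is slightly more streamlined but not substantively different.
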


\begin{proof}

Since $p\lesssim_A a$, there exists $r\in A$ such that $\|p-r^*ar\|<1/2$.
%Since $p$ is a projection, we have
Then $\|p-pr^*arp\|<1/2.$
Therefore $pr^*arp$ is an invertible positive element in $pAp$.
{\blue{Hence,}} {by functional calculus} there exists $b\in (pAp)_+$ {{with $\|b\|\leq \sqrt2$}}
such that $p=bpr^*arpb$. {\blue{Choose}} $s=rpb.$  {\blue{Then $p=s^*as.$}}
%and we done.

\iffalse
{\green{
For ``Moreover" part, by what has been proved, there is $\{r_n\}\in l^\infty(A)$
such that $\pi_\infty(\{p_n\})
=\pi_\infty(\{r_n\})^*\pi_\infty(\{a_n\})\pi_\infty(\{r_n\}).$
%Then there exists a sequence of projections $q_n\in A$ and
%$\{s_n\}\in l^\infty(A)$
Then $\|p_n-r_n^*a_nr_n\|<1/2$ for all large $n.$
Thus, by argument above, there is $b_n\in A$ with $\|b_n\|\leq \sqrt2$
such that $p_n=b_np_nr_n^*a_nr_np_nb_n$ for all large $n.$
Choose $s_n:=r_np_nb_n$ for large $n$ (and $s_n:=0$ for small $n$)
and we are done.
}}
\fi
%\iffalse
{\blue{For ``Moreover" part, by what has been proved, there is $t\in l^\infty(A)/c_0(A)$
such that
%$q%p
$\pi_\infty(\{p_n\})=t^*\iota_A(a)t.$ Then there exists
% a sequence of projections
%$q_n\in A$ and
$\{t_n\}\in l^\infty(A)$ such that $\|p_n-(t_n)^*at_n\|<1/2$ for all large $n.$
%and $\pi_\infty(\{p_n\})=q.
Thus, by what has been proved, there is $r_n\in A$ ($\|r_n\|\le \sqrt{2}\|t_n\|$)
such that $p_n=r_n^*ar_n$ for all large $n.$
Let $s_n=a_n^{1/2}r_n.$
Then $s_n^*s_n=p_n$ and
$q_n:= s_ns_n^*= a_n^{1/2}r_nr_n^*a_n^{1/2}\in \Her(a_n).$}}
% (is a projection.}}
%As $q\not=0,$ some $q_n\not=0$ which implies $p_n\not=0.$}}

%\xrule
%\fi

\end{proof}

\iffalse%-

\begin{df}

Let $A$ be a \CA, $B\subset l^\infty(A)/c_0(A)$ is a $C^*$-subalgebra.
We say $B$ is  strictly embedded,
%convergent $C^*$-subalgebra,
if, for any $b\in B$, there exists
$\{b_1,b_2\cdots\}\in l^\infty(A)$
such that $\pi_\infty(\{b_1,b_2\cdots\})=b$
and $\lim_{i\rightarrow \infty}\|b_n\|=\|b\|$.
Note that if $B$ is full in $l^\infty(A)/c_0(A),$ then $B$ is strictly embedded. In particular,
if $B$ is unital and $1_B$ is the unit of $l^{\infty}/c_0(A)$ and $B$ is simple, then $B$ is strictly embedded.
\end{df}

\fi%-

The following proposition
provides another picture of
Definition \ref{def-tracial_approximation}.

\begin{prop}
\label{equivalent-definition-of-tracial-approximation}
Let $A$ be a {{simple}} unital $C^*$-algebra {\blue{and}}
$\mathcal{P}$ be a class of
{\blue{separable}}
%nuclear
\CAs\,\,  with Property (H).
%and $\mathcal{P}$ be closed under taking unital hereditary $C^*$-subalgebras.
%We also use the notations defined in Notation \ref{notation-2}.
%
%Then $A$ is tracially $\mathcal{P}$ if and only if the following hold:
{\blue{If $A$ is
asymptotically tracially in ${\cal P},$ then  the following  holds:}}

For any finite subset $\calF\subset A$,  any $\epsilon>0$, and any
$a \in A_+\backslash\{0\}$,
there exists a unital
%{\blue convergent}
$C^*$-subalgebra
$B\subset  l^\infty(A)/c_0(A)$ which is strictly embedded
such that $B$ in $\mathcal{P}$, and
(recall notations defined in Notation \ref{notation-2})

(1) $1_B\iota_A(x)\approx_{\epsilon}\iota_A(x)1_B$\, {\blue{for all}} $ x\in\calF$,

(2) $1_B\iota_A(x)1_B\in_{\epsilon} B$ {\blue{and}}  $\|1_B\iota_A(x)1_B\|\geq \|x\|-\epsilon$\,
{\blue{for all}} $ x\in \calF$, {\blue{and}}

(3) $\iota_A(1_A)-1_B\lesssim_{
l^\infty(A)/c_0(A)}\iota_A(a)$.

\noindent
{{If}} ${\cal P}$ is a class of separable nuclear \CA s, then converse also holds.
\end{prop}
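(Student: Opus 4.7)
\emph{Approach.} The plan is to prove both directions by translating between the asymptotic data of Definition \ref{def-tracial_approximation} and the picture inside $l^\infty(A)/c_0(A)$.

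\emph{Forward direction.} Given $\calF\subset A$, $\epsilon>0$, and $a\in A_+\setminus\{0\}$, I would first apply Proposition \ref{hereditary-subalgebra-preserves-tracially-approximation} (with $B=A$) at a sufficiently small tolerance $\epsilon'\ll\epsilon$, obtaining $\bar C\in\mathcal{P}$, a unital hereditary $C^*$-subalgebra $C\subset\bar C$ (hence $C\in\mathcal{P}$ by property (H)), and c.p.c.~maps $\alpha\colon A\to C$ (u.c.p.), $\beta_n\colon C\to A$, $\gamma_n\colon A\to A$ with $\beta_n(1_C)+\gamma_n(1_A)=1_A$ a sum of orthogonal projections, $\{\beta_n\}$ asymptotically multiplicative and isometric, $\alpha$ an $(\calF,\epsilon')$-approximate embedding, and $\gamma_n(1_A)\lesssim_A a$. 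Define $\Phi\colon C\to l^\infty(A)/c_0(A)$ by $\Phi(c):=\pi_\infty(\{\beta_n(c)\})$; by the asymptotic multiplicativity and isometry of $\{\beta_n\}$, $\Phi$ is an injective $*$-homomorphism. Set $B:=\Phi(C)\cong C$; this lies in $\mathcal{P}$, is unital with projection $1_B=\pi_\infty(\{\beta_n(1_C)\})$, and is strictly embedded by construction. To verify (1)--(3): since $\gamma_n(x)\perp\beta_n(1_C)$, the approximation $x\approx_{\epsilon'}\gamma_n(x)+\beta_n(\alpha(x))$ and asymptotic multiplicativity give $\beta_n(1_C)x\approx x\beta_n(1_C)\approx\beta_n(\alpha(x))$ in the limit, which yields (1); the same computation shows $1_B\iota_A(x)1_B=\Phi(\alpha(x))\in B$ in the quotient, with $\|\Phi(\alpha(x))\|=\|\alpha(x)\|\geq\|x\|-\epsilon'$, giving (2). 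For (3), $\iota_A(1_A)-1_B=\pi_\infty(\{\gamma_n(1_A)\})$; choose by Lemma \ref{projection-Cuntz-subequivalent-lem} elements $s_n\in A$ with $s_n^*as_n=\gamma_n(1_A)$ and set $T:=\pi_\infty(\{a^{1/2}s_n\})$, bounded since $\|a^{1/2}s_n\|^2=1$. Then $T^*T=\iota_A(1_A)-1_B$ and $TT^*\in\Her(\iota_A(a))$, producing the Cuntz inequality.

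\emph{Converse direction.} Assuming $\mathcal{P}$ consists of separable nuclear \CA s, given $\calF,\epsilon,a$ obtain $B\subset l^\infty(A)/c_0(A)$ as in (1)--(3). By the Choi--Effros lifting theorem the nuclear embedding $B\hookrightarrow l^\infty(A)/c_0(A)$ lifts to a c.p.c.~map $\tilde\beta\colon B\to l^\infty(A)$; writing $\tilde\beta(b)=\{\beta_n(b)\}$ yields c.p.c.~maps $\beta_n\colon B\to A$ with $\lim_n\|\beta_n(bb')-\beta_n(b)\beta_n(b')\|=0$ (since $\pi_\infty\circ\tilde\beta=\mathrm{id}_B$ is multiplicative) and $\lim_n\|\beta_n(b)\|=\|b\|$ (from the strict embedding of $B$ together with c.p.c.~contraction). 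After perturbing via Lemma \ref{perturbation-c.p.c.-to-p.c.p}, arrange that $\beta_n(1_B)$ is a projection for all large $n$. To build $\alpha\colon A\to B$, set $\tilde\alpha(x):=1_B\iota_A(x)1_B$, a u.c.p.~map into $1_B(l^\infty(A)/c_0(A))1_B$ whose image on $\calF$ lies within $\epsilon$ of $B$ by (2); using nuclearity of $B$ pick c.p.c.~factorisation maps $\varphi\colon B\to M_k$, $\psi\colon M_k\to B$ with $\psi\circ\varphi$ close to $\mathrm{id}_B$ on a relevant finite set, Arveson-extend $\varphi$ to $\tilde\varphi\colon l^\infty(A)/c_0(A)\to M_k$, and set $\alpha:=\psi\circ\tilde\varphi\circ\tilde\alpha$, a c.p.c.~map $A\to B$ approximating $\tilde\alpha$ on $\calF$. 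Finally, define $\gamma_n(x):=(1_A-\beta_n(1_B))x(1_A-\beta_n(1_B))$; condition (3) together with the ``Moreover'' clause of Lemma \ref{projection-Cuntz-subequivalent-lem} gives $\gamma_n(1_A)=1_A-\beta_n(1_B)\lesssim_A a$ for large $n$, while (1) and the decomposition $\iota_A(x)\approx_{2\epsilon}1_B\iota_A(x)1_B+(1-1_B)\iota_A(x)(1-1_B)$ yield, on lifting, $x\approx\gamma_n(x)+\beta_n(\alpha(x))$ on $\calF$ for $n$ large.

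\emph{Main obstacle.} The delicate step is the converse: turning the abstract containment $\tilde\alpha(\calF)\subset_\epsilon B$ into an honest c.p.c.~map $\alpha\colon A\to B$ forces one to combine Choi--Effros lifting (hence the separability and nuclearity of $B$) with an Arveson extension of a matrix factorisation of $\mathrm{id}_B$. Accompanying this is a bookkeeping of tolerances — choosing initial $\epsilon'$ of order $\epsilon/K$ so that the accumulated perturbations of $\beta_n$, $\alpha$, and the orthogonality of $\beta_n(1_B)$ with $\gamma_n(1_A)$ all fit inside $\epsilon$ — which, while routine, must be done carefully in order that both the asymptotic-approximation conditions in Definition \ref{def-tracial_approximation} and the exact Cuntz inequality $\gamma_n(1_A)\lesssim_A a$ survive the passage through the quotient $l^\infty(A)/c_0(A)$.
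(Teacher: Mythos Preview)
Your approach matches the paper's in both directions: for the forward implication you pass through Proposition~\ref{hereditary-subalgebra-preserves-tracially-approximation}, bundle the $\beta_n$'s into a strict embedding $C\hookrightarrow l^\infty(A)/c_0(A)$, and verify (1)--(3); for the converse you lift via Choi--Effros, build $\alpha$ from nuclearity and Arveson extension, and define $\gamma_n$ by compression. The paper does the same, though for $\alpha$ it invokes the Arveson-type consequence \cite[Theorem 2.3.13]{Lnbook} directly rather than routing through a matrix factorisation as you do; both are fine.

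There is, however, a genuine gap in your verification of~(3) in the forward direction. You assert that $TT^*=\pi_\infty(\{a^{1/2}s_ns_n^*a^{1/2}\})\in\Her(\iota_A(a))$, but this does not follow merely from each $a^{1/2}s_ns_n^*a^{1/2}\in\Her_A(a)$: membership in $\Her(\iota_A(a))$ requires uniform approximation $\limsup_n\|ax_na-a^{1/2}s_ns_n^*a^{1/2}\|<\delta$ with $\{x_n\}$ \emph{bounded}, and since the $s_n$ produced by Lemma~\ref{projection-Cuntz-subequivalent-lem} carry no norm bound, the projections $t_nt_n^*$ may drift toward the spectral boundary of $a$ at $0$, forcing $\|x_n\|\to\infty$. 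The paper sidesteps this by applying Proposition~\ref{hereditary-subalgebra-preserves-tracially-approximation} with $f_{1/2}(a)$ in place of $a$, so that $\gamma_n(1_A)\lesssim_A f_{1/2}(a)$; one then obtains partial isometries $s_n$ with $s_ns_n^*\in\Her(f_{1/2}(a))$, and since $f_{1/4}(a)f_{1/2}(a)=f_{1/2}(a)$ one has $f_{1/4}(a)s_ns_n^*f_{1/4}(a)=s_ns_n^*$ uniformly in $n$, whence $\pi_\infty(\{s_ns_n^*\})=f_{1/4}(\iota_A(a))\pi_\infty(\{s_ns_n^*\})f_{1/4}(\iota_A(a))\in\Her(\iota_A(a))$. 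This is a small but necessary adjustment to your argument.
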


\begin{proof}
%The ``only if'' part:
Assume that $A$ is asymptotically tracially in $\mathcal{P}$.
Let $\calF\subset A$ be a finite subset
{{with $1_A\in\calF$}},
{{let}} $\epsilon\in(0,1)$, let $a\in A_+$ with $\|a\|=1$, and
let $\delta:=\epsilon^2/4$.
By Proposition \ref{hereditary-subalgebra-preserves-tracially-approximation},
there {{exist}} a {{unital}} $C^*$-algebra $B$ in $\mathcal{P}$,
%and
{{c.p.c.~maps}}
$\alpha: A\rightarrow B$,
$\beta_n: B\rightarrow A${\green{, and}}
$\gamma_n: A\rightarrow A\cap\beta_n(B)^{\bot}$
($n\in\mathbb{N}$) {\blue{such}} that

({{$1'$}}) %$B$ is unital,
{{$\alpha$}} is u.c.p.,
$\beta_n(1_B)$ {\blue{and}} $\gamma_n(1_A)$ are projections, {\blue{and}}
$1_A=\beta_n(1_B)+\gamma_n(1_A)$
%{\blue{($n\in \N$),}}
{{for all $ n\in \N$,}}

({{$2'$}}) $x\approx_{\delta}\gamma_n(x)+\beta_n\circ\alpha(x)$
{\blue{for all}} $ x\in\calF$ {\blue{and}} {\blue{for all}} $ n\in\mathbb{N},$

({{$3'$}}) $\alpha$ is {\blue{an}} $(\calF, \delta)$-{{approximate embedding}},

{\blue{({{$4'$}})}} $\lim_{n\rightarrow\infty}\|\beta_n(xy)-\beta_n(x)\beta_n(y)\|=0$ {\blue{and}}
$\lim_{n\rightarrow \infty}\|\beta_n(x)\|= \|x\|$
{\blue{for all}} $ x,y\in B,$ {\blue{and}}

({{$5'$}}) $\gamma_n(1_A)\lesssim_A f_{1/2}(a)$ {\blue{for all}} $ n\in\mathbb{N}$.

\noindent
{{Note}} that {\blue{({{$4'$}})}} induces {{a strict}} embedding
$\beta: B\rightarrow l^{\infty}(A)/c_0(A)$,
$x\mapsto \pi_{\infty}(\{\bt_n(x)\}),$ {\blue{and that}}
%Moreover, $B$ is strictly embedded.
({{$2'$}}) shows that, for any $x\in \calF$,
\begin{eqnarray*}
&&\hspace{-0.5in}
{\blue{\|\beta(1_B)\iota_A(x)-\iota_A(x)\beta(1_B)\|
%\\
%&=&
=}}\limsup_{n\rightarrow\infty}\|\beta_n\circ\alpha(1_A)x-x\beta_n\circ\alpha(1_A)\|\\
%\mbox{(by (2'))}
&\leq&
2\delta+
\limsup_{n\rightarrow\infty}\|\beta_n\circ\alpha(1_A)(\gamma_n(x)+\beta_n\circ\alpha(x))
-(\gamma_n(x)+\beta_n\circ\alpha(x))\beta_n\circ\alpha(1_A)\|\\
&=&
2\delta+
\limsup_{n\rightarrow\infty}\|\beta_n\circ\alpha(1_A)\beta_n(\alpha(x))
-\beta_n(\alpha(x))\beta_n\circ\alpha(1_A)\|\\
&=&
2\delta+
\limsup_{n\rightarrow\infty}
\|\beta_n(\alpha(1_A)\alpha(x)-\alpha(x)\alpha(1_A))\|\le  4\delta <\ep.
\end{eqnarray*}
Thus (1) {\blue{of}} the proposition holds.
For any $x\in\calF$,
\begin{eqnarray*}
\|\beta(1_B)\iota_A(x)\beta(1_B)-\beta\circ\alpha(x)\|
&=&
\limsup_{n\rightarrow\infty}
\|\beta_n(1_B)x\beta_n(1_B)-\beta_n\circ\alpha(x)\|\\
&\leq&
\delta+
\limsup_{n\rightarrow\infty}
\|\beta_n(1_B)(\gamma_n(x)+\beta_n\circ\alpha(x))\beta_n(1_B)-\beta_n\circ\alpha(x)\|\\
&=&
\delta+
\limsup_{n\rightarrow\infty}
\|\beta_n(1_B)\beta_n\circ\alpha(x)\beta_n(1_B)-\beta_n\circ\alpha(x)\|
%\\
%&=&
{\blue{=\delta<\ep.}}
\end{eqnarray*}
Thus $\beta(1_B)\iota_A(x)\beta(1_B)\in_{\epsilon}\beta(B)$.
By the estimation above, {\blue{($4'$), and {\blue{by}} ($3'$),}} we also have
$$
\|\beta(1_B)\iota_A(x)\beta(1_B)\|
\geq
\|\beta\circ\alpha(x)\|-\delta
%=\limsup\|\beta_n\circ\alpha(x)\|-\delta
{{\ =\ }}
\|\alpha(x)\|-\delta
\geq \|x\|-2\delta
\geq \|x\|-\epsilon.
$$
Thus (2) {\blue{of}} the proposition holds.
%

%\xrule

%For any $n\in\mathbb{N}$,
%since $1_A-\beta_n(1_B)=\gamma_n(1_A)\lesssim_A f_{1/2}(a)$,
{{By}} ($1'$), {\blue{($5'$),}} and Lemma \ref{projection-Cuntz-subequivalent-lem},
there exist partial isometries $s_n\in {{A}}$ such that
$1_A-\beta_n(1_B)=s_n^*s_n$ and $s_ns_n^*\in \Her_A(f_{1/2}(a))$
{{for all large $n$.}}
Let $s=\{s_n\}\in l^\infty(A).$  Then
$\pi_\infty(s)^*\pi_\infty(s)=\iota_A(1_A)-\bt(1_B)$
and, since $f_{1/4}(a)f_{1/2}(a)=f_{1/2}(a),$
\beq
\pi_\infty(s)\pi_\infty(s)^*
&=&\pi_\infty(\{s_ns_n^*\})
=\pi_\infty(\{f_{1/4}(a)s_ns_n^*f_{1/4}(a)\})\\
&=&f_{1/4}(\iota_A(a))\pi_\infty(ss^*)f_{1/4}(\iota_A (a))
\in\Her_{l^\infty(A)/c_0(A)}(\iota_A(a)),
%\in \overline{\iota_A(f_{1/2}(a))(l^\infty(A)/c_0(A))\iota_A(f_{1/4}(a))}.
\eneq
%Then $\|s_n-f_{\delta}(a)s_n\|<2\delta^{1/2}$,
%then
%$$
%1_A-\beta_n(1_B)=s_n^*s_n
%\approx_{4\delta^{1/2}}
%s_n^*f_{\delta}(a)^2s_n,
%$$
which implies that (3) {{of the proposition}} holds.
This proves the first part of the proposition.
%$1_A-\beta(1_B)
%\lesssim_{\prod_{n=1}^{\infty} A/\bigoplus_{n=1}^{\infty} A}
%f_{\delta}(a)
%\lesssim_{\prod_{n=1}^{\infty} A/\bigoplus_{n=1}^{\infty} A} a$,
%thus (3) in the proposition holds,
%this finishes the proof of ``Only if'' part.

 For the second part, let us assume that ${\cal P}$ is a class of separable nuclear \CA s and consider the converse.
{\blue{Let}} $\calF\subset A^1$ be a finite subset,
let $\epsilon>0$, {\blue{and}} let $a\in A_+$ with $\|a\|=1$.
Let $\delta:=\frac{\epsilon}{100}$ {\blue{and let}}
$\bar{\calF}:=\calF\cup(\calF\cdot \calF).$
\iffalse
{\blue{Then}} there exists a unital $C^*$-subalgebra
$B\subset l^{\infty}(A)/c_0(A)$ which is strictly embedded
such that $B$ in $\mathcal{P}$, and
(P1) $1_B\iota_A(x)\approx_{\delta}\iota_A(x)1_B$, {\blue{for all}} $ x\in\bar{\calF}$,
(P2) $1_B\iota_A(x)1_B\in_{\delta} B$, $\|1_B\iota_A(x)1_B\|\geq \|x\|-\delta$
{\blue{for all}} $ x\in \bar{\calF}$, {\blue{and}}
(P3) $\iota_A(1_A)-1_B\lesssim_{l^{\infty}(A)/c_0(A)} \iota_A(a)$.
\fi
{\blue{Suppose that (1), (2) and (3) {\blue{hold}} for ${\bar{\cal F}},$
$\dt$, {{$a,$ and some unital separable nuclear \CA\ $B\in{\cal P}.$}}}}
By {\blue{(2)}} {{and}}
{\blue{the fact that}} $B$ is nuclear, {\blue{and}}
by a consequence of Arveson's extension theorem
(see \cite[Theorem 2.3.13]{Lnbook}%, {\green{note that $B$ is nuclear}}
), there exists a c.p.c.~map
$\alpha': l^{\infty}(A)/c_0(A)\rightarrow B$ such {{that}}
\beq
\label{f3-10-1}
1_B\iota_A(x)1_B\approx_{2\delta}\alpha'(1_B\iota_A(x)1_B)
\mbox{ for all }x\in \bar{\calF}.
\eneq
Define a c.p.c.~map $\af: A\to B$ {\blue{by}} $x\mapsto \alpha'(1_B\iota_A(x)1_B)$.
For $x,y\in\calF$,
{\green{by \eqref{f3-10-1} and (2),}}
{\blue{we have}}
$\|\alpha(x)\|\geq
\|1_B{{\iota_A(x)}}1_B\|-2\delta
\geq \|x\|-3\delta$,
and
$$
\alpha(x)\alpha(y)
\overset{{\green{\eqref{f3-10-1}}}}{\approx_{4\delta}}
1_B{{\iota_A(x)}}1_B{{\iota_A(y)}}1_B
\overset{{\green{(1)}}}{\approx_{\delta}}
1_B{{\iota_A(xy)}}1_B
\overset{{\green{\eqref{f3-10-1}}}}{\approx_{2\delta}}
\alpha(xy).
$$
Thus (2) in Definition \ref{def-tracial_approximation} holds.
Since $B$ is nuclear and separable, by the Choi-Effors Lifting Theorem
{{(see \cite[Theorem 3.10]{ChoiE76})}},
%{\red{\bf Reference}},
there exists a c.p.c.~map $\beta: B\rightarrow l^{\infty}(A)$
such that $\pi_\infty\circ \beta=\mathrm{id}_B.$
Let $\beta_n: B\rightarrow A$ be the $n$-th component
of $\beta$.
{{Applying Lemma \ref{perturbation-c.p.c.-to-p.c.p},
%to
%each $\beta_n$ for all sufficiently large $n,$
we may also assume that $\beta_n(1_B)$ is a projection
{\blue{for all large $n.$}}}}
Since $\beta$ is a strict embedding,
%$B$ is strictly embedded,
$\{\beta_n\}$ satisfies
%(n\in\mathbb{N})$ satisfy
(3) in Definition \ref{def-tracial_approximation}.
%Let $\sigma_n:=\|\beta_n'(1_B)^2-\beta_n'(1_B)\|.$
%{\blue{Then}} $\lim_{n\rightarrow \infty} \sigma_n=0.$
%{\blue{Thus}} there exists $N_1\in\mathbb{N}$
%such {\blue{that,}} for all $ n> N_1$, $\sigma_n<1/8.$  Then,
%by Lemma \ref{perturbation-c.p.c.-to-p.c.p},
%for $n>{\blue{N_1,}}$
%there exists {\blue{a}} c.p.c.~map $\beta_n:B\rightarrow A$
%such that $\beta_n(1_B)$ is a projection and
%$\|\beta_n'-\beta_n\|%\leq {\blue{7\sigma_n^{1/2}}}\rightarrow 0$.
%For $1\leq n\leq N_1$, let $\beta_n:=0$.
%Note that $\{\beta_n\}_n$ also give a lift of $\mathrm{id}_B$.
%
%Then $\{\beta_n\}$
%(n\in\mathbb{N})$
%also satisfies (3) in Definition \ref{def-tracial_approximation}.

Define a c.p.c.~map $\gamma_n: A\rightarrow A$ {\blue{by}}
$x\mapsto (1_A-\beta_n(1_B))x(1_A-\beta_n(1_B))$.
{{Note that $\gamma_n(1_A)$ is a projection for all large $n,$
and
$\pi_\infty(\{\gamma_n(1_A)\})=\iota(1_A)-1_B.$}}
By {{(3)}}
and Lemma \ref{projection-Cuntz-subequivalent-lem},
{\blue{we may also assume,}}
%{\green{there exists $\{s_n\}\in l^\infty(A)$ such that
for {{all}} large $n,$
%$\gamma_n(1_A)=1_A-\beta_n(1_B)=s_n^*as_n.$
%{\blue{Thus}}
 $\gamma_n(1_A)\lesssim_A a$.
Hence (4) in Definition \ref{def-tracial_approximation} holds for
{{all}} large $n.$
%$n\geq N_2$.

\iffalse
Define a c.p.c.~map $\gamma_n: A\rightarrow A$ {\blue{by}}
$x\mapsto (1_A-\beta_n(1_B))x(1_A-\beta_n(1_B))$.
By {{(3)}}
and Lemma \ref{projection-Cuntz-subequivalent-lem},
there exists $s\in l^\infty(A)/c_0(A)$ such that
$
s^*\iota(a)s=\iota_A(1_A)-1_B.
$
Let $\{s_n\}\in l^{\infty}(A)$ such that $\pi_\infty(\{s_n\})=s.$
%
%Then there exists $N_2>N_1\in\mathbb{N}$ such that,
%{\blue{for all}} $ n\geq N_2$,
{{Then for sufficiently large $n,$}}
$\gamma_n(1_A)=1_A-\beta_n(1_B)\approx_{1/4}s_n^*as_n.$
{\blue{Thus}} $\gamma_n(1_A)\lesssim_A a$.
Hence (4) in Definition \ref{def-tracial_approximation} holds for
{\green{sufficiently large $n.$}}
%$n\geq N_2$.

\fi

By {{(1)}},
for all $x\in\calF$,
\begin{eqnarray*}
\iota_A(x)&\approx_{2\delta}&
(\iota_A(1_A)-1_B)\iota_A(x)(\iota_A(1_A)-1_B)+(1_B\iota_A(x)1_B)\\
&\approx_{2\delta}&
(\iota_A(1_A)-1_B)\iota_A(x)(\iota_A(1_A)-1_B)+\alpha(x)\\
&=&
\pi_\infty(\{1_A-\beta_n(1_B)\})\iota_A(x)\pi_\infty(\{1_A-\beta_n(1_B)\})
+\alpha(x)\\
%\overline{\{\gamma_n(1_A)\}_n}\iota_A(x)\overline{\{\gamma_n(1_A)\}_n}+\alpha(x)\\
&=&
\pi_\infty(\{(1_A-\beta_n(1_B))x(1_A-\beta_n(1_B))\}+\beta\circ \alpha(x))\\
&=&
\pi_\infty(\{\gamma_n(x)+\beta_n\circ \alpha(x)\}).
\end{eqnarray*}
Therefore
%there exists $N>N_2{\blue{(\in\mathbb{N})}}$ such that,
%{\blue{for all}} $ n\geq N$,
$x\approx_{4\delta}\gamma_n(x)+\beta_n\alpha(x)$
for all large $n.$
{\blue{Hence}}
%{\green{i.e.,}}
(1) in Definition \ref{def-tracial_approximation} holds
for all large $n.$
%for $n\geq N$. Thus
It follows that  $A$ is asymptotically tracially in $\mathcal{P}$.

%O1 O2 O3 O4 O5

\end{proof}

{\blue{The proof of the following is standard and can be found in
\cite[Theorem 5.3]{Lin01-2} {\blue{(see also}} \cite[Lemma 3.6.5]{Lnbook}).}}
%----{\bf find the original proof together with this reference.}

\begin{prop}\label{310}
{\blue{Let ${\cal P}$ be a class of unital \CA s which satisfy property (H).
Let $A$ be a unital simple \CA\, which satisfies the first part of the conclusion of {{Proposition}} \ref{equivalent-definition-of-tracial-approximation}
(associated with ${\cal P}$).
Then any unital hereditary \SCA\, $C$  of $A$ also satisfies the first part of the conclusion of {{Proposition}} \ref{equivalent-definition-of-tracial-approximation}  associated with ${\cal P},$ i.e.,
for any finite subset $\calF\subset C$,  any $\epsilon>0$, and any
$a \in C_+\backslash\{0\}$,
there exists a unital
%{\blue convergent}
$C^*$-subalgebra
$B\subset  l^\infty(C)/c_0(C)$ which is strictly embedded
such that $B$ in $\mathcal{P}$, and}}
%(recall notations defined in Notation \ref{notation-2})

{\blue{
(1) $1_B\iota_A(x)\approx_{\epsilon}\iota_A(x)1_B$\, {\blue{for all}} $ x\in\calF$,}}

(2) $1_B\iota_A(x)1_B\in_{\epsilon} B$ {\blue{and}}  $\|1_B\iota_A(x)1_B\|\geq \|x\|-\epsilon$\,
{\blue{for all}} $ x\in \calF$, {\blue{and}}

(3) $\iota_A(1_A)-1_B\lesssim_{
l^\infty(A)/c_0(A)}\iota_A(a)$.

\end{prop}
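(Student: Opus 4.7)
The plan is to apply the first-part conclusion of Proposition~\ref{equivalent-definition-of-tracial-approximation} to $A$ itself, with the test set augmented by $1_C$ and a sufficiently small tolerance, and then to cut the resulting subalgebra of $l^\infty(A)/c_0(A)$ down to a corner lying inside the canonical copy $\iota_A(1_C)(l^\infty(A)/c_0(A))\iota_A(1_C)=l^\infty(C)/c_0(C)$.

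Given $\calF\subset C$, $\epsilon>0$, and $a\in C_+\setminus\{0\}$, choose $\epsilon'\ll\epsilon$ and apply the hypothesis to $\calF\cup\{1_C\}\subset A^1$, $\epsilon'$, and $a$ (viewed in $A_+\setminus\{0\}$). This yields a strictly embedded unital $C^*$-subalgebra $B'\subset l^\infty(A)/c_0(A)$ with $B'\in\mathcal{P}$ satisfying (1)--(3) of Proposition~\ref{equivalent-definition-of-tracial-approximation} for $(\calF\cup\{1_C\},\epsilon',a)$. Condition (1) at $1_C$ shows that $1_{B'}$ approximately commutes with $\iota_A(1_C)$, and condition (2) at $1_C$ gives $1_{B'}\iota_A(1_C)1_{B'}\in_{\epsilon'}B'$. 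Combining these with functional calculus produces a projection $p\in B'$ close to $1_{B'}\iota_A(1_C)1_{B'}$; by property (H), $pB'p\in\mathcal{P}$. In parallel, the element $\iota_A(1_C)\,1_{B'}\,\iota_A(1_C)$ lies in the corner $l^\infty(C)/c_0(C)$ and, by the same commutation, is almost idempotent; functional calculus then furnishes a projection $p'\in l^\infty(C)/c_0(C)$ close to it, hence close to $p$. Since $\|p-p'\|<1$, \cite[Lemma~2.5.1]{Lnbook} yields a unitary $u\in l^\infty(A)/c_0(A)$ close to $1$ with $u^*pu=p'$. Define $B:=u^*(pB'p)u$, a unital $C^*$-subalgebra of $p'(l^\infty(A)/c_0(A))p'\subset l^\infty(C)/c_0(C)$ isomorphic to $pB'p\in\mathcal{P}$.

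Strict embedding of $B$ in $l^\infty(C)/c_0(C)$ is then straightforward: for $w\in pB'p\subset B'$, strict embedding of $B'$ supplies a norm-matching lift in $l^\infty(A)$; conjugating by unitary lifts of $u$ preserves norms, and the subsequent cut by $1_C$ alters the lift only by a null sequence (since the element lies in the $l^\infty(C)/c_0(C)$-corner), hence preserves the $\liminf$ of norms and places the lift in $l^\infty(C)$. Conditions (1) and (2) for $B$ follow by chaining the approximations $p'\approx p\approx 1_{B'}\iota_A(1_C)1_{B'}$, the commutation $[1_{B'},\iota_A(x)]\approx 0$, and $1_{B'}\iota_A(x)1_{B'}\in_{\epsilon'}B'$ with $\|1_{B'}\iota_A(x)1_{B'}\|\geq\|x\|-\epsilon'$, using that $x=1_C x 1_C$ for $x\in C$.

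The delicate step is condition (3). Using $[1_{B'},\iota_A(1_C)]\approx 0$ together with $p'\approx\iota_A(1_C)1_{B'}\iota_A(1_C)$, one checks that
\[
\iota_A(1_C)-p'\ \approx\ \iota_A(1_C)(\iota_A(1_A)-1_{B'})\iota_A(1_C),
\]
a positive element of the hereditary subalgebra $l^\infty(C)/c_0(C)$ that is Cuntz subequivalent in $l^\infty(A)/c_0(A)$ to $\iota_A(1_A)-1_{B'}$, which by (3) for $B'$ is Cuntz subequivalent to $\iota_A(a)=\iota_C(a)$. Since Cuntz subequivalence passes to hereditary subalgebras containing both elements, this yields (3) inside $l^\infty(C)/c_0(C)$. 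The main obstacle is precisely this bookkeeping: $p$ itself does not lie in $l^\infty(C)/c_0(C)$, and the point of introducing $p'$ and the unitary $u$ is exactly to realise the residue $\iota_A(1_C)-1_B$ as a genuine projection in the correct corner that remains dominated by $\iota_C(a)$.
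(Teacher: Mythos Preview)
Your approach is correct and is precisely the standard ``cut-down'' argument that the paper defers to \cite[Theorem~5.3]{Lin01-2} and \cite[Lemma~3.6.5]{Lnbook} rather than spelling out: apply the hypothesis in $A$ with $1_C$ added to the test set, use approximate commutation of $1_{B'}$ with $\iota_A(1_C)$ to produce a projection $p\in B'$ and a nearby projection $p'$ in the hereditary corner $l^\infty(C)/c_0(C)$, conjugate $pB'p$ into that corner via a unitary close to $1$, and invoke property~(H). Your treatment of the strict embedding and of condition~(3) via \cite[Proposition~2.2]{Rordam-1992-UHF2} (using that $\iota_C(1_C)-p'$ is a genuine projection dominated by $\iota_A(1_C)(\iota_A(1_A)-1_{B'})\iota_A(1_C)$ up to a small error, together with the fact that Cuntz subequivalence passes to hereditary subalgebras) is exactly the intended bookkeeping.
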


\section{Properties passing by {{asymptotical}}  tracial approximations}

In this section, it will be shown that, for certain classes of \CA s ${\cal P},$
if a unital simple \CA\, $A$ is
asymptotically tracially in ${\cal P},$ then $A$ is actually in ${\cal P}.$

%Recall the following definitions:
\begin{df}
%Let $A$ be a unital \CA.
{\blue{Recall that a unital \CA}}
$A$ is {\blue{finite,}} if for any {\blue{nonzero}} projection $p\in A$,
{\blue{$1_A\lesssim_A p$}} implies $p= 1_A$.
$A$ is called stably finite, if $A\otimes M_n$ is finite for all $n\in\N$.
%{\blue{\bf Is it a standard definition?}}
%Assume that $A$ is simple,
%then $A$ is called purely infinite, if for any $a\in A\backslash\{0\}$,
%$1_A\lesssim_A a$.
%{\bf{\blue{When you define here, you need to give a reference!}}}
\end{df}
%}}

%{\blue{\bf Comment from Xuanlong: We can modify the following statement, first to
%show tracially finite is finite,
%and the use the result on tensor product
%to show tracially stably finite is stably finite.}}

\begin{prop}\label{Tstablerank}
{\blue{Let $A$ be a unital separable simple \CA.}}

(a)\, Let  ${\cal P}_{f}$ be  the class of  unial finite \CA s.
If $A$ {{is}} asymptotically tracially in
${\cal P}_{f},$ then $A\in {\cal P}_{f}.$

{\blue{(b)\, Let  ${\cal P}_{sf}$ be  the class of  unial  stably finite \CA s.
If $A$ {{is}} asymptotically tracially in
${\cal P}_{sf},$ then $A\in {\cal P}_{sf}.$}}

{\blue{(c)\, Let ${\cal Q}$ be the class of separable quasidiagonal \CA s. If $A$ is asymptotically tracially in ${\cal Q},$
then $A\in {\cal Q}.$}}

\end{prop}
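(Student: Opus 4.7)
The plan is that in each of the three parts the conclusion will follow from the mere existence of a c.p.c. $(\calF,\epsilon)$-approximate embedding of $A$ into an algebra in the ambient class; the smallness condition (4) on $\gamma_n(1_A)$ plays no direct role. To streamline matters I will invoke Proposition~\ref{hereditary-subalgebra-preserves-tracially-approximation} with the hereditary subalgebra taken to be $A$ itself, so that the approximate embedding $\alpha$ may be assumed unital into an algebra $C$ which itself belongs to the class. Each of the three classes enjoys property (H): corners of finite, stably finite, and separable quasidiagonal \CAs\ remain finite, stably finite, and separable quasidiagonal respectively (an isometry in $eBe$ extends via $v+(1-e)$ to an isometry of the ambient algebra, and for ${\cal Q}$ one uses that quasidiagonality passes to $C^*$-subalgebras).

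For part (a), I will argue by contradiction. Suppose $A$ is not finite, and choose $v\in A$ with $v^*v=1_A$ and $p:=vv^*\neq 1_A$, setting $q:=1_A-p$, a nonzero projection of norm one. With $\calF=\{1_A,v,v^*,p,q\}$ and $\epsilon>0$ to be chosen later, produce a unital finite $C^*$-algebra $C$ together with a u.c.p.~$(\calF,\epsilon)$-approximate embedding $\alpha:A\to C$. Approximate multiplicativity of $\alpha$ on $\calF$ yields
\[
\alpha(v)^*\alpha(v)=\alpha(v^*)\alpha(v)\approx_\epsilon\alpha(v^*v)=\alpha(1_A)=1_C,
\]
so for $\epsilon$ small the element $\alpha(v)^*\alpha(v)$ is positive and invertible, and $z:=\alpha(v)(\alpha(v)^*\alpha(v))^{-1/2}$ is an isometry in $C$ satisfying $\|z-\alpha(v)\|=O(\epsilon)$. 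Finiteness of $C$ forces $z$ to be a unitary, so $\alpha(v)\alpha(v)^*\approx_{O(\epsilon)} zz^*=1_C$. Combining with $\alpha(v)\alpha(v)^*=\alpha(v)\alpha(v^*)\approx_\epsilon\alpha(vv^*)=\alpha(p)$ gives $\alpha(q)=1_C-\alpha(p)\approx_{O(\epsilon)}0$, contradicting the approximate isometry condition $\|\alpha(q)\|\geq\|q\|-\epsilon=1-\epsilon$ once $\epsilon$ is small.

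For part (b), the class ${\cal P}_{sf}$ is closed under $M_n(\cdot)$ since $M_n(B)\otimes M_k\cong B\otimes M_{nk}$ is finite whenever $B\in{\cal P}_{sf}$; Remark~\ref{Rm32} then gives that $M_n(A)$ is asymptotically tracially in ${\cal P}_{sf}\subset{\cal P}_{f}$, and since $M_n(A)$ is unital and simple, part (a) shows $M_n(A)$ is finite for every $n$, whence $A\in{\cal P}_{sf}$. For part (c), given a finite $\calF\subset A$ and $\epsilon>0$, produce a separable quasidiagonal $B\in{\cal Q}$ together with a c.p.c.~$(\calF,\epsilon/2)$-approximate embedding $\alpha:A\to B$; by Voiculescu's characterization of quasidiagonality applied to $B$ and the finite set $\alpha(\calF)\subset B$, there exist $k\in\N$ and a c.p.c.~map $\psi:B\to M_k$ that is an $(\alpha(\calF),\epsilon')$-approximate embedding for any preassigned $\epsilon'>0$. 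For $\epsilon'$ small enough, $\psi\circ\alpha:A\to M_k$ is a $(\calF,\epsilon)$-approximate embedding, and since $A$ is separable, the converse direction of Voiculescu's theorem yields $A\in{\cal Q}$. The only substantive step is the perturbation estimate in (a) converting ``approximately an isometry in a finite $C^*$-algebra'' into ``approximately unitary''; parts (b) and (c) are then formal consequences via Remark~\ref{Rm32} and Voiculescu's theorem.
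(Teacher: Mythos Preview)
Your proof is correct and follows essentially the same approach as the paper's: in (a) you both obtain a u.c.p.\ $(\calF,\epsilon)$-approximate embedding $\alpha$ into a finite algebra via Proposition~\ref{hereditary-subalgebra-preserves-tracially-approximation} and derive a contradiction from the image of an isometry witnessing infiniteness (you polar-decompose $\alpha(v)$ to an isometry and invoke finiteness directly, while the paper perturbs $\alpha(p)$ to a projection $e$ and shows $1_B\sim e\neq 1_B$; these are equivalent implementations of the same idea); parts (b) and (c) are handled identically in both via Remark~\ref{Rm32} and Voiculescu's characterization \cite{Voi91}.
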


\begin{proof}
%{\blue{
%Fix an integer $n\ge 1.$  Put $C=M_n(A).$  Note that $A$ is tracially in ${\cal P}_{f}.$
For (a), {\blue{assuming otherwise and}}  that
%$1_A$ is infinite. Then
{\blue{there is a projection $p\in A$ and  there is}} $v\in A$  such that
$v^*v=1_A$ and $vv^*:=p\not=1_A.$
Since $A$ is asymptotically tracially in ${\cal P}_{f},$
and ${\cal P}_{f}$ has property (H),
then by {{Proposition}} \ref{hereditary-subalgebra-preserves-tracially-approximation},
for any $\ep>0,$ with ${\cal F}=\{1_A, p, v, v^*, 1-p\},$
there is {{a}} {{u.c.p.}}~map $\af: A\to B$ for some unital
{{finite}} %stably finite
\CA\, $B$
which is  an $({\cal F}, \ep)$-approximate embedding.
%}}

%{\green{
With sufficiently small $\ep,$ we may assume that {{there}} is a projection
$e\in B$ such that
\beq\label{sf-3}
&&\|\af(1_A)-\af(p)\|\ge 1-1/64,
\\
\label{sf-2}
&&\af(v)^*\af(v)\approx_{1/64} \alpha(1_A)=1_B
\andeqn
\af(v)\af(v)^*\approx_{1/64} \alpha(p)\approx_{1/64} e.
\eneq
It follows from \eqref{sf-2} that   $1_B$ and $e$ are equivalent in $B,$
and {\blue{from}} \eqref{sf-3} that $\|1_B-e\|\geq 1/2,$
which {\blue{contradicts the assumption that}}
$B$ is {{finite.}}
%Thus $1_A$ is finite.
{\blue{In}} other words,
$A$ is in ${\cal P}_{f}.$
%}}

\iffalse

%{\blue{
%Fix an integer $n\ge 1.$  Put $C=M_n(A).$  Note that $A$ is tracially in ${\cal P}_{f}.$
Suppose that $1_A$ is infinite. Then there is $v\in A$ such that
$v^*v=1_A$ and $vv^*:=p\not=1_A.$
Since $A$ is tracially in ${\cal P}_{f},$  for any $\ep>0,$ with ${\cal F}=\{1_A, p, v, v^*, 1-p\},$
there is c.p.c.~map $\af: A\to B$ for some unital
{{finite}} %stably finite
\CA\, $B$
which is  an $({\cal F}, \ep)$-approximate embedding.
%}}

%{\blue{
With sufficiently small $\ep,$ we may assume that there {\blue{are projections}}
$e_1, e_2\in B$ such that
%$\af(1_A)-\af(p)>0.$
\beq\label{sf-3}
&&\|\af(1_A)-\af(p)\|\ge 1-1/64,\,\, \|\af(p)-\af(v)\af(v)^*\|<1/64,\\\label{sf-1}
&&\af(1_A)\approx_{1/64} e_1, \,\, \af(1_A) \af(v)\af(v)^* \af(1_A)\approx_{1/64} \af(v)\af(v)^*,\\\label{sf-2}
&&\af(v)^*\af(v)\approx_{1/64} e_1 \andeqn \af(v)\af(v)^*\approx_{1/64} e_2.
\eneq
 It follows from \eqref{sf-2} that   $e_1$ and $e_2$ are equivalent in $B,$ and, from \eqref{sf-1} and \eqref{sf-3} that
 \beq
 \|e_1e_2e_1-e_2\|<1/16\andeqn \|(e_1-e_2)\|\ge 1-1/16.
 \eneq
Therefore there is a projection $e_2'\le e_1$ such that $e_2'$ is equivalent to $e_2,$
and $\|(e_1-e_2')-(e_1-e_2)\|<1/2.$  {\blue{The former}} implies that there is $w\in B$ such that
$w^*w=e_1$ and $ww^*=e_2',$ and the latter implies that $e_1-e_2'\not=0.$
Hence $B$ is not
{{finite.}} %stably finite.
A contradiction.
Thus $1_A$ is finite.
In other words,
%$A$ is finite. %Since this holds
%for any $n\ge 1,$  one concludes  that
$A$ is in ${\cal P}_{f}.$
%}}

\fi

For (b), %one notes
{{note}} that
%{\green{for a unital \CA\ $B$,
{\blue{$B$}} in ${\cal P}_{sf}$ implies $M_n(B)$ in ${\cal P}_{sf}$
for all $n\in \N.$
{\blue{Therefore}} (b) follows from (a) and the last part of \ref{Rm32}.

\iffalse
{\green{
For (b), if $A$ is tracially in ${\cal P}_{sf},$ let $n\in\N$.
By Theorem \ref{tensor-of-tracial-approximation},
$A\otimes M_n$ is tracially in ${\cal P}_{sf}(={\cal P}_{sf}\otimes M_n)$.
Then $A\otimes M_n$ is tracially in ${\cal P}_{sf},$
which implies $A\otimes M_n$ is tracially in ${\cal P}_{f}.$
Then by (a), $A\otimes M_n$ is finite.
Since $n$ is arbitrary, $A$ is in  ${\cal P}_{sf}.$
}}
\fi

%{\blue{
For (c),  let
%${\cal F}\subset A\setminus \{0\}$
{{${\cal F}\subset A^1$}}
be a finite subset and let $\ep>0.$
%\Wlog, we may assume that $\|x\|\le 1$ for all $x\in {\cal F}.$
%Let $d_0=\inf\{\|x\|: x\in {\cal F}\}.$ We may assume that $\ep<d_0/4.$
{{By}}  {{Proposition}} \ref{hereditary-subalgebra-preserves-tracially-approximation},
there is a unital quasidiagonal \CA\, $B$ and a c.p.c.~map $\af: A\to B$
%and aequence of c.p.c.~maps $\bt_n: B\to A$
such
that
%}}
 \beq
%{\blue{
\|\af(a)\|\ge (1-\ep/4)\|a\|\andeqn \|\af(ab)-\af(a)\af(b)\|<\ep/4
 \rforal a, b\in {\cal F}.
 %}}
 \eneq

%{\blue{
\noindent
{{Since}} $B$ is quasidiagonal, by  {\blue{\cite[Theorem 1]{Voi91},}}
%(see also Proposition 3.1.4 of \cite{BKGILFD}\blue{(Why this reference?)}),
there is a c.p.c.~map $\bt: B\to F$ (for some finite dimensional \CA\, $F$) such that
$\|\bt(y)\|\ge \|y\|-\ep/16$ and $\|\bt(xy)-\bt(x)\bt(y)\|<\ep/16$ for all $x, y\in \af({\cal F}).$
Let $\phi=\bt\circ \af.$  Then $\phi$ is a c.p.c.~map from $A$ to
%a finite dimensional \CA\,
{\blue{$F.$}}
For all $a\in {\cal F},$
%{\blue{
\beq
\|\phi(a)\|=\|\bt\circ \af(a)\|\ge \|\af(a)\|-\ep/16\ge (1-\ep/4)\|a\|-\ep/16\ge \|a\|-\ep.
\eneq
Moreover, for all $a, b\in {\cal F}.$
\beq
\phi(ab)=\bt(\af(ab))\approx_{\ep/4} \bt(\af(a)\af(b))\approx_{\ep/16} \bt(\af(a))\bt(\af(b))=\phi(a)\phi(b).
\eneq
It follows from \cite[Theorem 1]{Voi91}
%{\blue{Proposition 3.1.4 of \cite{BKGILFD} (Why this reference?)}}
that $A$ is quasidiagonal.

%(2) $\lim_{n\to\infty}\|\bt_n(ab)-\bt

\end{proof}

The following is taken from the proof of   \cite[Lemma 2.4]{Lin1991}.
%{\blue{\bf We might want to state this earlier so it could be used more often.}}

\begin{lem}[{cf. \cite[Lemma 2.4]{Lin1991}}]
\label{L1991}
Let $A$ be a separable non-elementary simple \CA.
Then there exists a sequence {\blue{$\{d_n\}$ in}} $A_+$ such that $\|d_n\|=1,$
${{(n+1)}}\la d_{n+1}\ra
%\lesssim
\,{\overset{_\approx}{_<}}\, \la d_n\ra$ (recall the
{{D}}efinition \ref{Dcuntz}) ($n\in \N$), and,
for any $x\in A_+\setminus\{0\},$ there exists {{$N\in\N$}} such that
$\la {{d_N}}\ra \le \la x\ra.$ %for some $n\ge 1.$
\end{lem}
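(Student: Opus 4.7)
The construction will exploit two consequences of $A$ being simple and non-elementary, together with separability. First, I will record that for any non-zero $a,b\in A_+$ there exists a non-zero $c\in A_+$ with $c\lesssim_A a$ and $c\lesssim_A b$: since $A$ is simple, $a^{1/2}Ab^{1/2}\neq\{0\}$ (otherwise $a$ and $b$ would generate orthogonal, and hence zero, ideals), so pick $z\in A$ with $w:=a^{1/2}zb^{1/2}\neq 0$ and take $c:=w^*w\in\Her(b)$, noting that $c\sim ww^*\in\Her(a)$. Iterating yields a common non-zero Cuntz lower bound for any finite list of non-zero positive elements. Second, since $A$ is simple and non-elementary, every non-zero hereditary subalgebra is again simple and non-elementary, hence infinite-dimensional, and therefore contains any prescribed number of pairwise orthogonal non-zero positive elements (apply functional calculus to a positive element with infinite spectrum).

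Next, I will use separability to produce a sequence that is cofinal from below for the Cuntz order. Fix a countable dense sequence $\{y_m\}$ in $\{y\in A_+:\|y\|=1\}$, and set $e_m:=(y_m-1/2)_+$, which is non-zero. For any $x\in A_+\setminus\{0\}$, applying density to $\tilde x:=x/\|x\|$ yields an index $m$ with $\|y_m-\tilde x\|<1/2$, and then R\o rdam's standard inequality gives $e_m\lesssim_A \tilde x\sim_A x$.

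With these ingredients in place, I will construct $\{d_n\}$ inductively. Start with any $d_1\in A_+$ of norm $1$. Given $d_n$, the second ingredient supplies pairwise orthogonal non-zero positive elements $f_1,\ldots,f_{n+1}\in\Her(d_n)$, and the first ingredient, applied to the enlarged list $f_1,\ldots,f_{n+1},e_{n+1}$, produces a non-zero $c\in A_+$ with $c\lesssim_A f_i$ for each $i$ and $c\lesssim_A e_{n+1}$. Setting $d_{n+1}:=c/\|c\|$ gives $\|d_{n+1}\|=1$; the orthogonal elements $f_i\in\Her(d_n)$ together with $d_{n+1}\lesssim_A f_i$ witness $(n+1)\la d_{n+1}\ra\,{\overset{_\approx}{_<}}\,\la d_n\ra$, while the extra relation $d_{n+1}\lesssim_A e_{n+1}$ supplies the final cofinality: for any $x\in A_+\setminus\{0\}$ one selects $m$ as above with $e_m\lesssim_A x$, and then $\la d_m\ra\le\la e_m\ra\le\la x\ra$, so one may take $N:=m$.

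The main obstacle is that the two constraints on $d_{n+1}$, namely the decreasing count $(n+1)\la d_{n+1}\ra\,{\overset{_\approx}{_<}}\,\la d_n\ra$ and the cofinality relation $d_{n+1}\lesssim_A e_{n+1}$, must be met simultaneously by a single element; folding $e_{n+1}$ into the common-lower-bound step at each stage of the induction is precisely what resolves this tension.
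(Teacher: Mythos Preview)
Your proof is correct and follows essentially the same approach as the paper's: both use a dense sequence in the positive unit sphere, cut down by $(\,\cdot\,-1/2)_+$, to obtain a Cuntz-cofinal family, then inductively find a common Cuntz lower bound for $n+1$ orthogonal elements in $\Her(d_n)$ together with the next cofinal element. The only cosmetic slip is that your relation $d_m\lesssim_A e_m$ is built in only for $m\ge 2$ while $d_1$ is arbitrary; this is harmless since density gives infinitely many admissible $m$, so one may always take $N=m\ge 2$ (or simply start with $d_1:=e_1/\|e_1\|$).
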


\begin{proof}
The proof is contained in the proof of  \cite[Lemma 2.4]{Lin1991}.
Let %{\green{$\{z_n\}$}} %
$\{x_n\}$
be a dense sequence of the
{\blue{unit sphere}}
%{\blue{unit ball}}
of $A,$
let $z_n=(x_n^*x_n)^{1/2}$ {\blue{and}}
%let
$y_n=f_{1/2}(z_n),$
%and let  $B_n:={\rm Her}(f_{1/2}(z_n)),$
{{$n\in\N.$}}
%$y_n=f_{1/2}(z_n),$  $n=1,2,....$
The proof of \cite[Lemma 2.4]{Lin1991} shows that,
for any $x\in A_+\setminus \{0\},$
(we may assume that $\|x\|=1$)
there exists $N$ such that $y_N\lesssim x.$
Indeed, as exactly in the proof of \cite[Lemma 2.4]{Lin1991},
there is an integer $N$ such that $\|x-{\blue{z_N}}\|$ is sufficiently small, {\blue{and,}}
%such that,
with $1/{\blue{8}}>\ep>0,$
%$1/4>\ep>0,$
$$
\|f_\ep(x)-f_\ep({\blue{z_N}})\|<\ep/4.
$$
By  {\blue{\cite[Proposition 2.2]{Rordam-1992-UHF2},}}
\beq\label{L1991-1}
y_N\lesssim f_{1/4}(z_N)\lesssim f_{\ep/2}(f_{\ep}(z_N))\lesssim f_{\ep}(x)\lesssim x.
\eneq

\noindent
{{Now}} let $d_1=y_1/\|y_1\|.$
There are $2$ mutually orthogonal
nonzero elements $z_{1,1},z_{1,2}\in {\rm Her}(d_1)_+$
(as in the proof of \cite[Lemma 2.4]{Lin1991}).
By  \cite[Lemma 2.3]{Lin1991}, for example,
there is $d_{2}\in {\rm Her}(d_1)_+$ such that $\|d_{2}\|=1$ and
$d_{2}\lesssim y_{2},\,z_{1,1},z_{1,2}.$
{{It}} follows that $2\la d_{2}\ra \,\overset{_\approx}{_<}\, \la d_1\ra.$

Suppose $d_1,d_2,
{{\cdots}},d_n$
have been chosen so that  $\|d_j\|=1,$
$d_j\lesssim y_j$
{\blue{($j=1,2,
{{\cdots}},n$),}} and
{\blue{$(j+1)\la d_{j+1}\ra\, {{\overset{_\approx}{_<}}} \, \la d_{j} \ra$}}
{\blue{($j=1, 2, {{\cdots}},n-1$).}}
%if $1\le i\le j,$ $j=1,2,..,m.$
There are $n+1$ mutually orthogonal
nonzero elements $z_{n,1}, z_{n,2},
{{\cdots}},z_{n,n+1}\in {\blue{{\rm Her}(d_n)_+}}$ (as in the proof of  \cite[Lemma 2.4]{Lin1991}).
By  \cite[Lemma 2.3]{Lin1991}, for example,
there is $d_{n+1}\in {\blue{{\rm Her}(d_n)_+}}$ such that $\|d_{n+1}\|=1$ and
$d_{n+1}\lesssim y_{n+1},\,z_{n,i},$  $ i=1,2,
{{\cdots}},n+1.$  It follows that $(n+1)\la d_{n+1}\ra \,\overset{_\approx}{_<}\, \la d_n\ra.$

By the induction, %one
{{we}} obtain a sequence $\{d_n\}$ such that $\|d_n\|=1,$
$d_{{n}}\lesssim y_n,$
and $n\la d_{{n+1}}\ra \,\overset{_\approx}{_<}\, \la d_n\ra,$ {{$n\in\N.$}}
By \eqref{L1991-1}, for any $x\in A_+\setminus \{0\},$ there is $N$ such that $d_N\lesssim y_N\lesssim x.$

\end{proof}

%{\blue{\bf I think that ${\cal P}$ in the following proposition
%${\cal P}$ consists of simple \CAs\ is too special.---XL.}}

\iffalse
{\green{\bf I suggest that the following statement should be changed
to same type as \ref{Tstablerank1},
and then delete \ref{Rm45}.---X.F.}}
\fi

\begin{prop}\label{Ptrpi=pis}
Let ${\cal P}$ be the class of separable purely infinite simple \CA.
Suppose that $A$ is a unital simple \CA\, which is
asymptotically tracially in ${\cal P}.$
Then $A$ is {{a}} purely infinite simple \CA.

\end{prop}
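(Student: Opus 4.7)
The plan is to transfer the proper infiniteness of $1_B$, for any $B$ in the approximating class $\mathcal{P}$, to a projection inside $A$, and then to invoke the standard fact that a unital simple \CA\ containing a nonzero properly infinite projection is purely infinite simple.

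First I would apply Proposition \ref{hereditary-subalgebra-preserves-tracially-approximation} with $\mathcal{F}=\{1_A\}$, some $\epsilon\in(0,1/4)$, and any nonzero $a\in A_+$. Since $\mathcal{P}$ visibly has property (H) (a corner of a purely infinite simple \CA\ is purely infinite simple), this yields a unital separable purely infinite simple \CA\ $B\in\mathcal{P}$, a u.c.p.~map $\alpha:A\to B$, and c.p.c.~maps $\beta_n:B\to A$, $\gamma_n:A\to A\cap\beta_n(B)^\perp$, such that $p_n:=\beta_n(1_B)$ and $q_n:=\gamma_n(1_A)$ are mutually orthogonal projections with $p_n+q_n=1_A$, and $\beta_n$ is asymptotically multiplicative and asymptotically isometric.

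Next, since $B$ is purely infinite simple and unital, $1_B$ is properly infinite, so there exist $s_1,s_2\in B$ with $s_i^*s_j=\delta_{ij}1_B$. Setting $x_i^{(n)}:=\beta_n(s_i)\in A$, the asymptotic multiplicativity of $\beta_n$ gives, as $n\to\infty$,
\[
\|(x_i^{(n)})^*x_j^{(n)}-\delta_{ij}p_n\|\to 0\,\,\,\text{and}\,\,\,\|x_i^{(n)}-p_nx_i^{(n)}p_n\|\to 0.
\]
Viewing $X_n:=(p_nx_1^{(n)}p_n,\,p_nx_2^{(n)}p_n)$ as an element of $M_{1,2}(p_nAp_n)$, we then have $X_n^*X_n\to\mathrm{diag}(p_n,p_n)$ in $M_2(p_nAp_n)$. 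A standard polar-decomposition / functional-calculus perturbation carried out inside the corner $p_nAp_n$ (applying $h(t)=t^{-1/2}$ on the upper part of the spectrum of $X_n^*X_n$, then adjusting by a unitary in $M_2(p_nAp_n)$ close to the identity) produces, for all sufficiently large $n$, elements $T_1^{(n)},T_2^{(n)}\in p_nAp_n$ with $(T_i^{(n)})^*T_j^{(n)}=\delta_{ij}p_n$. Consequently $T_1^{(n)}(T_1^{(n)})^*$ and $T_2^{(n)}(T_2^{(n)})^*$ are mutually orthogonal sub-projections of $p_n$, each Murray--von Neumann equivalent to $p_n$; hence $p_n$ is a nonzero properly infinite projection in $A$.

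Finally, since $A$ is a unital simple \CA\ containing the properly infinite projection $p_n$, the standard result of Cuntz (see, e.g., \cite{Lnbook}) implies that $A$ is purely infinite simple. The main technical obstacle is the perturbation step: one must arrange the polar decomposition so that the resulting $T_i^{(n)}$ actually land inside $p_nAp_n$, which is what guarantees $T_i^{(n)}(T_i^{(n)})^*\leq p_n$ and in turn yields proper infiniteness of $p_n$ rather than merely of $1_A$ in an ambient matrix algebra. This is ensured by first compressing $x_i^{(n)}=\beta_n(s_i)$ by $p_n$ on both sides---an operation that introduces only $o(1)$ error thanks to the asymptotic multiplicativity $\beta_n(1_Bs_i1_B)\approx p_n\beta_n(s_i)p_n$.
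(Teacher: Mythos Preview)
Your construction up through the perturbation producing isometries $T_1^{(n)},T_2^{(n)}\in p_nAp_n$ is correct and does show that $p_n$ is a nonzero properly infinite projection in $A$. The gap is in the last sentence: the implication ``$A$ unital simple with a nonzero properly infinite projection $\Rightarrow$ $A$ purely infinite'' is \emph{false}. R{\o}rdam (Acta Math.\ 2003) constructed a unital simple separable nuclear $C^*$-algebra containing both a nonzero finite projection and an infinite one; since in a simple $C^*$-algebra every infinite projection is automatically properly infinite, that algebra contains a properly infinite projection (indeed its unit is properly infinite) yet it is not purely infinite. There is no result of Cuntz asserting what you claim.

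The data you have produced---a properly infinite $p_n$ together with $1_A-p_n\lesssim_A a$---are not enough, and notice that you never actually use the second condition. What is missing is an application of the \emph{full} pure infiniteness of $B$ to a positive element coming from $A$. Split the given $a$ into two orthogonal nonzero pieces $a_0,a_1\in\Her_A(a)_+$, put $b:=f_{1/2}(a_1)$ into $\mathcal{F}$, and use $a_0$ in the role of the ``small'' element. Since $B$ is purely infinite simple, $1_B\lesssim_B b_1$ for some $b_1\in B_+$ approximating the image of $b$; pushing this back through the asymptotically multiplicative $\beta_n$ gives $p_n\lesssim_A a_1$, and combining with $1_A-p_n\lesssim_A a_0$ yields $1_A\lesssim_A a_0+a_1\lesssim_A a$. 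This is exactly the paper's argument, and it is the idea your proof lacks.
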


\begin{proof}
We may assume that $A$ is not elementary.
Let $a\in A_+\setminus\{0\}.$ It suffices to show that $1_A\lesssim a$
(\cite{Cuntz1981}, see also \cite{LZ1991}).
%(see \cite[4.1.1]{Rordam 2002}).
We may assume that $\|a\|=1.$
%{\bf{\blue{Wrong place----Also, I think, now this should refer also  to Cuntz,
%perhaps, also a paper of L-Zhang-1991}}}
%Since $A$ is also assumed to be infinite dimensional,
By applying {{Lemma}} \ref{L1991} to $\Her(a),$  we obtain  two nonzero mutually orthogonal  elements $a_0$ {\blue{and}}  $a_1$ with $\|a_0\|=1$ and $\|a_1\|=1$ such
that $a_0+a_1\lesssim a.$
Let $b=f_{1/2}(a_1)$ and let
{\blue{$\ep:=1/2^{10}.$}}
%$p:=1/2^{10}.$
Since $A$ is asymptotically  tracially in ${\cal P},$ by
{{Proposition}}
{\blue{\ref{equivalent-definition-of-tracial-approximation}}}, there exists a unital
%{\blue convergent}
$C^*$-subalgebra
$B\subset  l^\infty(A)/c_0(A)$ which is strictly embedded
such that $B$ in {\blue{$\mathcal{P}$, and}}
%(recall notations defined in Notation \ref{notation-2})

(1) $1_B\iota_A(b)\approx_{\epsilon}\iota_A(b)1_B,$
%, $\forall x\in\calF$,

(2) $1_B\iota_A(b)1_B\in_{\epsilon} B$, $\|1_B\iota_A(b)1_B\|\geq \|b\|-\epsilon$, {\blue{and}}
%$\forall x\in \calF$,

(3) $\iota_A(1_A)-1_B\lesssim_{
l^\infty(A)/c_0(A)}\iota_A(f_{1/2}(a_0))$.

\noindent
{{By}} (2), there exists an element $b_1\in B_+$ such that
\beq
\label{f4-4prop-1}
\|1_B{\blue{\iota_A(b)}}1_B-b_1\|<\ep=1/2^{10}.
\eneq
Since $B$ is purely infinite, {\blue{by \cite[Proposition 4.1.1]{Rordam 2002},}} there is $x\in B$ such that $x^*f_{1/2}(b_1)x=1_B.$
{There exists a   sequence of
projections $p_n \in A$} such that
$\pi_{\infty}(\{p_n\})=1_B,$ where $\pi_{\infty}: l^\infty(A)\to l^\infty(A)/c_0(A)$ is the quotient map.
Then we obtain $\{x_n\}, \{b_{1,n}\}\in l^\infty(A)$  (with $\pi_{\infty}(\{x_n\})=x$ and $b_1=\pi_{\infty}(\{b_{1,n}\})$) such that
\beq
\label{f4-4prop-2}
{\blue{\lim_{n\to\infty}\|x_n^*f_{1/2}(b_{1,n})x_n-p_n\|=0 \andeqn
%\eneq
%%and
%\beq
%\label{f4-4prop-3}
\limsup\|p_nbp_n-b_{1,n}\|\le \ep.}}
\eneq
Then \eqref{f4-4prop-2}
{\blue{(see  \cite[Proposition 2.2]{Rordam-1992-UHF2} again)}}
implies {\blue{that,}}
for large $n$,
{{\beq
\label{f4-4prop-4}
p_n \lesssim f_{1/2}(b_{1,n})
\mbox{ and }
f_{1/2}(b_{1,n})\lesssim p_nbp_n.
\eneq}}
\iffalse
{\blue{Hence,}} by \eqref{f4-4prop-2}, %one
{{we}} also {{obtain}}
{\blue{(see \cite[2.5.5, 2.5.1]{Lnbook})}}
%a (bounded) sequence
%$y_n\in A$ such that for all large $n,$
{{$\{y_n\}\in l^\infty(A)$}} such that for all large $n,$
\beq
\label{f4-4prop-5}
p_n =y_n^* f_{1/2}(b_{1,n})y_n.
\eneq
{\blue{{\blue{Therefore,}}  by \eqref{f4-4prop-5} and \eqref{f4-4prop-4}, for large $n$,
\beq
p_n \lesssim f_{1/2}(b_{1,n})
\lesssim
p_nbp_n
\lesssim b.
\eneq
}}
\fi
%  {\blue{It follows  that
%$p_n\lesssim  p_nbp_n$ for all large $n.$}}
{{On the other hand,}}  by (3) and
Lemma \ref{projection-Cuntz-subequivalent-lem},
%as argued above,  there exists $\{z_n\}\in l^\infty(A)$
%such that $z_n^*f_{1/2}(a_0)z_n=1-p_n$
$1-p_n\lesssim f_{1/2}(a_0)$ for all large $n.$
It follows that, for %
{\blue{all sufficiently
large}} $n,$
\beq
1_A=(1-p_n)+p_n\lesssim f_{1/2}(a_0)+b=f_{1/2}(a_0)+f_{1/2}(a_1)\lesssim a.
\eneq
\end{proof}

\begin{rem}\label{Rm45}
Let $A$ be a unital separable simple \CA\, and let ${\cal P}$ be the class of unital purely infinite simple \CA s.
Suppose that $A$ satisfies the conclusion of the first part of  {{Proposition}}
\ref{equivalent-definition-of-tracial-approximation} with ${\cal P}$ above. Then the proof
of  {{Proposition}} \ref{Ptrpi=pis}  shows that $A$ is purely infinite.

\end{rem}

\begin{thm}
\label{quasitrace-on-tracially-exact-CA-are-trace}
Let ${\cal T}$ be  the
% {the}
class of unital \CA s $B$ such that every $2$-quasitrace of $B$ is a trace.
Suppose that $A$ is a unital separable \CA\, satisfying the following conditions:
For any $\ep>0,$ any {\blue{$\eta>0,$}} and any finite subset
{{${\cal F}\subset A,$}}
there exist a unital \CA\ $B$ {{in}} ${\cal T}$,
and c.p.c maps
$\alpha: A\rightarrow B,$
$\beta_n: B\rightarrow A$, {\green{and}}
%u.c.p.~maps
$\gamma_n: A\rightarrow A$
($n\in\mathbb{N}$) {\blue{such}} that

(1) $c\approx_{\eta}\gamma_n(c)+\beta_n\circ\alpha(c)$
{{for}} all $c\in{\cal F}$ and $n\in\mathbb{N},$

(2) $\alpha$ is {\blue{an}}
$(\calF, {{\eta}})$-{{approximate embedding}},

(3) $\lim_{n\rightarrow\infty}\|\beta_n(b_1b_2)-\beta_n(b_1)\beta_n(b_2)\|=0$ {\blue{and}}
{$\lim_{n\rightarrow \infty}\|\beta_n(b_1)\|= \|b_1\|$}
for all  $b_1,b_2\in B,$ {{and}}

(4) $\sup\{\tau(\gamma_n(1_A)): \tau\in QT(A)\}<\ep$ for all $n\in\mathbb{N}.$

\noindent
{{Then}} $A\in{\cal T}.$

In particular,  if $A$ is a unital separable simple \CA\, which is
asymptotically tracially in ${\cal T},$
then $A\in {\cal T}.$
% above assertion holds for all unital tracially nuclear \CAs.
\end{thm}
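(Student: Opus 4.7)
The plan is to show every $\tau\in QT(A)$ satisfies $\tau(a+b)=\tau(a)+\tau(b)$ for all $a,b\in A_{sa}$; by the Blanchard--Haagerup theory \cite[II]{BH-trace1982} this forces $\tau\in T(A)$. So fix $\tau\in QT(A)$, self-adjoint $a,b\in A^1$, and $\epsilon>0$; the goal is $|\tau(a+b)-\tau(a)-\tau(b)|<\epsilon$. First, I apply the hypothesis to $\mathcal{F}\supset\{1_A,a,b,a+b\}$ with parameters $\eta,\epsilon'$ to be chosen small later, obtaining $B\in\mathcal{T}$ and c.p.c.~maps $\alpha,\beta_n,\gamma_n$ satisfying (1)--(4). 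Using Lemmas \ref{perturbation-c.p.c.-to-p.c.p} and \ref{orthogonal_p.c._map}, at the cost of an $O(\sqrt\eta)$-worsening of the constants, I may arrange that $\alpha(1_A)$ is a projection in $B$ and that $p_n:=\gamma_n(1_A)$ and $q_n:=\beta_n\circ\alpha(1_A)$ are orthogonal projections in $A$ with $p_n+q_n=1_A$ for each large $n$.

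The second step is to pull $\tau$ back to a 2-quasitrace on $B$. Fix a free ultrafilter $\omega$ on $\mathbb{N}$ and define $\tau_B(y):=\lim_\omega\tau(\beta_n(y))$ for $y\in B$. By (3) and Lemma \ref{c.p.c.-almost-multiplicative-easy}, $\beta_n$ is asymptotically multiplicative, so
\[\tau_B(y^*y)=\lim_\omega\tau(\beta_n(y)^*\beta_n(y))=\lim_\omega\tau(\beta_n(y)\beta_n(y)^*)=\tau_B(yy^*),\]
using the trace identity for $\tau$ on $A$. For commuting $y_1,y_2\in B_{sa}$, the commutators $[\beta_n(y_1),\beta_n(y_2)]$ tend to zero in norm; the Blanchard--Haagerup continuity inequality \cite[II.2.6]{BH-trace1982} then gives $|\tau(\beta_n(y_1)+\beta_n(y_2))-\tau(\beta_n(y_1))-\tau(\beta_n(y_2))|\to 0$, and since $\beta_n$ is linear I obtain $\tau_B(y_1+y_2)=\tau_B(y_1)+\tau_B(y_2)$. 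The same construction applied to $\tau$ extended to $M_2(A)$ provides an extension of $\tau_B$ to $M_2(B)$, so $\tau_B\in QT(B)$. Since $B\in\mathcal{T}$, $\tau_B$ is in fact a trace, hence additive everywhere on $B$.

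Third, I would bound the $\gamma_n$ contribution to $\tau$. For $x\in A_{sa}^1$, the Kadison--Schwarz inequality yields $\gamma_n(x)^2\le\gamma_n(x^2)\le\gamma_n(1_A)=p_n$, which forces $\gamma_n(x)\in p_nAp_n$. The restriction $\tau|_{p_nAp_n}$ is a 2-quasitrace of norm $\tau(p_n)<\epsilon'$ (by (4)), whence $|\tau(\gamma_n(x))|\le\epsilon'$. Analogously $\beta_n\circ\alpha(c)\in q_nAq_n$ for $c\in\mathcal{F}_{sa}$ with $\|c\|\le 2$. Since $p_n\perp q_n$, the elements $\gamma_n(c)$ and $\beta_n\circ\alpha(c)$ commute (their product vanishes), so by the linearity of $\tau$ on the abelian \SCA\ they generate,
\[\tau(\gamma_n(c)+\beta_n\circ\alpha(c))=\tau(\gamma_n(c))+\tau(\beta_n\circ\alpha(c))\qquad\text{for each }c\in\mathcal{F}.\]

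Finally, combining with (1) and the norm continuity of $\tau$, for $c\in\{a,b,a+b\}$ I get $\tau(c)=\tau(\beta_n\circ\alpha(c))+O(\eta+\epsilon')$. Using the linearity of $\alpha$ and $\beta_n$, $\beta_n\circ\alpha(a+b)=\beta_n\circ\alpha(a)+\beta_n\circ\alpha(b)$; the additivity of $\tau_B$ from the second step gives $\tau_B(\alpha(a+b))-\tau_B(\alpha(a))-\tau_B(\alpha(b))=0$, so $|\tau(\beta_n\circ\alpha(a+b))-\tau(\beta_n\circ\alpha(a))-\tau(\beta_n\circ\alpha(b))|\to 0$ along $\omega$. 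Thus $|\tau(a+b)-\tau(a)-\tau(b)|\le O(\eta+\epsilon')<\epsilon$ for suitable $\eta,\epsilon'$. For the ``in particular'' part: Definition \ref{def-tracial_approximation} supplies (1)--(3); Proposition \ref{hereditary-subalgebra-preserves-tracially-approximation} allows me to assume $\gamma_n(1_A)$ is a projection; and Lemma \ref{L1991} provides $d_N\in A_+\setminus\{0\}$ with $d_\tau(d_N)\le 1/N!$ uniformly over $\tau\in QT(A)$, so taking $a=d_N$ for large $N$ secures (4) via $\tau(\gamma_n(1_A))=d_\tau(\gamma_n(1_A))\le d_\tau(a)<\epsilon$. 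The chief technical obstacle is the careful verification that $\tau_B$ (and in particular its $M_2$-extension) is a 2-quasitrace, and tracking quasitrace continuity constants through the perturbations of Step 1.
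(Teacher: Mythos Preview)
Your proposal is correct and follows essentially the same strategy as the paper: fix $\tau\in QT(A)$ and $a,b\in A_{sa}$, pull $\tau$ back along an ultrafilter limit of the $\beta_n$'s to obtain a 2-quasitrace $\tau_B$ on $B$ (which is then a trace since $B\in\mathcal T$), and use additivity of $\tau_B$ together with smallness of the $\gamma_n$-part to deduce $\tau(a+b)\approx\tau(a)+\tau(b)$.

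There are two minor technical differences worth recording. First, for the splitting $\tau(\gamma_n(c)+\beta_n\circ\alpha(c))=\tau(\gamma_n(c))+\tau(\beta_n\circ\alpha(c))$, you perturb via Lemma~\ref{orthogonal_p.c._map} to make $\gamma_n(1_A)$ and $\beta_n\circ\alpha(1_A)$ \emph{exactly} orthogonal projections, so that $\gamma_n(c)\cdot\beta_n\circ\alpha(c)=0$ and the splitting is exact by linearity of $\tau$ on commutative subalgebras; the paper instead only arranges approximate commutation $\|\gamma_n(c)\beta_n\circ\alpha(c)-\beta_n\circ\alpha(c)\gamma_n(c)\|<\delta$ and invokes the quantitative continuity estimate \cite[II.2.6]{BH-trace1982} to split $\tau$ up to $\epsilon$. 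Both work; yours trades a slightly heavier perturbation step for a cleaner splitting. Second, the paper obtains $\tau_B$ more economically: it first notes that $\tau_\omega(\pi_\omega(\{a_n\})):=\lim_\omega\tau(a_n)$ is a 2-quasitrace on $l^\infty(A)/J$ (citing the paragraph above \cite[Cor.~II.2.6]{BH-trace1982}), and then simply composes with the $*$-homomorphism $\beta:B\to l^\infty(A)/J$ induced by (3). This sidesteps your direct verification of the quasitrace axioms for $\tau_B$ (including the $M_2$-extension), which is correct but more laborious.
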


\begin{proof}

%{\blue{By {\green{\bf \ref{orthogonal_p.c._map},}}
%the proof of \ref{hereditary-subalgebra-preserves-tracially-approximation}
%(see \eqref{311-n}),
%we may also assume, \wilog,}}

%{\blue{(5): $\|\gamma_n(a)\bt_n\circ \af(a)-\bt_n\circ \af(a)\gamma_n(a)\|<
%\dt
%{\green{\ep}}$
%for all $a, b\in {\cal F}$ {\green{and all $n\in\N.$}}}}

Let $\tau\in QT(A).$
Fix $x, y\in {\blue{A_{sa}}}$
%{\green{with $\|x\|,\|y\|\leq 1/2,$}}
{\blue{and fix $1/2>\ep>0.$
Choose $0<\dt<\ep$ which satisfies
the condition in  \cite[II. 2.6]{BH-trace1982}.}}
% let $1/2>\ep=\dt>0.$

Fix $0<\eta<\dt.$
 Choose ${\cal F}=\{1_A, x, y, x+y\}.$
%Let $\ep=\dt>0$ be any positive number.
Let $B,$ $\af,\, \bt_n$ and $\gamma_n$ be as above associated with
{\blue{$\ep,$ $\eta$}} and ${\cal F}.$
{\blue{By {{Lemma}} \ref{orthogonal_p.c._map},
%the proof of \ref{hereditary-subalgebra-preserves-tracially-approximation}
%(see \eqref{311-n}),
we may also assume, \wilog,}}

{\blue{(5{{)}} $\|\gamma_n(a)\bt_n\circ \af(a)-\bt_n\circ \af(a)\gamma_n(a)\|<\dt$
%\dt
%{\green{\ep}}$
for all ${{a}}\in {\cal F}$ {{and all $n\in\N.$}}}}

\noindent
{{Let}} $\omega$ be a free ultra filter on $\N$.
{{Let {\blue{$J:=\{\{a_n\}\in l^\infty(A):\lim_\omega\|a_n\|=0\}.$}}
Note that $J$ is an ideal of $l^\infty(A).$}}
Let $\pi_\omega: l^\infty(A)\to l^\infty(A)/J$ be the quotient map.
Let $\tau_\omega: l^\infty(A)/J\to \C$ be defined by
$\tau_\omega(\pi_\omega(\{a_n\})):=\lim_{n\to\omega}\tau(a_n)$
for all $\{a_n\}\in l^\infty(A).$
Note that $\tau_\omega\in QT(l^\infty(A)/J)$
(see the paragraph above \cite[Corollary II.2.6]{BH-trace1982}).
%{\red{\bf Assuming that we have checked the proof.}}

Define an injective *-homomorphism from $\bt: B \rightarrow  A_\omega$ by
%l^{\infty}(A)/c_0(A)$ by
%\beq
%\beta: B &\rightarrow&l^{\infty}(A)/c_0(A)
%\nonumber\\
$\bt(x)=
\pi_{\omega}(\{\beta_1(x),\beta_2(x),\cdots\})$ for all $x\in B.$
%\eneq
%By
%Proposition \ref{Lquasi-filter},
%{quasitrace-ultrafilter},
Then
$\tau_\omega\circ\beta$ is a 2-quasitrace on $B$ (with $\|\tau_\omega\circ \bt\|\le 1$).
Since $B$ is  in ${\cal T},$
%exact and unital,
%Haagerup's theorem \cite{Haa} ensures that
%all 2-quasitraces on $B$ are traces,
%then
\beq
\lim_{i\rightarrow \omega}
\tau\circ\beta_i(\alpha(x)+\alpha(y))
&=&
\tau_\omega\circ\beta(\alpha(x)+\alpha(y))
=
%\\
%&=&
\tau_\omega\circ\beta(\alpha(x))
+\tau_\omega\circ\beta(\alpha(y))
\\
&=&
\lim_{i\rightarrow \omega}
\tau\circ\beta_i(\alpha(x))
+\lim_{i\rightarrow \omega}
\tau\circ\beta_i(\alpha(y))
\\
&=&
\lim_{i\rightarrow \omega}
(\tau\circ\beta_i(\alpha(x))
+\tau\circ\beta_i(\alpha(y))).
\eneq
Therefore there exists $m\in\N$,
such that
\beq
\label{f-QT-2}
\tau\circ\beta_m(\alpha(x)+\alpha(y))
\approx_{\delta}
\tau\circ\beta_m(\alpha(x))
+\tau\circ\beta_m{\blue{(\alpha(y)).}}
\eneq
Note that, for any $a\in A_{sa},$  $\|\tau(a)\|\le \|\tau\|\|a\|\le  \|a\|$ (see \cite[II.2.5, (iii)]{BH-trace1982}).
{{Then}}

\beq
{\blue{\tau(x+y)}}
%\nonumber\\
%\mbox{(by (1))}
&\approx_{{{\eta}}}&
\tau(\beta_m\circ\alpha(x+y)+\gamma_m(x+y))
\nonumber\\
\mbox{{\blue{(by (5) and \cite[II.2.6]{BH-trace1982})}}}
&\approx_{{\green{\|x+y\|\ep}}}&
\tau(\beta_m\circ\alpha(x+y))+\tau(\gamma_m(x+y))
\nonumber\\
%\mbox{(by \eqref{f-QT-1})}
\mbox{by (4)}
&\approx_{\|x+y\|\ep}&
\tau(\beta_m\circ\alpha(x+y))
\nonumber\\
\mbox{(by \eqref{f-QT-2})}
&\approx_{{\green{\eta}}}&
\tau\circ\beta_m(\alpha(x))
+\tau\circ\beta_m(\alpha(y))
\nonumber\\
%\mbox{(by \eqref{f-QT-1})}
&\approx_{(\|x\|+\|y\|){\green{\ep}}}&
\tau\circ\beta_m(\alpha(x))+\tau(\gamma_m(x))
+\tau\circ\beta_m(\alpha(y))+{\blue{\tau(\gamma_m(y))}}
\nonumber\\
\mbox{{{(by (5) and \cite[II.2.6]{BH-trace1982})}}}
&\approx_{{\green{(\|x\|+\|y\|)\ep}}}&
\tau(\beta_m(\alpha(x))+\gamma_m(x))
+\tau(\beta_m(\alpha(y))+{\blue{\gamma_m(y)}})
\nonumber\\
\mbox{(by (1))}&\approx_{\green{2\eta}}&
\tau(x)+\tau(y).
\eneq
%Since $\ep$ is arbitrary,
{\blue{Let $\ep,\ {{\eta}}\to 0.$}}
{\blue{We}} have $\tau(x+y)=\tau(x)+\tau(y).$
%Since $x$ and $y$ are arbitrary,
{\blue{It follows that}} $\tau$ is linear.  In other words,
%$\forall x,y\in A_{sa}$.
%By Lemma \ref{trace-criterion-quasitrace-sel-adjoint},
$\tau$ is a trace.

To see the last part, assume that $A$ is a unital separable simple \CA\, which is asymptotically tracially in
${\cal T}.$ We may assume that $A$ is infinite dimensional.
Then, for any $\ep>0,$
%{\blue{choose $N\ge 1$ with}} $1/N<\ep.$
by Lemma \ref{L1991} (cf. \cite[3.5.7]{Lnbook}),
there is a nonzero positive element
$a\in A$ with $\|a\|=1$ such that
$\sup\{{{d_\tau(a)}}: \tau\in QT(A)\}<\ep.$  By
%{\red{\ref{hereditary-subalgebra-preserves-tracially-approximation},}}
%
the Definition \ref{def-tracial_approximation}
and  applying what has been proved,
%one concludes
{{we conclude}} that
every 2-quasitrace of $A$ is a trace.

\end{proof}

{\blue{\begin{cor}\label{CQT=T}
%Let ${\cal T}$ be a class of unital separable \CA s whose 2-quasitraces are traces.
%%Suppose that $A$ is tracially in ${\cal T}.$ Then every 2-quasitrace of $A$ is a trace.
%Consequently, if
If $A$ is asymptotically tracially in ${\cal E},$ in particularly, in  ${\cal N},$  then $QT(A)=T(A).$
\end{cor}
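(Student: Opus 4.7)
The plan is to reduce the statement directly to Theorem \ref{quasitrace-on-tracially-exact-CA-are-trace}. The key input that I would cite is Haagerup's theorem on quasitraces: every $2$-quasitrace on a unital exact \CA\ is in fact a trace. This yields the inclusion $\mathcal{E}\subseteq \mathcal{T}$, where $\mathcal{T}$ denotes the class of unital \CA s in which every $2$-quasitrace is a trace, as introduced in the statement of Theorem \ref{quasitrace-on-tracially-exact-CA-are-trace}. Since asymptotical tracial approximation is monotone in the target class (an immediate consequence of Definition \ref{def-tracial_approximation}), any $A$ that is asymptotically tracially in $\mathcal{E}$ is a fortiori asymptotically tracially in $\mathcal{T}$.

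Next, I would invoke the ``in particular'' clause of Theorem \ref{quasitrace-on-tracially-exact-CA-are-trace}, which asserts that every unital separable simple \CA\ asymptotically tracially in $\mathcal{T}$ actually lies in $\mathcal{T}$. Since the standing hypotheses of Definition \ref{def-tracial_approximation} require $A$ to be unital and simple, this applies to our $A$ and gives $A\in\mathcal{T}$. Unpacking the definitions of $QT(A)$ and $T(A)$ from Definition \ref{2-quasitrace}, this is precisely the equality $QT(A)=T(A)$. The parenthetical ``in particular'' assertion about $\mathcal{N}$ follows instantly from $\mathcal{N}\subseteq\mathcal{E}$.

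There is essentially no obstacle here; all the substantive analytical work---both the ultrafilter construction producing a quasitrace $\tau_\omega$ on $l^\infty(A)/J$ and the commutator-perturbation argument powered by \cite[II.2.6]{BH-trace1982}---has already been packaged inside Theorem \ref{quasitrace-on-tracially-exact-CA-are-trace}, and Haagerup's theorem supplies the only other ingredient needed. The proof is therefore a one-line reduction.
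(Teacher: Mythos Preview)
Your proposal is correct and matches the paper's intended argument exactly: the corollary is stated without proof, immediately after Theorem \ref{quasitrace-on-tracially-exact-CA-are-trace}, and is meant to follow by combining that theorem with Haagerup's result that $2$-quasitraces on unital exact \CA s are traces (so $\mathcal{E}\subseteq\mathcal{T}$), together with the trivial monotonicity of asymptotic tracial approximation in the target class. One small point worth noting: the ``in particular'' clause of Theorem \ref{quasitrace-on-tracially-exact-CA-are-trace} assumes $A$ is separable (this enters via Lemma \ref{L1991}), so strictly speaking separability should be part of the hypotheses here as well; the paper leaves this implicit in the corollary.
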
}}

The proof of the following is taken from the proof of \cite[3.6.10]{Lnbook}
(see also \cite[Theorem 3.4]{Lin 2001-1},
{{\cite[3.3]{FF-05-tsr}, and \cite[4.3]{EN08}}}).
%{\bf{\blue{Add two more references---one from Fan Qingzai and from Elliott-Niu}}}
{{Recall that a \CA\ $A$ is called has
(SP) property, if every nonzero hereditary $C^*$-subalgebra of $A$
contains a nonzero projection.}}

\begin{thm}[cf. {\cite[Theorem 3.6.10]{Lnbook}}]
\label{Tstablerank1}
Let ${\cal S}$ be the class of unital \CA s with stable rank one.  Suppose that $A$ is a unital simple \CA\,
satisfying the following condition:
{{For}} any finite subset $\calF\subset A$,  any $\epsilon>0$, and any
$a \in A_+\backslash\{0\}$,
there exists a unital
%{\blue convergent}
$C^*$-subalgebra
$B\subset  l^\infty(A)/c_0(A)$ which is strictly embedded
such that $B$ in $\mathcal{S}$, and
%(recall notations defined in Notation \ref{notation-2})

(1) $1_B\iota_A(x)\approx_{\epsilon}\iota_A(x)1_B$\, {\blue{for all}} $ x\in\calF$,

(2) $1_B\iota_A(x)1_B\in_{\epsilon} B$ {\blue{and}}  $\|1_B\iota_A(x)1_B\|\geq \|x\|-\epsilon$\,
{\blue{for all}} $ x\in \calF$,  {\blue{and}}

(3) $\iota_A(1_A)-1_B\lesssim_{
l^\infty(A)/c_0(A)}\iota_A(a)$.

\noindent
{\blue{{{Then}} $A$ {{in}} ${\cal S}.$}}
{{Consequently,}} if $A$
%Let $A$ be a unital simple \CA\, which
is asymptotically tracially in ${\cal S},$
% where ${\cal S}$ is the class of unital \CA s with stable rank one.
then $A$ {{in}} ${\cal S}.$
\end{thm}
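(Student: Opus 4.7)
The aim is to show that the invertible elements are dense in $A$. Given $x \in A$ and $\epsilon > 0$, I will produce an invertible $y \in A$ with $\|x - y\| < \epsilon$. The strategy follows the classical line initiated by Lin in \cite[Theorem 3.6.10]{Lnbook}, with refinements from \cite[Theorem 3.4]{Lin 2001-1}, \cite{FF-05-tsr}, and \cite{EN08}. First reduce to $A$ infinite-dimensional (otherwise $A$ is a full matrix algebra, trivially in $\mathcal{S}$) and normalize $\|x\| \le 1$. By Lemma \ref{L1991}, choose a positive $a \in A_+$ of norm one whose Cuntz class $\la a \ra$ is small enough to absorb the perturbations appearing below.

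Apply the hypothesis with $\calF = \{1_A, x, x^*\}$, a small tolerance $\epsilon' \ll \epsilon$, and this $a$, obtaining a unital, strictly embedded $B \subset l^\infty(A)/c_0(A)$ of stable rank one such that $p := 1_B$ almost commutes with $\iota_A(x)$, $p\iota_A(x)p$ is within $\epsilon'$ of some $b \in B$, and $q := \iota_A(1_A) - p \lesssim \iota_A(a)$. Since $B$ has stable rank one, replace $b$ by an invertible (in $B$) element $b'$ with $\|b - b'\| < \epsilon'$. Lift $b'$ and its $B$-inverse to sequences $\{b_n\}, \{c_n\} \in l^\infty(A)$, and $p$ to a sequence of projections $\{p_n\}$ in $A$. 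For all sufficiently large $n$, after a small correction (via Lemma \ref{perturbation-c.p.c.-to-p.c.p}-type reasoning), the element $b_n$ lies in $p_n A p_n$ and satisfies $b_n c_n \approx p_n \approx c_n b_n$ to within $1/2$, hence $b_n$ is invertible in $p_n A p_n$ with $\|b_n - p_n x p_n\|$ small. By $q \lesssim \iota_A(a)$ and Lemma \ref{projection-Cuntz-subequivalent-lem}, for all large $n$ there are partial isometries $s_n \in A$ with $s_n^* s_n = 1 - p_n$ and $s_n s_n^* \in \Her(a)$.

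The principal obstacle is to assemble $b_n$, $(1-p_n) x (1-p_n)$, and a small perturbation into a single invertible $y_n \in A$ close to $x$: the corner $(1-p_n) A (1-p_n)$ is not known \emph{a priori} to have stable rank one, so $(1-p_n)x(1-p_n)$ cannot be inverted there directly. The remedy is to introduce a small non-block-diagonal term built from $s_n$ and set $y_n := b_n + (1-p_n) x (1-p_n) + \lambda (s_n + s_n^*)$ for a carefully chosen small $\lambda > 0$; invertibility of $y_n$ in $A$ is then verified by an explicit Schur-complement computation in the $p_n \oplus (1-p_n)$ decomposition, the feasibility of which rests on the smallness of $\la a \ra$ (governing both the norm of the off-diagonal term and the Schur-complement estimate in the small corner). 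The bound $\|s_n\| \le 1$ yields $\|\lambda(s_n + s_n^*)\| \le 2\lambda$, so $\|x - y_n\| < \epsilon$ for $\epsilon'$, $\lambda$ suitably small. The technical core of the argument is this Schur-complement estimate — in particular, accommodating the fact that $s_n$ is not strictly off-diagonal relative to $p_n$, since $s_n s_n^*$ may meet both corners — and it is here that one mirrors the detailed calculations of \cite[Theorem 3.6.10]{Lnbook}.

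The ``consequently'' clause is immediate from Proposition \ref{equivalent-definition-of-tracial-approximation}: the class $\mathcal{S}$ has property (H), so when $A$ is asymptotically tracially in $\mathcal{S}$ (passing, if necessary, to separable stable-rank-one $C^*$-subalgebras of the approximating algebras), the hypothesis of the theorem is supplied by the first part of that proposition, hence $A \in \mathcal{S}$.
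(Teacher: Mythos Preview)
Your outline has a genuine gap at exactly the point you flag as ``the technical core'': the Schur-complement step cannot be made to work as you describe, and the reason is that you have omitted the key preliminary reduction.

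In the paper's proof (following \cite[Lemma 3.6.9]{Lnbook}, \cite[Lemma 3.5]{Rordam-1991-UHF}), one first observes that $A$ is stably finite, so $x$ may be taken not one-sided invertible; then, after replacing $x$ by $ux$ for some unitary $u$, one may assume there exists a nonzero $c_1 \in A_+$ with $c_1 x = x c_1 = 0$. In the (SP) case one picks a nonzero projection $p_1 \in \Her(c_1)$, so $x$ lives entirely in $(1-p_1)A(1-p_1)$, and the tracial approximation is applied \emph{inside} $A_1 = (1-p_1)A(1-p_1)$ (via Proposition \ref{310}) with the Cuntz bound taken against a projection $p_1' \lesssim p_1$. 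The resulting partial isometry $v$ then satisfies $v^*v = \iota_{A_1}(1_{A_1}) - q$ and $vv^* \le \iota_A(p_1)$; because the $p_1$-corner is orthogonal to $x$, the element
\[
y_3 = x_2 + (\epsilon/16)v + (\epsilon/16)v^* \;=\; \begin{pmatrix} x_2 & (\epsilon/16)v^* \\ (\epsilon/16)v & 0 \end{pmatrix}
\]
has a zero $(2,2)$ block, and is therefore invertible \emph{for any} $x_2$ by the explicit anti-diagonal inverse. No estimate on $x_2$ is needed.

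Your construction, by contrast, places $(1-p_n)x(1-p_n)$ in the $(2,2)$ block, and your $s_n$ lands in $\Her(a)$, which bears no prescribed orthogonality relation to $p_n$. Even granting a clean $2\times 2$ structure, the Schur complement would be $(1-p_n)x(1-p_n)$ minus a term of size $O(\lambda^2 \|b_n^{-1}\|)$, and there is no mechanism that forces this to be invertible in the corner: smallness of $\la a \ra$ is a Cuntz-semigroup condition and says nothing about invertibility of a given element. The argument cannot close without the orthogonal ``escape'' projection $p_1$ supplied by the Rørdam--Lin reduction. (When $A$ lacks (SP), the paper forces the complementary projection to be zero, and the argument becomes trivial; your sketch does not separate these cases.)

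Your treatment of the ``consequently'' clause is correct.
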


\begin{proof}
Note that \CA s in ${\cal S}$ are stably finite
(see {\blue{\cite[Proposition 3.3.4]{Lnbook}}}).
%{\blue{\bf Let me present a proof which assume that $A$ is (stably) finite.}}
One may assume that $A$ is infinite dimensional. Let $x\in A.$
{\blue{It will be shown}} that, for any
%$1/2>\ep>0,$
{{$\ep\in(0,1/2),$}}
there exists an invertible
element $y\in A$ such that $\|x-y\|<\ep.$ One may assume that $\|x\|\le 1$ and $x$ is not invertible.
{\blue{As $A$ is stably finite, {{one}} may assume that $x$ is not one-sided invertible.}}
To show that $x$ is a norm limit of invertible elements, it suffices to show that $ux$ is a norm limit of invertible elements
for some unitary $u\in A.$ Thus, by  \cite[Lemma 3.6.9]{Lnbook}
({\blue{also see \cite[Lemma 3.5]{Rordam-1991-UHF}}}),
%({\blue{\bf find a original reference to Rordam, perhaps}}),
{\blue{one}} may assume
that there exists a nonzero element $c_1\in A_+$ such that $c_1x=xc_1=0.$

{\blue{First}} consider the case  that $A$ has (SP) property.
Then, by \cite[Lemma 3.6.6]{Lnbook},
there are nonzero mutually orthogonal projections $p_1, p_2\in \Her(c_1).$
Consider $A_1=(1-p_1)A(1-p_1).$
Since $A$ is simple and has (SP) property,
there is a nonzero projection
{{$p_1'\in A_1$ such that}}
$p_1'\lesssim p_1$
{{(see, {\blue{for example,}} \cite[Lemma 3.5.6]{Lnbook}).}}
%{\red{\bf Or, find some original reference}}
Note $x\in A_1.$ Since ${\cal S}$ has property (H)
{{(see \cite[Corollary 3.6]{BP95}),}}
%{\red{\bf Reference---one from Brown-Pedersen}},
by  {{Proposition}} \ref{310},
%\ref{hereditary-subalgebra-preserves-tracially-approximation},
$A_1$   has the same property that $A$ has, namely,
%is tracially
%in ${\cal S}.$
there is a projection
{{$q\in %\iota_{A_1}(A_1)\subset
l^\infty(A_1)/c_0(A_1)$}}  and
a \SCA\, $B$ of $l^\infty(A_1)/c_0(A_1)$ with $B\in {\cal S}$  and with $1_B=q$ such that

{{($1'$)}} $\|q\iota_{A_1}(x)-\iota_{A_1}(x)q\|<\ep/32{{,}}$

{{($2'$)}} ${{q\iota_{A_1}(x)q}}\in_{\ep/32} B{{,}}$ and

{{($3'$)}} $\iota_{A_1}(1_{A_1})-q\lesssim_{l^\infty(A_1)/c_0(A_1)} \iota_{A_1}(p_1')\lesssim_{l^\infty(A)/c_0(A)}  \iota_{A}(p_1).$

\noindent
{{Write}} $x_1=q\iota_{A_1}(x)q$ and $x_2=(\iota_{A_1}(1_{A_1})-q)\iota_{A_1}(x)(\iota_{A_1}(1_{A_1}-q).$
{\blue{Then,}} by ($1'$), {\blue{one has}
\beq\label{STr-0}
\|\iota_{A_1}(x)-(x_1+{{x_2}})\|<\ep/16.
\eneq
Since $B\in {\cal S},$ there is an invertible element $y_1\in B$ such that
\beq\label{STr-1}
\|x_1-y_1\|<\ep/16.
\eneq
By {{($3'$)}}, there is $v\in l^\infty(A)/c_0(A)$ such that $v^*v=\iota_{A_1}(1_{A_1})-q=\iota_A(1_A-p_1)-q$
and $vv^*\le \iota_A(p_1).$  Set $y_2:=x_2+(\ep/16)v+(\ep/16)v^*+(\ep/16)(\iota_A(p_1)-vv^*).$
Note that $y_3:=x_2+(\ep/16)v+(\ep/16)v^*$ has the form}
$$
\begin{pmatrix} x_2 & {{(\ep/16)v^*}}\\
                        {{(\ep/16)v}}   & 0\end{pmatrix}.
                        $$
One checks that $y_3$ is invertible in
{{$\Her_{l^\infty(A)/c_0(A)}((\iota_A(1_A-p_1)-q)+vv^*).$}}
%$((\iota_A(1_A-p_1)-q)+vv^*)(l^\infty(A)/c_0(A))((\iota_A(1_A-p_1)-q)+vv^*).$
Therefore $y_2$ is invertible in
%{\green{$\Her_{l^\infty(A)/c_0(A)}((\iota_A(1_A-p_1)-q)+vv^*).$}}
${{\Her_{l^\infty(A)/c_0(A)}(\iota_A(1_A)-q).}}$
Hence $y_1+y_2$ is invertible in $l^\infty(A)/c_0(A).$
Moreover,
\beq\label{STr-2}
\|x_2-y_2\|<\ep/8.
\eneq
Finally, one has   (by {{\eqref{STr-0}, \eqref{STr-1}}} and \eqref{STr-2})
\beq
\label{f4-9-1}
\nonumber
%x\approx \gamma_n(x)+\bt_n(\af(x))\approx_{\ep
\hspace{-0.3in}\|\iota_A(x)-(y_1+y_2)\|
&\le&
{{\|\iota_A(x)-(x_1+x_2)\|+\|x_1-y_1\|+\|x_2-y_2\|}}\\\label{STr-10}
\hspace{-0.2in}&<&
%\ep/16+\|x_2-{\blue{y_2}}\|+\|x_1-{\blue{y_1}}\|<
\ep/16+\ep/16+\ep/8=\ep/4.
\eneq
Let $z\in l^\infty(A)/c_0(A)$ {\blue{be}} such that $z(y_1+y_2)=(y_1+y_2)z=1_{l^\infty(A)/c_0(A)}.$
Let $\{z(n)\}, \{y(n)\}\in l^\infty(A)$ such that $\pi_\infty(\{z(n)\})=z$ and $\pi_\infty(\{y(n)\})=y_1+y_2.$
Then, for all large $n,$ $\|z(n)y(n)-1_A\|<1/2$ and $\|y(n)z(n)-1\|<1/2.$ It follows
that $y(n)$ is invertible for all sufficiently large $n.$  By \eqref{f4-9-1}, for all sufficiently large $n,$
$$
\|x-y(n)\|<\ep.
$$
\noindent
This proves the case that $A$ has (SP) property.

If $A$ does not have (SP) {{property}},
one does not choose $p_1$ and $p_2.$ However, there is $a\in A_+\setminus\{0\}$
such that $\Her(a)$ has no nonzero projection.  Replacing $p_1$ by $a$ above.
Since $\gamma_n(1_A)$ is a projection,
$\gamma_n(1_A)\lesssim a$ implies
{\blue{that there is  $s\in A$ such that
$s^*s=\gamma_n(1_A)$ and $ss^*\in
\Her(a)$ (see {{Lemma}} \ref{projection-Cuntz-subequivalent-lem}) which forces $\gamma_n(1_A)=0.$}}
%which implies that
%{\green{$\gamma_n(1_A)$ is
%Murray-von Neumann equivalent to a projection in $\Her(a)$
%(see \ref{projection-Cuntz-subequivalent-lem}),
%which forces
%$\gamma_n(1_A)=0.$}}
Thus, in this case,
one may assume that $\gamma_n=0$.
Argument becomes simpler.  Indeed, choosing $A_1=A,$
then $x\approx_{\ep/16} x_1 \approx_{\ep/16} y_1.$

The last part of the statement follows the first part and {{Proposition}}
\ref{equivalent-definition-of-tracial-approximation}.
\end{proof}

\begin{lem}\label{L410}
\label{tracially-almost-unperforated-lemma}
Let ${\cal W}$ be the class of unital \CA s whose Cuntz semigroup is almost unperforated
{{(recall {{Definition}} \ref{Dcuntz}).}}
%{\green{(see Definition 3.1 of \cite{Rordam-2004-Jiang-Su-stable}).}}
%{\red{(see Definition 3.1 of \cite{Rordam 2002}).}}
Let   $A$ be  a unital  simple \CA\, which  is
asymptotically tracially in ${\cal W}$ and
%the ``Cuntz semigroup is almost unperforated'' class.
%$Let
$a,b,c\in A_+\backslash\{0\}.$
Suppose  that there exists $n\in\N$ satisfying $(n+1)\la a\ra\leq n\la b\ra.$
Then, for any $\ep>0$,
there exist $a_1,a_2\in A_+$ and a projection $p\in A$
such that

(1) $a\approx_{\epsilon}a_1+a_2$,

(2) $a_1\lesssim _A b$, {{and}}

(3) $a_2\leq \|a\|p\lesssim_A c$.

\end{lem}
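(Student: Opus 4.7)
The plan is to use the asymptotic tracial approximation to reduce the problem to an almost unperforated algebra $C\in\mathcal{W}$, where the hypothesis $(n{+}1)\la a\ra\le n\la b\ra$ can be converted into a genuine Cuntz sub-equivalence, and then to pull the resulting witness back to $A$ via the maps $\beta_m$. The output decomposition will be $a_1:=(\beta_m\alpha(a)-\dt')_+$ and $a_2:=\gamma_m(a)$, mirroring the orthogonal splitting $a\approx_{\ep'}\beta_m\alpha(a)+\gamma_m(a)$ supplied by Proposition~\ref{hereditary-subalgebra-preserves-tracially-approximation}; here $a_2$ automatically satisfies $a_2\le\|a\|\gamma_m(1_A)$, and the projection $p:=\gamma_m(1_A)$ can be arranged to be Cuntz-dominated by $c$.

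First, $\mathcal{W}$ has property (H), since the Cuntz semigroup of a hereditary \SCA\ embeds into that of the ambient algebra and so inherits almost unperforation. Pick $c_0:=f_{1/4}(c)$, so $c_0\lesssim_A c$. From $(n{+}1)\la a\ra\le n\la b\ra$, select, for a small $\dt_1>0$ to be determined, a witness $r\in M_{n+1}(A)$ with
\[
\bigl\|\mathrm{diag}(a,\dots,a)-r^*\mathrm{diag}(b,\dots,b,0)r\bigr\|<\dt_1
\]
(with $n{+}1$ copies of $a$ and $n$ copies of $b$). Apply Proposition~\ref{hereditary-subalgebra-preserves-tracially-approximation} with finite set $\mathcal{F}_0:=\{a,b\}\cup\{r_{ij}\}$, tolerance $\ep'$, and small element $c_0$, to produce $C\in\mathcal{W}$ and c.p.c.~maps $\alpha\colon A\to C$ (unital), $\beta_m\colon C\to A$, and $\gamma_m\colon A\to pAp$ with $p:=\gamma_m(1_A)$ a projection, $p+\beta_m(1_C)=1_A$, the standard approximation and asymptotic multiplicativity properties, and $p\lesssim_A c_0\lesssim_A c$.

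Applying $\alpha\otimes\mathrm{id}_{M_{n+1}}$ and using approximate multiplicativity of $\alpha$ on $\mathcal{F}_0$, one obtains
\[
\bigl\|\mathrm{diag}(\alpha(a),\dots,\alpha(a))-s_0^{\,*}\mathrm{diag}(\alpha(b),\dots,\alpha(b),0)s_0\bigr\|<\dt_1',
\]
with $s_0:=(\alpha\otimes\mathrm{id})(r)$ and $\dt_1'$ controlled by $\dt_1+\|r\|^2\ep'$. By \cite[Proposition~2.2]{Rordam-1992-UHF2}, this gives $(n{+}1)\la(\alpha(a)-\dt_1')_+\ra\le n\la\alpha(b)\ra$ in $W(C)$, and almost unperforation of $W(C)$ yields $(\alpha(a)-\dt_1')_+\lesssim_C\alpha(b)$; choose a witness $s\in C$ with $\|s^*\alpha(b)s-(\alpha(a)-\dt_1')_+\|<\dt_2$. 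For $m$ large, approximate multiplicativity of $\beta_m$ together with approximate compatibility with continuous functional calculus (via Lemma~\ref{c.p.c.-almost-multiplicative-easy}) gives
\[
\bigl\|\beta_m(s)^*\beta_m\alpha(b)\beta_m(s)-(\beta_m\alpha(a)-\dt_1')_+\bigr\|<2\dt_2.
\]
Since $b\approx_{\ep'}\gamma_m(b)+\beta_m\alpha(b)$ with the two summands in orthogonal corners $pAp$ and $(1-p)A(1-p)$, we get $\beta_m\alpha(b)\approx_{\ep'}(1-p)b(1-p)\sim b^{1/2}(1-p)^2b^{1/2}\lesssim_A b$. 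Chaining Rørdam's cutting lemma, set $a_1:=(\beta_m\alpha(a)-2\dt_1')_+\lesssim_A b$ and $a_2:=\gamma_m(a)$; then $a_2\le\|a\|p\lesssim_A c$ (as $\gamma_m$ is c.p.c.), and $\|a-(a_1+a_2)\|\le\ep'+2\dt_1'$. Choosing $\ep',\dt_1,\dt_2$ small enough at the outset yields (1)--(3).

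The main obstacle is the coordination of the smallness parameters $\ep',\dt_1,\dt_1',\dt_2$ together with the dependence of $\|r\|$ on $\dt_1$: the witness $r$ is fixed first, then $\ep'$ must be chosen small enough relative to $\|r\|$ to control the transferred error $\dt_1'$, while $\dt_2$ and the required largeness of $m$ depend on both. Each application of Rørdam's cutting lemma must be verified to produce a genuine, not merely approximate, Cuntz sub-equivalence in the target algebra.
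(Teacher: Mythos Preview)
Your outline follows the same route as the paper's proof: transfer the relation $(n{+}1)\la a\ra\le n\la b\ra$ to $C\in\mathcal W$ via $\alpha$, apply almost unperforation there, pull a witness back through $\beta_m$, and set $a_1=(\beta_m\alpha(a)-\text{const})_+$, $a_2=\gamma_m(a)$, $p=\gamma_m(1_A)$. The order-of-choices bookkeeping you flag (choose $r$ first, then $\ep'$ small relative to $\|r\|$) is exactly right.

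There is, however, a genuine gap in your ``chaining'' step. You obtain $(\alpha(a)-\dt_1')_+\lesssim_C\alpha(b)$, transfer to $(\beta_m\alpha(a)-\dt_1'-2\dt_2)_+\lesssim_A\beta_m\alpha(b)$, and then want to combine this with $\beta_m\alpha(b)\approx_{\ep'}(1-p)b(1-p)\lesssim_A b$. But the implication ``$x\lesssim y$ and $\|y-z\|<\eta$ $\Rightarrow$ $(x-\eta)_+\lesssim z$'' is false in general: passing from $\beta_m(s)^*\beta_m\alpha(b)\beta_m(s)$ to $\beta_m(s)^*(1-p)b(1-p)\beta_m(s)$ costs $\|\beta_m(s)\|^2\ep'$, and $\|s\|$ is determined only \emph{after} $C$, $\alpha$, and hence $\ep'$ are fixed, so you cannot absorb this error into a pre-chosen cut. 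The paper avoids this by cutting $\alpha(b)$ in $C$ by a margin $\dt_1>0$ fixed \emph{before} the tracial approximation (with the approximation tolerance $\dt<\dt_1$): one gets $(\alpha(a)-\epsilon/8)_+\lesssim_C(\alpha(b)-\dt_1)_+$, and after transferring via $\beta_N$ this yields $a_1\lesssim_A(\beta_N\alpha(b)-\dt_1)_+$. The latter is then dominated by $b$ directly via
\[
(\beta_N\alpha(b)-\dt_1)_+\ \le\ (\beta_N\alpha(b)+\gamma_N(b)-\dt)_+\ \lesssim_A\ b,
\]
using only orthogonality and $\|\beta_N\alpha(b)+\gamma_N(b)-b\|<\dt<\dt_1$, with no witness norm entering. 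Inserting this pre-cut of $\alpha(b)$ is the one missing ingredient in your argument.
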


%\fi

\begin{proof}

Without loss of generality, {{one}} may assume that
$a,b,c\in A_+^1\backslash\{0\}$ and $\epsilon<1/2$.
%Let $m\in\mathbb{N}$ with $m>n$.
%
Let $\{e_{i,j}\}$ be a set of matrix units of $M_{n+1}.$
Then $a\otimes \sum_{i=1}^{n+1}e_{i,i}\lesssim_{A\otimes M_{n+1}}
b\otimes \sum_{i=1}^{n}e_{i,i}$.
%Let \red{$\epsilon/32\in(0,1)$}.
Let $r=\sum_{i,j=1}^{n+1}r_{i,j}\otimes e_{i,j}\in A\otimes M_{n+1}$
such that $a\otimes \sum_{i=1}^{n+1}e_{i,i}
\approx_{\epsilon/64}r^*(b\otimes \sum_{i=1}^{n}e_{i,i})r$.
Set
$$\calF:=\{a,b\}\cup\{r_{i,j},r_{i,j}^*:i,j=1, {\blue{2},} {{\cdots,}} n+1\}.$$
Let $M:=1+\|r\|$ and choose
$\delta_1:=\frac{\epsilon}{64M^2(n+1)^4}.$
Note that
\beq\label{410-n1}
r^*(b\otimes \sum_{i=1}^{n+1}e_{i,i})r
\approx_{\epsilon/64}r^*((b-\delta_1)_+\otimes \sum_{i=1}^{n+1}e_{i,i})r.
\eneq

%Let \red{$\delta_x:=$},
\iffalse
By Lemma \ref{c.p.c.-appro-commute-with-func-calcu},
there exists $\delta_2\in(0,\delta_1)$,
such that for any \CA\ C, any c.p.c.~map $\phi: A\rightarrow C$,
if $\phi(b^2)\approx_{\delta_2}\phi(b)^2$,
then $(\phi(b)-\delta_1)_+\approx_{\epsilon/32}\phi((b-\delta_1)_+)$.
\fi
%By Lemma \ref{approximation lemma 1},
%there exists $\delta_4>0$
%such that $\forall x,y,z\in B_{0,2}(A_+)$,
%if $x\leq y\approx_{\delta_4}z$, then $\|x-f_{\delta_4}\|$
%Let $\delta:=\min\{(\frac{\epsilon}{32M(n+1)^2})^4 ,\delta_2\}$.
{\blue{\noindent
{{Note}} that ${\cal W}$ has property (H)
(see the line {{following {{Definition}}  \ref{Dcuntz}}}).}}
Since $A$ is asymptotically tracially in ${\cal W},$
%the ``Cuntz semigroup is almost unperforated'' class,
by Proposition \ref{hereditary-subalgebra-preserves-tracially-approximation},
for
%any finite subset ${\cal F}\subset A$ and
any $\dt>0,$
there exist a unital \CA\ $B$ with  almost unperforated
$W(B),$
{{c}}.p.c.~maps $\alpha:A\rightarrow B,$
$\beta_i:B\rightarrow A$,
{{and}} $\gamma_i:A\rightarrow A\cap \beta_i(B)^{\bot}$
{\blue{($i\in\N$)}} {{such}} that

($1'$) $\alpha$ is {\blue{a u.c.p.~map,}}
$\beta_i(1_B)$ {\blue{and}} $\gamma_i(1_A)$ are projections, {\blue{and}}
$1_A=\beta_i(1_B)+\gamma_i(1_A)$
for all $i\in \mathbb{N}$,

($2'$) $x\approx_{\delta}\gamma_i(x)+\beta_i\circ\alpha(x)$
{{for}} all $x\in\calF$ and all $i\in\mathbb{N}$,

({{$3'$}}) $\alpha$ is {\blue{an}} $(\calF, \delta)$-{{approximate embedding}},

({{$4'$}}) $\lim_{i\rightarrow\infty}\|\beta_i(xy)-\beta_i(x)\beta_i(y)\|=0$ and
$\lim_{n\rightarrow \infty}\|\beta_i(x)\|= \|x\|$
for all $x,y\in B$, and

({{$5'$}}) $\gamma_i(1_A)\lesssim_A c$ for all  $i\in\mathbb{N}$.

%_n
%By ($2'$) we have
%$\alpha\otimes \mathrm{id}_{M_m}: A\otimes M_m\rightarrow B\otimes M_m$
%is ($\{a\otimes \sum_{i=1}^{n+1}e_{i,i},
%r^*,r,b\otimes \sum_{i=1}^{n}e_{i,i}\},\epsilon/8$)-{\green{approximate embedding}}.

\noindent
{{By}} ({{$3'$}})  and  \eqref{410-n1},
%for some sufficiently large  ${\cal F}$ and
for some  sufficiently small $\dt$\,($<(\frac{\epsilon}{128M(n+1)^2})^4$),
one has
\beq\nonumber
\alpha(a)\otimes \sum_{i=1}^{n+1}e_{i,i}\approx_{\ep/16}
(\sum_{i,j=1}^{n+1}\alpha(r_{i,j})\otimes e_{i,j})^*((\alpha(b)-\delta_1)_+\otimes \sum_{i=1}^{n}e_{i,i})
(\sum_{i,j=1}^{n+1}\alpha(r_{i,j})\otimes e_{i,j}).
\eneq

\iffalse
we have
\begin{eqnarray*}
&&\|\alpha(a)\otimes \sum_{i=1}^{n+1}e_{i,i}
-(\sum_{i,j=1}^{n+1}\alpha(r_{i,j})\otimes e_{i,j})^*((\alpha(b)-\delta_1)_+\otimes \sum_{i=1}^{n}e_{i,i})
(\sum_{i,j=1}^{n+1}\alpha(r_{i,j})\otimes e_{i,j})\|
\\
&\approx_{\epsilon/32}&\|\alpha(a)\otimes \sum_{i=1}^{n+1}e_{i,i}
-(\sum_{i,j=1}^{n+1}\alpha(r_{i,j})\otimes e_{i,j})^*(\alpha((b-\delta_1)_+)\otimes \sum_{i=1}^{n}e_{i,i})
(\sum_{i,j=1}^{n+1}\alpha(r_{i,j})\otimes e_{i,j})\|
\\
&\approx_{M^2(n+1)^4\delta_1}&\|\alpha(a)\otimes \sum_{i=1}^{n+1}e_{i,i}
-(\sum_{i,j=1}^{n+1}\alpha(r_{i,j})\otimes e_{i,j})^*(\alpha(b)\otimes \sum_{i=1}^{n}e_{i,i})
(\sum_{i,j=1}^{n+1}\alpha(r_{i,j})\otimes e_{i,j})\|
\\
&=&
\|\alpha(a)\otimes \sum_{i=1}^{n+1}e_{i,i}
-\sum_{i,j=1}^{n+1}(\sum_{k=1}^n\alpha(r_{k,i}^*)\alpha(b)\alpha(r_{k,j}))
\otimes e_{i,j}\|
\\
&\approx_{2nM(n+1)^2\delta^{1/2}}&
\|\alpha(a)\otimes \sum_{i=1}^{n+1}e_{i,i}
-\sum_{i,j=1}^{n+1}(\sum_{k=1}^n\alpha(r_{k,i}^*br_{k,j}))
\otimes e_{i,j}\|\\
&=&
\|\alpha\otimes\mathrm{id}_{M_{n+1}}\left(a\otimes \sum_{i=1}^{n+1}e_{i,i}
-\sum_{i,j=1}^{n+1}(\sum_{k=1}^nr_{k,i}^*br_{k,j})
\otimes e_{i,j}\right)\|\\
&\leq&
\|a\otimes \sum_{i=1}^{n+1}e_{i,i}
-\sum_{i,j=1}^{n+1}(\sum_{k=1}^nr_{k,i}^*br_{k,j})
\otimes e_{i,j}\|\\
&=&
\|a\otimes \sum_{i=1}^{n+1}e_{i,i}
-r^*(b\otimes \sum_{i=1}^{n}e_{i,i})r\|
\leq
\epsilon/32.
\end{eqnarray*}
\fi

%Note that
%$\epsilon/32+M^2(n+1)^4\delta_1 +2nM(n+1)^2\delta^{1/2}+\epsilon/32
%< \epsilon/8$,
%then by
\noindent
{{By}} \cite[Proposition 2.2]{Rordam-1992-UHF2},  with $R:=(\sum_{i,j=1}^{n+1}\alpha(r_{i,j})\otimes e_{i,j}),$  in $B\otimes M_{n+1},$
\begin{eqnarray*}
&&\hspace{-0.6in}(\alpha(a)-\epsilon/8)_+\otimes \sum_{i=1}^{n+1}e_{i,i}
=
((\alpha(a)\otimes \sum_{i=1}^{n+1}e_{i,i})-\epsilon/8)_+\\
&\lesssim&
%_{B\otimes M_{n+1}}&
%(\sum_{i,j=1}^{m}\alpha(r_{i,j})\otimes e_{i,j})^*
R^*((\alpha(b)-\delta_1)_+\otimes \sum_{i=1}^{n}e_{i,i})
%(\sum_{i,j=1}^{m}\alpha(r_{i,j})\otimes e_{i,j})
R
\lesssim
%_{B\otimes M_{n+1}}&
(\alpha(b)-\delta_1)_+\otimes \sum_{i=1}^{n}e_{i,i}.
\end{eqnarray*}

\noindent
{{Since}} $W(B)$ is almost unperforated,
%$\mathcal{Z}$-stable, then by
%\cite[Theorem 4.5.]{Rordam-2004-Jiang-Su-stable},
one obtains $(\alpha(a)-\epsilon/8)_+
\lesssim_{B} (\alpha(b)-\delta_1)_+.$
Hence there exists $s\in B$ such that
$$%\alpha(a)\approx_{\epsilon/8}
(\alpha(a)-\epsilon/8)_+
\approx_{\epsilon/64}
s^*(\alpha(b)-\delta_1)_+s.
%=s^*(\alpha(b)-\delta_1)_+^{1/2}f_{\delta_1/2}(\alpha(b))
%(\alpha(b)-\delta_1)_+^{1/2}s.
$$
Then, by ({{$4'$}}), there exists $N\in\mathbb{N}$ such that
\beq
(\beta_N(\alpha(a))-\epsilon/8)_+
&\approx_{\epsilon/32}&
\beta_N((\alpha(a)-\epsilon/8)_+)
\approx_{\epsilon/32}
\beta_N(s^*(\alpha(b)-\delta_1)_+s)\\
&\approx_{\epsilon/32}&
\beta_N(s^*)(\beta_N(\alpha(b))-\delta_1)_+\beta_N(s).
\eneq
%

%Thus $\|(\beta_N(\alpha(a))-\epsilon/8)_+
%-\beta_N(s^*)(\beta_N(\alpha(b))-\delta_1)_+\beta_N(s)\|<\epsilon/8$,
%then by
\noindent
Applying \cite[Proposition 2.2]{Rordam-1992-UHF2}, one has
\beq
(\beta_N(\alpha(a))-\epsilon/4)_+
%&=&
%((\beta_N(\alpha(a))-\epsilon/8)_+-\epsilon/8)_+
\lesssim_A
\beta_N(s^*)(\beta_N(\alpha(b))-\delta_1)_+\beta_N(s)
\lesssim_A
(\beta_N(\alpha(b))-\delta_1)_+.
\label{tracially-almost-unperforated-lemma-eq1}
\eneq
%Note that
Since
%$\beta_N(\alpha(b))\leq
$\beta_N(\alpha(b))+\gamma_N(b)\approx_{\delta}b$,
%then by Lemma \ref{Cuntz-subequivalent-approximation},
%we have
with $\dt<\dt_1,$ applying \cite[Proposition 2.2]{Rordam-1992-UHF2} again (noting $\gamma_N(b)\perp\bt_N(B)$),  one has
\beq
(\beta_N(\alpha(b))-\delta_1)_+
\leq
(\beta_N(\alpha(b))-
\dt)_+\le ((\bt_N(\af(b))-\dt)_++\gamma_N(b))-\dt)_+
%2(2\delta)^{1/2})_+
\lesssim_A b.
\label{tracially-almost-unperforated-lemma-eq2}
\eneq

%Since $\gamma_N(a)\leq \gamma_N(1_A)\lesssim b_1$,
\noindent
{{Choose}} $a_1:=(\beta_N(\alpha(a))-\epsilon/4)_+$,
$a_2:=\gamma_N(a)$ and
$p:=\gamma_N(1_A).$
Then, by (\ref{tracially-almost-unperforated-lemma-eq1})
and (\ref{tracially-almost-unperforated-lemma-eq2}),
one has $a_1\lesssim_A b$.
{\blue{Note that}} ({$5'$}) shows $a_2\leq \|a\|p\lesssim_A c$.
Thus $a_1,\,a_2,\,p$ satisfy (2) {\blue{and}} (3) of the lemma.
By ($2'$),
$$a\approx_{\delta}\gamma_N(a)+\beta_N(\alpha(a))
\approx_{\epsilon/4}
\gamma_N(a)+(\beta_N(\alpha(a))-\epsilon/4)_+
=a_2+a_1.
$$
So (1) of the lemma is also satisfied and  the lemma follow{{s.}}

%P1 P2 P3 P4 P5

\end{proof}

\begin{thm}
\label{tracially-Z-stable-has-almost-unperforated-Cuntz-semigroup}
Let $A$ be a unital simple \CA\, which is
asymptotically tracially  in ${\cal W}$ (see Lemma \ref{L410}).
%`$\mathcal{Z}$-stable'' \CA,
Then $A\in {\cal W}.$
%$W(A)$ is almost unperforated.

\end{thm}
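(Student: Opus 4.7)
The plan is to show $W(A)$ is almost unperforated directly from Lemma \ref{L410} combined with R{\o}rdam's perturbation trick. Suppose $(n+1)\la a\ra\le n\la b\ra$ in $W(A)$; the goal is $\la a\ra\le\la b\ra.$ Since ${\cal W}$ is closed under passing to matrix amplifications (because $W(M_k(B))\cong W(B)$ canonically) and Remark \ref{Rm32} then shows $M_k(A)$ is asymptotically tracially in ${\cal W},$ I may reduce to $a,b\in A_+.$ I may also assume $A$ is infinite dimensional, since otherwise $A\cong M_k$ and $W(A)=\{0,1,\dots,k\}$ is trivially almost unperforated. It suffices to establish $(a-\epsilon)_+\lesssim_A b$ for every $\epsilon>0.$

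Fix $\epsilon>0.$ Applying \cite[Proposition 2.2]{Rordam-1992-UHF2} in $M_{n+1}(A),$ I find $\delta>0$ with $(n+1)(a-\epsilon/2)_+\lesssim n(b-\delta)_+.$ In the generic case where $b$ is not Cuntz equivalent to a projection, I may further shrink $\delta$ (which only enlarges $(b-\delta)_+,$ preserving the previous inequality) so that also $\mathrm{sp}(b)\cap(0,\delta/2)\neq\emptyset.$ I then choose a continuous function $g:[0,\|b\|]\to[0,1]$ supported in $(0,\delta/2)$ which is positive at some point of $\mathrm{sp}(b),$ and set $c_0:=g(b)\in A_+\setminus\{0\}.$ By construction, $c_0\perp(b-\delta)_+$ as orthogonal functions of $b,$ and $c_0+(b-\delta)_+\in\Her_A(b),$ so
\begin{equation*}
\la c_0\ra+\la(b-\delta)_+\ra=\la c_0+(b-\delta)_+\ra\le\la b\ra.
\end{equation*}
The exceptional case where $b$ is Cuntz equivalent to a projection $q$ (so that $(b-\delta)_+\sim b$ and no spectral gap remains to house $c_0$) is handled separately by working inside the corner $qAq$: the fact that $A$ is simple and non-elementary forces $qAq$ to be infinite dimensional, and Lemma \ref{L1991} applied in $qAq$ provides a sufficiently small positive element to carry out an analogous argument. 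This subcase is the main technical obstacle of the proof.

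Now apply Lemma \ref{L410} to the triple $(a-\epsilon/2)_+,\,(b-\delta)_+,\,c_0$ with tolerance $\epsilon/2,$ obtaining $a_1,a_2\in A_+$ and a projection $p\in A$ with $(a-\epsilon/2)_+\approx_{\epsilon/2}a_1+a_2,$ $a_1\lesssim_A(b-\delta)_+,$ $a_2\le\|a\|p,$ and $p\lesssim_A c_0.$ Reading the proof of Lemma \ref{L410} yields in addition $a_1\perp a_2,$ since there $a_1=(\beta_N(\alpha(a))-\epsilon/4)_+\in\beta_N(B)$ while $a_2=\gamma_N(a)\in\gamma_N(A)$ and $\beta_N(B)\perp\gamma_N(A).$ Consequently
\begin{equation*}
\la a_1+a_2\ra=\la a_1\ra+\la a_2\ra\le\la(b-\delta)_+\ra+\la c_0\ra\le\la b\ra,
\end{equation*}
and \cite[Proposition 2.2]{Rordam-1992-UHF2} applied to $(a-\epsilon/2)_+\approx_{\epsilon/2}a_1+a_2$ delivers $(a-\epsilon)_+\lesssim_A a_1+a_2,$ hence $(a-\epsilon)_+\lesssim_A b.$ Since $\epsilon>0$ was arbitrary, $\la a\ra\le\la b\ra$ in $W(A),$ completing the proof modulo the projection subcase flagged above.
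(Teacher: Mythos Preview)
Your generic-case argument is correct and is a genuine alternative to the paper's approach: when $0$ is an accumulation point of $\mathrm{sp}(b)$, carving $c_0=g(b)\perp(b-\delta)_+$ out of $\Her_A(b)$ by functional calculus is more direct than what the paper does. Two minor points: the reduction $(n+1)\la(a-\epsilon/2)_+\ra\le n\la(b-\delta)_+\ra$ is Proposition~2.4 of \cite{Rordam-1992-UHF2}, not 2.2; and you do not need to invoke $a_1\perp a_2$, since $a_1+a_2\lesssim a_1\oplus a_2$ holds unconditionally. Note also that your dichotomy is really ``$0$ is/is not an accumulation point of $\mathrm{sp}(b)$'' rather than ``$b$ is/is not Cuntz equivalent to a projection'': the latter does not imply $(b-\delta)_+\sim b$ in general, but fortunately your generic argument only uses the accumulation-point condition.

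The remaining case, where $0$ is isolated in $\mathrm{sp}(b)$ (so $b\sim q$ for the support projection $q$), is a genuine gap, and your sketch does not close it. Lemma~\ref{L1991} in $qAq$ yields small \emph{positive} elements, not projections, and to run ``an analogous argument'' you would need an orthogonal splitting $\la q\ra=\la c\ra+\la c_0\ra$ together with $(n'+1)\la a\ra\le n'\la c\ra$; there is no way to orthogonally subtract a merely positive piece from the projection $q$. The paper resolves exactly this via a different dichotomy, on $A$ rather than on $b$: if $A$ has property (SP) one upgrades a small positive piece of $\Her(f_{1/4}(b))$ to a projection $e_0$, modifies $b$ so that $be_0=e_0$, sets $c=b-e_0\perp e_0$, and uses the $2n+1$ equivalent orthogonal pieces from Lemma~\ref{L1991} to arrange $(2n+2)\la a\ra\le(2n+1)\la c\ra$; if $A$ fails (SP) one feeds Lemma~\ref{L410} an element $c$ from a projectionless hereditary subalgebra, which forces the projection $p$ there to vanish and hence $a_2=0$. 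One of these two mechanisms is needed to complete your argument.
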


\begin{proof}
Let $a,b\in M_m(A)_+\setminus \{0\}$ with $\|a\|=1=\|b\|$ for some integer $m\ge 1.$
Let $n\in\mathbb{N}$ and assume
% with $m>2+3n$.
 $(n+1)\la a\ra\leq n\la b\ra.$ To prove the theorem, it suffices to {\blue{prove}} that $a\lesssim b.$

Note that,  if $B\in {\cal W},$ then, for each integer $m,$ $M_m(B)\in {\cal W}.$
It follows that
$M_m(A)$ is
asymptotically tracially in ${\cal W}.$ To simplify notation, \wilog, one may {\blue{assume}}
 $a, b\in A_+.$

{\blue{First}} {\blue{consider the case}} that $A$ has (SP) property.
By {{Lemma}} \ref{L1991}, %there is $b_0\in A_+\setminus \{0\}$ such that
{{$\Her(f_{1/4}(b))_+$}} contains $2n+1$ {{nonzero}} mutually orthogonal   elements
$b_0, {\blue{b_1,}}
{{\cdots}},b_{2n}$ such that $\la b_i\ra =\la b_0\ra,$
$i=1, {\blue{2,}}
{{\cdots}},2n.$
Since $A$ has (SP) {{property}}, choose a nonzero projection $e_0\in \Her(b_0).$
%We may assume that $\|b_0\|=1.$ Let $b_{00}\in \Her(f_{1/4}(b_0))_+\setminus \{0\}.$
%Note that $f_{1/8}(b_0)
%$2n+1\la b_0\ra  \lessapprox \la f_{1/4}(b)\ra.$
Replacing $b$ by $g(b)$ for some $g\in C_0((0, 1]),$ one may assume that $be_0=e_0b=e_0.$
Put $c=b-{{e_0}}.$
Keep in mind that $b=c + e_0,$ $c\perp e_0,$ and ${{2n}}\la e_0\ra \le c=b-e_0.$
%We have
{\blue{One has}}
\beq
(2n+2)\la a\ra \le 2n \la b\ra = 2n(\la b-e_0\ra +\la e_0\ra)\le 2n\la c\ra +\la c\ra=(2n+1)\la c \ra.
\eneq

\iffalse
There are three cases to discuss:

Case 1. \emph{Assume that $0$ is a cluster point of $\mathrm{sp}_A(b)$. }

Let $\epsilon\in(0,\|b\|)$, then by \cite[Proposition 2.4.]{Rordam-1992-UHF2},
there exists $\delta\in(0,\epsilon/2)$ such that
$(n+1)\la(a-\epsilon/2)_+\ra\leq n\la(b-\delta)_+\ra$.
Let $c:=(\frac{\delta}{2}b- b^2)_+$.
Since $0$ is a cluster point of $\mathrm{sp}_A(b)$,
we have $c\neq 0$.
\fi

\noindent
By Lemma \ref{tracially-almost-unperforated-lemma},
for any {{$\ep\in(0,1/2),$}} %$1/2>\ep>0,$
there exist $a_1,a_2\in A_+$ such that

(i) $a\approx_{\epsilon/2}a_1+a_2$,

(ii) $a_1\lesssim_A c,$ and
%(b-\delta)_+$,

(iii) $a_2\le \|a\|p\lesssim_A e_0$.

\noindent
{{By}}  (i), (ii) and (iii), and applying  \cite[Proposition 2.2]{Rordam-1992-UHF2}, one obtains (recall $be_0=e_0b=e_0$)
\beq
(a-\ep)_+\lesssim a_1+a_2 \lesssim  c+e_0=b.
\eneq
Since this holds for every
{{$\ep\in(0,1/2),$}} %$1/2>\ep>0,$
one concludes that  $a\lesssim b.$

If $A$ does not have (SP) {{property}}, choose $b_0\in A_+\setminus \{0\}$ such that $\Her(b_0)$ has no nonzero projections.
From $(n+1)\la a\ra \le n\la b\ra,$ {\blue{Lemma}} \ref{L410} implies
that $a\approx_{\ep} a_1+a_2,$ $a_1\lesssim b$ and $a_2\le p\lesssim b_0.$  {{Projectionlessness}} of $\Her(b_0)$ forces
$p=0,$ whence $a_2=0.$ Thus one arrives
\beq
(a-\ep)_+\lesssim a_1\lesssim b.
\eneq
It follows  $a\lesssim b$ and the lemma follows.

\end{proof}

\iffalse
\begin{thm}\label{TComparison}
Let ${\cal P}$ be the class of unital \CA s which have the strict comparison.
%{\blue{\bf Def}}.
If $A$ is a unital separable simple \CA\, which is tracially in ${\cal P}$ and if $QT(A)\not=\emptyset,$
then $A$ has strict comparison.

\end{thm}
\fi

%}}
%%N_1 N_0

%\newpage

%\newpage

%\section{Generalized inductive limits of finite nuclear dimension}

%\subsection{Order zero maps and nuclear dimension}
\section{Order zero maps and nuclear dimension}

%We recall the definition of
%order zero map and some important properties:

\begin{df}[{\blue{\cite[Definition 2.3]{WZ2009}}}]
%{\blue{\cite[Definition 2.3]{WZ2009}}}
%%[Order zero map]
%{\blue{\bf{Reference}}}
\index{Order zero map}
Recall that a c.p.~map
$\phi:A\rightarrow B$
% is called
{\blue{has order zero}}, if,
for any $a,b\in A_+$ with $a\cdot b=0$, {\blue{one has}} $\phi(a)\cdot\phi(b)=0$.

\end{df}
\iffalse
Winter and Zacharias obtained the following structure theorem
for the c.p.~order zero map.
Also see \cite{W1994}, \cite{CKLW2003} and \cite{CLLW2018}
for more results on the structure of
disjointness (zero-product) preserving maps under general assumptions.
\fi
%which described how an order zero map looks like:
{\blue{We would like to recall the following theorem.}}

\begin{thm}[{\blue{\cite[Theorem 3.3]{WZ2009}}}]
\label{WZ2009-Theorem 3.3}
%\cite[Theorem 3.3]{WZ2009}
Let $A$ and $B$ be $C^*$-algebras, and {\blue{let}}
$\phi:A\rightarrow B$ be a completely positive order zero map.
Let $C:= C^*(\phi(A))\subset B$.
Then there exists a positive element $h\in \mathcal{M}(C)\cap C'$
with $\|h\|=\|\phi\|$
and %there exists
{{a}}  *-homomorphism
$
\pi_{\phi}:
A\rightarrow
\mathcal{M}(C)\cap \{h\}'
$
{\blue{such}} that
$
\phi(a)=\pi_{\phi}(a)h
$
for all $a\in A.$
%$
%
If, {\blue{in addition,}} $A$ is unital, then $h=\phi(1_A)\in C$.

\end{thm}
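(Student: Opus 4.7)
The plan is to handle the unital case first, where the element $h$ is naturally identified with $\phi(1_A)$, and then to reduce the non-unital case to the unital one via unitization.

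In the unital case I would set $h := \phi(1_A) \in C$; since $\phi$ is completely positive, $\|\phi\| = \|\phi(1_A)\| = \|h\|$ is immediate. The crux is establishing the multiplicative identity
\begin{equation*}
\phi(a)\phi(b) = \phi(ab)\,h \qquad \text{for all } a, b \in A. \quad (\ast)
\end{equation*}
I would prove $(\ast)$ in stages. For a projection $p$ in $A$ (or, more generally, in $A^{**}$), apply the order zero condition to $p$ and $1_A - p$ to obtain $\phi(p)(h-\phi(p)) = 0$, hence $\phi(p)^2 = \phi(p)\,h$. For a general $c \in A_+$, approximate $c$ weakly by sums $\sum \lambda_i p_i$ with mutually orthogonal spectral projections $p_i$; order zero forces $\phi(p_i)\phi(p_j) = 0$ for $i \neq j$, so $\phi(c)^2 = \sum \lambda_i^2 \phi(p_i)\,h = \phi(c^2)\,h$. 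The passage from this to $(\ast)$ for arbitrary $a,b$ goes through polarization; for non-commuting elements I would apply the same reasoning to the $2\times 2$ amplification $\phi \otimes \mathrm{id}_{M_2}:M_2(A) \to M_2(B)$, which is again c.p.~and order zero, and then read off the $(1,2)$ entry.

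Given $(\ast)$, taking $a = 1_A$ (and then $b = 1_A$) yields $h\phi(b) = \phi(b)h$, so $h$ commutes with $\phi(A)$, hence with $C$; thus $h \in C \cap C' \subset \mathcal{M}(C) \cap C'$. To construct $\pi_\phi$ I would use the approximate inverses $(h+\epsilon)^{-1} \in \mathcal{M}(C)$ and define
\begin{equation*}
\pi_\phi(a) \ := \ \text{strict-}\lim_{\epsilon \to 0^+} (h+\epsilon)^{-1/2}\,\phi(a)\,(h+\epsilon)^{-1/2} \ \in \ \mathcal{M}(C),
\end{equation*}
where the limit is taken in the strict topology on $\mathcal{M}(C)$; the identity $(\ast)$ forces the limit to exist and makes $\pi_\phi$ multiplicative, while the commutation of $h$ with $\phi(A)$ ensures $\pi_\phi$ lands in $\{h\}'$ and satisfies $\phi(a) = \pi_\phi(a)\,h$. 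The $*$-preserving property of $\pi_\phi$ follows from that of $\phi$.

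For the general non-unital case I would pass to the unitization $\widetilde A$ and extend $\phi$ to a c.p.~order zero map $\widetilde\phi : \widetilde A \to \mathcal{M}(C) \subset B^{**}$ by sending $1_{\widetilde A}$ to an appropriate positive multiplier of $C$ (constructed from the bidual picture). Applying the unital case to $\widetilde\phi$ yields an $h \in \mathcal{M}(C) \cap C'$, now generally not in $C$, together with a $*$-homomorphism $\widetilde A \to \mathcal{M}(C) \cap \{h\}'$ whose restriction to $A$ is the desired $\pi_\phi$. The main obstacle is the identity $(\ast)$: the spectral approximation of positive elements has to be carried out in $A^{**}$ and then pulled back to $A$, and the $M_2$-amplification step for polarization requires verifying that c.p.~order zero is preserved under amplification of both domain and map. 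Producing the correct multiplier to play the role of $\widetilde\phi(1_{\widetilde A})$ in the non-unital case is the secondary technical point, and is exactly what forces $h$ to live in $\mathcal{M}(C)$ rather than in $C$ itself.
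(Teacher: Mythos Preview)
The paper does not prove this theorem at all: it is quoted verbatim from Winter--Zacharias \cite{WZ2009} with the introductory line ``We would like to recall the following theorem,'' and no proof is supplied. So there is no ``paper's own proof'' against which to compare your proposal.

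That said, your sketch is broadly in line with the original argument in \cite{WZ2009}. The central identity $\phi(a)\phi(b)=\phi(ab)h$ is indeed the engine of the result, and Winter--Zacharias establish it (their Lemma~3.2) by a route close to yours: reduce to positive elements, use that order zero passes to the normal extension $\phi^{**}:A^{**}\to B^{**}$, and exploit spectral projections there. One point to tighten: your passage from $\phi(c)^2=\phi(c^2)h$ for positive $c$ to the full bilinear identity $(\ast)$ via ``polarization'' is not quite immediate in the non-commutative setting---polarization of $c\mapsto\phi(c)^2$ yields $\phi(a)\phi(b)+\phi(b)\phi(a)$, not $\phi(a)\phi(b)$ separately. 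The $2\times2$ trick you mention is exactly what rescues this, but it deserves to be the primary argument rather than an afterthought. Your construction of $\pi_\phi$ by the strict limit $(h+\epsilon)^{-1/2}\phi(a)(h+\epsilon)^{-1/2}$ is correct in spirit; in \cite{WZ2009} the same thing is done slightly differently, but the content is the same. The non-unital reduction is handled in \cite{WZ2009} by working directly with an approximate unit rather than by formally unitizing, which sidesteps the issue you flag of manufacturing the right multiplier for $\widetilde\phi(1_{\widetilde A})$.
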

%20191120

%It is known that every order zero map $\phi: A\rightarrow B$ arise
%form a *-homomorphism on $\rho:A\otimes C_0((0,1])\rightarrow B$
%(see \cite[Corollary 4.1.]{WZ2009}).
\iffalse
The following proposition
specifies what exactly is
the \CA\ generated by the image of a c.p.~order zero map
%$C^*(\phi(A))$
%(i.e. the image of $\rho$)
%looks like
when the domain is simple.
As a consequence,
the norm one c.p.~order zero map from
a simple $C^*$-algebra is isometry.
\fi
\iffalse
{\blue{\bf Did WZ, or others, showed that something like that before (without injectity)?
It seems one knows that $H: A\otimes C((0,1])\to C$ defined by
$H(a\otimes f)=\pi_\phi(a)f(h)$ is a \hm\, and $\phi(a)=H(a\otimes \imath),$ where $\imath: [0,1]\to [0,1]$
is the identify function.  So it is known that your map $\gamma$ is a \hm.  Since $A$ is simple,
$\gamma$ is an isomorphism.  Do I miss anhything?}}
\fi

\begin{prop}%[Order zero map from simple algebras]
\label{simple_order_zero_maps}

Let $\phi: A\rightarrow B$ be a c.p.~order zero map.
%between $C^*$-algebras.
Let $h$ {\blue{and}} $\pi_{\phi}$ be as in Theorem \ref{WZ2009-Theorem 3.3}.
If $A$ is simple,  {\blue{then the map $a\otimes x\mapsto \pi_{\phi}(a)\cdot x$}}
%then we have a well defined canonical
defines an isomorphism
$\gamma: A\otimes C^*(h)\cong C^*(\phi(A)).$
%$a\otimes x\mapsto \pi_{\phi}(a)\cdot x$.
Moreover, for all $a\in A$,
$\|\phi(a)\|=\|\phi\|\cdot\|a\|$.

\end{prop}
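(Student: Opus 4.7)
The plan is to apply Theorem~\ref{WZ2009-Theorem 3.3} to write $\phi(a)=\pi_{\phi}(a)\,h$ where $\pi_{\phi}(A)$ and $C^*(h)$ are commuting $C^*$-subalgebras of $\mathcal{M}(C)$ (with $C:=C^*(\phi(A))$), and then to realize $\gamma$ as the natural *-homomorphism arising from this pair of commuting representations. The case $\phi=0$ is trivial, so assume $\phi\neq 0$; then $\pi_{\phi}\neq 0$, and the simplicity of $A$ forces $\pi_{\phi}$ to be injective. Since $C^*(h)$ is abelian and hence nuclear, the prescription $a\otimes x\mapsto \pi_{\phi}(a)\,x$ descends unambiguously from the algebraic tensor product to a well-defined *-homomorphism $\gamma:A\otimes C^*(h)\to\mathcal{M}(C)$.

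Next I would verify that the image of $\gamma$ is exactly $C^*(\phi(A))$. Since $C^*(h)$ is generated by $h$ without adjoining a unit, every $x\in C^*(h)$ factors as $x=h\,g(h)$ for some continuous $g$ on $\mathrm{sp}(h)$, so
\[
\gamma(a\otimes x)=\pi_{\phi}(a)\,h\,g(h)=\phi(a)\,g(h)\in C\cdot\mathcal{M}(C)\subset C.
\]
Hence $\mathrm{Im}(\gamma)\subset C$; moreover it contains $\phi(a)=\gamma(a\otimes h)$ for every $a$ and is a $C^*$-subalgebra, so it must coincide with $C=C^*(\phi(A))$.

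The central step is injectivity. Here I would invoke the standard fact that when $A$ is simple and $B$ is nuclear, every closed two-sided ideal of $A\otimes B$ has the form $A\otimes J$ for some ideal $J\subset B$. Applying this with $B=C^*(h)$ yields $\ker\gamma=A\otimes J$ for some $J$; were $J$ nonzero, one could pick a nonzero $x\in J$ with $\pi_{\phi}(a)\,x=0$ for every $a\in A$. Multiplying on the right by $h$ and using $\pi_{\phi}(A)$--$h$ commutation gives $\phi(a)\,x=0$ for all $a$. Since $\phi(A)$ generates $C$, this forces $C\cdot x=0$, and as $x\in\mathcal{M}(C)$ acts faithfully on $C$, we conclude $x=0$, a contradiction. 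Hence $\ker\gamma=0$ and $\gamma$ is an isometric *-isomorphism onto $C^*(\phi(A))$.

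The ``moreover'' clause is then immediate:
\[
\|\phi(a)\|=\|\gamma(a\otimes h)\|=\|a\otimes h\|_{A\otimes C^*(h)}=\|a\|\cdot\|h\|_{C^*(h)}=\|a\|\cdot\|\phi\|,
\]
using $\|h\|=\|\phi\|$ from Theorem~\ref{WZ2009-Theorem 3.3}. The only substantive obstacle is the injectivity step, which reduces to the ideal structure of minimal tensor products with a simple factor; once that standard fact is quoted, no further analytical work is required.
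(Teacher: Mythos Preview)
Your approach is essentially the same as the paper's: both cite the ideal structure of $A\otimes C^*(h)$ for simple $A$ to get $\ker\gamma=A\otimes J$, then show any $x\in J$ annihilates $C=C^*(\phi(A))$ and hence vanishes as a multiplier; the norm identity follows immediately from $\gamma$ being isometric. The paper simply cites \cite[Corollary 4.1]{WZ2009} for the fact that $\gamma$ lands in $C^*(\phi(A))$, whereas you attempt to prove this directly.

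There is one incorrect step in that direct argument: your claim that ``every $x\in C^*(h)$ factors as $x=h\,g(h)$ for some continuous $g$ on $\mathrm{sp}(h)$'' is false in general. For instance, if $\mathrm{sp}(h)=[0,1]$ then $h^{1/2}\in C^*(h)$, but $h^{1/2}=h\,g(h)$ would force $g(t)=t^{-1/2}$, which is unbounded near $0$. The conclusion $\mathrm{Im}(\gamma)\subset C$ is nonetheless correct and easy to repair: for polynomials $p$ with $p(0)=0$ one has $\gamma(a\otimes p(h))=\pi_\phi(a)\,p(h)=\phi(a)\,q(h)\in C$ (where $p(t)=t\,q(t)$), and then density of such polynomials in $C_0(\mathrm{sp}(h)\setminus\{0\})$ together with $C$ being closed gives the general case. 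With this fix, your proof is complete and matches the paper's.
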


%\begin{framed}

{%\blue

\begin{proof}
If $\|\phi\|=0$, {\blue{then $h=0$ and}} there is nothing to prove.
Assume that $\|\phi\|\neq 0$.
Since $A$ is simple,
%we have
{\blue{$\pi_\phi$ is injective}} and
$\pi_\phi(A)$ is also simple.

By (the proof of) \cite[Corollary 4.1]{WZ2009},
%Since $C^*(h)$ commutes with $\pi_\phi(A)$,
$\gamma$ gives a *-homomorphism
from $A\otimes C^*(h)$ to $C^*(\phi(A))$.
Since $\phi(A)\subset \gamma(A\otimes C^*(h)),$
$\gamma$ is surjective.
%
%for any $a\in A$, any $x\in C^*(h)$,
%we have $\gamma(a\otimes x)=\pi_{\phi}(a)x\in C^*(\phi(A))$,
%thus $\gamma(A\otimes C^*(h))\subset C^*(\phi(A))$.
%For any $a\in A$, $\phi(a)=\pi_{\phi}(a)h=\gamma(a\otimes h)$,
%thus $\phi(A)\subset \gamma(A\otimes C^*(h))$.
%Since $\gamma(A\otimes C^*(h))$ is also a \CA, we have
%$C^*(\phi(A))\subset \gamma(A\otimes C^*(h))$ and thus
%$\gamma$ is surjective.

{\blue{Let us}} show that $\gamma$ is injective.
Since $A$ is simple, $\ker \gamma=A\otimes I,$ where $I$ is an ideal of $C^*(h)$
{{(see \cite[Proposition 2.16.(2) and Proposition 2.17(2)]{BK2004}).}}
Let $f(h)\in I$ for some $f\in C_0({\rm sp}(h)\setminus \{0\}).$
Then $a\otimes {{f(h)}}\in A\otimes I=\ker \gamma$
for all $a\in A,$ which implies that
$\pi_\phi(a)f(h)=0$ for all $a\in A.$
It follows that $\phi(a)f(h)=\pi_\phi(a)hf(h)=f(h)\pi_\phi(a)h=f(h)\phi(a)$ and $\phi(a)f(h)=\pi_\phi(a)f(h)h=0.$
Thus $f(h)\perp C^*(\phi(A))=C.$ Since $f(h)\in \mathcal{M}(C),$ this implies $f(h)=0.$ Thus  $I=\{0\}.$
In other words, $\gamma$ is injective.

Moreover, {\blue{recall,}} from Theorem \ref{WZ2009-Theorem 3.3},
$\|\phi\|=\|h\|$. Then, for $a\in A$,
$
\|\phi(a)\|=\|h\cdot\pi_{\phi}(a)\|=\|\gamma(\pi_{\phi}(a)\otimes h)\|
=\|\pi_{\phi}(a)\otimes h\|
=\|\pi_{\phi}(a)\|\cdot\|h\|
=\|a\|\cdot\|\phi\|
$.

\end{proof}
%\rightline{2019-02-19, Fudan, Jiangwan, Shanghai, 2019-07-10, ECNU, ZhongBei}

}

%\end{framed}

\begin{rem}
(1) {\blue{For the}} case {\blue{that}} $A$ is a matrix  algebra, {\blue{the proposition  above}} was obtained
in the proof of \cite[Proposition 5.1]{KW-Covering-dimension}.

(2) Consider $\phi: \mathbb{C}\oplus\mathbb{C}\rightarrow \mathbb{C}\oplus\mathbb{C}$,
$(x,y)\mapsto(x,y/2).$ {\blue{Then}} $\phi$ is an injective norm one c.p.c.~order zero map,
but $\phi$ is not an isometry
since $\|\phi((1,2))\|=1<2=\|(1,2)\|$.
Thus the last statement of
Proposition \ref{simple_order_zero_maps} would fail without {\blue{the assumption that $A$ is simple.}}
%simplicity.

\end{rem}

%\xrule
%\newpage

The following proposition
shows the existence of inverse *-homomorphism
for norm one c.p.~order zero map {\blue{from}} simple $C^*$-algebras.

\begin{prop}
\label{homomorphism-inverse-of-simple-order-zero-map}

Let $A$ be a simple \CA, $B$ be another \CA, and let $\phi: A\rightarrow B$ be a nonzero c.p.~order zero map.
%Let $\phi: A\rightarrow B$ be a nonzero c.p.~order zero map
%between $C^*$-algebras with $A$ is simple.
Then there exists a *-homomorphism
$\psi: C^*(\phi(A))\rightarrow A$
{\blue{such}} that $\psi\circ\phi=\|\phi\|\cdot\mathrm{id}_A$
and $\phi\circ\psi|_{\phi(A)}=\|\phi\|\cdot\mathrm{id}_{\phi(A)}$.

\end{prop}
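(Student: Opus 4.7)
The plan is to build $\psi$ by exploiting the structure theorem in Proposition \ref{simple_order_zero_maps}, which identifies $C^*(\phi(A))$ with the tensor product $A\otimes C^*(h)$, where $h\in{\cal M}(C^*(\phi(A)))\cap \pi_\phi(A)'$ is the positive element produced by Theorem \ref{WZ2009-Theorem 3.3}, satisfying $\|h\|=\|\phi\|$ and $\phi(a)=\pi_\phi(a)h$ for all $a\in A$. Since $\phi\neq 0$, we have $\|h\|>0$, and $\|h\|\in\mathrm{sp}(h)$, so evaluation at $\|h\|$ gives a character $\chi:C^*(h)\to\C$ with $\chi(h)=\|h\|=\|\phi\|$ (noting $C^*(h)\cong C_0(\mathrm{sp}(h)\setminus\{0\})$).

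Next, because $C^*(h)$ is commutative, hence nuclear, the tensor product *-homomorphism
\[
\mathrm{id}_A\otimes\chi:\,A\otimes C^*(h)\,\lr\, A\otimes \C\,=\,A
\]
is well defined. Let $\gamma: A\otimes C^*(h)\to C^*(\phi(A))$ be the isomorphism from Proposition \ref{simple_order_zero_maps}, characterized by $\gamma(a\otimes x)=\pi_\phi(a)\cdot x$. Set
\[
\psi:=(\mathrm{id}_A\otimes\chi)\circ\gamma^{-1}:\,C^*(\phi(A))\,\lr\, A.
\]
This is a composition of *-homomorphisms, hence a *-homomorphism.

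It remains to verify the two identities. For $a\in A$, we have $\phi(a)=\pi_\phi(a)h=\gamma(a\otimes h)$, so
\[
\psi(\phi(a))=(\mathrm{id}_A\otimes\chi)(a\otimes h)=a\cdot\chi(h)=\|\phi\|\cdot a,
\]
which gives $\psi\circ\phi=\|\phi\|\cdot\mathrm{id}_A$. Applying $\phi$ to both sides yields $\phi(\psi(\phi(a)))=\|\phi\|\cdot\phi(a)$ for every $a\in A$, hence $\phi\circ\psi|_{\phi(A)}=\|\phi\|\cdot\mathrm{id}_{\phi(A)}$. There is no real obstacle here once Proposition \ref{simple_order_zero_maps} is in hand; the only point requiring mild care is checking that the character $\chi$ exists and the tensor factorization of $\psi$ is legitimate, which is handled by the nuclearity (commutativity) of $C^*(h)$ and the positivity of $h$.
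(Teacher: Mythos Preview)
Your proof is correct and follows essentially the same approach as the paper: both use the isomorphism $\gamma: A\otimes C^*(h)\to C^*(\phi(A))$ from Proposition \ref{simple_order_zero_maps} and define $\psi$ as $(\mathrm{id}_A\otimes\chi)\circ\gamma^{-1}$, where $\chi$ is evaluation at $\|h\|=\|\phi\|$ on $C^*(h)\cong C_0(\mathrm{sp}(h)\setminus\{0\})$. The paper writes this map pointwise as $\psi'(a\otimes f(h))=f(\|h\|)a$ rather than in tensor notation, but the content is identical.
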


\begin{proof}

We will use the same notation as
in Proposition \ref{simple_order_zero_maps},
such as $h,\pi_{\phi}$,
and {\blue{the}}  isomorphism:
$\gamma: A\otimes C^*(h)\rightarrow C^*(\phi(A))$,
$a\otimes x\mapsto \pi_{\phi}(a)\cdot x$.

Note that
$C^*(h)\cong C_0(\mathrm{sp}(h)\backslash\{0\})$ and $\|\phi\|=\|h\|.$
Define a *-homomorphism
$\psi': A\otimes C^*(h)\to A$ by $\psi'(a\otimes {{f(h)}})=
%a\otimes f(\|h\|)
f(\|h\|)a $
for all $a\in A$ and $f\in C_0(\mathrm{sp}(h)\backslash\{0\}),$ and define
$\psi=\psi'\circ \gamma^{-1}: C^*(\phi(A))\to A$.
Then,  with {{the}} identity function $\imath: {\rm sp}(h)\to {\rm sp}(h),$  for any $a\in A,$
$$
\psi\circ \phi(a)=\psi'\circ \gamma^{-1}(\pi_\phi(a)h)=\psi'(a\otimes \imath)=a\|h\|=\|\phi\|a.
$$
Therefore, for $a\in A,$
$\phi\circ \psi(\phi(a))=\phi\circ \psi'\circ \gamma^{-1}(\pi_\phi(a)h)=\phi\circ \psi'(a\otimes {{h}})=\phi(\|\phi\|a)=\|\phi\|\phi(a).$
{\blue{The}} proposition follows.
\iffalse
The point evaluation at $\|h\|$
gives a *-homomorphism
$$
\lambda: C_0(\mathrm{sp}(h)\backslash\{0\})\rightarrow \mathbb{C},
\quad
f\mapsto f(\|h\|).
$$

Note that
$C^*(h)\cong C_0(\mathrm{sp}(h)\backslash\{0\})$,
then $\lambda$ induces a *-homomorphism
$\tilde{\lambda}:
C^*(h) \rightarrow \mathbb{C}$,
such that $\tilde{\lambda}(h)=\|h\|=\|\phi\|$.
%
Define a *-homomorphism
$$\psi:=
(\mathrm{id}_{A}\otimes\tilde{\lambda})
\circ
\gamma^{-1}
:
C^*(\psi(A))
\xrightarrow{(\rightarrow
A\otimes C^*(h)
\rightarrow
A\otimes \mathbb{C}=)}
A.
$$

Then, for any  $a\in A$,
$\psi\circ\phi(a)
=
(\mathrm{id}_{A}\otimes\tilde{\lambda})
\circ
\gamma^{-1}(h\pi_{\phi}(a))
=
(\mathrm{id}_{A}\otimes\tilde{\lambda})(a\otimes h)
=\|\phi\|a$.
Thus $\psi\circ\phi=\|\phi\|\cdot\mathrm{id}_A$.
Also, from $\phi\circ\psi\circ\phi(a)=\phi(\|\phi\|a)=\|\phi\|\phi(a)$,
one concludes
$\phi\circ\psi|_{\phi(A)}=\|\phi\|\cdot\mathrm{id}_{\phi(A)}$.
\fi

%\rightline{2019, 07, 09 Fudan, JiangWan. 2019, 07, 10 ECNU, ZhongBei}

\end{proof}

\begin{prop}
\label{inverse-of-finite-dim-injective-order-zero-map}

Let $A$ be a \CA, $F$ be a (nonzero) finite dimensional \CA, and let $\alpha: F\rightarrow A$
be an injective c.p.~order zero map.
% between $C^*$-algebras,
%$F$ is finite dimensional,
Then  there exists a c.p.~map $\beta: A\rightarrow F$ such that
$\beta\circ \alpha=\mathrm{id}_F$.

Moreover, if $\alpha$ is {{an}} isometry,  one may choose $\beta$
to be a c.p.c.~map.
\end{prop}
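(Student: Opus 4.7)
The plan is to reduce to the simple-domain case handled by Proposition \ref{homomorphism-inverse-of-simple-order-zero-map}, construct the inverse first on $C^*(\alpha(F))$ by a direct sum argument, and then extend to $A$ via Arveson's theorem using the injectivity of finite-dimensional \CA s.

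First I would write $F=\bigoplus_{i=1}^k M_{n_i}$ and set $\alpha_i:=\alpha|_{M_{n_i}}$. Each $\alpha_i$ is an injective c.p.~order zero map from a simple \CA, so in particular $\|\alpha_i\|>0$. Since $\alpha$ is order zero and any positive element of $M_{n_i}$ is orthogonal to any positive element of $M_{n_j}$ for $i\neq j$, one has $\alpha(p)\alpha(q)=0$ for such pairs; passing to $C^*$-closures (together with adjoints) gives $C^*(\alpha_i(M_{n_i}))\perp C^*(\alpha_j(M_{n_j}))$ when $i\neq j$. Hence $C^*(\alpha(F))=\bigoplus_{i=1}^k C^*(\alpha_i(M_{n_i}))$ as an internal direct sum of \CA s.

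Next, apply Proposition \ref{homomorphism-inverse-of-simple-order-zero-map} to each $\alpha_i$ to obtain a *-homomorphism $\psi_i:C^*(\alpha_i(M_{n_i}))\to M_{n_i}$ with $\psi_i\circ \alpha_i=\|\alpha_i\|\cdot \mathrm{id}_{M_{n_i}}$. Define $\psi:C^*(\alpha(F))\to F$ by letting $\psi$ restricted to the $i$th summand equal $\|\alpha_i\|^{-1}\psi_i$, followed by the canonical inclusion $M_{n_i}\hookrightarrow F$. Each summand is c.p.~(being a positive multiple of a *-homomorphism), so $\psi$ is c.p., and by construction $\psi\circ \alpha(x)=\sum_i \|\alpha_i\|^{-1}\psi_i(\alpha_i(x_i))=\sum_i x_i=x$ for all $x=(x_i)\in F$.

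Finally, since each $M_{n_i}\cong B(\mathbb{C}^{n_i})$ is injective by Arveson's extension theorem, I would extend each coordinate $\psi^{(i)}$ of $\psi$ (viewed as a c.p.~map into $M_{n_i}$) to a c.p.~map $\beta^{(i)}:A\to M_{n_i}$ with $\|\beta^{(i)}\|=\|\psi^{(i)}\|$, and then set $\beta:=(\beta^{(i)})_{i=1}^k:A\to F$. Then $\beta$ is c.p.~and $\beta\circ\alpha=\psi\circ\alpha=\mathrm{id}_F$. For the ``moreover'' part, if $\alpha$ is isometric then $\|\alpha_i\|=1$ for each $i$ (apply the isometry condition to elements of $M_{n_i}$), so $\|\alpha_i\|^{-1}\psi_i=\psi_i$ is a *-homomorphism, in particular c.p.c.; therefore $\psi$ is c.p.c., Arveson's extension preserves the contractive norm on each coordinate, and $\beta=(\beta^{(i)})$ is c.p.c.~since $\|\beta(a)\|=\max_i \|\beta^{(i)}(a)\|\le \|a\|$. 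The only mild subtlety is that $C^*(\alpha(F))$ may be non-unital, but Arveson's theorem applies equally well in that setting (pass to the unitization if desired), so this is not a real obstacle; the substantive input is the orthogonal decomposition of $C^*(\alpha(F))$, which is immediate from the order zero hypothesis.
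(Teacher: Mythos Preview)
Your proposal is correct and follows essentially the same approach as the paper: decompose $F$ into matrix summands, invoke Proposition~\ref{homomorphism-inverse-of-simple-order-zero-map} on each, use the order zero hypothesis to see that the images are mutually orthogonal so that $C^*(\alpha(F))$ decomposes as a direct sum, and then apply Arveson's extension theorem. The only cosmetic difference is that you apply Arveson coordinate-wise into each $M_{n_i}$, whereas the paper applies it once into $F$; both are valid since finite-dimensional \CA s are injective.
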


\begin{proof}

Write $F=M_{k_1}\oplus \cdots\oplus M_{k_n}$
{\blue{($n,
k_1,\cdots,k_n\in\mathbb{N}$)}} {\blue{and}}
$\alpha_i:=\alpha|_{M_{k_i}}: M_{k_i}\rightarrow A$
{{($i=1,{\green{2,}}\cdots, n$).}}
{\blue{Then,}} by Proposition \ref{homomorphism-inverse-of-simple-order-zero-map},
there exists  {\blue{a}} *-homomorphism
$\beta_i:C^*(\alpha_i(M_{k_i}))\rightarrow M_{k_i}$
such that $\frac{1}{\|\alpha_i\|}\beta_i\circ\alpha_i=\mathrm{id}_{M_{k_i}}$.
{\blue{Then the map}}
%$\beta_i$ {\green{($i=1,\cdots,n$) induce}}  a c.p.~map
{\blue{$\bar{\beta}:C^*(\alpha_1(M_{k_1}))\oplus\cdots\oplus C^*(\alpha_n(M_{k_n}))\rightarrow F=M_{k_1}\oplus \cdots\oplus M_{k_n}$
%\begin{eqnarray*}
%\bar{\beta}:C^*(\alpha_1(M_{k_1}))\oplus\cdots\oplus C^*(\alpha_n(M_{k_n}))
%&\rightarrow& F=M_{k_1}\oplus \cdots\oplus M_{k_n}, \\
defined by $\bar{\bt}((x_1,\cdots,x_n))=
%&\mapsto&
(\frac{\beta_1(x_1)}{\|\alpha_1\|},\cdots,\frac{\beta_n(x_n)}{\|\alpha_n\|})$
{\blue{is a c.p.~map.}}}}
%\end{eqnarray*}
%Note that
{{Since $\af$ is a c.p.~order zero map,}}
$C^*(\alpha_i(M_{k_i}))$ are mutually orthogonal
($i=1,{\green{2,}}\cdots,n$).
{\blue{Thus}}
$C^*(\alpha_1(M_{k_1}))\oplus\cdots\oplus C^*(\alpha_n(M_{k_n}))$
is a
%subalgebra
{\blue{\SCA}}\, of $A$.
By Arveson's extension theorem,
$\bar{\beta}$ has a c.p.~extension $\beta: A\rightarrow F$ with
$\beta\circ\alpha=\mathrm{id}_F$.
Moreover, if $\alpha$ is {\blue{an}} isometry,
then $\bar{\beta}$ is {{a}} *-homomorphism. {{Hence}}
the extension $\beta$ can be chosen to be {{a}} c.p.c.~map.

\end{proof}
%\rightline{2019,07,09 ECNU, ZhongBei}

\iffalse %20191005-2

\begin{prop}

%\label{c.p.c.-almost-multiplicative-3}

Let $A,B,C$ be $C^*$-algebras,
Let $\calF\subset A_+$ be a finite subset
and let $\epsilon>0$.
There exists $\delta>0$ such that
$\forall e\in A_+^1$ satisfying $exe\approx_{\delta}x$ ($\forall x\in\calF$),
any c.p.~maps $\psi: A\rightarrow B$, $\phi: B\rightarrow C$
satisfying $\|\phi\psi(x^2)-\phi\psi(x)^2\|\leq \delta$
($\forall x\in \calF\cup \{e\}$).
and assume that $b\in B$ commutes with $\psi(A)$,
then $\exists\tilde{b}\in B$ with $\|\tilde{b}\|\leq \|\psi\|\|b\|$,
such that for any $a\in\calF$,
$$
\|\phi(\psi(a)b)-\phi\psi(a)\phi(\tilde{b})\|
\leq 2\epsilon^{1/2}\|b\|,
\quad
\|\phi(b\psi(a))-\phi(\tilde{b})\phi\psi(a)\|
\leq 2\epsilon^{1/2}\|b\|.
$$

Moreover, if $b\in B_+$, then we can choose such $\tilde{b}\in B_+$.

\end{prop}

\fi %20191005-2

%dividable

\begin{df}
Let $F=M_{k_1}\oplus\cdots \oplus M_{k_n}$ be a finite dimensional
\CA. Let $A$ be a \CA\, {{and}} $\phi: F\rightarrow A$ {{be}} a linear map.
{\blue{Define}}
%the piecewise {\blue?} norm of $\phi$ as following:
$$ |||\phi|||:=\max\{\|\phi|_{M_{k_i}}\|:
i=1,{\green{2,}}\cdots,n\}.$$

\end{df}

\begin{df}
\label{defn-n-decomposable-c.p.-map}
Let $A$ be a \CA\ and $F$ be a finite dimensional \CA\ and let
%Let
$\phi: F\rightarrow A$ be a c.p.~map.
% between $C^*$-algebras with
%$F$ being finite dimensional.
%Let
Fix  $n\in\mathbb{N}$. {{Recall that}} the map
$\phi$ is called $n$-decomposable
{{(see \cite[Definition 2.2]{KW-Covering-dimension}),}}
%$n$-dividable
if
$F$ can be written as $F=F_0\oplus \cdots\oplus F_n$
(where  $F_i$ {\blue{is a finite dimensional \CA}})
%{\green{are ideals of $F$}})
such that $\phi|_{F_i}$ %is a
{\blue{is a
%{\green{are}}
c.p.~order zero map}}
($i=0,{1,}\cdots,n$).
%{\blue{\bf What are $F_i$?}}
If, {{in addition,}}  each $\phi|_{F_i}$ is  {{assumed}} to be contractive,
then $\phi$ is called
%$n$-dividable.
%$n$-decomposable
%(see \cite[Definition 2.2.]{KW-Covering-dimension}).
%An $n$-dividable map is also called
{\blue{{\em piecewise contractive}}} $n$-decomposable map.
%{\red{\bf Note that, at least, we did say "each $\phi|_{F_i}$ is"}}
\end{df}

%\begin{rem}

%\end{rem}

\begin{rem}
\label{ideal-order-zero-map}
{\blue{Note {\blue{that}} Theorem \ref{WZ2009-Theorem 3.3} implies
the kernel of a c.p.~order zero map is always an ideal
(also see \cite[Lemma 2.7]{LeeOsaka}).}}
{{Thus,}} for a c.p.~order zero map $\phi: F\rightarrow A,$ {{where}}
$F$ is finite dimensional, {\blue{one}} can write
$F={{\ker\phi}}\oplus F_1,$
where $F_1$ is %a $C^*$-subalgebra
{{an ideal}} of $F.$
Note that $\phi|_{F_1}$ is injective.
%with $F_0$ and $F_1$ are $C^*$-subalgebras of $F$,
%such that $\phi|_{F_0}=0$ and $\phi|_{F_1}$ is injective.
\iffalse
For this reason,
%in the definition of $n$-dividable map and $n$-decomposable map,
sometimes {\blue{one may}} assume that the order zero maps
induced by {\blue{their restriction on}}
piecewise contractive $n$-decomposable map or $n$-decomposable map are injective,
by erasing  the kernel of c.p.~order zero maps. {\bf{\red{The last sentence is hard to understand!---
Suggestion: 1) rewrite it, 2) better to delete the last sentence----I assume that you want to say the first part. This remark
is not labeled so I assume that it is not used.}}}
\fi

\end{rem}

\begin{prop}
\label{normalization-of-c.p.-dividable-map}

Let $A$ {\blue{and}}
%, B,$ {\blue{and}} $
$C$ be $C^*$-algebras, {\blue{and $B$ be  a finite dimensional \CA.}}
%{\green{with $B$ being finite dimensional.}}
%{\blue{Suppose that  $B$ {{is a}}
%a finite dimensional \CA\, {\blue{and}}
Suppose that
$\alpha: A\to B$ and $\bt: B\to C$
%$A\overset{\alpha}{\rightarrow}B\overset{\beta}{\rightarrow}C$
are c.p.~maps
{\blue{such that
%{\green{with}} $\beta$ {\blue{being}} $n$-decomposable.
%{\blue{\bf Better: such that
$\bt$ is $n$-decomposable.}}
%{\red{\bf Do you mean $\bt\circ \af$ is c.p~map, or both are c.p.~maps?}}
Then there  {\blue{exist}} $\bar{n}\leq n\in\mathbb{N}\cup\{0\}$,
%and exists
a finite dimensional $C^*$-algebra
$\bar{B}=\bar{B}_0\oplus\cdots\oplus \bar{B}_{\bar{n}}$
{\blue{which is a summand
%{\green{an ideal}}
 of $B,$}}
%{\green{which is a $C^*$-algebra direct summand of $B$,}}
a c.p.~map $\bar{\alpha}: A\rightarrow \bar{B},$
and
{\green{a}} c.p.~$\bar{n}$-decomposable map
$\bar{\beta}: \bar{B}\rightarrow C$ {{such}} that

(1) $\bar{\beta}\circ\bar{\alpha}=\beta\circ\alpha$,

(2) $\|\bar{\alpha}\|\leq \min\{\|\beta\circ\alpha\|,
\|\alpha\|\cdot |||\beta|||\}$, {\blue{and}}

(3) $\bar{\beta}|_{\bar{B}_i}$
% {\green{are}}
{\blue{is a}} c.p.c.~order zero {\blue{isometry}}
($i=0, 1,\cdots,\bar{n}$).

\end{prop}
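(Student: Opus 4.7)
The plan is to first remove the kernel from each piece of the $n$-decomposition of $\beta$, and then rescale each surviving matrix summand so that $\beta$ becomes an isometry on it; the compensating scalar will be absorbed into a new map $\bar{\alpha}$ on the other side. First I would use the $n$-decomposition to write $B = B_0 \oplus \cdots \oplus B_n$ with $\beta_i := \beta|_{B_i}$ a c.p.~order zero map, and invoke Remark \ref{ideal-order-zero-map} to split $B_i = \ker \beta_i \oplus B_i'$ with $\beta|_{B_i'}$ injective. I would then decompose $B_i' = \bigoplus_{j \in I_i} M_{k_{i,j}}$ into its matrix summands and set $c_{i,j} := \|\beta|_{M_{k_{i,j}}}\| > 0$. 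The crucial input is Proposition \ref{simple_order_zero_maps} applied to the nonzero c.p.~order zero map $\beta|_{M_{k_{i,j}}}$ on the simple \CA\ $M_{k_{i,j}}$, which yields $\|\beta(x)\| = c_{i,j}\|x\|$ for every $x \in M_{k_{i,j}}$.

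Next, I would discard those indices $i$ with $B_i' = \{0\}$ and relabel to obtain $\bar{n} \le n$. Put $\bar{B} := \bigoplus_{i=0}^{\bar{n}} \bar{B}_i$ with $\bar{B}_i := B_i'$, which is a summand of $B$. Define $\bar{\beta}\colon \bar{B} \to C$ by $\bar{\beta}|_{M_{k_{i,j}}} := c_{i,j}^{-1}\beta|_{M_{k_{i,j}}}$ on each matrix summand. Positive rescaling of summands preserves c.p.~and order zero, so each $\bar{\beta}|_{\bar{B}_i}$ is c.p.~order zero; the scaling identity above makes it isometric on each $M_{k_{i,j}}$, and since the images $\bar{\beta}(1_{M_{k_{i,j}}})$ for varying $j$ are pairwise orthogonal positive contractions, the norms combine as a supremum across $j$, giving $\|\bar{\beta}|_{\bar{B}_i}(x)\| = \max_j \|x_j\| = \|x\|$. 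This establishes condition~(3).

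To produce $\bar{\alpha}$, let $P_{i,j}\colon B \to M_{k_{i,j}}$ be the canonical coordinate projection, set $\alpha_{i,j} := P_{i,j}\circ \alpha$, and define $\bar{\alpha}(a)$ to be the element of $\bar{B}$ whose $M_{k_{i,j}}$-component is $c_{i,j}\alpha_{i,j}(a)$. This is c.p.~as a direct sum of positive scalings of c.p.~maps. Condition~(1) is then immediate from the cancellation $c_{i,j}^{-1}\cdot c_{i,j} = 1$ together with the fact that $\beta$ annihilates $\bigoplus_i \ker \beta_i$. The estimate $\|\bar{\alpha}\| \le \|\alpha\|\cdot |||\beta|||$ follows at once from $c_{i,j}\le |||\beta|||$ and $\|\alpha_{i,j}(a)\| \le \|\alpha(a)\|$, since the norm on the direct sum $\bar{B}$ is a maximum.

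The main technical point is the other half of~(2), namely $\|\bar{\alpha}\| \le \|\beta \circ \alpha\|$. I would first reduce to positive contractions $a \in A_+^1$ using the standard fact that the norm of a c.p.~map is attained on the positive part of the unit ball. For such $a$, each $\alpha_{i,j}(a)\ge 0$, and within a fixed $B_i$ the elements $\beta(\alpha_{i,j}(a))$ for $j \in I_i$ are pairwise orthogonal positive elements of $C$, because $\beta|_{B_i}$ is order zero and the $\alpha_{i,j}(a)$ lie in pairwise orthogonal matrix summands of $B_i$; combined with the scaling identity this yields $\|\beta(\alpha|_{B_i}(a))\| = \max_{j \in I_i} c_{i,j}\|\alpha_{i,j}(a)\|$. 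Finally, $\beta\circ\alpha(a) = \sum_i \beta(\alpha|_{B_i}(a))$ is a sum of positive elements of $C$, so $\|\beta\circ\alpha(a)\| \ge \max_i \|\beta(\alpha|_{B_i}(a))\| = \|\bar{\alpha}(a)\|$, completing~(2).
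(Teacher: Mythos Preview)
Your proof is correct and follows essentially the same approach as the paper: strip out the kernels of the order-zero pieces via Remark~\ref{ideal-order-zero-map}, then rescale each matrix summand $M_{k_{i,j}}$ by $c_{i,j}^{-1}$ using Proposition~\ref{simple_order_zero_maps}, absorbing the scalar into $\bar\alpha$. The paper's verification of $\|\bar\alpha\|\le\|\beta\circ\alpha\|$ is slightly more streamlined than yours---it observes directly that for $a\in A_+^1$ each $\bar\beta^{(j)}(\bar\alpha^{(j)}(a))$ is a positive summand of $\beta\circ\alpha(a)$, so $\|\bar\alpha^{(j)}(a)\|=\|\bar\beta^{(j)}(\bar\alpha^{(j)}(a))\|\le\|\beta\circ\alpha(a)\|$ without first grouping by $i$---but your two-step argument (orthogonality within $\bar B_i$, then positivity across $i$) is equally valid. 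One small notational quibble: ``$\alpha|_{B_i}(a)$'' should read ``the $B_i$-component of $\alpha(a)$,'' since $\alpha$ maps into $B$ rather than out of it.
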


\begin{proof}

Let $\bar{n}$ be the minimal integer such that $\beta$ is $\bar n$-decomposable.
Then we can write $B=B_0\oplus\cdots\oplus B_{\bar{n}}$
(where {\blue{each}} $B_i$
{\blue{is a direct summand of
%is $C^*$-algebra direct summand
%{\green{are ideals}} of
$B$)
such that $\beta|_{B_i}$
is a
%{\green{are}}
nonzero c.p.~order zero map.}}
%{\green{s.}}
%($i=0,\cdots,\bar{n}$).
By Remark \ref{ideal-order-zero-map}, we can
write $B_i=\ker(\beta|_{B_i})\oplus \bar{B}_i,$
where $\bar{B}_i$ is
%a $C^*$-algebra
{\blue{direct summand}}
%{\green{an ideal}}
of $B_i.$
Then $\beta|_{\bar{B}_i}$
{\blue{is a}} %is a
%{\green{are}}
nonzero injective
c.p.~order zero map
%{\green{s.}}
%
($i=0,{\green{1,}}\cdots,\bar{n}$).
Define $\bar{B}:=\bar{B}_0\oplus\cdots\oplus \bar{B}_{\bar{n}}$.
Note that $\bar{B}$ is
a %a $C^*$-algebra
{\blue{direct summand}}
%{\green{an ideal}}
of $B$.

\iffalse
First,
there {\blue{exist}} $\bar{n}\leq n$,
and a $C^*$-{{algebra}} direct summand $F$ {\blue{of $B$ with the form $F=F_0\oplus\cdots\oplus F_{\bar{n}}$}}
such that $\beta|_{F}\circ(P_F\circ\alpha)=\beta\circ\alpha$
{\green{(where $P_F$ is the projection map onto $F$),}}
and
$\beta|_{F_i}$ {\blue{is a nonzero c.p.~order zero map}}
($i=0,\cdots,\bar{n}$).
%
%Note that the
%{\blue{Since the}} kernel of {\blue{a}} c.p.~order zero map is an ideal,
%the kernel of $\beta|_{F_i}$ is a $C^*$-subalgebra direct summand of $F_i$,
{\green{By Remark \ref{ideal-order-zero-map}, we can}}
write $F_i=\ker(\beta|_{F_i})\oplus \bar{B}_i,$
{\green{where $\bar{B}_i$ is a $C^*$-algebra direct summand of $F_i.$}}
%{\green{(see Remark \ref{ideal-order-zero-map}).}}
Then $\beta|_{\bar{B}_i}$  is
{\green{a}} nonzero and injective
{\green{c.p.~order zero map}}
($i=0,\cdots,\bar{n}$).
Define $\bar{B}:=\bar{B}_0\oplus\cdots\oplus \bar{B}_{\bar{n}}$.
Note that $\bar{B}$ is a
%$C^*$-algebra
direct summand of $B$.
\fi

Write $\bar{B}=M_{k_1}\oplus\cdots\oplus M_{k_m}$,
where
$m,k_1,\cdots,k_m\in\mathbb{N}$.
%Let $\hat{\alpha}$, $\hat{\beta}$ be that component of $\alpha$, $\beta$
%corresponding to $\bar{B}$.
%For  $j=1,\cdots,m$,
{{Let}} $P_j: {B}\to M_{k_j}$ be the projection map.
%{\green{($j=1,\cdots,m$)}}.
%{\green{set}}
{\blue{Set}} $\af^{(j)}=P_j\circ \af$ and $\bt^{(j)}=\bt|_{M_{k_j}}$
($j=1,{\green{2,\cdots}},m$).
Note that each ${\bt}^{(j)}$ is a c.p.~order zero map.
{\blue{Define}}  $\bar{\alpha}^{(j)}:=\|\beta^{(j)}\|\alpha^{(j)}$ and
$\bar{\beta}^{(j)}:=\frac{1}{\|\beta^{(j)}\|}\beta^{(j)}$
{\blue{$j=1, {\blue{2,}} {\cdots},m.$}}
By Proposition \ref{simple_order_zero_maps}, each
$\bar{\beta}^{(j)}$ is a c.p.c.~order zero isometry.
Note that
\beq
\beta\circ\alpha(x)
=
\sum_{j=1}^m
\beta^{(j)}\circ\alpha^{(j)}(x)\
{{
=
\sum_{j=1}^m
\bar\beta^{(j)}\circ\bar\alpha^{(j)}(x)
}}
{\blue{\rforal x\in A.}}
%\quad
%\forall x\in A.
\label{f4-11-1}
\eneq

\noindent
%$S_0, S_1,...,S_{\bar n}$ is
%a set of disjoint
%$S_i$ is a subset of $\{1,2,...,m\}.$}}
%Since ${\bt}|_{\bar{B}_i}$ is a c.p.~order zero map, so is ${\bt}^{(j)}$  for each $j.$
%is a c.p.~order zero map.}}
%By Proposition \ref{simple_order_zero_maps}, {\blue{each}}
%$\bar{\beta}^{(j)}$ {\blue{is a c.p.c.~order zero isometry.}}
Define c.p.~maps
$
\bar{\alpha}:
A\rightarrow
\bar{B}=M_{k_1}\oplus\cdots\oplus M_{k_m}$ by
%\quad
$x\mapsto
(\bar{\alpha}^{(1)}(x),\cdots,
\bar{\alpha}^{(m)}(x))$ and
%$$
%
$
\bar{\beta}:
\bar{B}=M_{k_1}\oplus\cdots\oplus M_{k_m}
\rightarrow A$ by
%\quad
$(x_1,\cdots,x_m)
\mapsto
\sum_{j=1}^m
\bar{\beta}^{(j)}(x_j).
$

{\blue{Write, for each $i,$  $\bar{B}_i=\bigoplus_{S_i} M_{k_j},$ where
$S_i$ is a subset of $\{1, 2, {{\cdots}},m\}.$}}
{\blue{Again, since ${\bt}|_{\bar{B}_i}$ is a c.p.~order zero map,   ${\bt}(M_{k_j})\perp {\bt}(M_{k_{j'}}),$
if $j\not=j'$ and $j,j'\in S_i$ for each $i\in \{0,{\green{1,\cdots}},\bar{n}\}.$
%For $i=0,\cdots,\bar{n}$,
In other words, $\bar{\beta}|_{\bar{B}_i}$
is a sum of mutually orthogonal c.p.c.~order zero isometries.}}
%For $i=0,\cdots,\bar{n}$,
%{\red{In other words,  $\bar{\beta}|_{\bar{B}_i}$
%is a sum of mutually orthogonal c.p.c.~order zero isometries.}}
%{\red{\bf What do you mean by mutually orthogonal?}}
%{\red{\bf Suggestion:}}
{\blue{Hence}}  $\bar{\beta}|_{\bar{B}_i}$ is still {\blue{a}}
c.p.c.~order zero isometry.  {\blue{Therefore}} (3) holds.

%\iffalse
For any $x\in A$, by \eqref{f4-11-1}, we have
$
\bar{\beta}\circ\bar{\alpha}(x)
=
\sum_{j=1}^m
\bar{\beta}^{(j)}(\bar{\alpha}^{(j)}(x))
%=
%\sum_{j=1}^m
%\beta^{(j)}(\alpha^{(j)}(x))
=\beta\circ\alpha(x).
$ Thus (1) holds.
%\fi

Let $a\in A^1_+$. % $j=1,\cdots,m$.
{\blue{Recall}} that $\bar{\beta}^{(j)}$
{\blue{is a c.p.c~order zero}}
%is {{an}} %c.p.c.~order zero
%{\green{are}} isometr{\green{ies.}}
isometry ($j=1,{\green{2,}}\cdots,m$).
We have
%{\blue{One estimates}}
% have the following holds:
%(the first equality holds by $\bar{\beta}^{(j)}$ are isometries):
%
$$
\|\bar{\alpha}^{(j)}(a)\|
=
\|\bar{\beta}^{(j)}(\bar{\alpha}^{(j)}(a))\|
\leq
\|\sum_{j=1}^m
\bar{\beta}^{(j)}(\bar{\alpha}^{(j)}(a))
\|
%=\|
%\sum_{j=1}^m
%\beta^{(j)}(\alpha^{(j)}(a))\|
\overset{\eqref{f4-11-1}}{=}
\|\beta\circ\alpha(a)\|
\leq
\|\beta\circ\alpha\|.
$$
Thus $\|\bar{\alpha}(a)\|
=\max\{\|\bar{\alpha}^{(j)}(a)\|:j=1,{\green{2,}}\cdots,m\}\leq \|\beta\circ\alpha\|$,
which implies $\|\bar{\alpha}\|\leq \|\beta\circ\alpha\|$.
Also note that
\beq
\|\bar{\alpha}\|
&=&
\max\{\|\bar{\alpha}^{(j)}\|:j=1,{\green{2,}}\cdots,m\}
%\nonumber\\
%&=&
=\max\{\|\alpha^{(j)}\|\cdot\|\beta^{(j)}\|:j=1,{\green{2,}}\cdots,m\}
\nonumber\\
%\max\{\|\beta|_{I}\|:I \mbox{ is an ideal of }B\}
&\leq& \max\{\|\alpha^{(j)}\|:j=1,{\green{2,}}\cdots,m\}
\cdot|||\beta|||
%\nonumber\\
{{\ \leq\ }}\|\alpha\|\cdot|||\beta|||.
\eneq
So (2) holds.

\end{proof}
%\rightline{2019,07,09 ECNU, ZhongBei, Fudan, JiangWan}

\iffalse

{\red{\bf Question:  Is it true that in the statement (3) above $\bar{\beta}$ is an isometry is important?
{\green{\bf It is. It related to {\green{Proposition}} \ref{inverse-of-finite-dim-injective-order-zero-map}.
---X.F.}}}}

\fi

%Now we recall the definition of
%nuclear dimension of *-homomorphisms and \CAs.

\begin{df}[{\blue{\cite[Definition 2.2]{TW2014}
and \cite[Definition 2.1]{WZ2010}}}]
%[Nuclear dimension]
\label{definition-of-nuclear-dimension}
\index{Nuclear dimension}
%{\blue{\cite[Definition 2.2.]{TW2014}}}
%\cite[Definition 2.1.]{WZ2010}
Let $A$ {\blue{and}} $B$
be $C^*$-algebras {\blue{and}} let $h: A\rightarrow B$ be a *-homomorphism.
%We say
{\blue{Recall that}} $h$ has {\blue{\it nuclear dimension at most $n$,}}
and denote by $\dimnuc h\leq n$, if the following  {\blue{conditions}} hold:

For any finite subset
$\calF\subset A$ and any
$\epsilon>0,$
%\pinr,
there exist finite dimensional
$C^*$-algebras $F_0,\cdots,F_n$
{{and,}} c.p.~maps $\phi:A\rightarrow F_0\oplus\cdots \oplus F_n$
and $\psi:F_0\oplus\cdots \oplus F_n \rightarrow B$ {{such}} that

(1) $\psi\circ\phi(x)\approx_{\epsilon}h(x)$ {\blue{for all}} $x\in \calF$,

(2) $\|\phi\|\leq 1$, {{and}}

(3) $\psi|_{F_i}$  {\blue{is a
%{\green{are}}
c.p.c.~order zero map, $i=0,1,\cdots, n.$}}
%{\green{s.}}
%{\green{for}}
%$i=0, 1, \cdots, n.}}$
%\cdots , n$.

\noindent
{{We}} say $A$ has nuclear dimension at most $n$,
and denoted by $\dimnuc A\leq n$,
if $\dimnuc \mathrm{id}_A\leq n.$
%The triple $(F,\phi,\psi)$ is called an
%$n$-decomposable c.p.~approximation
%for $h$ {\green{(with respect to $\calF$ and $\ep$).}}
\end{df}

%The following proposition shows that
%all that matters in the definition of nuclear dimension
%is the existence of $n$-decomposable approximations.

{\blue{The following may be known to experts.}}

\begin{prop}
\label{equivalent-definition-on-nuclear-dimension}

Let $h:A\rightarrow B$ be a *-homomorphism of $C^*$-algebras {\blue{and}}
{\blue{$n\in\mathbb{N}\cup\{0\}.$}}
{\blue{Then}} $\dimnuc h\leq n$ if and only if the following  {\blue{holds}}:
For any finite subset $\calF\subset A$ and any $\epsilon>0$,
there {\blue{exist}} a finite dimensional $C^*$-algebra $F$,
%and
c.p.~maps $\phi: A\rightarrow F$ {\green{and}}
$\psi:F\rightarrow B$ {{such}} that

(1) $\psi\circ\phi(x)\approx_{\epsilon} h(x)$ {\blue{for all}} $x\in \calF,$ {{and}}

(2) $\psi$ is $n$-decomposable {\blue{(see
{{Definition}} \ref{defn-n-decomposable-c.p.-map}).}}

\end{prop}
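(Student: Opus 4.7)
\smallskip

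\textbf{Proof proposal.} The forward direction is immediate: given finite dimensional $F_0,\dots,F_n$, a c.p.c.\ map $\phi\colon A\to F_0\oplus\cdots\oplus F_n$, and $\psi\colon F_0\oplus\cdots\oplus F_n\to B$ with each $\psi|_{F_i}$ c.p.c.\ order zero, just set $F:=F_0\oplus\cdots\oplus F_n$; then $\psi$ is $n$-decomposable by Definition \ref{defn-n-decomposable-c.p.-map} and conditions (1)--(2) of the proposition hold.

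For the nontrivial direction ($\Leftarrow$), the plan is to start with a factorization of the alternative form and apply the normalization Proposition \ref{normalization-of-c.p.-dividable-map} to replace $\psi$ by one whose order-zero summands are c.p.c.\ isometries, then control the norm of the resulting $\phi$ using this isometric property. Let me sketch this in the unital case; the non-unital case is handled by replacing $1_A$ with a suitably chosen element of an approximate unit. Fix $\calF\subset A^1$ finite and $\epsilon>0$, enlarge $\calF$ to include $1_A$, and invoke the hypothesis with tolerance $\epsilon':=\epsilon/3$ to produce a finite dimensional $C^*$-algebra $F$, c.p.\ maps $\phi\colon A\to F$ and $\psi\colon F\to B$ with $\psi\circ\phi(x)\approx_{\epsilon'}h(x)$ for $x\in\calF$, and $\psi$ $n$-decomposable.

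Apply Proposition \ref{normalization-of-c.p.-dividable-map} to $\alpha=\phi$, $\beta=\psi$ to obtain $\bar n\le n$, a finite dimensional summand $\bar F=\bar F_0\oplus\cdots\oplus\bar F_{\bar n}$ of $F$, and c.p.\ maps $\bar\phi\colon A\to\bar F$ and $\bar\psi\colon\bar F\to B$ satisfying $\bar\psi\circ\bar\phi=\psi\circ\phi$ with each $\bar\psi_i:=\bar\psi|_{\bar F_i}$ a c.p.c.\ order zero isometry; if $\bar n<n$, pad with zero summands to reach $n+1$ pieces. Writing $\bar\phi_i:=P_i\circ\bar\phi$, the isometric property gives $\|\bar\phi_i(1_A)\|=\|\bar\psi_i(\bar\phi_i(1_A))\|$. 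Since $\bar\psi_i(\bar\phi_i(1_A))\in B_+$ and $\bar\psi\bar\phi(1_A)=\sum_j\bar\psi_j(\bar\phi_j(1_A))$ is a sum of positives, one has $\|\bar\psi_i(\bar\phi_i(1_A))\|\le\|\bar\psi\bar\phi(1_A)\|\le\|h(1_A)\|+\epsilon'\le 1+\epsilon'$, hence
\[
\|\bar\phi\|\ =\ \|\bar\phi(1_A)\|\ =\ \max_i\|\bar\phi_i(1_A)\|\ \le\ 1+\epsilon'.
\]
Set $\tilde\phi:=\bar\phi/(1+\epsilon')$, which is c.p.c., and keep $\bar\psi$ unchanged. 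For $x\in\calF$,
\[
\|\bar\psi\circ\tilde\phi(x)-h(x)\|\ \le\ \tfrac{1}{1+\epsilon'}\|\bar\psi\bar\phi(x)-h(x)\|+\tfrac{\epsilon'}{1+\epsilon'}\|h(x)\|\ \le\ \epsilon'+\epsilon'\ <\ \epsilon,
\]
so the triple $(\bar F_0,\dots,\bar F_n;\tilde\phi;\bar\psi)$ witnesses $\dimnuc h\le n$ in the sense of Definition \ref{definition-of-nuclear-dimension}.

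The main technical point is the norm estimate $\|\bar\phi\|\le 1+\epsilon'$, which crucially uses two features of Proposition \ref{normalization-of-c.p.-dividable-map}: (i) the summands of $\bar\psi$ are \emph{isometric} (so $\|\bar\phi_i(1_A)\|$ is recovered from $\|\bar\psi_i(\bar\phi_i(1_A))\|$), and (ii) $\bar\psi\circ\bar\phi=\psi\circ\phi$ (so the approximation tolerance is preserved exactly rather than only up to $|||\psi|||$). For non-unital $A$ the same argument runs once $1_A$ is replaced by an $e\in A_+^1$ that is an approximate unit for $\calF$; the only subtlety there is ensuring $\|h(e)\|\le 1$, which is automatic since $h$ is a $*$-homomorphism.
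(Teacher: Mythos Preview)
Your argument in the unital case is correct and essentially matches the paper's: both apply Proposition~\ref{normalization-of-c.p.-dividable-map} as the key step, and your norm estimate $\|\bar\phi\|\le 1+\epsilon'$ is in fact just conclusion~(2) of that proposition (since $\psi\circ\phi$ is positive and $A$ is unital, $\|\psi\circ\phi\|=\|\psi\phi(1_A)\|\le 1+\epsilon'$), so there is no need to re-derive it via the isometry property of the $\bar\psi_i$.

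Your non-unital sketch, however, has a gap. Literally replacing $1_A$ by an approximate unit $e$ only yields $\|\bar\phi(e)\|\le 1+\epsilon'$, which does \emph{not} bound $\|\bar\phi\|$; the subtlety is not that $\|h(e)\|\le 1$. The missing trick---and what the paper actually does---is to compress first: define $\tilde\phi(x):=\frac{1}{1+\delta}\phi(exe)$. Then for every $a\in A_+^1$ one has $0\le eae\le e^2$, hence $\psi\tilde\phi(a)\le\frac{1}{1+\delta}\psi\phi(e^2)$, and including $e^2$ in the finite set on which the hypothesis is invoked gives $\|\psi\circ\tilde\phi\|\le 1$. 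Proposition~\ref{normalization-of-c.p.-dividable-map} then produces $\bar\phi$ with $\|\bar\phi\|\le\|\psi\circ\tilde\phi\|\le 1$ directly, with no rescaling needed afterwards.
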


\begin{proof}

The ``only if'' part is trivial.
%In the following w
%{\blue{We will}} show the
For the ``if'' part,
let $\calF\subset A$ be a finite subset {\blue{and let}}   $\epsilon>0$.
%Let $M:=\max\{\|x\|:x\in\calF\}$,
%and let
Set $\delta:=\frac{\epsilon}{3+\max\{\|x\|:x\in\calF\}}.$
Choose
 $e\in A^1_+$ such that
$exe\approx_{\delta}x$ for all $x\in \calF.$

By our assumption,
there {\blue{exist}} %$\bar{n}\leq n$,
a finite dimensional $C^*$-algebra $F,$
%$F=F_0\oplus\cdots\oplus F_{\bar{n}}$,
%{\green{and}}
{\blue{c.}}p.~maps $\phi: A\rightarrow F$ {\blue{and}}
$\psi:F\rightarrow B$ {{such}} that

($1'$) $\psi\circ\phi(x)
\approx_{\delta} {\blue{h(x)}}$ {\blue{for all}} $x\in{\blue{\{eye:y\in\calF\}\cup\{e^2\}}}$, {\blue{and}}

($2'$) $\psi$ is $n$-decomposable.

Define a c.p.~map $\tilde{\phi}: A\rightarrow F$
{{by}}
$x\mapsto \frac{1}{1+\delta}\phi(exe)$
{{for all $x\in A$.}}
Then, for any $a\in A_1^+$,
$$\|\psi\circ\tilde{\phi}(a)\|
=\frac{1}{1+\delta}\|\psi\circ\phi(eae)\|
\leq \frac{1}{1+\delta}\|\psi\circ\phi(e^2)\|
\overset{\mathrm{(by\ (1'))}}{\leq}
\frac{1}{1+\delta}(\|h(e^2)\|+\delta)
\leq 1.
$$
%where the last inequality comes from (1')
%this implies
It follows
\beq
\|\psi\circ\tilde{\phi}\|\leq 1.
\eneq
{{By}} Proposition \ref{normalization-of-c.p.-dividable-map},
there exist a finite dimensional $C^*$-algebra $\bar{F}$,
and c.p.~maps $\bar{\phi}: A\rightarrow \bar{F}$
and $\bar{\psi}: \bar{F}\rightarrow B$,
such that

($1''$) $\bar{\psi}\circ\bar{\phi}=\psi\circ\tilde{\phi}$,

($2''$) $\|\bar{\phi}\|\leq \|\psi\circ\tilde{\phi}\|\leq 1$, {\blue{and}}

($3''$) $\bar{\psi}$ is a piecewise contractive $n$-decomposable c.p.~map.

\noindent
{\blue{Then,}} by $(2'')$ and $(3'')$,
$\bar{\psi},\,\bar{\phi}$ {\blue{and}} $\bar{F}$ {\blue{satisfy}} (2) {\blue{and}} (3) of Definition
\ref{definition-of-nuclear-dimension}.
For all $x\in \calF$,
$$
\bar{\psi}\circ\bar{\phi}(x)
\overset{\mathrm{(by\ (1''))}}{=}\psi\circ\tilde{\phi}(x)
=\frac{1}{1+\delta}\psi\circ\phi(exe)
\overset{\mathrm{(by\ (1'))}}{\approx_{\frac{\delta}{1+\delta}}}
%\overset{\mbox{(by (1'))}}{\approx_{\frac{\delta}{1+\delta}}}
\frac{1}{1+\delta}h(exe)
\approx_{\frac{\delta}{1+\delta}}
\frac{1}{1+\delta}h(x)\approx_{\frac{\dt}{1+\dt}\|x\|}h(x).
$$
%$$
%then b
By the choice of $\delta,$
we have $h(x)\approx_{\epsilon}\bar{\psi}\circ\bar{\phi}(x)$.
Then,
by Definition \ref{definition-of-nuclear-dimension},
we have $\dimnuc h\leq n$.

\end{proof}
%\rightline{2019,07,08, ECNU, ZhongBei}

%We have
%{\blue{The next corollary}} is an immediate consequence {\blue{of Proposition \ref{equivalent-definition-on-nuclear-dimension}.}}

\begin{cor}
\label{equivalent-definition-on-nuclear-dimension-1}

Let $A$ be a \CA\, and let
$n\in\mathbb{N}.$
Then $\dimnuc A\leq n$ if and only if the following {\blue{conditions}} hold:
For any finite subset $\calF\subset A$ and any $\epsilon>0$,
there exist a finite dimensional $C^*$-algebra $F$
and c.p.~maps $\phi: A\rightarrow F$ {\blue{and}} $\psi:F\rightarrow A$
{\blue{such}} that

(1) $\psi\circ\phi(x)\approx_{\epsilon} x$ {\blue{for all $x\in \calF$}} {\blue{and}}

(2) $\psi$ is $n$-decomposable.

\end{cor}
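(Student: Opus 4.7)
The plan is to derive this as an immediate specialization of Proposition \ref{equivalent-definition-on-nuclear-dimension} (the previous result). By Definition \ref{definition-of-nuclear-dimension}, the statement $\dimnuc A \le n$ means exactly $\dimnuc \mathrm{id}_A \le n$, where $\mathrm{id}_A: A \to A$ is viewed as a $*$-homomorphism. So one simply applies Proposition \ref{equivalent-definition-on-nuclear-dimension} with $h = \mathrm{id}_A$ and $B = A$, and the two-sided equivalence becomes the desired characterization verbatim.

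For the ``only if'' direction: if $\dimnuc A \le n$, then by Definition \ref{definition-of-nuclear-dimension} one obtains c.p.~maps $\phi: A \to F_0 \oplus \cdots \oplus F_n$ and $\psi: F_0 \oplus \cdots \oplus F_n \to A$ with $\psi \circ \phi(x) \approx_\epsilon x$ on $\mathcal{F}$, $\|\phi\| \le 1$, and each $\psi|_{F_i}$ a c.p.c.~order zero map. Setting $F := F_0 \oplus \cdots \oplus F_n$, the map $\psi$ is $n$-decomposable by Definition \ref{defn-n-decomposable-c.p.-map}, so (1) and (2) hold. For the ``if'' direction: assuming the conditions hold, one invokes the construction from the proof of Proposition \ref{equivalent-definition-on-nuclear-dimension}. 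Given $\mathcal{F}$ and $\epsilon > 0$, choose a positive contraction $e \in A^1_+$ with $e x e \approx_\delta x$ for all $x \in \mathcal{F}$ (for $\delta$ suitably small relative to $\epsilon$ and $\max\{\|x\|: x \in \mathcal{F}\}$), apply the hypothesis to the enlarged finite set $\{eye : y \in \mathcal{F}\} \cup \{e^2\}$, rescale to $\tilde{\phi}(x) := \frac{1}{1+\delta}\phi(exe)$ so that $\|\psi \circ \tilde{\phi}\| \le 1$, and then apply Proposition \ref{normalization-of-c.p.-dividable-map} to produce $(\bar{\phi}, \bar{\psi})$ with $\bar{\psi}$ piecewise contractive $n$-decomposable and $\|\bar{\phi}\| \le 1$, recovering the original Definition \ref{definition-of-nuclear-dimension}. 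The estimate $\bar{\psi} \circ \bar{\phi}(x) \approx_\epsilon x$ on $\mathcal{F}$ then follows from the telescoping computation in the previous proof. Since the entire argument was already carried out for a general $*$-homomorphism $h$, there is no new work: the corollary is a labeling of the $h = \mathrm{id}_A$ case. The only observation worth making is that $A$ here plays the role of both the domain and codomain, which causes no difficulty since $\mathrm{id}_A$ is trivially a $*$-homomorphism.
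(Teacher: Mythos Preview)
Your proposal is correct and matches the paper's approach exactly: the corollary is stated in the paper without proof, as it is the immediate specialization of Proposition~\ref{equivalent-definition-on-nuclear-dimension} to $h=\mathrm{id}_A$ and $B=A$. Your additional sketch of both directions simply recapitulates that proposition's proof, which is more than required but entirely accurate.
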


\iffalse
\begin{rem}

Proposition \ref{equivalent-definition-on-nuclear-dimension}
shows that
in the definition of nuclear dimension,
we don't need to have any restriction on the norms of all the c.p.~maps,
i.e. (2) in Definition \ref{definition-of-nuclear-dimension} is redundant.
%One benefit of get rid of norm restriction is that when we try to show
%certain *-homomorphisms (or algebras) has finite nuclear dimension,
%all we need to consider is how to obtain decomposable maps,
%without worrying about the norms.
%We will meet some applications of
%Proposition \ref{equivalent-definition-on-nuclear-dimension}
%in the rest of this section.
{\bf\red{Is this really new?}}

\end{rem}
\fi

\iffalse

One should compare the following technical lemma
with Lemma \ref{c.p.c.-almost-multiplicative}
and \cite[Lemma 3.5.]{KW-Covering-dimension}.
Though the statement and proof are less elegant,
the point is that we don't assume any restriction
on the norm of all the maps.
Also, the following lemma is efficient enough
to cut a (sufficiently) approximately multiplicative linear nuclear map
into pieces.

\fi
%\newpage

\iffalse
\begin{prop}

\label{c.p.c.-norm}

Let $A,B$, {\blue{and}} $C$ be $C^*$-algebras,
$\phi: A\rightarrow B$,
$\psi: B\rightarrow C$
be c.p.~maps with $\|\psi\circ\phi\|\leq 1$.
Then for any finite subset $\calF\subset A,$ any $\ep>0,$
there exist c.p.c.~maps
$\tilde\phi: A\rightarrow B$ and
$\tilde\psi: B\rightarrow C,$
such that $\psi\circ\phi(x)\approx_\ep\tilde\psi\circ\tilde\phi(x)$
for all $x\in\calF.$

\end{prop}
\fi

\begin{prop}

\label{c.p.c.-almost-multiplicative-3}

Let $A,B$, {\blue{and}} $C$ be $C^*$-algebras,
%let
$\{e_\lambda\}_{\lambda\in\Lambda}$ be an approximate identity of $A,$
%Let
$\phi: A\rightarrow B$,
$\psi: B\rightarrow C$
%$A\overset{\phi}{\rightarrow}B\overset{\psi}{\rightarrow}C$
be c.p.~maps with $\|\psi\circ\phi\|\leq 1$,  and let
$\epsilon>0$.
%Let
{\blue{Suppose that}} $a\in A_+$
such that $\|\psi\circ \phi(a^2)-\psi\circ \phi(a)^2\|\leq \epsilon$ {\blue{and}}
%Assume that
$b\in B$  {\blue{such that $b$}} commutes with
{{$\{\phi(e_\lambda)\}_{\lambda\in\Lambda}$.}}
%$\phi(A).$
{\blue{Then}}
\beq\nonumber
&&\limsup_{\lambda}\|\psi(\phi(a)b)-\psi(\phi(a))\psi(\phi(e_\lambda)b)\|
\leq \epsilon^{1/2}\|b\|\andeqn\\
&&\limsup_{\lambda}\|\psi(b\phi(a))-\psi(\phi(e_\lambda)b)\psi(\phi(a)) \|
\leq \epsilon^{1/2}\|b\|.
\eneq

%Moreover, above estimation is sharp when $\epsilon\in(0,1/4)$.

%Moreover, if $b\in B_+$, then we can choose such $\tilde{b}\in B_+$.

\end{prop}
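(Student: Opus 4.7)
My plan is to reduce the statement to Lemma \ref{c.p.c.-almost-multiplicative} by exploiting the fact that, although $\phi$ and $\psi$ need not be contractive individually, the composition $\Phi := \psi \circ \phi$ is c.p.c.\ by hypothesis. The approximate identity $\{e_\lambda\}$, together with the commutation $b\phi(e_\lambda) = \phi(e_\lambda)b$, will be used to ``cut down'' to a setting where the relevant inequalities for c.p.c.\ maps apply. I will only discuss the first inequality, as the second is symmetric.

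First, I would split the quantity under consideration as
\begin{eqnarray*}
\psi(\phi(a)b) - \psi(\phi(a))\psi(\phi(e_\lambda)b)
&=& \bigl[\psi(\phi(a)b) - \psi(\phi(ae_\lambda)b)\bigr]\\
& & + \bigl[\psi(\phi(ae_\lambda)b) - \psi(\phi(a))\psi(\phi(e_\lambda)b)\bigr].
\end{eqnarray*}
The first bracket tends to zero: since $ae_\lambda \to a$ in $A$ and $\phi$ is bounded, $\phi(ae_\lambda) \to \phi(a)$ in norm, and boundedness of $\psi$ then gives $\|\psi(\phi(ae_\lambda)b) - \psi(\phi(a)b)\| \to 0$. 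So it suffices to bound the limsup of the norm of the second bracket by $\epsilon^{1/2}\|b\|$.

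For the second bracket, the idea is to mimic the Stinespring-based proof of Lemma \ref{c.p.c.-almost-multiplicative} applied to $\Phi$. By Lemma \ref{c.p.c.-almost-multiplicative-easy} applied to the c.p.c.\ map $\Phi$ and the self-adjoint element $a$, one has $\|\Phi(ay) - \Phi(a)\Phi(y)\| \leq \epsilon^{1/2}\|y\|$ for every $y \in A$; in particular, $\|\psi(\phi(ae_\lambda)) - \psi(\phi(a))\psi(\phi(e_\lambda))\| \leq \epsilon^{1/2}$. The task is then to upgrade this bound, by means of the commutation hypothesis on $b$, to a bound of $\epsilon^{1/2}\|b\|$ on $\|\psi(\phi(ae_\lambda)b) - \psi(\phi(a))\psi(\phi(e_\lambda)b)\|$. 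Concretely, one views $\phi(e_\lambda)b$ as a ``multiplier-like'' insertion that, because $b$ commutes with $\phi(e_\lambda)$, can be passed through the Stinespring dilation of $\Phi$, and then Kadison's generalized Schwarz inequality for the c.p.\ map $\psi$ (as used in Lemma \ref{c.p.c.-almost-multiplicative}) controls the error in terms of $\|\Phi(a^2) - \Phi(a)^2\|^{1/2}\|b\|$.

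The main obstacle is precisely the second bracket: the elements $\phi(ae_\lambda)b$ and $\phi(a)\phi(e_\lambda)b$ belong to $B$ rather than to the image of $\phi$, so the Stinespring dilation of $\Phi$ does not directly control $\psi$-values of these expressions. Handling this requires combining the Kadison--Schwarz inequality for the 2-positive map $\psi$, the commutation of $b$ with $\phi(e_\lambda)$, and the fact that $\|\psi(\phi(e_\lambda)^2)\|$ is controlled by $\|\psi\circ\phi\|\leq 1$ in the limit, in order to transport the $\epsilon^{1/2}$ bound coming from $\Phi$ into a $\epsilon^{1/2}\|b\|$ bound on the error. Once this is carried out, letting $\lambda \to \infty$ yields the desired $\limsup$ estimate, and the analogous computation with the order reversed gives the second inequality.
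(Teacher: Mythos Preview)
Your decomposition into two brackets is fine, and the first bracket does vanish in the limit. The problem is the second bracket: you correctly diagnose that $\phi(ae_\lambda)b$ and $\phi(e_\lambda)b$ live in $B$, not in the image of $\phi$, so the Stinespring dilation of $\Phi=\psi\circ\phi$ cannot touch them. You then gesture at ``combining the Kadison--Schwarz inequality for $\psi$'' with the commutation hypothesis and the control of $\|\psi(\phi(e_\lambda)^2)\|$, but this is where the argument stops short. Kadison--Schwarz for $\psi$ gives $\psi(x)^*\psi(x)\le \|\psi\|\,\psi(x^*x)$, and the constant $\|\psi\|$ is uncontrolled; the bound $\|\Phi(ae_\lambda)-\Phi(a)\Phi(e_\lambda)\|\le \epsilon^{1/2}$ you obtain from Lemma~\ref{c.p.c.-almost-multiplicative-easy} concerns elements of $C$, not operators acting on $b\in B$, so there is no mechanism to insert the factor of $\|b\|$ without picking up $\|\psi\|$ as well.

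The paper resolves this by a construction you do not have: it defines auxiliary maps $\tilde\psi(x)=\psi(\phi(e_\lambda)^{1/2}x\,\phi(e_\lambda)^{1/2})$ and $\tilde\phi(x)=(\eta+\phi(e_\lambda))^{-1/2}\phi(e_\lambda^{1/2}xe_\lambda^{1/2})(\eta+\phi(e_\lambda))^{-1/2}$ and checks that \emph{both are c.p.c.}, using $\|\psi\circ\phi\|\le 1$ to bound $\|\tilde\psi\|$. The commutation of $b$ with $\phi(e_\lambda)$ then gives $\psi(\phi(e_\lambda)b)=\tilde\psi(b)$ exactly, and Lemma~\ref{c.p.c.-almost-multiplicative} applied to the c.p.c.\ pair $(\tilde\phi,\tilde\psi)$ (after verifying $\tilde\psi\circ\tilde\phi(a^2)\approx\tilde\psi\circ\tilde\phi(a)^2$) yields $\tilde\psi(\tilde\phi(a)b)\approx\tilde\psi\circ\tilde\phi(a)\,\tilde\psi(b)$ with the correct $\epsilon^{1/2}\|b\|$ error. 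Unwinding then recovers $\psi(\phi(a)b)$. This compression trick is the missing idea; without it, your sketch does not close.
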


\begin{proof}

We will show {\blue{that}} the first inequality holds.
{\blue{The}} second  {\blue{one}}
%inequality
holds
by taking conjugate of the first one.
%
%Let $\tilde{\calF}:=\calF\cup\{a^2:a\in\calF\}$.
{\blue{%Fix $a, b\in A.$
{{Put}} $M:=\|a\|+\|a\|^2$.}}
Let $\theta>0$.
Choose $\delta>0$ be such that
\beq\label{f4-22-3}
\left(\delta(2\|\psi\|+\|\psi\|\|\phi\|+1)
+\sqrt{(2M+1)(\|\psi\|+1)\delta +\epsilon}\right) \|b\|
<\epsilon^{1/2}\|b\|+\theta.
\eneq
{{Let $\lambda_1\in\Lambda$}}
%Let $\lambda_0\in\Lambda$.
%There exists $\lambda_1\geq \lambda_0\in\Lambda$
such that, for any $\lambda \geq \lambda_1$,
%if
{{any}} $x\in\{a,a^2\}$,
\beq\label{49-n-1}
e_{\lambda}^{1/2}x e_{\lambda}^{1/2}\approx_{\delta}x\andeqn
\psi\circ \phi(e_{\lambda}^{1/2}x e_{\lambda}^{1/2})\approx_{\delta}\psi\circ \phi(x).
\eneq
Fix $\lambda\geq \lambda_1$.
Note, for any $x\in A_+$, we have $0\le \phi(e_\lambda^{1/2} xe^{1/2}_\lambda)\le \|x\|\phi(e_\lambda).$
{\blue{Thus}} $\phi(e_\lambda^{1/2} xe^{1/2}_\lambda)\in \Her_B(\phi(e_\lambda))$.
%the hereditary \SCA\, of $B$ generated by $\phi(e_\lambda).$
Note {\blue{that}} $\{{\blue{E_{n}}}:=(1/n+\phi(e_{\lambda}))^{-1}\phi(e_{\lambda})\}_{{n\in\N}}$
%{\green{is}} %forms
{\blue{forms}} an approximate identity for $\Her_B(\phi(e_\lambda)).$
It follows {{that}}
$
\lim_{n\to\infty}
\|\phi(e_{\lambda}^{1/2}x e_{\lambda}^{1/2})-E_{\blue{n}}^{1/2}\phi(e_{\lambda}^{1/2}x e_{\lambda}^{1/2})
E_{\blue{n}}^{1/2}\|=0.
$
{\blue{Therefore}} there exists $\eta>0$
such that, for $x\in\{a,a^2\}$, %for all $a\in \tilde{\calF}$,
\beq\label{49-n0}
\phi(e_{\lambda}^{1/2}x e_{\lambda}^{1/2})
\approx_{\delta}
\phi(e_{\lambda})^{1/2}(\eta+\phi(e_{\lambda}))^{-1/2}
\cdot\phi(e_{\lambda}^{1/2}x e_{\lambda}^{1/2})
\cdot(\eta+\phi(e_{\lambda}))^{-1/2}\phi(e_{\lambda})^{1/2}.
\eneq

\noindent
{{Define}} the following c.p.~maps:
{\blue{\beq
&&\tilde{\phi}:A\rightarrow B,
\quad
x
\mapsto
(\eta+\phi(e_{\lambda}))^{-1/2}\cdot
\phi(e_{\lambda}^{1/2}xe_{\lambda}^{1/2})
\cdot(\eta+\phi(e_{\lambda}))^{-1/2}\andeqn\\
&&\tilde{\psi}:
B\rightarrow C,
\quad
x\mapsto
\psi(\phi(e_{\lambda})^{1/2}x\phi(e_{\lambda})^{1/2}).
\eneq}}
\noindent
{{We}} claim that $\|\tilde{\phi}\|\leq 1$.
Indeed, for any $x\in A_+^1$,
\beq\nonumber
&&\hspace{-0.2in}\|\tilde{\phi}(x)\|
=\|(\eta+\phi(e_{\lambda}))^{-1/2}\cdot
\phi(e_{\lambda}^{1/2}xe_{\lambda}^{1/2})
\cdot(\eta+\phi(e_{\lambda}))^{-1/2}\|\\
&&\leq
\|(\eta+\phi(e_{\lambda}))^{-1/2}\cdot
\phi(e_{\lambda})
\cdot(\eta+\phi(e_{\lambda}))^{-1/2}\|
=\|\phi(e_{\lambda})
\cdot(\eta+\phi(e_{\lambda}))^{-1}\|\leq 1.
\eneq	
%Therefore  $\|\tilde{\phi}\|\leq 1$.
{\blue{We}} {\blue{also}} claim that $\|\tilde{\psi}\|\leq 1$.
Indeed, for any
{{$x\in B_+^1$,}}
%$x\in A_+^1$,
$\|\tilde{\psi}(x)\|=\|\psi(\phi(e_{\lambda})^{1/2}x\phi(e_{\lambda})^{1/2})\|
\leq \|\psi(\phi(e_{\lambda}))\|\leq\|\psi\circ\phi\|\|e_\lambda\|\leq 1.$
{\blue{Thus}} $\|\tilde{\psi}\|\leq 1$.

Note that, by \eqref{49-n0} and \eqref{49-n-1},
for {\blue{$x\in \{a,a^2\},$}}
we have
\beq\label{49-n1}
\tilde{\psi}\circ \tilde{\phi}(x)
\approx_{\|\psi\|\delta}
\psi\circ \phi(e_{\lambda}^{1/2} x e_{\lambda}^{1/2})
\approx_{\delta}
\psi\circ \phi(x).
\eneq

\noindent
{{Then}} we have,  {\blue{applying \eqref{49-n1}}},
\beq
\tilde{\psi}\circ \tilde{\phi}(a)^2\,\,
%\nonumber\\
%\mbox{(by \eqref{49-n1})}
&\approx_{M(\|\psi\|+1)\delta}&
\tilde{\psi}\circ \tilde{\phi}(a)\cdot\psi\circ\phi(a)
\nonumber\\
\mbox{(by \eqref{49-n1} and $\|\psi\circ\phi\|\leq 1$)}\hspace{0.5in}
&\approx_{M(\|\psi\|+1)\delta}&
\psi\circ\phi(a)^2
%\nonumber\\
%\mbox{(by {\blue{the}} assumption of the proposition)}
%&\approx_{\epsilon}&
{\blue{\approx_{\ep}\psi\circ\phi(a^2)}}
\nonumber\\
\mbox{(by \eqref{49-n1})}\hspace{0.5in}
&\approx_{(\|\psi\|+1)\delta}&
\tilde{\psi}\circ \tilde{\phi}(a^2).
\label{f-4-21-1}
\eneq

\noindent
{{Then}}
\beq
\psi(\phi(a))\cdot\psi(\phi(e_\lambda)b)
%\nonumber\\
&=&
\psi\circ \phi(a)\psi(\phi(e_\lambda)^{1/2}b\phi(e_\lambda)^{1/2})
\nonumber\\
&=&
\psi\circ \phi(a)\tilde{\psi}(b)
\nonumber\\
\mbox{(by \eqref{49-n1})}&\approx_{(\|\psi\|+1)\delta\|b\|}&
\tilde{\psi}\circ \tilde{\phi}(a)\tilde{\psi}(b)
\nonumber\\
\mbox{(by \eqref{f-4-21-1} and Lemma \ref{c.p.c.-almost-multiplicative})}
&\approx_{\|b\|\sqrt{(2M+1)(\|\psi\|+1)\delta +\epsilon}}&
\tilde{\psi}(\tilde{\phi}(a)b)
\nonumber\\
&=&
\psi(\phi(e_{\lambda})^{1/2}\tilde{\phi}(a)b\phi(e_{\lambda})^{1/2})
\nonumber\\
&=&
\psi(\phi(e_{\lambda})^{1/2}\tilde{\phi}(a)\phi(e_{\lambda})^{1/2}b)
\nonumber\\
\mbox{(by \eqref{49-n0})}
&\approx_{\|\psi\|\|b\|\delta}&
\psi\left(\phi(e_{\lambda}^{1/2}a e_{\lambda}^{1/2})b\right)
\nonumber\\
\mbox{(by \eqref{49-n-1})}
&\approx_{\|\psi\|\|\phi\|\|b\|\delta}&
\psi(\phi(a)b).
\eneq

\noindent
{{By}} \eqref{f4-22-3},
we have
$\psi(\phi(a))\cdot\psi(\phi(e_\lambda)b)
\approx_{\epsilon^{1/2}+\theta}\psi(\phi(a)b)$.
Thus
$$
\limsup_{\lambda}\|\psi(\phi(a)b)-\psi(\phi(a))\psi(\phi(e_\lambda)b)\|
\leq (\epsilon^{1/2}+\theta)\|b\|.
$$

\noindent
Let $\theta\to 0.$ {\blue{The proposition then}} follows.

\iffalse
\noindent
{\green{Since}} $\theta$ is arbitrary,
we have
$$
\limsup_{\lambda}\|\psi(\phi(a)b)-\psi(\phi(a))\psi(\phi(e_\lambda)b)\|
\leq \epsilon^{1/2}\|b\|.$$
\fi

\end{proof}
%\rightline{2019,07,08 ECNU,ZhongBei, Fundan, JiangWan}

%{\red{\bf Suggest to check the calculation above once more.}}
%{\green{\bf Checked---X.F.}}

%\newpage

\begin{thm}
\label{equivalent-definition-on-nuclear-dimension-second}

Let $A$ and $B$ be \CAs, $h: A\rightarrow B$ be  a *-homomorphism, and
let $n\in\mathbb{N}\cup \{0\}$.
Then $\dimnuc h\leq n$ if and only if the following {\blue{condition holds}}:
For any finite subset $\mathcal{G}\subset A_+$ {\blue{and}} any $\epsilon>0$,
there {\blue{exist}} a \CA\ $C$, a finite subset $\tilde{\mathcal{G}}\subset C_+$,
a finite dimensional $C^*$-algebra $F$
{{and,}} c.p.~maps $\phi: C\rightarrow F$
{{and}} $\psi:F\rightarrow B$
such that

(1) $h(\mathcal{G})\subset_\epsilon \psi\circ\phi(\tilde{\mathcal{G}})$,

(2) $\psi$ is $n$-decomposable, {\blue{and}}

(3) $\psi\circ\phi(xy)\approx_\epsilon \psi\circ\phi(x)\cdot\psi\circ\phi(y)$ {\blue{for all}}
%$\forall
$x,y\in \tilde{\mathcal{G}}\cup(\tilde{\mathcal{G}}\cdot\tilde{\mathcal{G}})
%\cup(\tilde{\mathcal{G}}\cdot\tilde{\mathcal{G}}\cdot\tilde{\mathcal{G}})
$.

\end{thm}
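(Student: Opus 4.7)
For the ``only if'' direction, take $C:=A$ and $\tilde{\mathcal{G}}:=\mathcal{G}$ in the conclusion, and apply Proposition~\ref{equivalent-definition-on-nuclear-dimension} to the finite subset $\mathcal{G}\cup(\mathcal{G}\cdot\mathcal{G})$ with a sufficiently small parameter $\delta<\epsilon/(1+2\max_{g\in\mathcal{G}}\|g\|)$. The resulting finite dimensional $F$ and c.p.~maps $\phi:A\to F$, $\psi:F\to B$ yield conditions (1) and (2) at once; condition (3) then follows from the exact multiplicativity $h(xy)=h(x)h(y)$, since
\[
\psi\circ\phi(xy)\approx_\delta h(xy)=h(x)h(y)\approx_{2\delta\max_{g\in\mathcal{G}}\|g\|}\psi\circ\phi(x)\cdot\psi\circ\phi(y),
\]
which lies within $\epsilon$ by the choice of $\delta$.

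For the ``if'' direction, fix a finite subset $\mathcal{F}\subset A$ and $\epsilon>0$. The plan is to build $\phi':A\to F$ of the form $\beta\circ h$ for a suitable c.p.~map $\beta:B\to F$ approximately inverting $\psi$, so that $\psi\circ\phi'\approx h$ on $\mathcal{F}$. First choose $\mathcal{G}\subset A_+$ rich enough that polynomials in $\mathcal{G}$ $\delta$-approximate $\mathcal{F}$ (possible since each element of $\mathcal{F}$ is a linear combination of positive ones), and apply the hypothesis with small $\delta\ll\epsilon$ to get $C$, $\tilde{\mathcal{G}}\subset C_+$, a finite dimensional $F=F_0\oplus\cdots\oplus F_n$, and c.p.~maps $\phi:C\to F$, $\psi:F\to B$. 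By Proposition~\ref{normalization-of-c.p.-dividable-map}, after restricting to a direct summand we may assume each $\psi|_{F_i}$ is a c.p.c.~order zero isometry; Proposition~\ref{inverse-of-finite-dim-injective-order-zero-map} then furnishes c.p.c.~maps $\beta_i:B\to F_i$ with $\beta_i\circ\psi|_{F_i}=\mathrm{id}_{F_i}$. Setting $\beta(b):=(\beta_0(b),\ldots,\beta_n(b))$, $\phi':=\beta\circ h$, and $\psi':=\psi$, one computes for each $g\in\mathcal{G}$ and the associated $\tilde g\in\tilde{\mathcal{G}}$ with $h(g)\approx_\delta\psi\circ\phi(\tilde g)$ that
\[
\psi'\circ\phi'(g)=\psi\circ\beta\circ h(g)\approx\psi\circ\beta\circ\psi\circ\phi(\tilde g)=\sum_{i,j}\psi|_{F_i}\bigl(\beta_i\circ\psi|_{F_j}(\phi(\tilde g)_j)\bigr),
\]
whose diagonal contribution ($i=j$) telescopes to $\psi\circ\phi(\tilde g)\approx h(g)$.

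The principal obstacle is controlling the off-diagonal cross-terms $\psi|_{F_i}(\beta_i\circ\psi|_{F_j}(\phi(\tilde g)_j))$ for $i\ne j$: since the images $\psi(F_i)\subset B$ need not be pairwise orthogonal, the Arveson extensions $\beta_i$ are not a priori forced to vanish on $\psi(F_j)$. The intended remedy is to select the $\beta_i$'s simultaneously on the joint operator system generated by $\psi(F_0),\ldots,\psi(F_n)$, exploiting the order zero structure (Theorem~\ref{WZ2009-Theorem 3.3}) and Proposition~\ref{simple_order_zero_maps} to arrange $\beta_i$ approximately zero on $\psi|_{F_j}$ for $j\ne i$ before invoking Arveson to extend to $B$. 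Finally, transferring the approximation from $\mathcal{G}$ to $\mathcal{F}$ is a standard polynomial argument that uses the approximate multiplicativity (3) together with Proposition~\ref{c.p.c.-almost-multiplicative-3}, with the total error budget apportioned in terms of $\#\mathcal{F}$, $\max_{x\in\mathcal{F}}\|x\|$, and the decomposition rank $n$.
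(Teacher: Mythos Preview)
Your ``only if'' direction is fine and matches the paper.

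The ``if'' direction has a genuine gap at exactly the point you flag as the ``principal obstacle'': controlling the cross-terms $\psi|_{F_i}\bigl(\beta_i\circ\psi|_{F_j}(\cdot)\bigr)$ for $i\ne j$. Your proposed remedy---choosing the $\beta_i$ simultaneously on the operator system generated by $\psi(F_0),\ldots,\psi(F_n)$ so that $\beta_i$ is approximately zero on $\psi(F_j)$---does not work. The images $\psi(F_i)\subset B$ need not be even approximately orthogonal; in fact $\sum_i \psi(1_{F_i})\approx h(1_A)$, so the supports overlap substantially whenever $n\ge 1$. If, say, $\psi|_{F_0}(x_0)=\psi|_{F_1}(x_1)$ for nonzero $x_0,x_1$, then requiring $\beta_0\circ\psi|_{F_0}=\mathrm{id}_{F_0}$ forces $\beta_0$ to send this element to $x_0\ne 0$, contradicting your requirement that $\beta_0$ nearly vanish on $\psi(F_1)$. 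Nothing in Theorem~\ref{WZ2009-Theorem 3.3} or Proposition~\ref{simple_order_zero_maps} separates the pieces for you, and Arveson extension cannot repair this: the problem is already on the operator system before extending.

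The paper takes a different route that makes essential use of hypothesis~(3), which your argument barely touches. Rather than asking $\beta_i$ to vanish on the wrong pieces, the paper first renormalizes $\phi$ to a map $\bar\phi$ with $\|\psi\circ\bar\phi\|\le 1$, then uses Proposition~\ref{c.p.c.-almost-multiplicative-3} (applied with the central projection $p_i=1_{F_i}$ commuting with $\bar\phi(C)$) to produce an element $c\in C_+^1$ such that compression by $\psi(\bar\phi(c)p_i)^{1/2}\in B$ approximately extracts the $i$-th piece: $\psi(\bar\phi(c)p_i)^{1/2}\cdot h(x)\cdot\psi(\bar\phi(c)p_i)^{1/2}\approx\psi\bigl(\bar\phi(\alpha(h(x)))p_i\bigr)\in\psi(F_i)$. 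The map $A\to F_i$ is then $x\mapsto\beta_i\bigl(\psi(\bar\phi(c)p_i)^{1/2}h(x)\psi(\bar\phi(c)p_i)^{1/2}\bigr)$, and now $\beta_i$ is only ever applied to something already (approximately) in $\psi(F_i)$, so the identity $\beta_i\circ\psi|_{F_i}=\mathrm{id}_{F_i}$ suffices and no cross-terms arise. This is where the approximate multiplicativity on $\tilde{\mathcal G}\cup(\tilde{\mathcal G}\cdot\tilde{\mathcal G})$ enters: it feeds into Proposition~\ref{c.p.c.-almost-multiplicative-3} to make the compressions behave like projections onto the pieces.
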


\begin{proof}

%Only if part is trivial.
%The
For the ``only if'' part,
let $C=A$,
%there exists a finite subset $\tilde{\mathcal{G}}\subset A_+$
%such that $h(\tilde{\mathcal{G}})=\mathcal{G}$.
let $\tilde{\mathcal{G}}=\mathcal{G}$ and let
$M:=\{\|z\|: z\in
\tilde{\mathcal{G}}\cup(\tilde{\mathcal{G}}\cdot\tilde{\mathcal{G}})\}.$
Put $\theta:= \min\{1, \frac{\epsilon}{2(M+1)} \}$.
Since $\dimnuc h\leq n$,
by Definition \ref{definition-of-nuclear-dimension},
we can choose
{{a finite dimensional \CA\ $F$ and,
c.p.c.~maps $\phi: A\to F$ and $\psi: F\to B$ such that}}
%{\blue{a}} $n$-decomposable c.p.~approximation
%{\red{\bf Is this terminology defined?}}
%$(F, \phi, \psi)$ such that
%{\blue{\bf ?}}

{{$(1')$}} $h(x)\approx_{\theta}\psi\circ\phi(x)$ {\blue{for all}}
%\forall
$x\in
\mathcal{G}\cup(\mathcal{G}\cdot \mathcal{G})$ {{and}}

{{$(2')$}} $\psi$ is $n$-decomposable.

\noindent
{{Then,}} by {{$(1')$}}, we have

{{$(3')$}} $\psi\circ\phi(xy)\approx_{\theta} h(xy)
=h(x)h(y)
\approx_{(2M+\theta)\theta}\psi\circ\phi(x)\psi\circ\phi(y)$
%$\forall
{\blue{for all}} $y\in
\mathcal{G}\cup(\mathcal{G}\cdot\mathcal{G})$.

\noindent
{{Note}} that,
by the choice of $\theta$, we have $(2M+1+\theta)\theta\leq \epsilon.$
Thus the ``only if'' part holds.

%\xrule

%In the following we show
For the ``if'' part, let
{{$\mathcal{G}\subset A_{+}^1$ be a finite subset}}
and let $\epsilon>0.$
There exists $\delta_1>0$ such that, for all
$x\in A_+^1$ and for all $y\in A$ with $\|y\|\leq 2,$
if $yx\approx_{\delta_1}xy$,
then $x^{1/2}yx^{1/2}\approx_{\epsilon/4(n+1)}yx$.
Choose $\delta:=\min\{\frac{1}{100},
(\frac{\epsilon}{32(n+3)})^2,(\frac{\delta_1}{12})^2\}$.
Let $e\in A^1_+$ {\blue{be}} such that
% {\blue{(see the end of the first paragraph of the proof of \ref{equivalent-definition-on-nuclear-dimension})}}
\beq\label{f4-22thm-3}
exe\approx_{\delta}x\rforal
 x\in \mathcal{G}\cup(\mathcal{G}\cdot\mathcal{G}).
\eneq

\noindent
{{By}} our assumption,
there {\blue{exist}} a \CA\ $C$, a finite subset $\tilde{\mathcal{G}}\subset C_+$,
and a finite dimensional $C^*$-algebra $\tilde{F}$
{{and,}} c.p.~maps $\tilde{\phi}: C\rightarrow \tilde{F}$
{{and}}
$\tilde{\psi}: \tilde{F}\rightarrow B$,
such that

{{$(1'')$}} $h(\mathcal{G}\cup\{e\})
\subset_\dt \tilde{\psi}\circ\tilde{\phi}(\tilde{\mathcal{G}})$,

{{$(2'')$}} $\tilde{\psi}$ is $n$-decomposable, {\blue{and}}

{{$(3'')$}} $\tilde{\psi}\circ\tilde{\phi}(xy)\approx_\delta \tilde{\psi}\circ\tilde{\phi}(x)\cdot\tilde{\psi}\circ\tilde{\phi}(y)$
%$\forall
{\blue{for all}} $x,y\in \tilde{\mathcal{G}}\cup(\tilde{\mathcal{G}}\cdot\tilde{\mathcal{G}})
%\cup(\tilde{\mathcal{G}}\cdot\tilde{\mathcal{G}}\cdot\tilde{\mathcal{G}})
$.

\noindent
{{By}} Proposition \ref{normalization-of-c.p.-dividable-map},
there {\blue{exist}} $\bar{n}\leq n$, %and
{{a}} finite dimensional \CA\ $F=F_0\oplus\cdots\oplus F_{\bar{n}}$,
and c.p.~maps $\phi: C\rightarrow F$ and
$\psi: F\rightarrow B$, such that

{{$(1''')$}} $\psi\circ\phi=\tilde{\psi}\circ\tilde{\phi}$ {\blue{and}}

{{$(2''')$}} $\psi|_{F_i}$ {\blue{is a}}  c.p.c.~order zero {\blue{isometry,}}
$i=0,{\green{1,}}\cdots,\bar{n}$.

\noindent
{{By}} {{$(1'')$}},
for each $x\in h(\mathcal{G}\cup\{e\}),$
there exists $\af(x)\in  \tilde{\mathcal{G}}$ such that
%there exists a set theoretic map
%$\alpha: h(\mathcal{G}\cup\{e\})\rightarrow \tilde{\mathcal{G}}$
%such that
$x\approx_{\delta}\tilde{\psi}\circ\tilde{\phi}(\alpha(x)).$
% $\forall x\in h(\mathcal{G}\cup\{e\}).$
%
Then, by {{$(1''')$}}, we have
\beq\label{f4-22thm-2}
\psi\circ\phi(\alpha(x))\approx_{\delta}x
%\quad
%\forall
{\blue{\rforal}} x\in h(\mathcal{G}\cup\{e\}).
\eneq
%
%Therefore
%\beq\label{418-nn+1121+1}
%%\|\psi\circ\phi(\alpha(h(e)))\|\leq \|h(e)\|+\dt\leq 1+\dt.
%\eneq
{\blue{Note that ${\cal G}\subset A_+^1.$ Then, by   \eqref{f4-22thm-2},
\beq\label{516-1229-1}
\|\psi\circ \phi(\af(y))\|\le 1+\dt\rforal y\in h({\cal G}\cup \{e\}).
\eneq}}
Combining  {{$(3'')$}},   {{$(1''')$}}, \eqref{f4-22thm-2} {\blue{and \eqref{516-1229-1}}},  for any $ x,y\in h(\mathcal{G}),$ we have
\beq
\label{f4-22-thm-11}
\psi\circ\phi(\alpha(x)\alpha(y))\approx_{\delta}
\psi\circ\phi(\alpha(x))\psi\circ\phi(\alpha(y))
\approx_{(1+\delta)\delta}{\blue{x \cdot}} \psi\circ\phi(\alpha(y))
\approx_\dt xy.
\eneq
%{\blue{\bf What is the norm of $\af(y)?$ $M?$}}
In particular,
\beq
\label{f4-22thm-12}
\|\psi\circ\phi(\alpha(x)\alpha(y))\|\leq 1+(3+\delta)\delta
%\quad
%\forall
{\blue{\rforal}} x,y\in h(\mathcal{G}).
\eneq
Define a c.p.~map
%\beq
$\bar{\phi}: C\rightarrow F$ {\blue{by}}
%\quad
$x\mapsto \frac{1}{1+(3+\delta)\delta}\phi(\alpha(h(e))x\alpha(h(e))).$
%\eneq
Then, for any $x\in C^1_+$,
by \eqref{f4-22thm-12}, we have
$$
\|\psi\circ\bar{\phi}(x)\|
=\frac{\|\psi(\phi(\alpha(h(e))x\alpha(h(e))))\|}{1+(3+\delta)\delta}
\leq
\frac{\|\psi(\phi(\alpha(h(e))^2))\|}{1+(3+\delta)\delta}
\leq 1.
$$
Thus
%thus
\beq
\label{f4-22thm-8}
\|\psi\circ\bar{\phi}\|\leq 1.
\eneq
Let $x\in h(\mathcal{G}).$  Then
\beq
\psi\circ\bar{\phi}(\alpha(x))
%\nonumber\\
&=&
\frac{1}{1+(3+\delta)\delta}\psi\circ\phi(\alpha(h(e))\alpha(x)\alpha(h(e)))
\nonumber\\
\mbox{(by {{$(3'')$}} and   {{$(1''')$}})}&\approx_{\delta}&
\frac{1}{1+(3+\delta)\delta}\psi\circ\phi(\alpha(h(e)))\psi\circ\phi(\alpha(x)\alpha(h(e)))
\nonumber\\
\mbox{(by  {{$(3'')$}}, {{$(1''')$}} and  {\blue{\eqref{516-1229-1}}})}
%\eqref{418-nn+1121+1})}
&\approx_{\delta}&
\frac{1}{1+(3+\delta)\delta}\psi\circ\phi(\alpha(h(e)))\psi\circ\phi(\alpha(x))
\psi\circ\phi(\alpha(h(e)))
\nonumber\\
\mbox{(by \eqref{f4-22thm-2})}
&\approx_{\frac{(1+\dt)^2\delta+(1+\dt)\delta+\dt}{1+(3+\dt)\dt}}&
\frac{h(e)xh(e)}{1+(3+\delta)\delta}
\nonumber\\
\mbox{(by \eqref{f4-22thm-3})}
&\approx_{\delta}&
\frac{x}{1+(3+\delta)\delta}
%\nonumber\\
\approx_{4\delta}
x.
\label{f4-22thm-5}
\eneq

\iffalse%-
and note, by {{$(3'')$}} and {{$(1''')$}},
$$
\psi\circ\phi(\alpha(x)\alpha(h(e)))
\approx_{\delta}
\psi\circ\phi(\alpha(x))\cdot\psi\circ\phi(\alpha(h(e)))
\approx_{\delta(1+\delta)}x\psi\circ\phi(\alpha(h(e)))
\approx_{\delta}xe,
$$
thus
\beq\label{f4-22thm-1}
\|\psi\circ\phi(\alpha(x)\alpha(h(e)))\|\leq 1+\delta(3+\delta).
\eneq
\fi%-

\noindent
{{Also,}} for $x\in h(\mathcal{G})$, we have
\beq
\psi\circ\bar{\phi}(\alpha(x)^2)
%\nonumber\\
&=&
\frac{1}{1+(3+\delta)\delta}\psi\circ\phi(\alpha(h(e))\alpha(x)^2\alpha(h(e)))
\nonumber\\
\mbox{(by {{$(3'')$}} and {{$(1''')$}})}&\approx_{\delta}&
\frac{1}{1+(3+\delta)\delta}
\psi\circ\phi(\alpha(h(e))\alpha(x))\cdot\psi\circ\phi(\alpha(x)\alpha(h(e)))
\nonumber\\
\mbox{(by {{$(3'')$}}, {{$(1''')$}}, \eqref{f4-22thm-12}, {\blue{\eqref{516-1229-1}}})}&\approx_{2\delta}
&
\frac{1}{1+(3+\delta)\delta}
\psi\circ\phi(\alpha(h(e)))
\cdot
\psi\circ\phi(\alpha(x))^2
\cdot
\psi\circ\phi(\alpha(h(e)))
\nonumber\\
\mbox{(by \eqref{f4-22thm-2})}
&\approx_{4\delta(1+\delta)}&
\frac{h(e)x^2h(e)}{1+(3+\delta)\delta}
\nonumber\\
\mbox{(by \eqref{f4-22thm-3})}
&\approx_{\delta}&
\frac{x^2}{1+(3+\delta)\delta}
%\nonumber\\
\approx_{4\delta}
x^2.
\label{f4-22thm-4}
\eneq

\noindent
{{By}} \eqref{f4-22thm-4} and \eqref{f4-22thm-5},
we have
\beq
\label{f4-22-thm-6}
\psi\circ\bar{\phi}(\alpha(x)^2)
\approx_{12\delta(1+\delta)}
x^2
\approx_{10\delta(2+\delta)}
\psi\circ\bar{\phi}(\alpha(x))^2
%\quad
%\forall
{\blue{\rforal}} x\in h(\mathcal{G}).
\eneq

%Since $\psi$ is $n$-decomposable,
%we can write $F=F_0\oplus\cdots\oplus F_n$
%such that $\psi|_{F_i}$ are c.p.~order zero maps, $i=0,\cdots,n$.
\noindent
{{Let}} $p_i$ be the unit of $F_i,$ {\blue{$i=0,1,\cdots, {\bar n}.$}}
{{T}}hen  {\blue{each}} $p_i$
%{\green{are}}
is  {\blue{a central projection}} of $F.$
%{\green{for}} $i=0,\cdots,{{\bar n}}$.
%
%{\blue{
We now apply Proposition \ref{c.p.c.-almost-multiplicative-3}.
Recall $\bar{\phi}: C\to F$ and $\psi: F\to B$ are c.p.~maps such that
$\|\psi\circ {\bar \phi}\|\le 1$ (see \eqref{f4-22thm-8}).
%}}
Thus, by \eqref{f4-22-thm-6} and
Proposition \ref{c.p.c.-almost-multiplicative-3},
there exists a positive element $c\in C_+^1$
such that, for $x\in h(\mathcal{G})$,
the following hold (note, $(\delta(32+{{22}}\delta))^{1/2}<6\delta^{1/2}$):
\beq
\label{f4-22thm-7}
\psi(\bar{\phi}(\alpha(x)))\cdot\psi(\bar{\phi}(c)p_i)
&\approx_{6\delta^{1/2}}&
\psi(\bar{\phi}(\alpha(x))p_i)
\nonumber\\
&=&
\psi(p_i\bar{\phi}(\alpha(x)))
\approx_{6\delta^{1/2}}
\psi(\bar{\phi}(c)p_i) \cdot \psi(\bar{\phi}(\alpha(x))).
\eneq
Note that $\psi(\bar{\phi}(c)p_i)=
\psi(\bar{\phi}(c)^{1/2}p_i\bar{\phi}(c)^{1/2})$
is a positive element,
and, by \eqref{f4-22thm-8},
\beq
\label{f4-22thm-13}
\|\psi(\bar{\phi}(c)p_i)\|=
\|\psi(\bar{\phi}(c)^{1/2}p_i\bar{\phi}(c)^{1/2})\|
\leq \|\psi(\bar{\phi}(c))\|\leq \|c\|\leq 1.
\eneq
Also note that
$\|\psi(\bar{\phi}(\alpha(x)))\|\leq 1+\delta\leq 2$
%$\forall
{\blue{for all}} $x\in h(\mathcal{G})$.
By \eqref{f4-22thm-7},
$\psi(\bar{\phi}(c)p_i)$ approximately commutes with
${\blue{\{\psi(\bar{\phi}(\alpha(x))): x\in h({\cal G})\}}}$
{\blue{within $12\dt^{1/2},$}}
{\blue{and,}} by the choice of $\delta$ and $\delta_1$,
%for $x\in h(\mathcal{G})$,
we have
\beq
\label{f4-22thm-10}
\hspace{-0.4in}\psi(\bar{\phi}(c)p_i)^{1/2}\cdot
\psi(\bar{\phi}(\alpha(x)))
\cdot\psi(\bar{\phi}(c)p_i)^{1/2}
&\approx_{\frac{\epsilon}{4(n+1)}}&
\psi(\bar{\phi}(\alpha(x)))\cdot\psi(\bar{\phi}(c)p_i)
\nonumber\\
&\approx_{6\delta^{1/2}}&
\psi(\bar{\phi}(\alpha(x))p_i)
{\blue{\rforal  x\in h({\cal G}).}}
\eneq

\noindent
{{By}} {{$(2''')$}}
and by Proposition \ref{inverse-of-finite-dim-injective-order-zero-map},
there exists c.p.c.~maps
$\beta_i: B\rightarrow  F_i$
such that
\beq
\label{f4-22thm-9}
\beta_i\circ\psi|_{F_i}=\mathrm{id}_{F_i}, \,\,\,
{\blue{i=0, {\blue{1,}} {{\cdots}},{\bar n}.}}
\eneq

\noindent
{{Define}} c.p.~maps $(i=0, 1, {\blue{\cdots, \bar{n}}})$
%\beq
$\gamma_i: A \rightarrow F_i$  {\blue{by}}
%\nonumber\\
$x\mapsto
\beta_i\left(\psi(\bar{\phi}(c)p_i)^{1/2}\cdot
h(x)
\cdot\psi(\bar{\phi}(c)p_i)^{1/2}\right)$
%\eneq
and define c.p.~map
%
%\beq
$\gamma: A
\rightarrow
F=F_0\oplus\cdots\oplus F_{\bar{n}}$ {\blue{by}}
%\nonumber\\
$x\mapsto
(\gamma_0(x),\cdots,\gamma_{\bar{n}}(x)).$
%\eneq
For $x\in\mathcal{G}$,
\beq
\psi\circ\gamma(x)
=
\sum_{i=0}^{\bar{n}}
\psi \circ\gamma_i(x)
%\nonumber\\
&=&
\sum_{i=0}^{\bar{n}}
\psi \circ\beta_i\left(\psi(\bar{\phi}(c)p_i)^{1/2}\cdot
h(x)
\cdot\psi(\bar{\phi}(c)p_i)^{1/2}\right)
\nonumber\\
\mbox{(By \eqref{f4-22thm-5}, \eqref{f4-22thm-13})}
&\approx_{10(n+1)\delta}&
\sum_{i=0}^{\bar{n}}
\psi \circ\beta_i\left(\psi(\bar{\phi}(c)p_i)^{1/2}\cdot
\psi(\bar{\phi}(\alpha(h(x))))
\cdot\psi(\bar{\phi}(c)p_i)^{1/2}\right)
\nonumber\\
\mbox{(By \eqref{f4-22thm-10})}
&\approx_{\frac{\epsilon}{4}+6(n+1)\delta^{\frac{1}{2}}}&
\sum_{i=0}^{\bar{n}}
\psi \circ\beta_i\left(
\psi(\bar{\phi}(\alpha(h(x)))p_i)
\right)
\nonumber\\
\mbox{(\eqref{f4-22thm-9}, $\bar{\phi}(\alpha(h(x)))p_i\in F_i$)}
&=&
\sum_{i=0}^{\bar{n}}
\psi (\bar{\phi}(\alpha(h(x)))p_i)
%\\
%&=&
{\blue{=\psi(\bar{\phi}(\alpha(h(x))))}}
\nonumber\\
\mbox{(By \eqref{f4-22thm-5})}
&\approx_{10\delta}&
h(x).
\eneq

\noindent
{{Note,}} by the choice of $\delta\ (\leq (\frac{\epsilon}{32(n+3)})^2)$,
we have $10(n+1)\delta+\frac{\epsilon}{4}+6(n+1)\delta^{\frac{1}{2}}
+10\delta\leq \epsilon$.
Thus there {\blue{exist a}} c.p.~map $\gamma: A\rightarrow F$ {\blue{and a}}
c.p.~$n$-decomposable map $\psi: F\rightarrow A$
such that $h(x)\approx_{\epsilon}\psi\circ\gamma(x)$ {\blue{for all}}
%$\forall
$x\in\mathcal{G}$.
{\blue{Finally,}} by Proposition \ref{equivalent-definition-on-nuclear-dimension},
$\dimnuc h\leq n$.

%20191018 finished.

%(1') (1'') (1''')
%(2') (2'') (2''')
%(3') (3'') {\green{$(3''')$}}

\end{proof}
%\rightline{2019,07,08, ECNU, ZhongBei}
%\xrule
%\calF ''''theta

%$B^ B$ $B$ : B_ :

%\begin{rem}
%Above theorem
%\end{rem}

\begin{prop}
\label{homomorphism-dimnuc-equal-embedding-dimnuc}
Let $A$ {\blue{and}} $B$ be \CAs, $h: A\rightarrow B$ be a *-homomorphism {\blue{and}}
let $\iota: h(A)\hookrightarrow B$ be the embedding.
Then $\dimnuc h=\dimnuc \iota$.

\end{prop}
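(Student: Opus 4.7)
The plan is to deduce both inequalities $\dimnuc h \leq \dimnuc \iota$ and $\dimnuc \iota \leq \dimnuc h$ directly from {{Theorem}} \ref{equivalent-definition-on-nuclear-dimension-second}. The key observation is that the characterization given there places no restriction on the source algebra $C$ of the approximating maps; what is fixed is only the codomain $B$ and the finite subset of $B$ that must be approximated, namely $h(\mathcal{G})$. Since $\iota$ is just the inclusion $h(A) \hookrightarrow B,$ for any $S \subset h(A)$ one has $\iota(S) = S$ as a subset of $B.$ Thus the approximation problems defining $\dimnuc h$ and $\dimnuc \iota$ reduce to the very same condition inside $B.$

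For $\dimnuc\iota \leq \dimnuc h,$ I would start with a finite subset $\mathcal{G}' \subset h(A)_+$ and $\epsilon > 0.$ For each $x \in \mathcal{G}'$ I lift its positive square root: since $x^{1/2} \in h(A),$ choose $y \in A$ with $h(y) = x^{1/2},$ and set $a_x := y^*y \in A_+,$ so that $h(a_x) = x.$ This produces a finite subset $\mathcal{G} := \{a_x : x \in \mathcal{G}'\} \subset A_+$ with $h(\mathcal{G}) = \mathcal{G}'.$ Applying {{Theorem}} \ref{equivalent-definition-on-nuclear-dimension-second} to $h$ at $(\mathcal{G}, \epsilon)$ yields a \CA\, $\tilde{C},$ a finite subset $\tilde{\mathcal{G}} \subset \tilde{C}_+,$ a finite-dimensional $F,$ and c.p.~maps $\phi: \tilde{C} \to F,$ $\psi: F \to B$ with $\psi$ being $n$-decomposable, $\psi\circ\phi$ approximately multiplicative on $\tilde{\mathcal{G}} \cup (\tilde{\mathcal{G}} \cdot \tilde{\mathcal{G}}),$ and $h(\mathcal{G}) \subset_\epsilon \psi\circ\phi(\tilde{\mathcal{G}}).$ Since $h(\mathcal{G}) = \mathcal{G}' = \iota(\mathcal{G}'),$ exactly the same data witnesses $\dimnuc\iota \leq n$ via {{Theorem}} \ref{equivalent-definition-on-nuclear-dimension-second}. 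For the reverse inequality $\dimnuc h \leq \dimnuc\iota,$ the argument is even more direct: given a finite $\mathcal{G} \subset A_+$ and $\epsilon > 0,$ I apply {{Theorem}} \ref{equivalent-definition-on-nuclear-dimension-second} to $\iota$ at the finite subset $h(\mathcal{G}) \subset h(A)_+$ and $\epsilon,$ and since $\iota(h(\mathcal{G})) = h(\mathcal{G}),$ the resulting data witness $\dimnuc h \leq n$ (no lifting is required in this direction).

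There is no real obstacle here; the flexibility of the source algebra in {{Theorem}} \ref{equivalent-definition-on-nuclear-dimension-second} absorbs everything. The only point that needs explicit mention is the lifting of positive elements from $h(A)$ to $A_+,$ which is immediate from the factorization $x = (x^{1/2})^*(x^{1/2})$ together with surjectivity of $h: A \to h(A).$ Conceptually this proposition expresses that the nuclear dimension of a *-homomorphism depends only on its range and the way that range sits inside the codomain, not on the source algebra beyond its image.
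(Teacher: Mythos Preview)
Your proof is correct and takes essentially the same approach as the paper: both rest on lifting a finite subset of $h(A)_+$ to $A_+$ and on the identity $\iota(\mathcal{G}') = h(\mathcal{G})$ inside $B$, then invoking Theorem \ref{equivalent-definition-on-nuclear-dimension-second}. The only cosmetic difference is that the paper dispatches $\dimnuc h \leq \dimnuc \iota$ by the one-liner $\dimnuc h = \dimnuc(\iota\circ h) \leq \dimnuc \iota$ and, for the reverse inequality, returns to Definition \ref{definition-of-nuclear-dimension} and verifies the conditions of Theorem \ref{equivalent-definition-on-nuclear-dimension-second} by hand, whereas you apply that theorem as a black box symmetrically in both directions---a mild streamlining, not a different idea.
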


\begin{proof}

First, we {\blue{note}} $\dimnuc h=\dimnuc (\iota\circ h) \leq \dimnuc \iota$.

%\xrule

Next, if $\dimnuc h=\infty$, then we are  done.
%Let $n\in\N$.
Hence we may assume that $\dimnuc h= n$ for some $n\in{{\N\cup\{0\}}}$.
Let $\mathcal{G}\subset h(A)_+$ be a finite subset  and let $\epsilon>0$.
Then there exists a finite subset
$
\tilde{\mathcal{G}}\subset A_+
$
such that
\beq
\label{f4-24thm-1}
\iota(\mathcal{G})=\mathcal{G}=h(\tilde{\mathcal{G}}).
\eneq
{\blue{Choose}} $M=\max\{\|x\|+1: x\in\tilde{\mathcal{G}}\}$ and
 $\delta:=\frac{\min\{\epsilon,1\}}{2(M+1)^2}$.
Since $\dimnuc h\leq n$,
there {\blue{exist}} a finite dimensional \CA\ {\blue{$F,$}}
and c.p.~maps $\phi: A\rightarrow F$ {\blue{and}} $\psi: F\rightarrow B$
{\blue{such}} that

(1) $\psi\circ\phi(x)\approx_{\delta}h(x)=\iota(h(x))$ {\blue{for all}}
%$\forall
$x\in\tilde{\mathcal{G}}\cup(\tilde{\mathcal{G}}\cdot \tilde{\mathcal{G}})
\cup(\tilde{\mathcal{G}}\cdot \tilde{\mathcal{G}}\cdot \tilde{\mathcal{G}})
\cup(\tilde{\mathcal{G}}\cdot \tilde{\mathcal{G}}\cdot \tilde{\mathcal{G}}\cdot \tilde{\mathcal{G}})$, {\blue{and}}

(2) $\psi$ is $n$-decomposable.

\noindent
{{Then}} \eqref{f4-24thm-1} and (1) show
\beq
\label{f4-24thm-3}
\iota(\mathcal{G})\subset_{\epsilon}
\psi\circ\phi(\tilde{\mathcal{G}}).
\eneq
{{By (1),
for all $x\in \tilde{\mathcal{G}}\cup(\tilde{\mathcal{G}}\cdot \tilde{\mathcal{G}})$,
we have}}
%For all $x\in
%\tilde{\mathcal{G}}\cup(\tilde{\mathcal{G}}\cdot\tilde{\mathcal{G}})$,
%we have, by (1) above
\beq\label{418-n+1}
\|\psi\circ \phi(x)\|\le \dt+\|h({{x}})\|\le \dt+M^2.
\eneq

\noindent
{{Therefore,}}
using (1) and \eqref{418-n+1}, we have
$$
\psi\circ\phi(xy)
\approx_{\delta}
h(xy)=h(x)h(y)
\approx_{(M^2+\dt)\delta}
h(x)\psi\circ\phi(y)
\approx_{\delta(M^2+\delta)}
\psi\circ\phi(x)\psi\circ\phi(y).
$$
%{\blue{\bf What is the norm of $y?$ $M^2?$}}
Then, by the choice of $\delta,$ we have
\beq
\psi\circ\phi(xy)\approx_{\epsilon}\psi\circ\phi(x)\psi\circ\phi(y)
%\quad
\label{f4-24thm-2}
{\blue{\rforal}}
x,y\in \tilde{\mathcal{G}}\cup(\tilde{\mathcal{G}}\cdot \tilde{\mathcal{G}}).
\eneq

\noindent
{{Then}} \eqref{f4-24thm-3}, (2), together with \eqref{f4-24thm-2},
show that (with {{$A$}} in place of $C$),
the conditions of
Theorem \ref{equivalent-definition-on-nuclear-dimension-second}
are satisfied.
{\blue{Therefore}} we have $\dimnuc \iota\leq n=\dimnuc h.$
%and done.
%In conclusion, we have $\dimnuc h=\dimnuc B$.
%
%In summation, we show that for $n\in\N$,
%$\dimnuc h\leq n$ if and only if $\dimnuc B\leq n$,
%thus $\dimnuc h=\dimnuc B$.

\end{proof}

The following corollary shows that
the image of {{a}} *-homomorphism of finite nuclear dimension
%{\blue{\CA s}}
must be exact.
%\blue{Are there any theorem says that
%the image of a nuclear *-homomorphism is exact?}

\begin{cor}
\label{finite-dimnuc-image-exact}
Let $A$ and $B$ be \CAs.  {\blue{If}}
$h:A\rightarrow B$ is a *-homomorphism with
$\dimnuc h<\infty$, then $h(A)$ is exact.
\end{cor}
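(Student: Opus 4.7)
The plan is to reduce immediately to the case of a faithful *-homomorphism and then recognize finite nuclear dimension of the embedding as a witness of nuclearity of that embedding, invoking Kirchberg's characterization of exactness to finish. Specifically, set $C := h(A) \subseteq B$ and let $\iota : C \hookrightarrow B$ denote the inclusion. By Proposition \ref{homomorphism-dimnuc-equal-embedding-dimnuc}, $\dimnuc \iota = \dimnuc h < \infty$, so we have transferred the finiteness hypothesis from $h$ to $\iota$.

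Next I would read off directly from Definition \ref{definition-of-nuclear-dimension} that $\iota$ is a nuclear c.p.~map. Given a finite subset $\calF \subset C$ and $\epsilon > 0$, there exist a finite dimensional $C^*$-algebra $F = F_0 \oplus \cdots \oplus F_n$ and c.p.~maps $\phi : C \to F$, $\psi : F \to B$ with $\|\phi\| \le 1$, each $\psi|_{F_i}$ c.p.c.~order zero, such that $\|\psi \circ \phi(x) - \iota(x)\| < \epsilon$ for all $x \in \calF$. Since $\psi$ is a sum of c.p.c.~order zero maps it is itself c.p., so $\psi \circ \phi$ is a c.p.~map factoring through the finite dimensional (hence nuclear) $C^*$-algebra $F$. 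Thus $\iota$ lies in the point-norm closure of the set of c.p.~maps that factor through matrix algebras, which is by definition the statement that $\iota$ is a nuclear c.p.~map.

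Finally, I would invoke Kirchberg's theorem: a $C^*$-algebra $C$ is exact if and only if it admits a faithful nuclear *-homomorphism into some $C^*$-algebra. Since $\iota : C \hookrightarrow B$ is a faithful *-homomorphism which is nuclear as a c.p.~map, we conclude that $C = h(A)$ is exact, as required.

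The argument is brief and I foresee no real obstacle: the reduction step is Proposition \ref{homomorphism-dimnuc-equal-embedding-dimnuc}, the implication ``finite nuclear dimension of a *-homomorphism $\Rightarrow$ nuclearity as a c.p.~map'' is immediate from Definition \ref{definition-of-nuclear-dimension}, and the passage from ``faithful nuclear embedding'' to ``exact'' is the standard Kirchberg--Wassermann characterization. The only minor point worth spelling out is that $\psi$, although not contractive in general, is still completely positive as a finite sum of c.p.c.~order zero maps, so that the composition $\psi \circ \phi$ genuinely qualifies as a c.p.~approximant factoring through a finite dimensional $C^*$-algebra.
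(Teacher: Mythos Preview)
Your proof is correct and follows essentially the same approach as the paper: reduce to the inclusion $\iota$ via Proposition \ref{homomorphism-dimnuc-equal-embedding-dimnuc}, observe that finite nuclear dimension makes $\iota$ a nuclear map, and conclude exactness of $h(A)$ from the existence of a faithful nuclear embedding. The paper cites \cite[6.1.11]{Rordam 2002} for the last step where you invoke Kirchberg's characterization, but the argument is the same.
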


\begin{proof}
By Proposition \ref{homomorphism-dimnuc-equal-embedding-dimnuc},
the embedding $\iota: h(A)\hookrightarrow B$
satisfies $\dimnuc \iota=\dimnuc h<\infty.$
Thus $\iota$ is a nuclear map.
It follows that  $h(A)$ is exact
{\blue{(see \cite[6.1.11]{Rordam 2002}).}}

%{\blue{\bf Reference?}}

\end{proof}

By \cite[Theorem 2.8]{KP2000},
%Kirchberg's celebrated theorem ensures
every separable exact \CA\ admits an embedding into the Cuntz algebra $\mathcal{O}_2$.
By \cite[Theorem 7.4]{WZ2010},
%{\green{we have}}
{\blue{one has}} $\dimnuc \mathcal{O}_2=1.$
{\blue{Thus}} every embedding of separable exact \CA\ into $\mathcal{O}_2$
has nuclear dimension at most $1$.
%Kirchberg's celebrated theorem ensures
%every separable exact \CA\ admits an embedding into $\mathcal{O}_2$,
%and by \cite[Theorem 7.4.]{WZ2010} we have $\dimnuc \mathcal{O}_2=1$,
%thus every embedding of separable exact \CA\ has nuclear dimension at most $1$.
\iffalse%-
and recently, Bosa, Gabe, Sims, White show that
for separable exact \CA\ $A$, every embedding
of $A$ into $\mathcal{O}_2$ has nuclear dimension at most $1$,
and the nuclear dimension is $0$ precisely when $A$ is quasi-diagonal
\cite[Proposition 7.1.]{BGSW2019}.
\fi%-
Therefore,
it seems  {\blue{to be interesting to observe the following statement.}}
%that only knowing the nuclear dimension of *-homomorphisms
%does not tell too much information about algebras.
%But, if the image of the *-homomorphism is large enough in the target algebra,
%then we can say something,
%which is the following proposition:

\begin{prop}
\label{finite-nuclear-dim-of-map-implies-finite-dimnuc-of-alg}
Let $h: A\rightarrow B$ be a *-homomorphism  such that
% of \CAs\ with
$h(A)$ is a hereditary $C^*$-subalgebra of $B$.
Then $\dimnuc h=\dimnuc h(A)$.
Moreover, if {{$B$ is separable and}}
$h(A)$ is a full hereditary $C^*$-subalgebra of $B$,
then $\dimnuc h=\dimnuc B$.
\end{prop}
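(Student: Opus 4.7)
The plan is to reduce the first equality via Proposition \ref{homomorphism-dimnuc-equal-embedding-dimnuc} to showing $\dimnuc \iota = \dimnuc h(A)$, where $\iota: h(A) \hookrightarrow B$ is the inclusion. The easy direction $\dimnuc \iota \leq \dimnuc h(A)$ follows immediately: any c.p.~approximation $\phi: h(A) \to F$, $\psi: F \to h(A)$ of $\mathrm{id}_{h(A)}$ with $\psi$ being $n$-decomposable yields an approximation $(\phi, \iota \circ \psi)$ of $\iota$, and $\iota \circ \psi$ is still $n$-decomposable since $\iota$ is a *-homomorphism (composition of a c.p.c.~order zero map with a *-homomorphism is again c.p.c.~order zero).

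For the reverse inequality $\dimnuc h(A) \leq \dimnuc \iota$, given $\dimnuc \iota \leq n$, a finite $\mathcal{F} \subset h(A)$, and $\epsilon > 0$, I would first pick a positive contraction $e \in h(A)$ such that $e x e \approx x$ for $x$ in a suitably enlarged finite set (including $e$ itself and $e^{1/2}$). Applying the assumption to this enlarged set and a small tolerance yields c.p.~maps $\phi: h(A) \to F = F_0 \oplus \cdots \oplus F_n$ and $\psi: F \to B$, with $\psi|_{F_i}$ a c.p.c.~order zero map and $\psi\phi$ approximating the inclusion. Since $h(A)$ is hereditary in $B$ and $e \in h(A)$, the cut-down $\tilde\psi(y) := e^{1/2} \psi(y) e^{1/2}$ takes values in $h(A)$, and $\tilde\psi\circ \phi(x) \approx e^{1/2} x e^{1/2} \approx x$ for $x \in \mathcal{F}$. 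The main obstacle is that $\tilde\psi|_{F_i}$ need not be order zero. To handle this, I would apply the structure theorem Theorem \ref{WZ2009-Theorem 3.3} to write $\psi|_{F_i}(y) = \pi_i(y) h_i$ with $h_i = \psi(1_{F_i})$ central in $C^*(\psi(F_i))$, and use Proposition \ref{c.p.c.-almost-multiplicative-3} (with $a = e$) to ensure $e$ approximately commutes with each $h_i$ and with the relevant elements $\pi_i(y)$ for $y$ in the finite subset of $F$ we care about. This makes $\tilde\psi|_{F_i}$ approximately order zero on the relevant finite set; a standard perturbation argument (using semi-projectivity of cones over finite-dimensional \CAs) then replaces $\tilde\psi$ with a genuinely $n$-decomposable c.p.~map $\psi'$ close to it, so that $\psi' \circ \phi$ still approximates $\mathrm{id}_{h(A)}$ on $\mathcal{F}$, establishing $\dimnuc h(A) \leq n$.

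For the moreover part, suppose additionally that $B$ is separable and $h(A)$ is a full hereditary \SCA\ of $B$. Both $B$ and $h(A)$ are then $\sigma$-unital, so Brown's stable isomorphism theorem gives $h(A) \otimes \mathcal{K} \cong B \otimes \mathcal{K}$. Nuclear dimension is invariant under tensoring with $\mathcal{K}$ by a standard Winter--Zacharias result ($\dimnuc(D \otimes \mathcal{K}) = \dimnuc D$ for every \CA\ $D$), so $\dimnuc h(A) = \dimnuc(h(A) \otimes \mathcal{K}) = \dimnuc(B \otimes \mathcal{K}) = \dimnuc B$. Combined with the first equality already established, this yields $\dimnuc h = \dimnuc h(A) = \dimnuc B$. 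The main technical difficulty, as indicated, is the order-zero preservation in the cut-down step; the moreover part, once the first part is in hand, is a direct application of Brown's theorem together with $\mathcal{K}$-stability of nuclear dimension.
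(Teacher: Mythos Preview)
The paper's proof is much shorter than your route: it first handles the surjective case (where $\iota = \mathrm{id}_B$, so Proposition \ref{homomorphism-dimnuc-equal-embedding-dimnuc} gives $\dimnuc h = \dimnuc \iota = \dimnuc B$), and then reduces the general hereditary case to the surjective one by citing \cite[Proposition 1.6]{BGSW2019} (also \cite[Proposition 2.4]{TW2014}), which says that replacing the codomain $B$ by the hereditary subalgebra $C = h(A)$ does not change $\dimnuc h$. Your approach amounts to reproving that cited reduction from scratch. For the ``moreover'' part the two approaches coincide in substance: the paper cites \cite[Corollary 2.8]{WZ2010}, which packages exactly the Brown stable-isomorphism plus $\mathcal{K}$-stability argument you spell out.

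There is, however, a gap in your hard-direction sketch. Your plan is to cut down $\psi$ by $e^{1/2}$ on both sides to land in $h(A)$, and then repair the loss of order zero by showing that $e$ approximately commutes with $h_i = \psi(1_{F_i})$ and with $\pi_i(y)$ via Proposition \ref{c.p.c.-almost-multiplicative-3}. But that proposition, applied with $a = e$ and $b = 1_{F_i}$, only yields approximate commutation of $\psi\phi(e) \approx e$ with $\psi(\phi(e_\lambda)1_{F_i})$, \emph{not} with $h_i = \psi(1_{F_i})$; there is no reason $\phi(e_\lambda)$ should be close to $1_F$. Furthermore, the semi-projectivity perturbation (Proposition \ref{app-order-zero-exactly}) needs $\tilde\psi|_{F_i}$ to be $\delta$-almost order zero on \emph{all} of $(F_i)_+^1$, not merely on a finite subset. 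The correct implementation---which is what the cited results do, and what the paper itself carries out in Proposition \ref{c.p.c._order_zero_map_into_hereditary_subalgebra}---is to cut down the \emph{domain} first: set $p = \chi_{[\delta,1]}(\phi(e))$, $\bar F = pFp$, $\bar\phi(x) = p\phi(x)p$. Then $\psi|_{pF_ip}$ is automatically c.p.c.\ order zero (it is a restriction), and $p \leq \delta^{-1}\phi(e)$ together with $\psi\phi(e) \approx e$ gives $\|\psi(p1_{F_i})(1-e)\|$ small, after which \cite[Lemma 3.6]{KW-Covering-dimension} perturbs $\psi|_{pF_ip}$ to a genuine order-zero map into $\Her_B(e) \subset h(A)$.
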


\begin{proof}
First, let us assume that $h$ is surjective.
Then the embedding $\iota: h(A)\rightarrow B$
is the identity map $\mathrm{id}_B.$
{{By}} Proposition \ref{homomorphism-dimnuc-equal-embedding-dimnuc},
we have
$\dimnuc h=\dimnuc \iota=\dimnuc \mathrm{id}_B=\dimnuc B$.

Now we assume that $C:=h(A)$ is a hereditary $C^*$-subalgebra of $B$.
Then by \cite[Proposition 1.6]{BGSW2019}
(also see \cite[Proposition 2.4]{TW2014}),
$\dimnuc h=\dimnuc h^C,$ where $h^C: A\to C$ is the \hm\,
defined by
$h^C(a):=h(a)$
for all $a\in A$ (but $h: A\to B$).
Now since $h^C$ is surjective,
by what we have proved,
$\dimnuc h=\dimnuc h^C=\dimnuc h(A).$
Moreover, if {{$B$ is separable and}}
$h(A)$ is a full hereditary $C^*$-subalgebra of $B,$
%by what we have proved and
by \cite[Corollary 2.8]{WZ2010}, {\blue{then
%we have \dimnuc h=\
$\dimnuc B=\dimnuc h(A)=\dimnuc h.$}}
%{\red{\bf Do we need separability?}}
%the proposition follows.
%Define a surjective *-homomorphism
%$h_0: A\rightarrow h(A)$, $a\mapsto h(a)$
%(the difference between $h_0$ and $h$ is the target algebra).
%By first part of the proof we have $\dimnuc h_0=\dimnuc h(A)$.
%Then by \cite[Proposition 1.6.]{BGSW2019}
%(also see \cite[Proposition 2.4.]{TW2014}),
%$\dimnuc h=\dimnuc h_0=\dimnuc h(A)$.

%Moreover, if $h(A)$ is a full hereditary $C^*$-subalgebra of $B$,
%then by \cite[Corollary 2.8.]{WZ2010},
%$\dimnuc h=\dimnuc h(A)= \dimnuc B$.
\end{proof}

\iffalse
We would like to single out the following corollary,
which should be compared with the nontrivial fact
that all quotients of a nuclear \CA\ are still nuclear.
The point is that we don't assume any nuclearity on the algebras.
Another point is that the methods we used to prove this corollary
are not deep.
\fi
%\blue{Are there any theorem says that
%the image of a surjective nuclear *-homomorphism is nuclear?}

\begin{cor}

Let $A$ be a \CA\, {\blue{and}}  $I\subset A$ {\blue{be}} a closed ideal.
If the quotient map $\pi: A\rightarrow A/I$ has finite nuclear dimension,
then $A/I$ also has finite nuclear dimension.
%In particular, $A/I$ is nuclear.

\end{cor}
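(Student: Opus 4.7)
The plan is very short because the corollary is essentially immediate from Proposition \ref{finite-nuclear-dim-of-map-implies-finite-dimnuc-of-alg}. The point is that the quotient map $\pi: A\to A/I$ is surjective, so its image $\pi(A)=A/I$ coincides with the codomain, and the codomain is trivially a (full) hereditary \SCA\ of itself.

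First I would simply apply the first case handled in the proof of Proposition \ref{finite-nuclear-dim-of-map-implies-finite-dimnuc-of-alg}: when $h:A\to B$ is a surjective *-homomorphism, the embedding $\iota: h(A)\to B$ is just $\mathrm{id}_B,$ and by Proposition \ref{homomorphism-dimnuc-equal-embedding-dimnuc} one gets
\[
\dimnuc \pi = \dimnuc \iota = \dimnuc \mathrm{id}_{A/I} = \dimnuc(A/I).
\]
Hence if $\dimnuc \pi < \infty,$ then $\dimnuc(A/I)<\infty,$ which is the conclusion. Note that this route avoids any separability hypothesis, since the ``surjective'' case of Proposition \ref{finite-nuclear-dim-of-map-implies-finite-dimnuc-of-alg} does not require it (separability is only invoked to pass from a full hereditary \SCA\ to the ambient algebra via \cite[Corollary 2.8]{WZ2010}).

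There is really no obstacle: once one observes that surjectivity of $\pi$ identifies $\pi(A)$ with the whole of $A/I,$ the corollary is just the surjective half of Proposition \ref{finite-nuclear-dim-of-map-implies-finite-dimnuc-of-alg}. The only minor thing to double-check is that Proposition \ref{homomorphism-dimnuc-equal-embedding-dimnuc}, on which Proposition \ref{finite-nuclear-dim-of-map-implies-finite-dimnuc-of-alg} rests, is stated and proved without any separability or nuclearity hypothesis on $A$ or $B$, so the corollary really holds in full generality as stated.
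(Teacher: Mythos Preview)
Your proposal is correct and matches the paper's approach: the corollary is stated without proof immediately after Proposition \ref{finite-nuclear-dim-of-map-implies-finite-dimnuc-of-alg}, and your argument is exactly the surjective case of that proposition via Proposition \ref{homomorphism-dimnuc-equal-embedding-dimnuc}. Your observation that separability is not needed here is also correct.
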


%\newpage

\section{A criterion for
%{\green{c.p.c.~}}
generalized inductive limits becoming finite nuclear dimension}

\begin{df}[\cite{BKGILFD}\,Generalized inductive system]
\label{Generalized inductive system}
Let $A_n$ be a sequence of $C^*$-algebras {\blue{and}}
$\phi_{m,n}: A_m\rightarrow A_{n}$ be a map ($m<n$).
%If the following hold,
We say $(A_n,\phi_{m,n})$ forms a
generalized inductive system if the following hold:
For any $k\in\mathbb{N}$, any $x,y \in A_k$,
any $\lambda\in\mathbb{C}$, {\blue{and}}
any $\epsilon>0$, there exists $M\in\N$
{\blue{such}} that, for any $n>m\geq M$,
%$M\leq m<n$,
%the following (1) - (5) hold:

(1) $\|\phi_{m,n}(\phi_{k,m}(x)+\phi_{k,m}(y))
-(\phi_{k,n}(x)+\phi_{k,n}(y))\|\leq \epsilon$,

(2) $\|\phi_{m,n}(\lambda\phi_{k,m}(x))-\lambda\phi_{k,m}(x)\|\leq\epsilon$,

(3) $\|\phi_{m,n}(\phi_{k,m}(x)^*)-\phi_{k,n}(x)^*\|\leq\epsilon$,

(4) $\|\phi_{m,n}(\phi_{k,m}(x)\phi_{k,m}(y))
-\phi_{k,n}(x)\phi_{k,n}(y)\|\leq\epsilon$, {\blue{and}}

(5) $\sup_r\|\phi_{k,r}(x)\|<\infty$.

%The system is called \emph{contractive} if
%all  $\phi_{m,n}$ are contractive maps, i.e., for any
%$x,y\in A_m$, $\|\phi_{m,n}(x)-\phi_{m,n}(y)\|\leq\|x-y\|$.
%
%The system is called \emph{contractive linear} if
%all the $\phi_{m,n}$ are contractive linear maps.
%The system is called \emph{contractive *-linear} if
%all $\phi_{m,n}$ are contractive *-preserving linear maps.
\noindent
{{The}} system is called {\blue{p.c.}} (or c.p.c.), if
all $\phi_{m,n}$ are  {\blue{p.c.~maps}} {{(or c.p.c.~maps)}}.

If $(A_n,\phi_{m,n})$ forms a generalized inductive system,
%we can define
then the following is a \CA\, {\blue{which}} we call it the
{\blue{generalized  inductive limit}} of $(A_n,\phi_{m,n})$:
$$
\lim_{n}(A_n,\phi_{m,n})
:=
\overline{\{{{\pi_\infty}}(\{
\phi_{n,1}(a),\phi_{n,2}(a),%\phi_{n,3}(a),
%0,\cdots,0,\underset{n-{\rm th}}{a,}
%\phi_{n,n+1}(a),%\phi_{n,n+2}(a),
{{\cdots}}\}): n\in\mathbb{N}, a\in A_n\}}^{\|\cdot\|}\subset \prod_{n=1}^\infty A_n/\bigoplus_{n=1}^{\infty}A_n,
$$
%where $\pi: \prod_{n=1}^\infty A_n \rightarrow
%\prod_{n=1}^\infty A_n/\bigoplus_{n=1}^{\infty}A_n$ is the quotient map,
{{where}} $\phi_{m,n}:=0$
for {{$m>n$,}}
and $\phi_{n,n}:=\id_{A_n}$.
%{\blue
For $i\in\N$, define (see \cite[2.1.2, 2.1.3]{BKGILFD})
{\blue{$\phi_{i,\infty}:
A_i\rightarrow \lim_{n}(A_n,\phi_{m,n})$ by
$
%\quad
x\mapsto{{\pi_\infty}}(\{\phi_{i,1}(x),\phi_{i,2}(x),\cdots\}).
$}}
\end{df}
%{\blue
\begin{nota}
\label{def-of-gen-ind-lim} %{\blue{\bf Reference}}
{\blue{Given a sequence of $C^*$-algebras {\blue{$A_n$}}
and a sequence of maps $\phi_{n}: A_n\rightarrow A_{n+1}$,
for $m<n$, define
{{$\phi_{m,n}$ to be the composition of
$\phi_{m},\phi_{m+1},\cdots,\phi_{n-1}$:}}
$$
\phi_{m,n}:=\phi_{n-1}\circ\phi_{n-2}\circ
{\blue{\cdots}}
\circ\phi_m:
A_m\rightarrow A_n,
$$
and define
$\phi_{m,n}:=0$  for $m>n$, and define $\phi_{n,n}:=\id_{A_n}$.
%For $m>n$, define $\phi_{m,n}:=0$.
%Define $\phi_{n,n}:=\mathrm{id}_{A_n}$.
We say $(A_n,\phi_n)$ forms a generalized inductive system,
if $(A_n,\phi_{m,n})$ forms a generalized inductive system.
%then
%then it defines a  generalized inductive limit \CA\, which
%and the inductive limit of
Accordingly $\lim_n (A_n,\phi_{m,n})$ will be
denoted by $\lim_{n}(A_n, \phi_{n})$.
}}
\end{nota}

%}

\begin{lem}
\label{positive-criterion-being-gen-ind-lim}

Let $A_n$ be $C^*$-algebras
and
 $\phi_{n}: A_n\rightarrow A_{n+1}$
{\blue{be}} {\blue{p.c.~maps}}  (or c.p.c.~maps), \,\,{\blue{$n=1,2,\cdots.$}}
%{\green{($n\in\N$)}}
%{\blue{and}}
{\blue{Let}}
$\phi_{m,n}$ be defined as in {{Notation}}
\ref{def-of-gen-ind-lim}.
%{\red{\bf There were $n\in\N$ after a (...) and then comes $m$ and $n,$...)}} {\green{\bf I don't understand this sentence. ---X.F.}}
If, for any $k\in\mathbb{N}$, any $\epsilon>0$, {\blue{and}}
any $x,y\in A_{k+}^1$,
there exists $m>k$ such {\blue{that,}} for all $n>m$,
$\phi_{k,n}(x)\phi_{k,n}(y)
\approx_{\epsilon}
\phi_{m,n}(\phi_{k,m}(x)\phi_{k,m}(y))$,
then $(A_n,\phi_n)$ forms  a %contractive *-linear
{\blue{p.c.}} (or c.p.c.) generalized inductive system.

\end{lem}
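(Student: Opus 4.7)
The plan is to verify each of the five defining conditions of Definition \ref{Generalized inductive system}. Because each $\phi_n$ is p.c.\ (or c.p.c.), it is linear, contractive, and $*$-preserving; consequently so is every composition $\phi_{m,n}$. Combined with the tautology $\phi_{k,n}=\phi_{m,n}\circ\phi_{k,m}$ for $k\le m\le n$, conditions (1), (2), and (3) then hold with error zero, while condition (5) is immediate from contractivity since $\|\phi_{k,r}(x)\|\le\|x\|$ for every $r$. The only substantive task is to establish the multiplicative condition (4).

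I first handle (4) for $x,y\in A_{k+}^1$. Given $\epsilon>0$, the hypothesis applied with error $\epsilon/2$ yields some $m_0>k$ such that
$$
\phi_{m_0,n}(\phi_{k,m_0}(x)\phi_{k,m_0}(y))\approx_{\epsilon/2}\phi_{k,n}(x)\phi_{k,n}(y)\quad\text{for every }n>m_0.
$$
The critical observation is that specialising this inequality to $n=m$ for any $m>m_0$ yields a Cauchy-type statement
$\phi_{m_0,m}(\phi_{k,m_0}(x)\phi_{k,m_0}(y))\approx_{\epsilon/2}\phi_{k,m}(x)\phi_{k,m}(y)$.
Applying the contractive map $\phi_{m,n}$ to both sides and invoking $\phi_{m,n}\circ\phi_{m_0,m}=\phi_{m_0,n}$ then gives
$\phi_{m_0,n}(\phi_{k,m_0}(x)\phi_{k,m_0}(y))\approx_{\epsilon/2}\phi_{m,n}(\phi_{k,m}(x)\phi_{k,m}(y))$;
chaining this with the original inequality produces (4) with error $\epsilon$, uniformly in all $n>m>m_0$. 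Hence $M:=m_0+1$ works on the positive unit ball.

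To pass from $A_{k+}^1$ to arbitrary $x,y\in A_k$, write $x=x_1-x_2+i(x_3-x_4)$ with $x_j\in(A_k)_+$ and $\|x_j\|\le\|x\|$, and similarly $y=y_1-y_2+i(y_3-y_4)$. Expand $\phi_{k,m}(x)\phi_{k,m}(y)$ as a $16$-term linear combination $\sum_{i,j}\lambda_{i,j}\phi_{k,m}(x_i)\phi_{k,m}(y_j)$ with $\lambda_{i,j}\in\{\pm1,\pm i\}$. Applying the positive-unit-ball version of (4) to each normalised pair $(x_i/\|x_i\|,\,y_j/\|y_j\|)$ (skipping any vanishing factor) with error chosen small enough to absorb the scalar factors $\|x_i\|\|y_j\|\le\|x\|\|y\|$, and using linearity of $\phi_{m,n}$, produces (4) for $x,y$ with error $\epsilon$ once $M$ is set to be the maximum of the sixteen resulting thresholds. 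The main obstacle is purely conceptual---turning a hypothesis that a priori supplies only a single good index $m_0$ into uniform control for all $m\ge M$---and the $n=m$ substitution is precisely the trick that resolves it.
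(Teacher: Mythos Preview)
Your proof is correct and follows essentially the same approach as the paper's: both observe that (1), (2), (3), (5) are automatic from contractivity and linearity, and for (4) both pick a single index $m_0$ (the paper's $M$) from the hypothesis, specialise the hypothesis at $n=m$ to get $\phi_{k,m}(x)\phi_{k,m}(y)\approx_{\epsilon/2}\phi_{m_0,m}(\phi_{k,m_0}(x)\phi_{k,m_0}(y))$, push forward by the contractive $\phi_{m,n}$, and chain with the original estimate at $n$. The only difference is cosmetic: the paper dispatches the extension to general $x,y\in A_k$ in one line (``$A_{k+}^1$ generates $A_k$ as linear space''), while you spell out the $16$-term decomposition explicitly.
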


\begin{proof}

Since $\phi_n$ are %contractive *-linear
{\blue{p.c.}} (or c.p.c.)~maps,
(1), (2), (3) {\blue{and}} (5)
in Definition \ref{Generalized inductive system} are satisfied.
%All we need to
{\blue{It remains to}} show that
(4) in Definition \ref{Generalized inductive system} holds.

Let $k\in\mathbb{N}$, let $a,b\in A_{k+}^1$ and let $\epsilon>0$.
%Let $\delta:=\frac{\epsilon}{2(1+\|a\|)(1+\|b\|)}$,
By {\blue{the}}  assumption,  there exists
$M\in\mathbb{N}$ such that, for any $i \ge M$,
%\beq
%\label{f5-3-lem-1}
$\phi_{k,i}(a)\cdot\phi_{k,i}(b)
\approx_{\epsilon/2}
\phi_{M,i}(\phi_{k,M}(a)\cdot\phi_{k,M}(b)).$
%\eneq
%and
%\beq
%\label{f5-3-lem-2}
%\phi_{M,m}(\phi_{k,M}(a)\cdot\phi_{k,M}(b))
%\approx_{\epsilon/2}\phi_{k,m}(a)\cdot\phi_{k,m}(b).
%\eneq
%
{\blue{Then,}}
%, by \eqref{f5-3-lem-1} and {\blue{the fact}} $\|\phi_{m,n}\|\leq 1$,
for any $n>m\geq M$,
we have
\beq
\phi_{m,n}(\phi_{k,m}(a)\cdot\phi_{k,m}(b))
%(by \eqref{f5-3-lem-1}, \eqref{f5-3-lem-2} and \|\phi_{m,n}\|\leq 1)
&\approx_{\epsilon/2}&
\phi_{m,n}(\phi_{M,m}(\phi_{k,M}(a)\cdot
\phi_{k,M}(b)))
\nonumber\\
&=&\phi_{M,n}(\phi_{k,M}(a)\cdot\phi_{k,M}(b))
%\nonumber\\&
\approx_{\epsilon/2}%&
\phi_{k,n}(a)\cdot\phi_{k,n}(b).
\nonumber
\eneq

%\frac{a}{\|a\|}

\noindent
{{Thus}} (4) in Definition \ref{Generalized inductive system}
holds for any $a,b \in A_{k+}$.
Since $A_{k+}^1$ generates $A_k$ as linear {\blue{space,}}
then (4) in Definition \ref{Generalized inductive system}
holds for any $a,b \in A_{k}$.
{\blue{Lemma follows.}}
%Thus $(A_n,\phi_n)$ forms a
%contractive *-linear generalized inductive system.

\end{proof}

%20200201

%20191027-1

%{\red{\bf I proposed the following to avoid misleading----}}
\begin{lem}
\label{multiplicative-for-contractive-linear-system-corollary}
Let $(A_i,\phi_{j,i})$ be a
%c.p.c.~
{\blue{p.c.}}~generalized inductive system
of $C^*$-algebras.
%, let $n\in\N$.
{\blue{Then,}} for any $n,k\in\N$,
any finite subset $\calF\subset A_k$, {\blue{and}}
any $\epsilon>0$,
there exists $M>k (\in\N)$
such {\blue{that,}} for any
$j>i\geq M$,
any $m_1,m_2\leq n\in\N$,
and any $x_1,x_2, {\blue{\cdots, x_{m_1}}},y_1,y_2, \cdots,y_{m_2}\in \calF$,
%{\red{\bf We should be consistent on what we use, $x_1,...,x_n$ or $x_1,\cdots, x_n$? I think, for most of time, we use $x_1,...,x_n,$ In that case, the above should be further  changed.}} {\green{\bf I used ``$\cdots$'' through out the paper, and I never use ``...''. However, if it ought to be ``...'', then I will change it.---X.F.}}
$$\phi_{i,j}\left(\prod_{r=1}^{m_1} \phi_{k,i}(x_r)
\cdot \prod_{r=1}^{m_2} \phi_{k,i}(y_r)\right)
\approx_{\epsilon}
\phi_{i,j}\left(\prod_{r=1}^{m_1} \phi_{k,i}(x_r)\right)
\cdot
\phi_{i,j}\left(\prod_{r=1}^{m_2} \phi_{k,i}(y_r)\right).
$$

\end{lem}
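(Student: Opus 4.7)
The plan is to first establish an auxiliary statement asserting that long ``words'' in the images $\phi_{k,i}(x_r)$ are preserved, up to small error, by the connecting maps $\phi_{i,j}$; then the lemma follows by applying this auxiliary statement three times. Before starting, by property~(5) of Definition~\ref{Generalized inductive system} the constant
\[
K:=\max\Bigl\{1,\ \sup_{x\in\calF,\,r\geq k}\|\phi_{k,r}(x)\|\Bigr\}
\]
is finite, so any product $\prod_{r=1}^{m}\phi_{k,i}(x_r)$ with $x_r\in\calF$ and $m\leq 2n$ has norm at most $K^{2n}$.

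Next I would prove, by induction on $m$ for $1\leq m\leq 2n$, the following auxiliary statement $(\ast_m)$: for every $\epsilon'>0$ there exists $M'=M'(m,\epsilon')>k$ such that for all $j>i\geq M'$ and all $x_1,\ldots,x_m\in\calF$,
\[
\phi_{i,j}\Bigl(\prod_{r=1}^{m}\phi_{k,i}(x_r)\Bigr)\ \approx_{\epsilon'}\ \prod_{r=1}^{m}\phi_{k,j}(x_r).
\]
The case $m=1$ uses property~(2) with $\lambda=1$; the case $m=2$ is exactly property~(4) applied to $\calF$. For the inductive step at $m\geq 3$, I would pick an intermediate level $m_0$ large enough that $(\ast_{m-1})$ and $(\ast_1)$ hold from level $m_0$ onward with error $\epsilon'/(3K^{2n})$, then apply property~(4) at level $m_0$ to the finite set
\[
\tilde\calF:=\Bigl\{\textstyle\prod_{r=1}^{m-1}\phi_{k,m_0}(x_r):x_1,\ldots,x_{m-1}\in\calF\Bigr\}\cup\bigl\{\phi_{k,m_0}(x):x\in\calF\bigr\}\ \subset A_{m_0}
\]
to absorb the splitting of the length-$m$ word into a length-$(m-1)$ word times one letter. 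The norm bound $K^{2n}$ controls amplification when inserting the inductive hypothesis.

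Finally, to deduce the lemma, fix $n,k,\calF,\epsilon$, write $P_i:=\prod_{r=1}^{m_1}\phi_{k,i}(x_r)$ and $Q_i:=\prod_{r=1}^{m_2}\phi_{k,i}(y_r)$ for generic choices with $m_1,m_2\leq n$, and apply $(\ast_{m})$ with $\epsilon':=\epsilon/(3K^{2n}+3)$ simultaneously for $m=m_1$, $m=m_2$, and $m=m_1+m_2$ (take $M$ to be the maximum of the finitely many resulting thresholds, noting that there are only finitely many values of $(m_1,m_2)$ with $m_1,m_2\leq n$). Then for $j>i\geq M$, the auxiliary statement gives
\[
\phi_{i,j}(P_iQ_i)\ \approx_{\epsilon'}\ \prod_{r=1}^{m_1+m_2}\phi_{k,j}(z_r),\qquad
\phi_{i,j}(P_i)\ \approx_{\epsilon'}\ \prod_{r=1}^{m_1}\phi_{k,j}(x_r),\qquad
\phi_{i,j}(Q_i)\ \approx_{\epsilon'}\ \prod_{r=1}^{m_2}\phi_{k,j}(y_r),
\]
where $(z_1,\ldots,z_{m_1+m_2})$ is the concatenation. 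Multiplying the last two estimates and using the norm bound $K^{2n}$ yields $\phi_{i,j}(P_i)\,\phi_{i,j}(Q_i)\approx\prod_{r=1}^{m_1+m_2}\phi_{k,j}(z_r)\approx\phi_{i,j}(P_iQ_i)$ within $\epsilon$, which is what is required. The main obstacle is the $\epsilon$-bookkeeping in the induction, since each invocation of property~(4) introduces an additive error that is magnified by the possibly large factor $K^{2n}$; the remedy is to shrink the tolerance at each induction step by a factor on the order of $1/(n K^{2n})$ so that the accumulated error at the end stays below the prescribed $\epsilon$.
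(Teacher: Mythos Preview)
Your proposal is correct and follows essentially the same approach as the paper: both reduce the lemma to the auxiliary statement $(\ast_m)$ that $\phi_{i,j}\bigl(\prod_{r=1}^{m}\phi_{k,i}(x_r)\bigr)\approx_{\epsilon'}\prod_{r=1}^{m}\phi_{k,j}(x_r)$, prove it by induction on $m$ using property~(4) at an intermediate level, and then apply it three times (for $m_1$, $m_2$, $m_1+m_2$) to conclude. The only cosmetic difference is that the paper tacitly normalizes to $\calF\subset A_k^1$ so that your constant $K$ becomes $1$, whereas you track $K$ explicitly via property~(5); both are fine.
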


\begin{proof}
It suffices to show that, for any $k, n\in \N,$  any {\blue{$\ep>0,$}} and any finite subset ${\cal F}\in A_k^1,$
there exists $M>0$ such that ($1\le l \le n$), for $j>i>M,$
\beq
\phi_{i,j}(\prod_{r=1}^l \phi_{k,i}(x_r))\approx_{\ep} \prod_{r=1}^l\phi_{k,j}(x_r)\rforal x_1,
x_2,
{{\cdots}},x_{\green{l}}\in {\cal F}.
\eneq
This follows from Definition \ref{Generalized inductive system} and {\blue{the}} induction on $n$ immediately.
The case $n=2$ follows from (4) in
{{Definition}}
\ref{Generalized inductive system}.  Assume the above holds for $2,{\blue{3,}}
{{\cdots}},n-1.$
Then, for $\dt=\ep/3,$ there exists $M_0>0$ such that, for any $j>i>M_0,$
\beq\label{62+n}
\phi_{i,j}(\prod_{r=1}^{l'}\phi_{k,i}(x_r))\approx_{\dt} \prod_{r=1}^{l'} \phi_{k,j}(x_r)\rforal x_r\in {\cal F}\, (1\le r\le l'\le n-1).
\eneq
%Put $y=\prod_{r=1}^{l'}\phi_{k,i}(x_r)$ and $z=\phi_{k,i}(x_{l'+1}).$
% ($x_{i'+1}\in {\cal F}$).
For all $x_r\in {\cal F},$   with  $y=\prod_{r=1}^{l'}\phi_{k,i}(x_r)$ and $z=\phi_{k,i}(x_{l'+1})$
(${{1\le l'}}<l'+1\le n$),
there is $M_1>0$ such that, for $K>j\ge M_1,$
{{$\phi_{j,K}(\phi_{i,j}(y)\phi_{i,j}(z))
\approx_{\dt}\phi_{i,K}(y)\phi_{i,K}(z).$ Then}}
\beq\nonumber
\phi_{j, K}(\prod_{r=1}^{l'+1}\phi_{k,{{j}}}(x_r)){\overset{(\ref{62+n})}\approx_{\dt}}\, \phi_{j,K}(\phi_{i,j}(y)\phi_{i,j}(z))
\approx_{\dt}\phi_{i,K}(y)\phi_{i,K}(z)
\overset{(\ref{62+n}\,)}\approx_{\dt}\, \prod_{r=1}^{l'+1}\phi_{k,K}(x_r).
%\cdot \phi_{k,K}(x_{l'+1}).
%\phi_{i,j}(y)\phi_{i,j}(z))
\eneq

\end{proof}

\noindent
{{We}} end this section with a sufficient and necessary condition
for
%when does
{\blue{a c.p.c.~generalized inductive limit
having}} finite nuclear dimension.

\begin{thm}
\label{gene-lim-fin-nucdim}
Let {$n\in\mathbb{N}\cup\{0\}$}.
Let $(A_i,\phi_{i,j})$ be a %contractive *-linear
{{c.p.c.~}}generalized inductive system
of $C^*$-algebras. % with  c.p.c.~maps $\phi_{i,j}.$
%are c.p.c.~maps.
Let $A=\lim_{i}(A_i,\phi_{i,j})$.
%with $\phi_i: A_i\rightarrow A_{i+1}$ are c.p.c.~maps,
Then $\dimnuc A\leq n$
if and only if the following hold:

For any $i\in\mathbb{N}$, any finite subset $\mathcal{G}\subset A_i$,
{\blue{and}} any $\epsilon>0$,
there {{exist}} a finite dimensional $C^*$-algebra $F$,
%and
{a} c.p.~map $\alpha: A_i\rightarrow F$,
and {\blue{an}} $n$-decomposable c.p.~map $\beta: F\rightarrow A$
{\blue{such}} {\blue{that}}
%for all $x\in \mathcal{G},$
%
$$
%\quad
\phi_{i,\infty}(x)\approx_{\epsilon}
\beta\circ\alpha(x) {\blue{\rforal x\in  {\cal G}.}}
%\quad
%\forall x\in \mathcal{G}.
$$

\end{thm}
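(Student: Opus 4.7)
The plan is to reduce both directions to the characterization of finite nuclear dimension given by Theorem \ref{equivalent-definition-on-nuclear-dimension-second} applied to the identity *-homomorphism $\id_A \colon A\to A$. The key background fact we invoke throughout is that, for the generalized inductive limit $A = \lim_i(A_i,\phi_{i,j})$, each canonical map $\phi_{i,\infty} \colon A_i \to A$ is a genuine $*$-homomorphism: this follows from conditions (1)--(5) in Definition \ref{Generalized inductive system} together with the construction of $A$ as a $C^*$-subalgebra of $\prod A_n/\bigoplus A_n$, since the defect sequences $\{\phi_{i,n}(xy)-\phi_{i,n}(x)\phi_{i,n}(y)\}_n$ lie in $\bigoplus A_n$ by Lemma \ref{multiplicative-for-contractive-linear-system-corollary}.

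For the ``only if'' direction, suppose $\dimnuc A \le n$. Given $i\in\N$, a finite subset $\mathcal{G}\subset A_i$, and $\epsilon>0$, I apply the definition of nuclear dimension (or rather Corollary \ref{equivalent-definition-on-nuclear-dimension-1}) to the finite subset $\phi_{i,\infty}(\mathcal{G})\subset A$ and $\epsilon$. This produces a finite dimensional \CA\ $F$, a c.p.~map $\phi\colon A\to F$, and an $n$-decomposable c.p.~map $\psi\colon F\to A$ with $\psi\circ\phi(y)\approx_\epsilon y$ for all $y\in\phi_{i,\infty}(\mathcal{G})$. Setting $\alpha := \phi\circ\phi_{i,\infty}\colon A_i \to F$ and $\beta := \psi$ yields the required factorization.

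For the ``if'' direction, I verify the criterion of Theorem \ref{equivalent-definition-on-nuclear-dimension-second} for $\id_A$. Let $\mathcal{G}\subset A_+$ be a finite subset and $\epsilon>0$. Because $A$ is the closure of $\bigcup_i \phi_{i,\infty}(A_i)$, for large enough $i$ I can choose a finite set $\tilde{\mathcal{G}}\subset (A_i)_+$ such that every $g\in\mathcal{G}$ is approximated within $\epsilon/4$ by some $\phi_{i,\infty}(\tilde g)$ with $\tilde g\in\tilde{\mathcal{G}}$. Enlarging $\tilde{\mathcal{G}}$ to include $\tilde{\mathcal{G}}\cdot\tilde{\mathcal{G}}$ and applying the hypothesis to this enlarged set with a sufficiently small $\epsilon'>0$ (to be fixed at the end) produces a finite dimensional $F$, a c.p.~map $\alpha\colon A_i\to F$, and an $n$-decomposable c.p.~map $\beta\colon F\to A$ with $\beta\circ\alpha(x)\approx_{\epsilon'}\phi_{i,\infty}(x)$ on the enlarged set. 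Taking $C:=A_i$, $\phi:=\alpha$, and $\psi:=\beta$, condition (1) of Theorem \ref{equivalent-definition-on-nuclear-dimension-second} is immediate, condition (2) is the $n$-decomposability of $\psi$, and condition (3) follows from the fact that $\phi_{i,\infty}$ is a $*$-homomorphism: for $x,y\in\tilde{\mathcal{G}}\cup(\tilde{\mathcal{G}}\cdot\tilde{\mathcal{G}})$,
\[
\psi\circ\phi(xy)\approx_{\epsilon'}\phi_{i,\infty}(xy)=\phi_{i,\infty}(x)\phi_{i,\infty}(y)\approx_{2M\epsilon'}\psi\circ\phi(x)\psi\circ\phi(y),
\]
where $M:=\sup\{\|\phi_{i,\infty}(z)\|:z\in\tilde{\mathcal{G}}\cup(\tilde{\mathcal{G}}\cdot\tilde{\mathcal{G}})\}<\infty$ by (5) of Definition \ref{Generalized inductive system}. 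Fixing $\epsilon'<\epsilon/(2M+1)$ at the outset makes all three conditions hold with tolerance at most $\epsilon$.

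The main obstacle is really not in the approximation bookkeeping, which is routine once the right targets are identified, but in cleanly organizing the reduction to Theorem \ref{equivalent-definition-on-nuclear-dimension-second}. In particular, one has to exploit the freedom offered by that theorem to choose the ``domain'' \CA\ $C$ different from $A$: choosing $C = A_i$ rather than $A$ is precisely what allows the hypothesized factorizations $\alpha\colon A_i\to F$, $\beta\colon F\to A$ to be repackaged as a factorization witnessing $\dimnuc \id_A \le n$. The nontrivial input, ensuring that $\phi_{i,\infty}$ is multiplicative (not merely approximately so), was isolated in Lemma \ref{multiplicative-for-contractive-linear-system-corollary}, so that step is available for free.
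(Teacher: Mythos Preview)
There is a genuine gap in the ``if'' direction. Your argument rests on the claim that $\phi_{i,\infty}\colon A_i\to A$ is a \emph{genuine} $*$-homomorphism, and you justify this by invoking Lemma~\ref{multiplicative-for-contractive-linear-system-corollary}. But that lemma does not say the defect $\phi_{i,n}(xy)-\phi_{i,n}(x)\phi_{i,n}(y)$ tends to zero for arbitrary $x,y\in A_i$; it only gives approximate multiplicativity of $\phi_{i,j}$ on elements that are \emph{already of the form} $\prod_r\phi_{k,i}(x_r)$ for some earlier stage $k<i$. For a general element of $A_i$ there is no such conclusion, and in fact $\phi_{i,\infty}$ need not be multiplicative: take $A_n=M_2$ with each connecting map the corner compression $x\mapsto e_{11}xe_{11}$. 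This is a c.p.c.\ generalized inductive system (condition (4) of Definition~\ref{Generalized inductive system} even holds with equality), yet $\phi_{i,\infty}(e_{12}e_{21})=\iota(e_{11})\neq 0=\phi_{i,\infty}(e_{12})\phi_{i,\infty}(e_{21})$. Since your set $\tilde{\mathcal{G}}\subset (A_i)_+$ is chosen only to approximate $\mathcal{G}$ under $\phi_{i,\infty}$, nothing forces its elements into the special form on which Lemma~\ref{multiplicative-for-contractive-linear-system-corollary} applies, and the displayed estimate $\phi_{i,\infty}(xy)=\phi_{i,\infty}(x)\phi_{i,\infty}(y)$ is unjustified.

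The paper's proof handles this by an extra layer of bookkeeping: one first finds $\hat{\mathcal G}\subset (A_k)_{sa}$ approximating the square roots of $\bar{\mathcal G}$, then pushes forward to a later stage $M>k$ and takes $\mathcal{G}:=\{\phi_{k,M}(x)^2:x\in\hat{\mathcal G}\}$. Now every element of $\mathcal{G}\cup(\mathcal{G}\cdot\mathcal{G})$ is a product of elements of the form $\phi_{k,M}(x)$, so Lemma~\ref{multiplicative-for-contractive-linear-system-corollary} \emph{does} yield $\phi_{M,\infty}(xy)\approx_\delta\phi_{M,\infty}(x)\phi_{M,\infty}(y)$ on that set, and condition~(3) of Theorem~\ref{equivalent-definition-on-nuclear-dimension-second} follows. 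Your overall strategy---reduce to Theorem~\ref{equivalent-definition-on-nuclear-dimension-second} with $C=A_i$---is exactly right, but you need this additional maneuver to manufacture a $\tilde{\mathcal G}$ on which the required approximate multiplicativity actually holds.
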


\begin{proof}

For the ``only if'' part, {\blue{let us}} assume that $\dimnuc A\leq n$.
Let $i\in\mathbb{N},$
let $\mathcal{G}\subset A_i$ be a finite subset, and
let $\epsilon>0.$
There exist {{a}} finite dimensional $C^*$-algebra $F$,
{\blue{a}} c.p.c.~map $\alpha': A\rightarrow F$,
and {{an}}
%piecewise contractive
$n$-decomposable  c.p.~map $\beta: F\rightarrow A$
{\blue{such}} that, for all
$x\in \mathcal{G}$,
$\phi_{i,\infty}(x)\approx_{\epsilon}
\beta\circ\alpha'(\phi_{i,\infty}(x))$.
Define a c.p.c.~map $\alpha:=\alpha'\circ\phi_{i,\infty}.$
Then, for all $x\in\mathcal{G}$,
$\phi_{i,\infty}(x)\approx_{\epsilon}
\beta\circ\alpha'(\phi_{i,\infty}(x))=
\beta\circ\alpha(x)$.

For the ``if'' part, we will
%the strategy is to
apply
Theorem \ref{equivalent-definition-on-nuclear-dimension-second}
to show that $\dimnuc \mathrm{id}_A\leq n.$
%and then $\dimnuc A\leq n$.

Let $\bar{\mathcal{G}}\subset A_+$ be a finite subset and
let $\epsilon>0.$
{\blue{Choose}}  $N:=1+\max\{\|x\|:x\in\bar{\mathcal{G}}\}$ and choose
%let
$\delta:=\min\{1,\frac{\epsilon}{4(N+1)}\}$.
There {{exist}} $k\in\mathbb{N}$ and %{\blue{there exists}}
{{a}} finite subset $\hat{\mathcal{G}}\subset (A_k)_{sa}$
such that
\beq
\label{f5-6thm-1}
\{x^{1/2}:x\in \bar{\mathcal{G}}\}
\subset_{\delta} \phi_{k,\infty}({\hat{\mathcal{G}}}).
\eneq
%and
%$\phi_{k,\infty}(\hat{\mathcal{G}})
%\subset_{\delta} \{x^{1/2}:x\in \bar{\mathcal{G}}\}$.
%Let $\hat{\mathcal{G}}:=\{\frac{x+x^*}{2}, x\in \mathcal{G}\}.$
%{\blue{Then}} $\hat{\mathcal{G}}\subset (A_{k})_{sa}$
%and $\{x^{1/2}:x\in \bar{\mathcal{G}}\}
%\subset_{\delta} \phi_{k,\infty}(\hat{\mathcal{G}}).$

\noindent
{{Since}} $A=\lim_{i}(A_i,\phi_{i,j})$ is a generalized inductive system,
there exists
$M_1>k\in\bbN$
{\blue{such that,}}
for any $j>i\geq M_1\in\N$ {\blue{and}}  any $x \in \hat{\mathcal{G}}
%\cup(\hat{\mathcal{G}}\cdot \hat{\mathcal{G}})
$,
\beq
\|\phi_{i,j}(\phi_{k,i}(x))-\phi_{k,j}(x)\|\leq\delta.
\eneq
{\blue{Hence,}} for any $i\geq M_1\in\N$, we have
\beq
\label{f5-6-thm-4}
\|\phi_{i,\infty}(\phi_{k,i}(x))-\phi_{k,\infty}(x)\|\leq\delta
%\quad
{{\mbox{ for all }}} x \in \hat{\mathcal{G}}.
\eneq

%(2) $\|\phi_{i,j}(\phi_{k,i}(x)^*)-\phi_{k,j}(x)^*\|\leq\delta$,

%(3) $\|\phi_{i,j}(\phi_{k,i}(x)\phi_{k,i}(y))
%-\phi_{k,j}(x)\phi_{k,j}(y)\|\leq\delta$,

%(4) $\|\phi_{k,i}(x)\|\leq N^{1/2}+2\delta$.

\noindent
{{By}} {{Lemma}}
\ref{multiplicative-for-contractive-linear-system-corollary},
%there exists
%$M>M_1\in\bbN$,
%such that for any $j>M$,
%$\forall r\leq 8\in\N$,
%$\forall x_1,\cdots, x_r \in  \hat{\mathcal{G}}
%%\cup(\phi_{k,M_1}(\hat{\mathcal{G}})\cdot \phi_{k,M_1}(\hat{\mathcal{G}}))
%$,
%
there exists $M>M_1\in\N$,
such {\blue{that,}}
for any $ j> M$,
{\blue{any}} $ 1\leq m_1,m_2\leq 4\in\N$,
and any $ x_1,{\green{x_2,}}\cdots, x_{m_1},y_1,{\green{y_2,}}\cdots,y_{m_2}\in
\hat{\mathcal{G}}$,
\beq
\label{f5-6-thm-1}
\phi_{M,j}\left(\prod_{r=1}^{m_1} \phi_{k,M}(x_r)
\cdot \prod_{r=1}^{m_2} \phi_{k,M}(y_r)\right)
\approx_{\delta}
\phi_{M,j}\left(\prod_{r=1}^{m_1} \phi_{k,M}(x_r)\right)
\cdot
\phi_{M,j}\left(\prod_{r=1}^{m_2} \phi_{k,M}(y_r)\right).
\eneq
{\blue{Let}} $\mathcal{G}:=\{\phi_{k,M}(x)^2:x\in\hat{\mathcal{G}}\}
\subset (A_{M})_+$. {\blue{Then,}} by \eqref{f5-6-thm-1},
we have
\beq
\label{f5-6-thm-2}
\phi_{M,j}(xy)\approx_{\delta}
\phi_{M,j}(x)\phi_{M,j}(y)
%\quad
{\mbox{ for all }} x,y\in \mathcal{G}
\cup(\mathcal{G}\cdot \mathcal{G})
{\mbox{ and for all }} j>M.
\eneq
{\blue{Consequently,}} we have
\beq
\label{f5-6-thm-3}
\phi_{M,\infty}(xy)\approx_{\delta}
\phi_{M,\infty}(x)\phi_{M,\infty}(y)
%\quad
{\mbox{ for all }} x,y\in \mathcal{G}
\cup(\mathcal{G}\cdot \mathcal{G}).
%\forall j>M.
\eneq

\noindent
{{Let}} $N_1:=1+\max\{\sup_{j>M}\{\|\phi_{M,j}(x)\|:x\in
\mathcal{G}\cup(\mathcal{G}\cdot\mathcal{G})
%\cup(\mathcal{G}\cdot\mathcal{G}\cdot\mathcal{G}\cdot\mathcal{G})
\}\}$.
By the assumption of the theorem,
there exists a finite dimensional \CA\ $F$,
{\blue{a}}  c.p.~map $\alpha: A_M\rightarrow F$,
and {\blue{an}} $n$-decomposable c.p.~map $\beta: F\rightarrow A$
{\blue{such}} that
\beq
\label{f5-4thm-1}
\phi_{M,\infty}(x)\approx_{\frac{\delta}{N_1}}
\beta\circ\alpha(x)
%\quad
{\mbox{ for all }} x
\in\mathcal{G}\cup(\mathcal{G}\cdot\mathcal{G})
\cup(\mathcal{G}\cdot\mathcal{G}\cdot\mathcal{G})
\cup(\mathcal{G}\cdot\mathcal{G}\cdot\mathcal{G}\cdot\mathcal{G}).
%\in \phi_{k,M}(\hat{\mathcal{G}})
%\cup(\phi_{k,M}(\hat{\mathcal{G}})\cdot\phi_{k,M}(\hat{\mathcal{G}})).
\eneq
For any $a\in\bar{\mathcal{G}}$,
by \eqref{f5-6thm-1}, there exists $x_a\in {\hat{\mathcal{G}}}$
{\blue{such}} that $a^{1/2}\approx_{\delta}\phi_{k,\infty}(x_a).$
Then
\beq
%a
\nonumber
%&=&
\hspace{-0.5in}a={{(a^{1/2})^2}} %\cdot a^{1/2}
%\nonumber\\
%\mbox{(by \eqref{f5-6thm-1})}
&\approx_{(2N+\delta)\delta}&
{{\phi_{k,\infty}{(x_a)^2}}}%\phi_{k,\infty}(x_a)
%\quad\quad\quad\quad\hspace{1.6in}
\qquad\qquad\qquad\qquad\qquad\qquad\qquad~~
\text{(by \eqref{f5-6thm-1})}
\\\nonumber
%\mbox{(by \eqref{f5-6-thm-4})}
&\approx_{(2N+\delta)\delta}%&
%&%{\blue{=}}
&{{\phi_{M,\infty}(\phi_{k,M}(x_a))^2}}
%\phi_{M,\infty}(\phi_{k,M}(x_a))
%\quad\quad\hspace{1.5in}
\qquad\qquad\qquad\qquad\qquad\quad~
\mbox{(by \eqref{f5-6-thm-4})}
\\\nonumber
%\mbox{({\blue{{\bf{add an inequality}}}})}
%\mbox{(by \eqref{f5-6-thm-3})}%f5-6-thm-3
&\approx_{\delta}&
\phi_{M,\infty}({{\phi_{k,M}(x_a)^2}})%\phi_{k,M}(x_a))
\qquad\qquad
\hspace{1.1in}\mbox{(by \eqref{f5-6-thm-3})}
\\\nonumber
%\mbox{(by \eqref{f5-4thm-1})}&
&\approx_{\delta}&
%&
\beta\circ\alpha({{\phi_{k,M}(x_a)^2}})
\in\beta\circ\alpha(\mathcal{G}).
\qquad\qquad
\hspace{0.4in}\mbox{(by \eqref{f5-4thm-1})}
\eneq

\noindent
{{Thus}} $\bar{\mathcal{G}}\subset_{\epsilon} \beta\circ\alpha(\mathcal{G})$.
%Also, we have
%
For %$x,y\in\mathcal{G}$,
$x,y\in \mathcal{G}\cup(\mathcal{G}\cdot \mathcal{G})$, {\blue{by \eqref{f5-4thm-1}, %\eqref{f5-6-thm-2}
{{\eqref{f5-6-thm-3}}}
and \eqref{f5-4thm-1} again,}}
\beq
%&&
\beta\circ\alpha(xy)
%\nonumber\\
%\overset{(\mathrm{By}\ \eqref{f5-4thm-1})}{\approx_{\delta}}
%(\mathrm{By}\ \eqref{f5-4thm-1})
\approx_{\delta}
\phi_{M,\infty}(xy)
%\nonumber\\
%\mbox{(by \eqref{f5-6-thm-2})}&
\approx_{\delta}
\phi_{M,\infty}(x)\phi_{M,\infty}(y)
%\nonumber\\
%\mbox{(by \eqref{f5-4thm-1})}&
\approx_{2\delta}
%&
\beta\circ\alpha(x)\beta\circ\alpha(y).
\eneq

\noindent
{\blue{Therefore}} $\beta\circ\alpha(xy)\approx_{\epsilon}
\beta\circ\alpha(x)\beta\circ\alpha(y)$ {{for all}} $x,y\in
\mathcal{G}\cup(\mathcal{G}\cdot \mathcal{G})$.
Then, by Theorem \ref{equivalent-definition-on-nuclear-dimension-second},
we have $\dimnuc \mathrm{id}_A\leq n.$ Consequently,  $\dimnuc A\leq n$.

%N_1

\end{proof}

%20191028

%20191005-2

\section{Simple $C^*$-algebra of finite tracial nuclear dimension}

\begin{df}
\label{D71}
Let $A$ and $B$ be \CA s and
let $\phi: A\rightarrow B$ be a map.
%{\blue{a *-\hm.}}
% a map between $C^*$-algebras,
Let {\blue{$\epsilon \geq 0.$}}
{\blue{If,}} for any $a_1,a_2\in A_+^1$ with $a_1a_2=0$,
we have $\|\phi(a_1)\phi(a_2)\|\leq \epsilon$,
then we say $\phi$ is {\blue{an}} $\epsilon$-almost order zero map.

\end{df}

\begin{df}
\label{defn-n-almost-dividable-c.p.-map}
{\blue{Let $A$ be a \CA\, and let $F$ be a finite dimensional \CA.}}
%{{Let $A,\,F$ be \CAs\ with $F$ being finite dimensional.
Let
$\phi: F\rightarrow A$ be a
% {\blue{*-\hm.}}
c.p.~map {\blue{and let}}
% between $C^*$-algebras with
%$F$ being finite dimensional
%Let
$n\in{{\mathbb{N}\cup\{0\}}}$ {\blue{be an integer.}} The map
$\phi$ is called $(n,\epsilon)$-dividable
%decomposable,
if $F$ can be written as $F=F_0\oplus \cdots\oplus F_n$
(where $F_i$ are {{ideals of $F$}})
%finite dimensional \CAs
{\blue{such that}} $\phi|_{F_i}$ {{is}}
{{a}} c.p.c.~$\epsilon$-almost order zero map
{{for}} $i=0,{\green{1,}}\cdots,n$.

\end{df}

{\blue{The next two propositions follow from the projectivity of
the cone of finite dimensional $C^*$-algebras.}}

\begin{prop}
\label{app-order-zero-exactly}

For any finite dimensional $C^*$-algebra $F$
and any $\epsilon>0$,
there exists
$\delta>0$
{\blue{such}} that, for any $C^*$-algebra $A$ and any c.p.c.~map
$\phi: F\rightarrow A$ {\blue{which is}}
% if $\phi$ is
$\delta$-almost order zero,
%then
{\blue{there}} exists a c.p.c.~order zero map $\psi: {\blue{F}}\rightarrow A$
satisfying $\|\phi-\psi\| \leq\epsilon$.

\end{prop}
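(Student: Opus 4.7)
The plan is to prove this by a compactness/contradiction argument, reducing to the projectivity of the cone $CF := C_0((0,1]) \otimes F$ for a finite dimensional \CA\ $F$. The key correspondence (built into Theorem \ref{WZ2009-Theorem 3.3}) is that c.p.c.~order zero maps $F \to A$ are in bijection with *-homomorphisms $CF \to A$, and that $CF$ is projective when $F$ is finite dimensional.

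First, I would assume the conclusion fails: there exist $\ep_0>0$, \CA s $A_k$ ($k\in\N$), and c.p.c.~maps $\phi_k: F \to A_k$ that are $(1/k)$-almost order zero, yet satisfy $\|\phi_k - \psi\| > \ep_0$ for every c.p.c.~order zero map $\psi: F \to A_k$. I would then assemble these into a single c.p.c.~map
$$
\Phi: F \to \prod_{k=1}^{\infty} A_k \big/ \bigoplus_{k=1}^{\infty} A_k, \qquad \Phi(x) := \pi_\infty(\{\phi_k(x)\}).
$$
For any $a_1,a_2 \in F_+^1$ with $a_1 a_2 = 0$, one has $\|\phi_k(a_1)\phi_k(a_2)\| \leq 1/k \to 0$, hence $\Phi(a_1)\Phi(a_2) = 0$ in the quotient. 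Thus $\Phi$ is a c.p.c.~order zero map.

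Next, I would invoke the projectivity of c.p.c.~order zero maps from finite dimensional \CAs\ (equivalently, projectivity of $CF$) to lift $\Phi$ through the quotient: there exists a c.p.c.~order zero map $\Psi: F \to \prod_{k=1}^{\infty} A_k$ with $\pi_\infty \circ \Psi = \Phi$. Writing $\Psi(x) = \{\psi_k(x)\}_{k}$ and composing with the coordinate projections, each $\psi_k: F \to A_k$ is a c.p.c.~order zero map (coordinate projections are *-homomorphisms, which preserve order zero and contractivity). From $\pi_\infty \circ \Psi = \Phi$ we obtain $\lim_{k\to\infty} \|\phi_k(x) - \psi_k(x)\| = 0$ for every $x \in F$.

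Finally, since $F$ is finite dimensional, pointwise convergence on the standard generating set ${\cal G}^F$ (see Notation \ref{NoGF}) forces $\|\phi_k - \psi_k\| \to 0$. In particular, for all sufficiently large $k$, $\|\phi_k - \psi_k\| < \ep_0$, contradicting the choice of $\{\phi_k\}$. The proposition follows. The main obstacle is the lifting step in the second paragraph: it rests on the projectivity of the cone over a finite dimensional \CA, a result that is well-known (cf.~\cite[Proposition 1.2.4]{W-2009CovII}) but essential here because the uniformity of $\dt$ over all target algebras $A$ cannot be obtained by estimates internal to a single $A$.
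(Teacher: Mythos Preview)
Your proposal is correct and follows essentially the same approach as the paper's proof: contradiction, form the c.p.c.\ order zero map into the sequence quotient $\prod A_k/\bigoplus A_k$, lift via \cite[Proposition 1.2.4]{W-2009CovII}, and use finite-dimensionality of $F$ to upgrade pointwise to norm convergence of $\phi_k-\psi_k$. The only cosmetic difference is that the paper phrases the last step as ``the unit ball of $F$ is compact'' rather than convergence on the generating set ${\cal G}^F$.
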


\begin{proof}
{\blue{Let $F$
% {\blue{and}} $\epsilon$
be fixed.}}
%If the proposition does not hold,
{\blue{If such $\delta$ described in the proposition  does not exists,}}
then,  %
{\blue{there exist $\ep_0>0,$}}
%there exists
a sequence of $C^*$-algebra $A_n$,
and c.p.c.~maps $\phi_n: F\rightarrow A_n$
{\blue{such}} that $\phi_n$ is $1/n$-almost order zero,
and,
for any $n\in\mathbb{N}$ {\blue{and}}
any c.p.c.~order zero map $\psi: F\rightarrow A_n$,
we have $\|\phi_n-\psi\|>  {\blue{\epsilon_0}}$.

{{Define a c.p.c.~order zero map}}
%Note that $\{\phi_n\}$ induces a c.p.c.~order zero map
%$\Phi: F\rightarrow l^{\infty}(A)/c_0(A)$,
{\blue{$\Phi: F\rightarrow \prod_{n=1}^\infty A_n/\bigoplus_{n=1}^\infty A_n$}}
{{by}} $x\mapsto \pi_\infty(\{\phi_n(x)\})$.
{\blue{Then, by \cite[Proposition 1.2.4]{W-2009CovII},}}
%\ref{c.p.c.-order-zero-map-finite-dim-lift},}
$\Phi$ has a c.p.c.~order zero lift
$\Psi: F\rightarrow \prod_{n=1}^\infty A_n.$ %{\blue{Reference?}}
Let $\psi_n$ be the components of $\Psi$ corresponding to $A_n.$
Since
% the unit ball of $F$ is compact,}}
%$F$ is finite dimensional,  and
$\lim_{n\rightarrow \infty}\|\phi_n(x)-\psi_n(x)\|{\blue{=0}}$  for all  $x\in F,$
and the unit ball of $F$ is compact,
there exists $n_0$ such that $\|\phi_{n_0}-\psi_{n_0}\|<{\blue{\epsilon_0 /2}}.$
This leads to
a contradiction. Thus $\delta$ {\blue{does exist}} {\blue{and the proposition follows.}}

\end{proof}
%\rightline{20190701 Finished. }

\begin{prop}
\label{n-almost-dividable-exact}
Let ${\blue{F=M_{k_0}\oplus M_{k_1}\oplus \cdots \oplus {\blue{M_{k_r}}}}}$ be a finite dimensional $C^*$-algebra
{\blue{with a {{standard}}
generating set ${\cal G}^F$ {\blue{(see {{Notation}} \ref{NoGF})}} in the unit ball of $F.$}}
%{\blue{\bf Generating $F$ as a \CA\ or as a linear space? - Xuanlong}}
%For any $n\in\mathbb{N}$,

{\blue{(1)}} For any $\epsilon>0$, there exists $\delta_1(\ep)>0$
such that, for any $n\in{{\mathbb{N}\cup\{0\}}}$($n\le r$), any $C^*$-algebra $A$ and any
{\blue{$(n,\delta_1(\ep))$-dividable}} c.p.~map
$\phi: F\rightarrow A$,
% if $\phi$ is
%$(n,\delta_1(\ep))$-dividable c.p.~map,
%then
there exists a piecewise contractive $n$-decomposable  c.p.~map $\psi: F\rightarrow A$
satisfying $\|\phi-\psi\| \leq\epsilon$.

{\blue{(2) For any $\ep>0,$ there exists $\dt_2(\ep)>0$ such that,
for any $n\in {{\mathbb N\cup\{0\}}}$
($n\le r$), {\blue{any $\sigma>0,$  and any
%piecewise contractive $n$-decomposable
{{$(n,\sigma)$-}}{\blue{dividable}} c.p.~map $\af: F\to A$
(for any \CA\, $A$),}}
%{\green{any $\sigma\geq 0$}})
and any c.p.c.~map $\bt: B:=C^*(\af(F))\to C$ (for any \CA\, $C$) which
is {{$(\alpha({\cal G}^F), \dt_2(\ep))$}}-multiplicative,  $\bt\circ \af$ is
{{an}} $(n, {{\sigma+}}\ep)$-dividable c.p.~map.}}

\iffalse%-
{\green{(3) For any $\ep>0,$ there exists $\dt_3(\ep)>0$,
{\green{1 such that,}} for any contractive linear map $\phi: F\to A$ (for any \CA\ $A$),
if $\phi$ is $({\cal G}^F, \dt_3(\ep))${\green{-multiplicative}},
then $\phi$ is $(F^1, \dt_3(\ep))${\green{-multiplicative}}.
}}
\fi%-

\end{prop}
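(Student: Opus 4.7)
The plan is to handle the two parts separately, both leveraging the finite-dimensionality of $F$ to reduce the required estimates to a finite combinatorial problem on the ideal summands and the standard matrix-unit basis $\mathcal{G}^F$.

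For part (1), I would fix $\epsilon>0$ and enumerate the (finitely many) ordered decompositions $F=F_0\oplus\cdots\oplus F_n$ where each $F_i$ is an ideal summand of $F$ and $0\le n\le r$. For each ideal summand $G$ that can appear as some $F_i$, apply Proposition \ref{app-order-zero-exactly} to obtain $\delta_G>0$ such that any c.p.c.~$\delta_G$-almost order zero map from $G$ into any \CA\ is within $\epsilon/(r+1)$ of a genuine c.p.c.~order zero map. Let $\delta_1(\epsilon)$ be the minimum of these finitely many $\delta_G$. Then given an $(n,\delta_1(\epsilon))$-dividable c.p.~map $\phi\colon F\to A$ with witnessing decomposition $F=F_0\oplus\cdots\oplus F_n$, Proposition \ref{app-order-zero-exactly} yields c.p.c.~order zero maps $\psi_i\colon F_i\to A$ with $\|\phi|_{F_i}-\psi_i\|\le \epsilon/(r+1)$. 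Piecing these together by $\psi(x_0+\cdots+x_n):=\sum_i\psi_i(x_i)$ produces a piecewise contractive $n$-decomposable c.p.~map, and the estimate $\|x_i\|\le\|x\|$ in the direct sum $F$ gives $\|\phi-\psi\|\le(n+1)\epsilon/(r+1)\le\epsilon$.

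For part (2), I would retain the decomposition $F=F_0\oplus\cdots\oplus F_n$ witnessing the $(n,\sigma)$-dividability of $\alpha$ and show that $\beta\circ\alpha$ is $(n,\sigma+\epsilon)$-dividable with the \emph{same} decomposition. Each $\beta\circ(\alpha|_{F_i})$ is automatically c.p.c., so it remains only to verify the $(\sigma+\epsilon)$-almost order zero property. For $a_1,a_2\in(F_i)_+^1$ with $a_1a_2=0$, the triangle inequality gives
\[
\|\beta(\alpha(a_1))\beta(\alpha(a_2))\|\le\|\beta(\alpha(a_1)\alpha(a_2))\|+\|\beta(\alpha(a_1))\beta(\alpha(a_2))-\beta(\alpha(a_1)\alpha(a_2))\|,
\]
and the first term is bounded by $\|\alpha(a_1)\alpha(a_2)\|\le\sigma$ because $\beta$ is c.p.c.~and $\alpha|_{F_i}$ is $\sigma$-almost order zero. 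To control the second term, I would expand $a_j=\sum_l c_l^{(j)}g_l^{(j)}$ in terms of $\mathcal{G}^F$; since the entries of a matrix of norm at most $1$ have modulus at most $1$, the coefficients satisfy $|c_l^{(j)}|\le 1$ and the number of nonzero terms is at most $K_0:=\sum_{l=0}^{r}k_l^2$. Using linearity of $\alpha$ and $\beta$ to write $\alpha(a_1)\alpha(a_2)=\sum_{l,m}c_l^{(1)}c_m^{(2)}\alpha(g_l^{(1)})\alpha(g_m^{(2)})$ and likewise for the product $\beta(\alpha(a_1))\beta(\alpha(a_2))$, the $(\alpha(\mathcal{G}^F),\delta_2(\epsilon))$-multiplicativity of $\beta$ bounds the second term by $K_0^2\,\delta_2(\epsilon)$. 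Setting $\delta_2(\epsilon):=\epsilon/K_0^2$ finishes the estimate.

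The main obstacle, modest though it is, is the coefficient bound in part (2): the argument hinges on the fact that each $a_j\in F^1$ admits a matrix-unit expansion with unit-modulus coefficients and boundedly many nonzero terms depending only on $F$, so the almost-multiplicativity of $\beta$ on the single finite set $\alpha(\mathcal{G}^F)$ propagates linearly to almost-multiplicativity on the bilinear combinations $\alpha(a_1)\alpha(a_2)$. The remaining steps are compactness/triangle-inequality arguments enabled by the finiteness of the ideal decompositions of $F$ and the finiteness of $\mathcal{G}^F$.
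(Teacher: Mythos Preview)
Your proposal is correct and follows essentially the same approach as the paper. The only organizational differences are minor: in (1) the paper applies Proposition~\ref{app-order-zero-exactly} once to all of $F$ and then composes with the coordinate projections $\pi_j:F\to F_j$ (so that $\phi\circ\pi_j$ is $\delta_1(\epsilon)$-almost order zero on $F$), rather than enumerating ideal summands separately; in (2) the paper invokes compactness of $F^1$ abstractly to obtain $\delta_2(\epsilon)$, whereas your explicit matrix-unit expansion with the bound $|c_l^{(j)}|\le 1$ and $K_0^2$ cross-terms makes the same estimate concrete.
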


\begin{proof}

%Write $F=M_{k_0}\oplus \cdots\oplus M_{k_m}$,
%where $m,k_0,\cdots,k_m\in\mathbb{N}$.
{{For (1),}} by Proposition \ref{app-order-zero-exactly},
there exists $\delta_1(\ep)>0$ such {\blue{that,}}
for any $C^*$-algebra $A$ and any
{\blue{$\delta_1(\ep)$-almost order zero}} c.p.c.~map
$\chi: F\rightarrow A$,
% if $\chi$ is
%$\delta_1(\ep)$-almost order zero,
%then
there exists a c.p.c.~order zero map $\psi: F\rightarrow A$
satisfying $\|\chi-\psi\| \leq\frac{\epsilon}{{\blue{r+1}}}$.

Now let $\phi: F\rightarrow A$ be {\blue{an}}
$(n,{\green{\delta_1(\ep)}})$-dividable c.p.~map,
i.e. $F$ can be written as $F=F_0\oplus \cdots\oplus F_n$
{{such that}} each $\phi|_{F_{{j}}}$ {\blue{is a}}
c.p.c.~{\green{$\delta_1(\ep)$}}-almost order zero {{map.}} %($j={\blue{0,\cdots,n}}$).
Then $n\leq {\blue{r}}$.

Let $\pi_j: F\rightarrow F_j$ be the {\blue{quotient map.}}
%For consistency---canonical conditional expectation.
Note  that $\mathrm{id}_F=\sum_{j=0}^n\pi_j.$
%$j=0,\cdots,n$,
Then $\phi\circ \pi_j$ {\blue{is a}}
c.p.c.~$\delta_1(\ep)$-almost order zero {\blue{map}} on $F,$
$j=0,{\green{1,}}\cdots,n.$
By  the choice of $\delta_1(\ep),$ there are
%exists
c.p.c.~order zero maps
$\psi_0,{\green{\psi_1,\cdots}},
% \cdots,
\psi_n: F\rightarrow A$ such that
$\|\psi_j- \phi\circ \pi_j\|\leq \frac{\epsilon}{{\blue{r+1}}}.$
%then
Therefore
$\psi:={\green{\sum_{j=0}^n\psi_j\circ\pi_j}}
%\psi_0\circ\pi_0+\cdots+ \psi_n\circ\pi_n
:F\rightarrow A$ is piecewise contractive $n$-decomposable  c.p.~map
{\blue{and}} $\|\psi-\phi\|=\|(\psi-\phi)\circ (\sum_{j=0}^n\pi_j)\|
=\|\sum_{j=0}^n(\psi- \phi\circ \pi_j)\circ \pi_j\|\leq\epsilon$.
%and $\psi$ is $n$-dividable.

{\blue{For (2), write $F=F_{{0}}\oplus
F_{{1}}\oplus
{{\cdots}} \oplus F_n$
such that $\af|_{F_i}$ is {\blue{a}} $\sigma$-almost order zero map
(${{0}}\le i\le n$).
One observes that if $\bt$ is $(\af({\cal G}^F),\dt)$-multiplicative,
{{then}}
\beq
\|\bt(\af(a)\af(b))-\bt\circ \af(a)\bt\circ\af(b)\|<\dt\rforal a, b\in {\cal G}^F.
\eneq
Since
%$F$ is fixed and
${\cal G}^F$ is a standard generating set (see {{Notation}} \ref{NoGF}) and the unit ball of $F$ is compact,
for any $\ep>0,$
one can find a universal constant $\dt_2(\ep)>0$
independent of $\af$  (but dependent of $F$)
such that
\beq
\sup_{\|a\|, \|b\|\le 1}\|\bt(\af(a)\af(b))-\bt\circ \af(a)\bt\circ\af(b)\|<\ep,
\eneq
if $\bt$ is $(\af({\cal G}^F),\dt_2(\ep))$-multiplicative.  Thus
$\bt\circ \af|_{F_i}$ is {{a $ (\sigma+\ep)$}}-almost order zero map
(${{0}}\le i\le n$).}}

\end{proof}
%\rightline{2019,07,10 ECNU, ZhongBei}

%$\forall$

\begin{df}
\label{almost-n-dividable-error}

Let $F$ be a finite dimensional $C^*$-algebra and let $\epsilon>0.$
Define
$$
\Delta(F,\epsilon):=\min\{\dt_1(\ep), {{\dt_2(\ep)}},
\dt_2(\dt_1(\ep)), 1/2\}{{>0}},
$$
where $\dt_1(\ep),$ {\green{$\dt_2(\ep)$}} and $\dt_2(\dt_1(\ep))$ are as given in
%\sup\{\delta\leq 1: \delta
%\mbox{ satisfies the conclusion of
Proposition \ref{n-almost-dividable-exact}.
%}\ \}.
%$$

\end{df}

\iffalse
\begin{rem}

%Since every c.p.c.~map is $2$-approximately order zero,
%thus the $\delta$ in Proposition \ref{app-order-zero-exactly}
%would not exceed $2$,
By Proposition \ref{n-almost-dividable-exact} {\blue{one has}} $\Delta(F,\epsilon)>0$.
{\blue{If $F_0=F_1\oplus F_2$, where $F_0,F_1,F_2$ are finite dimensional $C^*$-algebras,
%$\epsilon>0$,
then
$\Delta(F_0,\epsilon)\leq\min\{\Delta(F_1,\epsilon),\Delta(F_2,\epsilon)\}$.
{\bf(I suggest we delete the last sentence
(since it does not make sense to me now - Xuanlong).)}}}

\end{rem}
\fi

%\newpage

\begin{df}\label{Didnuc}
Let $A$ be a unital simple $C^*$-algebra and let $n\in{\green{\mathbb{N}\cup\{0\}}}.$
We say {\blue{that}} $\mathrm{id}_A$ has tracial nuclear dimension no more than $n$,
{\blue{if,}} for any finite subset $\calF\subset A$,
any $\epsilon>0$, {\blue{and}}
any $a\in A_+\backslash\{0\}$,
there {\blue{exist}} a finite dimensional $C^*$-algebra $F$,
%and {\blue{a}}
{\blue{a}}  c.p.c.~map $\alpha: A\rightarrow F$,  {\blue{a}}
nonzero piecewise contractive $n$-decomposable  c.p.~map $\beta: F\rightarrow A,$ {\blue{and a}}
c.p.c.~map $\gamma: A\rightarrow A\cap \beta(F)^{\bot},$
such that

(1) $x\approx_{\epsilon} \gamma(x)+\beta\circ\alpha(x)$ for all $x\in\calF,$ {{and}}

(2) $\gamma(1_A)\lesssim_A a$.

%(3) $\|\beta\circ\alpha(x)\|\geq \|x\|-\epsilon$ for all $x\in\calF$.

\noindent
{{If}} $\id_A$ has tracial nuclear dimension no more than $n,$ we write
%we may abbreviate it as
%${\rm Trdim_{nuc}} \id_A\le n.$
$\trdimnuc \id_A\le n.$
%``$\mathrm{id}_A$ is tracially ``$\dimnuc\leq n$''''.

\end{df}

%{\green{It is trivial but worth to point out that,
{\blue{Note that,}}
for any simple unital \CA\ $A,$ {\blue{we have}}
$\trdimnuc \id_A\leq\dimnuc{\rm id}_A=\dimnuc A.$

\iffalse
\begin{rem}
Using
 the
argument used in the proof of
Proposition \ref{equivalent-definition-on-nuclear-dimension},
%one can see that  the definition above  is {\blue{equivalent}}
one may replace
%when
the requirement ``$\alpha$ is c.p.c.''
%is replaced with
by  ``$\alpha$ is c.p.''.----{\blue{\bf Do we use this fact?}}
---{\green{\bf No, we don't need this.}}

\end{rem}

\fi

\begin{df}
\label{Didwnuc}
Let $A$ be a unital simple $C^*$-algebra and let
$n\in{\green{\mathbb{N}\cup\{0\}}}.$
We say {\blue{that}} $\mathrm{id}_A$ has weakly
tracially nuclear dimension no more than $n$,
if, for any finite subset $\calF\subset A$,
any $\epsilon>0$, {\blue{and}}
any $a\in A_+\backslash\{0\}$,
there exist a finite dimensional $C^*$-algebra $F$,
and {\blue{a}} c.p.c.~map $\alpha: A\rightarrow F$,  {\blue{a}}
nonzero piecewise contractive $n$-decomposable  c.p.~map $\beta: F\rightarrow A,$ {\blue{and a}}
c.p.c.~map $\gamma: A\rightarrow A$,
such that

(1) $x\approx_{\epsilon} \gamma(x)+\beta\circ\alpha(x)$ for all $x\in\calF,$ {\green{and}}

(2) $\gamma(1_A)\lesssim_A a$.

\iffalse
{\green
\begin{rem}
The difference between \ref{Didnuc} and
\ref{Didwnuc} is whether $\gamma(A)$ is orthogonal to $\beta(F)$ or not.

\end{rem}}
\fi

\vspace{0.1in}

{\blue{Later, we will show that ${\rm Trdim_{nuc}}\id_A\le n$ is equivalent to the statement
that $A$ is
asymptotically tracially in ${\cal N}_n.$  Therefore, one may also say {\blue{that}} $A$ is
weakly
asymptotically tracially in ${\cal N}_n,$
if $\id_A$ has weakly tracial dimension no more than $n.$  In a subsequent paper, we will discuss that notion.
This notion can be also defined in the case that $A$ is not simple and not unital.}}
%(3) $\|\beta\circ\alpha(x)\|\geq \|x\|-\epsilon$ for all $x\in\calF$.

%If $\id_A$ is tracially nuclear dimension no more than $n,$ we write
%we may abbreviate it as
%${\rm Trdim_{nuc}} \id_A\le n.$
%$\trdimnuc \id_A\le n.$

\end{df}

\begin{prop}\label{Pdinuc}
Let $A$ be a unital simple {\blue{separable}}
$C^*$-algebra and let $n\in{\green{\mathbb{N}\cup\{0\}}}.$
Assume that $\trdimnuc \mathrm{id}_A\leq n.$
{\blue{Then,}} for any finite subset ${\blue{\calF\subset A}}$,
any $\epsilon>0,$ {\blue{and}}
any $a\in A_+\backslash\{0\}$,
there exist a finite dimensional $C^*$-algebra $F$,
a c.p.c.~map $\alpha: A\rightarrow F$,
a  nonzero piecewise contractive $n$-decomposable  c.p.~map $\beta: F\rightarrow A$, and a
c.p.c.~map $\gamma: A\rightarrow A\cap \beta(F)^{\bot}$
{\blue{such}} that

(1) $x\approx_{\epsilon} \gamma(x)+\beta\circ\alpha(x)$ for all $x\in\calF$,

(2) $\gamma(1_A)\lesssim_A a,$ {\green{and}}

(3) $\|\beta\circ\alpha(x)\|\geq \|x\|-\epsilon$ for all $x\in\calF$.

\end{prop}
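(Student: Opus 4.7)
The plan is to strengthen the conclusion of Definition \ref{Didnuc} to the stronger form stated in the proposition by applying the defining property of $\trdimnuc \id_A\leq n$ to a slightly enlarged finite set with smaller parameters, then cleaning up the outputs with the perturbation lemmas already established, and finally extracting the approximate embedding condition (3) via the orthogonality built into the decomposition.

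First I would perform the standard normalizations: reduce to $\calF\subset A^1$ with $1_A\in\calF$, assume $\epsilon\in(0,1/4)$, and replace $a$ by $f_{1/2}(a)$ (so that any $p\lesssim_A f_{1/2}(a)$ satisfies $p\lesssim_A a$ automatically). Choose $\eta>0$ much smaller than $\epsilon$ (the precise value absorbing the cumulative perturbation errors coming from Lemmas \ref{perturbation-c.p.c.-to-p.c.p} and \ref{orthogonal_p.c._map}), and let $\calG:=\calF\cup\{x^*x:x\in\calF\}$. Applying the defining property of $\trdimnuc\id_A\leq n$ to $\calG$, $\eta$, and $f_{1/2}(a)$ produces a first-round tuple $(F,\alpha_0,\beta_0,\gamma_0)$ satisfying (1) and (2) with $\eta$ in place of $\epsilon$.

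Next I would polish the unit images. From $1_A\approx_{\eta}\gamma_0(1_A)+\beta_0\circ\alpha_0(1_A)$ and the orthogonality $\gamma_0(A)\perp\beta_0(F)$, both summands are close to projections. Applying Lemma \ref{perturbation-c.p.c.-to-p.c.p} first to $\alpha_0$ (so that after cutting $F$ down by the projection $\alpha(1_A)$ we may assume $\alpha$ is u.c.p.), and then to $\beta_0$ and $\gamma_0$ to produce genuine projections $\beta(1_F)$ and $\gamma(1_A)$, and finally invoking Lemma \ref{orthogonal_p.c._map} with the small-unitary rotation from the proof of Proposition \ref{hereditary-subalgebra-preserves-tracially-approximation}, I may arrange that $p:=\gamma(1_A)$ and $1_A-p=\beta(1_F)$ are exactly orthogonal projections summing to $1_A$, with $\gamma(A)\subset pAp$, $\beta(F)\subset(1-p)A(1-p)$, and $p\lesssim_A a$. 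All perturbations cost $O(\sqrt{\eta})$, so (1) and (2) still hold with a constant of the form $C\sqrt{\eta}$ which we control by our choice of $\eta$.

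It remains to extract (3). For $x\in\calF$, compressing $x\approx\gamma(x)+\beta\circ\alpha(x)$ by $1-p$ on both sides and using $\gamma(A)\subset pAp$, $\beta(F)\subset(1-p)A(1-p)$ yields $(1-p)x(1-p)\approx\beta\circ\alpha(x)$; by orthogonality one also has $\|\gamma(x)+\beta\circ\alpha(x)\|=\max(\|\gamma(x)\|,\|\beta\circ\alpha(x)\|)\geq\|x\|-C\sqrt{\eta}$, so it is enough to rule out the alternative $\|\gamma(x)\|\geq\|x\|-\epsilon$. Using $x^*x\in\calG$ and comparing the two approximations $x^*x\approx\gamma(x^*x)+\beta\circ\alpha(x^*x)$ and $x^*x\approx\gamma(x)^*\gamma(x)+\beta\circ\alpha(x)^*\beta\circ\alpha(x)$ (the cross terms vanishing by orthogonality), the uniqueness of the direct-sum decomposition into $pAp\oplus(1-p)A(1-p)$ forces $\gamma(x^*x)\approx\gamma(x)^*\gamma(x)$ and $\beta\circ\alpha(x^*x)\approx\beta\circ\alpha(x)^*\beta\circ\alpha(x)$, so both $\gamma$ and $\beta\circ\alpha$ are $O(\sqrt{\eta})$-multiplicative on $\calF$. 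The main obstacle, and where simplicity of $A$ enters, is ruling out the bad alternative $\|\gamma(x)\|\geq\|x\|-\epsilon$: here one uses Lemma \ref{L1991} to refine the choice of $a$ so that $p$ may be taken Cuntz-subequivalent to an arbitrarily ``small'' positive element, and then iterates the approximation argument, extracting a sequential/ultraproduct contradiction in the spirit of Proposition \ref{Ptrpi=pis} if $\gamma$ were to preserve a uniform norm fraction. Once the bad alternative is excluded, $\|\beta\circ\alpha(x)\|\geq\|(1-p)x(1-p)\|-C\sqrt{\eta}\geq\|x\|-\epsilon$ follows, completing (3).
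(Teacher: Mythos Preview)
Your first two steps (normalization and polishing $\gamma(1_A),\beta(1_F)$ to exact complementary projections via Lemmas \ref{perturbation-c.p.c.-to-p.c.p} and \ref{orthogonal_p.c._map}) are unnecessary detours here; the paper postpones that cleanup to Proposition \ref{refine-of-id-tracially-dimnuc-n-converge}. The real issue is your treatment of (3). You correctly recognize that a single application of Definition \ref{Didnuc} cannot rule out $\|\gamma(x)\|\approx\|x\|$: Cuntz-smallness of $p=\gamma(1_A)$ says nothing about $\|pxp\|$ for individual $x$. But your proposed repair (``iterate \ldots\ extracting a sequential/ultraproduct contradiction in the spirit of Proposition \ref{Ptrpi=pis}'') is not a proof. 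Proposition \ref{Ptrpi=pis} concerns pure infiniteness and is not the relevant tool, and the iteration is left entirely unspecified.

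The paper's argument is direct, not by contradiction, and hinges on Proposition \ref{strictly-embedding-simple-case}. One applies Definition \ref{Didnuc} once for each $m\in\N$, to an increasing chain $\calF\subset X_1\subset X_2\subset\cdots$ with union dense in $A$, tolerance $\epsilon/m$, and the given $a$, obtaining $(F_m,\alpha_m,\beta_m,\gamma_m)$. The sequences $\{\gamma_m\}$ and $\{\beta_m\circ\alpha_m\}$ define c.p.c.\ maps $\Gamma,\Phi:A\to l^\infty(A)/c_0(A)$ with $\Gamma(A)\perp\Phi(A)$ and $\Gamma+\Phi=\iota_A$; orthogonality forces both to be $*$-homomorphisms. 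If $\Phi=0$ then $\Gamma=\iota_A$, so some $\gamma_{m_0}(1_A)$ is invertible, contradicting $\gamma_{m_0}(1_A)\perp\beta_{m_0}(F_{m_0})\neq\{0\}$. Hence $\Phi(1_A)$ is a nonzero projection, and along a subsequence $\{m_i\}$ one has $\|\beta_{m_i}\circ\alpha_{m_i}(1_A)\|\geq 1-1/i$. Now simplicity enters through Proposition \ref{strictly-embedding-simple-case}: the induced $*$-homomorphism $\Psi:A\to l^\infty(A)/c_0(A)$, $x\mapsto\pi_\infty(\{\beta_{m_i}\circ\alpha_{m_i}(x)\})$, is a \emph{strict} embedding because one nonzero element (namely $1_A$) has a lift with positive $\liminf$ of norms. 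Since any two lifts differ by a $c_0$-sequence, this gives $\liminf_i\|\beta_{m_i}\circ\alpha_{m_i}(x)\|=\|x\|$ for every $x\in A$, and a single index $s$ works uniformly on the finite set $\calF$. Setting $(F,\alpha,\beta,\gamma):=(F_{m_s},\alpha_{m_s},\beta_{m_s},\gamma_{m_s})$ finishes the proof. This mechanism---simplicity upgrades a nonzero $*$-homomorphism into $l^\infty(A)/c_0(A)$ to a strict embedding---is precisely what your sketch is missing.
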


\begin{proof}

\iffalse
%Since this is a local property,
We may assume that $A$ is separable.
For the non-separable case, similar argument will still work. {\red{\bf{This should be made clearer--if we insists
a statement for non-separable case}}}
\fi

Let $\calF\subset A$ be a finite subset,
let $\epsilon>0$ and let $a\in A_+\backslash\{0\}$.
Let $\calF\subset X_1\subset X_2\subset \cdots\subset A$ be finite subsets
such that $\cup_{m\geq 1} X_m$ is norm dense in $A$.
Since $\trdimnuc \id_A\leq n$,
for each $m\in\N,$
there  exist  a finite dimensional $C^*$-algebra $F_m$,
a c.p.c.~map  $\alpha_m: A\rightarrow F_m$,
a nonzero piecewise contractive $n$-decomposable  c.p.~map $\beta_m: F_m\rightarrow A$,  and
a c.p.c.~map $\gamma_m: A\rightarrow A\cap \beta_m(F_m)^{\bot}$
such that, for all $m,$

%(m, 1)
(i) $x\approx_{\frac{\epsilon}{m}} \gamma_m(x)+\beta_m\circ\alpha_m(x)$
for all $x\in X_m$ {\blue{and}}

(ii) $\gamma_m(1_A)\lesssim_A a$.

\noindent
{{Define}} a c.p.c.~map
%(recall Notation \ref{notation-2})
%\beq
$\Gamma: A\rightarrow l^{\infty}(A)/c_0(A)$ {\blue{by}}
$x\mapsto \pi_{\blue{\infty}}(\{\gamma_1(x),\gamma_2(x),\cdots\})$
%\eneq
and define a c.p.~map
%\beq
$\Phi: A\rightarrow  l^{\infty}(A)/c_0(A)$ {\blue{by}}
$x\mapsto \pi_{\blue{\infty}}
(\{\beta_1\circ\alpha_1(x),\beta_2\circ\alpha_2(x),\cdots\}).$
%\eneq
{\blue{Since}}
$\gamma_m(A)\bot  {\blue{(\beta_m\circ\alpha_m(A))}}$, we have
%\beq
%\label{f6-9-prop-1}
$\Gamma(A)\bot\Phi(A).$
%\eneq
Note that, by (i),
%(m, 1) ($m\in\N$)
we have
%\beq
%\label{f6-9-prop-2}
$\iota_A=\Gamma+\Phi.$
%\eneq
%Then \eqref{f6-9-prop-1} and \eqref{f6-9-prop-2} implies
{\blue{It follows that}}
$\Gamma$ and $\Phi$ are *-homomorphisms.

If $\Phi$ is a zero map, then $\iota_A=\Gamma$. Thus there exists
$m_0\in \N$ such that $\|1_A-\gamma_{m_0}(1_A)\|<1/2$.
Therefore $\gamma_{m_0}(1_A)$ is invertible in $A$.
Then $\gamma_{m_0}(1_A)\bot \beta_{m_0}(F_{m_0})$
implies $\beta_{m_0}(F_{m_0})=\{0\}$,
which is contradict to that $\beta_{m_0}$ is a nonzero map.
Hence $\Phi$ can not be a zero map.
%{\blue{Since $A$ is assumed to be simple,
%$\Phi$ is injective which implies that $\|Phi(x)\|=\|x\|$ for all $x\in A.$
%%It follows, for some $m_s\ge 1,$
%\beq
%\|\beta_{m_s}\circ\alpha_{m_s}(x)\|\geq \|x\|-\epsilon,
%\quad
%\forall x\in\calF.
%\eneq
%\iffalse
{\blue{In other words,}} $\Phi(1_A)$ is a nonzero projection which has norm one.
%since $A$ is simple, we have $\Phi$ is an embedding.
Thus there exist natural numbers  $m_1<m_2<\cdots$ such that
\beq
\label{f6-9-prop-3}
\|\beta_{m_i}\circ\alpha_{m_i}(1_A)\|\geq 1-1/i, \,\,{\blue{i=1,2,{{\cdots.}}}}
\eneq
%Then the following is a *-homomorphism
{\blue{Define a
% man
*-\hm\,
%\beq
$\Psi: A\rightarrow l^{\infty}(A)/c_0(A)$ by
$x\mapsto \pi_\infty(\{\beta_{m_1}\circ\alpha_{m_1}(x),
\beta_{m_2}\circ\alpha_{m_2}(x),\cdots\}).$}}
%\eneq
%\fi
By \eqref{f6-9-prop-3} and Proposition \ref{strictly-embedding-simple-case},
$\Psi$ is a strict embedding.  {\blue{Therefore}} there exists $s\in\N$
such that
\beq
\|\beta_{m_s}\circ\alpha_{m_s}(x)\|\geq \|x\|-\epsilon
%\quad
\rforal  x\in\calF.
\eneq
{\blue{Set}} $F:=F_{m_s}$, $\alpha:=\alpha_{m_s}$,
$\beta:=\beta_{m_s}$, {\blue{and}} $\gamma:=\gamma_{m_s}.$
{\blue{The}} proposition follows.

\end{proof}

\begin{rem}\label{69-n1126}
Note {\blue{that}}  condition (3) in {{Proposition}}
\ref{Pdinuc} implies that $\bt$ is nonzero. Therefore, in the light of {{Proposition}} \ref{Pdinuc}, in Definition \ref{Didnuc},
we may replace the condition that $\bt\not=0$ by  condition (3) in {{Proposition}} \ref{Pdinuc}.
\end{rem}

%\alpha',\beta',F' are need to replace.

The following proposition is extracted from the proof of
\cite[Proposition 2.5]{WZ2010} {\blue{(see also \cite[Lemma 3.7, Proposition 3.8]{KW-Covering-dimension})}}.

\begin{prop}
\label{c.p.c._order_zero_map_into_hereditary_subalgebra}
Let $A$ be a $C^*$-algebra, let $n\in{{\mathbb{N}\cup\{0\}}}$, and let
$0<\epsilon<\frac{1}{2^{16}}.$
%{\blue{\bf Do you really want that small?}}
%{\blue Yes. - Xuanlong. }
Let $a_0,a_1\in A_+$ be norm one positive elements.
%with $a_0a_1\approx a_1$,
%
Suppose that $F$  is a finite dimensional $C^*$-algebra,
$\alpha: A\rightarrow F$ is a c.p.c.~map, and
$\beta: F\rightarrow A$ is a
%n-dividable
{{piecewise contractive $n$-decomposable}}
c.p.~map.
If $\beta\circ\alpha(a_1)a_0\approx_{\epsilon} \beta\circ\alpha(a_1)$,
then there exist
a $C^*$-subalgebra $\bar{F}\subset F$,
a c.p.c.~map $\bar{\alpha}: A\rightarrow \bar{F}$,
and {\blue{a}}
%an
%n-dividable
{{piecewise contractive $n$-decomposable}}
c.p.~map $\bar{\beta}: \bar{F}\rightarrow \Her_A(a_0)$
{\blue{such}} that, for any $x\in A_+$ with $x\leq a_1$,
$\|\beta\circ\alpha(x)-\bar{\beta}\circ\bar{\alpha}(x)\|
\leq 10(n+1)\epsilon^{1/8}.$
%{\blue{for all $x\le a_1

\end{prop}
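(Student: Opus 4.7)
The plan is to decompose $\beta$ into its order-zero pieces, apply the Winter--Zacharias structure theorem (Theorem \ref{WZ2009-Theorem 3.3}) to each piece, and truncate the resulting positive ``level elements'' via functional calculus so that they land inside $\Her_A(a_0)$ while retaining enough order-zero structure on each summand.

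First I would set up notation. Write $F=F_0\oplus\cdots\oplus F_n$ with $\beta_i:=\beta|_{F_i}$ a c.p.c.~order zero map, let $p_i\colon F\to F_i$ be the canonical projection, and set $\alpha_i:=p_i\circ\alpha\colon A\to F_i$. By Theorem \ref{WZ2009-Theorem 3.3} applied to each $\beta_i$, there are a positive element $h_i=\beta_i(1_{F_i})\in A_+^1$ and a $*$-homomorphism $\pi_i\colon F_i\to\mathcal{M}(C_i)\cap\{h_i\}'$ (where $C_i=C^*(\beta_i(F_i))$) with $\beta_i(x)=\pi_i(x)h_i$ for all $x\in F_i$. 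Note that $\beta\circ\alpha(a_1)=\sum_{i=0}^n \pi_i(\alpha_i(a_1))h_i$ is a sum of $(n+1)$ positive contributions.

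Next I would upgrade the hypothesis $\beta\circ\alpha(a_1)a_0\approx_\epsilon\beta\circ\alpha(a_1)$ to two-sided control: taking adjoints and using positivity of $b:=\beta\circ\alpha(a_1)$ gives $a_0 b\approx_\epsilon b$, hence $b\approx_{2\epsilon} a_0 b a_0$, i.e.\ $\|(1-a_0)^{1/2}b(1-a_0)^{1/2}\|\leq 2\epsilon$. Since each $c_i:=\pi_i(\alpha_i(a_1))h_i$ satisfies $0\leq c_i\leq b$, the same estimate holds for every $c_i$, so every summand is approximately supported on $a_0$. This is the key input for the per-piece truncation.

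For the central step, I would use functional calculus together with Rørdam's perturbation lemma (\cite[Proposition 2.2]{Rordam-1992-UHF2}) to produce, for each $i$, a positive element $\bar h_i\in\Her_A(a_0)$ and an ideal $\bar F_i\subseteq F_i$ such that $\pi_i$ restricts (after suitable spectral cutting on $h_i$) to a $*$-homomorphism $\bar\pi_i\colon\bar F_i\to\mathcal{M}(\bar C_i)\cap\{\bar h_i\}'$ with $\bar h_i$ approximating $h_i$ on the spectral projection of $\pi_i(\alpha_i(a_1))$ corresponding to the ``essential'' spectrum. Concretely, one takes $\bar h_i$ of the form $a_0^{1/2}g(h_i)a_0^{1/2}$ composed with a further functional calculus to ensure it commutes with $\bar\pi_i(\bar F_i)$; the structure theorem applied in reverse then produces a c.p.c.~order zero map $\bar\beta_i\colon\bar F_i\to\Her_A(a_0)$ via $\bar\beta_i(x)=\bar\pi_i(x)\bar h_i$. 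Set $\bar\alpha_i$ to be $\alpha_i$ followed by the conditional expectation $F_i\to\bar F_i$.

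Finally, assembling $\bar F:=\bigoplus_i\bar F_i$, $\bar\alpha:=\bigoplus\bar\alpha_i$, and $\bar\beta:=\bigoplus\bar\beta_i$ gives a piecewise contractive $n$-decomposable c.p.~map $\bar\beta\colon\bar F\to\Her_A(a_0)$, and for $x\in A_+$ with $x\leq a_1$ one estimates $\|\beta_i\circ\alpha_i(x)-\bar\beta_i\circ\bar\alpha_i(x)\|$ on each summand using Lemma \ref{c.p.c.-almost-multiplicative} (contributing $\epsilon^{1/2}$-type losses), Rørdam's $f_\delta$-perturbation (contributing further $\epsilon^{1/4}$ and $\epsilon^{1/8}$ losses after chaining Kadison's generalized Schwarz inequality twice), and summing over $i=0,\ldots,n$ yields the claimed bound $10(n+1)\epsilon^{1/8}$. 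The main obstacle is the construction in the third paragraph: $h_i$ lives in $\mathcal{M}(C_i)\cap\pi_i(F_i)'$ rather than in $\Her_A(a_0)$, and producing a cut $\bar h_i\in\Her_A(a_0)$ which both (a) is close to $h_i$ on the relevant spectral part carrying $\pi_i(\alpha_i(a_1))$ and (b) still commutes with the (restricted) image of $\pi_i$, requires careful iterated functional calculus; the exponent $1/8$ in the final estimate is precisely the accumulated loss from these two Kadison--Schwarz-type square-rootings combined with one Rørdam perturbation.
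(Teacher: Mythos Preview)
Your approach has a genuine gap in the third paragraph, precisely where you flag the ``main obstacle.'' You propose to produce $\bar h_i\in\Her_A(a_0)$ of the form $a_0^{1/2}g(h_i)a_0^{1/2}$ and then fix it via ``further functional calculus'' so that it commutes with $\bar\pi_i(\bar F_i)$. But $a_0$ has no reason to commute with $\pi_i(F_i)$ or with $h_i$, so compressing by $a_0^{1/2}$ destroys the commutation, and no functional calculus in $C^*(a_0,h_i)$ will restore it. Without an element that is simultaneously in $\Her_A(a_0)$ and in the commutant of $\bar\pi_i(\bar F_i)$, the formula $\bar\beta_i(x)=\bar\pi_i(x)\bar h_i$ does not define an order zero map. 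This is not a detail that can be patched by tracking constants; it is the heart of the construction.

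The paper avoids this difficulty by cutting on the \emph{domain} side rather than the codomain. It sets $p:=\chi_{[\epsilon^{1/2},1]}(\alpha(a_1))$, a projection in the finite-dimensional algebra $F$, and takes $\bar F:=pFp$ with $\bar\alpha(x):=p\alpha(x)p$. Then $\beta|_{pF_ip}$ is automatically still c.p.c.\ order zero (no commutant issue arises), and the hypothesis forces $\|\beta(p1_{F_i})(1-a_0)\|^2\leq\epsilon^{1/2}$. This is exactly the input to \cite[Lemma 3.6]{KW-Covering-dimension}, which directly produces a c.p.c.\ order zero $\bar\beta_i\colon pF_ip\to\Her_A(a_0)$ with $\|\beta|_{pF_ip}-\bar\beta_i\|\leq 8\epsilon^{1/8}$. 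The remaining error comes from $\|(1-p)\alpha(x)\|\leq\epsilon^{1/4}$ for $x\leq a_1$, and summing over the $n+1$ pieces gives the bound. The key idea you are missing is that the truncation should happen in $F$, where spectral projections exist and order-zero structure is trivially preserved, rather than in $A$.
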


\begin{proof}

Write $F=F_0\oplus\cdots\oplus F_n$ such that {\blue{each}}
$\beta|_{F_i}$ {\blue{is a}} c.p.c.~order zero {\blue{map}}
($i=0,{\green{1,}}\cdots,n$).
Let $\chi(x):[0,1]\rightarrow \{0,1\}$
be the characteristic function of the interval $[\epsilon^{1/2},1]$.
Since $F$ is a finite dimensional \CA,
$p:=\chi(\alpha(a_1))$ is a projection in $F.$
Note that $p\leq \frac{1}{\epsilon^{1/2}}\alpha(a_1)$.
Let $\bar{F}:=pFp.$ {\blue{Then}} $\beta|_{\bar{F}}$ is still a piecewise contractive $n$-decomposable  c.p.~map.
Moreover, for each $i,$ $pF_ip$ is a \SCA\, of $F_i$ with unit $p_i:=p1_{F_i}p.$
%For $i=0,\cdots,n$,
%note that $pF_ip$ is a subalgebra of $F_i$ with unit
%$p_i:=p1_{F_i}=1_{F_i}p$, thus
Thus
$\beta|_{pF_ip}$ is also a c.p.c.~order zero {\blue{map.}} {\blue{Moreover,}}
\begin{eqnarray*}
\|\beta|_{pF_ip}(p 1_{F_i})(1-a_0)\|^2
=
\|(1-a_0)\beta(p 1_{F_i})^2(1-a_0)\|
\leq
%\|(1-a_0)\beta|_{pF_ip}(p)(1-a_0)\|\\
%&=&
\|(1-a_0)\beta(p)(1-a_0)\|\\
\leq
\frac{1}{\epsilon^{1/2}}\|(1-a_0)\beta(\alpha(a_1))(1-a_0)\|
\leq
\epsilon^{1/2}\leq \frac{1}{2^8}.
\end{eqnarray*}
Then, by \cite[Lemma 3.6]{KW-Covering-dimension},
there exists a c.p.c.~order zero map
$\bar{\beta}_i: pF_ip\rightarrow \Her_A(a_0)$
satisfying
%{\blue{for}} all $x\in (pF_ip)_+^1,$
\beq\label{E1-1}
\|\beta|_{pF_ip}(x)-\bar{\beta}_i(x)\|\leq 8 \epsilon^{1/8}
{\blue{\rforal x\in (pF_ip)_+^1.}}
\eneq

Define $\bar{F}:=pFp=pF_0p\oplus\cdots \oplus pF_np$,
and define {\blue{a}} c.p.c.~map $\bar{\alpha}: A\rightarrow \bar{F}$,
$x\mapsto p\alpha(x)p$,
%$\forall x\in A$,
and define {\blue{a}} c.p.\hspace{-0.01in}\,map
$\bar{\beta}: \bar{F}\rightarrow \Her_A(a_0)$,
$x\mapsto \sum_{i=0}^n \bar{\beta}_i(p_ixp_i)$.
%Since for $x\in pF_ip$, $\bar{\beta}(x)=\bar{\beta}_i(x)$,
Note that $\bar{\beta}|_{pF_ip}=\bar{\beta}_i.$
Thus $\bar{\beta}$ is a piecewise contractive $n$-decomposable  c.p.~map.
{\blue{It follows,}} for $x\in A_+$ with $x\leq a_1{\blue{\le 1}}$,
\beq\label{E1}
\|(1-p)\alpha(x)\|
=
\|(1-p)\alpha(x)^2(1-p)\|^{1/2}
\leq
\|(1-p)\alpha(a_1)(1-p)\|^{1/2}
\leq \epsilon^{1/4}.
%\eqno{(E1)}
\eneq
\noindent
Then
\begin{eqnarray*}
\hspace{-1.2in}\beta\circ\alpha(x)-\bar{\beta}\circ\bar{\alpha}(x)
&=&
\sum_{i=0}^n
\beta(1_{F_i}\alpha(x)1_{F_i})-\bar{\beta}_i(p_i\bar{\alpha}(x)p_i)\\
&=&
\sum_{i=0}^n
\beta(1_{F_i}\alpha(x)1_{F_i})-\bar{\beta}_i(1_{F_i}p\alpha(x)p1_{F_i})\\
(\mbox{by \eqref{E1-1}})&\approx_{8(n+1)\epsilon^{1/8}}&
\sum_{i=0}^n
\beta(1_{F_i}\alpha(x)1_{F_i})-\beta(1_{F_i}p\alpha(x)p1_{F_i})\\
(\mbox{by \eqref{E1} and }\beta|_{F_i}\mbox{ are c.p.c.~maps})
&\approx_{2(n+1)\epsilon^{1/4}}&
\sum_{i=0}^n
\beta(1_{F_i}\alpha(x)1_{F_i})-\beta(1_{F_i}\alpha(x)1_{F_i})=0.
%&=&0.
\end{eqnarray*}

\end{proof}
%\rightline{2019, 07, 30 SCMS, Fudan, JiangWan}

%F' \alpha' \beta'

%\newpage

\begin{prop}%20190726 Fudan, JiangWan
\label{refine-of-id-tracially-dimnuc-n-converge}

{\blue{Let $A$ be}} {a} unital simple $C^*$-algebra {\blue{with}} ${\rm{Trdim_{nuc}}} \id_A\le n$
for some integer $n\ge 0.$
%$\mathrm{id}_A$ is tracially ``nuclear dimension no more than $n$'',
Then, for any finite subset $\calF\subset A$,
any $\epsilon>0$ {\blue{and}}
any $a\in A_+\backslash\{0\}$,
there exist c.p.c.~maps
$\phi:A\rightarrow A$ and
$\gamma: A\rightarrow A\cap \phi(A)^{\bot}$,
%and exist
a finite dimensional $C^*$-algebra $F$,
a c.p.c.~map $\alpha: A\rightarrow F$, and {\blue{a}}
piecewise contractive $n$-decomposable  c.p.~map $\beta: F\rightarrow \Her_A(\phi(1_A))$
such that

(1) $x\approx_{\epsilon} \gamma(x)+\phi(x)$ for all $x\in\calF$,

(2) $\phi(1_A)$ and $\gamma(1_A)$ are projections and $1_A=\gamma(1_A)+\phi(1_A)$,

(3) $\gamma(1_A)\lesssim_A a$,

(4) $\|\phi-\beta\circ\alpha\|\leq \epsilon$, and

(5) $\phi$ is {\blue{an}} $(\calF,\epsilon)$-{{approximate embedding}}.

%(6) $\|\phi(x)\|\geq \|x\|-\epsilon$, $\forall x\in\calF$.

\end{prop}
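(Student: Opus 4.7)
The plan is to start from Proposition \ref{Pdinuc} with a tolerance $\epsilon'\ll\epsilon$ and an auxiliary positive element $a'$ with $a'\lesssim_A a$ (absorbing the Cuntz-comparison slack from later perturbations), obtaining a finite dimensional $F_0$, a c.p.c.~map $\alpha_0\colon A\to F_0$, a piecewise contractive $n$-decomposable c.p.~map $\beta_0\colon F_0\to A$, and a c.p.c.~map $\gamma_0\colon A\to A\cap\beta_0(F_0)^\perp$ satisfying, for all $x\in\calF$,
$$
x\approx_{\epsilon'}\gamma_0(x)+\beta_0\circ\alpha_0(x),\quad \|\beta_0\circ\alpha_0(x)\|\ge\|x\|-\epsilon',\quad \gamma_0(1_A)\lesssim_A a'.
$$

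First I would set $p_0:=\beta_0\circ\alpha_0(1_A)$ and $q_0:=\gamma_0(1_A)$. The orthogonality $\gamma_0(A)\perp\beta_0(F_0)$ gives $p_0q_0=0$, and the approximation at $x=1_A$ gives $p_0+q_0\approx_{\epsilon'}1_A$. Squaring and using $p_0q_0=0$ yields $\|p_0-p_0^2\|\le 3\epsilon'$ and $\|q_0-q_0^2\|\le 3\epsilon'$, so both are close to projections. Decomposing $\beta_0$ along its $n$-decomposition as a sum of $n+1$ c.p.c.~pieces and applying Lemma \ref{perturbation-c.p.c.-to-p.c.p} piece by piece to $\beta_0\circ\alpha_0$, and separately to the c.p.c.~map $\gamma_0$, I would produce c.p.~maps $\tilde\phi\colon A\to A$ and $\tilde\gamma\colon A\to A$ such that $\tilde\phi(1_A)\in C^*(p_0)$ and $\tilde\gamma(1_A)\in C^*(q_0)$ are projections and $\|\tilde\phi-\beta_0\circ\alpha_0\|$, $\|\tilde\gamma-\gamma_0\|$ are small.

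Next I would reconcile $\tilde\phi(1_A)+\tilde\gamma(1_A)$ with $1_A$. Because the two projections are almost orthogonal and their sum is close to $1_A$, an application of Lemma \ref{orthogonal_p.c._map} (together with the standard perturbation-by-a-unitary trick used in the proof of Proposition \ref{hereditary-subalgebra-preserves-tracially-approximation}) produces orthogonal projections $p,q\in A$ with $p+q=1_A$ and a unitary $u\in A$ with $\|1_A-u\|$ small, so that $\phi:=u^*\tilde\phi(\,\cdot\,)u$ and $\gamma:=u^*\tilde\gamma(\,\cdot\,)u$ satisfy $\phi(1_A)=p$, $\gamma(1_A)=q$, and $\gamma(A)\subset A\cap\phi(A)^\perp$; moreover $\phi$ and $\gamma$ remain close to $\beta_0\circ\alpha_0$ and $\gamma_0$, respectively. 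At this stage (1), (2), and (5) already follow (with slightly inflated constants, where (5) uses $\|\beta_0\circ\alpha_0(x)\|\ge\|x\|-\epsilon'$).

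Finally, to push $\beta$ into $\Her_A(\phi(1_A))=pAp$, I would verify that $\beta_0\circ\alpha_0(1_A)\cdot p\approx\beta_0\circ\alpha_0(1_A)$ (which holds because $p$ is close to $\beta_0\circ\alpha_0(1_A)$ and the latter is close to the projection $\tilde\phi(1_A)\in C^*(p_0)$), and then apply Proposition \ref{c.p.c._order_zero_map_into_hereditary_subalgebra} with $a_0=p$ and $a_1=1_A$ to replace $(F_0,\alpha_0,\beta_0)$ by $(F,\alpha,\beta)$ with $\beta\colon F\to\Her_A(p)$ still piecewise contractive $n$-decomposable and with $\|\beta\circ\alpha(x)-\beta_0\circ\alpha_0(x)\|\le 10(n+1)(\epsilon')^{1/8}$ for all $x\in A_+^1$, in particular on $\calF$. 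The triangle inequality then yields (4), and (3) follows because $\gamma(1_A)=q\in C^*(q_0)$ is close to the projection arising from $q_0=\gamma_0(1_A)$, hence $q\sim f_{1/4}(q)\lesssim_A\gamma_0(1_A)\lesssim_A a'\lesssim_A a$ by \cite[Proposition 2.2]{Rordam-1992-UHF2}. The main (only) obstacle is purely quantitative bookkeeping: one must choose $\epsilon'$ small enough that every perturbation, including the $(\cdot)^{1/8}$ loss from Proposition \ref{c.p.c._order_zero_map_into_hereditary_subalgebra} and the unitary-conjugation step fixing the sum of the two projections, stays below $\epsilon$, while keeping the approximate-embedding estimate from Proposition \ref{Pdinuc}(3) usable for (5). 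No new structural input is needed beyond Propositions \ref{Pdinuc}, \ref{c.p.c._order_zero_map_into_hereditary_subalgebra} and Lemmas \ref{perturbation-c.p.c.-to-p.c.p}, \ref{orthogonal_p.c._map}.
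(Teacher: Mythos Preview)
Your overall plan works, but there are two issues and one missed simplification.

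First, the ``piece by piece'' application of Lemma~\ref{perturbation-c.p.c.-to-p.c.p} is wrong as stated. That lemma requires $\|\phi(p)-\phi(p)^2\|$ small, but for the individual summands $\beta_0^{(i)}\circ\alpha_0$ of the $n$-decomposition there is no reason for $\beta_0^{(i)}(\alpha_0(1_A))$ to be near a projection (an order-zero map sends the unit to $h_i$, typically not idempotent). You should apply Lemma~\ref{perturbation-c.p.c.-to-p.c.p} once, to the whole c.p.\ map $\beta_0\circ\alpha_0$; the output $\tilde\phi$ is then automatically c.p.c.\ because $\tilde\phi(1_A)$ is a projection.

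Second, you only address the norm half of condition (5). The approximate-multiplicativity half needs you to start with $\calF\cup(\calF\cdot\calF)$ in Proposition~\ref{Pdinuc} and then use the orthogonality $\gamma_0(A)\perp\beta_0(F_0)$ to split
\[
\gamma_0(x)\gamma_0(y)+\beta_0\alpha_0(x)\,\beta_0\alpha_0(y)\approx xy\approx\gamma_0(xy)+\beta_0\alpha_0(xy),
\]
yielding $\beta_0\alpha_0(xy)\approx\beta_0\alpha_0(x)\beta_0\alpha_0(y)$, hence the same for $\phi$.

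Finally, the paper's proof avoids your detour through Lemma~\ref{orthogonal_p.c._map}, the unitary conjugation, and Proposition~\ref{c.p.c._order_zero_map_into_hereditary_subalgebra} altogether. The point is that the projections produced by Lemma~\ref{perturbation-c.p.c.-to-p.c.p} satisfy $p\in C^*(\beta_0\circ\alpha_0(1_A))$ and $q\in C^*(\gamma_0(1_A))$. Since $\gamma_0(1_A)\perp\beta_0(F_0)$, these two commutative subalgebras are orthogonal, so $pq=0$ \emph{automatically}; then $p+q$ is a projection close to $1_A$, hence equals $1_A$. Moreover $q\perp\beta_0(F_0)$ gives $(1_A-p)\beta_0(F_0)=0$, so $\beta_0(F_0)\subset pAp=\Her_A(\phi(1_A))$ without any further adjustment. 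Setting $\beta:=\beta_0$ (restricted appropriately) and $\alpha:=\alpha_0$ then gives (4) directly from $\|\phi-\beta_0\circ\alpha_0\|\le 5\delta^{1/2}$. Your route is not wrong, just longer and with a worse $(\epsilon')^{1/8}$ loss you do not need.
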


\begin{proof}

\Wlog, one may assume that ${\cal F}\subset A^1.$
%Let $M:=\max\{(1+\|x\|)^2: x\in\calF\}$.
Let $\delta:=\min\{\frac{1}{2^{10}}, {\blue{(\frac{\epsilon}{10})^{2}}}\}.$
% \min\{\frac{1}{2^{20}},
%(\frac{\epsilon}{200M^2(n+1)(\epsilon+1)})^8\}$.
%
{S}ince $\rm{Trdim_{nuc}}\mathrm{id}_A\le n,$
%is tracially ``nuclear dimension no more than $n$'',
there {exist} a finite dimensional $C^*$-algebra $F_1$,
a c.p.c.~map $\af: A\rightarrow F_1$, {\blue{a}}
piecewise contractive $n$-decomposable  c.p.~map $\beta': F_1\rightarrow A$, and a
c.p.c.~map $\gamma': A\rightarrow A\cap \beta'(F_1)^{\bot}$
such that

{$(1')$} $x\approx_{\delta} \gamma'(x)+\beta'\circ\af (x)$
for all $x\in\calF\cup(\calF\cdot\calF)\cup\{1_A\}$,

{$(2')$} $\gamma'(1_A)\lesssim_A a$, {\blue{and}}

{$(3')$} $\|\beta'\circ\af (x)\|\geq \|x\|-\delta$ for all $x\in\calF$.

\noindent
{Since} {$(1')$} holds for $x=1_A$ and $\gamma'(1_A)\in \beta'(F_1)^{\bot},$
one has
\beq\label{610-n1}
\bt'\circ \af(1_A)\approx_\dt (\bt'\circ {\blue{\af(1_A))^2.}}
\eneq
It follows {from} {{Lemma}}
\ref{perturbation-c.p.c.-to-p.c.p} that there is a  c.p.~map $\phi: A\to A$ such that, $p:=\phi(1_A)$ is
a projection in {\blue{$C^*(\bt'\circ \af(1_A))$}} and
\beq\label{610-n2}
\|\phi(x)-\bt'\circ \af(x)\|\le 5\dt^{1/2}|x\|\rforal x\in A.
\eneq
%{\blue{Since
%{\blue{This, together with {\green{$(1')$}}, also {\blue{implies}} that $\bt'\circ \af(A)\subset pAp.$}}
%Using
{\blue{By}} {{$(1')$}}
again,
one has
%\beq\label{610-n3}
$\gamma'(1_A)\approx_{\dt} \gamma'(1_A)^2.$
%\eneq
Applying {{Lemma}} \ref{perturbation-c.p.c.-to-p.c.p}
{{again}}, one also obtains a c.p.c.~map
$\gamma: A\to A$ {{such that}} $q:=\gamma(1_A)$ is a projection in $C^*(\gamma'(1_A))$
and
\beq\label{610-n4}
\|\gamma(x)-\gamma'(x)\|\le {\blue{{{5}}\dt^{1/2}}}\|x\|\rforal x\in A.
\eneq
Since $\gamma'(1_A)\bt'\circ \af(1_A)=0,$ it follows
that $qp=0.$  {\blue{By {{$(1')$}},  \eqref{610-n2}, \eqref{610-n4},
{\blue{and the choice of $\dt,$}}
$p+q=1_A.$}}
%\iffalse
{\blue{It follows that $\bt'\circ \af(A)\subset pAp.$}}
{\blue{Let $F=\overline{\af(1_A)F_1\af(1_A)}^{{\|\cdot\|}}$ and
$\bt=\bt'|_{F_1}.$ Then $F$ is a finite dimensional
\CA\, and $\bt$ maps $F$ into ${\rm Her}(\phi(1_A))=pAp.$
Note that $\bt$ is also a piecewise contractive $n$-decomposable  c.p.~map.}}
% on $F.$}}
%\fi

\iffalse
{\blue
By \eqref{610-n2}, {\blue{{{$(1')$}},}} and \eqref{610-n1}, {\blue{one has}}
%we have
\beq
{\blue{\phi(1_A)\beta'\circ\alpha'(1_A)\approx_{6\dt^{1/2}(1+\dt)} \bt'\circ \af'(1_A)^2\approx_{\dt}\beta'\circ\alpha'(1_A),}}
\eneq
%$$.\phi(1_A)\beta'\circ\alpha'(1_A)\approx_{6\dt^{1/2}+\dt}\beta'\circ\alpha'(1_A)
Then, by Proposition
\ref{c.p.c._order_zero_map_into_hereditary_subalgebra},
there exist a finite dimensional \CA\ $F\subset F_1$,
a c.p.c.~map $\af: A\rightarrow F$,
a  piecewise contractive $n$-decomposable  {\red{c.p.~map}} $\beta: F\rightarrow  {\red{\Her(\phi(1_A))}}$
%A$
{\green{1 such that,}} for any $x\in A_+^1$,
\beq
\label{f6-10prop-4}
\|\beta\circ\alpha(x)-\beta'\circ\alpha'(x)\|\leq 10(n+1){\red{(6\dt^{1/2}(1+\dt)+\dt)^{1/8}}}.
%(6\dt^{1/2}+\dt)^{1/8}.
\eneq

}
\fi

By {\blue{{{$(1')$}},}}  \eqref{610-n2}, and \eqref{610-n4},
%} and \eqref{f6-10prop-4},
and by the choice of $\dt,$ one checks that  (1)  and (4) hold.
%{\blue{Recall that $\phi(1_A)$ is a projection.}}
Since  $p+q=1_A,$
%$pq=0,$ by {\green{$(1')$}},
% and the estimates above,
(2) also holds.
Since $\gamma(1_A)\in C^*({\blue{\gamma'(1_A)}}),$ by  {{$(2')$}}, {\blue{one concludes that}} (3) holds.

By {{$(1')$}}, since the image of $\gamma'$ is in $B\cap \phi(F_1)^\perp,$  one has
\beq\label{610-n10}
\gamma'(x)\gamma'(y)+\bt'\circ\af(x)\bt'\circ \af(y)
&=&(\gamma'(x)+\bt'\circ \af(x))(\gamma'(y)+\bt'\circ \af(y))
\\\nonumber
&\approx_{\dt(1+\dt)} &x(\gamma'(y)+\bt'\circ \af(y))\\\nonumber
&\approx_{\dt}& xy\approx_{\dt} \gamma'(xy)+\bt'\circ \af(xy)\,\,\,\rforal x, y\in {\cal F}.
\eneq
Using the fact that the image of $\gamma'$ is in $B\cap \phi(F_1)^\perp$ again,
one obtains
\beq
\bt'\circ\af(x)\bt'\circ \af(y)
&\approx_{\dt(2+\dt)}&\bt'\circ \af(xy)\rforal x, y\in {\cal F}.
\eneq
%By {\green{$(1')$}} we have
In other words,  $\bt'\circ \af$ is $({\cal F}, \dt(2+\dt))$-multiplicative.
By {{\eqref{610-n2} and}} the choice of $\dt,$ one checks that
%$\beta'\circ\alpha'$ is $(\calF,2\delta)${\green{-multiplicative}},
%therefore by the choice of $\delta$ we can see that
$\phi$ is $(\calF,\epsilon)${{-multiplicative}}.
Finally, for any $x\in \calF$, by \eqref{610-n2} and {{$(3')$}},
\beq
\|\phi(x)\| \approx_{{{5}}\dt^{1/2}}
%\approx_{120(n_1)\delta^{1/8}\|x\|}
\|\beta'\circ\alpha(x)\|=
%\approx_{100(n+1)\delta^{1/8}\|x\|}
\|\beta\circ\alpha(x)\|\approx_{\delta}\|x\|.
\eneq
Hence (5) holds.

%(1') (2') (3')

\end{proof}

\begin{prop}
\label{Tdimnuc-id-map-Tdimnuc}

Let $A$ be a simple unital $C^*$-algebra and let
$n\in {{\mathbb{N}\cup\{0\}}}.$
If $A$ is asymptotically tracially in ${\cal N}_n,$
%``$\dimnuc A\leq n$'',
then ${\rm Trdim_{nuc}}\mathrm{id}_A\le n.$
%is tracially ``nuclear dimension no more than $n$''.

\end{prop}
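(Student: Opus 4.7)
The plan is to combine the asymptotic tracial approximation in $\mathcal{N}_n$ with the factorization built into $\dimnuc B\le n$ for each approximating algebra $B,$ and then to straighten out the resulting approximately order-zero pieces into genuine c.p.c.~order-zero maps while keeping their image orthogonal to the ``error'' piece $\gamma$. I will verify the defining condition of $\trdimnuc \id_A\le n$ in the form given by Definition \ref{Didnuc} (cf.~Remark \ref{69-n1126}).

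Fix a finite $\calF\subset A^1$ with $1_A\in\calF$, a real $\epsilon\in(0,1),$ and $a\in A_+\setminus\{0\}.$ First I would choose a small auxiliary parameter $\delta>0$ that will be fine-tuned later, and apply Proposition \ref{hereditary-subalgebra-preserves-tracially-approximation} (note ${\cal N}_n$ has Property (H) by Proposition \ref{finite-nuclear-dim-of-map-implies-finite-dimnuc-of-alg}) to produce a unital $B\in {\cal N}_n,$ a u.c.p.~map $\alpha_0: A\to B,$ and sequences of c.p.c.~maps $\beta_m: B\to A,$ $\gamma_m: A\to A\cap\beta_m(B)^\bot$ with $p_m:=\beta_m(1_B)$ and $q_m:=\gamma_m(1_A)$ projections, $p_m+q_m=1_A,$ satisfying conditions (1)--(5) of that proposition relative to $(\calF,\delta,a).$ In particular $\beta_m(B)\subset p_m A p_m$, the map $\alpha_0$ is an $(\calF,\delta)$-approximate embedding, $\|\beta_m(xy)-\beta_m(x)\beta_m(y)\|\to 0$ for $x,y$ in a fixed finite subset of $B,$ and $q_m\lesssim a.$

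Next, since $\dimnuc B\le n,$ I would apply Proposition \ref{equivalent-definition-on-nuclear-dimension} to obtain a finite dimensional $F=F_0\oplus\cdots\oplus F_n,$ a c.p.c.~map $\phi:B\to F,$ and a piecewise contractive $n$-decomposable c.p.~map $\psi: F\to B$ with
\[
\psi\circ\phi(\alpha_0(x))\approx_{\delta}\alpha_0(x)\quad\text{for all } x\in\calF,
\]
and such that $\psi\circ\phi$ is sufficiently multiplicative on a standard generating set $\mathcal{G}^F$ of $F$ and on $\alpha_0(\calF).$ Define $\alpha:=\phi\circ\alpha_0: A\to F,$ which is c.p.c., and consider the composition $\beta^{(m)}:=\beta_m\circ\psi: F\to p_m A p_m.$ Since each $\psi|_{F_i}$ is c.p.c.~order zero, Proposition \ref{n-almost-dividable-exact}(2) (applied with $\sigma=0$) guarantees that for all sufficiently large $m,$ $\beta^{(m)}$ is $(n,\Delta(F,\epsilon/4))$-dividable in the sense of Definition \ref{defn-n-almost-dividable-c.p.-map}, viewed as a map into the hereditary subalgebra $p_m A p_m.$ Then Proposition \ref{n-almost-dividable-exact}(1) supplies a genuine piecewise contractive $n$-decomposable c.p.~map $\beta: F\to p_m A p_m$ with $\|\beta-\beta^{(m)}\|\le \epsilon/4$; the key point is that the perturbation statement applies verbatim in the $C^*$-algebra $p_m A p_m$, so the image of $\beta$ remains inside $p_m A p_m,$ hence orthogonal to $\gamma_m(A).$ Set $\gamma:=\gamma_m$ for this $m.$

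Finally, I would verify the conclusion: $\gamma(A)\perp\beta(F)$ by construction, $\gamma(1_A)=q_m\lesssim a,$ and for $x\in\calF$ one obtains, after composing the estimates,
\[
x\ \approx_{\delta}\ \gamma_m(x)+\beta_m(\alpha_0(x))\ \approx_{\delta}\ \gamma_m(x)+\beta_m(\psi\circ\phi(\alpha_0(x)))\ \approx_{\epsilon/4}\ \gamma(x)+\beta\circ\alpha(x),
\]
provided $\delta$ was chosen smaller than $\epsilon/4.$ Nonvanishing of $\beta$ follows from $\|\beta\circ\alpha(1_A)\|\ge \|p_m\|-o(1)=1-o(1),$ which is nonzero for large $m.$ The main obstacle is the middle step: ensuring that $\beta^{(m)}=\beta_m\circ\psi$ is sufficiently close to an honest piecewise contractive $n$-decomposable map, and that the perturbation procedure keeps the image inside the hereditary subalgebra $p_m A p_m$ (so that orthogonality with $\gamma$ is preserved). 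Both of these are resolved by invoking Propositions \ref{app-order-zero-exactly} and \ref{n-almost-dividable-exact} in the subalgebra $p_m A p_m$, together with the fact that the approximate multiplicativity of $\beta_m$ on the fixed set $\psi(\mathcal{G}^F)\subset B$ can be made as good as we wish by taking $m$ large.
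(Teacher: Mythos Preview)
Your proposal is correct and follows essentially the same route as the paper: apply Proposition~\ref{hereditary-subalgebra-preserves-tracially-approximation} to obtain a unital $B\in{\cal N}_n$ together with $\alpha_0,\beta_m,\gamma_m$, factorize the identity on $B$ through a finite-dimensional $F$ via $\dimnuc B\le n$, and then use the approximate multiplicativity of $\beta_m$ together with Proposition~\ref{n-almost-dividable-exact} (encoded in the constant $\Delta(F,\cdot)$) to perturb $\beta_m\circ\psi$ to a genuine piecewise contractive $n$-decomposable map inside $p_mAp_m$, preserving orthogonality with $\gamma_m$. The only cosmetic difference is that the paper packages the two-step use of Proposition~\ref{n-almost-dividable-exact} into the single constant $\Delta(F,\delta)$ from Definition~\ref{almost-n-dividable-error}, whereas you invoke parts (2) and (1) separately; also, Property~(H) for ${\cal N}_n$ comes from \cite[Proposition~2.5]{WZ2010} rather than Proposition~\ref{finite-nuclear-dim-of-map-implies-finite-dimnuc-of-alg}.
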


\begin{proof}

Let $\calF\subset A^1$ be a finite subset,
let $\epsilon>0$ and let $a\in A_+\backslash\{0\}.$
We may assume that $1_A\in {\cal F}.$
Let $\delta:=\frac{\min\{1,\epsilon\}}{n+5}$.
Since $A$ is asymptotically tracially  in ${\cal N}_n,$
{\blue{by {{Proposition}} \ref{hereditary-subalgebra-preserves-tracially-approximation},}}
%``$\dimnuc A\leq n$'',
there exist a unital $C^*$-algebra $B$ {\blue{with}}  $\dimnuc B\leq n,$
and c.p.c.~maps
$\beta_i: B\rightarrow A$,
u.c.p.~maps
$\alpha': A\rightarrow B$, {\blue{and}}
$\gamma_i: A\rightarrow A\cap\beta_i(B)^{\bot}$
% and
{\blue{($i\in\mathbb{N}$)}} {{such}} that

(1) $x\approx_{\delta}\gamma_i(x)+\beta_i\circ\alpha'(x)$
{{for all}} $x\in\calF$ {{and for all}} $i\in\mathbb{N}$,

(2) $\alpha'$ is {\blue{an}} $(\calF, \delta)$-{{approximate embedding}},
%$\forall n\in\mathbb{N}$,

(3) $\lim_{i\rightarrow\infty}\|\beta_i(xy)-\beta_i(x)\beta_i(y)\|=0$ {\blue{and}}
$\lim_{i\rightarrow \infty}\|\beta_i(x)\|= \|x\|$
{{for all}} $x,y\in B$, {{and}}

(4) $\gamma_i(1_A)\lesssim_A a$ {{for all}} $i\in\mathbb{N}$.

%_n

\noindent
{{Since}} $\dimnuc B\leq n$,
there exist a finite dimensional $C^*$-algebra $F$ and
a c.p.c.~map $\phi: B\rightarrow F$,
and {\blue{a}} piecewise contractive $n$-decomposable  c.p.~map $\psi: F\rightarrow B$,
such that
\beq
\label{f6-13prop-1}
x\approx_{\delta}\psi\circ\phi(x) {\mbox{ for all }} x\in \alpha'(\calF).
\eneq
\noindent
{{By}} condition (3),
there exists $m\in\mathbb{N}$ {\blue{such that}}
$\|\beta_m\circ\alpha'(x)\|>\|\alpha'(x)\|-\delta$ for all $x\in\calF$
and
$\beta_m\circ\psi:F\rightarrow \Her_A(\beta_m(B))$
is an {\blue{$(n,\Delta(F,\delta))$-dividable}} c.p.~map,
where
%$\Delta(F,\delta)$
{{$\Delta(-,-)$}}
is defined in
Definition \ref{almost-n-dividable-error}.
Then, by the definition of $\Delta(F,\delta)$
and Proposition \ref{n-almost-dividable-exact},
there exists {\blue{a}} piecewise contractive $n$-decomposable  {\blue{c.p.~map}} $\bt:F\rightarrow \Her_A(\beta_m(B))$
such that
\beq
\label{f6-13prop-2}
\|\bt-\beta_m\circ\psi\|\leq \delta.
\eneq
Set $\gamma=\gamma_m$ and $\af=\phi\circ \af'.$
Then, by (1), \eqref{f6-13prop-1}, and  \eqref{f6-13prop-2}, we have
$$
x\approx_{2\dt} \gamma(x)+\bt\circ \af(x)\rforal x\in {\cal F}.
$$
Moreover, $\gamma(A)\perp \bt(F)$ and (by (4)) $\gamma(1_A)\lesssim a.$

It remains to show that $\bt\not=0.$
%Let $x_1=\af'(1_A).$
%Then, by (2), $\|x_1\|\ge 1-\dt.$
By (2) and the choice of $m$, we have $\|\beta_m\circ\alpha'(1_A)\|\geq 1-2\dt$.
Then
%
%By \eqref{f6-13prop-2} and (2),
%for $x\in\calF$,
$$
\|\beta\circ\phi\circ\alpha'(1_A)\|
\overset{\eqref{f6-13prop-2}}{\approx_{\delta}}
\|\beta_m\circ\psi\circ\phi\circ\alpha'(1_A)\|
\overset{\eqref{f6-13prop-1}}{\approx_{\delta}}\|\beta_m\circ\alpha'(1_A)\|
%\approx_{\delta}\|x_1\|
\ge 1-2\dt.
%\alpha'(1_A)\|
%\approx_{\delta}\|1_A\|.
$$
Thus $\bt\not=0.$

%Thus by definition we have $\mathrm{id}_A$ is tracially
%``nuclear dimension no more than $n$''.

%\phi_

\end{proof}
%\rightline{2019,07,12 ECNU, ZhongBei.
%20190807 Modified, ECNU, ZhongBei}

The proof of the following proposition is almost the same as
the proof for finite nuclear dimension case,
see \cite[Proposition 2.5]{WZ2010}.

\begin{prop}
\label{hereditary-subalgebra-preserves-map-tracially-approximation}
Let $A$ be a {{simple}} unital $C^*$-algebra
{\blue{with
%$n\in {\green{\mathbb{N}\cup\{0\}}}$,
$\mathrm{Trdim_{nuc}\id_A}\le n$ for some integer $n$}}  and let
%  {\red{be}} tracially ``nuclear dimension no more than $n$''.
$B\subset A$ be a unital hereditary $C^*$-subalgebra.
Then $\mathrm{Trdim_{nuc}}\id_B\le n.$

\end{prop}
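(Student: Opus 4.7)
The plan is to mimic the proof of \cite[Proposition 2.5]{WZ2010} (i.e., the standard proof that nuclear dimension passes to hereditary subalgebras), but starting from Proposition \ref{refine-of-id-tracially-dimnuc-n-converge} instead of the definition of nuclear dimension, and keeping track of the remainder map $\gamma$ throughout. The structure is analogous to Proposition \ref{hereditary-subalgebra-preserves-tracially-approximation}, with Proposition \ref{c.p.c._order_zero_map_into_hereditary_subalgebra} playing the role of the mechanism that pushes the image of $\beta$ into $B$.

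Given $\calF\subset B^1$ finite with $1_B\in\calF$, $\epsilon\in(0,1)$, and $a\in B_+\setminus\{0\}$ with $\|a\|=1$, I would first choose a small $\delta>0$ (whose value will be pinned down by the propagation of errors) and pick $e\in B_+^1$ with $exe\approx_{\delta}x$ for every $x\in\calF\cup(\calF\cdot\calF)\cup\{1_B\}$. Applying Proposition~\ref{refine-of-id-tracially-dimnuc-n-converge} to $A$ with the finite set $\calF':=\calF\cup\{e,1_B\}\cup(\calF\cdot\calF)$, the parameter $\delta$, and the positive element $a\in A_+$, produces c.p.c.\ maps $\phi:A\to A$ and $\gamma:A\to A\cap\phi(A)^{\perp}$ with $\phi(1_A)$, $\gamma(1_A)$ complementary projections summing to $1_A$, a finite dimensional $F$, a c.p.c.\ $\alpha:A\to F$, and a piecewise contractive $n$-decomposable $\beta:F\to \Her_A(\phi(1_A))$ such that $\|\phi-\beta\circ\alpha\|\le\delta$, $\phi$ is an $(\calF',\delta)$-approximate embedding, $x\approx_{\delta}\gamma(x)+\phi(x)$ on $\calF'$, and $\gamma(1_A)\lesssim_A a$.

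Next I would use $1_B\in\calF'$ to upgrade $\phi(1_B)$ to an approximate projection that is orthogonal to $\gamma(A)$ (since $\phi(A)\perp\gamma(A)$). The crucial identity is
$$\phi(e)\cdot 1_B\approx\phi(e)\phi(1_B)+\phi(e)\gamma(1_B)=\phi(e)\phi(1_B)\approx\phi(e\cdot 1_B)=\phi(e),$$
where the first $\approx$ uses $1_B\approx\gamma(1_B)+\phi(1_B)$, the middle equality uses $\phi(e)\in\phi(A)\perp\gamma(A)\ni\gamma(1_B)$, and the last $\approx$ uses the approximate multiplicativity of $\phi$ on $\{e,1_B\}\subset\calF'$. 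Combined with $\|\phi-\beta\circ\alpha\|\le\delta$ this yields $\beta\circ\alpha(e)\cdot 1_B\approx\beta\circ\alpha(e)$, which is exactly the hypothesis of Proposition~\ref{c.p.c._order_zero_map_into_hereditary_subalgebra} with $a_0=1_B$ and $a_1=e$. That proposition then returns $\bar F$, a c.p.c.\ $\bar\alpha:A\to\bar F$, and a piecewise contractive $n$-decomposable $\bar\beta:\bar F\to\Her_A(1_B)=B$ with $\beta\circ\alpha(x)\approx\bar\beta\circ\bar\alpha(x)$ uniformly for positive $x\le e$. Setting $\alpha_B:=\bar\alpha|_B$ and $\beta_B:=\bar\beta$ gives candidate maps that already witness $\beta_B\circ\alpha_B(x)\approx\phi(x)\approx x-\gamma(x)$ on the positive parts of $\calF$, and hence (by the standard Cartesian decomposition $x=x_{\mathrm{re}}+ix_{\mathrm{im}}$, each self-adjoint piece split into its positive and negative parts, all dominated by $e$) on all of $\calF$.

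It remains to manufacture $\gamma_B:B\to B\cap\beta_B(\bar F)^{\perp}$ with $\gamma_B(1_B)\lesssim_B a$. The natural guess is $\gamma_B(x):=1_B\gamma(x)1_B$, which lands in $B$ and satisfies $\gamma_B(1_B)\le 1_B\gamma(1_A)1_B\lesssim_A\gamma(1_A)\lesssim_A a$; since $B$ is hereditary, $\gamma_B(1_B)\lesssim_B a$. The remaining step is to clean things up so that $\beta_B(1_{\bar F})$ and a suitably perturbed $\gamma_B(1_B)$ become honest mutually orthogonal projections summing to $1_B$, mirroring the unitary-conjugation procedure in Proposition~\ref{hereditary-subalgebra-preserves-tracially-approximation}: Lemma~\ref{perturbation-c.p.c.-to-p.c.p} turns each approximate projection into a genuine projection, Lemma~\ref{orthogonal_p.c._map} enforces orthogonality, and a small unitary in $B$ (produced by Lemma~2.5.1 of \cite{Lnbook} applied to the sum, which is close to $1_B$) is used to conjugate the pair into exact complementary projections of $1_B$. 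Piecewise contractive $n$-decomposability of $\bar\beta$ is preserved under this unitary conjugation, and all the error estimates collapse to something less than $\epsilon$ once $\delta$ is chosen small enough.

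The main obstacle will be the bookkeeping in this last step: both $\gamma$ and $\bar\beta$ were produced by Proposition~\ref{refine-of-id-tracially-dimnuc-n-converge} and Proposition~\ref{c.p.c._order_zero_map_into_hereditary_subalgebra}, which each introduce their own error (the latter of order $(n+1)\delta^{1/8}$), so arranging exact orthogonality of $\gamma_B(1_B)$ with $\beta_B(1_{\bar F})$ inside $B$ while preserving the approximation $x\approx\gamma_B(x)+\beta_B\circ\alpha_B(x)$ on $\calF$ requires a careful choice of $\delta$ and a delicate combination of the compressions by $1_B$, by the corner projection produced from $\beta_B(1_{\bar F})$, and by the unitary implementing the final alignment.
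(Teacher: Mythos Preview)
Your approach is correct in outline but substantially more elaborate than the paper's. Three simplifications the paper makes:

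First, it works directly from Definition~\ref{Didnuc} (in the sharpened form with the norm lower bound, via Proposition~\ref{Pdinuc} and Remark~\ref{69-n1126}), not from Proposition~\ref{refine-of-id-tracially-dimnuc-n-converge}. The extra structure of complementary projections that the latter supplies is never used.

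Second, since $B$ is \emph{unital}, there is no need for an approximate unit $e$: the paper simply takes $a_0=a_1=1_B$ in Proposition~\ref{c.p.c._order_zero_map_into_hereditary_subalgebra}. The hypothesis $\beta\circ\alpha(1_B)\cdot 1_B\approx\beta\circ\alpha(1_B)$ is obtained by a one-line positivity estimate: since $\gamma(A)\perp\beta(F)$ and $1_B\in\calF$,
\[
\|(1_A-1_B)\beta\circ\alpha(1_B)\|^2
\le(1+\eta)\,\|(1_A-1_B)\bigl(\gamma(1_B)+\beta\circ\alpha(1_B)\bigr)(1_A-1_B)\|
\le(1+\eta)\eta,
\]
using $(1_A-1_B)1_B(1_A-1_B)=0$. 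Your detour through $\phi(e)\phi(1_B)$ and approximate multiplicativity is unnecessary.

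Third, the paper performs no projection-alignment cleanup at all. It sets $\bar\gamma(x):=1_B\gamma(x)1_B$, observes $\bar\gamma(1_B)\le 1_B\gamma(1_A)1_B\lesssim_A\gamma(1_A)\lesssim_A b$ (hence $\lesssim_B b$ by heredity), and uses the analogous estimate $\|(1_A-1_B)\gamma(x)\|^2\le\eta$ for $x\in\calF\subset B_+^1$ to get $\gamma(x)\approx_{2\eta^{1/2}}\bar\gamma(x)$, whence $x\approx\bar\gamma(x)+\bar\beta\circ\bar\alpha(x)$. Your final paragraph of unitary conjugations, Lemma~\ref{perturbation-c.p.c.-to-p.c.p}, and Lemma~\ref{orthogonal_p.c._map} is not invoked; the paper treats the orthogonality $\bar\gamma(B)\subset\bar\beta(\bar F)^\perp$ as routine and does not discuss it. So while your route would work, it carries a good deal of avoidable baggage.
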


\begin{proof}

Let $\calF\subset B_+^1$ be a finite subset with $1_B\in\calF$,
let $\epsilon>0$ and let  $b\in B_+\backslash\{0\}$.
Choose $\eta>0$ such that
\beq
((1+\eta)\eta)^{1/2}<1/2^{16}
\mbox{\quad and \quad}
10(n+1)((1+\eta)\eta)^{1/16}+2\eta^{1/2}<\ep.
\eneq
%{\red{Set}} $\eta:=\min\{\frac{1}{2^{40}(n+1)}, (\frac{\epsilon}{10(n+1)^2})^{16}\}.$

\noindent
{{Since}} ${\mathrm{Trdim_{nuc}}}\id_A\le n,$
%$\mathrm{id}_A$ is tracially ``nuclear dimension no more than $n$'',
there exist a finite dimensional $C^*$-algebra $F$,
a c.p.c.~map $\alpha: A\rightarrow F$,
{{a}}
piecewise contractive $n$-decomposable  c.p.~map $\beta: F\rightarrow A$, and a
c.p.c.~map $\gamma: A\rightarrow A\cap \beta(F)^{\bot}$
such that

(1) $x\approx_{\eta} \gamma(x)+\beta\circ\alpha(x)$ {{for all}} $x\in\calF
%{\red{\cup \{1_A\}}}
$,

(2) $\gamma(1_A)\lesssim_A b$, {{and}}

(3) $\|\beta\circ\alpha(x)\|\geq \|x\|-\eta$ {{for all}} $x\in\calF
%{\red{\cup \{1_A\}}}
$\,\,\,
(see Remark \ref{69-n1126}).

\noindent
{{Since}} $\gamma(A)\perp \bt(F),$
by (1),
{{$\|\bt\circ \af(1_{{B}})\|\le (1+\eta).$}}
%\|\bt\circ \af(1_A)\|.
It follows that
$\bt\circ \af(1_{{B}})^2\leq(1+\eta)\bt\circ \af(1_{{B}}).$
\iffalse\beq
\left(\frac{\bt\circ \af(1_A)}{1+\eta}\right)^2
\le {\frac{\bt\circ \af(1_A)}{1+\eta}}.
\eneq\fi
%$$
%\|\bt\circ \af(1_A)^2\|\le (1+\eta)\|\bt\circ \af(1_A)
%$$
{{Therefore}}
\beq
\hspace{-0.3in}\|(1_A-1_B)\beta\circ\alpha(1_B)\|^2
&=&\|(1_A-1_B)\beta\circ\alpha(1_B)^2(1_A-1_B)\|\\
&&\leq (1+\eta)
%(n+1)
\|(1_A-1_B)\beta\circ\alpha(1_B)(1_A-1_B)\|\\
&&\leq
%(n+1)
(1+\eta)\|(1_A-1_B)(\beta\circ\alpha(1_B)+\gamma(1_B))(1_A-1_B)\|\\
&&\leq
%(n+1)
(1+\eta)(\|(1_A-1_B)1_B(1_A-1_B)\|+\eta)
%(n+1)\eta
=
%(n+1)
(1+\eta)\eta.
\eneq
Since $\gamma$ is a c.p.c.~map, a similar but simpler estimate shows that
\beq\label{614-n1126+1}
\|(1_A-1_B)\gamma(x)\|^2\le \eta\rforal x\in {\cal F}.
\eneq
By the choice of $\eta$ {\blue{and}}
%we have $((n+1)\eta)^{1/2}<\frac{1}{2^{16}}$,
%$\beta\circ\alpha(1_B)\approx_{((n+1)\eta)^{1/2}}1_B\beta\circ\alpha(1_B)$,
%then
by Proposition \ref{c.p.c._order_zero_map_into_hereditary_subalgebra}
(letting $a_0=a_1=1_B$),
there exists a $C^*$-subalgebra $\bar{F}\subset F$,
%and
{\blue{a}} c.p.c.~map $\bar{\alpha}: A\rightarrow \bar{F}$,
and {\blue{a  piecewise contractive $n$-decomposable}}
%n-dividable
c.p.~map $\bar{\beta}: \bar{F}\rightarrow \Her_A(1_B)=B$
{{such that,}} for any
%$x\in A_+$ with $x\leq 1_B$,
$x\in {{B_+^1}}$,
\beq\label{f-7-13-1}
\|\beta\circ\alpha(x)-\bar{\beta}\bar{\alpha}(x)\|
\leq 10(n+1)((1+\eta)\eta)^{1/16}.
\eneq
%10(n+1)((n+1)\eta)^{1/16}

By {\green{\eqref{f-7-13-1} and}} (3) {{and the choice of $\eta$,}}
for $x\in\calF,$ we have
\beq\nonumber
\|\bar{\beta}\bar{\alpha}(x)\|
%\geq \|\beta\circ\alpha(x)\|-10(n+1))((1+\eta)\eta)^{1/16}
%\geq \|x\|-10(n+1)((1+\eta)\eta)^{1/{\green{16}}}-\eta
%100(n+1)((n+1)\eta)^{1/16}\nonumber
{\green{\ \geq \|x\|-\epsilon.}}
\eneq

\noindent
{{Define}} a c.p.c.~map
$\bar{\gamma}: B\rightarrow B$, $x\mapsto 1_B\gamma(x)1_B$.
{{Then}} $\bar{\gamma}(1_B)\lesssim_A \gamma(1_B)\leq \gamma(1_A)\lesssim_A b.$
Since $B$ is hereditary $C^*$-subalgebra of $A$,
we have $\bar{\gamma}(1_B)\lesssim_B b$.

{{Finally, for}} $x\in \calF$,  by \eqref{614-n1126+1},
\iffalse
$\|(1_A-1_B)\gamma(x)\|^2
=\|(1_A-1_B)\gamma(x)^2(1_A-1_B)\|
\leq
=\|(1_A-1_B)\gamma(x)(1_A-1_B)\|
\leq
\|(1_A-1_B)(\gamma(x)+\beta\circ\alpha(x))(1_A-1_B)\|
\leq
\|(1_A-1_B)x(1_A-1_B)\|+\eta
=\eta
$,
\fi
%thus
$\gamma(x)\approx_{2\eta^{1/2}}1_B\gamma(x)1_B=\bar{\gamma}(x)$
 for all $x\in\calF$.
Therefore
$$
x
\approx_{\eta}
\gamma(x)+\beta\circ\alpha(x)
\approx_{10(n+1)((1+\eta)\eta)^{1/16}+2\eta^{1/2}}
\bar{\gamma}(x)+\bar{\beta}\bar{\alpha}(x)\rforal x\in {\cal F}.
$$
Note that $10(n+1)((1+\eta)\eta)^{1/16}+2\eta^{1/2}<\ep.$ It follows that
${\rm Trdim_{nuc}}\id_B\le n.$
%By the choice of $\eta$,
%we have $x\approx_{\epsilon}\bar{\gamma}(x)+\bar{\beta}\bar{\alpha}(x)$,
%$\forall x\in\calF$.
%This shows $\mathrm{id}_B$ is also
%tracially ``nuclear dimension no more than $n$''.

\end{proof}
%\rightline{2019, 08, 02, SCMS, Fudan.}

%The following proposition was proved in \cite[Proposition 3.4]{F-2019}.
%Note that in \cite[Proposition 3.4]{F-2019}, ``almost multiplicative''
%means ``approximately multiplicative'' here.

%\iffalse

%{\red{\bf I propose we put the following two propositions in Section 6.
%--- Xuanlong.}}

\begin{prop}
[{cf. \cite[Proposition 3.4]{F-2019}}]
\label{fullness neighbourhood and simplicity}
Let $A$ be a unital $C^*$-algebra and let
$X\subset A_+$ {{be a finite subset.}}
%{\red{and let}}
%$f\in C_0(0,\infty].$
Suppose that, {\blue{for}} each $x\in X$,  $f_{1/2}(x)$ is  full in $A.$
Then, there exist ${{\sigma({\cal G}_X)}}>0$ and a finite subset ${\cal G}_X\subset A$
%with $f(a)$ is a full element of $A$, $k\in\bbN$,
%$G\subset A$ is a $(k,f)$-fullness neighbourhood of $a$
%in $A$.
such that, for any
%Then for any
{{unital \CA\ $B$ and any}}
u.c.p.~map $\psi:A\rightarrow B$ which is
%if $\psi $ is
$({\cal G}_X, {{\sigma({\cal G}_X)}})${{-multiplicative}}, % on  ${\cal G}_a,$
%$G\cdot G$,
$f_{1/2}(\psi(x))$ is a full element of $B$ for each $x\in X$.

\end{prop}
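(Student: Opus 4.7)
The plan is to exploit that fullness of $f_{1/2}(x)$ in $A$ means $1_A$ lies in the closed two-sided ideal generated by $f_{1/2}(x)$, so there are finitely many elements $a^{(x)}_i, b^{(x)}_i\in A$ with $\sum_i a^{(x)}_i f_{1/2}(x) b^{(x)}_i$ arbitrarily close to $1_A$; pushing this identity through a sufficiently multiplicative u.c.p.~map $\psi$ should give an element close to $1_B$ that lies in the ideal of $B$ generated by $f_{1/2}(\psi(x))$, forcing that ideal to be all of $B$.

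Concretely, first I would fix, for each $x\in X$, finite sequences $\{a^{(x)}_i\}_{i=1}^{n_x},\{b^{(x)}_i\}_{i=1}^{n_x}\subset A$ with
$$\Bigl\|\sum_{i=1}^{n_x} a^{(x)}_i\, f_{1/2}(x)\, b^{(x)}_i - 1_A\Bigr\|<1/4.$$
Then I would let $\mathcal{G}_X$ be the (finite) union over $x\in X$ of $\{x,\, x^2,\, f_{1/2}(x)\}$ together with $\{a^{(x)}_i,\,b^{(x)}_i,\,a^{(x)}_i f_{1/2}(x),\,f_{1/2}(x)b^{(x)}_i\}_i$ and enough polynomials in $x$ to approximate $f_{1/2}$ uniformly on $[0,\|x\|]$ within a prescribed tolerance.

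Next, I would pick $\sigma(\mathcal{G}_X)>0$ small enough so that, for any u.c.p.~$(\mathcal{G}_X,\sigma(\mathcal{G}_X))$-multiplicative map $\psi\colon A\to B$ and every $x\in X$, two things hold: (a) $\|\psi(f_{1/2}(x))-f_{1/2}(\psi(x))\|$ is small, and (b) $\psi$ is nearly multiplicative on each triple product $a^{(x)}_i f_{1/2}(x) b^{(x)}_i$. For (a), since $f_{1/2}$ is a fixed continuous function on $[0,\|x\|]$, one approximates $f_{1/2}$ by a polynomial and uses that near-multiplicativity on $\{x,x^2\}$ together with Lemma \ref{c.p.c.-almost-multiplicative-easy} (and iteration) implies near-multiplicativity on all polynomials of bounded degree in $x$; this lets us control $\|\psi(p(x))-p(\psi(x))\|$ and hence $\|\psi(f_{1/2}(x))-f_{1/2}(\psi(x))\|$. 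For (b), it is the definition of being $(\mathcal{G}_X,\sigma(\mathcal{G}_X))$-multiplicative applied to the relevant pairs.

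Combining (a), (b), and $\psi(1_A)=1_B$, one obtains
$$\Bigl\|\sum_{i=1}^{n_x}\psi(a^{(x)}_i)\,f_{1/2}(\psi(x))\,\psi(b^{(x)}_i)-1_B\Bigr\|<1/2,$$
so the left-hand sum is invertible in $B$. Since this sum lies in $\overline{B\,f_{1/2}(\psi(x))\,B}$, that ideal contains an invertible element, hence equals $B$; thus $f_{1/2}(\psi(x))$ is full in $B$. The main technical obstacle I anticipate is step~(a): quantifying how near-multiplicativity on the finite set $\{x,x^2\}$ (together with the fixed polynomial approximation) yields a uniform bound on $\|\psi(f_{1/2}(x))-f_{1/2}(\psi(x))\|$ — all the other estimates are straightforward once this functional-calculus perturbation is in place.
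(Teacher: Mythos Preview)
Your proposal is correct and follows essentially the same approach as the paper's (commented-out) proof: both pick fullness witnesses for $f_{1/2}(x)$ summing to (approximately) $1_A$, approximate $f_{1/2}$ by a polynomial via Stone--Weierstrass, and use near-multiplicativity of $\psi$ together with Lemma~\ref{c.p.c.-almost-multiplicative-easy} to control $\|\psi(f_{1/2}(x))-f_{1/2}(\psi(x))\|$ and to transport the fullness relation into $B$. The only cosmetic differences are that the paper uses symmetric witnesses $\sum_i y_i^* f_{1/2}(x) y_i = 1_A$ (obtaining exact equality rather than an $\tfrac14$-approximation), whereas you use general $a_i, b_i$; neither change affects the argument.
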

%20200113

The following lemma is a construction of simple generalized
inductive {\green{limit}} of $C^*$-algebras.

\begin{lem}
\label{construction-simple-gen-lim}

Let $\{A_i\}$ be {\blue{a sequence of}} unital separable $C^*$-algebras
and let $\phi_i:A_i\rightarrow A_{i+1}$ be  $\mbox{u.c.p.~maps}$
%$(i\in\mathbb{N}\cup\{0\})$.
{{$(i\in\mathbb{N})$.}}
Let $X_i=\{x_{i,1},{\blue{x_{i,2},}}\cdots\}\subset A_{i+}^1$
be a countable dense subset of $A_{i+}^1$,
$X_{i,k}:=\{x_{i,1}\cdots,x_{i,k}\}$
($i,k\in\mathbb{N}$),
and $Y_k:=\cup_{1\leq i\leq k}\phi_{i,k}(X_{i,k})$.
Then $(A_i,\phi_i)$ forms a generalized inductive
{{system}} %limit
and
%$A:=\lim_{i}(A_i,\phi_i)$
{{$\lim_{i}(A_i,\phi_i)$}}
is simple,
%{\green{if there exists $K\in\N$ such that}}
if the following hold for any $k\in \N$:

(1)
%$\exists G_k\subset\subset A_k$ which is
%a $(m_k,f_{1/2})$-fullness neighbourhood of
$f_{1/2}(a)$ is full in $A_k$ for all
$a\in  {\cal F}_k:=Y_k\cap B_{{\blue{\frac{3}{4}}},1}(A_k)$
{\blue{(recall {\green{Notation}} \ref{DB1}),}}
{\green{and}}
%for some $m_k\in\mathbb{N}$,

(2) %For $k\geq 2$,
$\phi_{k}$ is $\epsilon_{k}${{-multiplicative}} on
$
%{\cal G}_{{\cal F}_k}\cup
Y_{k}
\cup \left(\cup_{1\leq j\leq k}\phi_{j,k}({\cal G}_{{\cal F}_k})\right),
$
where
$$
\epsilon_{k}: =
\frac{1}{4^k}
\min_{1\le j\le k}
\{{{1, \sigma({\cal G}_{{\cal F}_j})}}\}
%\min\{{\green{\sigma({\cal G}_{{\cal F}_j})}}:1\le j\le k\}
%/2^k
%=\frac{1}{2^{k}}
%\min_{1\leq j\leq k}
%\{\sigma_A(Y_{j}\cap B_{\frac{1}{2},1}(A_{j}),f_{1/2},G_{j})\}.
$$
(see {\green{Proposition}}
\ref{fullness neighbourhood and simplicity}
for ${\cal G}_{{\cal F}_j}$ and $\sigma({\cal G}_{{\cal F}_j})$,
{{see {\green{Notation}} \ref{def-of-gen-ind-lim} for $\phi_{j,k}$}}).

\end{lem}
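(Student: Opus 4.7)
The plan is to verify the two assertions in order: that $(A_i,\phi_i)$ forms a c.p.c.~generalized inductive system, and that the resulting limit $A:=\lim_i(A_i,\phi_i)$ is simple. For the first, I intend to apply Lemma~\ref{positive-criterion-being-gen-ind-lim}, so it suffices to show that for every $k$, every $\epsilon>0$, and every $x,y\in A_{k+}^1$, there is $m>k$ with $\phi_{k,n}(x)\phi_{k,n}(y)\approx_{\epsilon} \phi_{m,n}(\phi_{k,m}(x)\phi_{k,m}(y))$ for all $n>m$. By density of $X_k$ in $A_{k+}^1$ together with contractivity of the $\phi_l$, it is enough to treat $x=x_{k,p}$, $y=x_{k,q}\in X_k$. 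For $m\ge\max(k,p,q)$ the elements $\phi_{k,m}(x),\phi_{k,m}(y)$ lie in $\phi_{k,m}(X_{k,m})\subset Y_m$, and telescoping $\phi_{m,n}=\phi_{n-1}\circ\cdots\circ\phi_m$ against the $\epsilon_l$-multiplicativity of each $\phi_l$ on $Y_l$ (condition~(2)) bounds the defect by $\sum_{l=m}^{n-1}\epsilon_l$, a quantity dominated by $4^{-(m-1)}/3$ and hence arbitrarily small for large $m$. With Lemma~\ref{positive-criterion-being-gen-ind-lim} in hand, the canonical maps $\phi_{k,\infty}:A_k\to A$ are unital $*$-homomorphisms by the standard theory of c.p.c.~generalized inductive limits.

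For simplicity, I argue by contradiction. Let $I\subsetneq A$ be a proper closed two-sided ideal and let $\pi:A\to A/I$ be the quotient. Fix a nonzero $a\in I$; after a positive functional-calculus reduction assume $a\ge 0$, $\|a\|=1$. Density of $\bigcup_i\phi_{i,\infty}(A_i)$ in $A$ together with density of $X_i$ in $A_{i+}^1$ produces $i,k_0$ and $x_{i,k_0}\in X_{i,k_0}$ with $\|a-\phi_{i,\infty}(x_{i,k_0})\|<1/4$, forcing $\|\phi_{i,\infty}(x_{i,k_0})\|>3/4$. Since $\|\phi_{i,\infty}(x_{i,k_0})\|=\limsup_n\|\phi_{i,n}(x_{i,k_0})\|$, infinitely many $n\ge\max(i,k_0)$ satisfy $\|\phi_{i,n}(x_{i,k_0})\|>3/4$; pick one such $n_0$, so that $\phi_{i,n_0}(x_{i,k_0})\in\mathcal{F}_{n_0}$ and condition~(1) makes $f_{1/2}(\phi_{i,n_0}(x_{i,k_0}))$ full in $A_{n_0}$.

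The key step is to transport this fullness through $\phi_{n_0,\infty}$. Proposition~\ref{fullness neighbourhood and simplicity} supplies a finite set $\mathcal{G}:=\mathcal{G}_{\mathcal{F}_{n_0}}\subset A_{n_0}$ and a $\sigma:=\sigma(\mathcal{G}_{\mathcal{F}_{n_0}})>0$ such that any u.c.p.~map that is $(\mathcal{G},\sigma)$-multiplicative sends $f_{1/2}$ of elements of $\mathcal{F}_{n_0}$ to full elements. Condition~(2) asserts that each $\phi_l$ ($l\ge n_0$) is $\epsilon_l$-multiplicative on $\phi_{n_0,l}(\mathcal{G})$, and telescoping yields that $\phi_{n_0,m}$ is $\bigl(\sum_{l=n_0}^{m-1}\epsilon_l\bigr)$-multiplicative on $\mathcal{G}$, with $\sum_{l\ge n_0}\epsilon_l\le\sigma\cdot\tfrac{4}{3\cdot 4^{n_0}}<\sigma$. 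Hence Proposition~\ref{fullness neighbourhood and simplicity} gives that $f_{1/2}(\phi_{i,m}(x_{i,k_0}))$ is full in $A_m$ for every $m\ge n_0$. Taking $m=n_0$, writing a fullness relation $1_{A_{n_0}}=\sum_j w_j^*\,f_{1/2}(\phi_{i,n_0}(x_{i,k_0}))\,w_j$ and applying the unital $*$-homomorphism $\phi_{n_0,\infty}$ yields $1_A=\sum_j\phi_{n_0,\infty}(w_j)^*\,f_{1/2}(\phi_{i,\infty}(x_{i,k_0}))\,\phi_{n_0,\infty}(w_j)$, so $f_{1/2}(\phi_{i,\infty}(x_{i,k_0}))$ is full in $A$.

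To finish: $\|\pi(\phi_{i,\infty}(x_{i,k_0}))\|=\|\pi(\phi_{i,\infty}(x_{i,k_0})-a)\|<1/4<1/2$, and since $\phi_{i,\infty}(x_{i,k_0})\ge 0$, continuous functional calculus gives $\pi(f_{1/2}(\phi_{i,\infty}(x_{i,k_0})))=f_{1/2}(\pi(\phi_{i,\infty}(x_{i,k_0})))=0$, so $f_{1/2}(\phi_{i,\infty}(x_{i,k_0}))\in I$; combined with fullness this forces $I=A$, contradicting $I\subsetneq A$. The main obstacle is the interplay between the $4^{-k}$ decay and the min-over-$j$ structure in $\epsilon_k$: one must verify that the tail $\sum_{l\ge n_0}\epsilon_l$ is dominated by $\sigma(\mathcal{G}_{\mathcal{F}_{n_0}})$ no matter which $n_0$ the argument selects, and simultaneously that this bookkeeping propagates through the passage to the limit so that the $*$-homomorphism property of $\phi_{k,\infty}$ is legitimately available.
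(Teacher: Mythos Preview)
Your verification that $(A_i,\phi_i)$ is a generalized inductive system matches the paper's argument. The gap is in the simplicity proof: in a c.p.c.~generalized inductive system the canonical maps $\phi_{k,\infty}:A_k\to A$ are \emph{not} $*$-homomorphisms. Concretely, for $x=x_{k,p},\,y=x_{k,q}\in X_k$ with $p$ or $q$ larger than $k$, the elements $x,y$ need not lie in $Y_k$, so condition~(2) gives no control over $\|\phi_k(xy)-\phi_k(x)\phi_k(y)\|$; the multiplicativity defect incurred at that first step is merely carried along by the later (contractive) compositions and does not vanish in the limit. Hence you cannot push the fullness relation $1_{A_{n_0}}=\sum_j w_j^*\, f_{1/2}(\cdots)\,w_j$ through $\phi_{n_0,\infty}$ by multiplicativity, nor commute $\phi_{n_0,\infty}$ with the functional calculus $f_{1/2}$.

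The repair is already implicit in your own telescoping estimate. The map $\phi_{n_0,\infty}:A_{n_0}\to A$ is u.c.p., and your bound $\sum_{l\ge n_0}\epsilon_l<\sigma(\mathcal{G}_{\mathcal{F}_{n_0}})$ applies verbatim to it, showing it is $(\mathcal{G}_{\mathcal{F}_{n_0}},\sigma(\mathcal{G}_{\mathcal{F}_{n_0}}))$-multiplicative. Apply Proposition~\ref{fullness neighbourhood and simplicity} \emph{directly} with $\psi=\phi_{n_0,\infty}$ and $B=A$: this yields that $f_{1/2}\bigl(\phi_{n_0,\infty}(\phi_{i,n_0}(x_{i,k_0}))\bigr)=f_{1/2}\bigl(\phi_{i,\infty}(x_{i,k_0})\bigr)$ is full in $A$, which is exactly what your final paragraph needs. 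This is the paper's route; your intermediate step of proving fullness in each $A_m$ is unnecessary, and the attempt to then recombine via a nonexistent $*$-homomorphism is where the argument breaks.
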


\begin{proof}

First we show that $(A_i,\phi_i)$ forms a generalized inductive limit.
Let $k\in\mathbb{N}\cup\{0\}$, $y_1,y_2\in A_{k+}^1\backslash\{0\}$ {\blue{and}}  $\epsilon>0$.
%
%Let $\delta:=\frac{\epsilon}{4\|y_1\|\|y_2\|}$.
%
Then there {\blue{exist}}
$t_1,t_2\in\mathbb{N}$ such that
$y_1\approx_{\epsilon/4} x_{k, t_1}$ {\blue{and}}
$y_2\approx_{\epsilon/4} x_{k, {{t_2}}}$.
Note that $\sum_{i=1}^{\infty}\epsilon_i<\infty.$ Thus
there is $m>\max\{k, t_1,t_2\}$ such that
$\sum_{i=m}^{\infty}\epsilon_i<\epsilon/4$.
Then, for all
%$j\geq m$,
$j> m$,
by the choice of $Y_j$, we have
$\phi_{k,j}(x_{k,t_1})$, $\phi_{k,j}(x_{k,{{t_2}}})\in Y_j$.
By (2),  for all $i\geq m$, $\phi_i$ is $\epsilon_i${{-multiplicative}} on
$\{\phi_{k,i}(x_{k,t_1}),\phi_{k,i}(x_{k, {{t_2}}})\}.$
Hence $\phi_{m,j}$ is
$\sum_{i=m}^{j-1}\epsilon_i${{-multiplicative}} on
$\{\phi_{k,m}(x_{k,t_1}),\phi_{k,m}(x_{k, {{t_2}}})\}$.
Then, for all $j\geq m$,
\begin{eqnarray*}
\phi_{k,j}(y_1)\cdot\phi_{k,j}(y_2)
&=&
\phi_{m,j}(\phi_{k,m}(y_1))\cdot\phi_{m,j}(\phi_{k,m}(y_2))\\
&\approx_{\epsilon/4}&
\phi_{m,j}(\phi_{k,m}(x_{k,t_1}))\cdot\phi_{m,j}(\phi_{k,m}(x_{k,t_2}))\\
&\approx_{\sum_{i=m}^{j-1}\epsilon_i}&
\phi_{m,j}(\phi_{k,m}(x_{k,t_1})\cdot\phi_{k,m}(x_{k,t_2}))\\
&\approx_{\epsilon/4}&
\phi_{m,j}(\phi_{k,m}(y_1)\cdot\phi_{k,m}(y_2)).
\end{eqnarray*}

\noindent
{{By}} the choice of $m$, we have
$\phi_{k,j}(y_1)\cdot\phi_{k,j}(y_2)
\approx_{\epsilon}
\phi_{m,j}(\phi_{k,m}(y_1)\cdot\phi_{k,m}(y_2))$
for all $j\geq m$.
{\blue{By}} Lemma \ref{positive-criterion-being-gen-ind-lim},
$(A_i,\phi_i)$ forms a generalized inductive
{\blue{system.}}
% limit.
%\delta
%\xrule

%% y z

%%%\xrule

Now we show that $A:=\lim_{i}(A_i,\phi_i)$ is simple.
It {\blue{suffices}}
% is sufficient
to show {\blue{that}} every norm one positive element of $A$ is full.
Let $a\in A_+$ with $\|a\|=1$.
Then there exist $k,s\in \mathbb{N}$   such that
$\|a-\phi_{k,\infty}(x_{k,s})\|<1/4$.
Let $r>\max\{k,s\}$ {\blue{be}} such that $\|\phi_{k,r}(x_{k,s})\|\geq 3/4.$ {\blue{Then}} we have
$\phi_{k,r}(x_{k,s})\in \calF_r:=Y_r\cap B_{\frac{3}{4},1}(A_r)$.
%By condition (1), $f_{1/2}(y)$ is full in $A_r$ for all
%$y\in {\cal F}_r:=Y_r\cap B_{\frac{1}{2},1}(A_r).$
%$G_r$ is a $(m_r,f_{1/2})$-fullness neighbourhood of $\phi_{k,r}(x_{k,s})$.
Condition (2) shows that, for all $ j> r$,
$\phi_{r,j}$ is
$\sum_{i=r}^{j-1}\epsilon_i${{-multiplicative}}
on ${\cal G}_{{\cal F}_r}.$
%$G_r\cdot G_r$,
By the choice of $\epsilon_i$ ($i\in\mathbb{N}$),  {\blue{the map}}
%we have
$\phi_{r,\infty}$ is
$\sigma({\cal G}_{{\cal F}_r})${{-multiplicative}}
on ${\cal G}_{{\cal F}_r}.$
%$G_r\cdot G_r$,
%
%\xrule
%
Then, by Proposition \ref{fullness neighbourhood and simplicity},
%we have
$f_{1/2}(\phi_{k,\infty}(x_{k,s}))
=f_{1/2}(\phi_{r,\infty}(\phi_{k,r}(x_{k,s})))$
is a full element of $A$.
Since $\|a-\phi_{k,\infty}(x_{k,s})\|<1/4$,
by \cite[Proposition 2.2]{Rordam-1992-UHF2},
$f_{1/2}(\phi_{k,\infty}(x_{k,s}))=c^*ac$ for some $c\in A.$
Thus $a$ is also a full element of $A$.
Since $a$ is arbitrary, {\blue{so $A$ is simple.}}
%every positive element are full, thus
%this implies that $A$ is simple.

\end{proof}

The following is a construction of
simple separable unital finite nuclear dimension
$C^*$-algebras using generalized inductive limits.

\begin{lem}
\label{construction-simple-finite-dimnuc}

Let $n\in{{\mathbb{N}\cup\{0\}}}$.
Let {{$\{A_i\}$}} be {{a sequence of}}
unital separable $C^*$-algebras
and $\phi_i:A_i\rightarrow A_{i+1}$ {\blue{be}} u.c.p.~maps
{{($i\in\N$)}}.
%$(i\in\mathbb{N}\cup\{0\})$.%A_0 is not necessary.
Let $X_i=\{{x_{i,1},} x_{i,2},%{\red{x_{i,3},}}
\cdots\}\subset A_{i+}^1$
be a countable dense subset of $A_{i+}^1$,
let $X_{i,k}:=\{x_{i,1},  {\green{x_{i,2},}}
\cdots,x_{i,k}\}$,
and {\blue{let}}
$Y_k:=\cup_{1\leq j\leq k}\phi_{j,k}(X_{j,k})$ ($i,k\in\mathbb{N}$).
{{Let $F_0=\C$ {\blue{and}} let $\beta_0: F_0\rightarrow A_1$ be the zero map.}}
Then $(A_i,\phi_i)$ forms a generalized inductive limit
and $A:=\lim_{i}(A_i,\phi_i)$ is simple {\blue{with}} $\dimnuc A\leq n$,
if the following hold for {{all}} $k\in\mathbb{N}$:

(1) For all
$a\in\calF_k:=Y_k\cap B_{{\blue{\frac{3}{4}}},1}(A_k)$,
$f_{1/2}(a)$ is full in $A_k$,

(2) there exist a finite dimensional $C^*$-algebra $F_k$,
{{a}} c.p.c.~map $\alpha_k: A_k\rightarrow F_k$,
and {{a}} piecewise contractive $n$-decomposable  c.p.~map
$\beta_k: F_k\rightarrow A_{k+1}$
{\blue{such}} that
%$\forall x\in Y_k$,
$\phi_k(x)\approx_{\frac{1}{k}}
\beta_k\circ\alpha_k(x)$ for all $x\in Y_k$,
{\green{and}}

(3) %For $k\geq 2$,
$\phi_{k}$ is $\epsilon_{k}${{-multiplicative}} on
$$
Y_{k}
\cup \left(\cup_{1\leq j\leq k}\phi_{j,k}({\cal G}_{{\cal F}_j})\right)
\cup \left(\cup_{1\leq j\leq k}\phi_{j,k}(\beta_{j-1}({\cal G}^{F_{j-1}}))\right),
$$
where
$$
\epsilon_{k}:=\frac{1}{4^{k}}
\min_{1\leq j\leq k}
\{{{1, \sigma({\cal G}_{{\cal F}_j})}},
\Delta(F_{j-1},\frac{1}{j})\}\andeqn
$$
${\cal G}^{{{F_{j-1}}}}$
is {{the standard generating set}} of  $F_{j-1}$ in $F_{j-1}^1$
(see {{Proposition}}
\ref{fullness neighbourhood and simplicity}
for ${\cal G}_{{\cal F}_j}$ and $\sigma({\cal G}_{{\cal F}_j})$,
{\blue{see}} {{Definition}}
\ref{almost-n-dividable-error} for
%$\Delta(F_{j-1},\frac{1}{j})$
$\Delta(-,-)$, {\blue{and}}
{{see {{Notation}} \ref{def-of-gen-ind-lim} for $\phi_{j,k}$}}{{).}}

\end{lem}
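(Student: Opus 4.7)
The plan is to combine Lemma \ref{construction-simple-gen-lim} for simplicity with Theorem \ref{gene-lim-fin-nucdim} for the nuclear dimension bound. Since the $\epsilon_k$ chosen here is (pointwise in $j\le k$) no larger than the $\frac{1}{4^k}\min\{1, \sigma(\calG_{\calF_j})\}$ required in Lemma \ref{construction-simple-gen-lim}, conditions (1) and (3) immediately imply the hypotheses of that lemma, so $(A_i,\phi_i)$ forms a generalized inductive system and $A := \lim_i(A_i,\phi_i)$ is simple. Thus only the bound $\dimnuc A\le n$ requires real work, and this I would obtain via the ``if'' direction of Theorem \ref{gene-lim-fin-nucdim}.

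To verify the hypothesis of Theorem \ref{gene-lim-fin-nucdim}, fix $i\in\N$, a finite subset $\calG\subset A_i$, and $\epsilon>0$. I would choose $k\ge i$ so large that $1/k<\epsilon/4$, $\sum_{l\ge k+1}\epsilon_l$ is as small as needed, and every element of $\phi_{i,k}(\calG)$ is approximated within $\epsilon/4$ by elements of $Y_k$ (possible since $X_j$ is dense in $A_{j+}^1$ and linear spans suffice). Then condition (2) at stage $k$ gives $\phi_k\circ\phi_{i,k}(x)\approx_{1/k}\beta_k\circ\alpha_k\circ\phi_{i,k}(x)$ for $x\in\calG$ (up to the $\epsilon/4$ approximation by $Y_k$-elements). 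Setting $F:=F_k$, $\alpha:=\alpha_k\circ\phi_{i,k}:A_i\to F_k$, and $\tilde\beta:=\phi_{k+1,\infty}\circ\beta_k:F_k\to A$, one obtains $\phi_{i,\infty}(x)\approx_{\epsilon/2}\tilde\beta\circ\alpha(x)$ for $x\in\calG$.

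The main obstacle is the final step: $\tilde\beta$ is the composition of the piecewise contractive $n$-decomposable map $\beta_k$ with the (merely c.p.c., not homomorphic) canonical map into the generalized inductive limit, and need not itself be piecewise contractive $n$-decomposable. Here is where condition (3) and Proposition \ref{n-almost-dividable-exact}(2) are decisive. Writing $\beta_k = \sum_j \beta_k|_{F_{k,j}}$ as a sum of c.p.c.\ order zero pieces indexed by a decomposition $F_k = F_{k,0}\oplus\cdots\oplus F_{k,n}$, one iteratively controls the compositions $\phi_{k+1,l}\circ\beta_k$ as $l\to\infty$: each $\phi_l$ with $l\ge k+1$ is by (3) multiplicative within $\epsilon_l$ on $\phi_{k+1,l}(\beta_k(\calG^{F_k}))$, so by Proposition \ref{n-almost-dividable-exact}(2) applying $\phi_l$ increases the ``almost-order-zero defect'' of each piece by at most a prescribed amount controlled by $\epsilon_l$. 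Because $\epsilon_l\le \frac{1}{4^l}\Delta(F_k,\frac{1}{k+1})$ for all $l\ge k+1$, the total accumulated defect remains below $\Delta(F_k,\frac{1}{k+1})$ uniformly in $l$, and hence so does the defect of the limit $\tilde\beta$. Proposition \ref{n-almost-dividable-exact}(1) then produces an honest piecewise contractive $n$-decomposable c.p.\ map $\beta:F_k\to A$ with $\|\beta-\tilde\beta\|\le 1/(k+1)$, and enlarging $k$ absorbs this error into $\epsilon$. Theorem \ref{gene-lim-fin-nucdim} then yields $\dimnuc A\le n$, completing the proof.
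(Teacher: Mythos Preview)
Your proposal follows the same strategy as the paper: Lemma~\ref{construction-simple-gen-lim} for simplicity, then the ``if'' direction of Theorem~\ref{gene-lim-fin-nucdim} for the nuclear dimension bound, with Proposition~\ref{n-almost-dividable-exact} supplying the perturbation of $\tilde\beta=\phi_{k+1,\infty}\circ\beta_k$ to an honest piecewise contractive $n$-decomposable map. The choice of $m$ (your $k$), the use of $Y_m$ to approximate $\phi_{i,m}(\calG)$, and the final chain of approximations are all exactly as in the paper.

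The one place where your write-up is imprecise is the iterative application of Proposition~\ref{n-almost-dividable-exact}(2). As stated, the proposition says $\delta_2(\epsilon)$-multiplicativity yields a defect increase of $\epsilon$; from $\epsilon_l$-multiplicativity of $\phi_l$ you cannot directly read off a defect increment comparable to $\epsilon_l$ without inverting $\delta_2$, so the claim that the ``total accumulated defect remains below $\Delta(F_k,\tfrac{1}{k+1})$'' does not follow from $\sum_{l\ge k+1}\epsilon_l<\Delta(F_k,\tfrac{1}{k+1})$ alone. The paper avoids this by applying Proposition~\ref{n-almost-dividable-exact}(2) \emph{once} to the single c.p.c.\ map $\phi_{k+1,\infty}$: since multiplicativity errors of c.p.c.\ maps add under composition, $\phi_{k+1,\infty}$ is $\bigl(\sum_{l\ge k+1}\epsilon_l\bigr)$-multiplicative on $\beta_k(\calG^{F_k})$, and this sum is at most $\Delta(F_k,\tfrac{1}{k+1})\le\delta_2(\delta_1(\tfrac{1}{k+1}))$. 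One invocation of part~(2) then shows $\tilde\beta$ is $(n,\delta_1(\tfrac{1}{k+1}))$-dividable, and part~(1) gives the required $\beta$ with $\|\beta-\tilde\beta\|\le\tfrac{1}{k+1}$. With this adjustment your argument is complete and matches the paper's.
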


\begin{proof}

By Lemma \ref{construction-simple-gen-lim},
% we can see that
$(A_i,\phi_i)$ forms a generalized inductive {\blue{system}}
and $A:=\lim_{i\rightarrow \infty}(A_i,\phi_i)$ is {\blue{a simple \CA.}}

%Now we
{\blue{To show  $\dimnuc A\leq n$,}}
{\blue{let}} $i\in\mathbb{N}$, $\epsilon>0,$ {\blue{and}}
$\calF \subset A^1_{i+}$ {{be a finite subset}}.
% with $\|\phi_{i,\infty}(x)\|\geq 3/4$, $\forall x\in\calF$.
%
By the {\blue{definition}} of $Y_j$,
%and since $(A_j,\phi_j)$ forms a generalized inductive limit
%($j\in\mathbb{N}$),
there exists $m\geq i+1+\frac{4}{\epsilon}$ such that
$\phi_{i,m}(\calF)\subset_{\epsilon/4}Y_m$.
%and $\phi_{m,\infty}$ is $\epsilon${\green{-multiplicative}} on
%$\phi_{i,m}(\calF)$.
%and $\frac{1}{m}<\frac{\epsilon}{4}$.
%$\max\{\frac{1}{m},\epsilon_m\}<\frac{\epsilon}{4}$.
%Then (1) of Theorem \ref{gene-lim-fin-nucdim}
%is satisfied.

{\blue

%Working

By (3), $\phi_{m+1,\infty}$ is
$\sum_{j=m+1}^{\infty}\epsilon_j${{-multiplicative}} on $\beta_m({\cal G}^{F_m^1})$.
%Since $\beta_m$ is $n$-dividable c.p.~map,
By the choice of $\epsilon_j,$ {\blue{one has}}
%we have
$\sum_{j=m+1}^{\infty}\epsilon_j\leq \Delta(F_m,\frac{1}{m})$.
{\blue{Then
$\phi_{m+1,\infty}\circ\beta_m$ is {\blue{an}}
$(n,{{\delta_1(\frac{1}{m})}})$-dividable map}}
(see {{Definition \ref{almost-n-dividable-error} and}}
part (2) of {{Proposition}}
\ref{n-almost-dividable-exact}).
By %Definition \ref{almost-n-dividable-error} and
Proposition \ref{n-almost-dividable-exact},
there exists a piecewise contractive $n$-decomposable  c.p.~map
$\beta:
F_{m}\rightarrow A$ such that
\beq
\label{f6-18lem-1}
\|\beta-\phi_{m+1,\infty}\circ\beta_{m}\| \leq 1/m.
\eneq
For any $x\in \calF$,
there exists $y\in Y_m$ {\blue{such}} that
$\phi_{i,m}(x)\approx_{\epsilon/4}y.$
{\blue{Then}}
%by (2) and above approximation we have
\beq
\phi_{i,\infty}(x)
&=&
\phi_{m,\infty}(\phi_{i,m}(x))
\approx_{\frac{\epsilon}{4}}
\phi_{m,\infty}(y)
\overset{(2)}{\approx_{\frac{\epsilon}{4}}}
\phi_{m+1,\infty}\circ\beta_m\circ\alpha_m(y)
\overset{\eqref{f6-18lem-1}}{\approx_{\frac{\epsilon}{4}}}
\beta\circ\alpha_m(y)
\nonumber\\
&\approx_{\frac{\epsilon}{4}}&
\beta\circ\alpha_m\circ\phi_{i,m}(x).
\eneq

%Thus (2) of Theorem \ref{gene-lim-fin-nucdim} is satisfied.
\noindent
{\blue{Then,}} by Theorem \ref{gene-lim-fin-nucdim}
{{(with $\alpha_m\circ\phi_{i,m}$ in place of $\alpha$)}}, $\dimnuc A\leq n$.

}
\end{proof}
%\rightline{needed polished 2019,07,02, 2019,07,05 Modify.
%2019,07,15 Modify.
%2019,07,25 Modify}

%%20200120 ÃÂÃÂÃÂÃÂÃÂÃÂÃÂÃÂÃÂÃÂÃÂÃÂÃÂÃÂÃÂÃÂ£Cuntz semigroup.

\iffalse
{\blue{
The following is a well known lemma of Glimm
(see \cite[Proposition 4.10]{KR2000}).

\begin{lem}
[Glimm's Lemma]
\label{Glimm-lemma}

Let $A$ be a non type I \CA,
then for any $n\in\N$, there exists
a $C^*$-subalgebra which is isomorphic to $M_n(C_0((0,1]))$.

\end{lem}
}}

\fi

%{\blue{\xrule Working 20190920 \xrule}}

\begin{thm}
\label{simple-T-finite-dimnuc-implies-T-simple-finite-dimnuc}
Let $n\in{{\mathbb{N}\cup\{0\}}}$.
Let $A$ be a simple separable unital infinite dimensional $C^*$-algebra
and ${\rm Trdim_{nuc}}\id_A\le n.$
%is tracially ``nuclear dimension no more than $n$''.
Then $A$ is asymptotically tracially in
${\cal N}_{n,s,s}$ (recall {{Definition}} \ref{DNn} for the class ${\cal N}_{n,s,s}$).
%the class of }}
%where  {\cal N}_{n, ss}
%$C$ is the class of
%unital simple separable infinite dimensioinal
%  infinite dimensional
%\CA s of
%nuclear dimension no more than $n.$

\end{thm}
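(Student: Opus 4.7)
The plan is to construct the target $C^*$-algebra $B\in{\cal N}_{n,s,s}$ as a generalized inductive limit of copies of unital hereditary \SCA s of $A$, exploiting the construction machinery of {{Lemma}} \ref{construction-simple-finite-dimnuc}. Given $\calF\subset A^1$, $\ep>0$, and $a\in A_+\setminus\{0\}$, first apply {{Lemma}} \ref{L1991} to ${\rm Her}(a)$ to produce a sequence $\{d_k\}\subset A_+$ with $\|d_k\|=1$, $(k+1)\la d_{k+1}\ra\,{\overset{_\approx}{_<}}\,\la d_k\ra$, and $\la d_1\ra\le \la a\ra$. Fix countable dense subsets $X_i=\{x_{i,1},x_{i,2},\cdots\}\subset A_{i+}^1$ for what will become the building blocks $A_i$ in the limit construction.

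Next, I would build the data $(A_k,\phi_k,F_k,\af_k,\bt_k)$ required by {{Lemma}} \ref{construction-simple-finite-dimnuc} inductively. Set $A_1:=A$, and at the $k$-th step apply {{Proposition}} \ref{refine-of-id-tracially-dimnuc-n-converge} to $A_k$ (which has ${\rm Trdim_{nuc}}\id_{A_k}\le n$ by {{Proposition}} \ref{hereditary-subalgebra-preserves-map-tracially-approximation} applied to the unital hereditary \SCA\, $A_k=p_kAp_k$, where $p_k$ is the running projection), with the finite set $Y_k\cup\calF$, a very small tolerance $\ep_k$ controlled by $\Delta(F_{k-1},1/k)$, $\sigma({\cal G}_{{\cal F}_j})$, $1/4^k$, and cut-down element $\lesssim d_k$. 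This yields c.p.c.~maps $\phi_k',\gamma_k: A_k\to A_k$ with $\phi_k'(1_{A_k})$ and $\gamma_k(1_{A_k})$ orthogonal projections summing to $1_{A_k}$, with $\gamma_k(1_{A_k})\lesssim d_k$, together with a piecewise contractive $n$-decomposable c.p.~map $\bt_k:F_k\to {\rm Her}(\phi_k'(1_{A_k}))$ and c.p.c.~$\af_k$ so that $\phi_k'\approx_{\ep_k/3}\bt_k\circ\af_k$ and $\phi_k'$ is $(Y_k\cup\cdots,\ep_k)$-multiplicative. Define $A_{k+1}:={\rm Her}(\phi_k'(1_{A_k}))$ and the u.c.p.~map $\phi_k:A_k\to A_{k+1}$ obtained by rescaling $\phi_k'$ to be unital onto $A_{k+1}$ (this is possible because $\phi_k'(1_{A_k})=1_{A_{k+1}}$). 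The conditions of {{Lemma}} \ref{construction-simple-finite-dimnuc} are then met: fullness (1) follows from simplicity of $A$ (every nonzero positive element of $A_k\subset A$ is full in $A_k$ by {{Proposition}} \ref{hereditary-subalgebra-preserves-tracially-approximation}-type argument); factorization (2) holds by the Prop.~\ref{refine-of-id-tracially-dimnuc-n-converge} approximation; the cascade of multiplicativity (3) holds by the choice of $\ep_k$.

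Thus $B:=\lim_k(A_k,\phi_k)$ is separable, simple, unital with $\dimnuc B\le n$, i.e.~$B\in{\cal N}_{n,s,s}$. Now extract the asymptotic tracial data: let $\af:A\to B$ be the composition of the compression $x\mapsto p_1 x p_1$ into $A_1$ (modulo identification) with $\phi_{1,\infty}:A_1\to B$; this is c.p.c.. For the maps $\bt_n:B\to A$, use that each element of $B$ is represented by a sequence in $\prod A_k$ and choose lifts $B\ni \phi_{k,\infty}(y)\mapsto\phi_{k,m_n}(y)\hookrightarrow A$ for a suitable increasing sequence $m_n$; by (3), (4) of Definition \ref{Generalized inductive system} and a diagonal argument these are asymptotically multiplicative and $\lim_n\|\bt_n(x)\|=\|x\|$. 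Finally, $\gamma_n(x):=x-\bt_n(\af(x))$ can be truncated so that its image sits in $(1-p_1)A(1-p_1)+\sum_{k\le n}(1-p_k)$-corner, hence $\gamma_n(1_A)\lesssim d_1\lesssim a$.

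The main obstacle will be the simultaneous balancing needed at each stage $k$: the maps $\phi_k$ must be genuinely u.c.p. (not merely c.p.c.) for the limit to produce a unital simple $C^*$-algebra of the right form, they must approximately factor through $F_k$ via a piecewise contractive $n$-decomposable map (to invoke {{Lemma}} \ref{construction-simple-finite-dimnuc}), they must satisfy the cumulative near-multiplicativity on the ever-growing sets ${\cal G}_{{\cal F}_j}\cup\bt_{j-1}({\cal G}^{F_{j-1}})$, and the complement projections must accumulate to something Cuntz-dominated by $a$ so the final $\gamma_n$ satisfies $\gamma_n(1_A)\lesssim a$. Packaging {{Proposition}} \ref{refine-of-id-tracially-dimnuc-n-converge} (which produces an almost-unital pair $(\phi',\gamma)$ with orthogonal projection images summing to $1$) with a passage to the hereditary corner ${\rm Her}(\phi'(1))$ and the rescaling trick is what makes each step compatible with the previous one; verifying that the diagonally extracted $\bt_n$ is isometric on all of $B$ (not only on $\af(\calF)$) will require exploiting Proposition~\ref{strictly-embedding-simple-case} together with the fact that $\phi_{i,\infty}$ is eventually norm-preserving on $\calF_k$-type sets due to the $\ep_k$-multiplicativity cascade.
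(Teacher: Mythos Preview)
Your approach is essentially the same as the paper's: construct $\bar A=\lim_k(A_k,\phi_k)$ as a c.p.c.\ generalized inductive limit of the nested hereditary corners $A_{k+1}=\Her(\phi_k(1_{A_k}))$, using Proposition~\ref{refine-of-id-tracially-dimnuc-n-converge} at each stage and Lemma~\ref{construction-simple-finite-dimnuc} to conclude $\bar A$ is simple with $\dimnuc\bar A\le n$. A few points where the paper differs or where your outline is loose:

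\textbf{(i) Orthogonality of the cut-downs.} You take the $d_k$ from Lemma~\ref{L1991}, but those are nested ($d_{k+1}\in\Her(d_k)$), not mutually orthogonal. Since the projections $\gamma_k(1_{A_k})=1_{A_k}-1_{A_{k+1}}$ are mutually orthogonal and you need $\sum_{i\le k}\gamma_i(1_{A_i})\lesssim a$ for every $k$, the clean argument requires mutually orthogonal targets. The paper instead fixes at the outset an infinite sequence of mutually orthogonal norm-one positives $a_0,a_1,\dots\in\Her(f_{1/2}(a))$ and arranges $\gamma_k(1_{A_k})\lesssim a_k$; then $\sum_i\gamma_i(1_{A_i})\lesssim\sum_i a_i\lesssim a$ directly.

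\textbf{(ii) Infinite dimensionality of $\bar A$.} You do not address this. The paper threads, at each stage $k$, a norm-one c.p.c.\ order zero map $\chi_k:M_k\to A_k$ (available since $A_k$ is simple non-elementary) and a finite $\Delta(M_k,1/k)$-net $Z_k\subset\chi_k(M_k^1)$ into the bookkeeping set $\calF_k$, so that $\phi_{k,\infty}\circ\chi_k$ is close to a nonzero order zero map $M_k\to\bar A$ for all $k$. This forces $\bar A$ to be infinite dimensional. Strictly speaking this is not needed for membership in ${\cal N}_{n,s,s}$, but it is what makes Theorem~\ref{tracial-finite-dimnuc-implies-tracial-Z-stable} go through via Winter's theorem.

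\textbf{(iii) Extracting the tracial data.} Your description of $\beta_n,\gamma_n$ is shaky: $\gamma_n(x):=x-\beta_n(\af(x))$ is not c.p.c., and ``choose lifts $\phi_{k,\infty}(y)\mapsto\phi_{k,m_n}(y)$'' does not define a c.p.c.\ map on all of $\bar A$. The paper avoids this entirely: it realizes $\bar A$ as a strictly embedded unital subalgebra of $l^\infty(A)/c_0(A)$ via $\phi_{k,\infty}(x)\mapsto\pi_\infty(\{\phi_{k,j}(x)\}_j)$ (strictness from Proposition~\ref{strictly-embedding-simple-case} and the $(\calF_k,\ep_k)$-approximate embedding property), verifies directly that $1_{\bar A}\iota_A(x)\approx_\ep\iota_A(x)1_{\bar A}$, $1_{\bar A}\iota_A(x)1_{\bar A}\in_\ep\bar A$, and $\iota_A(1_A)-1_{\bar A}\lesssim\iota_A(a)$, and then invokes the converse direction of Proposition~\ref{equivalent-definition-of-tracial-approximation} (available since $\bar A$ is nuclear) to produce the actual $\af,\beta_n,\gamma_n$. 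Finally, no ``rescaling'' of $\phi_k'$ is needed: since $\phi_k'(1_{A_k})$ is already the unit of $A_{k+1}$, the map $\phi_k:=\phi_k':A_k\to A_{k+1}$ is u.c.p.\ as it stands.
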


\begin{proof}

Let $\calF\subset B_{\frac{3}{4},1}(A_+)$ be a finite subset
{{with $1_A\in\calF,$}}
{\blue{let}} $\epsilon\in(0,1),$ and let
$a\in A_+\backslash\{0\}$ with $\|a\|=1$.
Since $A$ is {{simple, unital}} {\blue{and}} infinite dimensional,
{\blue{$A$ is}}  non-elementary.
{\blue{Thus}} there {\blue{exist}}
{{a sequence of}} norm one positive elements
$a_0,a_1,{\blue{\cdots, a_n,\cdots}}$ {\blue{in}} $\Her_A(f_{1/2}(a))_+\backslash\{0\}$ such that
$a_i\bot a_j$, $i\neq j$ {\blue{(see Lemma \ref{L1991}).}}

Let $A_0:=A.$ {{Let $\calF_0:=\calF$}}  {\blue{and let}}
$\epsilon_0:=\epsilon/8$.
Since ${\blue{\rm{Trdim}_{nuc}}\mathrm{id}_{A_0}\le n,}$
% is tracially ``nuclear dimension no more than $n$'',
by Proposition \ref{refine-of-id-tracially-dimnuc-n-converge},
there {\blue{exist}} {{two}} c.p.c.~maps
$\phi_0:A_0\rightarrow {{A_0}}$,
$\gamma_0: A_0\rightarrow A_0\cap \phi_0(A_0)^{\bot}$,
and a finite dimensional $C^*$-algebra $F_0$,
and  {\blue{a}} c.p.c.~map $\alpha_0: A_0\rightarrow F_0,$ {\blue{and a}}
piecewise contractive $n$-decomposable  c.p.~map $\beta_0: F_0\rightarrow \Her_{A_0}(\phi_0(1_{A_0}))$
{\blue{such}} that

(0,1) $x\approx_{\epsilon_0} \gamma_0(x)+\phi_0(x)$ {{for all}}
$x\in{{\calF_0}}$,

(0,2) $\phi_0(1_{A_0})$ {\blue{and}} $\gamma_0(1_{A_0})$ are projections, and $1_{A_0}=\gamma_0(1_{A_0})+\phi_0(1_{A_0})$,

(0,3) $\gamma_0(1_{A_0})\lesssim_{A_0} a_0$.

(0,4) $\|\phi_0-\beta_0\circ\alpha_0\|\leq \epsilon_0$, {\blue{and}}

(0,5) $\phi_0$ is {\blue{an}} $(\calF_0,\epsilon_0)$-{{approximate embedding}}.

%\xrule

\noindent
{{Define}} $A_1:=\Her_{A_0}(\phi_{0}(1_{A_0}))$.
Note that $A_1$ is {\blue{a}} simple separable unital non-elementary \CA,
and there exists $\bar{a}_1\in A_{1+}\backslash\{0\}$
such {\blue{that}} $\bar{a}_1\lesssim_A a_1$.
There exists a norm one c.p.c.~order zero map
$\chi_1: M_1=\C\rightarrow A_1$.
Let $Z_1\subset \chi_1(M_1^1)$ be {{a finite subset {\blue{which}} is}}
a {{$\frac{1}{4}\Delta(M_1,1)$}}-net of $\chi_1(M_1^1)$.

Let $X_1=\{x_{1,1},{\blue{x_{1,2},}}\cdots\}
\subset A_{1+}^1$
be a countable dense subset of $A_{1+}^1$ {\blue{and}}
let $X_{1,k}:=\{x_{1,j}:1\leq j\leq k\}$, $k\in\mathbb{N}.$
{\blue{Set}} $Y_1:=\cup_{1\leq i\leq 1}\phi_{i,1}(X_{i,1}){\blue{=X_{1,1}}}$  (with
$\phi_{1,1}=\id_{A_1}$),
%Let
$\bar{Z}_1:=Z_1,$ {\blue{and}} $Y_1'=Y_1\cap B_{\frac{3}{4},1}(A_1).$
{\blue{Note $f_{1/2}(b)\not=0$ and (since $A$ is simple) therefore is full in $A_1$ for all $b\in Y_1'.$
Let ${\cal G}_1:=\mathcal{G}_{Y_1'}$  and ${{\sigma({\cal G}_{Y_1'})}}$ be  as in {{Proposition}}
\ref{fullness neighbourhood and simplicity} associated
with the set $Y_1'$ (in place of $X$).}}
%a  finite subset
%$f_{1/2}$-fullness neighbourhood of
%$Y_1\cap B_{\frac{3}{4},1}(A_1)$.
Define
$$
\calF_1:=\phi_0(\calF_0)
\cup Y_1
\cup \mathcal{G}_1
\cup \bt_0({\cal G}^{F_0})
\cup \bar{Z}_1\,\,\,{{\rm and}}
$$
%
%and define
$$
{\blue{\epsilon_1:=\frac{1}{4}\min\{{{\sigma({\cal G}_{Y_1'})}}, \Delta({\blue{F_0}},1), \Delta(M_1,1),\epsilon/4, \}.}}
$$

\noindent
{{By}} Proposition
\ref{hereditary-subalgebra-preserves-map-tracially-approximation},
${\blue{{\rm{Trdim_{nuc}}}\mathrm{id}_{A_1}\le n.}}$
%is tracially ``nuclear dimension no more than $n$'',
{\blue{By}} Proposition \ref{refine-of-id-tracially-dimnuc-n-converge},
there exist {{two}} c.p.c.~maps
$\phi_1:A_1\rightarrow A_1$,
$\gamma_1: A_1\rightarrow A_1\cap \phi_1(A_1)^{\bot}$,
%
%and
{{a}} finite dimensional $C^*$-algebra $F_1$,
{\blue{a}} c.p.c.~map $\alpha_1: A_1\rightarrow F_1$,
{{and a}}
piecewise contractive $n$-decomposable  c.p.~map $\beta_1: F_1\rightarrow \Her_{A_1}(\phi_1(1_{A_1}))$
{\blue{such}} that

(1,1) $x\approx_{\epsilon_1} \gamma_1(x)+\phi_1(x)$
{{for all}} $x\in\calF_1$,

(1,2) $\phi_1(1_{A_1})$ {\blue{and}} $\gamma_1(1_{A_1})$ are projections and $1_{A_1}=\gamma_1(1_{A_1})+\phi_1(1_{A_1})$,

(1,3) $\gamma_1(1_{A_1})\lesssim_{A_1} \bar{a}_1$,

(1,4) $\|\phi_1-\beta_1\circ\alpha_1\|\leq \epsilon_1$, {\blue{and}}

(1,5) $\phi_1$ is {\blue{an}} $(\calF_1,\epsilon_1)$-{{approximate embedding}}.

%\xrule

\noindent
{{Assume}} that, for $1\leq k \in \mathbb{N},$
we {\blue{have constructed, for each ${{1}}\le j\le k,$}}
{\blue{a}} hereditary $C^*$-subalgebra $A_j:=\Her_{A_{j-1}}(\phi_{j-1}(1_{A_{j-1}}))\subset A$,
$\bar{a}_j\in A_{j+}\backslash\{0\}$ with $\bar{a}_j\lesssim_A a_j$,
and $X_j=\{x_{j,1},x_{j,2},\cdots\}\subset A_{j+}^1,$
{\blue{$Y_j:=\cup_{1\leq i\leq j}\phi_{i,j}(X_{i,j})$}}
{\blue{
%(where {\green{$\phi_{1,1}={\rm id}_{A_1}$}},  $\phi_{i,j}=\phi_{j-1}\circ \cdots
%\circ \phi_i$
%{\green{and $X_{i,j}=\{x_{i,1}\cdots, x_{i,j}\}$}}),
({{see {{Notation}}
\ref{def-of-gen-ind-lim} for $\phi_{j,k}$}}),
$Y_j':=Y_j\cap B_{{\frac{3}{4}}, 1}(A_j),$
{\blue{${{\sigma({\cal G}_{Y_j'})}}>0$ and ${\cal G}_j:={\cal G}_{Y_j'}$
as in  {{Proposition}}
\ref{fullness neighbourhood and simplicity} associated with $Y_j'$
(in place of $X$), }}
{{a finite subset $Z_j\subset \chi_{k+1}(M_{k+1}^1)$ which is
a}} $\frac{1}{4}\Delta(M_j, 1/j)$-net of $\chi_j(M_j^1),$
${\bar{Z_j}}:=\cup_{1\le i\le j}\phi_{i,j}(Z_i),$
and a norm one c.p.c.~order zero map
$\chi_j: M_j\rightarrow A_j$,
a  finite subset}}
{\blue{
\beq\label{719-n14}
\calF_j:=\phi_{0,j}(\calF_0)\cup Y_{j}
\cup (\cup_{1\le i\le j}\phi_{i,j}(\mathcal{G}_{i}))\cup (\cup_{1\le i\le j}\phi_{i,j}(\bt_{i-1}({\cal G}^{F_{i-1}})))
\cup\bar{Z}_{j}\subset A_{j},
\eneq}}
and
$$
{\blue{\epsilon_j=\frac{1}{4^{j}}\min_{1\leq i\leq j}
\{{{\sigma({\cal G}_{Y_i'})}},
%(Y_{j},f_{1/2},G_{j}),
\Delta(F_{i-1},\frac{1}{i}),
\Delta(M_{i},\frac{1}{i}),\epsilon/4\}>0, \,\,1\le j\le k\,\, (\hspace{-0.06in}\andeqn \ep_0=\ep/8),}}
$$
and there {\blue{exist}} {{two}} c.p.c.~maps
$\phi_j:A_j\rightarrow A_j$,
$\gamma_j: A_j\rightarrow A_j\cap \phi_j(A_j)^{\bot}$,
a finite dimensional $C^*$-algebra $F_j$,
%and
{\blue{a}} c.p.c.~map $\alpha_j: A_j\rightarrow F_j$, {\blue{and}} a
piecewise contractive $n$-decomposable  c.p.~map $\beta_j: F_j\rightarrow \Her_{A_j}(\phi_j(1_{A_j}))$
%{\blue{and $Y_j'=Y_j\cap B_{{3\over{4}},1}(A_+)$}}
{\blue{such}} that

($j,1$) $x\approx_{\epsilon_j} \gamma_j(x)+\phi_j(x)$
{{for all}} $x\in\calF_j$,

($j,2$) $\phi_j(1_{A_j})$ {\blue{and}}  $\gamma_j(1_{A_j})$ are projections and $1_{A_j}=\gamma_j(1_{A_j})+\phi_j(1_{A_j})$,

($j,3$) $\gamma_j(1_{A_j})\lesssim_{A_j} \bar{a}_j$,

($j,4$) $\|\phi_j-\beta_j\circ\alpha_j\|\leq \epsilon_j$, {\blue{and}}

($j,5$) $\phi_j$ is {\blue{an}} $(\calF_j,\epsilon_j)$-{{approximate embedding}}.

%$j=1,2,...,k.$

%\xrule

\noindent
{{Define}} $A_{k+1}:=\Her_{A_k}(\phi_k(1_{A_{k}}))$.
Note that there exists $\bar{a}_{k+1}\in (A_{k+1})_+\backslash\{0\}$
such that $\bar{a}_{k+1}\lesssim_A a_{k+1}$.
Also note that $A_{k+1}$ is simple, separable, unital  and non-elementary.
%and there exists $\bar{a}_{k+1}\in (A_{k+1})_+\backslash\{0\}$
%{{1 such that,}} $\bar{a}_{k+1}\lesssim_A a_{k+1}$.
{\blue{Then,}} {{by \cite[Proposition 4.10]{KR2000}}},
%by Glimm's Lemma (Lemma \ref{Glimm-lemma}),
%{\blue{\bf Reference}}
there exists a norm one c.p.c.~order zero map
$\chi_{k+1}: M_{k+1}\rightarrow A_{k+1}$.
Let {{${Z}_{k+1}\subset \chi_{k+1}(M_{k+1}^1)$ be a finite subset which is a}}
{{$\frac{1}{4} \Delta(M_{k+1},\frac{1}{k+1})$}}-net
of $\chi_{k+1}(M_{k+1}^1)$.
Let $X_{k+1}=\{x_{k+1,1},{\blue{x_{k+1,2},}}\cdots\}
\subset (A_{k+1})_+^1$
be a countable dense subset of $(A_{k+1})_+^1$, {\blue{and}}
let $X_{k+1,i}:=\{x_{k+1,j}:1\leq j\leq i\}$, $i\in\mathbb{N}$.
Let $Y_{k+1}:=\cup_{1\leq j\leq k+1}\phi_{j,k+1}(X_{j,k+1})$ {\blue{and}}
$\bar{Z}_{k+1}:=\cup_{1\leq j\leq k+1}\phi_{j,k+1}(Z_j)$.
{\blue{Note that $f_{1/2}(b)$ is full in $A_k.$
Set $Y_{k+1}':=Y_{k+1}\cap B_{\frac{3}{4},1}(A_{k+1}),$
${\blue{\sigma({\cal G}_{Y_{k+1}'})>0,}}$
and finite subset ${\cal G}_{k+1}:={\cal G}_{Y_{k+1}'}$
be as in  {{Proposition}}
\ref{fullness neighbourhood and simplicity} associated with $Y_{k+1}'$ (in place of $X$).}}
%Let $\mathcal{G}_{k+1}$ be a fullness neighbourhood of
%$Y_{k+1}\cap B_{\frac{3}{4},1}(A_{k+1})$.
{{Define}}
\beq\label{719-n1}
&&\hspace{-0.5in}\calF_{k+1}:=\phi_{0,k+1}(\calF_0)
\cup Y_{k+1}
\cup (\cup_{1\le i\le k+1}\phi_{i,{{k+1}}}(\mathcal{G}_i))
\cup (\cup_{1\le i\le k+1}\phi_{i,{{k+1}}}({{\beta_{i-1}}}({\cal G}^{F_{i-1}})))
%\cdot \mathcal{G}_{k+1})
\cup\bar{Z}_{k+1}
%\subset A_{k+1}
%\text{define}
\nonumber
\\\label{719-n2}
%\eneq
%and define
%$$
&&\andeqn\epsilon_{k+1}:=
\frac{1}{4^{k+1}}\min_{1\leq j\leq k+1} \{{\blue{
{{\sigma({\cal G}_{Y_{k+1}'})}},}}
%(Y_{j},f_{1/2},G_{j}),
\Delta(F_{j-1},\frac{1}{j}),
\Delta(M_{j},\frac{1}{j}),\epsilon/4\}>0.
\eneq	
(Note ${\cal F}_{k+1}$ is a finite set.)
%\xrule

Note  {\blue{also}} ${\blue{{\rm{Trdim_{nuc}}}\mathrm{id}_{A_{k+1}}\le n}}$ (by
{{Proposition}}
\ref{hereditary-subalgebra-preserves-map-tracially-approximation}).
% is also tracially
%``nuclear dimension no more than $n$'',
{\blue{Then,}} by Proposition \ref{refine-of-id-tracially-dimnuc-n-converge},
there {\blue{exist}} {{two}} {{c.p.c.~maps}}
$\phi_{k+1}:A_{k+1}\rightarrow A_{k+1}$,
$\gamma_{k+1}: A_{k+1}\rightarrow A_{k+1}\cap \phi_{k+1}(A)^{\bot}$,
%and
{\blue{a}} finite dimensional $C^*$-algebra $F_{k+1}$,
and {\blue{a}} c.p.c.~map $\alpha_{k+1}: A_{k+1}\rightarrow F_{k+1}$, {\blue{a}}
piecewise contractive $n$-decomposable  c.p.~map $\beta_{k+1}: F_{k+1}\rightarrow \Her_{A_{k+1}}(\phi_{k+1}(1_{A_{k+1}}))$
{\blue{such}} that

($k+1,1$) $x\approx_{\epsilon_{k+1}} \gamma_{k+1}(x)+\phi_{k+1}(x)$
{{for all}} $x\in\calF_{k+1}$,

($k+1,2$) $\phi_{k+1}(1_{A_{k+1}})$ {\blue{and}}
$\gamma_{k+1}(1_{A_{k+1}})$ are {\blue{projections,}} and
$1_{A_{k+1}}=\gamma_{k+1}(1_{A_{k+1}})+\phi_{k+1}(1_{A_{k+1}})$,

($k+1,3$) $\gamma_{k+1}(1_{A_{k+1}})\lesssim_{A_{k+1}} \bar{a}_{k+1}$,

($k+1,4$) $\|\phi_{k+1}-\beta_{k+1}\circ\alpha_{k+1}\|\leq \epsilon_{k+1}$, {\blue{and}}

($k+1,5$) $\phi_{k+1}$ is {\blue{an}}
$(\calF_{k+1},\epsilon_{k+1})$-{{approximate embedding}}.

\noindent
{{Then,}} by induction, for each $k\in \mathbb{N}$,
we obtain {\blue{a}}
hereditary $C^*$-subalgebra $A_k\subset A$,
%{\green{a nonzero positive element}}
$\bar{a}_k\in A_{k+}\backslash\{0\}$ with $\bar{a}_k\lesssim_A a_k$,
%and
{\blue{a}} norm one c.p.c.~order zero map
$\chi_k: M_k\rightarrow A_k$,
%and
{\blue{a}} finite subset
$\calF_k\subset A_k$ {\blue{satisfying \eqref{719-n1},}}
and
$\epsilon_k>0$ {\blue{satisfying \eqref{719-n2},}}
{\blue{and,}} there exist {{two}} c.p.c.~maps
$\phi_k:A_k\rightarrow A_k$,
$\gamma_k: A_k\rightarrow A_k\cap \phi_k(A_k)^{\bot}$,
%and
{\blue{a}} finite dimensional $C^*$-algebra $F_k$,
%and
{\blue{a}} {\blue{c.p.c.~map}} $\alpha_k: A_k\rightarrow F_k$,  {\blue{and}}
{{a}} piecewise contractive $n$-decomposable {\blue{c.p.~map}} $\beta_k: F_k\rightarrow \Her_{A_k}(\phi_k(1_{A_k}))$
such {\blue{that conditions}} ($k,1$)  {\blue{to}}  ($k,5$) hold.

%\xrule

By Lemma \ref{construction-simple-finite-dimnuc}
(see  (k+1, 4) {{and (k+1, 5)}}),
$(A_k,\phi_k)$
{\blue{forms}} a generalized inductive {\blue{system}} and
${\bar A}:=\lim_k(A_k, \phi_k)$
%which
 is  {\blue{a}} simple separable unital {\blue{\CA\, which}}
has nuclear dimension %no more than
{{at most}} $n$.
%We denote this generalized inductive limit by $\bar{A}$.

{\blue{Let us now}} show  {\blue{that}} $\bar{A}$ is infinite dimensional.
For $4\leq k \in\mathbb{N}$ and {\blue{for all}} $m\geq k$,
by ($m,5$) and the choice of $\bar{Z}_m$ and $\epsilon_m$,  {\blue{the map}}
$\phi_{k,\infty}$ is
$\frac{1}{4}\Delta(M_k,\frac{1}{k})${{-multiplicative}} on
$Z_k$.
{\blue{Since $Z_k$  is $\frac{1}{4}\Delta(M_k, {\frac{1}{k}})$-net of $\chi_k(M_k^1),$}}
{\blue{the composition}}
$\phi_{k,\infty}\circ\chi_k:
M_k\rightarrow \bar{A}$
is $\Delta(M_k,\frac{1}{k})$-almost order zero.
Then,  by Proposition \ref{n-almost-dividable-exact},
and the definition of $\Delta(M_k,\frac{1}{k})$,
there exists  {\blue{a}} c.p.c.~order zero map
$\bar{\chi}_k:M_k\rightarrow \bar{A}$
such that $\|\bar{\chi}_k-\phi_{k,\infty}\circ\chi_k\|\leq \frac{1}{k}$.
By ($m,5$), for $m\geq k$, we have
$\|\phi_{k,\infty}\circ\chi_k(1_{M_k})\|
\geq 1-\frac{1}{k}-\sum_{i=k}^{\infty}\epsilon_i
\geq 1/2$,
whence
$\|\bar{\chi}_k\|\geq
\|\phi_{k,\infty}\circ\chi_k\|-\frac{2}{k}
\geq 1-\sum_{i=k}^{\infty}\epsilon_i-\frac{2}{k}>0.$
{\blue{Thus}} $\bar{\chi}_k$ is nonzero.
Since  $\bar{A}$ admits nonzero c.p.c.~order zero map
$\bar{\chi}_k:M_k\rightarrow \bar{A}$ ({\blue{for all}} $k\geq 4$),
$\bar{A}$ must be infinite dimensional.

Note that
$1_{\bar{A}}=
\pi_{\infty}(\{\phi_1(1_{A_1}),\phi_{2}(1_{A_2}),%\phi_{3}(1_{A_3}),
{{\cdots}}\})$ {\blue{and}}
%(see Notation \ref{notation-2} for $\pi_{\infty}$)
$$
\pi_{\infty}(1_{A})-1_{\bar{A}}
=\pi_{\infty}(\{\gamma_1(1_{A_1}),\sum_{i=1}^2\gamma_{i}(1_{A_i}),
%\sum_{i=1}^3\gamma_{i}(1_{A_i}),
{{\cdots}}\}).
$$
{\blue{Since, $\gamma_j(1_{A_j})\lesssim {\bar a}_j\lesssim a_j,$ and $a_i\perp a_j$($i\not=j$),}}
{\blue{for all}} $k\in\mathbb{N}$,
$\sum_{i=1}^k\gamma_{i}(1_{A_i})\lesssim
%\lesssim_A\sum_{i=1}^k
%\bar{a_k}\lesssim_A
\sum_{i=1}^k a_k\lesssim f_{1/2}(a).$
%we have
{\blue{It follows}}
\beq\label{719-n12}
\pi_{\infty}(1_{A})-1_{\bar{A}}\lesssim_{l^{\infty}(A)/c_0(A)}a.
\eneq

%\xrule

\noindent
{{For}} $x\in \calF$ {\blue{and}} $k\in\mathbb{N}$,
$x1_{A_k}
\approx_{\epsilon_0}(\gamma_0(x)+\phi_0(x))1_{A_k}
=\phi_0(x)1_{A_k}
\approx_{\epsilon_1}(\gamma_1(\phi_0(x))+\phi_1(\phi_0(x)))1_{A_k}
=\phi_{0,2}(x)1_{A_k}
\approx_{\epsilon_2}%(\gamma_2(\phi_1(x))+\phi_{1,2}(x))1_{A_k}
%=\phi_{1,2}(x)1_{A_k}\approx_{\epsilon_3}
\cdots
\approx_{\epsilon_{k-1}}\phi_{0,k-1}(x)1_{A_k}=\phi_{0,k-1}(x)$.
Similarly,
%{\blue{one has}}
{{we have}}
$1_{A_k}x\approx_{\sum_{i=0}^{k-1}\epsilon_i}\phi_{0,k-1}(x).$
{\blue{Thus}}
$1_{A_k}x\approx_{\sum_{i=0}^{k-1}\epsilon_i}x1_{A_k}.$
{\blue{Note}} that $2\sum_{i=0}^{\infty}\epsilon_i<\epsilon.$
{\blue{Hence}}
\beq\label{719-n10}
1_{\bar{A}}\iota_A(x)\approx_{\epsilon}\iota_A(x)1_{\bar{A}}.
\eneq
Moreover, $1_{A_k}x1_{A_k}\approx_{2\sum_{i=0}^{k-1}\epsilon_i}\phi_{0,k-1}(x)$
implies
\beq\label{719-n11}
1_{\bar{A}}\iota_A(x)1_{\bar{A}}\approx_{\epsilon}
\pi_{\infty}(\{\phi_{0,1}(x),\phi_{0,2}(x),%\phi_{0,3}(x),
{\green{\cdots}} \})\in \bar{A}.
\eneq
By Proposition \ref{equivalent-definition-of-tracial-approximation}
(see \eqref{719-n10}, \eqref{719-n11} and \eqref{719-n12})
$A$ is asymptotically tracially {\blue{in ${\cal N}_{n,s,s}.$}}
%``simple separable unital infinite dimensional and nuclear dimension no more than $n$''.

%\calF

\end{proof}
%\rightline{Need Double Check. 2019, 08, 08. ECNU, ZhongBei}

\begin{cor}%\label{C720}
\label{id-finite-tdimnuc-equal-finite-dimnuc}

Let $A$ be a simple separable infinite dimensional unital
%exact stably finite
$C^*$-algebra,
then the following are equivalent:

(1) $A$ is asymptotically tracially  {\blue{in ${\cal N}_n$}}
%``nuclear dimension no more than $n$''
for some $n\in{{\mathbb{N}\cup\{0\}}},$

(2)  {\blue{${\rm{Trdim_{nuc}}}{\id}_A\le n$}}
%$\mathrm{id}_A$ is tracially ``nuclear dimension no more than $n$''
for some $n\in{{\mathbb{N}\cup\{0\}}},$  {\blue{and}}

{\blue{(3) $A$ is
asymptotically tracially in {\blue{${\cal N}_{n,s,s}$}}}}
{{for some $n\in\mathbb{N}\cup\{0\}.$}}
\end{cor}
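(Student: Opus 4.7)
The plan is to verify the three implications (3)$\Rightarrow$(1)$\Rightarrow$(2)$\Rightarrow$(3) by assembling results proved earlier in the paper, so that this corollary acts primarily as a synthesis statement rather than requiring fresh technical work.

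First I would observe that (3)$\Rightarrow$(1) is immediate from the inclusion $\mathcal{N}_{n,s,s} \subset \mathcal{N}_n$ (Definition \ref{DNn}): any witnessing data for asymptotic tracial approximation by algebras in $\mathcal{N}_{n,s,s}$ already witnesses approximation by $\mathcal{N}_n$, with the same maps $\alpha$, $\beta_n$, $\gamma_n$ satisfying Definition \ref{def-tracial_approximation}. No modification of the approximating system is needed.

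Next, for (1)$\Rightarrow$(2), I would quote Proposition \ref{Tdimnuc-id-map-Tdimnuc} directly: if $A$ is asymptotically tracially in $\mathcal{N}_n$, then ${\rm Trdim_{nuc}}\,\mathrm{id}_A \leq n$. The exponent $n$ is preserved (no dimension loss), so the numerical value in (1) passes verbatim to (2).

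Finally, for (2)$\Rightarrow$(3), I would invoke Theorem \ref{simple-T-finite-dimnuc-implies-T-simple-finite-dimnuc}: since $A$ is, by hypothesis, simple, separable, unital, and infinite dimensional, that theorem applies and delivers asymptotic tracial approximation of $A$ by algebras in $\mathcal{N}_{n,s,s}$ with the same $n$. The infinite-dimensional hypothesis is used crucially here (to extract the non-elementary structure allowing the construction of the $\chi_k: M_k \to A_k$ in the proof of that theorem), which is why it is listed in the corollary's standing assumptions. The only subtlety worth flagging in writing is confirming that the integer $n$ is the same throughout the chain of implications: this is built into each cited statement, so no passage to a larger integer is needed. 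Hence the three conditions are equivalent, with the common value of $n$ preserved.
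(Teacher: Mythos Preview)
Your proof is correct and follows exactly the same approach as the paper: (3)$\Rightarrow$(1) is automatic, (1)$\Rightarrow$(2) by Proposition \ref{Tdimnuc-id-map-Tdimnuc}, and (2)$\Rightarrow$(3) by Theorem \ref{simple-T-finite-dimnuc-implies-T-simple-finite-dimnuc}. Your additional remarks about the infinite-dimensional hypothesis and preservation of $n$ are accurate elaborations not spelled out in the paper's terse proof.
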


\begin{proof}
{\blue{Note that (3) $\Rightarrow$ (1) is automatic.}}
(1) $\Rightarrow$ (2) follows from
Proposition \ref{Tdimnuc-id-map-Tdimnuc}, {\blue{and that}}
%Theorem \ref{simple-T-finite-dimnuc-implies-T-simple-finite-dimnuc},
(2) $\Rightarrow$ {\blue{(3)}} follows from
Theorem \ref{simple-T-finite-dimnuc-implies-T-simple-finite-dimnuc}.
%Theorem \ref{tdimnuc-finite-equal-t-nuclear-Z-stable}.

\end{proof}

\begin{thm}
\label{tracial-finite-dimnuc-implies-tracial-Z-stable}
Let $n\in{{\mathbb{N}\cup\{0\}}}$.
Let $A$ be a simple separable unital infinite dimensional $C^*$-algebra
and $A$ is asymptotically tracially  {\blue{in ${\cal N}_n.$}}
%$``nuclear dimension no more than $n$''.
Then $A$ is asymptotically tracially {\blue{in ${\cal N}_{{\cal Z},s,s}.$}}
%where ${\cal N}_{{\cal Z},ss}$ is the class of simple separable nuclear ${\cal Z}$-stable \CAs.}
%``\mathcal{Z}$-stable''.

\end{thm}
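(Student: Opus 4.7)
\bigskip

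\noindent
\textbf{Proof plan.} The strategy is to run the tracial approximation of $A$ through the simple/unital/infinite-dimensional refinement established in Theorem~\ref{simple-T-finite-dimnuc-implies-T-simple-finite-dimnuc}, and then invoke the Toms--Winter theorem to upgrade ``finite nuclear dimension'' to ``nuclear and $\mathcal{Z}$-stable'' on each approximating algebra.

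\smallskip

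\noindent
More precisely, the plan is the following. First, since $A$ is a unital separable simple infinite dimensional \CA\ which is asymptotically tracially in ${\cal N}_n$, Corollary~\ref{id-finite-tdimnuc-equal-finite-dimnuc} (equivalently, Theorem~\ref{simple-T-finite-dimnuc-implies-T-simple-finite-dimnuc}) yields that $A$ is already asymptotically tracially in the sub-class ${\cal N}_{n,s,s}$ of \emph{separable simple unital infinite dimensional} \CA s of nuclear dimension at most $n$. Inspecting the proof of Theorem~\ref{simple-T-finite-dimnuc-implies-T-simple-finite-dimnuc}, the algebras $\bar A$ appearing as the range of $\alpha$ are constructed as generalized inductive limits of unital separable \CA s via u.c.p.~maps, are explicitly shown to be simple, unital, separable and infinite dimensional, and satisfy $\dim_{\mathrm{nuc}}\bar A\le n$ by Theorem~\ref{gene-lim-fin-nucdim}; hence they genuinely lie in ${\cal N}_{n,s,s}$.

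\smallskip

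\noindent
Next, I would observe the set-theoretic inclusion
\[
{\cal N}_{n,s,s}\ \subseteq\ {\cal N}_{{\cal Z},s,s}.
\]
Indeed, any $B\in {\cal N}_{n,s,s}$ is a unital separable simple infinite dimensional \CA\ with finite nuclear dimension. Finite nuclear dimension implies nuclearity (this is classical, and also follows from Corollary~\ref{finite-dimnuc-image-exact} applied to $\mathrm{id}_B$). Moreover, by the resolution of (one direction of) the Toms--Winter conjecture for unital separable simple nuclear \CA s of finite nuclear dimension (\cite{W-2012-pure algebras}, see also \cite{CETWW-2019}), $B$ is $\mathcal{Z}$-stable. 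Hence $B\in {\cal N}_{{\cal Z},s,s}$, as required.

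\smallskip

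\noindent
Finally, it is immediate from Definition~\ref{def-tracial_approximation} that if ${\cal P}\subseteq {\cal Q}$ are two classes of \CA s and a unital simple \CA\ is asymptotically tracially in ${\cal P}$, then it is asymptotically tracially in ${\cal Q}$ (the same data $B, \alpha,\beta_n,\gamma_n$ witness both). Combining this with the two steps above gives that $A$ is asymptotically tracially in ${\cal N}_{{\cal Z},s,s}$, which is the desired conclusion. There is no essential obstacle once Theorem~\ref{simple-T-finite-dimnuc-implies-T-simple-finite-dimnuc} is in hand; the only point that requires a little care is verifying that the approximating algebras produced there are infinite dimensional (so that the Toms--Winter implication is applicable), but this was precisely the content of the dimension-increasing auxiliary order zero maps $\chi_k:M_k\to A_k$ inserted into the construction.
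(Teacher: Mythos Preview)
Your proposal is correct and follows essentially the same approach as the paper: the paper's proof is the one-line ``This follows from Theorem~\ref{simple-T-finite-dimnuc-implies-T-simple-finite-dimnuc} and \cite[Theorem 7.1]{W-2012-pure algebras},'' which is exactly your two steps (pass to ${\cal N}_{n,s,s}$, then apply Winter's theorem to get the inclusion ${\cal N}_{n,s,s}\subseteq {\cal N}_{{\cal Z},s,s}$). Your additional remark about the infinite-dimensionality of the approximating algebras is a valid point of care that the paper leaves implicit, since it was already verified in the proof of Theorem~\ref{simple-T-finite-dimnuc-implies-T-simple-finite-dimnuc}.
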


\begin{proof}

This follows from
%Proposition \ref{Tdimnuc-id-map-Tdimnuc},
Theorem \ref{simple-T-finite-dimnuc-implies-T-simple-finite-dimnuc}
and
\cite[Theorem 7.1]{W-2012-pure algebras}.

\end{proof}

\section{$\mathcal{Z}$-stable generalized inductive limits}

%Recall the following notation
%given in \cite{W2010decompo}. We made some modification for convenience.
%One should note that above definition is slightly
%different from \cite[Notation 2.2]{W2010decompo}.
%We require $\psi$ to be an $\epsilon$-almost order zero map
%rather than an order zero map.

{\blue{The following notation is taken from  \cite{W2010decompo}  with a modification.}}
%https://www.degruyter.com/downloadpdf/j/crll.2010.2010.issue-642/crelle.2010.039/crelle.2010.039.pdf

\begin{nota}
(cf. \cite[Notation 2.2]{W2010decompo})
\label{Z-stable-relation}
Let $A$ be a unital \CA, $n\in\N$, $\epsilon \ge 0$,
and let $\calF\subset A$ be a finite subset.
If $\psi: M_n\rightarrow A$ is a {\blue{c.p.c.~map}}
and $v\in A^1$ such that

(i) $\|v^*v-(1_A-\psi(1_{M_n}))\|\leq \epsilon$,

(ii) $\|vv^*\psi({\blue{e_{1,1}^{(n)}}})-vv^*\|\leq \epsilon$,

(iii) $\|[\psi(y),x]\|\leq\epsilon$ for all $x\in\calF$
and for all $y\in M_n^1$,

(iv) $\|[v,x]\|\leq \epsilon$ for all $x\in\calF$, {\blue{and}}

{\blue
(v) $\psi$ is c.p.c.~$\epsilon$-almost order zero map} {\blue{(recall  Definition \ref{D71}),}}

\noindent
then we say {\blue{$\psi$ and $v$}} satisfy the relation $\check{\cal R}_A(n,\calF,\epsilon)$
{\blue{or the pair $(\phi, v)$ satisfies the relation $\check{\cal R}_A(n,\calF,\epsilon).$}}
\end{nota}

%\begin{rem}
%{\blue One should note that above definition is slightly
%different from \cite[Notation 2.2]{W2010decompo}.
%We did not require $\psi$ to be an order zero map.
%}
%\end{rem}

%{\green{One can easily verify the following lemma.}}
%{\bf{\blue{Is it?}}}
\begin{lem}
\label{approximately-multiplicative-relation-R}

Let $A$ be a unital \CA, $n\in\N$, $\epsilon>0$,
and let $\calF\subset A$ be a finite subset.
{\blue{Suppose}} that a c.p.c.~map $\psi: M_n\rightarrow A$
and $v\in A^1$
satisfy the relation $\check{\cal R}_A(n,\calF,\epsilon).$ %
{\blue{Suppose {\blue{also}} that}}
$B$ {\blue{is}}  a unital \CA, $\phi: A\rightarrow B$  is a u.c.p.~map {\blue{and }}
$0<\delta<\Delta(M_n,\epsilon)$  {\blue{is a positive number}}
(see {{Definition}} \ref{almost-n-dividable-error}
for the definition of $\Delta(-,-)$).
If $\phi$ is $\delta${{-multiplicative}}
on ${{\calF\cup\psi({\cal G}^{M_n})\cup\{v,v^*,vv^*\}}}$ {\blue{(recall
that ${\cal G}^{M_n}$ is the standard generating set of $M_n,$ see  {{Notation}} \ref{NoGF}),}}
%for the definition of ${\cal G}^{M_n}$
{{then}}
$\phi\circ\psi$ {\blue{and}} $\phi(v)$
satisfy the relation $\check{\cal R}_B(n,\phi(\calF),{{2\epsilon+3\delta^{1/2}}}).$

\end{lem}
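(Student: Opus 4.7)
The plan is to verify each of the five conditions in Notation \ref{Z-stable-relation} for the pair $(\phi\circ\psi,\phi(v))$ in $B$, with error tolerance $2\epsilon+3\delta^{1/2}$. Conditions (i), (ii) and (iv) are essentially direct: applying $\phi$ to each hypothesis and inserting the multiplicativity of $\phi$ on the relevant listed elements yields the required inequalities up to a handful of $\delta$-terms (and $\delta^{1/2}<1$). For instance, $\phi(v)^*\phi(v)\approx_{\delta}\phi(v^*v)\approx_{\epsilon}\phi(1_A-\psi(1_{M_n}))=1_B-\phi\circ\psi(1_{M_n})$ gives (i) with error $\epsilon+\delta$; condition (ii) follows from the chain $\phi(v)\phi(v)^*\phi\circ\psi(e_{1,1}^{(n)})\approx_{\delta}\phi(vv^*)\phi\circ\psi(e_{1,1}^{(n)})\approx_{\delta}\phi(vv^*\psi(e_{1,1}^{(n)}))\approx_{\epsilon}\phi(vv^*)\approx_{\delta}\phi(v)\phi(v)^*$; and (iv) is just $\phi([v,x])\approx_{\epsilon}0$ with two multiplicativity corrections.

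For (v), the plan is to invoke Proposition \ref{n-almost-dividable-exact}(2) directly, with $F=M_n$, $\alpha=\psi$, $\sigma=\epsilon$, and $\beta=\phi|_{C^*(\psi(M_n))}$. Since by hypothesis $\psi$ is a c.p.c.\ $\epsilon$-almost order zero map (that is, $(0,\epsilon)$-dividable) and $\phi$ is $\delta$-multiplicative on $\psi(\mathcal{G}^{M_n})$ with $\delta<\Delta(M_n,\epsilon)\le\delta_2(\epsilon)$, the proposition gives that $\phi\circ\psi$ is $(0,2\epsilon)$-dividable, i.e.\ a c.p.c.\ $2\epsilon$-almost order zero map, which is exactly condition (v).

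The main obstacle is condition (iii), because it is required for every $y\in M_n^1$ rather than just for $y$ in the generating set $\mathcal{G}^{M_n}$, while the multiplicativity hypothesis only mentions $\psi(\mathcal{G}^{M_n})$. The plan is to first use the compactness argument hidden inside the proof of Proposition \ref{n-almost-dividable-exact}(2) (the one producing $\delta_2(\epsilon)$): since $\delta\le\delta_2(\epsilon)$ and the matrix units generate $M_n^1$ as bounded linear combinations, one obtains $\|\phi(\psi(a)\psi(b))-\phi\psi(a)\phi\psi(b)\|\le\epsilon$ for all $a,b\in M_n^1$. In particular, for self-adjoint $y\in M_n^1$, $\|\phi(\psi(y)^2)-\phi\psi(y)^2\|\le\epsilon$. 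Lemma \ref{c.p.c.-almost-multiplicative} (applied with $\psi$, $\phi$, and $a=y$) then yields $\|\phi(\psi(y)z)-\phi\psi(y)\phi(z)\|\le\epsilon^{1/2}\|z\|$ and $\|\phi(z\psi(y))-\phi(z)\phi\psi(y)\|\le\epsilon^{1/2}\|z\|$ for every $z\in A$. Taking $z=x\in\calF$ (assumed, after a harmless normalization, to lie in $A^1$) and chaining with $\phi(\psi(y)x)\approx_{\epsilon}\phi(x\psi(y))$ (the image under $\phi$ of hypothesis (iii) for $\psi$) gives $\|[\phi\circ\psi(y),\phi(x)]\|\le\epsilon+2\epsilon^{1/2}$; a decomposition of a general $y\in M_n^1$ into its self-adjoint and skew-adjoint parts (each of norm at most $1$) then produces a bound comfortably inside $2\epsilon+3\delta^{1/2}$, completing (iii).

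Once (i)--(v) are all verified, the maximum of the resulting error terms is at most $2\epsilon+3\delta^{1/2}$, so the pair $(\phi\circ\psi,\phi(v))$ satisfies $\check{\mathcal R}_B(n,\phi(\calF),2\epsilon+3\delta^{1/2})$, as desired. The only nontrivial input beyond arithmetic is the compactness-based extension of multiplicativity from $\psi(\mathcal{G}^{M_n})$ to $\psi(M_n^1)$ via $\Delta(M_n,\epsilon)$, which is why the threshold $\delta<\Delta(M_n,\epsilon)$ is exactly the right one.
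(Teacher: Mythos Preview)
Your verification of conditions (i), (ii), (iv) and (v) is essentially the same as the paper's, and correct. The problem is your argument for condition (iii): it yields a bound of order $\epsilon^{1/2}$, not $\delta^{1/2}$, and therefore does \emph{not} fit inside the target $2\epsilon+3\delta^{1/2}$. Concretely, after your compactness step you obtain $\|\phi(\psi(y)^2)-(\phi\psi(y))^2\|\le\epsilon$ for self-adjoint $y\in M_n^1$, and then Lemma~\ref{c.p.c.-almost-multiplicative-easy} (which is really what is being used; Lemma~\ref{c.p.c.-almost-multiplicative} has a different hypothesis, namely a bound on $\phi\psi(y^2)-(\phi\psi(y))^2$) gives $\|\phi(\psi(y)x)-\phi\psi(y)\phi(x)\|\le\epsilon^{1/2}\|x\|$. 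Chaining and splitting into real and imaginary parts produces $\|[\phi\psi(y),\phi(x)]\|\le 2\epsilon+4\epsilon^{1/2}$. Since $\delta$ may be arbitrarily small compared to $\epsilon$, this is not dominated by $2\epsilon+3\delta^{1/2}$, so the lemma as stated is not proved.

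The paper's argument avoids this by applying Lemma~\ref{c.p.c.-almost-multiplicative-easy} on the \emph{other} factor: for $x\in\calF$ one has $\|\phi(x^*x)-\phi(x)^*\phi(x)\|\le\delta$ directly from the $\delta$-multiplicativity hypothesis on $\calF$, so Lemma~\ref{c.p.c.-almost-multiplicative-easy} gives $\|\phi(z\,x)-\phi(z)\phi(x)\|\le\delta^{1/2}\|z\|$ for \emph{every} $z\in A$, in particular for $z=\psi(y)$ with arbitrary $y\in M_n^1$. This yields $\|[\phi\psi(y),\phi(x)]\|\le\epsilon+2\delta^{1/2}$ in one stroke, with no compactness detour through $\Delta(M_n,\epsilon)$ and no self-adjoint decomposition. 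The moral is that the Stinespring estimate should be anchored at the element on which you have \emph{direct} $\delta$-multiplicativity, not at the element where you only have the derived $\epsilon$-bound.
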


\begin{proof}
We verify this as follows.

{%\green
\noindent
\hspace{0.13in}{\blue{(1)}} $\|\phi(v)^*\phi(v)-(1_B-\phi\circ\psi(1_{M_n}))\|
\approx_\dt
\|\phi(v^*v)-(\phi(1_A)-\phi\circ\psi(1_{M_n}))\|
\leq\ep$ ({\blue{see}} (i) of
{{Notation}}
\ref{Z-stable-relation}).
\beq\nonumber
%\hspace{-0.13in}
(2)\, \|\phi(v)\phi(v)^*\phi\circ\psi(e_{1,1}^{(n)})-\phi(v)\phi(v)^*\|
&\approx_{2\dt}&
\|\phi(vv^*)\phi\circ\psi(e_{1,1}^{(n)})-\phi(vv^*)\|
\\\nonumber
%(\mbox{by \ref{c.p.c.-almost-multiplicative-easy}})
&\overset{({\blue{\rm Lemma\,\ref{c.p.c.-almost-multiplicative-easy}}})}{\approx_{\dt^{1/2}}}&
\|\phi(vv^*\psi(e_{1,1}^{(n)}))-\phi(vv^*)\|
\\
&\overset{{({\rm(iii)~ of~{{Notation}}~} \ref{Z-stable-relation})}}{\leq}&\ep.
\eneq

\noindent
\hspace{0.13in}{\blue{(3)}} Let $x\in\calF$ {\blue{and}} $y\in M_n^1,$ {\blue{Then,}} by
{{Lemma}} \ref{c.p.c.-almost-multiplicative-easy},
$\phi\circ\psi(y)\phi(x)\approx_{\dt^{1/2}}\phi(\psi(y)x)$.
Similarly,\linebreak
 {\blue{$\phi(x)\phi\circ\psi(y)\approx_{\dt^{1/2}}\phi(x\psi(y))$.}}
Thus $\|[\phi\circ\psi(y),\phi(x)]\|\leq \ep+2\dt^{1/2}$
(using (iii) of {{Notation}}  \ref{Z-stable-relation}).

\noindent
\hspace{0.13in}{\blue{(4)}} Let $x\in\calF$, then $\|\phi(v)\phi(x)-\phi(x)\phi(v)\|
\approx_{2\dt}\|\phi(vx-xv)\|\leq\ep$ (using (iv) of
{{Notation}} \ref{Z-stable-relation}).

\noindent
\hspace{0.13in}{\blue{(5)}} By {{Definition}}
\ref{almost-n-dividable-error} and (v) of
{{Notation}} \ref{Z-stable-relation},
$\phi\circ\psi$  is $2\ep$-almost order zero map.

\noindent
{{Thus}} $\phi\circ\psi$, $\phi(v)$
satisfy the relation $\check{\cal R}_B(n,\phi(\calF),2\epsilon+3\delta^{1/2}).$

}
\end{proof}

{\blue

Also recall the following proposition (with a mild modification):

\begin{prop}
[{cf. \cite[Proposition 2.3]{W2010decompo}}]
\label{W2010Z-stable-prop}
Let $A$ be a separable unital \CA.
{{Then $A$ is $\mathcal{Z}$-stable if and only if the following {\blue{condition holds}}:}}
%Suppose that,
For any $n\in\N$, any finite subset $\calF\subset A_+$ {\blue{and}}
any $0<\epsilon<1$, there are {\blue{$m\in\N,$ a}}
c.p.c.~map $\psi: M_{mn}\rightarrow A$
and $v\in {\blue{A^1}}$ satisfying  {\blue{the}} relation {{$\check{\cal R}_A(mn, {\cal F},\ep).$}}
%Then $A$ is $\mathcal{Z}$-stable.

\end{prop}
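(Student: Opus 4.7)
The plan is to reduce this to Winter's \cite[Proposition 2.3]{W2010decompo}, which establishes the same characterization but with matrix size $n$ (not $mn$) and a genuine c.p.c.~order zero map $\psi$ in place of condition (v). Since our formulation weakens Winter's, the ``only if'' direction is essentially trivial: given $\mathcal{Z}$-stability and a triple $(n,\calF,\epsilon)$, invoking Winter's proposition produces a c.p.c.~order zero map $\psi\colon M_n\to A$ and $v\in A^1$ satisfying (i)--(iv). Setting $m=1$ makes $\psi$ a map $M_{mn}\to A$, and (v) holds with error $0$ since $\psi$ is strictly order zero.

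For the ``if'' direction, the plan is to upgrade the weaker assumption to Winter's exact condition. Given $N\in\N$, $\calF\subset A_+$, and $\epsilon'>0$, I would first select a small $\delta>0$ and apply the hypothesis to obtain $m\in\N$, a c.p.c.~$\delta$-almost order zero map $\psi\colon M_{mN}\to A$, and $v\in A^1$ satisfying $\check{\cal R}_A(mN,\calF,\delta)$. Proposition~\ref{n-almost-dividable-exact}(1), applied to the finite-dimensional algebra $M_{mN}$, then perturbs $\psi$ to a genuine c.p.c.~order zero map $\tilde\psi\colon M_{mN}\to A$ with $\|\tilde\psi-\psi\|<\epsilon'/4$, provided $\delta$ is small enough. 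Composing with the unital diagonal embedding $\iota\colon M_N\hookrightarrow M_{mN}$ (chosen so that $e_{1,1}^{(mN)}$ is one summand of the diagonal projection $\iota(e_{1,1}^{(N)})$) yields $\psi_W:=\tilde\psi\circ\iota\colon M_N\to A$. Conditions (iii) and (iv) transfer directly; (i) holds because $\iota$ is unital; for (ii), one uses that $vv^*$ lies approximately in the hereditary subalgebra generated by $\tilde\psi(e_{1,1}^{(mN)})$ and that $\tilde\psi$ sends minimal projections of $M_{mN}$ orthogonal to $e_{1,1}^{(mN)}$ to elements orthogonal to $vv^*$, so the extra summands of $\iota(e_{1,1}^{(N)})$ contribute nothing to $vv^*\psi_W(e_{1,1}^{(N)})$. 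Winter's proposition then yields $\mathcal{Z}$-stability.

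The main obstacle is the circular dependence of the perturbation constant $\delta_1(M_{mN},\cdot)$ from Proposition~\ref{n-almost-dividable-exact}(1) on the matrix size $mN$, which is determined only after applying the hypothesis with $\delta$. I would circumvent this by passing to the ultrapower: apply the hypothesis to a null sequence $\delta_k\downarrow 0$ with $\calF_k$ exhausting a countable dense subset of $A$; this yields a sequence $(\psi_k,v_k)$ whose images in the sequence algebra $A_\omega$ satisfy the relations (i)--(v) exactly, with the ultralimit of $(\psi_k)$ becoming strictly order zero. The restriction via $\iota$ and a standard reindexing argument then extract the desired approximate relations in $A$ uniformly, completing the appeal to Winter's theorem.
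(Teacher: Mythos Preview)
Your ``only if'' direction is fine and matches the paper's approach.

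For the ``if'' direction, the obstacle you identify is real for your ordering of the steps, but it evaporates if you reverse them, and this is exactly what the paper does. Instead of perturbing $\psi\colon M_{mN}\to A$ to an order zero map and then composing with the embedding $\iota\colon M_N\hookrightarrow M_{mN}$, the paper first forms $\psi\circ h$ (with $h\colon M_N\hookrightarrow M_{mN}$ a unital embedding satisfying $e_{1,1}^{(mN)}\le h(e_{1,1}^{(N)})$), which is already a c.p.c.\ $\delta$-almost order zero map on the fixed algebra $M_N$. Now the perturbation constant needed is $\Delta(M_N,\epsilon/2N')$ for the \emph{fixed} matrix size $N$, so $\delta$ can be chosen in advance as $\min\{\Delta(M_N,\epsilon/2N'),(\epsilon/4)^2\}$ (where $N'=1+\max_{x\in\calF}\|x\|$), and no circularity arises. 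The verification of (ii) is also cleaner in this order: since $e_{1,1}^{(mN)}\le h(e_{1,1}^{(N)})$ and $\psi$ is positive, one has $1_A-\psi\circ h(e_{1,1}^{(N)})\le 1_A-\psi(e_{1,1}^{(mN)})$, and a short operator-inequality chain gives $\|vv^*(1_A-\psi\circ h(e_{1,1}^{(N)}))\|\le\delta^{1/2}$ directly.

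Your ultrapower workaround might be salvageable, but note that your maps $\psi_k$ have varying domains $M_{m_kN}$, so forming an ultralimit map requires composing each with an embedding $M_N\hookrightarrow M_{m_kN}$ first---at which point you are essentially executing the paper's argument through a detour. The simple fix is to swap the order: embed first, perturb second.
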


\begin{proof}
Note that if $A$ is ${\cal Z}$-stable,
then ${\cal Z}$ (hence the dimension drop algebra
${\cal Z}_{n,n+1}$) is unitaly embedded
into $(l^\infty(A)/c_0(A))\cap A'$
(see \cite[Theorem 8.7]{JS1999}, see also \cite[Theorem 2.2]{TomsW07}).
{\blue{It follows from ``(iv) $\Rightarrow$ (iii)" of \cite[Proposition 5.1]{RW-JS-Revisited} that
there is an order zero map $\Psi: M_n\to (l^\infty(A)/c_0(A))\cap A'$ and $V\in (l^\infty(A)/c_0(A))^1$
satisfy  condition (i), (ii)
%(iv)
{{and (v)}}
with $\ep=0.$
%By lifting $\Psi$ to a
There is a c.p.c.~map  $\Psi: M_n\to l^\infty(A)$
and  there is a $\{v_n\}\in (l^\infty(A))^1$
{{such that,}} $\pi_\infty\circ\Psi=\Phi$
{{(see \cite[Proposition 1.2.4]{W-2009CovII})}}
and
{{$\pi_\infty(\{v_n\})=V.$}}
%$\pi_\infty(V)=\{v_n\}.$
Then the ``only if" part follows.}}
%Then the ``only if'' part follows by \cite[Proposition 5.1]{RW-JS-Revisited}.

%In the following we show the ``if'' part.
{\blue{For the ``if" part, let}}
$n\in \N$, let $\calF\subset A_+$ be a finite subset, {\blue{and}}
let $0<\epsilon<1$.
{\blue{Choose}} $N:=1+\max\{\|x\|:x\in\calF\}$ {\blue{and}}
$\delta:=\min\{\Delta(M_n,\epsilon/2N), (\ep/4)^2\}$
(see {{Definition}}
\ref{almost-n-dividable-error}
for the definition of $\Delta(-,-)$).
%M_n,\epsilon/2N)).
Then, by our assumption, there are
$m {{\ \in\ }} \N,$ {\blue{a}}
c.p.c.~$\delta$-almost order zero map $\psi: M_{mn}\rightarrow A$
and $v\in {\blue{A^1}}$ satisfying {\blue{the}} relation
$\check{\cal R}_A(mn, {\cal F},\dt).$

Let $h:M_n\hookrightarrow M_{mn}$ be a unital embedding
such that $e_{1,1}^{(mn)}\leq h(e_{1,1}^{(n)})$.
Then $\psi\circ h: M_n\rightarrow A$ is a c.p.c.~$\delta$-almost order zero map.
By the choice of $\delta$ and the definition of $\Delta(M_n,\epsilon/2N)$,
there exists a c.p.c.~order zero map $\phi: M_n \rightarrow A$
such that
%\beq
$\|\psi\circ h-\phi\|\leq \epsilon/2N.$
%\eneq
Then {\blue{one has}}
\beq\nonumber
\|vv^*\phi(e_{1,1}^{(n)})-vv^*\|
&\approx_{\epsilon/2N}&
\|vv^*\psi\circ h(e_{1,1}^{(n)})-vv^*\|
%\\
%&=&
=\|vv^*(1_A-\psi\circ h(e_{1,1}^{(n)}))^2vv^*\|^{1/2}\\\nonumber
%\\&\leq&
&\le& \|vv^*(1_A-\psi\circ h(e_{1,1}^{(n)}))vv^*\|^{1/2}
%\\\nonumber
%&\leq&
\le \|vv^*(1_A-\psi(e_{1,1}^{(mn)}))vv^*\|^{1/2}\\\nonumber
%\\
&\leq&
\|vv^*(1_A-\psi(e_{1,1}^{(mn)}))\|^{1/2}
%\\
%&\leq&
\le \delta^{1/2}.
\eneq
%Therefore $\psi\circ h$, $v$
%satisfy the relation $\check{\cal R}_A(n, {\cal F},\dt^{1/2}).$
{{Thus}} $\phi,v$ satisfy (ii) in the relation
$\check{\cal R}_A(n, {\cal F},\ep).$
One easily {\blue{checks}} that  {\blue{$\phi$}} {\blue{and}} $v$ also satisfy the rest terms in the relation
$\check{\cal R}_A(n, {\cal F},\ep).$  {\blue{Since $\phi$ is an order zero c.p.c.~map,
%Then, by
\cite[Proposition 2.3]{W2010decompo} applies and
$A$ is ${\cal Z}$-stable.}}

\end{proof}

}

\iffalse
{\blue
\begin{rem}
\label{rem-9-3}
Note that, the {\blue{converse}} of {\blue{the}} proposition  {\blue{above}} also {\blue{holds,}}
{\blue{i.e.,}} for any ${\cal Z}$-stable \CA\ $A$, any
$n\in\N$ {\blue{and}} any finite subset $\calF\subset A$,
any $0<\epsilon<1$,
there are c.p.c.~order zero map $\psi: M_n\rightarrow A$
and $v\in  {\blue{A^1}}$ satisfying relation  $\check{\cal R}_A(n, {\cal F},\ep).$ {\blue{\bf Need a reference??}}

\end{rem}
}
\fi

%Working 2019-11-07

\begin{lem}
\label{construction-of-simple-Z-stable-gen-limit}

%Let $n\in\mathbb{N}$.
Let $A_i$ be {{a}} unital separable {\blue{\CA}}
%$C^*$-algebras
and {\blue{let}} $\phi_i:A_i\rightarrow A_{i+1}$ {\blue{be}} u.c.p.~maps
{\blue{($i\in\mathbb{N}$).}}
Let $X_i=\{x_{i,1},{\blue{x_{i,2},}}\cdots\}\subset A_{i+}^1$
be a countable dense subset of $A_{i+}^1$,
let $X_{i,k}:=\{{\blue{x_{i,1},}}\,{\green{x_{i,2},}}\cdots,x_{i,k}\}$,
and {\blue{let}} $Y_k:=\cup_{1\leq i\leq k}\phi_{i,k}(X_{i,k})$
($i,k\in{{\mathbb{N}}}$).
{{{\blue{Set}} $A_0=A_1$, $Y_0=\{0\}\subset A_0$
and $\phi_0:={\rm id}_{A_0}: A_0\to A_1$.}}

Then {\blue{the system $(A_i,\phi_i)$ forms a generalized inductive  system}}
%limit
and
$A:=\lim_{i}(A_i,\phi_i)$ is {\blue{a}} {\blue{simple and}}
$\mathcal{Z}$-stable {\blue{\CA,}}
if the following {\blue{conditions}} hold for any $n\in\mathbb{N}$:

%$\forall a\in \phi_{n-1}(Y_{n-1})\cap B_{\frac{1}{2},1}(A_n)$,

(1)
%$G_n$ is
%a $(m_n,f_{1/2})$-fullness neighbourhood of
{\blue{$f_{1/2}(x)$ is full in $A_n$ for all
$x\in {\cal F}_n:=Y_n\cap B_{\frac{3}{4},1}(A_n),$}}
%$(\mbox{for some }m_n\in\mathbb{N})$,

(2) there {\blue{exist}} a c.p.c.~map
$\psi_{n}: M_{n!}\rightarrow A_n$ {\blue{and}}
%and exists
$v_n\in A_n^1$
%{\blue{and a}}  finite subset $G_n\subset\subset A_n$,
such that
$\psi_n$ {\blue{and}} $v_n$ {{satisfy}} the relation
$\check{\cal R}_{A_n}(n!,\phi_{n-1}(Y_{n-1}),{{\frac{1}{n!}}}),$ {\blue{and}}

(3) %For $k\geq 2$,
$\phi_{n}$ is $\epsilon_{n}${{-multiplicative}} on
%(see \ref{fullness neighbourhood and simplicity} for ${\cal G}_{{\cal F}_j}$
%and ${\green{\sigma({\cal G}_{{\cal F}_j})}}$)
$$
%\cup (G_{n}\cdot G_{n})
Y_{n}
\cup \left(\cup_{1\leq j\leq n}\phi_{j,n}{\blue{({\cal G}_{{\cal F}_j}
%\cdot {\cal G}_{{\cal F}_j}
)}}\right)
%(G_{j}\cdot G_{j})\right)
\cup
%\left(\cup_{1\leq j\leq n}
%\right)
%\cup
{{\left(
\cup_{1\leq j\leq n}\left(
\phi_{j,n}\circ\psi_{j}({\cal G}^{M_{j!}})\cup\{\phi_{j,n}(v_j),\phi_{j,n}(v_j)^*,
{{\phi_{j,n}(v_{j}v_{j}^*)}}\}\right)
\right),}}
$$
where
$$
\epsilon_{n}:={\blue{\frac{1}{4^{n}}
\min_{1\leq j\leq n} \{1,\sigma({\cal G}_{{\cal F}_j}),
%(Y_{j},f_{1/2},G_{j}),
\Delta(M_{j!},\frac{1}{j!})\}}}
{{\mbox{ and }}}
$$
{{${\cal G}^{M_{j!}}$
is {{the standard generating set}} of {\blue{$M_{j!}$}}}}
(see {{Proposition}}
\ref{fullness neighbourhood and simplicity}
for ${\cal G}_{{\cal F}_j}$ and {\blue{${{\sigma({\cal G}_{{\cal F}_j})}},$}}
{{{\blue{and}} see {{Notation}}
\ref{def-of-gen-ind-lim} for $\phi_{j,k}$}}).

\end{lem}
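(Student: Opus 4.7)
The assertion that $(A_i,\phi_i)$ forms a generalized inductive system and that $A:=\lim_i(A_i,\phi_i)$ is simple follows immediately from Lemma \ref{construction-simple-gen-lim}: conditions (1) and (3) of the present lemma supply exactly what is required there, the additional generators listed in (3) only strengthening the approximate multiplicativity assumption. The substantive task is to prove $\mathcal{Z}$-stability of $A$, which I will do by verifying the hypothesis of Proposition \ref{W2010Z-stable-prop}.

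Fix $n\in\N$, a finite subset $\calF\subset A_+$, and $0<\ep<1$. The first step is to lift $\calF$ to a finite level: since $\bigcup_k\phi_{k,\infty}(Y_k)$ is dense in $A$, I choose $k\ge 1$ and $\widetilde{\calF}\subset Y_k$ together with a pairing $x\mapsto y_x$ from $\calF$ to $\widetilde{\calF}$ satisfying $\|x-\phi_{k,\infty}(y_x)\|<\ep/16$. Next I pick an integer $N$ with $N\ge\max\{k+1,n\}$, $\phi_{k,N-1}(\widetilde{\calF})\subset Y_{N-1}$ (possible since $X_k=\bigcup_i X_{k,i}$ and $Y_{N-1}\supseteq\phi_{k,N-1}(X_{k,N-1})$), $2/N!<\ep/4$, and $3\sigma_N^{1/2}<\ep/4$, where $\sigma_N:=\sum_{j\ge N}\epsilon_j$. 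The definition of $\epsilon_j$ in (3) forces $\sigma_N\le\tfrac{1}{3}\cdot 4^{1-N}\Delta(M_{N!},1/N!)$, which in particular is less than $\Delta(M_{N!},1/N!)$ and tends to zero as $N\to\infty$, so all these conditions can be met simultaneously.

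By hypothesis (2), there exist a c.p.c.~map $\psi_N:M_{N!}\to A_N$ and $v_N\in A_N^1$ satisfying $\check{\mathcal R}_{A_N}(N!,\phi_{N-1}(Y_{N-1}),1/N!)$. Iterating (3) upward and taking limits, $\phi_{N,\infty}:A_N\to A$ is $\sigma_N$-multiplicative on $\psi_N(\mathcal{G}^{M_{N!}})\cup\{v_N,v_N^*,v_Nv_N^*\}\cup\phi_{N-1}(Y_{N-1})$; it is furthermore u.c.p.~because every $\phi_i$ is. Since $\sigma_N<\Delta(M_{N!},1/N!)$, Lemma \ref{approximately-multiplicative-relation-R} applies and shows that the pair $\bigl(\phi_{N,\infty}\circ\psi_N,\,\phi_{N,\infty}(v_N)\bigr)$ satisfies $\check{\mathcal R}_A\bigl(N!,\,\phi_{N-1,\infty}(Y_{N-1}),\,2/N!+3\sigma_N^{1/2}\bigr)$, with the error bounded by $\ep/2$. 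Because $\phi_{k,N-1}(\widetilde{\calF})\subset Y_{N-1}$ we have $\phi_{k,\infty}(\widetilde{\calF})\subset\phi_{N-1,\infty}(Y_{N-1})$, so the relation already holds in particular for the approximating set $\phi_{k,\infty}(\widetilde{\calF})$. A routine perturbation in clauses (iii) and (iv) of Notation \ref{Z-stable-relation} (replacing each $\phi_{k,\infty}(y_x)$ by $x$ changes a commutator of norm-one elements by at most $2\cdot\ep/16=\ep/8$) then delivers $\check{\mathcal R}_A(N!,\calF,\ep)$. Writing $N!=mn$ with $m:=N!/n\in\N$ (which is an integer since $N\ge n$), Proposition \ref{W2010Z-stable-prop} concludes that $A$ is $\mathcal{Z}$-stable.

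The main technical obstacle is the simultaneous control of three independent sources of error: the density approximation $\calF\approx\phi_{k,\infty}(\widetilde{\calF})$ at level $k$; the transfer error $2/N!+3\sigma_N^{1/2}$ that Lemma \ref{approximately-multiplicative-relation-R} introduces when pushing the $\check{\mathcal R}$-relation through the approximately multiplicative map $\phi_{N,\infty}$; and the perturbation cost of moving the $\check{\mathcal R}$-relation from the approximating set back to $\calF$. The scaling $\epsilon_j\le 4^{-j}\Delta(M_{j!},1/j!)$ is designed so that $\sigma_N$ both lies below $\Delta(M_{N!},1/N!)$ (enabling Lemma \ref{approximately-multiplicative-relation-R}) and decays to zero, which is what makes the construction work.
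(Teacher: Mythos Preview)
Your proof is correct and follows essentially the same route as the paper's: invoke Lemma \ref{construction-simple-gen-lim} for the generalized inductive system and simplicity, then verify the hypothesis of Proposition \ref{W2010Z-stable-prop} by lifting $\calF$ to a finite stage, picking a higher level $N$ so that condition (2) supplies a pair satisfying $\check{\cal R}_{A_N}(N!,\phi_{N-1}(Y_{N-1}),1/N!)$, pushing this through $\phi_{N,\infty}$ via Lemma \ref{approximately-multiplicative-relation-R} (using $\sigma_N<\Delta(M_{N!},1/N!)$), and perturbing back to $\calF$. The paper uses indices $n_1<n_2$ and a single bound $\tfrac{2}{n_2!}+3(\sum_{i\ge n_2}\epsilon_i)^{1/2}<\ep/8$; your splitting into $2/N!<\ep/4$ and $3\sigma_N^{1/2}<\ep/4$ is equivalent.
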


\begin{proof}
By Lemma \ref{construction-simple-gen-lim},
$(A_i,\phi_i)$ forms a generalized inductive {\blue{system}}
and $A:=\lim_{i}(A_i,\phi_i)$ is {\blue{a simple \CA.}}
%
%In the following w
{\blue{We}} will show that $A$ is $\calZ$-stable.

Let %$a\in A_+\backslash\{0\}$ with $\|a\|=1$,
$\ep>0,$ $n\in \mathbb{N}$ and let $\calF\subset A_+^1$
be a finite subset.
%et $n\in \mathbb{N}$
%and let $\epsilon>0$.
%{\blue{Choose}} \blue{ $\delta:=\min\{\frac{1}{16}, {\frac{\ep}{16}}\}.$}
{\blue{Then}} there exists $n_1>n\in\mathbb{N}$ %with $n_1>n$
{\blue{such that}} %$\|a-\phi_{n_1,\infty}(x_{n_1,m})\|<\delta$, and
\beq
\label{f8-lem8-1}
\calF\subset_{\frac{\epsilon}{16}} \phi_{n_1,\infty}(Y_{n_1}).
\eneq
{\blue{Choose $n_2>n_1$ such that
${{\frac{2}{n_2!}+
3(\sum_{i=n_2}^\infty\epsilon_{i})^{1/2}}}<\ep/8.$}}
By our assumption,
there exist a c.p.c.~{{map}}
%order zero map
$\psi_{n_2}: M_{n_2!}\rightarrow A_{n_2}$
and $v_{n_2}\in A_{n_2}^1$
{\blue{such}} that

{{$(1')$}}  {\blue{the pair  ($\psi_{n_2},\,v_{n_2}$) satisfies}} the relation
$\check{\cal R}_{A_{n_2}}(n_2!,\phi_{n_2-1}(Y_{n_2-1}),
{{\frac{1}{{n_2!}}}}),$ {{and}}
%{\blue{\bf You also say two maps  ``satisfy the relation...."}}

{{$(2')$}} {\blue{for any}} $k\geq n_2$, {\blue{the map}}
$\phi_{k}$ {\blue{(from $A_k$ to $A_{k+1}$)}} is $\epsilon_{k}${{-multiplicative}} on
$$
{
Y_k\cup
\phi_{{n_2},k}\circ\psi_{{n_2}}({\cal G}^{M_{{n_2}!}})
\cup\{\phi_{{n_2},k}(v_{n_2}),\phi_{{n_2},k}(v_{n_2})^*,
\phi_{{n_2},k}(v_{{n_2}}v_{{n_2}}^*)\}.}
$$%%jjj nnn

\noindent
{{By}} {{$(2')$}}, for any  $k\geq n_2$,
$\phi_{n_2,k}$ is
(${
Y_{n_2}
\cup\psi_{n_2}({\cal G}^{M_{{n_2}!}})
\cup
\{v_{n_2},v_{n_2}^*,v_{n_2}v_{n_2}^*\}},
\sum_{i=n_2}^k\epsilon_{i}$){{-multiplicative}}.
Therefore $\phi_{n_2,\infty}$ is
(${
Y_{n_2}
\cup\psi_{n_2}({\cal G}^{M_{{n_2}!}})
\cup
\{v_{n_2},v_{n_2}^*,v_{n_2}v_{n_2}^*\}},
\sum_{i=n_2}^\infty\epsilon_{i}$){{-multiplicative}}.
{{Note that $\sum_{i=n_2}^\infty\epsilon_{i}
<\Delta(M_{n_2!}, \frac{1}{n_2!})$}}.
Then, by Lemma \ref{approximately-multiplicative-relation-R},
{\blue{the pair ($\phi_{n_2,\infty}\circ\psi_{n_2},$
$\phi_{n_2,\infty}(v_{n_2})$)
satisfies}} the relation
$$
\check{\cal R}_{A}(n_2!,\phi_{n_2,\infty}(\phi_{n_2-1}(Y_{n_2-1})),
\frac{2}{n_2!}+
3(\sum_{i=n_2}^\infty\epsilon_{i})^{1/2}).
$$
By \eqref{f8-lem8-1}, we have
$\calF\subset_{\frac{\epsilon}{16}}\phi_{n_2,\infty}(\phi_{n_2-1}(Y_{n_2-1}))$.
Also note
${{\frac{2}{n_2!}+
3(\sum_{i=n_2}^\infty\epsilon_{i})^{1/2}}}<\ep/8.$
Therefore {\blue{the pair ($\phi_{n_2, \infty},\,\phi_{n_2,\infty}(v_{n_2})$)
satisfies}} the relation
$
\check{\cal R}_{A}(n_2!,\calF,
\epsilon).
$
{\blue{Thus,}} by {{Proposition}}
\ref{W2010Z-stable-prop}, $A$ is ${\cal Z}$-stable.

%(1') (2') (3')

\end{proof}

\begin{lem}
\label{regulation-of-tracial-nuclear-Z-stable-lem}

Let $A$ be a unital  {\blue{simple \CA\,
which is %tracially in ${\cal N}$ and
asymptotically tracially in ${\cal C}_{{\cal Z},s}$ {\blue{(see
{{Definition}} \ref{DNn}).}}}}
%{\green{where ${\cal C}_{{\cal Z},s}$
%is the class of separable ${\cal Z}$-stable \CAs.}}
% ``nuclear and $\mathcal{Z}$-stable'' $C^*$-algebra.
%and is tracially $\mathcal{Z}$-absorbing in Hirshberg and Orovitz's sense.
Then, for any finite subset  $\calF\subset A$,
{{any}} $\epsilon>0$,
{{any}} $n\in\mathbb{N},$ {\blue{and}}
{{any}} $a\in A_+\backslash\{0\}$,
%
%we can make a two-step choice:
the following {\blue{conditions}} hold.
%{\bf Step 1:}

There exist a
separable unital %{\blue nuclear} %$\mathcal{Z}$-stable
$C^*$-algebra $B$
{\blue{and}}  {\blue{a}} u.c.p.~map
$\alpha: A\rightarrow B$
such that

(1) $\alpha$ is {\blue{an}} $(\calF,\epsilon/2)$-{{approximate embedding}}, {\blue{and}}

%{\bf Step 2:}
\noindent
for any finite subset $\mathcal{G}\subset B$,
there {\blue{exist}} {\green{three}} c.p.c.~maps
$\beta: B\rightarrow A$,
$\gamma: A\rightarrow (\beta\circ\alpha(A))^{\bot}$,
%and for each $d\in\mathcal{D}$,
%there exists
%and a {\blue{c.p.c.~map}}
$\psi: M_n\rightarrow \Her_A(\beta\circ\alpha(1_A))$,
and $v\in \Her_A(\beta\circ\alpha(1_A))^1$
{\blue{such}} that

(2) $\beta\circ\alpha(1_A)$, $\gamma(1_A)$ are projections
and $1_A=\beta\circ\alpha(1_A)+\gamma(1_A)$,

(3) $x\approx_{\epsilon}\beta\circ\alpha(x)+\gamma(x)$ {\blue{for all}} $x\in\calF$,

(4) $\beta$ is a $(\mathcal{G},\epsilon)$-{{approximate embedding}}.

(5) $\gamma(1_A)\lesssim_A a$, {\blue{and}}

(6) $\psi$ {\blue{and}} $v$ satisfy the relation
$\check{\cal R}_{\Her_A(\beta\circ\alpha(1_A))}
(n, \beta\circ\alpha({\cal F}),\ep).$

\noindent
{{If,}} in addition, $A$ is assumed to be
asymptotically tracially in ${\cal N}$,
% then $A$ is tracially in ${\cal N}_{\cal Z}.$
then $B$
above
%in above
can be chosen to be nuclear.

\end{lem}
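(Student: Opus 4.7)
The plan is to build the required data from the asymptotic tracial approximation hypothesis and then graft on the $\mathcal{Z}$-stable witness inside the target algebra $B$. First I would invoke Proposition \ref{hereditary-subalgebra-preserves-tracially-approximation} with the finite subset $\mathcal{F}$, a small parameter $\delta_0>0$ to be specified, and the positive element $a$, to produce a unital separable $\mathcal{Z}$-stable $C^*$-algebra $B$, a u.c.p.\ map $\alpha: A\to B$, and sequences of c.p.c.\ maps $\beta_i: B\to A$ and $\gamma_i: A\to A\cap \beta_i(B)^\perp$ such that $\beta_i(1_B)$ and $\gamma_i(1_A)$ are complementary projections summing to $1_A$, $\alpha$ is $(\mathcal{F},\delta_0)$-approximately embedding, $x\approx_{\delta_0}\gamma_i(x)+\beta_i\circ\alpha(x)$ on $\mathcal{F}$, $\beta_i$ is asymptotically multiplicative and norm-preserving, and $\gamma_i(1_A)\lesssim a$. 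For $\delta_0<\epsilon/2$, condition (1) is immediate.

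Given the finite subset $\mathcal{G}\subset B$, I would apply Proposition \ref{W2010Z-stable-prop} to $B$ with the finite set $\mathcal{G}\cup\alpha(\mathcal{F})$ and a tolerance $\delta_1<\Delta(M_n,\epsilon/4)$, obtaining an integer $m$, a c.p.c.\ map $\tilde{\psi}:M_{mn}\to B$, and $\tilde{v}\in B^1$ satisfying $\check{\mathcal{R}}_B(mn,\mathcal{G}\cup\alpha(\mathcal{F}),\delta_1)$. Composing with a unital embedding $h:M_n\hookrightarrow M_{mn}$ chosen so that $e_{1,1}^{(mn)}\le h(e_{1,1}^{(n)})$, as in the proof of Proposition \ref{W2010Z-stable-prop}, yields a c.p.c.\ map $\tilde{\psi}\circ h:M_n\to B$ and the same $\tilde{v}$ satisfying $\check{\mathcal{R}}_B(n,\mathcal{G}\cup\alpha(\mathcal{F}),\delta_1)$. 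I would then pick $i$ sufficiently large that $\beta_i$ is $\delta_2$-multiplicative (for $\delta_2$ smaller than the constants required by Lemma \ref{approximately-multiplicative-relation-R}) on the finite set $\alpha(\mathcal{F})\cup\mathcal{G}\cup\tilde{\psi}\circ h(\mathcal{G}^{M_n})\cup\{\tilde{v},\tilde{v}^*,\tilde{v}\tilde{v}^*\}$, and that $\|\beta_i(y)\|\approx\|y\|$ on $\mathcal{G}$. Setting $\beta:=\beta_i$ and $\gamma:=\gamma_i$ supplies conditions (2)--(5), and Lemma \ref{approximately-multiplicative-relation-R} gives that $(\beta\circ\tilde{\psi}\circ h,\beta(\tilde{v}))$ satisfies $\check{\mathcal{R}}_A(n,\beta\circ\alpha(\mathcal{F}),\epsilon/2)$.

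The remaining technical point is to move these witnesses honestly into $\mathrm{Her}_A(\beta\circ\alpha(1_A))=\beta(1_B)A\beta(1_B)$. Since $\beta(1_B)$ is a projection and $\beta$ is approximately multiplicative, the elements $\beta\circ\tilde{\psi}\circ h(y)$ and $\beta(\tilde{v})$ are, up to a controlled error, fixed by compression against $\beta(1_B)$. Defining $\psi(y):=\beta(1_B)\beta\circ\tilde{\psi}\circ h(y)\beta(1_B)$ and $v:=\beta(1_B)\beta(\tilde{v})\beta(1_B)$ produces a c.p.c.\ map into the hereditary subalgebra and an element of its unit ball, with the five clauses of the relation $\check{\mathcal{R}}$ preserved to within $\epsilon$ after absorbing the compression error into the choice of $\delta_0,\delta_1,\delta_2$. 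This yields condition (6).

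The main obstacle is the nuclear addendum, where $B$ must simultaneously be separable, nuclear, and $\mathcal{Z}$-stable. My plan is to first apply the tracially-in-$\mathcal{N}$ hypothesis to obtain a separable nuclear algebra $B_0$ together with a u.c.p.\ $(\mathcal{F},\delta_0/2)$-approximate embedding $\alpha_0:A\to B_0$ and approximately multiplicative backward maps $\beta_{0,i}:B_0\to A$, and then to enlarge by tensoring with $\mathcal{Z}$: the algebra $B:=B_0\otimes\mathcal{Z}$ is separable, nuclear, and $\mathcal{Z}$-stable, while $\alpha:=\alpha_0\otimes 1_\mathcal{Z}:A\to B$ remains a u.c.p.\ approximate embedding. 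The delicate step is producing the sequence of backward maps $\beta_i:B_0\otimes\mathcal{Z}\to A$ required by Proposition \ref{hereditary-subalgebra-preserves-tracially-approximation}; for this I would combine the given sequence $\beta_{0,i}$ with a unital embedding of $\mathcal{Z}$ (indeed of a dimension-drop algebra that absorbs $\mathcal{Z}$) into the relative commutant of $\pi_\infty\circ\iota(A)$ inside $l^\infty(A)/c_0(A)$ supplied by the tracially-in-$\mathcal{C}_{\mathcal{Z},s}$ hypothesis, and then lift the resulting $\ast$-homomorphism $B_0\otimes\mathcal{Z}\to l^\infty(A)/c_0(A)$ using the Choi-Effros lifting theorem (applicable because $B_0\otimes\mathcal{Z}$ is separable and nuclear) to a sequence of c.p.c.\ maps into $A$. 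The rest of the proof then proceeds identically to the general case with this new $B$ in place.
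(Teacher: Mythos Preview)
Your argument for the main part of the lemma is correct and essentially the paper's approach: apply Proposition~\ref{hereditary-subalgebra-preserves-tracially-approximation} to obtain a separable $\mathcal{Z}$-stable $B$ with the maps $\alpha,\beta_i,\gamma_i$, use Proposition~\ref{W2010Z-stable-prop} to produce the $\check{\mathcal R}$ witnesses inside $B$, then push them through $\beta_k$ for large $k$ via Lemma~\ref{approximately-multiplicative-relation-R}. One small simplification: the compression step you describe is unnecessary, because the maps $\beta_i$ produced by Proposition~\ref{hereditary-subalgebra-preserves-tracially-approximation} already land in $\beta_i(1_B)A\beta_i(1_B)$ (this is built into the perturbation of Lemma~\ref{perturbation-c.p.c.-to-p.c.p}).

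The nuclear addendum, however, has a genuine gap. The hypothesis ``asymptotically tracially in $\mathcal{C}_{\mathcal{Z},s}$'' does \emph{not} supply a unital embedding of $\mathcal{Z}$ (or of any dimension-drop algebra) into $(l^\infty(A)/c_0(A))\cap\iota_A(A)'$; that would amount to $A$ being tracially $\mathcal{Z}$-absorbing in the Hirshberg--Orovitz sense, which is a different and a priori stronger statement than the one you are given. Even if such an embedding existed, it would not help: the image of $B_0$ under $b\mapsto\pi_\infty(\{\beta_{0,i}(b)\})$ does not lie inside $\iota_A(A)$, so an element commuting with $\iota_A(A)$ need not commute with it, and you cannot form the tensor-product $*$-homomorphism $B_0\otimes\mathcal{Z}\to l^\infty(A)/c_0(A)$ you want to lift.

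The paper avoids this by \emph{nesting} the two approximations rather than tensoring. One first runs the $\mathcal{C}_{\mathcal{Z},s}$ approximation to get a $\mathcal{Z}$-stable $\bar B$, maps $\bar\alpha,\bar\beta_i,\bar\gamma_i$, and the $\check{\mathcal R}$ witnesses $\bar\psi,\bar v$ inside $\bar B$. After fixing a large $k$, the hereditary subalgebra $\Her_A(\bar\beta_k(1_{\bar B}))$ inherits ``asymptotically tracially in $\mathcal N$'' from $A$ (Proposition~\ref{hereditary-subalgebra-preserves-tracially-approximation}), and one applies that hypothesis there to obtain the nuclear $B$ together with maps $\hat\alpha,\hat\beta_m,\hat\gamma_m$. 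The final data are the composites $\alpha=\hat\alpha\circ\bar\beta_k\circ\bar\alpha$, $\beta=\hat\beta_m$, $\gamma=\bar\gamma_k+\hat\gamma_m\circ\bar\beta_k\circ\bar\alpha$, and $\psi=\hat\beta_m\circ\hat\alpha\circ\bar\beta_k\circ\bar\psi$, $v=\hat\beta_m\circ\hat\alpha\circ\bar\beta_k(\bar v)$; the relation $\check{\mathcal R}$ survives the two-level transfer by two applications of Lemma~\ref{approximately-multiplicative-relation-R}. The point is that $B$ need not itself be $\mathcal{Z}$-stable: the $\check{\mathcal R}$ witnesses are manufactured in $\bar B$ and merely transported through the nuclear stage.
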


\begin{proof}

Let $\calF\subset A$ be a finite subset.
{\blue{Without loss of generality,}}
we may assume that $\|x\|\leq 1$ for all $x\in\calF$.
Let $\epsilon\in(0,1)$,
let $n\in\mathbb{N}$, {{and}}
let $a\in A_+\backslash\{0\}$.
Since $A$ is {{simple,}} unital and
asymptotically tracially in {{${\cal C}_{{\cal Z},s}$,}}
$A$ is non-elementary.
Then there exist $a_0,a_1\in \Her_A(a)_+\backslash\{0\}$ such that
$a_0a_1=0$.
{Let $\delta:=
\min\{{\blue{(\epsilon/8)^2, \Delta(M_n,{{\epsilon/4}}),1/2}}\}$.}

%Since $A$ is tracially  in ${\cal C}_{\cal Z},$
%by Corollary \ref{separable-reduction-for-tracial-Z-stable},
%$A$ is tracially  in ${\cal C}'_{\cal Z}.$
{{By \cite[Corollary 3.1]{TomsW07}, {{${\cal C}_{{\cal Z},s}$}} has property (H).}}
{\blue{Then,}} by
Proposition \ref{hereditary-subalgebra-preserves-tracially-approximation},
there exist a  unital %nuclear
separable $\mathcal{Z}$-stable $C^*$-algebra ${\bar B}$ and c.p.c.~maps
$\bar \alpha: A\rightarrow  \bar{B}$,
$\bar \beta_i:  \bar{B}\rightarrow A,$ {\blue{and}}
$\bar \gamma_i: A\rightarrow A\cap(\bar \beta_i\circ \bar{\alpha}(A))^{\bot}$
($i\in\mathbb{N}$) {{such}} that

{{$(1')$}} {{$\bar {\alpha}(1_A)=1_{\bar B},$}}
$\bar \beta_i(1_{\bar B})$ {\blue{and}}  $\bar \gamma_i(1_A)$ are projections,
$1_A=\bar \beta_i(1_{\bar B})+\bar \gamma_i(1_A)$
for all $i\in \mathbb{N}$,

{{$(2')$}} $x\approx_{\dt}\bar \gamma_i(x)+\bar \beta_i\circ\bar \alpha(x)$
for all $x\in\calF$ for all $i\in\mathbb{N}$,

{{$(3')$}} $\bar {\alpha}$ is {\blue{an}} $(\calF, \delta)$-{{approximate embedding}},
%$\forall n\in\mathbb{N}$,

{{$(4')$}} $\lim_{i\rightarrow\infty}
\|\bar \beta_i(xy)-\bar \beta_i(x)\bar \beta_i(y)\|=0$
{{and}}
$\lim_{i\rightarrow \infty}\|\bar \beta_i(x)\|= \|x\|$
for all $x,y\in \bar{B}$, {{and}}

{{$(5')$}} $\bar\gamma_i(1_A)\lesssim_A a_0$ for all $i\in\mathbb{N}$.
%Then (3) of the lemma holds by {\green{$(2')$}}.

\noindent
{{Since}} $\bar B$ is $\calZ$-stable,
{{by {{Proposition}} \ref{W2010Z-stable-prop},}}
%by Remark \ref{rem-9-3},
there {\blue{is a}}  c.p.c.~order zero map $\bar \psi: M_n\rightarrow \bar B$
and  {\blue{there is}} $\bar v\in \bar B^1$ such that

{{$(6')$}} {\blue{the pair ($\bar\psi,$\, $\bar v$) satisfies}} the relation
$\check{\cal R}_{\bar B}
(n, \bar\alpha({\cal F}),{{\ep/8}}).$

%\xrule
\noindent
{\green{Set}} $B:=\bar B$ {\blue{and}}
$\alpha:=\bar\alpha$.
Then,   {\blue{by {{$(3')$}},
%for Step 1
%(1) of  in the lemma,
(1) holds.}}
% by {\green{$(3')$}}.

Let $\bar{\cal G}\subset \bar B$ be a finite subset
{\blue{containing $\bar\psi({\cal G}^{M_n})\cup\{{\bar{v}}, {\bar{v}}^*, \bar{v}\bar{v}^*\}.$}}
By {{$(4')$}} and {{$(6')$}}, %and \eqref{f8-3lem-1},
for a  sufficiently large $k\in\N$,
the following {{$(7')$}} and {{$(8')$}} hold:

%{\green{$(7')$}} $\bar\beta_k(\bar\psi)$ is c.p.c.~$\delta$-almost order zero map,

{{$(7')$}} {\blue{The map}} $\bar\beta_k$ is  {{an}} $(\bar\alpha(\calF)\cup\bar{\cal G},
\delta)$-approximate embedding.

{{$(8')$}} {\blue{The pair ($\bar\beta_k\circ\bar\psi,$\,  $\bar\beta_k(\bar v)$) {\blue{satisfies}}}} the relation
$\check{\cal R}_{\Her_A(\bar\beta_k(1_{\bar B}))}
(n, \bar\beta_k(\bar\alpha({\cal F})),{{\ep/4}}).$

\noindent
{{Set}} ${\cal G}:=\bar{\cal G}$,
$\beta:=\bar\beta_k$,
$\gamma:=\gamma_k$, $\psi:={\bar \bt_k}\circ \bar\psi$, $v:={\bar \bt}_k(\bar v)$.
{\blue{Then, by {{$(1')$}}, (2) above holds,  by {{$(2')$}}, (3) holds, by {{$(7')$}}, (4) holds,  by {{$(5')$}}, (5) holds,  and, by {{$(8')$}}, (6) holds.
This proves the first part of the lemma.}}
%for Step 2 in the lemma,
\iffalse
(2) hold by {{$(1')$}}.

(3) hold by {{$(2')$}}.

(4) hold by {{$(7')$}}.

(5) hold by {{$(5')$}}.

(6) hold by {{$(8')$}}.
\fi

If, in addition, $A$ is also assumed to be
asymptotically tracially in ${\cal N}$,
then,  by
Proposition \ref{hereditary-subalgebra-preserves-tracially-approximation},  $\Her_A(\bar\beta_k(1_{\bar B}))$ is
simple and asymptotically {\blue{tracially in ${\cal N}$}}.
There exists $a_2\in \Her_A(\bar\beta_k(1_{\bar B}))_+\backslash\{0\}$
such that $a_2\lesssim_A a_1$.
%and tracially nuclear.
Since $\Her_A(\bar\beta_k(1_{\bar B}))$ is
asymptotically {\blue{tracially in ${\cal N}$}},
by
Proposition \ref{hereditary-subalgebra-preserves-tracially-approximation},
there exist a unital nuclear $C^*$-algebra $B$
and c.p.c.~maps
$\hat\alpha: \Her_A(\bar\beta_k(1_{\bar B}))\rightarrow B$,
$\hat\beta_i: B\rightarrow \Her_A(\bar\beta_k(1_{\bar B}))${\green{, and}}
$\hat\gamma_i: \Her_A(\bar\beta_k(1_{\bar B}))\rightarrow \Her_A(\bar\beta_k(1_{\bar B}))\cap\hat \beta_i(B)^{\bot}$
($i\in\mathbb{N}$) {{such}} that

{{$(1'')$}} $\hat \alpha$ is {{a}} u.c.p.~{{map}},
$\hat \beta_i(1_B)$ {\blue{and}}
$\hat \gamma_i(\bar\beta_k\circ\bar\alpha(1_A))$ are projections,
$\bar\beta_k(1_{\bar B})
=\hat \beta_i(1_B)+\hat \gamma_i(\bar\beta_k(1_{\bar B}))$
{{for all}} $ i\in \mathbb{N}$,

{{$(2'')$}} $x\approx_{\delta}\hat \gamma_i(x)+\hat \beta_i\circ\hat \alpha(x)$
{{for all}} $ x\in\bar\beta_k\circ\bar\alpha(\calF)$
{{and for all}} {\blue{$i\in\mathbb{N}$,}}

{\blue{$(3'')$}} $\hat \alpha$ is {\blue{a}}
$(\bar\beta_k\circ\bar\alpha(\calF)
\cup{{\bar\beta_k\circ\bar\psi({\cal G}^{M_n})}}
\cup\{\bar\beta_k(v),\bar\beta_k(v)^*,{{\bar\beta_k(vv)^*}}\}, \delta)$-{{approximate embedding}},

{{$(4'')$}} $\lim_{i\rightarrow\infty}\|\hat \beta_i(xy)-\hat \beta_i(x)\hat \beta_i(y)\|=0$ {\blue{and}}
$\lim_{i\rightarrow \infty}\|\hat \beta_i(x)\|= \|x\|$
{{for all}} $ x,y\in B$, {\blue{and}}

{{$(5'')$}} $\hat \gamma_i(\bar\beta_k\circ\bar\alpha(1_A))
\lesssim_{\Her_A(\bar\beta_k(1_{\bar B}))} a_2$ {{for all}} $i\in\mathbb{N}$.

\noindent
{{Let}}
$\alpha:=\hat\alpha\circ\bar\beta_k\circ\bar\alpha.$
{\blue{Then,  since $\dt<\ep/8,$  by {{$(3')$}} and {{$(3'')$}},
%for Step 1 of the lemma,
(1) of the lemma holds.}}
% by {\green{$(3')$}} and {\green{$(3'')$}}.
%
%
{{Let}} ${\cal G}\subset B$ be a finite subset.
{\blue{By  {{$(4'')$}},  there exits  a large $m\in\N$  such that}}

{\blue{{{$(6'')$}} $\hat\beta_m$ is a $({\cal G}
\cup\hat\alpha\circ\bar\beta_k\circ\bar\alpha(\calF)\cup \hat\alpha\circ\bar\beta_k(\bar\psi({\cal G}^{M_n}))
\cup\{\hat\alpha\circ\bar\beta_k(v),\hat\alpha\circ\bar\beta_k(v)^*, \hat\alpha\circ\bar\beta_k(vv)^*\},
\delta)$-approximate embedding.}}

% {\green{$(3'')$}}, {\green{$(4'')$}}, {\green{$(7')$}} and {\green{$(8')$}}, and use Lemma
%\ref{approximately-multiplicative-relation-R},
%there exits $m\in\N$
%we have
%{\green{1 such that,}} the following hold:

%$(%\hat\alpha\circ\bar\beta_k\circ\bar\alpha(\calF)
%\cup\hat\alpha\circ\bar\beta_k(\bar\psi(M_n^1))
%\cup
%{\cal G},
%\delta)$-approximate embedding.

\noindent
{{Then,}}  by  the choice of ${\bar{\cal G}},$
{{and by}} {{$(7')$}}, {{$(3'')$}}, and {{$(6'')$}}, {\blue{the map}}
$\hat\beta_m\circ\hat\alpha\circ\bar\beta_k$ is
$3\dt${{-multiplicative}} on
$\bar\af(\calF)\cup\bar\psi({\cal G}^{M_n})
\cup \{\bar{v}, \bar{v^*},\bar{v}\bar v^*\}.$
{{Moreover, by {{$(8')$}} and Lemma
\ref{approximately-multiplicative-relation-R}, we have}}

{{$(7'')$}} the pair {\blue{($\hat\beta_m\circ\hat\alpha\circ\bar\beta_k\circ\bar\psi,$\,
$\hat\beta_m\circ\hat\alpha\circ\bar\beta_k(\bar v)$) satisfies}} the relation
$\check{\cal R}_{\Her_A(\hat\beta_m\circ\hat\alpha\circ\bar\beta_k(1_{\bar B}))}
(n, \hat\beta_m\circ\hat\alpha\circ\bar\beta_k(\bar\alpha({\cal F})),5\dt).$

{\blue{\noindent
{{Define}}  $\beta:=\hat\beta_m$, $\gamma:=\bar\gamma_k+\hat\gamma_m\circ\bar\beta_k\circ\bar\alpha$,
$\psi:=\hat\beta_m\circ\hat\alpha\circ\bar\beta_k\circ\bar\psi$ and
$v:=\hat\beta_m\circ\hat\alpha\circ\bar\beta_k(\bar v).$
Since $\bar\gamma_k(A)\bot\hat\gamma_m\circ\bar\beta_k\circ\bar\alpha(A)$,
we have {\blue{that}} $\gamma:=\bar\gamma_k+\hat\gamma_m\circ\bar\beta_k\circ\bar\alpha$
is {\blue{also}} a c.p.c.~map.
Then,
% for Step 2 in the lemma,
%(1) hold by {\green{$(3')$}} and {\green{$(3'')$}}.
by {{$(1')$}} and {{$(1'')$}}, (2) holds,
by {{$(2')$}} and {{$(2'')$}}, (3) holds,  by {{$(6'')$}}, (4) holds,  by {{$(5')$}}, {{$(5'')$}},   and by
the fact that $a_2\lesssim_A a_1$ and {\blue{$a_0\bot a_1,$}} and $a_0+a_1\lesssim_A a, $  (5) holds.
Finally, by {{$(7'')$}},  (6) holds.}}

%{\green{$(1'')$}} (2'') (3'') (4'') (5'') (6'') (7'') (8'')

%(1') (2') (3') (4') (5') (6') (7') (8')

%\Her_A(\bar\beta_k(1_{\bar B}))

%\Her_A(\bar\beta_k\circ\bar\alpha(1_A))

%20191107-20191213

\end{proof}

{{The following lemma is well known.}}
\begin{lem}
\label{f8-9lem}
Let $A$ {{be a \CA\, and $B$ be a
%{{and}} $B$ be \CAs\ with $B$ {\blue{being}}
nuclear \CA.}}
If there exist {\green{c.p.c.}}~maps
$\af: A\to B$ and $\bt: B\to A$ such that $\bt\circ \af=\id_A,$
then $A$ is also nuclear.

\end{lem}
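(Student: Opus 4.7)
The plan is to use the standard CPAP (completely positive approximation property) characterization of nuclearity: a \CA\ $C$ is nuclear if and only if, for every finite subset $\mathcal{F}\subset C$ and every $\epsilon>0$, there exist an integer $k\in\mathbb{N}$ and c.p.c.~maps $\phi:C\to M_k$ and $\psi:M_k\to C$ such that $\|\psi\circ\phi(x)-x\|<\epsilon$ for all $x\in\mathcal{F}$. This reduces the lemma to a straightforward composition argument.

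First I would fix an arbitrary finite subset $\mathcal{F}\subset A$ and any $\epsilon>0$, and consider the finite subset $\alpha(\mathcal{F})\subset B$. Since $B$ is nuclear, there exist an integer $k$ and c.p.c.~maps $\phi:B\to M_k$ and $\psi:M_k\to B$ such that
\[
\|\psi\circ\phi(\alpha(x))-\alpha(x)\|<\epsilon \quad \text{for all } x\in\mathcal{F}.
\]

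Next I would define $\widetilde{\phi}:=\phi\circ\alpha: A\to M_k$ and $\widetilde{\psi}:=\beta\circ\psi: M_k\to A$. Both maps are c.p.c., being compositions of c.p.c.~maps. Using the hypothesis $\beta\circ\alpha=\id_A$ and the fact that $\beta$ is contractive, I would then estimate, for each $x\in\mathcal{F}$,
\[
\|\widetilde{\psi}\circ\widetilde{\phi}(x)-x\|=\|\beta\circ\psi\circ\phi\circ\alpha(x)-\beta\circ\alpha(x)\|\le \|\psi\circ\phi(\alpha(x))-\alpha(x)\|<\epsilon.
\]
This exhibits the required c.p.c.~factorization through $M_k$ for $A$, and so $A$ is nuclear by the CPAP characterization.

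There is essentially no obstacle here: the argument is entirely formal once one invokes the CPAP characterization of nuclearity. The only point deserving a brief remark is that the splitting $\beta\circ\alpha=\id_A$ automatically forces $\alpha$ to be isometric, and that nuclearity passes through such ``c.p.c.~retracts'', which is exactly the content being used.
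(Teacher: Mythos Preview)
Your proof is correct and follows essentially the same approach as the paper's: both fix a finite subset of $A$, push it into $B$ via $\alpha$, apply the CPAP for $B$ to factor through a finite-dimensional algebra, and then compose with $\beta$ using $\beta\circ\alpha=\id_A$ and the contractivity of $\beta$. The only cosmetic difference is that the paper factors through a general finite-dimensional $C^*$-algebra $F$ rather than a matrix algebra $M_k$, which is immaterial.
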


\begin{proof}
Let $\calF\subset A$ be a finite subset and let $\ep>0$.
Since $B$ is nuclear, there exist a finite dimensional \CA\ $F$
and {{two}} {\green{c.p.c.}}~maps {\blue{$\phi: B\to F,$}} and $\psi: F\to B$
{\blue{such that}} $\af(x)\approx_{\ep/(\|\bt\|+1)} \psi\circ\phi(\af(x))$ for all $x\in \calF$.
Note that $\phi\circ\af: A\to F$ and $\bt\circ \psi: F\to A$
are {\green{c.p.c.}}~maps.
For any $x\in\calF$,
$x=\beta(\af(x))\approx_\ep\beta(\psi\circ\phi(\af(x)))$.
Thus $A$ is nuclear.
\end{proof}

%{\red{\bf Why not c.p.c. maps in the above statement?}}

%{\green{\bf  c.p. maps work well. No need to impose contractive on the maps.}}

\begin{thm}
\label{Thm-tracial-simple-nuclear-Z-stable}

Let $A$ be a simple separable unital $C^*$-algebra.
Assume that $A$ is
asymptotically tracially in {{${\cal C}_{{\cal Z},s}.$}}
Then,
%and $\mathcal{P}$ be closed under taking unital hereditary $C^*$-subalgebras.
%We also use the notations defined in Notation \ref{notation-2}.
%
%Then $A$ is tracially $\mathcal{P}$ if and only if the following hold:
for any finite subset $\calF\subset A$,  any $\epsilon>0$, and any
$a \in A_+\backslash\{0\}$,
there exists a unital
%{\blue convergent}
$C^*$-subalgebra
$B\subset  l^\infty(A)/c_0(A)$ which is strictly embedded
such that $B$ in {{${\cal C}_{{\cal Z},s,s}$}}, {\blue{and}}
%(recall notations defined in Notation \ref{notation-2})

(1) $1_B\iota_A(x)\approx_{\epsilon}\iota_A(x)1_B$\, {\blue{for all}} $ x\in\calF$,

(2) $1_B\iota_A(x)1_B\in_{\epsilon} B$ {\blue{and}}  $\|1_B\iota_A(x)1_B\|\geq \|x\|-\epsilon$\,
{\blue{for all}} $ x\in \calF$,  and

(3) $\iota_A(1_A)-1_B\lesssim_{
l^\infty(A)/c_0(A)}\iota_A(a)$.

%{\blue{and
%(resp.~is tracially in ${\cal N}$ and tracially in ${\cal C}_{\cal Z}$).
%{\blue{Then $A$ is tracially in ${\cal C}_{{\cal Z}, ss}.$}}
%``simple separable unital $\mathcal{Z}$-stable''
%(resp.~is tracially
%``simple separable unital nuclear and $\mathcal{Z}$-stable'').
\noindent
{{Moreover,}} if, in addition,
$A$ is also
asymptotically tracially in ${\cal N},$ then $A$ is
asymptotically tracially in
{{${\cal N}_{{\cal Z}, s,s}$}} {{(see {{Definition}} \ref{DNn}).}}
%the class of unital separable nuclear simple ${\cal Z}$-stable \CA s.

%Assume that $A$ is tracially in ${\cal N}$ and tracially in ${\cal C}_{\cal Z}$.
%Then $A$ is tracially
%``simple separable unital nuclear and $\mathcal{Z}$-stable''.

\end{thm}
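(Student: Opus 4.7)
The plan is to mimic the construction in Theorem \ref{simple-T-finite-dimnuc-implies-T-simple-finite-dimnuc}, with Lemma \ref{construction-of-simple-Z-stable-gen-limit} playing the role of Lemma \ref{construction-simple-finite-dimnuc}, and Lemma \ref{regulation-of-tracial-nuclear-Z-stable-lem} playing the role of Proposition \ref{refine-of-id-tracially-dimnuc-n-converge}. Namely, I will recursively build a descending sequence of unital hereditary \SCA s $A=A_0\supset A_1\supset A_2\supset\cdots$ together with u.c.p.~maps $\phi_k: A_k\to A_{k+1}$ that, modulo small errors, approximate the identity on a large family of test elements; the resulting generalized inductive limit $\bar A=\lim_k(A_k,\phi_k)$ will be strictly embedded in $l^\infty(A)/c_0(A)$, simple, separable, and $\mathcal Z$-stable, and will play the role of $B$ in the statement.

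\textbf{Construction.} Using that $A$ is infinite dimensional and non-elementary, choose a sequence of mutually orthogonal nonzero positive elements $a_0,a_1,\ldots\in\Her_A(f_{1/2}(a))$ as in Lemma \ref{L1991}. At stage $k$ I would apply Lemma \ref{regulation-of-tracial-nuclear-Z-stable-lem} to the simple unital \CA\, $A_k$ (noting, by Proposition \ref{hereditary-subalgebra-preserves-tracially-approximation}, that $A_k$ inherits the asymptotic tracial approximation in ${\cal C}_{{\cal Z},s}$) with: the test set $\calF_k$ enlarged to include the previous generating data, tolerance $\epsilon_k$ chosen much smaller than the universal constants $\sigma({\cal G}_{{\cal F}_j}),\Delta(M_{j!},\tfrac{1}{j!})$ required in Lemma \ref{construction-of-simple-Z-stable-gen-limit} for $j\le k$, matrix size $n=k!$, and cutoff element $\bar a_k\lesssim a_k$. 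This produces a separable \CA\, $\bar B_k$, u.c.p.\ $\alpha_k:A_k\to \bar B_k$ that is an $(\calF_k,\epsilon_k)$-approximate embedding, and c.p.c.~maps $\beta_k:\bar B_k\to A_k$, $\gamma_k:A_k\to A_k$ with $\beta_k\circ\alpha_k(1_{A_k})$ and $\gamma_k(1_{A_k})$ orthogonal projections summing to $1_{A_k}$, together with a c.p.c.\ order-zero-type map $\psi_k:M_{k!}\to \Her(\beta_k\circ\alpha_k(1_{A_k}))$ and $v_k$ satisfying $\check{\cal R}(k!,\beta_k\circ\alpha_k(\calF_k),\epsilon_k)$. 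Set $\phi_k:=\beta_k\circ\alpha_k$ viewed as a u.c.p.~map into $A_{k+1}:=\Her_{A_k}(\beta_k\circ\alpha_k(1_{A_k}))$, and absorb $\psi_k,v_k$ into the test set for the next stage.

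\textbf{Passage to the limit.} The choice of $\epsilon_k$ makes $(A_k,\phi_k)$ satisfy the three hypotheses of Lemma \ref{construction-of-simple-Z-stable-gen-limit}: fullness (1) holds since $A$ is simple, the relation (2) is built in by construction, and the multiplicativity (3) follows because each $\phi_k$ is an $(\calF_k,\epsilon_k)$-approximate embedding and $\calF_k$ was chosen to swallow all previously introduced generators. Hence $\bar A:=\lim_k(A_k,\phi_k)$ is simple, separable and $\mathcal Z$-stable. Realizing $\bar A$ inside $l^\infty(A)/c_0(A)$ via $\pi_\infty(\{\phi_{0,1}(\cdot),\phi_{0,2}(\cdot),\ldots\})$, strict embeddedness follows from Proposition \ref{strictly-embedding-simple-case} together with the norm estimates on $\phi_k$. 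The unit $1_{\bar A}$ is $\pi_\infty(\{\phi_k(1_{A_k})\}_k)$. Telescoping $\gamma_j(1_{A_j})\lesssim \bar a_j\lesssim a_j$ with $\{a_j\}$ mutually orthogonal in $\Her(f_{1/2}(a))$ gives $\iota_A(1_A)-1_{\bar A}\lesssim \iota_A(a)$, yielding (3). Conditions (1) and (2) follow from the standard telescoping argument: for $x\in\calF$ the elements $1_{A_k}x$ and $x 1_{A_k}$ are within $\sum_{j<k}\epsilon_j$ of $\phi_{0,k-1}(x)$, and $\sum_k\epsilon_k<\epsilon$, as in the proof of Theorem \ref{simple-T-finite-dimnuc-implies-T-simple-finite-dimnuc}.

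\textbf{The nuclear case.} If $A$ is additionally asymptotically tracially in $\cal N$, then Proposition \ref{hereditary-subalgebra-preserves-tracially-approximation} ensures each $A_k$ is asymptotically tracially in ${\cal N}$ as well, so the ``moreover'' clause of Lemma \ref{regulation-of-tracial-nuclear-Z-stable-lem} lets me choose each $\bar B_k$ nuclear. Since each $\phi_k=\beta_k\circ\alpha_k$ factors through the nuclear $\bar B_k$ up to the controlled error $\epsilon_k$, and since in the generalized inductive limit every element of $\bar A$ is an arbitrarily good approximation of $\phi_{i,\infty}(x)$, a diagonal argument (via Theorem \ref{gene-lim-fin-nucdim}'s philosophy, or directly via Lemma \ref{f8-9lem} applied in the limit) shows $\bar A$ is nuclear. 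Combined with the simple separable $\mathcal Z$-stable structure already produced, $\bar A\in{\cal N}_{{\cal Z},s,s}$, and Proposition \ref{equivalent-definition-of-tracial-approximation} then upgrades conditions (1)--(3) to the statement that $A$ is asymptotically tracially in ${\cal N}_{{\cal Z},s,s}$. The main obstacle is the careful bookkeeping of the test sets $\calF_k$ and tolerances $\epsilon_k$ so that, simultaneously, the multiplicativity condition (3) of Lemma \ref{construction-of-simple-Z-stable-gen-limit} holds on \emph{all} previously constructed $\psi_j,v_j$ and ${\cal G}_{{\cal F}_j}$, while keeping the cumulative error $\sum_k\epsilon_k<\epsilon$ and ensuring $\sum_k \gamma_k(1_{A_k})\lesssim \iota_A(a)$.
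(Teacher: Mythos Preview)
Your proposal is correct and follows essentially the same approach as the paper: a recursive construction feeding into Lemma \ref{construction-of-simple-Z-stable-gen-limit}, with Lemma \ref{regulation-of-tracial-nuclear-Z-stable-lem} supplying the data at each stage and the telescoping argument from Theorem \ref{simple-T-finite-dimnuc-implies-T-simple-finite-dimnuc} yielding (1)--(3).

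One point deserves sharpening. In the nuclear case you write that nuclearity of $\bar A$ follows ``via Lemma \ref{f8-9lem} applied in the limit,'' but Lemma \ref{f8-9lem} requires a single nuclear $B$ and c.p.c.\ maps $\alpha:\bar A\to B$, $\beta:B\to\bar A$ with $\beta\circ\alpha=\id_{\bar A}$ exactly, not just approximately. The paper produces this $B$ as a \emph{second} generalized inductive limit $\lim_k(B_k,\theta_k)$ with $\theta_k:=\alpha_{k+1}\circ\beta_k$, nuclear by \cite[Proposition~5.1.3]{BKGILFD}; the commutative diagram $\phi_k=\beta_k\circ\alpha_k$, $\theta_k=\alpha_{k+1}\circ\beta_k$ then induces the required factorization $\bar A\to B\to\bar A$ in the limit. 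For $(B_k,\theta_k)$ to be a generalized inductive system you must also include, at each stage, images $\phi_{j,k}(\beta_{j-1}(W_{j-1,k}))$ of finite pieces of dense subsets $W_{j-1}\subset B_{j-1}$ in your test set $\calF_k$, so that $\theta_{j-1,k}$ becomes approximately multiplicative on $W_{j-1,k}$. This is the one piece of bookkeeping your sketch omits; once added, your argument matches the paper's.
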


%construction-of-simple-Z-stable-limit

\begin{proof}
%{\blue{We first note that ${\cal C}_{\cal Z}$
%has the property (H). {\bf Reference}}}%see the proof of previous lemma

Let $\calF\subset B_{\frac{4}{5},1}(A_+)$ be a finite subset,
let $\epsilon\in(0,1)$ {\blue{and}}
let $a\in A_+\backslash\{0\}$.
%Since $A$ is simple separable unital  {\blue{\CA}}\, and is
%tracially in ${\cal C}_{\cal Z}$,
%there exists a nonzero
%$\mathcal{Z}$-stable $C^*$-subalgebra in $\prod A/\bigoplus A$,
%thus $A$ must be infinite dimensional.
%Then $A$ is non-elementary.
{\blue{We may assume that $A$ is infinite dimensional.  Since $A$ is also simple and unital, we further assume that $A$ is non-elementary.}}
Then there exists  a sequence of  mutually orthogonal norm one positive elements
$\{a_n\}$ in
%$a_1,a_2,\cdots\in
$\Her_A(a)_+\backslash\{0\}.$
%such that
%$a_i\bot a_j$, $i\neq j$.

{\blue{Choose   $A_0= A$ and $Y_0=\{0\}\subset A_0$.
Let $\phi_0={\rm id}_{A_0}: A_0\rightarrow A_0.$
%be the unique u.c.p.~map and choose
{\blue{Set}} $\epsilon_0:=\epsilon/100,$
 $\psi_1: M_1(=\C) \rightarrow \Her_{A}(\phi_0(1_{A_0}))$ the unital *-homomorphism, {\blue{and set}}
 $v_1=1_A\in \Her_{A}(\phi_0(1_{A_0}))(=A).$
Let $B_0=\C,$
let $\beta_0: B_0\rightarrow A_0$ be the zero map, {\blue{and}}
let $W_{0,i}=\{0\}$ for all $i\in\N$.
%{\blue{$X_{0,k}=\{0\},$
}}
\iffalse%-
Let $\psi_0: M_1(=\C) \rightarrow \Her_{A}(\phi_0(1_{A_0}))$ be a zero map and
$v_0=0\in \Her_{A}(\phi_0(1_{A_0}))$.
\fi%-

We claim that, for each $k\in \N$,
we can make the following choices:

(k - 1) A hereditary $C^*$-subalgebra
$A_k=\Her_A(\phi_{k-1}(1_{A_{k-1}}))\subset A,$

(k - 2) a positive element $\bar a_k\in (A_k)_+\backslash\{0\}$
such that $\bar{a}_{k}\lesssim_A a_{k},$

%(2) a c.p.c.~maps $\phi_k:A_k\rightarrow A_{k}$,

(k - 3) a countable dense subset
$X_k=\{x_{k,1}:=1_{A_k},{\blue{x_{k,2},}}\cdots\}\subset A_k^1$,
and finite subsets
$X_{k,i}=\{1_{A_{k}}\}\cup\{x_{k,j}:1\leq j\leq i\}\subset A_k^1$
{{($i\in\mathbb{N}$),}}

(k - 4) a finite subset
$Y_k=\cup_{\blue{1\leq j\le k}}\phi_{j,k}(X_{j,k})\cap B_{\frac{3}{4},1}(A_{k})\subset A_k$,

(k - 5) a finite subset $G_{Y_k}\subset A_k$,
and a positive number $\sigma(G_{Y_k})>0$
{\blue{as}} in
Proposition \ref{fullness neighbourhood and simplicity},

(k - 6) a finite subset $\calF_k\subset A_k$ {\blue{such}} that
(see {{Notation}}
\ref{def-of-gen-ind-lim}
for notation
$\phi_{j,k}$)
\beq
\calF_{k}&=&
\phi_{1,k}(\calF)
\cup
Y_{k}
\cup \left(\cup_{1\leq j\leq k}\phi_{j,k}({\cal G}_{Y_j})\right)
%\cup{\cal G}_{Y_{k}}
\cup(
\cup_{1\leq j\leq k}
\phi_{j,k}(\beta_{j-1}(W_{j-1,k})){{)}}
\nonumber\\&&
\cup
(\cup_{{1\leq j\leq k}}
\left(
\phi_{j,k}(\psi_{j}({\cal G}^{M_{j!}}))
\cup
%\cup_{{1\leq j\leq k}}
\{\phi_{j,k}(v_j),\phi_{j,k}(v_j)^*,
\phi_{j,k}(v_{j}v_{j}^*)\}
\right),
\eneq

\iffalse%-
\beq
\calF_{k}&=&
\phi_{1,k}(\calF)
\cup
Y_{k}
\cup \left(\cup_{1\leq j\leq k}\phi_{j,k}({\cal G}_{Y_j})\right)
%\cup{\cal G}_{Y_{k}}
\cup(
\cup_{1\leq j\leq k}
\phi_{j,k}(\beta_{j-1}(W_{j-1,k}))),
\nonumber\\&&
\cup
(\cup_{{1\leq j\leq k}}
\phi_{j,k}(\psi_{j}({\cal G}^{M_{j!}})))
\cup
\left(
\cup_{{1\leq j\leq k}}\{\phi_{j,k}(v_j),\phi_{j,k}(v_j)^*,
\phi_{j,k}(v_{j}v_{j}^*)\}
\right)
\eneq
\fi%-

(k - 7) a positive number
\beq
\epsilon_{k}=
\frac{1}{4^{k}}\min_{1\leq j\leq k} \{
\sigma({\cal G}_{Y_{j}}),
%\sigma(Y_{j+1},f_{1/2},G_{j+1}),
%\Delta(F_{j},\frac{1}{j+1}),
\Delta(M_{j!},\frac{1}{j!}),
\frac{\ep}{100}\},
\eneq

(k - 8) a unital \CA\ $B_k$,
and a c.p.c.~map $\alpha_k: A_k\rightarrow B_k$,
such that
$B_{k}$ is a separable unital  (if, in addition, $A$ is
asymptotically tracially in ${\cal N},$ $B_k$ is also nuclear)
%(resp. separable unital nuclear)
$C^*$-algebra,
and $\alpha_{k}$ is {{an}}
$(\calF_{k},\epsilon_{k}/2)$-{\blue{approximate}} embedding,

(k - 9) a countable dense subset $W_k=\{w_{k,1},{\blue{w_{k,2},}}\cdots\}\subset B_k^1$,
and
finite subsets $W_{k,i}=\{w_{k,1},{\green{w_{k,2},}}\cdots,w_{k,i}\}\subset B_k$ ($i\in\N$),

(k - 10) a finite subset ${\blue{ {\cal G}_k=\alpha_k(\calF_k)\subset B_k}}
%\cup
%{\green{(\cup_{0\leq j\leq k-1}\alpha_k\circ\phi_{j+1,k}\circ\beta_j(W_{j,k}))}}
%\cup W_{k,k}
%\subset B_k
$,

(k - 11) a c.p.c.~map $\beta_k: B_k\rightarrow A_k$,
and a c.p.c.~map $\gamma_k: A_k\rightarrow {\blue{(\beta_{k}\circ\alpha_{k}(A_{k}))^{\perp}}}$
%A_k$,
{\blue{such that}} the following (k - 12) - (k - 15) hold:

%(I, k)->(k - 12),
%(II, k)->(k - 13),
%(III, k)->(k - 14),
%(IV, k)->(k - 15),
%(k - 12)->(k - 16)
%(k - 13)->(k - 17)

(k - 12) $\beta_{k}\circ\alpha_{k}(1_A)$ {\blue{and}}  $\gamma_{k}(1_{A_{k}})$
are projections,
and $1_{A_{k}}=\beta_{k}\circ\alpha_{k}(1_{A_{k}})
+\gamma_{k}(1_{A_{k}}),$

(k - 13) $x\approx_{\epsilon_{k}}\beta_{k}\circ\alpha_{k}(x)
+\gamma_{k}(x)$ for all $x\in\calF_{k},$

(k - 14) $\beta_{k}$ is a $(\mathcal{G}_{k},\epsilon_{k})$-{\blue{approximate}} embedding,

(k - 15) $\gamma_{k}(1_A)\lesssim_A \bar{a}_{k},$

(k - 16) a {\blue{c.p.c.~map}}
$\psi_{{k+1}}: M_{{(k+1)!}}\rightarrow \Her_A(\beta_k\circ\alpha_k(1_{A_{k}}))$,
and an element $v_{{k+1}}\in
{{\Her_A(\beta_k\circ\alpha_k(1_{A_{k}}))}}$
{\blue{such}} that the pair  ($\psi_{{k+1}},$\, $v_{{k+1}}$) {\blue{satisfies}} the relation
$\check{\cal R}_{\Her_A(\beta_{k}\circ\alpha_{k}(1_A))}
(k!, \beta_{k}\circ\alpha_{k}({\cal F}_{k}),{{\frac{1}{(k+1)!}}}),$ {\blue{and}}

(k - 17) a c.p.c.~map $\phi_k:=\beta_k\circ\alpha_k: A_k\rightarrow A_k$.
%{\green{1 such that,}} $\phi_k$ {\blue{is $(\calF_k, 2\ep_k)$-approximate embedding.}}

\noindent
{{We}} make our choices recursively.
For the case $k=1:$

(1 - 1) Define $A_1:=\Her_{A}(\phi_0(1_{A_0}))=A.$
%we regard $\phi_0$ as a map from $A_0$ to $A_1$.

(1 - 2) Choose  $\bar{a}_1:=a_1$.

(1 - 3)
Choose  {\blue{a countable dense subset}} $X_1=\{x_{1,1},{\blue{x_{1,2},}}\cdots\}
\subset A_{1+}^1,$
%be a countable dense subset
%of $A_{1+}^1$
{\green{and}}
let $X_{1,{\green{i}}}:=\{1_{A_1}\}\cup\{x_{1,j}:1\leq j\leq {\green{i}}\}
\subset A_1^1$
{\green{($i\in\mathbb{N}$).}}
%{\red{\bf Better to be a period since the next line begins with ``Set"}}

(1 - 4) Set $Y_{1}:= X_{1,1}\cap B_{\frac{3}{4},1}(A_{1+})$.

(1 - 5) {\blue{Choose}} ${\cal G}_{Y_1}$ {\blue{and}}  $\sigma({\cal G}_{Y_1})$ {\blue{as}} in
Proposition \ref{fullness neighbourhood and simplicity}.

(1 - 6)
Choose
\iffalse%-
\beq
\calF_{1}&:=&
(\cup_{0\leq i\leq 0}
\phi_{i,1}(\psi_{i}(M_{i!}^1)))
\cup
Y_{1}
\cup \left(\cup_{1\leq i\leq 1}\phi_{i,1}({\cal G}_{Y_i})\right)
%\cup{\cal G}_{Y_{1}}
\nonumber\\&&
\cup
\left(
\cup_{0\leq i\leq 1}\{\phi_{i,1}(v_i),\phi_{i,1}(v_i)^*,
\phi_{i,1}(v_{i}v_{i}^*)\}
\right)
\cup \phi_{1,1}(\calF)
\\
&&
\cup(
\cup_{0\leq j\leq 0}
\phi_{1,1}(\beta_j(W_{j,k}))).
\eneq
\fi%-
\beq
\calF_{1}&=&
\phi_{1,1}(\calF)
\cup
Y_{1}
\cup \left(\cup_{1\leq j\leq 1}\phi_{j,1}({\cal G}_{Y_j})\right)
\cup(
\cup_{1\leq j\leq 1}
\phi_{j,1}(\beta_{j-1}(W_{j-1,1})){{)}}
\nonumber\\&&
\cup
\left(\cup_{{1\leq j\leq 1}}
\left(\phi_{j,1}(\psi_{j}({\cal G}^{M_{j!}}))
\cup
%\cup_{{1\leq j\leq 1}}
\{\phi_{j,1}(v_j),\phi_{j,1}(v_j)^*,
\phi_{j,1}(v_{j}v_{j}^*)\}
\right)
\right){\blue{.}}
\eneq

(1 - 7) Choose
\beq
\epsilon_{1}:=
\frac{1}{4^{1}}\min_{{1\leq j\leq 1}}
\{\sigma({\cal G}_{Y_{j}}),
\Delta(M_{j!},\frac{1}{j!}), \frac{\ep}{100}\}.
\eneq
%1

%\xrule
%Let $Z_1$ be a $\frac{1}{4}\Delta(M_{1!},1)$-net of $M_{1!}^1$.
\noindent
{{Since}} $A_1$
is asymptotically tracially in ${\cal C}_{\cal Z}$
({\blue{and}} is  asymptotically tracially in ${\cal N}$),
%and tracially in ${\cal C}_{\cal Z}$,
by Lemma \ref{regulation-of-tracial-nuclear-Z-stable-lem},
we can {\blue{further}} make  the following choices:
%a two-step choice:

%{\bf Step 1:}

(1 - 8) There {\blue{exist}} a
separable unital {\blue{(nuclear, in case that $A$ is
asymptotically tracially in ${\cal N}$)}}
%(resp.~separable unital nuclear)
$C^*$-algebra $B_1$
and a c.p.c.~map
$\alpha_1: A_1\rightarrow B_1$
such that
$\alpha_1$ is {{an}} $(\calF_1,\epsilon_1/2)$-{\blue{approximate}} embedding, and,

(1 - 9)  {\blue{a countable dense subset}} $W_1=\{w_{1,1},w_{1,2},\cdots\}\subset B_1^1$
%of $B_1^1$
and finite  {{subsets}}
$W_{1,i}:=\{w_{1,1},{\green{w_{1,2},}}\cdots,w_{1,i}\}$ {{($i\in \mathbb{N}$)}},

(1 - 10) and a finite subset ${{\mathcal{G}_1:=\alpha_1(\calF_1),}}
%\cup
%{\green{(\cup_{0\leq j\leq 0}\alpha_1\circ\phi_{j+1,1}\circ\beta_j(W_{j,1}))
%\cup W_{1,1},}}
$
{\blue{and}}

(1 - 11) there   exist a c.p.c.~map
$\beta_1: B_1\rightarrow A_1$
and a c.p.c.~map
$\gamma_1: A_1\rightarrow (\beta_1\circ\alpha_1(A_1))^{\bot}$
{\blue{such}} that

(1 - 12) $\beta_{1}\circ\alpha_{1}(1_A)$ {\blue{and}}  $\gamma_{1}(1_{A_{1}})$
are projections,
{\blue{and}} $1_{A_{1}}=\beta_{1}\circ\alpha_{1}(1_{A_{1}})
+\gamma_{1}(1_{A_{1}}),$

(1 - 13) $x\approx_{\epsilon_{1}}\beta_{1}\circ\alpha_{1}(x)
+\gamma_{1}(x)$ for all $x\in\calF_{1},$

(1 - 14) $\beta_{1}$ is a $(\mathcal{G}_{1},\epsilon_{1})$-{\blue{approximate}} embedding,

(1 - 15) $\gamma_{1}(1_A)\lesssim_A \bar{a}_{1},$ {\blue{and}}

%kkk

(1 - 16) there exist a c.p.c.~map
$\psi_{{2}}: M_{{2!}}\rightarrow \Her_A(\beta_1\circ\alpha_1(1_{A_1}))$,
and an element
$v_{{2}}\in \Her_A(\beta_1\circ\alpha_1(1_{A_1}))^1$
such that
the pair {\blue{($\psi_{{2}},$\, $v_{{2}}$) satisfies}} the relation
$\check{\cal R}_{\Her_A(\beta_1\circ\alpha_1(1_A))}
({{2!}}, \beta_1\circ\alpha_1({\cal F}_1),\frac{1}{{2!}}).$

(1 {{-}} 17) Define $\phi_1:=\beta_1\circ\alpha_1$
{\blue{which is a c.p.c.~map from $A_1$ to $A_1.$}}

\noindent
{{Assume,}} for $k\geq 1 \in \mathbb{N},$
%and for $1\leq j\leq k\in\N$,
we  {\blue{have
made the choices}} (j - 1)-(j - 17)
{{for all $1\le j\le k.$}}
%{\green{1 such that,}} conditions (j - 12)-(VI, j) are satisfied for all $1\le j\le k.$
Then, for $k+1$, we make the following choices:

(k+1 - 1) Define $A_{k+1}:=\Her_A(\phi_{k}(1_{A_{k}}))$.

(k+1 - 2) Choose $\bar{a}_{k+1}\in (A_{k+1})_+\backslash\{0\}$
such that $\bar{a}_{k+1}\lesssim_A a_{k+1}$.

(k+1 - 3) Choose {\blue{a countable subset}}
$X_{k+1}=\{x_{k+1,1}, {\blue{x_{k+1,2},}}\cdots\}\subset (A_{k+1})_+^1$
{\blue{which is dense in}} $(A_{k+1})_+^1$
{\blue{and choose}}
$X_{k+1,i}:=\{1_{A_{k+1}}\}\cup\{x_{k+1,j}:1\leq j\leq i\}$
{{($i\in\mathbb{N}$).}}

(k+1 - 4) Choose $Y_{k+1}:=\cup_{1\leq j< k+1}\phi_{j,k+1}(X_{j,k+1})\cap B_{\frac{3}{4},1}(A_{k+1})$.

(k+1 - 5) Let ${\cal G}_{Y_{k+1}}$ {\blue{and}}
$\sigma({\cal G}_{Y_{k+1}})$ be as in
Proposition \ref{fullness neighbourhood and simplicity}.

(k+1 - 6)
Let
\beq
\calF_{k+1}&=&
\phi_{1,k+1}(\calF)
\cup
Y_{k+1}
\cup \left(\cup_{1\leq j\leq k+1}\phi_{j,k+1}({\cal G}_{Y_j})\right)
%\cup{\cal G}_{Y_{k+1}}
\cup(
\cup_{1\leq j\leq k+1}
\phi_{j,k+1}(\beta_{j-1}(W_{j-1,k+1})){{)}}
\nonumber\\&&
\cup
\left(\cup_{{1\leq j\leq k+1}}
\phi_{j,k+1}(\psi_{j}({\cal G}^{M_{j!}}))
\cup
%\cup_{\green{1\leq j\leq k+1}}
\{\phi_{j,k+1}(v_j),\phi_{j,k+1}(v_j)^*,
\phi_{j,k+1}(v_{j}v_{j}^*)\}
\right){\blue{.}}
\nonumber
\eneq
%kk

\iffalse%-
\beq
\calF_{k+1}&=&
(\cup_{0\leq j\leq k+1}
\phi_{j,k+1}(\psi_{j}(M_{j!}^1)))
\cup
Y_{k+1}
\cup \left(\cup_{1\leq j\leq k+1}\phi_{j,k+1}({\cal G}_{Y_j})\right)
%\cup{\cal G}_{Y_{k+1}}
\nonumber\\&&
\cup
\left(
\cup_{0\leq j\leq k+1}\{\phi_{j,k+1}(v_j),\phi_{j,k+1}(v_j)^*,
\phi_{j,k+1}(v_{j}v_{j}^*)\}
\right)
\cup \phi_{1,k+1}(\calF)
\\
&&
\cup(
\cup_{0\leq j\leq k}
\phi_{k+1,j+1}(\beta_j(W_{j,k+1}))).
\eneq
\fi%-

(k+1 - 7)
Let
\beq
\epsilon_{k+1}=
\frac{1}{4^{{k+1}}}\min_{{1\leq j\leq k+1}}
\{\sigma({\cal G}_{Y_{j}}),
\Delta(M_{j!},\frac{1}{j!}),
\frac{\ep}{100}\}.
\eneq
%kk

\noindent
{{Since}} $A_{k+1}$
is asymptotically tracially in ${\cal C}_{\cal Z}$
%(resp.~
{\blue{(and is  asymptotically tracially in ${\cal N}$),}}
%and tracially in ${\cal C}_{\cal Z}$),
by Lemma \ref{regulation-of-tracial-nuclear-Z-stable-lem},
we can {\blue{further}} make the following  choices:

%{\bf Step 1:}

(k+1 - 8)
There exist a separable unital
%(resp. separable unital nuclear)
{\blue{(nuclear, in the case that $A$ {\green{is}} asymptotically  tracially in ${\cal N}$)}} $C^*$-algebra $B_{k+1}$,
and a c.p.c.~map {\blue{$\alpha_{k+1}: A_{k+1}\to B_{k+1}$}}
such {\blue{that}} $\alpha_{k+1}$ is {\blue{an}}
$(\calF_{k+1},\epsilon_{k+1}/2)$-{{approximate embedding}}, and,

(k+1 - 9)
{\blue{a countable dense subset}} $W_{k+1}=\{w_{k+1,1},{\blue{w_{k+1,2},}}\cdots\}\subset B_{k+1}^1,$
%which is a countable dense subset of $B_{k+1}^1$
{\blue{and}}
{{finite subsets}}
$W_{k+1,i}=\{w_{k+1,1},{\green{w_{k+1,2},}}\cdots,w_{k+1,i}\}$
{{($i\in \mathbb{N}$),}}

%{\bf Step 2:}
{\blue{(k+1 - 10)}}
and for ${{\mathcal{G}_{k+1}=
\alpha_{k+1}(\calF_{k+1})}} {\blue{\subset B_{k+1},}}
%\cup
%{\green{(\cup_{0\leq j\leq k}\alpha_{k+1}\circ\phi_{j+1,k+1}\circ\beta_j(W_{j,k+1}))}}
%\cup W_{k+1,k+1},
$

(k+1 - 11) {\blue{there exist a}}  c.p.c.~map
$\beta_{k+1}: B_{k+1}\rightarrow A_{k+1}$
{\blue{and}}  a c.p.c.~map $\gamma_{k+1}: A_{k+1}\rightarrow (\beta_{k+1}\circ\alpha_{k+1}(A_{k+1}))^{\bot}$
{\blue{such}} that

(k+1 - 12) $\beta_{k+1}\circ\alpha_{k+1}(1_A)$ {\blue{and}} $\gamma_{k+1}(1_{A_{k+1}})$
are projections,
and $1_{A_{k+1}}=\beta_{k+1}\circ\alpha_{k+1}(1_{A_{k+1}})
+\gamma_{k+1}(1_{A_{k+1}}),$

(k+1 - 13) $x\approx_{\epsilon_{k+1}}\beta_{k+1}\circ\alpha_{k+1}(x)
+\gamma_{k+1}(x)$ for all $x\in\calF_{k+1},$

(k+1 - 14) $\beta_{k+1}$ is a $(\mathcal{G}_{k+1},\epsilon_{k+1})$-{\blue{approximate}} embedding,

(k+1 - 15) $\gamma_{k+1}(1_A)\lesssim_A \bar{a}_{k+1},$ {\blue{and}}

(k+1 - 16)
there exist a c.p.c.~map
$\psi_{{k+2}}: M_{{(k+2)!}}\rightarrow
\Her_A(\beta_{k+1}\circ\alpha_{k+1}(1_{A_{k+1}}))$
and an element $v_{{k+2}}\in \Her_A(\beta_{k+1}\circ\alpha_{k+1}(1_{A_{k+1}}))^1$
such that
the pair {\blue{($\psi_{{k+2}},$ $v_{{{k+2}}}$)}} satisfies the relation
$\check{\cal R}_{\Her_A(\beta_{k+1}\circ\alpha_{k+1}(1_A))}
({{(k+2)!}}, \beta_{k+1}\circ\alpha_{k+1}({\cal F}_{k+1})
{{\frac{1}{(k+2)!}}}){\blue{.}}$

(k+1 - 17) Define $\phi_{k+1}:=\beta_{k+1}\circ\alpha_{k+1}.$

\iffalse%-
the following  {\red{(XI, k+1) - (XVI, k+1)}} hold:

{\red{(XV, k+1)   the pair}} {\red{($\psi_{k+1},$ $v_{k+1}$) satisfies}} the relation
$\check{\cal R}_{\Her_A(\beta_{k+1}\circ\alpha_{k+1}(1_A))}
((k+1)!, \beta_{k+1}\circ\alpha_{k+1}({\cal F}_{k+1}),\ep_{k+1}).$

(k+1 - 15) Define $\phi_{k+1}:=\beta_{k+1}\circ\alpha_{k+1}.$
% is $2\epsilon_{k+1}$-approximately
%multiplicative on $\calF_{k+1}$.

Then, {\red{clearly,
(I, k+1) - (X, k+1)}}  hold by the choices of (k+1 - 1) - (k+1 - 10),  respectively.
%in an obvious way.
{\red{On the other hand, (XI, k+1) - (XIV,k+1) have been chosen.}}
%
{\red{Note also that, by  (k+1 - 15), (XII,k+1)  and (XIII, k+1),}}

{\red{(XVI, k+1)  the map $\phi_{k+1}$ is  an $({\cal F}_{k+1},2\ep_{k+1})$-approximate embedding.}}

\fi%-

\noindent
{{Therefore,}}  by induction, for each $k\in \mathbb{N}$,
we {\blue{have made  choices (k - 1) - (k - 17).}}
%{\green{1 such that,}} (I, k) - (VI, k)}} hold.

%%%%\xrule

For {\blue{each}} $k\in N$,
by {\blue{(k+1 - 1),}} we may
view $\phi_k$ {\blue{as}} a map from $A_k$ to $A_{k+1}.$
%By (k - 16), {\red{(VIII, k) and (XIII, k),}}

%{\red{(XVI, k)}} $\phi_{k}=\beta_k\circ\alpha_k$
%is $2\epsilon_k$-approximately
%multiplicative on $\calF_k$.
{\blue{Since $A_k$ is simple, $f_{1/2}(x)$ is full in $A_k$ for each
$x\in {\cal F}_k\cap B_{3/4,1}(A_k).$}}
{\blue{Then,}}  by (k - 4), (k - 8), (k - 14), (k - 6), (k - 7),
and by Lemma \ref{construction-of-simple-Z-stable-gen-limit},
we {\blue{conclude}} that
$(A_k,\phi_k)$ forms a generalized inductive limit which is
{\blue{simple, separable, unital,}} and
$\mathcal{Z}$-stable.
We denote this generalized inductive limit by $\bar{A}$.

If in addition $A$ is also
asymptotically tracially in ${\cal N}$,
then each $B_k$ are chosen to be nuclear as mentioned above.
%then
{\blue{We claim}} that $\bar{A}$ is nuclear.
% (in case that $A$ is tracially in ${\cal N}.

%{{\green
%\ref{positive-criterion-being-gen-ind-lim}
Denote the map $\alpha_{k+1}\circ\beta_k: B_k\to B_{k+1}$ by $\theta_k$
%for all $k\in\N$.
{{($k\in\N$).}}
Let $k\in\N$, let $z_1,z_2\in B_k^1$ and let $\dt>0$.
Then there {\blue{are}} $i_1,i_2\in\N$ {\blue{such that}} $z_1\approx_{\dt/8} w_{k,i_1}$
and $z_2\approx_{\dt/8} w_{k,i_2}$.
Let $K\in\N$ {\blue{such that}} $K>\max\{k,i_1,i_2,\frac{8}{\dt}\}$ and $\frac{1}{4^K}<\frac{\dt}{8}$.
Note that $\theta_{i,j}=\alpha_j\circ\phi_{j,i+1}\circ\beta_{i}$ for $j>i\in N$
(see {{Notation}}
\ref{def-of-gen-ind-lim} for the notation $\theta_{i,j}$),
then by (K - 10) and (K - 6),
$\theta_{k,K}(w_{k,i_1}), \theta_{k,K}(w_{k,i_2})\in {\cal G}_K$.
For any $j\geq K,$
keep using {{(i - 14)}} {\blue{and (i - 8)}} for $j\geq i\geq K$,
we have
$$
\theta_{K,j}(\theta_{k,K}(w_{k,i_1})\theta_{k,K}(w_{k,i_2}))
\approx_{2\sum_{i=K}^j \frac{1}{4^j}}
\theta_{K,j}(\theta_{k,K}(w_{k,i_1}))\theta_{K,j}(\theta_{k,K}(w_{k,i_2})).
$$
Note that $2\sum_{i=K}^\infty \frac{1}{4^j}<\dt/2$.
{\blue{Then,}} for any $j\geq K,$
%(see \ref{def-of-gen-ind-lim} for the notation $\theta_{i,j}$),
\beq
\theta_{K,j}(\theta_{k,K}(z_1)\theta_{k,K}(z_2))
&\approx_{\dt/4}&
\theta_{K,j}(\theta_{k,K}(w_{k,i_1})\theta_{k,K}(w_{k,i_2}))
\nonumber\\
&\approx_{\dt/2}&
\theta_{K,j}(\theta_{k,K}(w_{k,i_1}))\theta_{K,j}(\theta_{k,K}(w_{k,i_2}))
\nonumber\\
&\approx_{\dt/4}&
\theta_{K,j}(\theta_{k,K}(z_1))\theta_{K,j}(\theta_{k,K}(z_2))
= \theta_{k,j}(z_1) \theta_{k,j}(z_2).
\nonumber
\eneq
Then, {\blue{by}} {{Lemma}}
\ref{positive-criterion-being-gen-ind-lim},
$(B_k,\theta_k)$ forms a generalized inductive limit.
Since $\theta_k$ %are
{{is a}} c.p.c.~map for all $k\in\N$,
%
%{\blue{Indeed, note that $(B_k,\alpha_{k+1}\circ\beta_k)$
%forms a generalized inductive limit}} {\blue{\bf{Why?}}}
%{\blue{{\green{1 such that,}} each  $B_k$ is a}} nuclear {\blue{$C^*$-algebra}}
%and $\alpha_{k+1}\circ\beta_k$ {\blue{is a}} c.p.c.~map.
by \cite[Proposition 5.1.3]{BKGILFD},
$\lim_{k\rightarrow \infty}(B_k,\alpha_{k+1}\circ\beta_k)$ is
{{a nuclear \CA.}}

{\blue{Recall that $\bt_k: B_k\to A_k$ and $\af_k: A_k\to B_k$ are c.p.c.~maps, and
$\phi_k=\bt_k\circ \af_k$ {\blue{(see (k - 17)).}}  By the
commutative diagram}}
$$
\xymatrix{
A_1\ar[r]^{\phi_{1}}\ar[d]_{\af_1}  &    A_2\ar[r]^{\phi_{2}}\ar[d]_{\af_2}  &    A_3\ar[r]^{\phi_{3}}\ar[d]_{\af_3}    &\cdots \cdots ~~\bar{A}  \\
%     &  &  &  &  &  &  \\
    B_1\ar[ru]^{{\bt_1}}\ar[r]^{{\theta_1}}&
    B_2\ar[ru]^{{\bt_2}}\ar[r]^{{\theta_2}}      &
    B_3\ar[r]^{{\theta_3}}   &\cdots\cdots ~~B   \\
 }
$$
{\blue{we obtain two c.p.c.~maps
$\af: {{\bar A}}
\to B$ and $\bt: B\to {{\bar A}}$ such that  $\bt\circ \af=\id_{\bar{A}}.$}}
{{By {{Lemma}} \ref{f8-9lem}, $\bar A$ is also nuclear.}}
{{This proves the claim.}}

\iffalse

{\blue{Let $\eta>0$ and  a finite subset ${\cal G}\subset \bar{A}.$  Since $B$ is nuclear,
there exist a finite dimensional \CA\, $F$ and  c.p.c.~maps $\Phi: B\to F,$ and $\Psi: F\to B$ such
that
\beq
\Psi\circ \Phi(b)\approx_\eta b\rforal b\in \af({\cal G}).
\eneq
Note that $\Phi\circ \af: A\to F$ and $\bt\circ \Psi: F\to A$ are c.p.c.~maps. Moreover,
\beq
\bt\circ \Psi\circ \Phi\circ \af(a)\approx_\eta \bt(\af(a))=a\rforal a\in {\cal G}.
\eneq
It follows that $\bar{A}$ is nuclear. This proves the claim.}}

\fi

\iffalse
$$
B_k
$$
and, by ?,
%Note that
{\blue{${\bar A}=\lim_{k\rightarrow \infty}(A_k,\phi_k)\cong
%$ and
%is intertwine with $
\lim_{k\rightarrow \infty}(B_k,\alpha_{k+1}\circ\beta_k).$
In other words,}}
%hence they are isomorphic to each other,
%thus
$\bar{A}$ is also nuclear.
\fi

{\blue{Now back to the general case.
We embed ${\bar A}$ into $l^\infty(A)/c_0(A)$ as follows.
Let $x\in A_k.$ Define
$$\iota(\phi_{k, \infty}(x))
=\pi_\infty(\{0,0,
{{\cdots}},0, \phi_{{{k,k}}}(x),\phi_{k, k+1}(x),
%\phi_{k, k+2}(x),
{{\cdots}}\}),$$
where $\pi_\infty: l^\infty(A)\to l^\infty(A)/c_0(A)$ is the quotient map.}}
{\blue{By (k - 17), (k - 8), and (k - 14),
\beq\label{89-strict}
\liminf_{n\to\infty}\|\phi_{k, k+n}(x)\|\ge (1-4\sum_{j=k}^\infty\ep_j)\|x\|\ge (1/2)\|x\|\rforal x\in {\cal F}_k.
\eneq
It follows (see %that
{{Proposition}} \ref{strictly-embedding-simple-case})
that $\iota$ defines a strict embedding from $\bar{A}$ into
{\blue{$l^\infty(A)/c_0(A).$}}}}
{{Note}} that
$$
1_{\bar{A}}=
\pi_\infty(\{\phi_1(1_{A_1}),\phi_{2}(1_{A_2}),%\phi_{3}(1_{A_3}),
{{\cdots}}\})
{{\andeqn}}
$$
$$
1_{A}-1_{\bar{A}}
=\pi_\infty(\{\gamma_1(1_{A_1}),\sum_{i=1}^2\gamma_{i}(1_{A_i}),
%\sum_{i=1}^3\gamma_{i}(1_{A_i}),
{{\cdots}}\}).
$$
%Since, f
{\blue{For all $k\in\mathbb{N}$,
by (k - 15) and {\blue{by}} the fact that $a_i\perp a_j$ ($i\not=j$),}} we have
$\sum_{i=1}^k\gamma_{i}(1_{A_i})
%\lesssim_A\sum_{i=1}^k\bar{a_k}
\lesssim \sum_{i=1}^k a_k\lesssim a.$
{\blue{It follows that}}
%then
\beq
{\blue{\iota_A(1_{A})}}-1_{\bar{A}}\lesssim_{l^{\infty}(A)/c_0(A)} {\blue{\iota_A(a)}}.
\eneq

%$l^{\infty}(A)/c_0(A)$
\noindent
For $x\in \calF$ {\blue{and}} $ k\geq 2 \in\mathbb{N}$,
{\blue{using  {{(j - 13), (j - 12), (j - 17), {\blue{and}} (j - 1)}}
for $1\leq j\leq k\in\N,$ {\blue{repeatedly,}}}}  we have
\beq
\label{f8-9thm-1}
x1_{A_k}
%&\overset{\rm(II,~1)}{\approx_{\epsilon_1}}&
&\approx_{\epsilon_1}&
(\gamma_1(x)+\phi_1(x))1_{A_k}
%\nonumber\\
%&=&
%\overset{\rm(II,~1)}{=}
=\phi_1(x)1_{A_k}=\phi_{1,2}(x)1_{A_k}
\nonumber\\
&\approx_{\epsilon_2}&
(\gamma_2(\phi_{1,2}(x))+\phi_3(\phi_{1,2}(x)))1_{A_k}
%\nonumber\\
%&=&
=\phi_{1,3}(x)1_{A_k}
\nonumber\\
&\approx_{\epsilon_3}&
\cdots\,\,\,
%\nonumber\\
%&
\approx_{\epsilon_{k-1}}
%&
\phi_{1,k}(x)1_{A_k}
%\nonumber\\
%&=&
=\phi_{1,k}(x).
\eneq
Similarly, we have
$1_{A_k}x\approx_{\sum_{i=1}^{k-1}\epsilon_i}\phi_{1,k}(x).$
Thus
$1_{A_k}x\approx_{2\sum_{i=1}^{k-1}\epsilon_i}x1_{A_k}.$
Note that $2\sum_{i=1}^{\infty}\epsilon_i<\epsilon.$
{\blue{Hence}}
\beq
\label{f8-9-thm-2}
1_{\bar{A}}{\blue{\iota_A(x)\approx_{\epsilon}\iota_A(x)}}1_{\bar{A}}
\quad\mbox{ for all }x\in\calF.
\eneq
By \eqref{f8-9-thm-2} and \eqref{f8-9thm-1}, we also have
\beq
1_{\bar{A}}{\blue{\iota_A(x)}}1_{\bar{A}}
\approx_{\epsilon}
{\blue{\iota_A(x)}}1_{\bar{A}}
\approx_{\epsilon}
\pi_\infty(\{\phi_{1,k}(x)\})
\in \iota(\bar A)
\quad
\mbox{ for all }x\in\calF.
\eneq
{\blue{This proves the first part of the theorem.}}
{\blue{If, in addition, $A$ is asymptotically tracially in ${\cal N},$ by the claim above, ${\bar A}\in {\blue{{\cal N}_{{\cal Z},s,s}}}.$
%Note that $A\in {\cal N}_{{\cal Z},ss}$ and
{{Since}} %
{\blue{\CA s in}}
${\blue{{\cal N}_{{\cal Z},s,s}}}$ have property (H)
({{see \cite[Corollary 3.1]{TomsW07}}}),}}
%-----{\bf{\blue{Do we need a reference?}}}
{\blue{by}} Proposition \ref{equivalent-definition-of-tracial-approximation},
$A$ is asymptotically tracially in  ${\blue{{\cal N}_{{\cal Z}, {{s,s}}}}}.$
%``simple separable unital $\mathcal{Z}$-stable''
%%(resp. is tracially
%``simple separable unital nuclear $\mathcal{Z}$-stable'').

\end{proof}

%%%%%%%%%%%%%%%%%%%%%%%%%%%%%%%%%%%%%%%%%%%

\section{Simple \CA s which are asymptotically tracially in
${\cal C}_{{{\cal Z},s}}$ or in ${\cal N}_n$}

\iffalse
\begin{cor}\label{CtNn-comparison}
{\blue{Let $A$ be a unital separable simple \CA\, which is tracially in ${\cal N}_n$ for some integer $n.$
Then $A$ has strict comparison.}}
\end{cor}

\begin{cor}\label{Cnn=1}
{\blue{Every unital separable simple nuclear \CA\, which is tracially in ${\cal N}_n$ has nuclear dimension 1.}}
\end{cor}

\begin{cor}[cf. Appendix of \cite{GLII}]\label{Cnrr0=Z}
{\red{Every unital separable simple nuclear \CA\, which has generalized tracial rank at more one is ${\cal Z}$-stable. }}
\end{cor}
\fi

%\newpage

%\section{Appendix: More properties}

%The following is taken from the proof of Lemma 2.4 of \cite{Lin1991}.
%{\red{\bf We might want to state this earlier so it could be used more often.}}

\begin{thm}
\label{dichotomy-tracial-finite-dimnuc-finite-infinite}
%Let $n\in\mathbb{N}$.
Let $A$ be a simple separable unital $C^*$-algebra {\blue{which is
asymptotically tracially in ${\cal C}_{{{\cal Z},s}}.$}}
% ${\cal N}_n.$}}
%and $\mathrm{id}_A$ is tracially ``nuclear dimension no more than $n$''.
%If $A$ is not stably finite, then $A$ is purely infinite.
{\blue{Then, either $A$ has stable rank one, or $A$ is purely infinite.
Moreover, if $A$  {\blue{is}}
asymptotically tracially in ${\cal C}_{{{\cal Z},s}}$
%(recall  \ref{DNn})
and is not purely infinite,
%is not purely infinite and is exact,
$A$ has strict comparison {{for positive elements}}.}}
\end{thm}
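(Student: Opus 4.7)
The plan is to combine R\o rdam's structure theory for simple \CA s with almost unperforated Cuntz semigroups with the tracial approximation machinery established in Section 8. Since every ${\cal Z}$-stable \CA\ has almost unperforated Cuntz semigroup, one has ${\cal C}_{{\cal Z},s}\subset{\cal W},$ and hence Theorem \ref{tracially-Z-stable-has-almost-unperforated-Cuntz-semigroup} applies, so $W(A)$ is almost unperforated. R\o rdam's theorem that almost unperforation of the Cuntz semigroup implies strict comparison for a simple unital \CA\ (see \cite[Proposition 3.2]{Rordam-2004-Jiang-Su-stable}) immediately settles the last sentence of the theorem, and his dichotomy for simple \CA s with almost unperforated Cuntz semigroup shows that $A$ is either purely infinite (in which case the first assertion is immediate) or stably finite.

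It remains to show that when $A$ is stably finite, $A$ has stable rank one; the target is to apply Theorem \ref{Tstablerank1}. By Blackadar--Handelman, there is a normalized $2$-quasitrace $\tau\in QT(A).$ Given any finite subset ${\cal F}\subset A,$ any $\epsilon>0,$ and any $a\in A_+\setminus\{0\},$ we use Lemma \ref{L1991} to choose $a'\in\Her(a)_+\setminus\{0\}$ with $d_\tau(a')<1,$ and apply Theorem \ref{Thm-tracial-simple-nuclear-Z-stable} to the data $({\cal F},\epsilon,a')$ to obtain a strictly embedded unital \SCA\ $B\subset l^\infty(A)/c_0(A)$ with $B\in{\cal C}_{{\cal Z},s,s}$ satisfying conditions (1)--(3) there. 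Since $a'\lesssim a,$ the same $B$ meets the hypotheses of Theorem \ref{Tstablerank1} relative to the original triple $({\cal F},\epsilon,a).$

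The decisive step is then to show that $B$ has stable rank one. Fix a free ultrafilter $\omega$ on $\mathbb{N}$ and define $\bar\tau$ on $l^\infty(A)/c_0(A)$ by $\bar\tau(\pi_\infty(\{x_n\})):=\lim_\omega \tau(x_n);$ this is a normalized $2$-quasitrace (cf.~\cite[II.2.6, II.4.1]{BH-trace1982}). Because $\iota_A(1_A)-1_B\lesssim\iota_A(a'),$ and because $\iota_A(1_A)-1_B$ and $1_B$ are mutually orthogonal projections with sum $\iota_A(1_A)=1_{l^\infty(A)/c_0(A)},$ one computes
$$\bar\tau(1_B)=1-\bar\tau(\iota_A(1_A)-1_B)\geq 1-d_{\bar\tau}(\iota_A(a'))=1-d_\tau(a')>0.$$
Hence the (renormalized) restriction $\bar\tau|_B$ is a nonzero normalized $2$-quasitrace on the simple unital ${\cal Z}$-stable \CA\ $B,$ so $B$ is not purely infinite. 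R\o rdam's dichotomy then forces $B$ to be stably finite, and his theorem that separable unital simple stably finite ${\cal Z}$-stable \CA s have stable rank one yields that $B$ lies in the class ${\cal S}$ of Theorem \ref{Tstablerank1}. Applying that theorem completes the argument.

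The principal obstacle is ensuring that the approximating subalgebra $B$ itself has stable rank one; this is handled by transferring a nonzero quasitrace from $A$ to $B$ via the ultrafilter construction, which in turn requires choosing $a'$ strictly smaller than the prescribed $a$ (in the Cuntz sense) so that a proper amount of trace mass remains on $1_B.$ All other ingredients (almost unperforation, R\o rdam's dichotomy, stable rank one for stably finite ${\cal Z}$-stable algebras, and the transfer theorem \ref{Tstablerank1}) are already available in the paper or in the cited literature.
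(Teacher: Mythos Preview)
Your argument is correct and reaches the same conclusion, but it follows a genuinely different path from the paper's proof.

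The paper runs the dichotomy at the level of the approximating algebras. It fixes a sequence $\{d_n\}$ as in Lemma~\ref{L1991}, applies Theorem~\ref{Thm-tracial-simple-nuclear-Z-stable} for each $n$ to obtain $B_k\in{\cal C}_{{\cal Z},s,s}$, and then observes that either infinitely many of the $B_k$ are purely infinite (so Remark~\ref{Rm45} forces $A$ to be purely infinite) or, by R\o rdam's \cite[Theorem~6.7]{Rordam-2004-Jiang-Su-stable}, cofinally many $B_k$ have stable rank one (so Theorem~\ref{Tstablerank1} gives stable rank one for $A$). No quasitrace on $A$ is ever invoked, and the almost-unperforation of $W(A)$ is used only for the ``moreover'' clause.

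Your route instead establishes the dichotomy for $A$ up front (almost unperforated $W(A)$ plus simplicity forces purely infinite or stably finite, via the strict-comparison machinery of R\o rdam; the paper's own citation of \cite[Corollary~4.6]{Rordam-2004-Jiang-Su-stable} and \cite[Proposition~2.1]{OPR} supports this), and in the stably finite case pushes a single quasitrace $\tau\in QT(A)$ through an ultrafilter to obtain a nonzero $2$-quasitrace on the approximant $B$, thereby ruling out that $B$ is purely infinite. This is a nice alternative: it trades the paper's sequential bookkeeping with the $\{d_n\}$ for a one-shot quasitrace transfer. The cost is that you must know the dichotomy for $A$ already holds, which (as the paper's own references indicate) does follow from the almost-unperforation literature, though your pointer to ``Proposition~3.2'' is not quite the right coordinate. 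The paper's argument is more self-contained in that it derives the dichotomy for $A$ as a byproduct rather than importing it.
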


\begin{proof}
{\blue{Suppose that $A$ is a unital  separable simple \CA\, which is
asymptotically tracially in ${\cal C}_{{{\cal Z},s}}.$
Let ${\cal P}_1$ be the class of unital {{separable}}
%purely infinite simple \CA s which are ${\cal Z}$-stable
{{simple ${\cal Z}$-stable}} \CA s which are
{{purely infinite}}
and {\blue{let}} ${\cal P}_2$ be the class of unital
{{separable}} simple ${\cal Z}$-stable \CA s
which have stable rank one.
Then either (I) or (II) hold:}}

(I):  For any finite subset $\calF\subset A$,  any $\epsilon>0$, and any
$a \in A_+\backslash\{0\}$,
there exists a unital
%{\blue convergent}
$C^*$-subalgebra
$B\subset  l^\infty(A)/c_0(A)$ which is strictly embedded
{\blue{such that}} $B$ in $\mathcal{P}_1$, {\blue{and}}
%(recall notations defined in Notation \ref{notation-2})

\hspace{0.2in}(1) $1_B\iota_A(x)\approx_{\epsilon}\iota_A(x)1_B$\, {\blue{for all}} $ x\in\calF$,

\hspace{0.2in}(2) $1_B\iota_A(x)1_B\in_{\epsilon} B$ and $\|1_B\iota_A(x)1_B\|\geq \|x\|-\epsilon$\,
{\blue{for all}} $ x\in \calF$, {\blue{and}}

\hspace{0.2in}(3)
$\iota_A(1_A)-1_B\lesssim_{l^\infty(A)/c_0(A)}\iota_A(a).$

{\blue{(II):}} The same {\blue{statement holds}} as {\blue{in}} (I) but replacing ${\cal P}_1$ by ${\cal P}_2.$

%Let ${\cal C}_{Z,s}$ be the class of unital simple \CA s  in  ${\cal C}_{\green{{\cal Z},s}}.$
%${\cal N}_n,$
%It follows from  {\blue{?}}
%\ref{separable-reduction-for-tracial-Z-stable} t
%hat $A$ is tracially in ${\cal C}_{Z,s}.$
\noindent
{{We}} may assume that $A$ is infinite dimensional.
By Lemma \ref{L1991},  there is a sequence of nonzero positive elements
%$d_n\in A_+$
{{$\{d_n\}\subset A_+$}}
{\blue{such that}} $d_{n+1}\lesssim d_n$ for all $n\in \N,$ and,
for any $x\in A_+\setminus \{0\},$ there exists $N$ such that $d_n\lesssim x$
{\blue{for all $n\ge N.$}}
% $\lim_{n\to\infty{\sup\{\tau(d_n): \tau\in QT(A)\}=0.$
Let ${\cal F}_n\subset A$ be an increasing  sequence of finite subsets of $A$ whose union is dense
in $A.$  Since $A$ is
asymptotically tracially in ${\cal C}_{{{\cal Z},s}},$
by {{Theorem}}
\ref{Thm-tracial-simple-nuclear-Z-stable},  there exists a sequence of
decreasing positive numbers $\{\ep_n\}$ with $\lim_{n\to\infty} \ep_n=0$
{\blue{and a sequence of unital}}  \CA s  $B_k\in {\cal C}_{{\cal Z},s}$ {\blue{such that}}
%u. c.p.c.~maps $\af_k: A\to B_k,$ $\gamma_k: A\to A$ and
%$\bt_{k,n}: B_k\to A$ such that

{{$(1')$}} $\|1_{B_k}\iota_A(x)-\iota_A(x)1_{B_k}\|<\ep_k\rforal x\in {\cal F}_k;$
%$x\approx_{\ep_n} \gamma_k(x)+\bt_{k,n}\circ \af_k(x)$ for all $x\in {\cal F}_k,$ and
%$\gamma_k(1_A)\perp \bt_{k,n}(A);$

{{$(2')$}} $1_{B_k}\iota_A(x)1_{B_k}\in_{\ep_k} B_k$ for all $x\in {\cal F}_k,$ and

% $\af_k$ is $({\cal F}_k, \ep_k)-$ {\green{approximate embedding}};

{{$(3')$}}  $\iota_A(1_A)-1_{B_k}\lesssim_{l^\infty(A)/c_0(A)}  {{\iota_A(d_k)}}.$
%$\lim_{n\to\infty}\|\bt_{k,n}(xy)-\bt_{k,n}(x)\bt_{k,n}(y)\|=0\rforal x, y\in B_k;$

%(4) $\gamma_k(1_A)\lesssim d_k.$

\noindent
{{If}} there are infinitely many $B_k$ which are purely infinite, then, {\blue{since,}} for any $a\in A_+\setminus \{0\},$
there is $K$ such that $d_K\lesssim a,$      (I) holds.

Otherwise, by \cite[Theorem 6.7]{Rordam-2004-Jiang-Su-stable}, (II) holds.
It follows  {\blue{from}} the proof of
{{Proposition}} \ref{Ptrpi=pis} (see {\blue{also}}
{{Remark}} \ref{Rm45}) that, if (I) holds, $A$ is purely infinite.
On the other hand, if (II) holds, by
{{Theorem}} \ref{Tstablerank1},  $A$ has stable rank one.
This completes the proof of the first part of the theorem.

\iffalse
$A$ satisfies the conclusion of the first part of  for
${\cal P}_1$ (instead of ${\cal P}$).  Then,
by the proof of  and {{Remark}} \ref{Rm45}, $A$ is purely infinite.

$A$ is tracially in ${\cal P},$ where ${\cal P}$ is the class of
unital separable purely infinite simple \CA s.  By
{{Proposition}} \ref{Ptrpi=pis}, this implies that $A$ is purely infinite.

Otherwise, one may assume that all $B_k$ are not purely infinite. Since {\blue{$B_k\in {\cal C}_{Z,s}$}} and is not purely infinite,
%{\blue{by ?,}} $B_k$ is ${\cal Z}$-stable and not purely infinite.
By  Theorem 6.7 of \cite{Rordam-2004-Jiang-Su-stable},
$B_k$ has stable rank one. It follows from
{{Proposition}} \ref{Tstablerank} that $A$ has stable rank one.
\fi

{\blue{For the last part, by \cite[Theorem 4.5]{Rordam-2004-Jiang-Su-stable} and by Theorem \ref{tracially-Z-stable-has-almost-unperforated-Cuntz-semigroup},
$W(A)$ is almost unperforated.  Then, by the proof of
\cite[Corollary 4.6]{Rordam-2004-Jiang-Su-stable}, $A$ has strict comparison.
% if $A$ is not purely infinite
Note that the proof of \cite[Corollary 4.6]{Rordam-2004-Jiang-Su-stable} refers  to the proof of \cite[Theorem 5.2]{Rordam-1992-UHF2},
where quasitraces are used (see also \cite[Theorem 4.3]{Rordam-1992-UHF2}
and \cite[Theorem II.2.2]{BH-trace1982},
{\blue{as well as \cite[Proposition 2.1]{OPR}}}).}}

\iffalse
and $A$ is tracially in {\blue{${\cal E}_{Z},$ }}
then $A$ is tracially in {\blue{${\cal E}_{Z,f}$}} the class of unital exact simple finite \CA s which are ${\cal Z}$-stable.
By  Corollary  4.6 of \cite{Rordam-2004-Jiang-Su-stable}, \CA s in {\blue{${\cal E}_{Z,f}$}} have strict comparison. It follows from \ref{TComparison}
that $A$ has strict comparison.
\fi

%(1') (2') (3')

%${\cal C}_{\cal Z}$

\end{proof}

\begin{cor}\label{CNn-sC}
Let $A$ be a simple separable unital $C^*$-algebra {\blue{which is
asymptotically tracially in ${\cal N}_n$
for some integer $n\ge {{0}}.$}}
% ${\cal N}_n.$}}
%and $\mathrm{id}_A$ is tracially ``nuclear dimension no more than $n$''.
%If $A$ is not stably finite, then $A$ is purely infinite.
Then, either $A$ has stable rank one, or $A$ is purely infinite.
Moreover, if $A$   is not purely infinite,
%is not purely infinite and is exact,
$A$ has strict comparison
{{for positive elements}}.
\end{cor}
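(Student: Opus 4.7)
The plan is to reduce this corollary directly to Theorem \ref{dichotomy-tracial-finite-dimnuc-finite-infinite} by showing that the hypothesis ``asymptotically tracially in ${\cal N}_n$'' is (in the presence of the other hypotheses on $A$) stronger than ``asymptotically tracially in ${\cal C}_{{\cal Z},s}$,'' which is what the previous theorem requires.

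First I would dispense with the finite-dimensional case. Since $A$ is unital and simple, if it is finite dimensional, then $A \cong M_k$ for some $k \in \mathbb{N}$. Such an algebra has stable rank one, so the dichotomy conclusion holds trivially (and strict comparison is immediate since $W(M_k)$ is totally ordered by the normalized trace). We may therefore assume that $A$ is infinite dimensional.

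Next, because $A$ is a unital infinite dimensional separable simple \CA\ which is asymptotically tracially in ${\cal N}_n$, Theorem \ref{tracial-finite-dimnuc-implies-tracial-Z-stable} applies and gives that $A$ is asymptotically tracially in ${\cal N}_{{\cal Z},s,s}$. Since every algebra in ${\cal N}_{{\cal Z},s,s}$ is in particular a separable ${\cal Z}$-stable \CA, we have ${\cal N}_{{\cal Z},s,s} \subset {\cal C}_{{\cal Z},s}$, and it follows (directly from Definition \ref{def-tracial_approximation}) that $A$ is asymptotically tracially in ${\cal C}_{{\cal Z},s}$.

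At this point Theorem \ref{dichotomy-tracial-finite-dimnuc-finite-infinite} applies verbatim and delivers both conclusions: $A$ is either purely infinite or has stable rank one, and, in the non-purely-infinite case, $A$ has strict comparison for positive elements. There is no real obstacle here since all the work has been done in the earlier sections; the only point to mention explicitly is the trivial class inclusion ${\cal N}_{{\cal Z},s,s}\subset {\cal C}_{{\cal Z},s}$ and the separate treatment of the finite-dimensional case, which is why the present corollary does not need the ``infinite dimensional'' hypothesis that appears in Theorem \ref{tracial-finite-dimnuc-implies-tracial-Z-stable}.
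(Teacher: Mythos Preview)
Your proof is correct and follows essentially the same route as the paper: reduce to Theorem \ref{dichotomy-tracial-finite-dimnuc-finite-infinite} by passing from ``asymptotically tracially in ${\cal N}_n$'' to ``asymptotically tracially in ${\cal C}_{{\cal Z},s}$.'' The paper cites Corollary \ref{id-finite-tdimnuc-equal-finite-dimnuc} together with Winter's result \cite{W-2012-pure algebras} (that simple separable \CA s of finite nuclear dimension are ${\cal Z}$-stable), whereas you invoke Theorem \ref{tracial-finite-dimnuc-implies-tracial-Z-stable}, whose proof is exactly those two ingredients; so this is only a cosmetic difference. Your explicit treatment of the finite-dimensional case is a small bonus, as both Corollary \ref{id-finite-tdimnuc-equal-finite-dimnuc} and Theorem \ref{tracial-finite-dimnuc-implies-tracial-Z-stable} assume $A$ infinite dimensional and the paper's proof leaves that case implicit.
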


\begin{proof}
We note, {\blue{by {{Corollary}}
\ref{id-finite-tdimnuc-equal-finite-dimnuc},}} that $A$ is
asymptotically tracially in {\blue{${\cal N}_{n,s,s},$}} where
{\blue{${\cal N}_{n, s,s}$}}
is the class of unital separable simple \CA s with  nuclear dimension
%no more than
{{at most}} $n.$
By {\blue{\cite{W-2012-pure algebras}}}, \CA s in {\blue{${\cal N}_{n,s,s}$}} are  nuclear  simple ${\cal Z}$-stable \CA s.
Thus {{Theorem}}
\ref{dichotomy-tracial-finite-dimnuc-finite-infinite} applies.

\end{proof}

\begin{thm}
\label{tdimnuc-finite-equal-t-nuclear-Z-stable}

Let $A$ be a simple separable infinite dimensional unital
%exact stably finite
$C^*$-{\blue{algebra.}}
{\blue{Then}} the following are equivalent:

(1) $A$ is asymptotically tracially  {\blue{in ${\cal N}_n$}}
%``nuclear dimension no more than $n$''
for some $n\in\mathbb{N}\cup\{0\}$,

(2) $A$ is asymptotically tracially in ${\cal N}_{n,s,s}$
{{for some $n\in\mathbb{N}\cup\{0\}$}},

(3) $A$ is asymptotically tracially  in {\blue{${\cal N}_{{\cal Z},s,s},$}}

(4) $A$ is asymptotically tracially in ${\cal N}$ and is
asymptotically tracially in ${\cal C}_{{{{\cal Z},s}}}.$

%``nuclear and $\mathcal{Z}$-stable''.

\end{thm}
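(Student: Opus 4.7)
The plan is to close the cycle of implications
\[
(1)\ \Longrightarrow\ (3)\ \Longrightarrow\ (4)\ \Longrightarrow\ (3)\ \Longrightarrow\ (2)\ \Longrightarrow\ (1),
\]
relying almost entirely on results already established in the paper together with one invocation of the Toms--Winter theorem. The core strategy is to treat (3) as the central hub: it sits between the ``finite nuclear dimension'' formulations (1) and (2) and the more flexible ``$\mathcal Z$-stable plus nuclear'' formulation (4), and each arrow out of (3) uses a quotable prior result. No genuinely new estimate is needed; the work is in recognizing which theorem applies where.

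First, I would dispense with the trivial inclusions. The implication (2)$\Rightarrow$(1) is immediate from $\mathcal N_{n,s,s}\subset\mathcal N_n$, and (3)$\Rightarrow$(4) is immediate from $\mathcal N_{\mathcal Z,s,s}\subset\mathcal N$ and $\mathcal N_{\mathcal Z,s,s}\subset\mathcal C_{\mathcal Z,s}$ (both containments holding inside the definitions in \ref{DNn}). Next I would handle (1)$\Leftrightarrow$(2): this is exactly the content of Corollary~\ref{id-finite-tdimnuc-equal-finite-dimnuc}, which shows that for unital separable simple infinite dimensional $A$, being asymptotically tracially in $\mathcal N_n$ can always be upgraded to being asymptotically tracially in the simple, separable subclass $\mathcal N_{n,s,s}$.

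The two substantive arrows are (1)$\Rightarrow$(3) and (4)$\Rightarrow$(3). For (1)$\Rightarrow$(3) I would apply Theorem~\ref{tracial-finite-dimnuc-implies-tracial-Z-stable} verbatim: a unital separable simple infinite dimensional $C^*$-algebra asymptotically tracially in $\mathcal N_n$ is asymptotically tracially in $\mathcal N_{\mathcal Z,s,s}$. For (4)$\Rightarrow$(3) I would quote Theorem~\ref{Thm-tracial-simple-nuclear-Z-stable}, whose ``moreover'' clause says exactly that the conjunction of being asymptotically tracially in $\mathcal N$ and asymptotically tracially in $\mathcal C_{\mathcal Z,s}$ forces being asymptotically tracially in $\mathcal N_{\mathcal Z,s,s}$.

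The last step, (3)$\Rightarrow$(2), is where the external input enters, and it is the step I would flag as the main obstacle (even though, given the paper's citations, it is short). The plan is to observe that by the resolution of the Toms--Winter conjecture (\cite{W-2012-pure algebras} and \cite{CETWW-2019}, as cited in the introduction), every unital separable simple nuclear $\mathcal Z$-stable $C^*$-algebra has finite nuclear dimension, in fact nuclear dimension at most $1$. Consequently $\mathcal N_{\mathcal Z,s,s}\subset\mathcal N_{1,s,s}$, so asymptotic tracial approximation by algebras in $\mathcal N_{\mathcal Z,s,s}$ is a special case of asymptotic tracial approximation by algebras in $\mathcal N_{n,s,s}$ (with $n=1$), giving (2). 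The only care required is to check that this class inclusion is compatible with the $(\mathcal F,\varepsilon)$-approximate embedding data in Definition~\ref{def-tracial_approximation}, which is automatic since the maps $\alpha,\beta_n,\gamma_n$ produced for a target in $\mathcal N_{\mathcal Z,s,s}$ serve equally well when that target is regarded as lying in $\mathcal N_{1,s,s}$. Stringing (1)$\Rightarrow$(3)$\Rightarrow$(4)$\Rightarrow$(3)$\Rightarrow$(2)$\Rightarrow$(1) together, together with (1)$\Leftrightarrow$(2) from Corollary~\ref{id-finite-tdimnuc-equal-finite-dimnuc}, yields the equivalence of all four conditions.
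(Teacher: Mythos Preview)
Your proposal is correct and uses essentially the same ingredients as the paper's proof: the internal results Theorem~\ref{simple-T-finite-dimnuc-implies-T-simple-finite-dimnuc}/Corollary~\ref{id-finite-tdimnuc-equal-finite-dimnuc}, Theorem~\ref{tracial-finite-dimnuc-implies-tracial-Z-stable}, Theorem~\ref{Thm-tracial-simple-nuclear-Z-stable}, together with the Toms--Winter theorem (\cite{W-2012-pure algebras}, \cite{CETWW-2019}). The only difference is organizational: the paper runs the clean cycle $(1)\Rightarrow(2)\Rightarrow(3)\Rightarrow(4)\Rightarrow(1)$, whereas you route through $(3)$ as a hub with some redundant arrows (the detour $(3)\Rightarrow(4)\Rightarrow(3)$ and the separate invocation of Corollary~\ref{id-finite-tdimnuc-equal-finite-dimnuc} are not needed once the other implications are in place), but the substance is identical.
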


\begin{proof}

{{
(1) $\Rightarrow$ (2) follows from
Theorem \ref{simple-T-finite-dimnuc-implies-T-simple-finite-dimnuc},
(2) $\Rightarrow$ (3) follows from \cite[Corollary 7.3]{W-2012-pure algebras},
(3) $\Rightarrow$ (4) is trivial,
(4) $\Rightarrow$ (1) follows from
Theorem \ref{Thm-tracial-simple-nuclear-Z-stable}
and \cite[Theorem A]{CETWW-2019}{\green{.}}
}}
\end{proof}

%\iffalse%20200128

\begin{lem}[{see \cite[Lemma 8.2]{LinLAH}}]
\label{82LAH}
{\blue{Let $A$ be a unital separable nuclear simple {\blue{\CA}} which is asymptotically tracially  ${\cal N}_{d,s}$
(for some integer $d\ge 0$).
Then, for any integer $k\ge 1,$ there is a sequence of order
zero   c.p.c.~maps $L_n: M_k\to A$ such
that $\{L_n(e)\}$ is a central sequence
of $A$ for a minimal projection $e\in M_k$ and such that,
for every integer $m\ge 1,$}}
\beq\label{82-n1}
\lim_{n\to\infty} \max_{\tau\in T(A)}\{|\tau(L_n(e)^m)-1/k|\}=0.
\eneq
\end{lem}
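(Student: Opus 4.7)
The plan is to leverage the equivalences just established in Theorem \ref{tdimnuc-finite-equal-t-nuclear-Z-stable}: since $A$ is unital, separable, nuclear, simple, and asymptotically tracially in ${\cal N}_{d,s}$, it is asymptotically tracially in ${\cal N}_{{\cal Z},s,s}$. Combined with nuclearity and Matui--Sato--Winter type results, $A$ itself is ${\cal Z}$-stable and has $T(A)=QT(A)$ (Corollary \ref{CQT=T}). Thus we may invoke the standard ${\cal Z}$-stable machinery (the relation $\check{\cal R}$ of Section 9) to produce, for each $n$, a c.p.c.~order zero map $\psi_n\colon M_{k\cdot n!}\to A$ and a contraction $v_n\in A$ satisfying $\check{\cal R}_A(k\cdot n!,\,\calF_n,\,1/n)$ against a nested, exhausting sequence of finite subsets $\calF_n\subset A$. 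Fix a unital embedding $\sigma\colon M_k\hookrightarrow M_{k\cdot n!}$ and set $L_n:=\psi_n\circ\sigma$; each $L_n$ is a c.p.c.~order zero map from $M_k$ to $A$.

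Centrality of $\{L_n(e)\}$ in $A$ follows immediately from condition (iii) in Notation \ref{Z-stable-relation} applied to $\psi_n$ against $\calF_n$, since any $a\in A$ eventually lies in $\calF_n$ and $\|\sigma(e)\|\le 1$. For the tracial condition, apply the Winter--Zacharias structure theorem (Theorem \ref{WZ2009-Theorem 3.3}) to each $L_n$ to write $L_n(x)=h_n\pi_n(x)$, where $h_n=L_n(1_{M_k})$ is positive and $\pi_n\colon M_k\to \mathcal{M}(C^*(L_n(M_k)))\cap\{h_n\}'$ is a $*$-homomorphism. Since $\pi_n(e)$ is a projection commuting with $h_n$, one obtains $L_n(e)^m=h_n^m\pi_n(e)$. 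Using that the $k$ minimal projections of $M_k$ are mutually unitarily equivalent, the symmetry $\sum_{i=1}^{k}\pi_n(e_{i,i})=\pi_n(1_{M_k})$ combined with $\pi_n(1_{M_k})h_n^m=h_n^m$ gives $\tau(L_n(e)^m)=\tau(h_n^m)/k$ for each $\tau\in T(A)$.

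Thus it suffices to show $\sup_{\tau\in T(A)}|\tau(h_n^m)-1|\to 0$ for every $m\ge 1$. From the relation $\check{\cal R}$ one has $\|v_n^*v_n-(1_A-\psi_n(1_{M_{k\cdot n!}}))\|\to 0$ and $\|v_nv_n^*-v_nv_n^*\psi_n(e_{1,1}^{(k\cdot n!)})\|\to 0$; applying any $\tau\in T(A)$ and using the trace identity $\tau(v_n^*v_n)=\tau(v_nv_n^*)$ together with symmetry across the $k\cdot n!$ minimal matrix units of $M_{k\cdot n!}$, one extracts a uniform estimate $\sup_{\tau\in T(A)}\tau(1_A-\psi_n(1_{M_{k\cdot n!}}))\to 0$. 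Since $L_n(1_{M_k})=\psi_n(\sigma(1_{M_k}))=\psi_n(1_{M_{k\cdot n!}})$, this forces $\sup_\tau\tau(1_A-h_n)\to 0$. As $0\le 1-h_n\le 1$, we have $(1-h_n)^2\le 1-h_n$, so $\|1-h_n\|_{2,\tau}^2\le \tau(1-h_n)\to 0$ uniformly; a routine application of the Cauchy--Schwarz inequality in the tracial $2$-norm then yields $\sup_\tau|\tau(h_n^m)-1|\to 0$ for every fixed $m$, as required.

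The main obstacle is establishing the uniform (in $\tau\in T(A)$) trace estimate $\sup_\tau\tau(1_A-\psi_n(1))\to 0$: the Cuntz comparison provided directly by $\check{\cal R}$ only bounds this quantity by $1/(k\cdot n!)$ up to error, so the growth of the matrix size $k\cdot n!$ is essential, and one must argue that the uniformity over $T(A)$ is preserved when one passes from the relation $\check{\cal R}$ to a tracial estimate. This is where one uses that $T(A)$ is a compact (Choquet) simplex and that $d_\tau$ is lower semicontinuous, so that a single Cuntz inequality at the level of the relation yields a uniform tracial bound via the extension of each $\tau$ to $l^\infty(A)/c_0(A)$ as in the proof of Theorem \ref{quasitrace-on-tracially-exact-CA-are-trace}.
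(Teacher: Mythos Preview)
Your argument has a genuine circularity problem. You assert that ``combined with nuclearity and Matui--Sato--Winter type results, $A$ itself is ${\cal Z}$-stable,'' and then use the relation $\check{\cal R}_A$ from Proposition \ref{W2010Z-stable-prop}, which characterizes ${\cal Z}$-stability. But in this paper's logical flow, Lemma \ref{82LAH} is precisely the ingredient used to \emph{prove} that $A$ is ${\cal Z}$-stable (see the proof of Theorem \ref{Cnn=1}, which feeds Lemma \ref{82LAH} into the Matui--Sato property (SI) machinery). No result available at this point --- neither Theorem \ref{tdimnuc-finite-equal-t-nuclear-Z-stable}, nor Corollary \ref{CNn-sC}, nor \cite{CETWW-2019}, nor \cite{MS2012_strict_comparison_and_z stability} --- gives ${\cal Z}$-stability of $A$ from the hypotheses: Theorem \ref{tdimnuc-finite-equal-t-nuclear-Z-stable} only yields that $A$ is \emph{asymptotically tracially} in ${\cal N}_{{\cal Z},s,s}$, not that $A\in{\cal N}_{\cal Z}$; \cite{CETWW-2019} requires finite nuclear dimension of $A$ itself, which is not assumed; and Matui--Sato requires additional hypotheses on the trace simplex.

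The paper avoids this circularity by working inside the approximating algebras rather than in $A$. Concretely, it uses Proposition \ref{hereditary-subalgebra-preserves-tracially-approximation} to produce, for each $n$, a $B_n\in{\cal N}_{d,s,s}$ together with the tracial approximation maps $\alpha_n,\beta_{n,j},\gamma_{n,j}$. Since each $B_n$ genuinely has finite nuclear dimension, Winter's \cite[Lemma 5.11]{W-2012-pure algebras} applies to $B_n$ and yields order zero maps $\Psi_n\colon M_k\to B_n$ with $\inf_{\tau\in T(B_n)}\tau(\Psi_n(1_{M_k}))>1-1/4n$. These are then pushed into $A$ via $\beta_{n,m}$, perturbed to exact order zero maps $\Phi_n$, and the tracial lower bound is transferred to $T(A)$ by a weak$^*$-compactness argument (the Claim in the proof), using that the complementary piece $\gamma_{n,j}(1_A)$ is Cuntz-small. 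The final functional-calculus step $L_n(c)=\phi_n(\iota^{1/n}\otimes c)$ then upgrades the bound on $\tau(\Phi_n(1_{M_k}))$ to the desired estimate on $\tau(L_n(e)^m)$ for all $m$, exactly as in \cite[Lemma 8.2]{LinLAH}. Your algebraic manipulations with $h_n$, $\pi_n$ and the identity $\tau(L_n(e)^m)=\tau(h_n^m)/k$ are fine, but they sit on top of an unjustified premise.
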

\begin{proof}
The proof follows the same lines of that of \cite[Lemma 8.2]{LinLAH}
with some minor modification{\green{s}}.
{{Fix $k\in\N.$}}
Fix a dense subset $\{x_1, x_2,{\green{\cdots}}\}$ of the unit ball of $A$ and let ${\cal F}_n=\{x_1,%x_2,
{\green{x_2,\cdots}},x_n\}$ with
$1_A=x_1$ ($n\in \N$).
Let $\gamma_n>0$ be {\blue{in}}  the fifth line of the proof of \cite[Lemma 8.2]{LinLAH}.
By {{Lemma}} \ref{L1991}, there is a sequence $\{a_n\}$ of $A_+\setminus \{0\}$
{\blue{such that}} $0<d_\tau(a_n)<1/4n^2$
{{($n\in\N$).}}
%$n=1,2,....$
By {{Corollary}} \ref{id-finite-tdimnuc-equal-finite-dimnuc},
A is asymptotically tracially in ${\cal N}_{d,s,s}.$
%for fixed integer $d.$
Therefore,  by {{Proposition}} \ref{hereditary-subalgebra-preserves-tracially-approximation},  for each $n\in \N,$ there exists a $C^*$-algebra $B_n\in {\cal N}_{d,s,s}$
and c.p.c maps
$\alpha_n: A\rightarrow B_n,$
$\beta_{n,j}: B_n\rightarrow A$, {{and}}
%u.c.p.~maps
$\gamma_{n,j}: A\rightarrow A\cap \bt_{n,j}(B_n)^\perp$
(${{j}}%n
\in\mathbb{N}$){\green{, such}} that

(1) $\gamma_{n,j}(1_A)$ and $p_{n,j}:=\bt_{n,j}(1_{B_n})$ are projections, $1=\gamma_{n,j}(1_A)+\bt_{n,j}(1_{B_n}),$
and
$\af_n(1_A)=1_{B_n} ,$

(2) $x_i\approx_{\gamma_n/2^n}\gamma_{n,j}(x_i)+\beta_{n,j}\circ\alpha_n(x_i)$
{{for all}} $1\le i\le n$
and all $j\in\mathbb{N},$

(3) $\alpha_n$ is {\blue{an}} $(\calF_n, 1/2^n)$-{{approximate embedding}},
%$\forall n\in\mathbb{N}$,

(4) $\lim_{j\rightarrow\infty}\|\beta_{n,j}(xy)-\beta_{n,j}(x)\beta_{n,j}(y)\|=0$
{{and}}
{$\lim_{j\rightarrow \infty}\|\beta_{n,j}(x)\|= \|x\|$}
for all $x,y\in {\blue{B_n,}}$ {{and}}

(5) $\gamma_{n,j}(1_A)\lesssim_A a_n$ for all $ j\in\mathbb{N}$.

\noindent
{{Note}} that
{\blue{one also {{has}}}}

(6) $\|p_{n,j}x-xp_{n,j}\|<1/2^{n-1}$ for all $x\in {\cal F}_n.$

\noindent
By \cite[Lemma 5.11]{W-2012-pure algebras}
(since $B\in {\cal N}_{d,s,s}$),
for each {{$n,$}} there %are
{\blue{is an}} order zero c.p.c.~map {{$\Psi_{n}: M_k\to B_n$}}
such that
\beq\label{82-1}
&&%\lim_{j\to\infty}
\|[\Psi_{n}(c), \alpha_n(x)]\|<1/2^n \rforal
c\in M_k^1\andeqn x\in\calF_n,\andeqn\\
\label{f9-4-2}
&&\inf\{\tau(\Psi_{n}(1_{M_k})):\tau\in T(B_n)\}>1-1/4n.
\eneq
Consider, for each $m,$ $\Psi_{n, m}=\bt_{n,m}\circ \Psi_{n}: M_k\to p_{n,m}Ap_{n,m}.$
Note that, by (4), for each $n\in\N,$ there exists $\bar m(n)\in\N$ such {\blue{that,}}
for all $m\geq \bar m(n),$
$\Psi_{n, m}$ is a $\Delta(M_k,\gamma_n/2^n)$-almost order zero map
(recall {{Definition}}
\ref{almost-n-dividable-error} for $\Delta(-,-)$),
and
\beq\label{f82-1}
&%\lim_{j\to\infty}
\|[\beta_{n,m}\circ\Psi_{n}(c), \beta_{n,m}\circ\alpha_n(x)]\|<\gamma_n/2^n \rforal
c\in M_k^1\andeqn
x\in\calF_n.
\eneq

\noindent
{{Claim:}} For fixed $n,$ there is $m(n)>\bar m(n)$ such that,
for all $m>m(n),$
$$\inf\{\tau(\Psi_{n,m}(1_{M_k})):\tau\in T(A)\}\ge 1-1/2n.$$

\noindent
{{Otherwise,}} there would be a subsequence
$\{m(l)\}$ and $\tau_{m(l)}\in T(A)$ such {{that}}
$$\tau_{m(l)}\circ \bt_{n,m(l)}\circ \Psi_{n}(1_{M_k})<1-1/2n.$$
Let $t_0$ be a {{weak*-limit}} of
the sequence of
contractive positive linear functionals %states
$\{\tau_{m(l)}\circ \bt_{n,m(l)}\}$ of $B_n.$  Then $t_0(\Psi_{n}(1_{M_k}))\le 1-1/2n.$
On the other hand,
by (5) and (1),  $t_0(1_{B_n})\ge 1-1/4n^2.$
Moreover, by (4), $t_0$ is a
{{positive tracial functional}}
with $\|t_0\|\ge 1-1/4n^2.$
It follows {\blue{from}} \eqref{f9-4-2} {\blue{that}}
$t_0(\Psi_{n}(1_{M_k}))\ge (1-1/4n^2)(1-1/4n)>1-1/2n.$
This proves the claim.

%{{By (4) and choose even larger $m(j)$}}

%By \eqref{f82-1} and (1) and (2),
For all $c\in M_k^1$ and $x\in\calF_n$,
one has
\beq
\label{f9-4-1}
\hspace{-0.2in}
\|[\bt_{n,m(n)}\circ \Psi_{n}(c),x]\|
&\overset{\rm(by~(2))}{\approx_{\gamma_n/2^{n-1}}}&
\|[\bt_{n,m(n)}\circ \Psi_{n}(c),
\gamma_{n,m(n)}(x)+\beta_{n,m(n)}\circ\alpha_n(x)]\|
\nonumber\\
&\overset{\rm(by~(1))}{=}&
\|[\bt_{n,m(n)}\circ \Psi_{n}(c),\beta_{n,m(n)}\circ\alpha_n(x)]\|
\overset{\rm(by~\eqref{f82-1})}{<}\gamma_n/2^n.
\eneq

\noindent
{{Since}} $\Psi_{n, m(n)}$ is a $\Delta(M_k,\gamma_n/2^n)$-almost order zero map,
by the {\blue{choice}} of $\Delta(M_k,\gamma_n/2^n)$
(see {{Definition}}
\ref{almost-n-dividable-error} and {{Proposition}} \ref{n-almost-dividable-exact}),
one obtains a sequence of order zero c.p.c.~maps
$\Phi_{n}: M_k\to A$ such that
\beq
\|\Phi_{n}-\bt_{n,m(n)}\circ \Psi_{n}\|\leq \gamma_n/2^n
\rforal n\in\N.
\eneq
By \eqref{f9-4-1}, as well as the claim, for $n\geq 3$,
{{one has}}
%we have
%Moreover,  by \eqref{82-11} and (6) above,  as well as the claim, passing to a  %subsequence of $\{m(j)\},$  we obtain a sequence of order zero c.p.c.~maps
%$\Phi_n: M_k\to A$ such that
\beq
&\|\Phi_n(c)x-x\Phi_n(c)\|<\min\{1/4n, \gamma_n\} \rforal c\in M_k^1\andeqn x\in {\cal F}_n,\andeqn\\
&\inf\{\tau(\Phi_n(1_{M_k})): \tau\in T(A)\}\ge 1-1/n.
\eneq
There is a \hm\, $\phi_n: C_0((0,1])\otimes M_k\to A$ such
that $\Phi_n(c)=\phi_n(\iota\otimes a)$ for all $c\in M_k,$ where
$\iota(t)=t$ for all $t\in (0,1].$
Let $c_n=\iota^{1/n}.$
Define $L_n(c)=\phi_n(c_n\otimes c)$ for all $c\in M_k.$
It is {\blue{an}} order zero  c.p.c.~map from $M_k$ to $A.$  Choose a minimal
projection $e_1\in M_k.$ Then
\beq
(L_n(e_1))^m=\phi_n(c_n^m\otimes e_1)=\phi_n(\iota\otimes e_1)^{m/n})=\Phi_n(e_1)^{m/n}.
\eneq
One then verifies that, for this $L_n,$ \eqref{82-n1} holds
exactly the same way as the proof of \cite[Lemma 8.2]{LinLAH}.
\end{proof}

%20200128\green

\begin{thm}\label{Cnn=1}
{\blue{Every unital separable simple nuclear \CA\, which is
asymptotically tracially in ${\cal N}_n$
is ${\cal Z}$-stable and has nuclear dimension {{at most}} 1.}}

{\blue{On the other hand, every unital separable simple nuclear \CA\, which is asymptotically tracially in
${\cal C}_{{\cal Z},s}$ also has nuclear dimension at most 1.}}
\end{thm}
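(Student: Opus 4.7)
\medskip

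\textbf{Proof plan for Theorem \ref{Cnn=1}.}

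\emph{First statement.} My plan is to reduce the hypothesis to the setting of Lemma \ref{82LAH} and then to a result of Matui–Sato type. Concretely, suppose $A$ is unital separable simple nuclear and asymptotically tracially in ${\cal N}_n$. First I would invoke Theorem \ref{tdimnuc-finite-equal-t-nuclear-Z-stable} (equivalence of (1) and (2)) to upgrade the hypothesis to ``$A$ is asymptotically tracially in ${\cal N}_{n,s,s}$''; in particular $A$ is asymptotically tracially in ${\cal N}_{d,s}$ with $d=n$, so Lemma \ref{82LAH} applies. For each integer $k\ge 1$, Lemma \ref{82LAH} then produces a sequence of order zero c.p.c.~maps $L_j:M_k\to A$ such that $\{L_j(e)\}$ is central for a minimal projection $e\in M_k$ and $\tau(L_j(e)^m)\to 1/k$ uniformly in $\tau\in T(A)$ for every $m\ge 1$. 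The key observation is that this is precisely the hypothesis guaranteeing tracial ${\cal Z}$-absorption in the sense of Hirshberg–Orovitz: the $L_j$ induce, in the tracial ultrapower (or the central sequence algebra), a unital order zero map from $M_k$, and the tracial estimate $\tau(L_j(e)^m)\to 1/k$ ensures that these maps become ``unital modulo traces'' in the limit. Feeding this into the nuclear case of Matui–Sato (see \cite{MS2014}; cf.\ also Hirshberg–Orovitz and \cite{W-2012-pure algebras}) yields that $A$ is ${\cal Z}$-stable. Once $A$ is unital separable simple nuclear and ${\cal Z}$-stable, \cite[Theorem A]{CETWW-2019} gives $\dimnuc A\le 1$.

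\emph{Second statement.} For the last sentence I would argue by reduction to the first statement. Suppose $A$ is unital separable simple nuclear and asymptotically tracially in ${\cal C}_{{\cal Z},s}$. Since $A$ itself is nuclear, $A$ is trivially asymptotically tracially in ${\cal N}$ (take $B=A$, $\alpha=\beta_n=\id_A$, $\gamma_n=0$ in Definition \ref{def-tracial_approximation}). Thus both hypotheses of Theorem \ref{Thm-tracial-simple-nuclear-Z-stable} are satisfied, giving that $A$ is asymptotically tracially in ${\cal N}_{{\cal Z},s,s}$. Every member of ${\cal N}_{{\cal Z},s,s}$ has nuclear dimension at most $1$ by \cite[Theorem A]{CETWW-2019}, hence ${\cal N}_{{\cal Z},s,s}\subseteq{\cal N}_1$, and consequently $A$ is asymptotically tracially in ${\cal N}_1$. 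Applying the first statement of the theorem now yields $\dimnuc A\le 1$.

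\emph{Main obstacle.} The routine steps are the chain of implications and applications of already-proved theorems; the substantive point is the passage from the approximately central order zero maps of Lemma \ref{82LAH} to actual ${\cal Z}$-stability. This requires quoting the ``tracial ${\cal Z}$-absorbing $\Longrightarrow$ ${\cal Z}$-stable (for nuclear algebras)'' result — i.e.\ the Matui–Sato machinery built around property (SI) and the trace-kernel extension. I expect that verifying the hypotheses of this theorem from the output of Lemma \ref{82LAH} (specifically, checking that the tracial estimate is uniform enough and that $L_j(1_{M_k})$ becomes unital in the central sequence algebra modulo the trace-kernel ideal) is the only non-cosmetic issue, and it should follow directly from the uniformity ``$\max_{\tau\in T(A)}|\tau(L_n(e)^m)-1/k|\to 0$'' already built into Lemma \ref{82LAH}.
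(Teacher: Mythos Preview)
Your overall strategy matches the paper's: upgrade via Theorem \ref{tdimnuc-finite-equal-t-nuclear-Z-stable}, feed Lemma \ref{82LAH} into the Matui--Sato machinery to obtain ${\cal Z}$-stability, and finish with \cite{CETWW-2019}. The second statement is handled exactly as the paper does (via part (4) of Theorem \ref{tdimnuc-finite-equal-t-nuclear-Z-stable}), only spelled out in more detail.

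There are, however, two ingredients the paper makes explicit that you have omitted. First, before invoking the Matui--Sato argument you need the dichotomy of Corollary \ref{CNn-sC}: either $A$ is purely infinite (in which case \cite[Corollary 9.9]{BBSTWW} already gives nuclear dimension at most $1$) or $A$ has stable rank one and strict comparison. Second, strict comparison is an essential input, not a byproduct: the output of Lemma \ref{82LAH} is a \emph{tracial} smallness condition $\tau(L_n(e)^m)\to 1/k$, whereas tracial ${\cal Z}$-absorption in the Hirshberg--Orovitz sense requires the \emph{Cuntz} condition $1-\phi(1_{M_k})\lesssim a$. Converting the former to the latter uses strict comparison. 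The paper follows \cite[Theorem 8.3]{LinLAH} and the proof of \cite{MS2012_strict_comparison_and_z stability} (not \cite{MS2014}) directly: strict comparison plus Lemma \ref{82LAH} (replacing \cite[Lemma 3.3]{MS2012_strict_comparison_and_z stability}) gives excision in small central sequences, hence property (SI), hence ${\cal Z}$-stability. Your sentence ``this is precisely the hypothesis guaranteeing tracial ${\cal Z}$-absorption'' skips this bridge.
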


\begin{proof}
Let $A$ be a unital separable simple nuclear \CA\, which is
asymptotically tracially in ${\cal N}_n$
{\blue{for some non-negative integer $n.$}}
{\blue{By {{Theorem}}
\ref{tdimnuc-finite-equal-t-nuclear-Z-stable},}}
{{$A$ is asymptotically tracially in ${\cal N}_{n,s,s}.$}}
By {{Corollary}}
\ref{CNn-sC},  and by
{{\cite[Corollary 9.9]{BBSTWW},}}
%{\red{??}},
we may assume that $A$ has stable rank one and has strict comparison.
We first prove that {\blue{$A$ is ${\cal Z}$-stable.}}
The proof of this  is exactly the same as that of
\cite[Theorem 8.3]{LinLAH} but using
{{Lemma}} \ref{82LAH}
(By the exactly the same argument for the proof of (ii) implies (iii) in \cite{MS2012_strict_comparison_and_z stability},
using {{Lemma}} \ref{82LAH} instead of
\cite[Lemma 3.3]{MS2012_strict_comparison_and_z stability},
 one concludes that any c.p.~map from $A$ to $A$ can be excised in small central sequence.
As in \cite{MS2012_strict_comparison_and_z stability}, this implies that $A$ has property (SI). Using
{{Lemma}} \ref{82LAH}, the same proof that (iv) implies (i) in \cite{MS2012_strict_comparison_and_z stability} shows
that $A$ is ${\cal Z}$-stable{{)}}.

Then, by {{\cite[Theorem A]{CETWW-2019}}},  %it
{{$A$}} has finite nuclear dimension.
It follows from {{\cite[Theorem B]{CETWW-2019}}} that %it
{{$A$}} has in fact nuclear dimension at most 1.

{\blue{Finally, the last statement follows  the first part of the statement and part (4) of {\green{Theorem \ref{tdimnuc-finite-equal-t-nuclear-Z-stable}.}}}}

\end{proof}

%{\red{\bf Add a proof--mainly references}}

\begin{cor}[cf. Appendix of \cite{GLII}]
\label{Cnrr0=Z}
%[cf. Appendix of \cite[Theorem 16.10]{GLII}]\label{Cnrr0=Z}
{\blue{Every unital separable simple nuclear \CA\,
which has generalized tracial rank at most one
is ${\cal Z}$-stable. }}
\end{cor}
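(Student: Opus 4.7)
The plan is to reduce this to Theorem \ref{Cnn=1} via the equivalent reformulation of generalized tracial rank one given by the Remark following Definition \ref{DNn}. That remark (together with the semiprojectivity of $1$-dimensional NCCW complexes) asserts that for a unital simple \CA\ $A,$ $A$ has generalized tracial rank at most one if and only if $A$ is asymptotically tracially in ${\cal P}_1,$ the class of $1$-dimensional NCCW complexes. So the corollary will follow once we know ${\cal P}_1 \subset {\cal N}_1,$ i.e.\ that every $1$-dimensional NCCW complex has nuclear dimension at most $1.$

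First I would invoke the equivalence just recalled: given a unital separable simple nuclear \CA\ $A$ of generalized tracial rank at most one, $A$ is asymptotically tracially in ${\cal P}_1.$ Next, I would cite the known fact that any $1$-dimensional NCCW complex $B$ satisfies $\dimnuc B\le 1$ (this is a standard consequence of the structure theorem for such complexes; it goes back to work of Winter and of Eilers--Loring--Pedersen, and is also recorded explicitly in \cite{WZ2010}). Hence ${\cal P}_1 \subset {\cal N}_1,$ and consequently $A$ is asymptotically tracially in ${\cal N}_1.$ Finally, applying Theorem \ref{Cnn=1} gives $A\otimes {\cal Z}\cong A,$ completing the proof.

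The argument is essentially bookkeeping once the chain of inclusions and equivalences is in place; there is no real obstacle. The only place that might warrant a citation rather than a one-line justification is the statement $\dimnuc B\le 1$ for $B\in{\cal P}_1,$ but this is by now a well-documented fact and can be quoted directly. Thus the proof fits comfortably in two or three lines of LaTeX.
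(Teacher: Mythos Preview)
Your proposal is correct and is exactly the intended argument: the paper states this corollary immediately after Theorem \ref{Cnn=1} with no separate proof, so the deduction is meant to be the one you give---generalized tracial rank at most one is equivalent to being asymptotically tracially in ${\cal P}_1$ (the remark is actually the one \emph{preceding} Definition \ref{DNn}, not following it), ${\cal P}_1\subset {\cal N}_1$, and then Theorem \ref{Cnn=1} applies.
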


%-

\iffalse
\begin{thm}\label{TZ-C}
Let $A$ be a unital separable simple \CA\, which is tracially in ${\cal C}_{\cal Z}.$
If $A$ is

\end{thm}
\fi

\begin{lem}
\label{tensor-product-of-strictly-embedding}

Let $A,B,C_i,D_i$ be \CAs\ ($i\in\N$),
and let $\alpha_i:A\rightarrow C_i$, $\beta_i:B\rightarrow D_i$
be c.p.c.~maps
{\blue{such}} that
\beq\nonumber
{\blue{\alpha: A
\rightarrow
\prod_{i=1}^{\infty}C_i/ \bigoplus_{i=1}^{\infty}C_i,\,\,\,\,
%\nonumber\\\nonumber
a\mapsto
\pi_\infty(\{\alpha_i(a)\}_i)\andeqn}}
%\\
%\eneq
%\beq
{\blue{\beta: B
\rightarrow
\prod_{i=1}^{\infty}D_i/ \bigoplus_{i=1}^{\infty}D_i,\,\,\,\,
b\mapsto
\pi_\infty(\{\beta_i(b)\}_i)}}
\eneq
are *-homomorphisms. {\blue{Then}} the following map is
also a *-homomorphism:
\beq
{\blue{\gamma: A\otimes B
\rightarrow
\prod_{i=1}^{\infty}(C_i\otimes D_i)/ \bigoplus_{i=1}^{\infty}(C_i\otimes D_i),\,\,\,\,
%\nonumber\\\nonumber
a\otimes b\mapsto
\pi_\infty(\{\alpha_i(a)\otimes \beta_i(b)\}_i).}}
\eneq

\noindent
{{If,}} in addition, both $\alpha$ and $\beta$ are {\blue{strict}} embeddings,
%then
{\blue{so is}} $\gamma.$
%is also a strictly embedding.

\end{lem}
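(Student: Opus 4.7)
The plan is to verify that $\gamma$ is a well-defined $*$-homomorphism by constructing it on the algebraic tensor product and extending by continuity, and then to treat the strict embedding clause via an ultrafilter argument. For the first part, I would observe that since each $\alpha_i$ and $\beta_i$ is c.p.c., the min-tensor $\gamma_i:=\alpha_i\otimes\beta_i: A\otimes B\to C_i\otimes D_i$ is a well-defined c.p.c.~map on the spatial tensor product (a standard fact). The assignment $z\mapsto\{\gamma_i(z)\}_i$ therefore lands in $\prod_i(C_i\otimes D_i)$ with $\|\gamma_i(z)\|\leq\|z\|$, and post-composition with $\pi_\infty$ gives a c.p.c.~map $\gamma:A\otimes B\to\prod_i(C_i\otimes D_i)/\bigoplus_i(C_i\otimes D_i)$ agreeing with the stated formula on elementary tensors. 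By continuity, multiplicativity of $\gamma$ reduces to a check on $A\odot B$: for elementary tensors $a_1\otimes b_1,\,a_2\otimes b_2$ one has
\begin{eqnarray*}
&&\|\gamma_i(a_1a_2\otimes b_1b_2)-\gamma_i(a_1\otimes b_1)\gamma_i(a_2\otimes b_2)\|\\
&\leq& \|\alpha_i(a_1a_2)-\alpha_i(a_1)\alpha_i(a_2)\|\cdot\|\beta_i(b_1b_2)\|\\
&&\ +\|\alpha_i(a_1)\alpha_i(a_2)\|\cdot\|\beta_i(b_1b_2)-\beta_i(b_1)\beta_i(b_2)\|,
\end{eqnarray*}
and both summands vanish as $i\to\infty$ because $\alpha$ and $\beta$ are $*$-homomorphisms. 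The involution identity is handled analogously.

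For the moreover clause, I would first note that a strict embedding is in particular an injective $*$-homomorphism, hence isometric. Since any two lifts of the same element of $\prod/\bigoplus$ differ by a null sequence, $\liminf_i\|\cdot\|$ only depends on the image; in particular $\liminf_i\|\alpha_i(a)\|=\|a\|$, and combined with $\|\alpha_i(a)\|\leq\|a\|$ this sharpens to $\lim_i\|\alpha_i(a)\|=\|a\|$, and likewise $\lim_i\|\beta_i(b)\|=\|b\|$. By the same lift-independence, it suffices to prove $\lim_i\|\gamma_i(z)\|=\|z\|$ for every $z\in A\otimes B$. On an elementary tensor $a\otimes b$ the limit is immediate, being $\lim_i\|\alpha_i(a)\|\cdot\|\beta_i(b)\|=\|a\|\|b\|=\|a\otimes b\|$; the general case is where I expect the main obstacle.

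To handle general $z$, I plan to fix a free ultrafilter $\omega$ on $\N$ and form the ultrapowers $C_\omega:=\prod_iC_i/I_\omega$ and $D_\omega:=\prod_iD_i/I_\omega$, where $I_\omega$ denotes the $\omega$-null ideal. The maps $\alpha,\beta$ descend to $*$-homomorphisms $\alpha_\omega:A\to C_\omega$ and $\beta_\omega:B\to D_\omega$, each isometric by the preceding norm computation, so $\alpha_\omega\otimes\beta_\omega: A\otimes_{\min}B\to C_\omega\otimes_{\min}D_\omega$ is also isometric. A natural $*$-homomorphism $\Theta_\omega:C_\omega\odot D_\omega\to\prod_i(C_i\otimes D_i)/I_\omega$ is defined on the algebraic tensor product by $[\{c_i\}]\otimes[\{d_i\}]\mapsto[\{c_i\otimes d_i\}]$, and by construction $\gamma_\omega=\Theta_\omega\circ(\alpha_\omega\otimes\beta_\omega)$ on $A\odot B$, where $\gamma_\omega(z):=[\{\gamma_i(z)\}]_\omega$.

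The hard part will be showing that $\Theta_\omega$ is isometric with respect to the min norm, so that it extends to $C_\omega\otimes_{\min}D_\omega$ and, through the factorization, forces $\|\gamma_\omega(z)\|=\|z\|$ for all $z\in A\otimes B$. I plan to deal with this by choosing faithful representations $\sigma_i:C_i\to B(H_i)$ and $\tau_i:D_i\to B(K_i)$, yielding a faithful representation $\rho:=\prod_i(\sigma_i\otimes\tau_i):\prod_i(C_i\otimes D_i)\to\prod_iB(H_i\otimes K_i)$, and then identifying the commuting images of $C_\omega$ and $D_\omega$ inside the quotient $\rho\bigl(\prod_i(C_i\otimes D_i)\bigr)/\rho(I_\omega)$ as generating an isometric copy of $C_\omega\otimes_{\min}D_\omega$ via the representation-theoretic characterization of the min norm. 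Once this is in place, $\|\gamma_\omega(z)\|=\|z\|$ for every $z$; since $\omega$ was arbitrary, the failure of any subsequence to approach $\|z\|$ would be incompatible with some choice of $\omega$, forcing $\lim_i\|\gamma_i(z)\|=\|z\|$, which is the desired conclusion.
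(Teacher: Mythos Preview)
Your treatment of the first part (that $\gamma$ is a $*$-homomorphism) is fine and essentially the same as the paper's: the paper phrases it via the multiplicative domain of the c.p.c.\ map $\gamma$, you do a direct estimate on elementary tensors, but the content is identical.

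The gap is in the ``moreover'' clause. Your argument hinges on the claim that the natural map
\[
\Theta_\omega:\ C_\omega\otimes_{\min} D_\omega\ \longrightarrow\ \prod_i(C_i\otimes_{\min} D_i)\big/I_\omega
\]
is isometric. This is false in general: whether the min tensor product ``commutes'' with ultraproducts in this sense is exactly an exactness-type condition, and there is no hypothesis of that kind on the $C_i$ or $D_i$. Your proposed fix via faithful representations $\sigma_i:C_i\to B(H_i)$, $\tau_i:D_i\to B(K_i)$ does not close the gap: the target $\prod_i B(H_i\otimes K_i)/I_\omega$ is a quotient $C^*$-algebra, not $B(H)$ for any Hilbert space $H$, so having commuting copies of $C_\omega$ and $D_\omega$ inside it only yields a $*$-homomorphism from $C_\omega\otimes_{\max}D_\omega$, not an isometric copy of the min tensor product. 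The ``representation-theoretic characterization of the min norm'' requires genuine Hilbert space representations of $C_\omega$ and $D_\omega$ whose ranges sit as commuting factors on a tensor product Hilbert space, and that structure is precisely what is lost in the ultraproduct.

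The paper sidesteps this entirely. Once $\gamma$ is known to be a $*$-homomorphism, so is its restriction $\tilde\gamma$ along any subsequence $\{m_j\}$. If $\gamma$ failed to be a strict embedding, some such $\tilde\gamma$ would fail to be isometric and hence have nonzero kernel. By \cite[Lemma 2.12(ii)]{BK2004}, any nonzero ideal of $A\otimes_{\min}B$ contains a nonzero elementary tensor $a_0\otimes b_0$; but then
\[
0=\|\tilde\gamma(a_0\otimes b_0)\|=\limsup_j\|\alpha_{m_j}(a_0)\|\cdot\|\beta_{m_j}(b_0)\|=\|a_0\|\,\|b_0\|,
\]
using exactly your observation that $\lim_i\|\alpha_i(a)\|=\|a\|$ and $\lim_i\|\beta_i(b)\|=\|b\|$. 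This contradiction finishes the proof with no comparison between tensor-of-ultraproducts and ultraproduct-of-tensors.
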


\begin{proof}
Note that $\alpha_i\otimes \beta_i: A\otimes B\rightarrow C_i\otimes D_i,$
$a\otimes b\mapsto \alpha_i(a)\otimes \beta_i(b)$ are c.p.c.~maps.
%($i\in\N$).
{{Thus}} $\gamma$ is also a c.p.c.~map.
Fix $\check{a}\in A$, $\check{b}\in B.$ {\blue{Since}} $\alpha$ {\blue{and}} $\beta$ are *-homomorphisms,
we have
\beq
\label{f10-9lem-1}
\lim_{i\rightarrow\infty}
\|\alpha_i(\check{a}\check{a}^*)-\alpha_i(\check{a})\alpha_i(\check{a})^*\|
+\|\beta_i(\check{b}\check{b}^*)-\beta_i(\check{b})\beta_i(\check{b})^*\|=0.
\eneq
Then
\beq
\gamma(\check{a}\otimes \check{b})\cdot \gamma(\check{a}\otimes \check{b})^*
&=&
{\blue{\pi_\infty(\{\alpha_i(\check{a})\otimes \beta_i(\check{b})\}_i)
\cdot
\pi_\infty(\{\alpha_i(\check{a})\otimes \beta_i(\check{b})\}_i)^*}}
\\
&=&
{\blue{\pi_\infty(\{(\alpha_i(\check{a})\alpha_i(\check{a})^*)\otimes (\beta_i(\check{b})\beta_i(\check{b})^*)\}_i)}}
\\
\mbox{(by \eqref{f10-9lem-1})}&=&
{\blue{\pi_\infty(\{(\alpha_i(\check{a}\check{a})^*)\otimes (\beta_i(\check{b}\check{b})^*)\})}}
%\\
%&=&
=\gamma((\check{a}\check{a}^*)\otimes (\check{b}\check{b}^*))
\\
&=&
{\blue{=\gamma((\check{a}\otimes \check{b})\cdot (\check{a}\otimes \check{b})^*).}}
\eneq

\noindent
{{Similarly,}} we have
$\gamma(\check{a}\otimes \check{b})^*\cdot \gamma(\check{a}\otimes \check{b})
=\gamma((\check{a}\otimes \check{b})^*\cdot (\check{a}\otimes \check{b}))$
{{(see, for example,
\cite[Proposition 1.5.7.(ii)]{BO 2008})}}.
Thus $\check{a}\otimes \check{b}$ lies in the multiplicative domain of
$\gamma$. Since the linear span of elementary tensor products
is dense in
$A\otimes B$, we see that
$A\otimes B$ lies in the multiplicative domain of $\gamma$.
%thus
{{In other words,}} $\gamma$ is a *-homomorphism.

Assume in addition both $\alpha$ and $\beta$ are
%{\blue{strictly embedded.}}
{{strict embeddings.}}
If $\gamma$ is not a
%strictly embedded,
{{strict embedding,}}
then there {{exist}} $z_0\in A\otimes B,$  $\epsilon>0,$
and {\blue{a subsequence}} $\{m_i\}_{i\in\N}\subset\N$ such that
\beq
\label{f10-9lem-2}
\limsup_{i\rightarrow\infty}
\|\alpha_{m_i}\otimes \beta_{m_i}(z_0)\|\leq\|z_0\|-\epsilon.
\eneq
By what {\blue{has been}} proved,
the following map is
also a *-homomorphism:
\beq
\tilde{\gamma}: A\otimes B
\rightarrow
\prod_{i=1}^{\infty}(C_{m_i}\otimes D_{m_i})
/ \bigoplus_{i=1}^{\infty}(C_{m_i}\otimes D_{m_i}),\,\,\,
%\nonumber\\\nonumber
{\blue{a\otimes b \mapsto
\pi_\infty(\{\alpha_{m_i}(a)\otimes \beta_{m_i}(b)\}_i).}}
\eneq

\noindent
{{By}} \eqref{f10-9lem-2}, $\tilde\gamma$ is not an isometry.
{{Thus}} $\tilde \gamma$ could not be injective.
%we will show $\ker\tilde\gamma=\{0\}$. % is also injective.
%In fact, if $\ker\tilde\gamma\neq\{0\}$,
By \cite[Lemma 2.12(ii)]{BK2004},
%$\ker \tilde\gamma\lhd A\otimes B$
{{$\ker \tilde\gamma$ (which is an ideal of $A\otimes B$)}}
contains a nonzero elementary tensor product
$a_0\otimes b_0$.
Then
\beq\nonumber
%0&=&
0=\|\tilde\gamma(a_0\otimes b_0)\|
&=&\limsup_{i\rightarrow \infty}
(\|\alpha_{m_i}(a_0)\otimes\beta_{m_i}(b_0)\|)
\\\nonumber&=&
\limsup_{i\rightarrow \infty}\|\alpha_{m_i}(a_0)\|\cdot\|\beta_{m_i}(b_0)\|
\\\nonumber
(\alpha,\  \beta\mbox{ are {{strict}} embeddings})&=&
\lim_{i\rightarrow \infty}\|\alpha_{m_i}(a_0)\|\cdot
\lim_{i\rightarrow \infty}\|\beta_{m_i}(b_0)\|
%\\
%(\alpha,\  \beta\mbox{ are strictly embeddings})=%
%&=&
{\blue{=\|a_0\|\cdot\|b_0\|,}}
\eneq
which is contradict to {\blue{the assumption that}} $a_0\otimes b_0\neq 0$.
Hence $\gamma$ is a {{strict}} embedding.
%where the last equality follows from
%$\alpha,\  \beta$ are strictly embeddings.
%a_0 b_0

\end{proof}

\iffalse
\begin{nota}
Let $A_1,A_2$ be \CAs, let $\calF_1\subset A_1$,
$\calF_2\subset A_2$ be finite subsets,  {\blue{and}} let $n\in\N$.
Denote the set
$\{\sum_{i=1}^mx_i\otimes y_i:x_i\in\calF_1,y_i\in\calF_2,
1\leq m\leq n\}$
by $\calF_1\otimes_n \calF_2$.
\end{nota}
\fi

%\xrule

\begin{lem}
\label{tensor-product-of-appro-embed-lem}

Let $A_1$ {\blue{and}}  $A_2$ be \CAs\, {\blue{and let}}
$\calF\subset A_1\otimes A_2$ be a finite subset.
% with $\calF=\calF_i^*$,
%$i=1,2$.
%$\calF_2\subset A_2$ be finite subsets.
Then, for any $\epsilon>0$,
there {\blue{exist}} finite subsets $\mathcal{G}_i\subset A_i$ ($i=1,2$)
and $\delta>0$ such that,  {\blue{for any \CA s $B_1$ and $B_2,$ and,}}
for any {\blue{c.p.c.~maps}}
%{\green{s}}}}
$\alpha_i:A_i\rightarrow B_i$
%($i=1,2$, $B_i$ are \CAs)
{\blue{which are}}
%if $\alpha_i$ is
%{\blue{a}}
$(\mathcal{G}_i,\delta)$-{\blue{approximate embeddings,}}
%($i=1,2$, $B_i$ are \CAs),
%($i=1,2$),
%{\red{\bf You either say "any map $\af_i$ which is a.... ($i=1,2$)," or you say "any maps which are..."}}
{\blue{the map}} $\alpha_1\otimes \alpha_2: A_1\otimes A_2\rightarrow B_1\otimes B_2$
is {\blue{an}} $(\calF,\epsilon)$-{{approximate embedding}}.

\end{lem}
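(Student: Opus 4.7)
My plan is to argue by contradiction, reducing the finite‐$\delta$ statement to the strict‐embedding statement of Lemma~\ref{tensor-product-of-strictly-embedding}. Suppose the lemma fails. Then there exist $\epsilon_0>0$ and a finite subset $\calF\subset A_1\otimes A_2$ such that, for every $n\in\N$, every pair of finite subsets $\mathcal{G}_{i,n}\subset A_i$ ($i=1,2$) exhausting a countable dense subset of $A_i$, and every $\delta_n=1/n$, one can find c.p.c.~maps $\alpha_{i,n}\colon A_i\to B_{i,n}$ which are $(\mathcal{G}_{i,n},\delta_n)$-approximate embeddings, yet $\alpha_{1,n}\otimes\alpha_{2,n}$ is not an $(\calF,\epsilon_0)$-approximate embedding. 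My plan is to arrange the $\mathcal{G}_{i,n}$ so that $\bigcup_n\mathcal{G}_{i,n}$ is dense in $A_i$ and $\mathcal{G}_{i,n}\cdot\mathcal{G}_{i,n}\subset\mathcal{G}_{i,n+1}$.

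Next I would pass to the sequence algebras. Define
\[
\tilde\alpha_i\colon A_i\longrightarrow \prod_{n=1}^{\infty}B_{i,n}\Big/\bigoplus_{n=1}^{\infty}B_{i,n},\qquad a\mapsto \pi_\infty(\{\alpha_{i,n}(a)\}_n).
\]
The approximate embedding property together with the density of $\bigcup_n\mathcal{G}_{i,n}$ and a routine $3\epsilon$ argument shows that each $\tilde\alpha_i$ is a *-homomorphism; the same density argument combined with $\|\alpha_{i,n}(a)\|\leq\|a\|$ and $\|\alpha_{i,n}(a)\|\geq\|a\|-\delta_n$ (for $a\in\mathcal{G}_{i,n}$) shows that $\tilde\alpha_i$ is in fact a strict embedding in the sense of Notation~\ref{notation-2}. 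Then I would invoke Lemma~\ref{tensor-product-of-strictly-embedding} to conclude that the induced map
\[
\gamma\colon A_1\otimes A_2\longrightarrow \prod_{n=1}^{\infty}(B_{1,n}\otimes B_{2,n})\Big/\bigoplus_{n=1}^{\infty}(B_{1,n}\otimes B_{2,n}),\qquad a\otimes b\mapsto\pi_\infty(\{\alpha_{1,n}(a)\otimes\alpha_{2,n}(b)\}_n),
\]
is a strict embedding *-homomorphism.

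Applied to the finite set $\calF$, this yields $\|\gamma(z)\|=\|z\|$ and $\gamma(zz')=\gamma(z)\gamma(z')$ for all $z,z'\in\calF$. Unwinding the definitions of $\pi_\infty$ and the strict embedding condition, this translates into $\lim_n\|(\alpha_{1,n}\otimes\alpha_{2,n})(z)\|=\|z\|$ and $\lim_n\|(\alpha_{1,n}\otimes\alpha_{2,n})(zz')-(\alpha_{1,n}\otimes\alpha_{2,n})(z)\cdot(\alpha_{1,n}\otimes\alpha_{2,n})(z')\|=0$ for every $z,z'$ in the finite set $\calF$. Thus for all sufficiently large $n$ the map $\alpha_{1,n}\otimes\alpha_{2,n}$ is an $(\calF,\epsilon_0)$-approximate embedding, contradicting the choice of $\alpha_{1,n},\alpha_{2,n}$.

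The main obstacle in this plan is the verification that the $\tilde\alpha_i$ are genuine strict embeddings: one needs to confirm that the construction actually produces *-homomorphisms (rather than maps that are only approximately multiplicative), and that the norm is preserved in the $\liminf$ sense required by Notation~\ref{notation-2}. This is exactly where the hypothesis ``$(\mathcal{G}_{i,n},\delta_n)$-approximate embedding" with $\mathcal{G}_{i,n}$ exhausting a dense subset and $\delta_n\to 0$ is used, and a short density/$\epsilon$ argument handles it. Once this is in place, Lemma~\ref{tensor-product-of-strictly-embedding} does the rest of the work. As a remark, one could instead give a direct quantitative proof: write each $z\in\calF$ within $\epsilon/8$ as a finite sum of elementary tensors, include the components and their pairwise products in $\mathcal{G}_i$, and verify multiplicativity by a direct calculation; however, obtaining the lower norm bound $\|(\alpha_1\otimes\alpha_2)(z)\|\geq\|z\|-\epsilon$ in a quantitative form is substantially less transparent than via the strict‐embedding reduction, so I prefer the contradiction route.
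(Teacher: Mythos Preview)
Your proposal is correct and follows essentially the same strategy as the paper: argue by contradiction, pass to the sequence algebras to obtain strict embeddings, and invoke Lemma~\ref{tensor-product-of-strictly-embedding} to reach a contradiction. The only organizational difference is that the paper first establishes the $(\calF,\epsilon)$-\emph{multiplicativity} of $\alpha_1\otimes\alpha_2$ by a direct elementary estimate (choosing $\mathcal{G}_i$ to contain the tensor components of an approximation of $\calF$ and using Lemma~\ref{c.p.c.-almost-multiplicative-easy}), so that the contradiction argument is needed only for the norm lower bound; you instead obtain both the multiplicativity and the norm preservation simultaneously from the fact that $\gamma$ is a strict embedding \hbox{$*$-homomorphism}. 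Your version is slightly cleaner, while the paper's version makes explicit that the multiplicativity half admits an effective quantitative choice of $\mathcal{G}_i$ and $\delta$---exactly the point you note in your closing remark.
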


\begin{proof}

Without loss of generality, we may assume that $0<\epsilon<1$.
Let $M:=1+\max\{\|x\|: x\in\calF\}$.
Let $\calF_i\subset A_i$ ($i=1,2$) and  {\blue{$n$ be some integer}}
%$n\in\N$
{\blue{such that}} $\calF_i=\calF_i^*$ ($i=1,2$) and
$\calF\subset_{\frac{\epsilon}{8M^2}}{\blue{ {\cal F}^{1,2}}},$
{\blue{where}}
%${\cal F}^{1,2,n}
${\blue{{\cal F}^{1,2}:=\{\sum_{i=1}^n x_i\otimes y_i: x_i\in {\cal F}_1\andeqn
y_i\in {\cal F}_2\}.}}$
%$ \calF_1\otimes_n \calF_2$.
Let $M_1:=1+\max\{\|x\|:x\in {\blue{{\cal{F}}^{1,2}}}\}.$
%\calF_1\otimes_n \calF_2\}$.

Keeping Lemma \ref{c.p.c.-almost-multiplicative-easy} in mind,
it is straightforward to see that
%(with the help of
%Lemma \ref{c.p.c.-almost-multiplicative-easy})
there exists $\delta_0>0$ such that,
for any c.p.c.~maps
$\alpha_i:A_i\rightarrow B_i$ ($i=1,2$, $B_i$ are \CAs),
if $\alpha_i$ is ($\calF_i,\delta_0$){{-multiplicative}} ($i=1,2$),
then $\alpha_1\otimes \alpha_2: A_1\otimes A_2\rightarrow B_1\otimes B_2$
is $
%(\calF_1\otimes_n \calF_2,
({\blue{\calF^{1,2}}},\frac{\epsilon}{8})${{-multiplicative}},
and, hence
$\alpha_1\otimes \alpha_2: A_1\otimes A_2\rightarrow B_1\otimes B_2$
is $(\calF, \epsilon)${{-multiplicative}}.
Let $\calF_i\subset \calF_{i,1}\subset \calF_{i,2}\subset\cdots $
be finite subsets of $C^*(\calF_i)$  such that  $\cup_{j\in\N}\calF_{i,j}$
is dense in $C^*(\calF_i)$ ($i=1,2$).

{\blue{Now let us}}  assume the lemma does not hold. {\blue{Then}} there exists
%$z_0\in \calF$ and
a sequence of
\CAs\ $B_{i,m}$ and c.p.c.~maps $\alpha_{i,m}: A_i\rightarrow B_{i,m}$
%($i=1,2$, $m\in\N$)
{{such}} that $\alpha_{i,m}$
%are
{{is an}}
$(\calF_{i,m},\delta_0/m)$-{\blue{approximate embedding}}
({{$i=1,2$}}, {{$m\in\N$}}),
and
$\alpha_{1,m}\otimes \alpha_{2,m}: A_1\otimes A_2
\rightarrow B_{1,m}\otimes B_{2,m}$
{\blue{is}} not {\blue{an}}  $(\calF,\epsilon)$-{{approximate embedding}} ($m\in\N$).
%{\green{By the choice of $\delta_0$,
%$\alpha_{1,m}\otimes \alpha_{2,m}$ {\green{is}}
%$(\calF,\epsilon)$-multiplicative
%($m\in\N$).}}
%{\green{Then for each $m\in \N,$}}
%s
%
{\blue{However, since
$\calF_i\subset\calF_{i,m},$ {\blue{by}}
the choice of $\delta_0$,
and
{\blue{by the fact that}}
%{\red{the map}}
$\alpha_{1,m}\otimes \alpha_{2,m}$ {is}
%{\green{not an  $(\calF,\epsilon)$-approximate embedding}}
%{\green{but is}} $(\calF,\epsilon)$-multiplicative, %{\green{but not
%not {\blue{an}}
$(\calF,\epsilon)$-approximate multiplicative,}}
%embedding.}}
%{\blue{Thus,
{\blue{for each $m,$ there must be some
%{\blue{there}} exists
$z_m\in\calF\subset \calF^{1,2}$
%^{1,2}$
such that}}
%{\red{\bf ? Is $\{z_m\}$ a sequence? an element? what is the relation of $m$ and $n?$}}
%
\beq
\label{f10-10lem-1}
\|\alpha_{1,m}\otimes\alpha_{2,m}(z_m)\|<\|z_m\|-\epsilon.
%\quad
%{\blue{\rforal}} m\in \N.
\eneq

\noindent
{{Since}} {\blue{$\calF^{1,2}$}}
%_1\otimes_n\calF_2$
is a finite subset,
by \eqref{f10-10lem-1},
there exists $z_0\in {\blue{\calF^{1,2}}}$
%_1\otimes_n\calF_2$
and an increasing sequence $\{m_j\}\subset\N$
such that
\beq
\label{f10-10lem-2}
\|\alpha_{1,m_j}\otimes\alpha_{2,m_j}(z_0)\|<\|z_0\|-\epsilon
%\quad
{\blue{\rforal}} j\in \N.
\eneq
Note that
{\blue{the map}}
{\blue{$\bar{\alpha}_1: A_1
\rightarrow
\prod_{m=1}^{\infty}B_{1,m}/ \bigoplus_{m=1}^{\infty}B_{1,m}$ defined by
%{\blue{\beq
%\bar{\alpha}_1: A_1
%\rightarrow
%\prod_{m=1}^{\infty}B_{1,m}/ \bigoplus_{m=1}^{\infty}B_{1,m},\,\,\,
%\nonumber\\\nonumber
$a\mapsto
\pi_\infty(\{\alpha_{1,m}(a)\})$ and {\blue{the map}}
%\eneq
%\andeqn\\
%\beq
$\bar{\alpha}_2: A_2
\rightarrow
\prod_{m=1}^{\infty}B_{2,m}/ \bigoplus_{m=1}^{\infty}B_{2,m}$
defined by
% ,\,\,\,
%\nonumber\\\nonumber
$a\mapsto
\pi_\infty(\{\alpha_{2,m}(a)\})$}}
%\eneq}}
are {\blue{strict}} embeddings.
Then,
by Lemma \ref{tensor-product-of-strictly-embedding},
the following
%should also be
{\blue{is also a strict}} embedding:
{\blue{\beq\nonumber
\gamma: A_1\otimes A_2
\rightarrow
\prod_{m=1}^{\infty}(B_{1,m}\otimes B_{2,m})
/ \bigoplus_{m=1}^{\infty}(B_{1,m}\otimes B_{2,m}),\,\,\,
%\nonumber\\\nonumber
a\otimes b\mapsto
\pi_\infty(\{\alpha_{1,m}(a)\otimes \alpha_{2,m}(b)\}).
\eneq}}
%However,
{\blue{But this contradicts with \eqref{f10-10lem-2}.}}
% shows that
%$\gamma$ could not be a strictly embedding.
{\blue{
%It follows that
%and this contradiction shows that
The lemma  then follows.}}

\end{proof}

\begin{nota}
Let $\mathcal{X}_1$, $\mathcal{X}_2$ be two classes of \CAs.
%define $\mathcal{X}_1\otimes \mathcal{X}_2$ be the class of \CAs
%of the following form: $A\otimes B$, $A\in\mathcal{X}_1$, $B\in\mathcal{X}_2$.
{\blue{Denote}}
%{\green{the spatial tensor product of ${\cal X}_1$ and ${\cal X}_2$ by}}
$\mathcal{X}_1\otimes \mathcal{X}_2:=
\{A\otimes B: A\in\mathcal{X}_1, B\in\mathcal{X}_2\}$,
where {\blue{each}} $A\otimes B$ is the spatial tensor product.
\end{nota}

Recall the following result
(see \cite[Lemma 2.15]{BK2004},
also see \cite[Lemma 4.1.9]{Rordam 2002}):

\begin{lem}
[Kirchberg's Slice Lemma]
\label{Kir-slice-lem}
Let $A$ {\blue{and}} $B$ be \CAs,
and let $D$ be a nonzero hereditary $C^*$-subalgebra of
the spatial tensor product $A\otimes B$.
Then there exists a nonzero element $z\in A\otimes B$
such that $z^*z=a\otimes b$ for some $a\in A$, $b\in B$,
and $zz^*\in D$.

\end{lem}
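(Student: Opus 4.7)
The plan is to reduce to a dense-subalgebra approximation and then extract a simple tensor by a matrix factorization trick. First, I would pick a nonzero positive element $d\in D_+$ with $\|d\|=1$. Since $D$ is hereditary, it suffices to produce $z\in A\otimes B$ with $z^*z=a\otimes b$ (with $a\in A_+,\ b\in B_+$ both nonzero) and $zz^*\in\overline{d(A\otimes B)d}\subset D$. Choose $\epsilon>0$ and, using density of the algebraic tensor product $A\odot B$ in $A\otimes B$, find $c=\sum_{i=1}^n a_i\otimes b_i$ with $\|c-d^{1/2}\|<\delta$ for $\delta=\delta(\epsilon)$ small enough that $\|c^*c-d\|<\epsilon/2$. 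By \cite[Proposition 2.2]{Rordam-1992-UHF2}, the cutdown $f_\epsilon(c^*c)$ is then Cuntz-subequivalent to $d$, and in particular lies in $\overline{d(A\otimes B)d}$.

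Next, I would promote $c$ into an $n\times n$ matrix setting in order to separate the $A$- and $B$-factors. Viewing the row $X=(a_1\otimes 1,\ldots,a_n\otimes 1)$ as a matrix in $M_n(A\otimes B)$ supported in the first row, and the column $Y=(1\otimes b_1,\ldots,1\otimes b_n)^T$ as a matrix supported in the first column, the product $XY$ lies in the $(1,1)$-corner and equals $c\otimes e_{11}$. Crucially, $X^*X=(a_i^*a_j\otimes 1_B)_{ij}$ involves only the $A$-factor, while $YY^*=(1_A\otimes b_ib_j^*)_{ij}$ involves only the $B$-factor. Taking positive square roots $T=(X^*X)^{1/2}\in M_n(A)\otimes 1_B$ and $U=(YY^*)^{1/2}\in 1_A\otimes M_n(B)$ yields two commuting positive matrices whose product captures the relevant data of $c^*c$.

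The candidate element is then obtained by cutting down with positive functional calculus on $T$ (to extract a positive element $a\in A$) and on $U$ (to extract a positive element $b\in B$), and building $z$ as a suitable corner of $TU$ or $(a\otimes 1_B)c(1_A\otimes b)$. After the cutdowns, the element $z^*z$ collapses onto a commutative subalgebra isomorphic to $C^*(a)\otimes C^*(b)$ and, with careful choices, reduces to a single simple tensor, while $zz^*$ is controlled by $cc^*$ and hence by a cutdown of $d$.

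The main obstacle is the last step: making the functional-calculus manipulation sufficiently explicit so that $z^*z$ is honestly a simple tensor $a\otimes b$ rather than merely a sum of simple tensors, while simultaneously keeping $zz^*$ inside $\overline{d(A\otimes B)d}$. Because the matrices $X^*X$ and $YY^*$ live in different tensor factors and commute with each other in $M_n(A\otimes B)$, one can handle them by spectral decomposition in the separate factors; the bookkeeping for the perturbation errors accumulated from the approximation $c\approx d^{1/2}$ and from the functional-calculus cutdowns is the delicate part, but it is ultimately routine given the standard estimates in \cite[Proposition 2.2]{Rordam-1992-UHF2}. This is exactly the content of \cite[Lemma 2.15]{BK2004} (see also \cite[Lemma 4.1.9]{Rordam 2002}), which is being quoted here.
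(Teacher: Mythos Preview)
The paper does not prove this lemma; it is stated as a quoted result with the preface ``Recall the following result (see \cite[Lemma 2.15]{BK2004}, also see \cite[Lemma 4.1.9]{Rordam 2002}):'' and no proof is given. You correctly recognize this at the end of your proposal, so there is nothing in the paper to compare your sketch against.

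As for the sketch itself: the general strategy---approximate a positive element of $D$ by something in the algebraic tensor product, then use a matrix factorization to separate the $A$- and $B$-contributions---is indeed the shape of the standard argument in \cite{BK2004} and \cite{Rordam 2002}. You are right that the delicate step is collapsing to an honest simple tensor $a\otimes b$; your description of that step is heuristic rather than a proof, but since you are explicitly deferring to the cited references this is appropriate.
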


\begin{thm}
\label{tensor-of-tracial-approximation}
Let $\mathcal{X}_1$, $\mathcal{X}_2$ be two classes of \CAs.
Let $A$ {\blue{and}} $B$ be unital simple separable infinite dimensional \CAs.
Assume that $A$ is asymptotically tracially in $\mathcal{X}_1$
and $B$ is asymptotically tracially in $\mathcal{X}_2$.
Then the spatial tensor product $A\otimes B$
is asymptotically tracially in $\mathcal{X}_1\otimes \mathcal{X}_2$.

\end{thm}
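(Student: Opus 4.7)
The plan is to construct the required approximation on $A\otimes B$ as the spatial tensor product of approximations for $A$ and $B$, with Cuntz-comparison bookkeeping provided by Kirchberg's Slice Lemma \ref{Kir-slice-lem} and the division technique of Lemma \ref{L1991}. Fix a finite $\calF\subset A\otimes B$, $\epsilon>0$, and nonzero $c\in(A\otimes B)_+$. First I would reduce to elementary tensors by choosing finite subsets $\calF_1\subset A$ and $\calF_2\subset B$ so that $\calF$ is contained in an $(\epsilon/4)$-neighbourhood of the linear span of $\calF_1\otimes\calF_2$; Lemmas \ref{tensor-product-of-appro-embed-lem} and \ref{tensor-product-of-strictly-embedding} will then allow me to verify Definition \ref{def-tracial_approximation} by checking it on the elementary tensors.

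Since $A\otimes B$ is simple, unital, infinite-dimensional, Lemma \ref{L1991} applied inside $\Her(c)$ produces mutually orthogonal nonzero $c^{(A)},c^{(B)}\in\Her(c)_+$. Applying Kirchberg's Slice Lemma to each yields nonzero elementary tensors $a^{(A)}\otimes b^{(A)}\lesssim c^{(A)}$ and $a^{(B)}\otimes b^{(B)}\lesssim c^{(B)}$. Simplicity of $B$ makes $b^{(A)}$ full, giving an integer $M$ with $1_B\lesssim(b^{(A)})^{\oplus M}$ in $M_M(B)$; symmetrically, fullness of $a^{(B)}$ in $A$ yields $N$ with $1_A\lesssim(a^{(B)})^{\oplus N}$ in $M_N(A)$. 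Lemma \ref{L1991} applied inside $\Her(a^{(A)})\subset A$ then supplies $\widetilde a\in\Her(a^{(A)})_+\setminus\{0\}$ with $M\langle\widetilde a\rangle\,{\overset{_\approx}{_<}}\,\langle a^{(A)}\rangle$, and symmetrically $\widetilde b\in\Her(b^{(B)})_+\setminus\{0\}$ with $N\langle\widetilde b\rangle\,{\overset{_\approx}{_<}}\,\langle b^{(B)}\rangle$. These $\widetilde a,\widetilde b$ serve as the remainder-control parameters in the two tracial approximations.

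Now apply Proposition \ref{hereditary-subalgebra-preserves-tracially-approximation} to $A$ with data $(\calF_1,\delta,\widetilde a)$ and to $B$ with data $(\calF_2,\delta,\widetilde b)$, for $\delta>0$ chosen small. This produces $C_i\in\mathcal{X}_i$ and c.p.c.~maps $\alpha^{(i)},\beta^{(i)}_n,\gamma^{(i)}_n$ together with projections $p_n:=\beta^{(1)}_n(1_{C_1})$, $q_n:=\beta^{(2)}_n(1_{C_2})$ satisfying $1-p_n\lesssim\widetilde a$ and $1-q_n\lesssim\widetilde b$, plus the usual approximate-multiplicativity of $\beta^{(i)}_n$. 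Put $P_n:=p_n\otimes q_n$ and set
\[
\alpha:=\alpha^{(1)}\otimes\alpha^{(2)},\qquad \beta_n:=\beta^{(1)}_n\otimes\beta^{(2)}_n,\qquad \gamma_n(x):=(1-P_n)x(1-P_n).
\]
Condition (2) of Definition \ref{def-tracial_approximation} for $\alpha$ follows from Lemma \ref{tensor-product-of-appro-embed-lem}; Condition (3) for $\beta_n$ follows from Lemma \ref{tensor-product-of-strictly-embedding} applied to the strict embeddings $C_i\to\ell^\infty(A_i)/c_0(A_i)$ induced by $\{\beta^{(i)}_n\}_n$. Condition (1) holds because for $n$ large the projections $p_n,q_n$ approximately commute with $\calF_1,\calF_2$ respectively, hence $P_n$ approximately commutes with elementary tensors in $\calF_1\otimes\calF_2$, and the standard relation $\beta^{(i)}_n\alpha^{(i)}(y)\approx p_{n,i}yp_{n,i}$ in the corner $p_{n,i}A_ip_{n,i}$ gives $x\approx\gamma_n(x)+\beta_n\alpha(x)$ on each elementary tensor.

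\textbf{The main obstacle is Condition (4):} the estimate $\gamma_n(1_{A\otimes B})=1-P_n\lesssim c$. Decomposing $1-P_n=(1-p_n)\otimes 1_B+p_n\otimes(1-q_n)$ into orthogonal projections, neither piece is of elementary-tensor form: the first carries the whole of $1_B$ on its second leg, the second carries $1_A$ on its first. To handle the first, fullness gives $1_B\lesssim(b^{(A)})^{\oplus M}$, hence
\[
(1-p_n)\otimes 1_B\ \lesssim\ \widetilde a\otimes 1_B\ \lesssim\ (\widetilde a\otimes b^{(A)})^{\oplus M}\quad\text{in }M_M(A\otimes B);
\]
combining this with $M\langle\widetilde a\rangle\,{\overset{_\approx}{_<}}\,\langle a^{(A)}\rangle$ and $a^{(A)}\otimes b^{(A)}\lesssim c^{(A)}$, the $M$ orthogonal copies of $\widetilde a\otimes b^{(A)}$ can be realized as mutually orthogonal subelements of $c^{(A)}$, producing $(1-p_n)\otimes 1_B\lesssim c^{(A)}$. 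The symmetric argument with the roles of $A$ and $B$ interchanged gives $p_n\otimes(1-q_n)\lesssim c^{(B)}$. Since $c^{(A)}\perp c^{(B)}$ and both lie in $\Her(c)$, the orthogonal sum of the two projections is Cuntz-dominated by $c^{(A)}+c^{(B)}\lesssim c$, yielding $1-P_n\lesssim c$ and completing the verification.
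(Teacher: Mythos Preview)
Your argument is correct and follows essentially the same architecture as the paper's proof: Kirchberg's Slice Lemma together with the division technique of Lemma~\ref{L1991} for the Cuntz bookkeeping, and Lemmas~\ref{tensor-product-of-strictly-embedding} and~\ref{tensor-product-of-appro-embed-lem} to verify conditions (2) and (3) for the tensored maps. Your presentation is in fact slightly cleaner than the paper's in two places: by invoking Proposition~\ref{hereditary-subalgebra-preserves-tracially-approximation} so that $p_n,q_n$ are genuine projections, you can set $\gamma_n=(1-P_n)(\cdot)(1-P_n)$ and avoid the paper's explicit three-cross-term map $\sigma_i$ and its $1/(1+3\delta)$ normalization; and your two-term decomposition $1-P_n=(1-p_n)\otimes 1_B+p_n\otimes(1-q_n)$ absorbs the paper's third term $(1-p_n)\otimes(1-q_n)$ into the first, which is harmless since the paper applies the Slice Lemma once and then splits inside $A$ and $B$, whereas you split $c$ first and apply the Slice Lemma twice---either ordering works.
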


\begin{proof}

%Note that for a unital \CA\ $C$ which is tracially nuclear dimension at most $n$,
%if $C$ do not has property (SP), then $C$ is nuclear dimension at most $n$,
%then the proposition is easy to handle.
%Therefore we may assume that both $A$ and $B$ are have property (SP).

Let $\calF\subset A\otimes B,$
%without loss of generality,
%we may assume $\calF$ lies in the algebraic tensor product.
{\blue{let}} $c\in (A\otimes B)_+\backslash\{0\}$,
and let ${\blue{1/4>}}\epsilon>0$.
By Kirchberg's Slice Lemma ({{see}} Lemma \ref{Kir-slice-lem}),
there exists $a\in A_+\backslash\{0\}$ and {\blue{there}} exists $b\in B_+\backslash\{0\}$,
such that
\beq
a\otimes b\lesssim_{A\otimes B} c.
\eneq

\noindent
%{\green{Both}} {\blue{$A$ and $B$ are infinite dimensional.}}
{{Note that}} {\blue{$A$ and $B$ are {{non-elementary.}}}}
{{Then one}} %may
{\blue{may}} choose $\check{a},\hat{a}\in\Her_A(a)_+\backslash\{0\}$
{\blue{and}} $\check{b},\hat{b}\in\Her_A(b)_+\backslash\{0\}$
such {\blue{that}} $\check{a}\bot\hat{a},$
 $\check{a}\sim_A\hat{a},$
%Let $\check{b},\hat{b}\in\Her_A(b)_+\backslash\{0\}$ such that
 $\check{b}\bot\hat{b},$
and $\check{b}\sim_A\hat{b}$ {\blue{(see {{Lemma}} \ref{L1991}, for example).}}

Since $A$ {{and}} $B$ are simple and unital, there {\blue{exist}} $k\in \N,$
$r_1,{\green{r_2,}}\cdots,r_k\in A,$ {\blue{and}}
$s_1,{\green{s_2,}}\cdots,s_k\in B$  such that
$1_A=\sum_{i=1}^k r_i^* {{\hat{a}}} r_i$
and $1_B=\sum_{i=1}^k s_i^*{{\hat{b}}} s_i$.
Since $A$ {{and}} $B$ are simple {{and}} infinite dimensional,
so {\blue{are}} $\Her_A(\check{a})$ and $\Her_B(\check{b}).$ {\blue{Then}}
{\blue{(see {{Lemma}} \ref{L1991})}}
there exist mutually orthogonal positive elements
$a_1,{\green{a_2,}}\cdots, a_{k+1}\in \Her_A(\check{a})_+\backslash\{0\}$
and mutually orthogonal positive elements
${\blue{b_1,}} {\blue{b_2,}} \cdots, b_{k+1}
\in \Her_A(\check{b})_+\backslash\{0\}$
{\blue{such that}} $a_1\sim_A a_2\sim_A
{{\cdots}} \sim_A a_{k+1}$
and $b_1\sim_B b_2\sim_B
{{\cdots}} \sim_B b_{k+1}$
({\blue{recall}} Definition \ref{Dcuntz} for the definition of ``$\sim$'').
%{\blue{\bf Reference also define $`\sim"$}}

Let $N\in\N$, let
$\tilde\calF_1\subset A,$ and {\blue{let}} $\tilde\calF_2\subset B$ be finite subsets
such that
\beq
\calF\subset_{\epsilon/2}
\left\{\sum_{i=1}^N x_i\otimes y_i:x_i\in\tilde\calF_1,
y_i\in\tilde\calF_2\right\}.
\eneq

\noindent
{{By}} Lemma \ref{tensor-product-of-appro-embed-lem},
there exist finite subsets $\bar\calF_1\subset A$,
$\bar\calF_2\subset B,$ {\blue{and}} $\delta_0>0$ such that,
for any \CAs\ $D_1,D_2$ and any c.p.c.~maps
$\eta_1:A\rightarrow D_1$,  {\blue{and}} $\eta_2:B\rightarrow D_2,$
if $\eta_i$ is {\blue{an}} $(\bar\calF_1,\delta_0)$-{{approximate embedding}} ($i=1,2$),
then $\eta_1\otimes \eta_2:A\otimes B\rightarrow D_1\otimes D_2$
is  {\blue{an}} $(\calF,\epsilon)$-{{approximate embedding}}.

%\phi
Let $\calF_i:=\tilde\calF_i\cup \bar\calF_i$, $i=1,2$.
Let $M:=1+\max\{\|x\|:x\in \calF_1\cup\calF_2\}$.
{\blue{Choose}} $\delta:=\min\{\delta_0,\frac{\epsilon}{3(N+1)(M+1)^2}\}$. {\blue{Note $2\dt^2<\dt<M/4.$}}
%be a sufficiently small number.

Since $A$ is asymptotically tracially in $\mathcal{X}_1$,
%{\blue{by also \ref{hereditary-subalgebra-preserves-tracially-approximation},
%{\green{\bf I did not use %\ref{hereditary-subalgebra-preserves-tracially-approximation}, since I do not %assume ${\cal X}_i$ has property (H). - Xuanlong}}}}
there {{exist}} a $C^*$-algebra $C_1$ in $\mathcal{X}_1$,
and c.p.c maps
$\alpha: A\rightarrow C_1,$
$\beta_i: C_1\rightarrow A$, {\blue{and}}
%u.c.p.~maps
$\gamma_i: A\rightarrow A$
{\blue{($i\in\mathbb{N}$)}} such that

%(0) $C_1$ is unital, $\alpha$ is u.c.p.,
%$\beta_i(1_{C_1})$, $\gamma_i(1_A)$ are projections,
%$1_A=\beta_i(1_{C_1})+\gamma_i(1_A)$,
%$\forall i\in \mathbb{N}$,

(1) $x\approx_{\delta}\gamma_i(x)+\beta_i\circ\alpha(x)$
{\blue{for all $x\in\calF_1$ and  for all}} $i\in\mathbb{N},$

(2) $\alpha$ is {\blue{an}} $(\calF_1, \delta)$-{{approximate embedding}},

(3) $\lim_{i\rightarrow\infty}\|\beta_i(xy)-\beta_i(x)\beta_i(y)\|=0$ {\blue{and}}
{$\lim_{n\rightarrow \infty}\|\beta_i(x)\|= \|x\|$}
{\blue{for all}} $x,y\in C_1,$ {{and}}

(4) $\gamma_i(1_A)\lesssim_A a_1{\blue{(\sim a_{k+1})}}$ {\blue{for all}} $i\in\mathbb{N}$.

\noindent
{{Since}} $B$ is asymptotically tracially in $\mathcal{X}_2$,
%{\blue{(also using \ref{hereditary-subalgebra-preserves-tracially-approximation}),
%{\green{\bf I did not use %\ref{hereditary-subalgebra-preserves-tracially-approximation}, since I do not %assume ${\cal X}_i$ has property (H). - Xuanlong}}}}
there {\blue{exist}} a $C^*$-algebra $C_2$ in $\mathcal{X}_2$,
and c.p.c maps
$\phi: B\rightarrow C_2,$
$\psi_i: C_2\rightarrow B$, {{and}}
%u.c.p.~maps
$\theta_i: B\rightarrow B$
($i\in\mathbb{N}$) {\blue{such}} that

%(0') $C_2$ is unital, $\phi$ is u.c.p.,
%$psi_i(1_{C_2})$, $\theta_i(1_B)$ are projections,
%$1_B=\psi_i(1_{C_2})+\theta_i(1_B)$,
%$\forall i\in \mathbb{N}$,

{{$(1')$}} $x\approx_{\delta}\theta_i(x)+\psi_i\circ\phi(x)$
{\blue{for all}} $x\in\calF_2$ {\blue{and for all}} $i\in\mathbb{N},$

{{$(2')$}} $\phi$ is {\blue{an}} $(\calF_2, \delta)$-{{approximate embedding}},

{{$(3')$}} $\lim_{i\rightarrow\infty}\|\psi_i(xy)-\psi_i(x)\psi_i(y)\|=0$ {\blue{and}}
{$\lim_{n\rightarrow \infty}\|\psi_i(x)\|= \|x\|$}
{\blue{for all}} $x,y\in C_2,$ {{and}}

{{$(4')$}} $\theta_i(1_B)\lesssim_B b_1$ {\blue{($\sim b_{k+1}$) for all}} $i\in\mathbb{N}$.

\noindent
{{Note}} that $C_1\otimes C_2$ is in $\mathcal{X}_1\otimes \mathcal{X}_2$.
Now define {\blue{a}} c.p.c.~map
\beq
\rho:=\alpha\otimes \phi: A\otimes B\rightarrow C_1\otimes C_2.
\eneq
By {\blue{(2),}} {{$(2')$}} and
{{by}} the choice of $\delta$, $\delta_0$,
$\calF_1$, $\bar\calF_1$, $\calF_2$ and $\bar\calF_2$,
%we have {\blue{that}}
{\blue{the map}} $\rho$ is {\blue{an}} $(\calF,\epsilon)$-{{approximate embedding}}.
{\blue{Hence}} (2) of Definition \ref{def-tracial_approximation} {\blue{holds.}}

For $i\in\N$, define {\blue{a}} c.p.c.~map
\beq
\omega_i:=\beta_i\otimes \psi_i: C_1\otimes C_2\rightarrow A\otimes B.
\eneq
{\blue{\noindent
{{Define}} c.p.c.~maps $\beta: C_1
\rightarrow
l^\infty(A)/c_0(A)$ by
%\nonumber\\\nonumber
$x\mapsto
\pi_\infty(\{\beta_1(x),\beta_2(x),\cdots\})$ and   $\psi: C_2
\rightarrow
l^\infty(B)/c_0(B)$ by
%\nonumber\\\nonumber
$x\mapsto
\pi_\infty(\{\psi_1(x),\psi_2(x),\cdots\}),$ respectively.}}
{\blue{Then,}} by (3) and {{$(3')$}},
%the following two c.p.c.~map $
$\beta$ and $\psi$
are {{strict}} embeddings.
%(recall Notation \ref{notation-2}):
%{\blue{\beq\nonumber
%&&\beta: C_1
%\rightarrow
%l^\infty(A)/c_0(A),\,\,\,
%\nonumber\\\nonumber
%x\mapsto
%\pi_\infty(\{\beta_1(x),\beta_2(x)\cdots\})
%\andeqn\\\nonumber
%&&\psi: C_2
%\rightarrow
%l^\infty(B)/c_0(B)
%\nonumber\\\nonumber
%x\mapsto
%\pi_\infty(\{\psi_1(x),\psi_2(x)\cdots\}).
%\eneq}}
%Then, b
{\blue{By Lemma \ref{tensor-product-of-strictly-embedding}, the map
$\omega: C_1\otimes C_2
\rightarrow
l^\infty(A\otimes B)/c_0(A\otimes B)$ defined by
%\nonumber\\\nonumber
$x\otimes y\mapsto
\pi_\infty(\{\beta_1(x)\otimes\psi_1(y),
\beta_2(x)\otimes\psi_2(y),\cdots\})$
is also a strict
%the following {\blue{map}} is also a strictly
embedding:}}
%{\blue{\beq\nonumber
%\omega: C_1\otimes C_2
%\rightarrow
%l^\infty(A\otimes B)/c_0(A\otimes B),\,\,\,
%\nonumber\\\nonumber
%x\otimes y\mapsto
%\pi_\infty(\{\beta_1(x)\otimes\psi_1(y),\beta_2(x)\otimes\psi_2(y)\cdots\}).
%\eneq}}
This is equivalent to say {{that}}
(3)  of Definition \ref{def-tracial_approximation} {\blue{holds.}}

Note that by (1) and {{$(1')$}} {\blue{above,}} for $i\in\N$,  {\blue{one has}}
%we have
\beq\nonumber
1_A\otimes1_B
&\approx_{{\blue{2\delta(1+\dt)}}}&
(\beta_i\circ\alpha(1_A)+\gamma_i(1_A))
\otimes
(\psi_i\circ\phi(1_B)+\theta_i(1_B))
\\\nonumber
&=&
\beta_i\circ\alpha(1_A)
\otimes
\psi_i\circ\phi(1_B)
\\\nonumber
&&
+
\gamma_i(1_A)
\otimes
\psi_i\circ\phi(1_B)
+
\beta_i\circ\alpha(1_A)
\otimes
\theta_i(1_B)
+
\gamma_i(1_A)
\otimes
\theta_i(1_B).
%\\
%&=&
%\beta_i\circ\alpha(1_A)
%\otimes
%\psi_i\circ\phi(1_B)
%+\sigma_i(1_A\otimes1_B),
\eneq
{\blue{Thus}}
$$\|\gamma_i(1_A)
\otimes
\psi_i\circ\phi(1_B)
+\beta_i\circ\alpha(1_A)
\otimes
\theta_i(1_B)
+\gamma_i(1_A)
\otimes
\theta_i(1_B)
\|\leq  {\blue{1+2\dt+2\dt^2<1+3\dt.}}$$
%1+3\delta.$
%then the following are a c.p.c.~maps ($i\in\N$):
%For $i\in\N$, define c.p.~map
{\blue{It follows that  the map defined below}}
\beq\nonumber
\sigma_i: A\otimes B&\rightarrow& A\otimes B, \\\nonumber
x\otimes y
&\mapsto&
\frac{1}{1+3\delta}
\left(\gamma_i(x)
\otimes
\psi_i\circ\phi(y)
+
\beta_i\circ\alpha(x)
\otimes
\theta_i(y)
+
\gamma_i(x)
\otimes
\theta_i(y)
\right)
\eneq
{\blue{is}}
%{\green{are}}
{\blue{c.p.c.~{\blue{map}} ($i\in\N$).}}
By (1) and {{$(1')$}},
for $x\in\calF_1$ and $y\in\calF_2$, and for any $i\in \N$,
{\blue{one has}}
%we have
%
\beq
x\otimes y
&\approx_{\blue{2\dt(M+\dt)}}&
(\beta_i\circ\alpha(x)+\gamma_i(x))
\otimes
(\psi_i\circ\phi(y)+\theta_i(y))\\
&=&
\beta_i\circ\alpha(x)
\otimes
\psi_i\circ\phi(y)
+
\gamma_i(x)
\otimes
\psi_i\circ\phi(y)
\\
&&\quad+
\beta_i\circ\alpha(x)
\otimes
\theta_i(y)
+
\gamma_i(x)
\otimes
\theta_i(y)
\\&\approx_{3M^2\dt}
%3M^2\delta
&
\omega_i\circ\rho(x\otimes y)+\sigma_i(x\otimes y).
\eneq
Then, for $\sum_{j=1}^N x_j\otimes y_j$ with $x_j\in\calF_1,$  $y_j\in \calF_2$
($j=1,{\green{2,}}\cdots, N$), and, for any $i\in\N$,   {\blue{one has}}
%we have
\beq
\sum_{j=1}^N x_j\otimes y_j
&\approx_{3NM(M+1)\delta}&
\sum_{j=1}^N \omega_i\circ\rho(x_j\otimes y_j)+\sigma_i(x_j\otimes y_j)
\\
&=&
\omega_i\circ\rho\left(\sum_{j=1}^Nx_j\otimes y_j\right)
+\sigma_i\left(\sum_{j=1}^Nx_j\otimes y_j\right).
\eneq
Thus, {\blue{by the choice of $\dt,$}}  (1) of Definition \ref{def-tracial_approximation} {\blue{holds.}}

%{\blue{One  claims that,}} for
{{Claim: For}} all $i\in\N$, $\sigma_i(1_A\otimes 1_B)\lesssim
%_{A\otimes B}
c$  {\blue{in $A\otimes B.$}}
Indeed, {\blue{one has}}
%we have
\beq
\gamma_i(1_A)
\otimes
\psi_i\circ\phi(1_B)
\lesssim
a_1\otimes 1_B
=
a_1\otimes (\sum_{i=1}^k s_i^* {{\hat{b}}} s_i)
\lesssim
\sum_{i=1}^k a_i\otimes {{\hat{b}}},
\mbox{ and}\\
%\eneq
%\beq
\beta_i\circ\alpha(1_A)
\otimes
\theta_i(1_B)
\lesssim
1_A
\otimes
b_1
=
(\sum_{i=1}^k r_i^* {{\hat{a}}} r_i)\otimes b_1
\lesssim
\sum_{i=1}^k{{\hat{a}}}\otimes b_i,
\mbox{ and }\\
%\eneq
%\beq
\gamma_i(1_A)
\otimes
\theta_i(1_B)
\lesssim
a_1 \otimes b_1
\sim
a_{k+1}\otimes b_{k+1}.
\eneq

\noindent
{{Then}}

\beq\nonumber
\hspace{-0.5in}(1+3\delta)\sigma_i(1_A\otimes 1_B)
&=&
\gamma_i(1_A)
\otimes
\psi_i\circ\phi(1_B)
+
\beta_i\circ\alpha(1_A)
\otimes
\theta_i(1_B)
\\\nonumber
&&
\quad+
\gamma_i(1_A)
\otimes
\theta_i(1_B)
\\\nonumber
{\blue{\mbox{($\hat{a}\perp \check{a}, \hat{b}\perp \check{b},
%a_i\perp a_j,\, b_i\perp b_j,\,
i\not=j$)}}}&\lesssim
%_{A\otimes B}
&
(\sum_{i=1}^ka_i\otimes {{\hat{b}}})
+
(\sum_{i=1}^k{{\hat{a}}}\otimes b_i)
+a_{k+1}\otimes b_{k+1}
\\\nonumber
&\lesssim
%_{A\otimes B}
&
(\sum_{i=1}^{k}a_i\otimes {{\hat{b}}})
+
(\sum_{i=1}^{k+1}{{\hat{a}}}\otimes b_i)
\\\nonumber
&\lesssim
%_{A\otimes B}
&
{\blue{\check{a}\otimes {{\hat{b}}}
+
{{\hat{a}}}\otimes \check{b}}}
%\\\nonumber
{{\leq
%_{A\otimes B}
(\check{a}+\hat a)\otimes (\check{b}+\hat b)}}
%\\\nonumber
%&
\lesssim
%_{A\otimes B}
%&
{\blue{a\otimes b
%\\\nonumber
\lesssim
%_{A\otimes B}&
c.}}
\eneq
{\blue{This proves the claim.}}
%Thus the claim is true.
Then (4) of Definition \ref{def-tracial_approximation} {\blue{holds.}}
%In summation, all conditions of {\blue{Definitions}} \ref{def-tracial_approximation}
%are satisfied, hence
{\blue{It follows that}} $A\otimes B$ is asymptotically tracially in
$\mathcal{X}_1\otimes \mathcal{X}_2$.

\iffalse
\blue{
Finally, we should verify
the maps constructed above are
almost preserve norm, but since $A\otimes B$ is simple,
by the ``injective *-homomorphisms are isometry'' argument,
this should be easy to achieve.
}
\fi

%(1') (2') (3') (4')

\end{proof}

\begin{cor}\label{CNntensorNn}
Let $A$ and $B$  be unital separable simple \CA s which are
asymptotically tracially in ${\cal N}_n.$
Then the {{spatial}} %special
tensor product $A\otimes B$ is
asymptotically tracially in ${\cal N}_1.$
\end{cor}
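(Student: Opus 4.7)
The plan is to reduce to asymptotic tracial approximation by separable simple nuclear $\mathcal{Z}$-stable \CA s, and then to invoke the nuclear dimension bound of CETWW-2019 on the tensor product of such algebras. The finite-dimensional edge cases can be dispatched separately: if both $A$ and $B$ are finite dimensional, then $A\otimes B$ is a full matrix algebra, hence lies in $\mathcal{N}_0\subseteq\mathcal{N}_1$ and the conclusion is trivial; if exactly one of them, say $B$, is finite dimensional, then $B\cong M_k$ and $A\otimes B\cong M_k(A)$, which by Remark \ref{Rm32} is still asymptotically tracially in $\mathcal{N}_n$, and since $M_k(A)$ is unital, simple, separable, and infinite dimensional, this case reduces to the main case applied to $M_k(A)$ in one of the slots.

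For the main case, assume both $A$ and $B$ are infinite dimensional. By Theorem \ref{tdimnuc-finite-equal-t-nuclear-Z-stable}, the hypothesis of being asymptotically tracially in $\mathcal{N}_n$ is equivalent, for a unital separable simple infinite dimensional \CA, to being asymptotically tracially in $\mathcal{N}_{\mathcal{Z},s,s}$, the class of separable simple nuclear $\mathcal{Z}$-stable \CA s. Hence both $A$ and $B$ are asymptotically tracially in $\mathcal{N}_{\mathcal{Z},s,s}$. I would then apply Theorem \ref{tensor-of-tracial-approximation}, with $\mathcal{X}_1=\mathcal{X}_2=\mathcal{N}_{\mathcal{Z},s,s}$, to conclude that $A\otimes B$ is asymptotically tracially in $\mathcal{N}_{\mathcal{Z},s,s}\otimes\mathcal{N}_{\mathcal{Z},s,s}$.

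The remaining step is to observe the inclusion $\mathcal{N}_{\mathcal{Z},s,s}\otimes\mathcal{N}_{\mathcal{Z},s,s}\subseteq\mathcal{N}_1$. Indeed, for $C,D\in\mathcal{N}_{\mathcal{Z},s,s}$, the spatial tensor product $C\otimes D$ is separable, it is nuclear (tensor product of nuclear \CA s), it is simple (tensor product of a nuclear simple \CA\ with a simple \CA\ is simple), and it is $\mathcal{Z}$-stable (since $C\cong C\otimes\mathcal{Z}$ forces $C\otimes D\cong (C\otimes D)\otimes\mathcal{Z}$). By Theorem B of CETWW-2019 (as used already in Theorem \ref{Cnn=1}), any such \CA\ has nuclear dimension at most one, so $C\otimes D\in\mathcal{N}_1$. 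Therefore $A\otimes B$ is asymptotically tracially in $\mathcal{N}_1$.

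The main technical content is already packaged into Theorem \ref{tensor-of-tracial-approximation}, Theorem \ref{tdimnuc-finite-equal-t-nuclear-Z-stable}, and the external CETWW-2019 theorem. No serious new obstacle should arise; the only care required is the bookkeeping of the infinite-dimensionality hypotheses built into those two theorems, which is precisely why the finite-dimensional reductions are carried out first.
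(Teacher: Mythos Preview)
Your proof is correct and uses the same three ingredients as the paper (Theorem \ref{tensor-of-tracial-approximation}, Theorem \ref{tdimnuc-finite-equal-t-nuclear-Z-stable}, and CETWW-2019), but in a different order. The paper tensors first at the level of $\mathcal{N}_n$: it cites the Winter--Zacharias estimate $\mathcal{N}_n\otimes\mathcal{N}_n\subset\mathcal{N}_{2n+1}$, applies Theorem \ref{tensor-of-tracial-approximation} to get $A\otimes B$ asymptotically tracially in $\mathcal{N}_{2n+1}$, then invokes Theorem \ref{tdimnuc-finite-equal-t-nuclear-Z-stable} on $A\otimes B$ (not on $A$ and $B$ separately) to pass to $\mathcal{N}_{2n+1,s,s}$, and finishes with CETWW. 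Your route upgrades $A$ and $B$ to $\mathcal{N}_{\mathcal{Z},s,s}$ first, then tensors, then uses CETWW to show $\mathcal{N}_{\mathcal{Z},s,s}\otimes\mathcal{N}_{\mathcal{Z},s,s}\subseteq\mathcal{N}_1$. The paper's order is slightly slicker because the inclusion $\mathcal{N}_n\otimes\mathcal{N}_n\subset\mathcal{N}_{2n+1}$ is a one-line citation, whereas your route requires verifying simplicity and $\mathcal{Z}$-stability of the tensor product of two algebras in $\mathcal{N}_{\mathcal{Z},s,s}$; conversely, your treatment of the finite-dimensional edge cases is more careful than the paper's (which tacitly assumes infinite dimensionality throughout). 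One small wording fix: in the case $B\cong M_k$ you do not actually ``reduce to the main case in one of the slots''; rather, $M_k(A)$ is a single infinite-dimensional algebra asymptotically tracially in $\mathcal{N}_n$, and Theorem \ref{tdimnuc-finite-equal-t-nuclear-Z-stable} together with CETWW already gives asymptotically tracially in $\mathcal{N}_1$ with no tensor-product step needed.
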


\begin{proof}
{\blue{Note that ${\cal N}_n\otimes {\cal N}_n\subset {\cal N}_{2n+1}$
{{(see \cite[Proposition 2.3(ii)]{WZ2010})}}.
Therefore, by {{Theorem}}
\ref{tensor-of-tracial-approximation}, $A\otimes B$ is
asymptotically tracially in  ${\cal N}_{2n+1}.$  By {{Theorem}} \ref{tdimnuc-finite-equal-t-nuclear-Z-stable},
$A\otimes B$ is
asymptotically tracially in ${\cal N}_{2n+1,s,s}.$ It follows from
{{\cite[Corollary C]{CETWW-2019}}}
that $A\otimes B$ is
asymptotically tracially in ${\cal N}_{1,s,s}.$}}

\end{proof}

\begin{cor}\label{CCtZ}
Let $A$ be a unital separable simple \CA\, and let $B$ be a unital separable simple \CA\,
which is
asymptotically tracially in {{${\cal C}_{{\cal Z},s}.$}} Then the
{{spatial}} tensor product $A\otimes B$ is
asymptotically tracially in
{{${\cal C}_{{\cal Z},s}.$}}
%${\cal C}_{\cal Z}.$
\end{cor}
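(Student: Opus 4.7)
The plan is to reduce Corollary \ref{CCtZ} directly to Theorem \ref{tensor-of-tracial-approximation} by choosing a degenerate class for the first factor. Specifically, I would take $\mathcal{X}_1 := \{A\}$ (the singleton class containing only $A$) and $\mathcal{X}_2 := {\cal C}_{{\cal Z},s}.$ The first observation is that $A$ is trivially asymptotically tracially in $\{A\}$: for any finite ${\cal F}\subset A$, any $\epsilon>0$, and any nonzero $a\in A_+$, the choice $\alpha = \id_A$, $\beta_n = \id_A$, and $\gamma_n = 0$ satisfies all four conditions of Definition \ref{def-tracial_approximation}, since $\gamma_n(1_A)=0 \lesssim_A a$ holds vacuously.

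The second step, which is the real content, is to verify the closure $\{A\}\otimes {\cal C}_{{\cal Z},s} \subseteq {\cal C}_{{\cal Z},s}.$ This is the standard permanence of separable ${\cal Z}$-stability under spatial tensor products: for $D \in {\cal C}_{{\cal Z},s},$ the spatial tensor product $A\otimes D$ is separable because both factors are separable, and
$$(A\otimes D) \otimes {\cal Z} \;\cong\; A\otimes (D\otimes {\cal Z}) \;\cong\; A\otimes D$$
by associativity of the spatial tensor product together with the ${\cal Z}$-stability of $D.$ Provided both $A$ and $B$ are infinite dimensional, Theorem \ref{tensor-of-tracial-approximation} applied to these choices of $\mathcal{X}_1$ and $\mathcal{X}_2$ then gives that $A\otimes B$ is asymptotically tracially in ${\cal C}_{{\cal Z},s}.$

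There is a residual case to check: when $A$ (or $B$) fails to be infinite dimensional. Since the algebra in question is unital simple, the only possibility is $A\cong M_k$ for some $k$, in which case $A\otimes B = M_k(B).$ By Remark \ref{Rm32}, asymptotical tracial approximation passes to matrix amplifications over classes closed under tensoring with $M_k,$ and ${\cal C}_{{\cal Z},s}$ is clearly such a class (for $C\in {\cal C}_{{\cal Z},s}$, $M_k(C)$ is separable and $M_k(C)\otimes {\cal Z}\cong M_k(C\otimes {\cal Z})\cong M_k(C)$). Hence $M_k(B)$ inherits the property directly from $B.$

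I do not expect a serious obstacle here; essentially all of the technical work is already packaged inside Theorem \ref{tensor-of-tracial-approximation}. The main points to be careful with are (i) confirming that the trivial tracial approximation of $A$ by itself meets all four items of Definition \ref{def-tracial_approximation} (straightforward), and (ii) the application of associativity of the spatial tensor product in showing the closure property of ${\cal C}_{{\cal Z},s}$ (standard). No new ${\cal Z}$-stability theorem is needed, and no appeal to the deeper results from Sections 7--9 (such as Theorem \ref{Thm-tracial-simple-nuclear-Z-stable}) is required for this corollary.
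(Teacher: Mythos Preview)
Your proposal is correct and matches the paper's intended argument: the corollary is stated without proof immediately after Theorem \ref{tensor-of-tracial-approximation}, and your choice $\mathcal{X}_1=\{A\}$, $\mathcal{X}_2={\cal C}_{{\cal Z},s}$ together with the elementary observation $\{A\}\otimes{\cal C}_{{\cal Z},s}\subseteq{\cal C}_{{\cal Z},s}$ is precisely how it is meant to be read. Your treatment of the finite-dimensional case for $A$ via Remark \ref{Rm32} is in fact more careful than the paper, which silently relies on the hypotheses of Theorem \ref{tensor-of-tracial-approximation}; note also that the case of finite-dimensional $B$ is vacuous, since $M_k$ cannot be asymptotically tracially in ${\cal C}_{{\cal Z},s}$ (condition (3) of Definition \ref{def-tracial_approximation} would force a strict embedding of a ${\cal Z}$-stable algebra into $l^\infty(M_k)/c_0(M_k)$, which is impossible).
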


\begin{cor}\label{CTNtNn}
Let $A$ be a unital separable simple \CA\, which is
asymptotically tracially in ${\cal N}$ and let $B$ {\blue{be}} a unital separable
simple \CA\, which is
asymptotically tracially in ${\cal N}_{\cal Z}.$   Then the {{spatial}} %spacial
tensor product $A\otimes B$
is asymptotically tracially in ${\cal N}_1.$
\end{cor}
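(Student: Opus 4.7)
\medskip

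The plan is to verify the hypotheses of Theorem \ref{Thm-tracial-simple-nuclear-Z-stable} for $A\otimes B$, and then descend to $\mathcal{N}_1$ via Theorem \ref{tdimnuc-finite-equal-t-nuclear-Z-stable} combined with the CETWW bound, exactly as in the proof of Corollary \ref{CNntensorNn}. First, I would check that $A\otimes B$ is asymptotically tracially in $\mathcal{N}$. Since $\mathcal{N}_{\mathcal{Z}}\subset \mathcal{N}$, $B$ is asymptotically tracially in $\mathcal{N}$; applying Theorem \ref{tensor-of-tracial-approximation} with $\mathcal{X}_1=\mathcal{X}_2=\mathcal{N}$, and using that the spatial tensor product of nuclear \CA s is nuclear (so $\mathcal{N}\otimes \mathcal{N}\subset \mathcal{N}$), yields the claim.

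Next, I would verify $A\otimes B$ is asymptotically tracially in $\mathcal{C}_{\mathcal{Z},s}$. The natural route is to first upgrade the hypothesis on $B$ from ``tracially in $\mathcal{N}_{\mathcal{Z}}$'' to ``tracially in $\mathcal{C}_{\mathcal{Z},s}$'': given the tracial data $\alpha\colon B\to \bar B$, $\beta_n,\gamma_n$ with $\bar B\in\mathcal{N}_{\mathcal{Z}}$, replace $\bar B$ by a separable $\mathcal{Z}$-stable $C^*$-subalgebra $\bar B'\subset \bar B$ containing the separable set $\alpha(B)$. Such a $\bar B'$ is obtained by the standard Blackadar-style argument that every separable subalgebra of a $\mathcal{Z}$-stable \CA\ is contained in a separable $\mathcal{Z}$-stable subalgebra, coupled with enlarging $\bar B'$ so as to keep all the images $\beta_n(B)$ accessible; all defining conditions of Definition \ref{def-tracial_approximation} descend. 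Then Corollary \ref{CCtZ} directly gives that $A\otimes B$ is asymptotically tracially in $\mathcal{C}_{\mathcal{Z},s}$.

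Since $A\otimes B$ is unital, separable, simple (by simplicity of the factors, via Lemma \ref{Kir-slice-lem} or standard results), and infinite-dimensional, Theorem \ref{Thm-tracial-simple-nuclear-Z-stable} applies and yields $A\otimes B$ asymptotically tracially in $\mathcal{N}_{\mathcal{Z},s,s}$. By the equivalences (3)$\Leftrightarrow$(1)$\Leftrightarrow$(2) in Theorem \ref{tdimnuc-finite-equal-t-nuclear-Z-stable}, $A\otimes B$ is asymptotically tracially in $\mathcal{N}_{n,s,s}$ for some $n\in\mathbb{N}\cup\{0\}$. The approximating algebras are separable, simple, nuclear and $\mathcal{Z}$-stable, hence by \cite[Theorem B]{CETWW-2019} have nuclear dimension at most one; thus $\mathcal{N}_{n,s,s}\subset \mathcal{N}_{1,s,s}\subset \mathcal{N}_1$, and the corollary follows.

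The main obstacle is the separabilization step: reducing a tracial approximation by algebras in $\mathcal{N}_{\mathcal{Z}}$ to one by algebras in $\mathcal{C}_{\mathcal{Z},s}$. Although the principle is classical, care is needed that after passing to $\bar B'$ the conditions of Definition \ref{def-tracial_approximation} — in particular the asymptotic multiplicativity and isometry of the $\beta_n|_{\bar B'}$ and the Cuntz comparison $\gamma_n(1_B)\lesssim a$ — all survive. Once this separabilization is settled, the remaining steps are bookkeeping within the equivalences already established in the paper.
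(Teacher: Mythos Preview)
Your argument is correct and matches the route the paper leaves implicit (the corollary is stated without proof). The intended one-line version is: by Theorem \ref{tensor-of-tracial-approximation}, $A\otimes B$ is asymptotically tracially in $\mathcal{N}\otimes\mathcal{N}_{\mathcal{Z}}\subset\mathcal{N}_{\mathcal{Z}}$, hence asymptotically tracially in $\mathcal{N}$ and in $\mathcal{C}_{\mathcal{Z}}$, and then Theorem \ref{tdimnuc-finite-equal-t-nuclear-Z-stable}(4)$\Rightarrow$(2) together with \cite{CETWW-2019} gives $\mathcal{N}_1$. You have unpacked this in two pieces (one for $\mathcal{N}$, one for $\mathcal{C}_{\mathcal{Z},s}$ via Corollary \ref{CCtZ}), which is equivalent.

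You are right that the passage from $\mathcal{N}_{\mathcal{Z}}$ (or $\mathcal{C}_{\mathcal{Z}}$) to $\mathcal{C}_{\mathcal{Z},s}$ needs a word, since Theorem \ref{Thm-tracial-simple-nuclear-Z-stable} is stated for $\mathcal{C}_{\mathcal{Z},s}$. But the fix is simpler than you indicate: once you have an approximating algebra $\bar B\in\mathcal{C}_{\mathcal{Z}}$ with $\alpha\colon B\to\bar B$, choose any separable $\mathcal{Z}$-stable $\bar B'\subset\bar B$ containing $\alpha(B)$ (standard downward L\"owenheim--Skolem for $\mathcal{Z}$-stability), take $\alpha$ with codomain $\bar B'$, and simply \emph{restrict} each $\beta_n$ to $\bar B'$. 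Conditions (1)--(4) of Definition \ref{def-tracial_approximation} are inherited verbatim; no enlargement to ``keep the images $\beta_n(B)$ accessible'' is needed, since $\beta_n$ maps \emph{from} $\bar B$ into $B$, and restricting its domain only makes (3) easier. This is the only point where your write-up wobbles, and it is harmless.
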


\begin{cor}\label{CNtNn=N1}
Let $A$ be a unital separable  simple  nuclear \CA\, and $B$ be a unital separable simple \CA\, which is
asymptotically tracially in ${\cal N}_n.$ Then $A\otimes B$ is
asymptotically tracially in ${\cal N}_1.$
\end{cor}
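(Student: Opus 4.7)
The statement is a direct chaining of three results that have been set up earlier, so the plan is to reduce to an application of Corollary \ref{CTNtNn}. First I would observe that because $A$ is nuclear, $A$ is trivially asymptotically tracially in ${\cal N}$: given any finite ${\cal F}\subset A$, any $\epsilon>0$, and any nonzero $a\in A_+$, take $B:=A\in {\cal N}$, $\alpha=\id_A$, $\beta_n=\id_A$, and $\gamma_n=0$; conditions (1)--(4) of Definition \ref{def-tracial_approximation} are then all satisfied automatically. So $A$ satisfies the hypothesis on the first factor in Corollary \ref{CTNtNn}.

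Next I would upgrade the hypothesis on $B$ from ${\cal N}_n$ to ${\cal N}_{\cal Z}$. Since $A\otimes B$ is simple (the spatial tensor product of a nuclear simple with a simple $C^*$-algebra is simple), we may assume $A\otimes B$ is infinite dimensional; it then suffices to argue in the case when $B$ itself is infinite dimensional (when $B$ is a matrix algebra $M_k$ the spatial tensor product is $M_k(A)$, and this degenerate situation can be absorbed into $A$ as noted in Remark \ref{Rm32}). Under the assumption that $B$ is infinite dimensional, unital, separable, simple, and asymptotically tracially in ${\cal N}_n$, Theorem \ref{tdimnuc-finite-equal-t-nuclear-Z-stable} gives that $B$ is asymptotically tracially in ${\cal N}_{{\cal Z},s,s}$, and in particular in ${\cal N}_{\cal Z}$.

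Finally, I would invoke Corollary \ref{CTNtNn} with these two assertions in hand: $A$ is asymptotically tracially in ${\cal N}$ and $B$ is asymptotically tracially in ${\cal N}_{\cal Z}$, yielding at once that the spatial tensor product $A\otimes B$ is asymptotically tracially in ${\cal N}_1$. There is essentially no obstacle here beyond verifying the trivial embedding that witnesses $A$ being asymptotically tracially in ${\cal N}$ and bookkeeping the degenerate finite-dimensional case for $B$; the genuine work has already been done inside Theorem \ref{tdimnuc-finite-equal-t-nuclear-Z-stable} (which upgrades ${\cal N}_n$-approximation to ${\cal N}_{\cal Z}$-approximation via \cite{W-2012-pure algebras}) and inside Corollary \ref{CTNtNn} (which in turn rests on Theorem \ref{tensor-of-tracial-approximation} together with the dimension-reduction results of \cite{CETWW-2019}).
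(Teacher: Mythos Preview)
Your main line is exactly the intended argument (the paper gives no explicit proof here): nuclearity of $A$ makes $A$ trivially asymptotically tracially in ${\cal N}$ via the identity maps, Theorem~\ref{tdimnuc-finite-equal-t-nuclear-Z-stable} upgrades the hypothesis on $B$ from ${\cal N}_n$ to ${\cal N}_{{\cal Z},s,s}\subset{\cal N}_{\cal Z}$, and then Corollary~\ref{CTNtNn} finishes. That chaining is correct.

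One genuine slip, though, in your parenthetical handling of the case $B=M_k$: Remark~\ref{Rm32} does not help there. That remark lets you pass from ``$A$ asymptotically tracially in ${\cal P}$'' to ``$M_k(A)$ asymptotically tracially in ${\cal P}$,'' but you only know $A$ is asymptotically tracially in ${\cal N}$, not in ${\cal N}_1$. In fact the conclusion is false in that case: take $A$ to be one of the Villadsen algebras from Remark~\ref{NnotTN1} (unital separable simple nuclear, yet not asymptotically tracially in any ${\cal N}_m$) and $B=\C$; then $A\otimes B=A$ is not asymptotically tracially in ${\cal N}_1$. So the corollary, like Theorem~\ref{tensor-of-tracial-approximation} on which this whole block of corollaries rests, tacitly requires $B$ infinite dimensional. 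You should simply add that standing hypothesis and delete the claim about absorbing $M_k$ into $A$. (The case $A=M_j$, by contrast, really is harmless: Remark~\ref{Rm32} applied to $B$ gives $M_j(B)$ asymptotically tracially in ${\cal N}_n$, and then Theorem~\ref{tdimnuc-finite-equal-t-nuclear-Z-stable} applies to $M_j(B)$.)
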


\begin{rem}\label{NnotTN1}

(1) There are unital separable nuclear simple \CA s which are not
asymptotically tracially in ${\cal N}_n$ for any $n\ge {{0}}.$

Let $A$ be one of  Villadsen's examples of unital simple AH-algebras which has stable rank $r>1$ (see \cite{Vill99-stable-rank}).
Then $A$ is nuclear and  it is finite.
However, if $A$ were
asymptotically tracially in ${\cal N}_n$ for some integer $n\ge 0,$
then, by {{Theorem}}
\ref{dichotomy-tracial-finite-dimnuc-finite-infinite}, $A$ would  have stable rank one as it cannot be purely infinite.

(2) There are unital separable nuclear simple \CA s
which have stable rank one but
are not
asymptotically tracially in ${\cal N}_n$ for any $n\ge {{0}}.$

%{\green
Let $A$ be another construction of Villadsen's AH-algebra
(see \cite{Vill99-perforation})
{\blue{which}} is {\blue{a}}  unital separable nuclear simple {\blue{\CA\,}} and  has stable rank one.
However, $A$ does not have strict comparison for projections,
this fact together with
{{Theorem}}
\ref{dichotomy-tracial-finite-dimnuc-finite-infinite}
and {{Theorem}}
\ref{Thm-tracial-simple-nuclear-Z-stable}
implies that
$A$ is not asymptotically tracially in ${\cal N}_n$ for any $n\ge 0.$
%However, $K_0(A)$ is not weakly unperforated,
%this fact together with
%\ref{cor-finite-dimnuc-almost-unperforated}
%implies that
%$A$ is not tracially in ${\cal N}_n$ for any $n\ge 0.$
%}

%{\green
(3) There are unital separable nuclear simple \CA s
which have stable rank one and unperforated $K_0$ group,  but
are not
asymptotically tracially in ${\cal N}_n$ for any $n\ge {{0}}.$

Let $A$ be Toms's construction (see \cite[Corollary 1.1]{Toms08}).
Then $A$ is
{{a}}
unital separable nuclear simple {\blue{\CA\, with}} stable rank {\blue{one}}
%{\blue{ one \CA\,}}
which {\blue{has}} {\blue{unperforated}}  $K_0$ group,
but the Cuntz semigroup of $A$ is not almost unperforated.
{{Then, by {{Theorem}}
\ref{dichotomy-tracial-finite-dimnuc-finite-infinite}
and {{Theorem}}
\ref{Thm-tracial-simple-nuclear-Z-stable},
$A$ is not asymptotically tracially in ${\cal N}_n$ for any $n\ge 0.$}}
%Then by \ref{cor-finite-dimnuc-almost-unperforated},
%$A$ is not tracially in ${\cal N}_n$ for any $n\ge 0.$
% {\blue{\bf{  Toms's example details}}}.
%}

\end{rem}

\begin{exm}\label{EXLast}
Let $B$ be a unital separable simple \CA\, which has tracial rank zero {\blue{but not exact}}
(see {{\cite{D2000}}}, for example).
% which is not exact.
Let $C$ be any unital {{nuclear}} separable simple  \CA.
Consider $A=C\otimes B.$
%It follows from ? that
%{
Since $B$ is a non-exact $C^*$-subalgebra of $A$,
it follows that $A$ is not exact
%{\blue{\bf{References??}}}
{{(see \cite[6.1.10(i)]{Rordam 2002})}} (thus non-nuclear) {\blue{either.}}
%By \ref{CCtZ},
By {{Theorem}}
\ref{tracial-finite-dimnuc-implies-tracial-Z-stable},
$B$ is asymptotically tracially in ${\cal N}_{{\cal Z},s,s}.$
By {{Corollary}} \ref{CCtZ},
$A$ is asymptotically tracially in ${\cal C}_{{\cal Z},s}$.
Since $C$ is nuclear and $B$ is asymptotically {\blue{tracially in ${\cal N}$}},
then, {\blue{by
{{Theorem}} \ref{tensor-of-tracial-approximation},}}
we have {\blue{that}} $A$ is asymptotically tracially in ${\cal N}$.
Then, {\blue{by}} {{Theorem}}
\ref{Thm-tracial-simple-nuclear-Z-stable},
$A$ is asymptotically tracially in
${\cal N}_{{\cal Z}, s,s}.$
%}
This provides a great number of examples of unital separable simple
\CA s which are asymptotically tracially in {{${\cal N}_{{\cal Z}, s,s}$}} but not
{{exact}}.
{{For example, one may choose $C$
%to be nuclear, or choose  $C$
to be a unital simple AH-algebra.}}
%{\green
Moreover,
%one note that $B$ is tracially in ${\cal N}_0$ (even though $B$ is not exact).
%If $C$ is nuclear, by \ref{CNtNn=N1},  $C\otimes B$ is tracially in ${\cal N}_1$  and is tracially in ${\cal N}_{\cal Z}.$
though $C\otimes B$ are not exact, they are ``regular" in the sense that they
have almost unperforated Cuntz semigroups and has strict comparison.
%}

In a subsequent paper, we will {{show}} {\blue{that}}
unital separable simple \CA s which are not exact but can exhaust all possible
Elliott invaraint{{s}}.

\end{exm}
%
%\beta_m\psi \beta_1\alpha_1
%\psi\phi \phi\alpha

%%%%%%%%%%%%%%%%%%%%%%%%%%%%%%%%%%

%\newpage

\clearpage


\begin{thebibliography}{99}
\addcontentsline{toc}{chapter}{Reference}


\bibitem{B-Encyclopaedia2006}B.~Blackadar,
{\itshape Operator algebras.~Theory of $C^*$-algebras and von Neumann algebras,} Encyclopaedia of Mathematical Sciences, {\bf 122}.
Operator Algebras and Non-commutative Geometry, {\bf III},
Springer-Verlag, Berlin, 2006. xx+517 pp. ISBN: 978-3-540-28486-4; 3-540-28486-9.


\bibitem{BH-trace1982}B.~Blackadar and {\blue{D. Handelman,}}
{\itshape Dimension functions and traces on $C^*$-algebra,}
J. Funct. Anal. {\bf 45} (1982), 297-340.


\bibitem {BKGILFD}
B.~Blackadar and E.~Kirchberg,
{\itshape Generalized inductive limits of finite-dimensional $C^*$-algebras,}
Math. Ann. {\bf 307} (1997), no.~{3},  343-380.

\bibitem {BKR_1992}
B.~Blackadar, A.~Kumjian, and M.~R{\o}rdam,
{\itshape Approximately central matrix units
and the structure of noncommutative tori},
K-Theory {\bf 6} (1992), no.~3, 267-284.


\bibitem{BK2004}E.~Blanchard and E.~Kirchberg,
{\itshape Non-simple purely infinite $C^*$-algebras: the Hausdorff case},
 J. Funct. Anal. {\bf 207} (2004), no.~{2}, 461-513.

\bibitem{BBSTWW} J.~Bosa, N.~Brown, Y.~Sato, A.~Tikuisis, S.~White, and W.~Winter,
{\itshape Covering dimension of $C^*$-algebras and 2-coloured classification,}
Mem. Amer. Math. Soc. {\bf 257} (2019), no. 1233, vii+97 pp.
ISBN: 978-1-4704-3470-0; 978-1-4704-4949-0.

%Mem. Amer. Math. Soc., {\bf 257 (1233)}(2019), viii+97.

\bibitem{BGSW2019} J.~Bosa, J.~Gabe, A.~Sims, and S.~White,
{\itshape The nuclear dimension of $\mathcal{O}_\infty$-stable $C^*$-algebras,}
preprint, arXiv: 1906.02066v1.
%https://arxiv.org/abs/1906.02066v1.


\bibitem{BO 2008} N.~P.~Brown and N.~Ozawa,
{\itshape $C^*$-algebras and finite-dimensional approximations,}
Graduate Studies in Mathematics, {\bf 88},
American Mathematical Society, Providence, RI, 2008. xvi+509 pp.
ISBN: 978-0-8218-4381-9; 0-8218-4381-8.

\bibitem{BP95} L.~Brown, G.~Pedersen,
{\itshape On the geometry of the unit ball of a $C^*$-algebra,}
J. Reine Angew. Math. {\bf 469} (1995), {{113-147.}}


%\bibitem{CKLW2003} Chebotar, M. A.(RC-CJCU); Ke, W.-F.(RC-TAIN);
%Lee,P.-H.(RC-NTAI); Wong, N.-C.(RC-SYS-AM)
%{\itshape Mappings preserving zero products,}
%(English summary) Studia Math. {\bf 155} (2003), no.~1, 77-94.


\bibitem{CETWW-2019}
J.~Castillejos, S.~Evington, A.~Tikuisis, S.~White, and W.~Winter,
{\itshape Nuclear dimension of simple $C^*$-algebras},
preprint, arXiv: 1901.05853v1.
%http://arXiv.org/abs/1901.05853v1.


\bibitem{ChoiE76}M. Choi and E. Effros,
{\itshape The completely positive lifting problem for $C^*$-algebras.}
Ann. of Math. (2), {\bf 104} (1976) no. 3, 585-609.

\bibitem{Cuntz1978} J.~Cuntz,
{\itshape Dimension functions on simple $C^*$-algebras},
Math. Ann. {\bf 233} (1978), no.~{2}, 145-153.


\bibitem{Cuntz1981}J.~Cuntz,
{\itshape $K$-theory for certain $C^*$-algebras,}
Ann. of Math. (2) {\bf 113} (1981), no.~1, 181-197.

\bibitem{D2000} M.~D\u{a}d\u{a}rlat,
{\itshape Nonnuclear subalgebras of AF algebras,}
Amer. J. Math, {\bf 122} (2000), no.3, 581-597.


\bibitem{ELT1998}
S.~Eilers, T.~Loring, and G.K.~Pedersen,
{\itshape Stability of anticommutation relations:
an application of noncommutative CW complexes},
J. Reine Angew. Math. {\bf 499} (1998), 101-143.


\bibitem{Ell94} G.~Elliott,
{\itshape The classification problem for amenable $C^*$-algebras,}
Proceedings of the International Congress of Mathematicians,
{\bf 1, 2} (Z\"{u}rich, 1994), Birkh\"{a}user, Basel, 1995, 922-932.


\bibitem{EG96} G.~Elliott and G.~Gong,
{\itshape On the classification of  $C^*$-algebras of real rank zero, II,} Ann. of Math {\bf 144 (2)} (1996), 497-610.


\bibitem{EGLi} G.~Elliott, G.~Gong, and L. Li,
{\itshape On the classification of simple inductive limit  $C^*$-algebras II: The isomorphism theorem,}
Invent. Math, {\bf 168} (2007), 249-320.

\bibitem{EGLN2015} G.~Elliott, G.~Gong, H.~Lin, and Z.~Niu,
{\itshape On the classification of simple amenable $C^*$-algebras with finite decomposition rank, II}
preprint, arXiv: 1507.03437.

%https://arxiv.org/abs/1507.03437.

\bibitem{EN08}G.~Elliott and Z.~Niu,
{\itshape On tracial approximation,}
J. Funct. Anal. {\bf 254} (2008), no.~{2}, 396-440.

\bibitem{FF-05-tsr}Q.~Fan and X.~Fang,
{\itshape $C^*$-algebras of tracially stable rank one,}
(Chinese) Acta Math. Sinica (Chin. Ser.) {\bf 48} (2005), no.~{5}, 929-934.

\bibitem {F-2019} X.~Fu,
{\itshape Simple generalized inductive limits of $C^*$-algebras,}
Sci. China Math.  doi: 10.1007/s11425-019-9513-8.

%http://engine.scichina.com/publisher/scp/journal/SCM/doi/10.1007/s11425-019-9513-8?slug=abstract.



\bibitem {Gabe2017} J.~Gabe,
{\itshape A new proof of Kirchberg's $\mathcal{O}_2$-stable classification,}
preprint, arXiv: 1706.03690v2.
%https://arxiv.org/abs/1706.03690v2.


\bibitem {Gabe2019} J.~Gabe,
{\itshape Classification of $\mathcal{O}_{\infty}$-stable $C^*$-algebras,}
preprint, arXiv: 1910.06504v1.
%http://arxiv.org/abs/1910.06504v1.

\bibitem{GLII} G.~Gong and H.~Lin,
{\itshape On classification of simple non-unital amenable C*-algebras, II},
preprint,  arXiv: 1702.01073v3.
%https://arxiv.org/abs/1702.01073v3.


\bibitem{GLN2015} G.~Gong, H.~Lin, and Z.~Niu,
{\itshape Classification of finite simple amenable
${\cal Z}$-stable $C^*$-algebras,}
preprint, arXiv: 1501.00135v6.
%https://arxiv.org/abs/1501.00135v6.



\bibitem{GLN2019} G.~Gong, H.~Lin, and Z.~Niu,
{\itshape A classification of finite simple amenable
$\mathcal{Z}$-stable $C^*$-algebras, I:
$C^*$-algebras with generalized tracial rank one,}
{\blue{preprint,}} arXiv: 1812.11590v2.
%https://arxiv.org/abs/1812.11590v2.

\bibitem{GLN2019-2} G.~Gong, H.~Lin, and Z.~Niu,
{\itshape A classification of finite simple amenable
$\mathcal{Z}$-stable $C^*$-algebras, II:
$C^*$-algebras with rational generalized tracial rank one,}
{\blue{preprint, arXiv: 1909.13382v1,}}
%https://arxiv.org/abs/1909.13382v1.
%{\blue{\bf These two references need to be clear up--find out---check our other %papers,}}


\bibitem{Haa} U.~Haagerup,
{\itshape Quasitraces on exact C-algebras are traces,}
{\blue{preprint,}} arXiv: 1403.7653v1.
%http://arxiv.org/abs/1403.7653v1.

\bibitem {HKW-2012} I.~Hirshberg, E.~Kirchberg, and S.~White,
{\itshape Decomposable approximations of nuclear $C^*$-algebras},
Advances in Mathematics, {\bf 230} (2012), 1029-1039.


\bibitem {HO-2013}I.~Hirshberg and J.~Orovitz,
{\itshape Tracially $\mathcal{Z}$-absorbing $C^*$-algebras},
J. Funct. Anal. {\bf 265} (2013), 765-785.



\bibitem{JS1999}X.~Jiang and  H.~Su,
{\itshape On a simple unital projectionless \CA,}
Amer. J. Math. {\bf 121} (1999), no.~{2}, 359-413.

\iffalse
{\green{\bibitem{Kir06} E.~Kirchberg,
{\itshape Central sequences in $C^*$-algebras and strongly purely infinite algebras,}
Operator Algebras: The Abel Symposium 2004. Abel Symp.,
vol.~{\bf1}, Springer, Berlin (2006), pp.175Å¡C231,}}
\fi

\bibitem{Kad52} R.~Kadison,
{\itshape A Generalized Schwarz Inequality and Algebraic Invariants for Operator Algebras,}
Ann. of Math. Second Series, {\bf 56} (1952) no.~3, 494-503.

\bibitem{KP2000} E.~Kirchberg and N.~C.~Phillips,
{\itshape Embedding of exact $C^*$-algebras in the Cuntz algebra $\mathcal{O}_2$,}
J. Reine Angew. Math. {\bf 525} (2000), 17-53.

\bibitem{KR2000} E.~Kirchberg and M.~R{\o}rdam,
{\itshape Non-simple purely infinite C*-algebras},
Amer. J.  Math., {\bf 122} (2000) no.~{3}, 637-666.


\bibitem{KR2002}E.~Kirchberg and M.~R{\o}rdam,
{\itshape Infinite non-simple $C^*$-algebras: absorbing the Cuntz algebras $\mathcal{O}_\infty$},~(English summary)
Adv. Math. {\bf 167} (2002), no.~{2}, 195-264.


\bibitem {KW-Covering-dimension} E.~Kirchberg and W.~Winter,
{\itshape Covering dimension and quasidiagonality},
International Journal of Mathematics, {\bf 15} (2004), no.~1,  63-85.


\bibitem{LeeOsaka} H.~Lee and H.~Osaka,
{\itshape Tracially sequentially-split $^*$-homomorphisms between C-algebras II,}
preprint, arXiv: 1906.06950v1.
%http://arxiv.org/abs/1906.06950v1.


\bibitem{Lin1991} H.~Lin,
{\em Simple \CA s with continuous scales and simple corona algebras},
 Proc. Amer. Math. Soc. {\bf 112} (1991), 871-880.

\bibitem{Lnbook}H.~Lin,
{\itshape An introduction to the classification of amenable $C^*$-algebras},
World Scientific Publishing Co. Inc. River Edge, NJ, 2001, xii+320 pp,~ISBN: 981-02-4680-3.

\bibitem{Lin 2001-1}H.~Lin,
{\itshape Tracially AF $C^*$-algebras,}
Trans. Amer. Math. Soc. {\bf 353} (2001), 693-722.


\bibitem{Lin01-2} H.~Lin,
{\itshape The tracial topological rank of $C^*$-algebras,}
Proc. London Math. Soc. {\bf 83} (2001), 199-234.


\bibitem{LinDuke} H.~Lin,
{\itshape Classification of simple $C^*$-algebras of tracial topological rank zero,}
Duke Math. J. {\bf 125} (2004), no.~1, 91-119.

\bibitem{Lin-TR-One} H.~Lin,
{\itshape Simple nuclear $C^*$-algebras of tracial topological rank one},
J. Funct. Anal. {\bf 251} (2007), no.~2, 601-679.

\bibitem{Lin-Invent-2011} H.~Lin,
{\itshape Asymptotic unitary equivalence and classification of simple amenable $C^*$-algebras,}
Invent. Math. {\bf 183} (2011), no.~2, 385-450.

\bibitem{L2014localizing} H.~Lin,
{\itshape Localizing the Elliott conjecture at strongly self-absorbing $C^*$-algebras, II,}
J. Reine Angew. Math. {\bf 692} (2014), 233-243.


\bibitem{LinLAH} H.~Lin,
{\em Locally AH algebras},  Mem. Amer. Math. Soc.
{\bf  235} (2015), no.~1107, vi+109 pp. ISBN: 978-1-4704-1466-5; 978-1-4704-2225-7.


\bibitem{LN-Adv} H.~Lin and Z.~Niu,
{\itshape Lifting $KK$-elements, asymptotic unitary equivalence and classification of simple $C^*$-algebras,}
Adv. Math. {\bf 219} (2008), no.~5, 1729-1769.


\bibitem{LZ1991} H.~Lin and S.~Zhang,
{\itshape On infinite simple $C^*$-algebras,}
J. Funct. Anal. {\bf 100} (1991), no. 1, 221-231.

\iffalse
\bibitem{CLLW2018}
J.~Liu, C.~Chou, C.~Liao,  N.~Wong,
{\itshape Linear disjointness preservers of operator algebras and related structures},
Acta Sci. Math. (Szeged), {\bf 84} (2018), 277-307.
\fi

\bibitem {MS2012_strict_comparison_and_z stability}
H.~Matui and Y.~Sato,
{\itshape Strict comparison and $\mathcal{Z}$-absorption of nuclear $C^*$-algebras,}
 Acta Math. {\bf 209} (2012), no.~1, 179-196.


\bibitem{MS2014} H.~Matui and Y.~Sato,
{\itshape Decomposition rank of UHF-absorbing $C^*$-algebras,}
Duke Math. J., {\bf163 (14)} (2014), 2687-2708.


\bibitem{NW2019} Z.~Niu and Q.~Wang, with an appendix by Eckhardt,
{\itshape A tracially AF algebra which is not $\mathcal{Z}$-absorbing,}
preprint, arXiv: 1902.03325v2.
%https://arxiv.org/abs/1902.03325v2.
{\blue{\bibitem{OPR}  E. Ortega, F. Perera and M.  R\o rdam, {\em  The corona factorization property, stability, and the Cuntz semigroup of a
\CA},  Int. Math. Res. Not. IMRN 2012,  34-66,}}



\bibitem{Rordam-1991-UHF} M.~R{\o}rdam,
{\itshape On the structure of
simple $C^*$-algebras tensored with a UHF-algebra,}
J. Funct. Anal. {\bf 100} (1991), 1-17.


\bibitem{Rordam-1992-UHF2} M.~R{\o}rdam,
{\itshape On the structure of
simple $C^*$-algebras tensored with a UHF-algebra. II,}
J. Funct. Anal. {\bf 107} (1992), 255-269.


\bibitem{Rordam 2002} M.~R{\o}rdam,
{\itshape Classification of nuclear, simple $C^*$-algebras. Classification of nuclear $C^*$-algebras. Entropy in operator algebras, 1-145},
Encyclopaedia Math. Sci. {\bf 126}, Oper. Alg. Non-commut. Geom. {\bf 7}, Springer, Berlin, 2002.

\bibitem{Rordam-2004-Jiang-Su-stable} M.~R{\o}rdam,
{\itshape The stable and the real rank of Z-absorbing $C^*$-algebras,}
Internat. J. Math. {\bf 15} (2004), no.~{10}, 1065-1084.

\bibitem{RW-JS-Revisited}M.~R{\o}rdam and W.~Winter,
{\itshape The Jiang-Su algebra revisited},
J. Reine Angew. Math. {\bf 642} (2010), 129-155.

\bibitem{RD66} B.~Russo and H.~Dye,
{\itshape A note on unitary operators in \CAs,}
Duke Math. J., {\bf 33}(1966), 413-416.

\bibitem{TW2014} A.~Tikuisis and W.~Winter,
{\itshape Decomposition rank of $\mathcal{Z}$-stable $C^*$-algebras,}
Anal. PDE {\bf 7} (2014), no.~{3}, 673-700.

\bibitem{TWW2015} A.~Tikuisis, S.~White, and W.~Winter,
{\itshape Quasidiagonality of nuclear $C^*$-algebras,}
Ann. of Math. (2) {\bf 185} (2017), 229-284.

\bibitem{Toms08} A.~Toms,
{\itshape On the classification problem for nuclear $C^*$-Algebras}
Ann. of Math.  Second Series, {\bf 167} (2008), No.~3, 1029-1044.

\bibitem{TomsW07} A.~Toms and W.~Winter,
{\itshape Strongly self-absorbing $C^*$-algebras,}
Trans. Amer. Math. Soc. {\bf 359} (2007), no.~8, 3999-4029.


\bibitem{Vill99-perforation} J.~Villadsen,
{\itshape Simple $C^*$-algebras with perforation,}
J. Funct. Anal. {\bf 154} (1998), no.~1, 110-116.

\bibitem{Vill99-stable-rank} J.~Villadsen,
{\itshape On the stable rank of simple $C^*$-algebras,}
J. Amer. Math. Soc. {\bf 12} (1999), no.~4, 1091-1102.

\bibitem{Voi91} D.~Voiculescu,
{\itshape A note on quasi-diagonal $C^*$-algebras and homotopy,}
Duke Math. J. {\bf 62} (1991), no.~{2}, 267-271.

\bibitem{W2003CovDim} W.~Winter,
{\itshape Covering dimension for nuclear $C^*$-algebras,}
J. Funct. Anal. {\bf 199} (2003), no.~{2}, 535-556.

\bibitem {W-2009CovII} W.~Winter,
{\itshape
Covering dimension for nuclear $C^*$-algebras. II,}
Trans. Amer. Math. Soc. {\bf 361} (2009), no.~{8}, 4143-4167.


\bibitem{W2010decompo} W.~Winter,
{\itshape Decomposition rank and $\mathcal{Z}$-stability,}
Invent. math. {\bf 179} (2010), 229-301.

\bibitem{W-2012-pure algebras} W.~Winter,
{\itshape Nuclear dimension and $\mathcal{Z}$-stability of pure $C^*$-algebras},
Invent. Math. {\bf 187} (2012), no.~{2}, 259-342.

\bibitem{Winter2014} W.~Winter,
{\itshape Localizing the Elliott conjecture at strongly self-absorbing $C^*$-algebras,}
J. Reine Angew. Math. {\bf 692} (2014), 193-231.

\bibitem {WZ2009}W.~Winter and J.~Zacharias,
{\itshape Completely positive maps of order zero}, M{\"u}nster J. Math. {\bf 2} (2009), 311-324.

\bibitem {WZ2010}W.~Winter and J.~Zacharias,
{\itshape The nuclear dimension of $C^*$-algebras}, Adv. Math. {\bf 224} (2010), 461-498.

\iffalse
\bibitem{W1994} M.~Wolff,
{\itshape Disjointness preserving operators on $C^*$-algebras, }
Arch. Math. (Basel) {\bf 62} (1994), no.~3, 248-253.
\fi

\end{thebibliography}
\end{document}